\tikzset { domaine/.style 2 args={domain=#1:#2} }
\newcommand{\Odd}{\mathrm{odd}}
\newcommand{\Ev}{\mathrm{even}}
  \def\<{\langle}
  \def\>{\rangle}
  \DeclareMathOperator{\Ker}{Ker}
  \DeclareMathOperator{\Img}{Im}
\newtheorem{theorem}{Theorem}
\newtheorem{theo}{Theorem}[section]
\newtheorem*{theo*}{Theorem}
\newtheorem{conj}{Conjecture}[section]
\newtheorem{conj'}{Conjecture'}[section]
\newtheorem{prop}[theo]{Proposition}
\newtheorem*{prop*}{Proposition}
\newtheorem{lem}{Lemma}[section]
\newtheorem{cor}{Corollary}[section]
\newtheorem*{cor*}{Corollary}
\newtheorem{defn}{Definition}[section]
\newtheorem{claim}{Claim}
\newtheorem{remark}{Remark}[section]
\newcommand{\nocontentsline}[3]{}
\newcommand{\tocless}[2]{\bgroup\let\addcontentsline=\nocontentsline#1{#2}\egroup}
\numberwithin{equation}{section}
\title{Sobolev regularity of compactified 3-manifolds and the ADM Center of Mass}
\author{Rodrigo Avalos \\ Email: \href{rdravalos@gmail.com}{rdravalos@gmail.com} \footnote{Universität Tübingen, Mathematisch-Naturwissenschaftliche Fakultät, Fachbereich Mathematik, Tübingen, Deutschland.}}
\date{}
\begin{document}
\maketitle
%\tableofcontents

\begin{abstract}
In this paper %we address a set of related problems in geometric analysis which connect themselves by posing subtle questions about regularity of geometric problems which can be analysed via regularity of geometric partial differential equations (PDEs). Concretely, 
we address the existence of preferred asymptotic coordinates on asymptotically Euclidean (AE) manifolds $(M^3,g)$ such that $g$ admits an asymptotically Schwarzschildian first order expansion, based purely on a priori geometric conditions, which will then be used to establish geometric criteria guaranteeing the convergence of the ADM center of mass (COM). This question is analysed relating it to the study of the regularity of conformal compactifications of such manifolds, which is itself explored via elliptic theory for operators with coefficients of very limited regularity. With these related problems in mind, we shall first establish regularity properties of $L^{q'}$-solutions associated to the operator $\Delta_{\hat{g}}:W^{k,p}(S_2\hat{M})\to W^{k-2,p}(S_2\hat{M})$, for $k=1,2$ and $1<p\leq q$, where $\hat{M}^n$ is a closed manifold, $\hat{g}\in W^{2,q}(\hat{M})$, with $q>\frac{n}{2}$ and $S_2\hat{M}$ denotes the bundle of symmetric $(0,2)$-tensor fields. Appealing to these results, we establish Sobolev regularity of conformally compactified AE 3-manifolds via the decay of the Cotton tensor, improving on previous results. This allows us to construct preferred asymptotic coordinates on such AE manifolds where the metric has a first order Schwarzschildian expansion, which in turn will allow us to address a version of a conjecture posed by C. Cederbaum and A. Sakovich concerning the convergence of the COM of such manifolds.
\end{abstract}

\tableofcontents

%\section{Introduction}

\section{Introduction}

In this paper we shall address a few related problems concerning regularity of geometric structures. In order to motivate this analysis, let us start with two problems well-known within geometric analysis. The first one of these is related to the ADM center of mass (COM) associated to a $3$-dimensional asymptotically Euclidean (AE) Riemannian manifold $(M^3,g)$. Let us briefly recall that an AE manifold is a non-compact manifold which (in the case of one end) outside a compact set $K\subset M^n$ is diffeomorphic to the exterior of a closed ball in $\mathbb{R}^n$, that is $M^n\backslash K\cong \mathbb{R}^n\backslash \overline{B_{1}(0)}$, and the corresponding diffeomorphism induces a set of asymptotic coordinates, in which the Riemannian metric $g$ approaches the Euclidean metric as we move towards infinity. There are different ways to measure this decay, and while full details are provided within Section \ref{AESection}, let us now say that an AE manifold is of order $\tau>0$ if
\begin{align}
g_{ij}(x)=\delta_{ij}+O_k(|x|^{-\tau}),
\end{align} 
on $\mathbb{R}^n\backslash \overline{B_{1}(0)}$ for some $k\geq 0$.\footnote{In this context, a function $f$ is said to be of class $O_k(|x|^{-\tau})$ if $\partial^{\alpha}f=O(|x|^{-\tau-|\alpha|})$ for any multi-index $\alpha$ such that $|\alpha|\leq k$.} In this context, in the case of an AE 3-manifold with one end and asymptotic coordinates $\{x^i\}_{i=1}^3$, the ADM energy is defined as\footnote{The Einstein summation convention along repeated indices will be assumed through the paper.}
\begin{align}\label{ADMenergyInto}
\begin{split}
E\doteq \frac{1}{16\pi}\lim_{r\rightarrow\infty}\int_{S_r}\left(\partial_ig_{ij}-\partial_jg_{ii} \right)\nu^jd\omega_r \; \; ; \; \;
%P_i\doteq \frac{1}{8\pi}\lim_{r\rightarrow\infty}\int_{S_r}\pi_{ij}\nu^jd\omega_r,
\end{split}
\end{align}
whenever the limit exists, and where $S_r\hookrightarrow \mathbb{R}^3\backslash\overline{B_{R_0}(0)}$ denotes a topological sphere of radius $r>R_0$ contained within the end of $M^3$, while $\nu$ denotes the outward-pointing Euclidean unit normal to it and $d\omega_r$ the volume form on $S_r$ induced by the Euclidean  metric. Let us highlight that precise geometric conditions can be formulated to guarantee the convergence of (\ref{ADMenergyInto}) as well as the independence of this limit on the asymptotic coordinates (see, for instance, \cite{BartnikMass}). Also, it is a consequence of the Riemannian positive mass theorem that for 3-dimensional AE manifolds of order $\tau>\frac{1}{2}$ with non-negative scalar curvature $E\geq 0$, and $E=0$ iff $(M^3,g)\cong \mathbb{E}^3$ (See \cite{SchoenYauPM1,WittenPMT,BartnikMass}).

In the above context, the COM of a 3-dimensional AE manifold $(M^3,g)$ is given by a vector $C_{B\acute{O}M}=(C^1,C^2,C^3)\in \mathbb{R}^3$, whose components are defined by the limits
\begin{align}
%J_i&\doteq \frac{1}{8\pi}\lim_{r\rightarrow\infty}\int_{S_r}\pi_{jk}Y_i^j\nu^kd\omega_r \label{angularmomentum},\\
C^k_{B\acute{O}M}&\doteq\frac{1}{16\pi E} \lim_{r\rightarrow\infty}\left(\int_{S_r}x^k\left(\partial_ig_{ij}-\partial_jg_{ii} \right)\nu^jd\omega_r - \int_{S_r}\left(g_{ik}\nu^i - g_{ii}\nu^k \right)d\omega_r \right)\label{COMIntro},
\end{align}
whenever the limits exist, and where we have appealed to the same notations introduced above.\footnote{More detailed definitions concerning AE manifolds and ADM asymptotic charges are provided within Section \ref{AESection}.} Just as with other asymptotic charges associated to isolated gravitational systems, the COM has proven to be related to highly interesting problems in geometry, most notably the existence of geometric foliations of infinity in AE manifolds and the convergence of the center of such foliations, which has been proposed as a geometric way of defining the COM. A non-exhaustive list of related important references is given by \cite{HY,Metzger1,EichmaierMetzger,Nerz1,Nerz2,CederbaumSakovich,EichmairWillmore}, which treat constant mean curvature foliations, constant expansion foliations, constant space-time mean curvature foliations and foliations by Willmore surfaces. Nevertheless, in particular for the convergence of the COM and the center of these associated foliations, one typically needs to impose very precise asymptotic behaviour for the metric $g$. Namely, certain asymptotic parity conditions need to be imposed, which in the literature are known as the Regee-Teiltelboim (RT) conditions. In their simplest form, such conditions are given by the assumption that there are asymptotic coordinates $\{x^i\}_{i=1}^3$ for $M$ where the AE metric $g$ can be written as
\begin{align}\label{RTcondsIntro}
g_{ij}(x)=\delta_{ij} + O_2(|x|^{-\tau}), \:\: g^{\Odd}_{ij}(x)=O_2(|x|^{-\tau-1}), \:\: R_g^{\Odd}(x)=O(|x|^{-\tau-3}),
\end{align}  
for some $\tau>\frac{1}{2}$, where the superscript ``$\Odd$" refers to the odd-part of the given function, and $R_g$ denotes the scalar curvature of the metric $g$. Actually, in many cases a \emph{Schwarzschildian} expansion near infinity is demanded, which implies that the above expansion is actually replaced by
\begin{align}\label{SchwarzExpansionIntro}
g_{ij}=\left(1 + \frac{A}{|x|}\right)\delta_{ij} + O_2(|x|^{-2}),
\end{align}
where $A$ is a constant, related to the ADM energy of $g$. Clearly, the Schwarzschildian case implies a strong version of the RT parity conditions. In this context, a generally recognised problem is that the existence of such an a priori expansion with these parity properties is not know to be a consequence of some set of geometric hypotheses, but is typically merely assumed to exist. Although it is true that in sufficiently weak topologies metrics obeying the RT conditions are known to form a dense subset of the space of solutions to the Einstein constraint equations (ECE) of general relativity (GR) due to the work in \cite{CorvinoSchoen}, these topologies are not strong enough to guarantee convergence of the associated COM. It is therefore an interesting open question to determine geometric conditions which guarantee the existence of an asymptotic coordinate system where an AE manifold $(M^3,g)$ is guaranteed to have an expansion of the form of (\ref{RTcondsIntro}), and this question relates to \cite[Conjecture 1]{CederbaumSakovich}, which we take as one of the motivations for this paper. Let us highlight that this can also be motivated as a question within GR, since the expansion of $g$ near infinity is associated to the decay of the gravitational field at infinity for an isolated system. The known explicit solutions modelling such idealised systems obey expansions like (\ref{SchwarzExpansionIntro}), and from the weak Newtonian limit this kind of decay could be expected more generically. Nevertheless, proving clear-cut geometric conditions which a priori guarantee such an expansion seems to be quite subtle.

Let us notice that one may construct AE manifolds satisfying (\ref{SchwarzExpansionIntro}) in a very geometric way via decompactification of certain closed Riemannian manifolds. That is, let $(\hat{M}^3,\hat{g})$ be a smooth closed Riemannian manifold, and assume that $g$ is Yamabe positive. Then, given a point $p\in \hat{M}$, it is well-known that there exists a conformally related metric $g$ such that $(M^3\doteq \hat{M}^3\backslash\{p\},g)$ is AE (with $p$ being the point at infinity) and $g$ has an expansion of the form of (\ref{SchwarzExpansionIntro}) near infinity due to results in \cite{LeeParker}. This classic result is obtained since the Yamabe condition guarantees the existence of a positive Green function $G_{L_{\tilde{g}}}$ for the conformal Laplacian $L_{\tilde{g}}=-8\Delta_{\tilde{g}}+R_{\tilde{g}}$ of each element in $[\hat{g}]$, and then one can look for a suitable element within $[\hat{g}]$ where $G_{L_{\tilde{g}}}$ has a very explicit expansion in normal coordinates near its singularity at the selected point $p$. Then, the inversion of coordinates $z=\frac{x}{|x|^2}$ around $p$ induces an asymptotic chart for $M^3$ where (\ref{SchwarzExpansionIntro}) is valid in the $z$-coordinate system. Furthermore, by construction $R_g\equiv 0$ and thus $(M^3,g)$ can be regarded as a time-symmetric vacuum initial data set for the evolution problem in GR. This procedure provides an interesting connection between special classes of closed Riemannian manifolds and AE manifolds obeying the RT conditions. Notice, in particular, that in the case of AE manifolds obtained in this manner, the conditions which guaranteed the existence of an expansion like (\ref{SchwarzExpansionIntro}) were strictly geometric, and no reference to special coordinates with parity conditions needed to be invoked. Generalising this kind of construction to more general AE manifolds will be one of our main motivations.

Concerning the above description on how to obtain AE manifolds which naturally admit asymptotic coordinates where the RT conditions are satisfied, we would like to focus on the relation between an AE manifold and (in the case of manifolds with one end) its one-point compactification. Ideally, one would like to be able to start with an AE manifold and guarantee that under a certain set of geometric criteria there is an asymptotic chart where the RT conditions are satisfied, instead of starting already from a closed manifold and decompactifying it. One strategy one may adopt could be to start with a given AE manifold $(M^3,g)$, then attempt to conformally compactify it into a closed Riemannian manifold $(\hat{M}^3,\hat{g})$, then move within the conformal class $[\hat{g}]$ so as to find a suitable element in the class whose conformal decompactification $(M^3,\gamma)$ has an explicit asymptotic expansion, say like (\ref{SchwarzExpansionIntro}), and finally estimate the conformal factor between $g$ and $\gamma$ so as to extract a similar expansion for $g$. 

The above strategy brings us to the second geometric problem motivating this paper, since it is immediately faced with very interesting regularity issues, which have been highlighted in  \cite{Herzlich1997} and \cite{MaxwellDiltsYamabeAE}, where it has been pointed out that the compactification of a smooth AE manifold will, in general, be only of very limited regularity around the point of compactification. In \cite{Herzlich1997} the author has studied how additional control on conformal objects can be used to improve the regularity of the compactified manifold. In particular, for a $3$-dimensional AE manifold $(M^3,g)$, the decay of the Cotton tensor $C_g$ is tightly related to the regularity of its one-point conformal compactification $(\hat{M}^3,\hat{g})$. Conditions to obtain $C^{2}$-regularity for $\hat{g}$ were studied in detail in \cite{Herzlich1997}, while in \cite{MaxwellDiltsYamabeAE} the authors guarantee that, a priori, $\hat{g}$ can be shown to have $W^{2,p}(\hat{M})$ Sobolev regularity, with $p>\frac{3}{2}$, under minimal conditions on $g$ and with no extra requirements on $C_g$. Therefore, in order to pursue the existence of RT expansions following the reasoning described in the above paragraph, one will need first to address regularity issues concerning conformal compactifications. In particular, we would aim to obtain $\hat{g}\in W^{2,p}(\hat{M})$ with $p>3$, but without imposing a priori decays which might be too strong to be interesting. For instance, such regularity is guaranteed from \cite{MaxwellDiltsYamabeAE} if one starts with a smooth AE manifold of order $\tau>1$, but this decay implies that the ADM energy of $g$ is zero, and therefore no AE metric with non-negative scalar curvature can obey these conditions due to the positive mass theorem. Therefore, we would like to pursue the validity of the strategy described in the above paragraph, but for decay rates $\tau\in (\frac{1}{2},1]$. 

Having the above paragraph in mind, to impose geometric conditions as weak as possible on $(M^3,g)$, we would like to pursue Sobolev-type controls on its compactification, interpolating more subtly in between $C^{k}$-spaces, and furthermore demanding controls on decays in an integral sense, rather than point-wise. In some sense, we attempt to interpolate in between the results of \cite{MaxwellDiltsYamabeAE} and \cite{Herzlich1997}. The strategy to do so is to appeal to a sequence of bootstraps. First, one may appeal to the general results of \cite{MaxwellDiltsYamabeAE} to obtain, a priori, a conformal compactification $(\hat{M}^3,\bar{g})$, with $\bar{g}\in W^{2,p}(\hat{M})$ and $2<p<3$. Then, several results on the Yamabe problem for low regularity metrics can be used to select an element $\hat{g}\in [\bar{g}]$ such that its scalar curvature has improved regularity, given by $R_{\hat{g}}\in W^{2,p}(\hat{M})$. The next goal is to improve the regularity of the Ricci tensor. For that, we shall appeal to the conformal invariance of the Cotton tensor $C_{\hat{g}}$ in three dimensions. In particular, an a priori weighted $L^p$-control of $C_{g}$ will translate on an a priori $L^p$ control of $C_{\hat{g}}$, in turn translating into $W^{-1,p}(\hat{M})$-control for $\mathrm{div}_{\hat{g}}C_{\hat{g}}$, and finally providing the same type of control for $\Delta_{\hat{g}}\mathrm{Ric}_{\hat{g}}$. For this last step, the a priori control on $R_{\hat{g}}$ will be important. Once $W^{-1,p}$-control for $\Delta_{\hat{g}}\mathrm{Ric}_{\hat{g}}$ is obtained, the remaining improvements of regularity will be through elliptic theory. 

The above strategy is our motivation for the first problem we shall study in this paper, %since caution should be taken
which concerns the elliptic regularity theory that is to be applied, where some caution must be taken due to the limited regularity of the coefficients of $\Delta_{\hat{g}}\mathrm{Ric}_{\hat{g}}$. In fact, regularity questions associated to a bootstrap from an a priori $L^r$ solution $u$ (with $r$ sufficiently large) for an equation of the form $\Delta_gu\in W^{-1,q}$ (or even in $L^p$) for metrics $g\in W^{2,p}$ and $\frac{n}{2}<p<n$, seem to be very close to critical cases in regularity theory. We would like to highlight that this type of regularity statements seem to be outside of the scope of classical references, such as \cite{GilbargTrudinger,TaylorToolsForPDEs,Hormander2,Hormander3}. Notice that when treating equations with coefficients of limited regularity, it is typical to impose a priori $W^{1,p}$ controls on the a priori solution,\footnote{See, for instance, \cite[Chapter 8]{GilbargTrudinger}, \cite[Chapter 3]{TaylorToolsForPDEs} and \cite[Chapter 5]{Morrey}.} and we shall need to start with weaker solutions than that. In a broader context, a related problem is the conjecture posed by J. Serrin \cite{Serrin} in the case of weak solutions of scalar equations of the form
\begin{align}\label{SerrinEq}
\mathrm{div}(a^{ij}\nabla_i u)=0
\end{align}
on a bounded regular domain  $\Omega\subset \mathbb{R}^n$ where $a^{ij}$ are bounded and measurable coefficients. These equations have well-established regularity properties for weak $W^{1,2}_{loc}$-solutions, and in \cite{Serrin} the author provided an example of a weak solution $W_{loc}^{1,p}$ to (\ref{SerrinEq}) with $1<p<2$, such that $u\not\in L_{loc}^{\infty}$, showing that the $W^{1,2}_{loc}$ requirement was necessary to obtain $L^{\infty}_{loc}$ control for the solution. The author then conjectured that if the coefficients $a^{ij}\in C^{0,\alpha}(\Omega)$, then any $W^{1,1}_{loc}$-solution to (\ref{SerrinEq}) should be $W^{1,2}_{loc}$. This conjecture was solved by H. Brezis in \cite{Brezis1}, actually showing a stronger statement. That is, it was shown that under a Dini condition on $a^{ij}$, a weak solution of (\ref{SerrinEq}) which is a priori of bounded variation must actually be in $W^{1,2}_{loc}$. Since this result, refinements contemplating different kinds of Sobolev regularity on the coefficients and the a priori solution have been established, for instance in \cite{Regularity1,Regularity2}, and also counterexamples to borderline cases have been presented \cite{JIN}. Concerning these results, let us comment that in \cite{Regularity2} the authors analyse (\ref{SerrinEq}) and establish that if $a^{ij}\in C^{0,1}_{loc}$ and $u\in L^1_{loc}$, then $u\in W^{2,p}_{loc}$ for any $p<\infty$, while in \cite{Regularity1} it is established that if $a^{ij}\in W^{1,n}(\Omega)$ also satisfies a Dini-type condition and $u\in L^{n'}_{loc}$, then $u\in W^{1,2}_{loc}$.

We would like to highlight that bootstrapping an a priori $L^{q'}$-solution to the tensor equation
\begin{align}\label{SerrinEq2}
\Delta_gu=f
\end{align} 
for a $W^{2,q}(M)$ metric on a closed manifold $M$, with $q>\frac{n}{2}$, $u$ a symmetric $(0,2)$-tensor field, and $f\in L^{p}$, is a related problem to the one described above, although out of the scope of the referenced papers.   The geometric problem put forward above for the compactification of an AE manifold naturally poses this regularity question as the requirement to bootstrap $\mathrm{Ric}_{\hat{g}}$ to $L^p(\hat{M}^3)$ for some $p>3$. Once this is established, we can appeal to harmonic coordinates $\{y^i\}_{i=1}^3$ around the point of compactification to bootstrap this extra regularity gained for the Ricci tensor into the metric and obtain $\hat{g}\in W^{2,p}(\hat{M})$, with $p>3$, which was our target. This last process is also filled with subtle issues, since the harmonic coordinates associated to $\hat{g}$ will be a priori only $C^{1,\alpha}$-compatible with the differentiable structure induced by the inversion of coordinates $x^i=\frac{z^i}{|z|^2}$. Nonetheless, these extra subtle issues can be overcome.

Let us highlight how the above problems became naturally interrelated. We started with an open question in geometry and mathematical GR, concerning the existence of geometric conditions such that an AE 3-manifold admits asymptotic coordinates so that the metric has an expansion satisfying (\ref{RTcondsIntro}). We then noticed the relation between this problem and the problem of the regularity of the compactification of such AE manifolds, and finally we pointed out that this last problem poses a relevant question within elliptic regularity theory, related to recent developments in the area. Let us then notice that, after solving the last two of these problems, there is still work to be done in order to solve the questions associated to the COM and expansions like (\ref{RTcondsIntro}). In particular, one then needs to conformally decompactify $(\hat{M}^3,\hat{g})$ into $(M^3,\gamma)$, with $\gamma\in [g]$, and analyse these questions. The main idea is that if $\hat{g}$ is shown to be $W^{2,p}(\hat{M})$ with $p>3$, one may then use normal coordinates $\{\bar{y}^i\}_{i=1}^3$ around the point of compactification $p_{\infty}$ to obtain an expansion of the form:
\begin{align}\label{IntroMetricExpansion.1}
\hat{g}_{ij}(\bar{y})=\delta_{ij}+O_1(|\bar{y}|^{1+\alpha}), \text{ for some } \alpha>0.
\end{align} 
Then, we may decompactify following the ideas of \cite{MaxwellDiltsYamabeAE} and inducing a structure of infinity around $p_{\infty}$ via the inversion $\bar{z}=\frac{\bar{y}}{|\bar{y}|^2}$. Along this process, we will need to keep track of the relation between the coordinates $\{\bar{z}^i\}_{i=1}^3$ and the original asymptotic coordinates $\{z^i\}_{i=1}^3$ on $M$, at least up to sufficiently high order, which will depend on similar questions concerning the intermediary coordinate systems on $\hat{M}$ around $p_{\infty}$, given by the inversion $x=\frac{z}{|z|^2}$, then the change of coordinate to harmonic coordinates $\{y^i\}_{i=1}^3$, and finally the change to the normal coordinates $\{\bar{y}^i\}_{i=1}^3$ described above. Through (\ref{IntroMetricExpansion.1}) this process will allow us to estimate
\begin{align*}
\gamma_{ij}(\bar{z})=\delta_{ij}+O_1(|\bar{z}|^{-1-\alpha})
\end{align*} 
and, writing $g=u^4\gamma$, also grant that $u-1$ belongs to some weighted $L^p_{-\epsilon}(M,\Phi_{\bar{z}})$-space with $\epsilon>0$, where $\Phi_{\bar{z}}$ denotes the structure of infinity induced by the $\bar{z}$-coordinates. Then, the conformal covariance of the scalar curvature, together with a priori controls on $u-1$, $R_g$ and $R_{\gamma}$ give us an expansion
\begin{align}\label{IntroMetricExpansion.2}
u=1+\frac{C}{|\bar{z}|}+O_1(|\bar{z}|^{-1-\alpha}), \text{ for some } \alpha>0.
\end{align}
All this together, guarantees the existence of a structure of infinity with coordinates $\{\bar{z}^i\}^3_{i=1}$, such that
\begin{align}\label{IntroMetricExpansion.2}
\begin{split}
g(\partial_{\bar{z}^i},\partial_{\bar{z}^j})&=\left(1+\frac{4C}{|\bar{z}|}\right)\delta_{ij} + O_1(|\bar{z}|^{-1-\alpha}),\\
\bar{z}(z)-\mathrm{Id}(z)&\in C^1_{1-\alpha}(\mathbb{R}^n\backslash\overline{B_{R_0}(0)}).
\end{split}
\end{align}

Notice the above expansion will settle the question initially posed concerning (\ref{RTcondsIntro}), at least for expansions up to first order. We should highlight that higher order expansions could be extracted requiring further decay for the derivatives of the Cotton tensor $C_g$. In the above case, we need only a weighted $L^p$ condition on $C_g$. Moreover, from the above expansion one may read quite explicitly the ADM energy, and, under an $L^p$-integrability condition on $z^iR_g$, the above expansion is enough to guarantee the convergence of the COM in the coordinates $\{\bar{z}^i\}^3_{i=1}$, which settles a Riemannian and $L^p$-version of the conjecture posed by Cederbaum-Sakovich in \cite[Conjecture 1]{CederbaumSakovich}, under an additional control on $C_g$. %An interesting question, which shall be addressed in future work, is the necessity of this extra condition on $C_g$. 
We would like to highlight that the proof of this version of the conjecture in \cite{CederbaumSakovich} presents a relevant advance in the geometric understanding of the COM of AE 3-manifolds, since, besides the case of asymptotically conformally flat manifolds treated in \cite{CorvinoCOM}, to the best of our knowledge, it is the fist time such convergence condition is formulated  purely in geometric terms, without invoking RT parity conditions a priori.

\subsection{Main results}

In this section we shall present the main results obtained in this paper. Based on the topics discussed above and their relation, let us start with the regularity results, the first of which is given by the following theorem.

\begin{theorem}\label{ThmAIntro}
Let $(M^n,g)$ be a closed Riemannian manifold with $g\in W^{2,q}(M)$, $q>\frac{n}{2}$, and $n\geq 3$. Then, given $1< p\leq q$, %given a vector bundle $E\mapsto M$, 
the Laplace-Beltrami operator
%\begin{align*}
%\Delta_g:C^{\infty}(M;E)&\mapsto W^{1,q}(M;E),\\
%u&\mapsto \Delta_gu=g^{ij}\nabla_i\nabla_ju
%\end{align*}
%extends to a well-defined continuous operator
\begin{align*}
\Delta_g:W^{2,p}(M;S_2M)\to L^{p}(M;S_2M), %\text{ for all } p>1
\end{align*}
%Furthermore, (\ref{LapBelSob}) 
is Fredholm of index zero, $\mathrm{Ker}(\Delta_g\vert_{W^{2,p}})\subset W^{2,q}(M)$ and the following regularity implication follows:
\begin{align*}
\text{ if } u\in L^{q'}(S_2M), \text{ and } \Delta_gu\in L^p(S_2M)\Longrightarrow u\in W^{2,p}(S_2M).
\end{align*}
\end{theorem}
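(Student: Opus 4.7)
The plan is to prove the three assertions in order, each building on the previous.

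For the Fredholm property, first check that $\Delta_g\colon W^{2,p}(S_2M)\to L^p(S_2M)$ is bounded for every $1<p\leq q$. Expanding $\Delta_g u$ in local coordinates produces $g^{ij}\partial_i\partial_j u$ together with lower-order terms whose coefficients involve $\Gamma\in W^{1,q}\hookrightarrow L^{q^{*}}$ and $\partial\Gamma,\Gamma^{2}\in L^{q}\cup L^{q^{*}/2}$; since $2q>n$, Morrey's embedding yields $g^{ij}\in C^{0,\alpha}$, and a Sobolev product estimate places each term in $L^{p}$ precisely because $q>n/2$. A freezing-of-coefficients argument, combined with the $L^p$-Calder\'on--Zygmund inequality for the constant-coefficient Laplacian and a partition of unity, then gives the a priori estimate
$$\|u\|_{W^{2,p}}\leq C\bigl(\|\Delta_g u\|_{L^p}+\|u\|_{L^p}\bigr).$$
With the compact embedding $W^{2,p}\hookrightarrow L^p$, this makes $\Delta_g$ semi-Fredholm; formal self-adjointness in the $L^{2}(dV_g)$ pairing, together with continuity of the Fredholm index along a smooth deformation $g_t$ connecting $g$ to a smooth reference metric $\hat g$ (where the index vanishes), shows the index is zero. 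For the kernel regularity, if $u\in W^{2,p}$ solves $\Delta_g u=0$ then the source $0$ lies in $L^{q}$, and iterating the a priori estimate at successively higher Sobolev exponents upgrades $u$ into $W^{2,q}$.

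The regularity implication is the heart of the argument. Given $u\in L^{q'}$ with $\Delta_g u=f\in L^{p}$ distributionally, first show that $f$ annihilates $K:=\ker(\Delta_g\vert_{W^{2,q}})\subset W^{2,q}$. For any $\xi\in K$, approximate it by $\xi_k\in C^\infty$ in $W^{2,q}$ and pass to the limit in $\int u\,\Delta_g\xi_k\,dV_g=\int f\,\xi_k\,dV_g$: the LHS tends to $0$ because $\Delta_g\xi_k\to 0$ in $L^{q}$ pairs with $u\in L^{q'}$, while the RHS converges to $\int f\,\xi\,dV_g$ via $\xi_k\to\xi$ in $W^{2,q}\hookrightarrow L^{\infty}$ (using $2q>n$) against $f\in L^{p}$. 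Hence $\int f\,\xi\,dV_g=0$, and by the Fredholm property at level $p$ there exists $v\in W^{2,p}$ with $\Delta_g v=f$. Setting $w:=u-v\in L^{q'}$ (using Sobolev embedding of $W^{2,p}$ on the closed manifold), one has $\Delta_g w=0$ distributionally and, by the same density procedure, $\int w\,\Delta_g\phi\,dV_g=0$ for every $\phi\in W^{2,q}$. Since $\Delta_g(W^{2,q})$ is a closed subspace of $L^{q}$ of codimension $\dim K$ by the Fredholm property, its annihilator in $L^{q'}=(L^q)^{*}$ has dimension $\dim K$; this annihilator visibly contains $K$ itself (via integration by parts for smooth enough test fields), so by equality of dimensions $w\in K\subset W^{2,q}\subset W^{2,p}$. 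Therefore $u=v+w\in W^{2,p}$, as required.

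The principal obstacle is the web of Sobolev exponents that has to be juggled against a metric of only $W^{2,q}$ regularity. Several of the coefficient products in $\Delta_g$ sit at the critical scaling threshold $q=n/2$ where Sobolev multiplication is borderline, so the freezing-coefficients argument in the a priori estimate, and the density arguments at the duality step, must track these bounds uniformly as $p$ ranges through $(1,q]$. Verifying that $w=u-v$ genuinely lies in $L^{q'}$ — so that the final pairing with $\Delta_g\phi\in L^{q}$ is admissible — depends on sharp Sobolev embeddings of $W^{2,p}$ for small $p$, and justifying the distributional identity at the level $\phi\in W^{2,q}$ similarly presses against the limited regularity of the Christoffel symbols. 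These are precisely the issues flagged in the paper's introduction as placing this regularity question outside the scope of classical references.
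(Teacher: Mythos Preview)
Your overall strategy---semi-Fredholm via frozen coefficients, index zero via smooth approximation, and the regularity implication via a duality/annihilator argument---matches the paper's. But there is a genuine gap in your ordering of the argument, and one step is simply wrong as written.

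The claim ``iterating the a priori estimate at successively higher Sobolev exponents upgrades $u$ into $W^{2,q}$'' does not work. The estimate $\|u\|_{W^{2,r}}\leq C(\|\Delta_g u\|_{L^r}+\|u\|_{L^r})$ presupposes $u\in W^{2,r}$; it bounds a norm you already know is finite and has no mechanism for raising the exponent. Bootstrapping from $W^{2,p}$ to $W^{2,q}$ would itself require the very regularity implication you are trying to prove. The paper obtains kernel regularity instead by a dimension count: with index zero at every $1<r\leq q$ and the identification $\Ker(\Delta_g^{*}|_{(L^r)'})\cong\Ker(\Delta_g|_{L^{r'}})$ (which needs the low-regularity integration-by-parts of Proposition~\ref{IntByPartsWeakProp}), one gets the chain $\dim K\leq\dim\Ker(\Delta_g|_{W^{2,p}})=\dim\Ker(\Delta_g|_{L^{p'}})\leq\dim\Ker(\Delta_g|_{L^{q'}})=\dim K$, forcing equality throughout.

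This gap then infects your next step. When you write ``by the Fredholm property at level $p$ there exists $v\in W^{2,p}$ with $\Delta_g v=f$'', you are asserting that $f\perp K$ suffices to put $f$ in $\mathrm{Im}(\Delta_g|_{W^{2,p}})$. But Fredholm theory says the image is the annihilator of $\Ker(\Delta_g^{*}|_{(L^p)'})\cong\Ker(\Delta_g|_{L^{p'}})$, which a priori could be strictly larger than $K$. Only after you know $\Ker(\Delta_g|_{L^{p'}})=K$ (equivalently, $\dim\Ker(\Delta_g|_{W^{2,p}})=\dim K$) does your orthogonality condition become sufficient. You actually \emph{do} prove the key ingredient---your annihilator argument for $w$ shows $\Ker(\Delta_g|_{L^{q'}})=K$, and combined with $W^{2,p}\hookrightarrow L^{q'}$ this gives kernel regularity---but it appears \emph{after} the step that needs it. Reorder: run the annihilator/dimension argument at level $q$ first to obtain $\Ker(\Delta_g|_{L^{q'}})=K$, deduce kernel regularity from $W^{2,p}\hookrightarrow L^{q'}$, and only then invoke solvability at level $p$.
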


Establishing the above theorem relies as a first step on general semi-Fredholm results for elliptic operators with low regularity coefficients on closed manifolds, then approximation arguments by smooth metrics can be combined with stability properties for the index of a semi-Fredholm operator to prove the full Fredholm statement, and finally the regularity statements require duality arguments, which for coefficients of low regularity are rather subtle. Theorem \ref{ThmAIntro} above can actually be refined into the following result for operators acting from $W^{1,p}\to W^{-1,p}$.

\begin{theorem}\label{ThmBInto}
Let $(M^n,g)$ be a closed Riemannian manifold with $g\in W^{2,q}(M)$, $q>\frac{n}{2}$, and $n\geq 3$. Then, for all $\frac{1}{q}-\frac{1}{n}\leq \frac{1}{p}<\frac{1}{q'}+\frac{1}{n}$ the Laplace-Beltrami operator
\begin{align*}
\Delta_g:W^{1,p}(S_2M)\to 	W^{-1,p}(S_2M),
\end{align*}
is Fredholm of index zero, $\mathrm{Ker}(\Delta_g\vert_{W^{1,p}})\subset W^{2,q}(S_2M)$ and the following regularity implication follows:
\begin{align*}
\text{ if } u\in 	L^{q'}(S_2M), \text{ and } \Delta_gu\in W^{-1,p}(S_2M)\Longrightarrow u\in W^{1,p}(S_2M).
\end{align*}
\end{theorem}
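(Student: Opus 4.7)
The plan mirrors the three-step strategy behind Theorem \ref{ThmAIntro}: first, establish semi-Fredholmness from general elliptic theory for operators with rough coefficients; second, upgrade to Fredholm of index zero by approximation with smooth metrics and stability of the index; third, prove the regularity implication by a duality argument reducing to Theorem \ref{ThmAIntro}. The admissible range $\frac{1}{q}-\frac{1}{n}\le \frac{1}{p}<\frac{1}{q'}+\frac{1}{n}$ is precisely the one that keeps all coefficient multiplications bounded both for $\Delta_g:W^{1,p}\to W^{-1,p}$ and for its formal adjoint realization on $W^{1,p'}\to W^{-1,p'}$.

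For semi-Fredholmness, in local charts $\Delta_g u=g^{ij}\partial_i\partial_j u+B^k\cdot\partial_k u+C\cdot u$ with $g^{ij}\in W^{2,q}\hookrightarrow C^0$, $B\in W^{1,q}$ and $C\in L^q$. Integration by parts against $\phi\in W^{1,p'}$ moves one derivative onto $\phi$, and the lower bound $\frac{1}{p}\ge\frac{1}{q}-\frac{1}{n}$ ensures via H\"older and Sobolev embeddings that all resulting bilinear pairings are continuous. Ellipticity of the leading symbol combined with a Garding-type inequality, in the same spirit as for Theorem \ref{ThmAIntro} but shifted one derivative lower, plus compactness of $M$, yield semi-Fredholmness. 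Approximating $g$ by smooth metrics $g_k\to g$ in $W^{2,q}$ and using the same multiplier estimates on coefficient differences gives $\Delta_{g_k}\to\Delta_g$ in operator norm; since $\Delta_{g_k}:W^{1,p}\to W^{-1,p}$ is Fredholm of index zero for smooth $g_k$ by classical elliptic theory together with formal self-adjointness, stability of the index under small perturbations produces $\mathrm{ind}(\Delta_g)=0$.

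Kernel regularity is handled directly: if $\Delta_g u=0$ with $u\in W^{1,p}$, Sobolev embedding places $u\in L^{p^*}$ where $\frac{1}{p^*}=\frac{1}{p}-\frac{1}{n}$, and the strict upper bound $\frac{1}{p}<\frac{1}{q'}+\frac{1}{n}$ rewrites as $p^*>q'$, so on the closed manifold $u\in L^{q'}$ and Theorem \ref{ThmAIntro} upgrades $u$ to $W^{2,q}$. For the main regularity implication, suppose $u\in L^{q'}$ and $f:=\Delta_g u\in W^{-1,p}$. The strategy is to show $f$ lies in the range of $\Delta_g$, so there exists $v\in W^{1,p}$ with $\Delta_g v=f$; then $w:=u-v\in L^{q'}$ satisfies $\Delta_g w=0$ distributionally and Theorem \ref{ThmAIntro} gives $w\in W^{2,p}\subset W^{1,p}$, whence $u=v+w\in W^{1,p}$. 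To see that $f$ is in the range, pair against any $\psi\in\ker(\Delta_g:W^{1,p'}\to W^{-1,p'})$, which by the kernel-regularity statement applied with $p'$ in place of $p$ lies in $W^{2,q}$. For a smooth approximation $\psi_\varepsilon\to\psi$ in $W^{2,q}$, classical integration by parts gives $\langle f,\psi_\varepsilon\rangle=\langle u,\Delta_g\psi_\varepsilon\rangle$; passing to the limit, the left-hand side converges using $W^{2,q}\hookrightarrow W^{1,p'}$ (valid precisely by the upper bound on $1/p$), and the right-hand side converges to $\langle u,\Delta_g\psi\rangle=0$ via the $L^{q'}$--$L^q$ duality and continuity of $\Delta_g:W^{2,q}\to L^q$, so $\langle f,\psi\rangle=0$ and $f$ lies in the range.

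The main obstacle is the duality identification between the cokernel of the $W^{1,p}\to W^{-1,p}$ realization and the kernel of the dual realization $W^{1,p'}\to W^{-1,p'}$, together with the careful passage to the limit in the integration-by-parts identity $\langle\Delta_g u,\psi\rangle=\langle u,\Delta_g\psi\rangle$ when $u$ is only $L^{q'}$ and $\psi$ only $W^{2,q}$. The delicate bookkeeping of which Sobolev embedding is being invoked at each stage---particularly $W^{2,q}\hookrightarrow W^{1,p'}$ and $L^{q'}\hookrightarrow W^{-1,p}$---is precisely what the asymmetric range of $1/p$ encodes, and the need to verify each such embedding under the rough regularity $g\in W^{2,q}$ is where the proof really requires care.
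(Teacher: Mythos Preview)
Your proposal is correct and follows essentially the same three-step architecture as the paper: semi-Fredholmness from rough-coefficient elliptic estimates, index zero by smooth approximation and index stability, and the regularity implication by showing $\Delta_g u$ annihilates the dual kernel and hence lies in the range. The paper invests more in making the duality machinery explicit---it builds the isomorphisms $\mathcal{S}_{k,p}:W^{-k,p'}\to(W^{k,p})'$ induced by the rough metric, derives the adjoint formula $\Delta_g^*=\mathcal{S}_{1,p}\circ\Delta_g|_{W^{1,p'}}\circ\mathcal{S}_{-1,p}^{-1}$, and proves the integration-by-parts identity $\langle\Delta_g u,v\rangle=\langle u,\Delta_g v\rangle$ for $u\in L^{q'}$, $v\in W^{2,q}$ as a standalone proposition---whereas you achieve the same endpoint by approximating $\psi$ in $W^{2,q}$ by smooth functions and passing to the limit; both routes are valid and yield the same identification of the cokernel with $\Ker(\Delta_g|_{W^{1,p'}})$. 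One small caution: when you invoke ``the kernel-regularity statement applied with $p'$ in place of $p$,'' note that at the boundary $\frac{1}{p}=\frac{1}{q}-\frac{1}{n}$ the dual exponent satisfies $\frac{1}{p'}=\frac{1}{q'}+\frac{1}{n}$ with equality, so $p'$ falls outside the strict range of the theorem itself; the kernel-regularity argument, however, only needs the embedding $W^{1,p'}\hookrightarrow L^{q'}$, which holds with non-strict inequality, so the conclusion survives---the paper handles this by rerunning the embedding argument directly for $p'$ rather than re-invoking the theorem.
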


The proof of Theorem \ref{ThmBInto} is obtained along the same general lines to those described for Theorem \ref{ThmAIntro}, although now the duality arguments become even more subtle. It is interesting to highlight the $L^{q'}$ a priori regularity needed in both theorems above in order to start the bootstrap. This condition seems to be rather optimal within the duality arguments needed in our proofs, and it is interesting to compare the above results with theorems 1.2 and 4.1 of  \cite{Regularity2}, where one can find similar requirements. Let us also notice that similar results could also be reached appealing to the results of \cite[Section 2]{MaxwellHolstRegularity}, and related results can also be found in \cite[Appendix A]{Salo}.

\medskip 

The regularity results presented within Theorems \ref{ThmAIntro} and \ref{ThmBInto} will then be applied to address our geometric questions on the regularity of compactified AE 3-manifolds, which are understood to have only 1-end. The main result within this analysis is the following:

\begin{theorem}\label{ThmCIntro}
Let $(M^3,g)$ be a smooth $W^{k,p}_{\tau}$-AE manifold, relative to a structure of infinity $\Phi_z$ with coordinates $\{z^i\}_{i=1}^3$ and $p> 2$, $\tau\in (-1,-\frac{1}{2})$, $k\geq 4$. If $C_g\in L^{p_1}_{\sigma}(M,\Phi_z)$ with $-6<\sigma<-4$ and $p_1=\frac{3}{6+\sigma}$, then $(M^3,g)$ can be conformally compactified into $(\hat{M},\hat{g})$, where $\hat{M}$ stands for the 1-point compactification of $M$, and $\hat{M}$ can be equipped with a preferred differentiable structure $\mathcal{D}_{\mathrm{Har}}(\hat{M})$ which is $W^{2,q}$-compatible with the differentiable structure provided by the inverted coordinates $x=\frac{z}{|z|^2}$, such that $\hat{g}\in W^{2,q}(\mathcal{D}_{\mathrm{Har}}(\hat{M}))$ for some $q>3$. In particular $\hat{g}\in C^{1,\alpha}(\mathcal{D}_{\mathrm{Har}}(\hat{M}))$, for some $\alpha\in (0,1)$.
\end{theorem}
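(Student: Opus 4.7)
The plan is to execute the compactification strategy sketched in the introduction, in which each stage produces a controlled improvement of regularity. First I would invoke the compactification result of Dilts--Maxwell to obtain an initial conformal compactification $(\hat{M},\bar{g})$ with $\bar{g}\in W^{2,p}(\hat{M})$ for some $2<p<3$, where $\hat{M}$ is equipped with the smooth structure induced by the inversion $x=\tfrac{z}{|z|^2}$; the hypotheses $\tau\in(-1,-\tfrac{1}{2})$, $k\geq 4$, and $p>2$ are exactly what is needed to feed their theorem. Next, applying low-regularity Yamabe-type results within the conformal class $[\bar{g}]$ I would select a representative $\hat{g}\in[\bar{g}]$ whose scalar curvature enjoys the improved regularity $R_{\hat{g}}\in W^{2,p}(\hat{M})$, the conformal factor being controlled in $W^{2,p}$ so that the ambient Sobolev regularity of the metric is preserved.

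The core of the argument is the Ricci tensor bootstrap based on the Cotton tensor. In dimension three $C_{\hat{g}}$ is conformally invariant, and a careful change of variables under the inversion together with the conformal factor from the compactification translates the hypothesis $C_g\in L^{p_1}_\sigma(M,\Phi_z)$ with $-6<\sigma<-4$ and $p_1=\tfrac{3}{6+\sigma}$ into $C_{\hat{g}}\in L^{p}(\hat{M})$ for some $p>2$; the arithmetic is arranged precisely so that the Jacobian of $x=z/|z|^2$ absorbs the weights. Taking a distributional divergence yields $\mathrm{div}_{\hat{g}}C_{\hat{g}}\in W^{-1,p}(\hat{M})$, and the three-dimensional identity
\begin{equation*}
\mathrm{div}_{\hat{g}}C_{\hat{g}}=\Delta_{\hat{g}}\mathrm{Ric}_{\hat{g}}+\mathcal{L}(\nabla R_{\hat{g}},\mathrm{Ric}_{\hat{g}},\hat{g}),
\end{equation*}
combined with $R_{\hat{g}}\in W^{2,p}$ and $\mathrm{Ric}_{\hat{g}}\in L^{p}$ (coming from $\bar{g}\in W^{2,p}$), gives $\Delta_{\hat{g}}\mathrm{Ric}_{\hat{g}}\in W^{-1,p}(\hat{M})$. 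Since the a priori control $\mathrm{Ric}_{\hat{g}}\in L^{q'}$ is automatic from $\hat{g}\in W^{2,p}$ with $p>2$, Theorem \ref{ThmBInto} applied componentwise upgrades $\mathrm{Ric}_{\hat{g}}$ to $W^{1,p}(\hat{M})$, and Sobolev embedding then yields $\mathrm{Ric}_{\hat{g}}\in L^{p^*}(\hat{M})$ for some $p^*>3$.

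Finally I would pass to $\hat{g}$-harmonic coordinates $\{y^i\}_{i=1}^3$ around the point of compactification; standard results for metrics of limited regularity guarantee that such coordinates exist and are a priori only $C^{1,\alpha}$-compatible with the inverted chart $\{x^i\}$. In these coordinates the components of $\hat{g}$ satisfy
\begin{equation*}
\Delta_{\hat{g}}\hat{g}_{ij}=-2\,(\mathrm{Ric}_{\hat{g}})_{ij}+Q_{ij}(\hat{g},\partial\hat{g}),
\end{equation*}
with $Q_{ij}$ quadratic in first derivatives. The right-hand side lies in $L^{p^*}$ by the previous step together with Sobolev multiplication in $W^{1,p}\cdot W^{1,p}$, so Theorem \ref{ThmAIntro} (applied scalarly to each component, using $\hat{g}\in W^{2,p}$ as the low-regularity coefficient) promotes $\hat{g}_{ij}\in W^{2,q}$ for some $q>3$, hence to $C^{1,\alpha}$. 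The compatibility claim for $\mathcal{D}_{\mathrm{Har}}(\hat{M})$ with the inverted chart is then obtained by rerunning the transformation law $x\mapsto y$ with this improved regularity, upgrading the initial $C^{1,\alpha}$-compatibility to $W^{2,q}$-compatibility. The main obstacle is the bookkeeping at the compactification point: the conformal change, the Yamabe adjustment, the translation of the weighted Cotton hypothesis into an unweighted $L^p$ bound on $\hat{M}$, and the final change to harmonic coordinates must each be performed in function spaces compatible with the borderline regularity $W^{2,p}$ with $p<3$, and the scaling of the exponent $p_1=\tfrac{3}{6+\sigma}$ is precisely what synchronises all of these steps.
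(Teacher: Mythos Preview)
Your overall architecture matches the paper's: Dilts--Maxwell compactification, Yamabe adjustment to improve $R_{\hat g}$, translation of the weighted Cotton hypothesis into $C_{\hat g}\in L^{p_1}(\hat M)$, the identity expressing $\hat\nabla^kC_{ijk}$ in terms of $\Delta_{\hat g}\mathrm{Ric}_{\hat g}$, and finally Theorem~\ref{ThmBInto} to push $\mathrm{Ric}_{\hat g}$ into $L^q$ with $q>3$. Two points deserve comment.

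First, a minor one: the lower-order terms in the divergence identity are not just linear in $(\nabla R_{\hat g},\mathrm{Ric}_{\hat g})$; there are genuine curvature--squared contributions $\hat R_{liaj}\mathrm{Ric}^{la}$ and $\mathrm{Ric}^l_{\;j}\mathrm{Ric}_{il}$, which a priori are only in $L^{q_0/2}$ with $q_0>2$. The paper checks separately that $L^{q_0/2}\hookrightarrow W^{-1,q}$ for some $q>\tfrac32$, which is what feeds into Theorem~\ref{ThmBInto}.

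The real gap is your last step. You propose to invoke Theorem~\ref{ThmAIntro} on the harmonic-coordinate equation $\hat g^{ij}\partial_{ij}\hat g_{ab}=-2(\mathrm{Ric}_{\hat g})_{ab}+Q_{ab}$, ``using $\hat g\in W^{2,p}$ as the low-regularity coefficient''. But $\hat g\in W^{2,q_0}$ holds only in the original differentiable structure $\mathcal D_0$; the harmonic coordinates $y^i$ are merely $W^{2,\bar q}$-related to the inverted chart (with $\bar q=\tfrac{3q_0}{3-q_0}$), so the Jacobians are only $W^{1,\bar q}$ and second-order Sobolev regularity of tensors is \emph{not} preserved under the change $x\mapsto y$. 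In $\mathcal D_{\mathrm{Har}}$ one has only $\hat g\in W^{1,\bar q}$, so the hypothesis of Theorem~\ref{ThmAIntro} fails. The paper handles this by abandoning Theorem~\ref{ThmAIntro} here and instead proving a separate regularity lemma (its Proposition~\ref{RegularityGT}) tailored to operators $g^{ij}\partial_{ij}$ with only $W^{1,q}$, $q>n$, coefficients in harmonic coordinates: existence of a $W^{2,q/2}$ solution via Gilbarg--Trudinger $L^p$-theory plus a uniqueness argument using the divergence form $\partial_i(\sqrt{\det g}\,g^{ij}\partial_j\cdot)$. This yields only $\hat g\in W^{2,\bar q/2}$ at first, and a finite bootstrap $q_{i+1}=\tfrac{3q_i}{6-q_i}$ (strictly increasing since $q_0=\bar q>3$) is then needed to reach $q>3$. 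Your one-shot application of Theorem~\ref{ThmAIntro} shortcuts precisely the place where the paper has to work hardest. Relatedly, the $W^{2,\bar q}$-compatibility of $\mathcal D_{\mathrm{Har}}$ with $\mathcal D_0$ is an \emph{input} coming from the construction of harmonic coordinates, not something recovered a posteriori from the improved metric regularity as you suggest.
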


As stated in the introduction, the above theorem should be compared with the results of \cite{MaxwellDiltsYamabeAE} and \cite{Herzlich1997}. In comparison with \cite[Lemma 5.2]{MaxwellDiltsYamabeAE}, the above theorem obtains improved regularity of the metric $\hat{g}$ under an additional weighted $L^p$-control on the Cotton tensor of the AE metric $g$. In the case of \cite{Herzlich1997}, since we are concerned with $C^{1,\alpha}$-regularity we should compare with \cite[Theorem A]{Herzlich1997}, in which case the above theorem demands weaker assumptions on the decay of the Cotton tensor (see Remark \ref{RemarkComparisonHerzlich}) and obtains intermediate Sobolev control for the compactified metric. More explicitly, in \cite[Theorem A]{Herzlich1997} a point wise control of the form $C_g=O(|z|^{-5-\epsilon})$ ($\epsilon>0$) is demanded to obtain a $C^2$-compactification, while $C_g=O(|z|^{-5})$ is demanded to obtain a $C^{1,\alpha}$-compactification. These conditions are strictly stronger than the ones imposed in Theorem \ref{ThmCIntro}, both regarding the strength of the decay and the integral versus point wise character. Along these lines, let us also comment that in \cite[Theorem C]{Herzlich1997} the author also analyses the existence of a $C^{0,\alpha}$ compactification under the decay $C_g=O(|z|^{-3-\epsilon})$. During our work we have not pursued establishing refinements to the $C^{0,\alpha}$-compactification for two reasons: Firstly, the applications to the existence of refined expansions like (\ref{SchwarzExpansionIntro}) require $C^{1,\alpha}$-controls for the compactification. Secondly, $C^{0,\alpha}$-regularity for the compatification can be deduced from \cite[Lemma 5.2]{MaxwellDiltsYamabeAE} under even weaker conditions, for instance, no point wise a priori control on $C_g$. Finally, as stated in the introduction, further control can be obtained from the same methods by demanding further control on the Cotton tensor $C_g$. Finally, let us highlight that in the process of developing the tools necessary for Theorem \ref{ThmCIntro}, in Section \ref{SectionConformalProps} we will provide some results concerning low regularity conformal deformations of scalar curvature related to the Yamabe problem which complement results presented in \cite{MaxwellRoughClosed,Holst1}.

The above theorem plays a key role in the proof of the following result, which concerns the existence of a preferred structure of infinity where an AE 3-manifold is granted to have a \emph{first order Schwarzschildian expansion}:

\begin{theorem}\label{MainAEthemIntro}
Let $(M^3,g)$ be a smooth $W^{4,p}_{\tau}$-AE manifold with respect to a structure of infinity with coordinates $\{z^i\}_{i=1}^3$, with $\tau\in (-1,-\frac{1}{2})$ and $p>2$. Assume furthermore that: 
\begin{enumerate}
%\item[1.] $\tau\in (-1,-\frac{1}{2})$;
\item $R_g\in L^r_{-3-\epsilon}(M,\Phi_z)$ for some $r>3$ and $\epsilon>0$;
\item $C_{g}\in L^{p_1}_{\sigma}(M,\Phi_z)$ for some $-6<\sigma<-4$ and $p_1=\frac{3}{6+\sigma}$.
\end{enumerate}
Then, there is a structure of infinity with coordinates $\{\bar{z}^i\}_{i=1}^3$, which is $C^{1,\alpha}$-compatible with the original one, such that
\begin{align}\label{ThmDIntroExpansion}
\begin{split}
g(\partial_{\bar{z}^i},\partial_{\bar{z}^j})&=\left(1+\frac{4C}{|\bar{z}|}\right)\delta_{ij} + O_1(|\bar{z}|^{-1-\alpha}),\\
\bar{z}(z)-\mathrm{Id}(z)&\in C^1_{1-\alpha}(\mathbb{R}^n\backslash\overline{B_{R_0}(0)}),
\end{split}
\end{align}
for some $\alpha>0$.% and where the constant $C$ is given by (\ref{MassComputationGeneral}).
\end{theorem}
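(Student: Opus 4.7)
The plan is to follow the strategy outlined at the end of the introduction, dividing the argument into a compactification step, a normal-coordinate-inversion step, a coordinate-compatibility step, and a conformal-factor analysis step. First, I would apply Theorem \ref{ThmCIntro} to $(M^3,g)$: the hypothesis on $C_g$ is precisely the one required there, so one obtains a conformal compactification $(\hat{M},\hat{g})$ with $\hat{g}\in W^{2,q}(\mathcal{D}_{\mathrm{Har}}(\hat{M}))$ for some $q>3$, compatible with the structure induced by the inverted coordinates $x=z/|z|^2$ up to $W^{2,q}$-regularity. By Sobolev embedding this gives $\hat{g}\in C^{1,\alpha}$ for some $\alpha\in(0,1)$.

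Second, around the point of compactification $p_\infty$ I would construct $\hat{g}$-normal coordinates $\{\bar{y}^i\}_{i=1}^3$. Since $\hat{g}\in W^{2,q}$ with $q>3$, these coordinates exist and are $C^{1,\alpha}$-compatible with the harmonic chart, and Taylor expansion of a $C^{1,\alpha}$ metric in normal coordinates yields
\begin{align*}
\hat{g}_{ij}(\bar{y})=\delta_{ij}+O_1(|\bar{y}|^{1+\alpha}).
\end{align*}
Following the procedure in \cite{MaxwellDiltsYamabeAE}, I would decompactify by inverting $\bar{z}=\bar{y}/|\bar{y}|^2$; the conformal metric $\gamma=|\bar{z}|^{-4}\hat{g}$ inherits, in the $\bar{z}$-coordinates, the asymptotic expansion
\begin{align*}
\gamma_{ij}(\bar{z})=\delta_{ij}+O_1(|\bar{z}|^{-1-\alpha}).
\end{align*}

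Third, I would verify that the new structure of infinity is $C^{1,\alpha}$-compatible with the original $\{z^i\}$. This amounts to composing the four coordinate changes $z\mapsto x=z/|z|^2$ (smooth inversion), $x\mapsto y$ (passage to $\hat{g}$-harmonic coordinates, of class $C^{1,\alpha}$ near $p_\infty$ by standard harmonic-coordinate regularity for $W^{2,q}$ metrics), $y\mapsto\bar{y}$ (passage to normal coordinates, also $C^{1,\alpha}$), and the final inversion $\bar{y}\mapsto\bar{z}$. I would track the regularity through each step in weighted spaces, showing that the composition satisfies
\begin{align*}
\bar{z}(z)-\mathrm{Id}(z)\in C^1_{1-\alpha}(\mathbb{R}^3\setminus\overline{B_{R_0}(0)}).
\end{align*}
This is where I expect the main obstacle: none of the intermediate transitions is smooth, the regularities must be propagated consistently across the inversion (which is a weight-shifting transformation), and the bootstrap has to use the actual expansion of $\hat{g}$ rather than just its Hölder class. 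Care is needed so as to identify which coordinate change is responsible for what order of error term.

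Finally, I would extract the conformal factor $u$ defined by $g=u^4\gamma$. The conformal covariance of the scalar curvature yields the Lichnerowicz-type equation
\begin{align*}
-8\Delta_\gamma u+R_\gamma u=R_g u^5
\end{align*}
on the end of $M$. Using the $W^{4,p}_\tau$-AE assumption on $g$, the decay of $\gamma$ just obtained, and the hypothesis $R_g\in L^r_{-3-\epsilon}$ with $r>3$, standard weighted elliptic theory on AE manifolds places $u-1$ in a weighted $L^p_{-\epsilon}$-space with respect to $\Phi_{\bar{z}}$. Refining this via the explicit flat Green's function expansion of $\Delta$ on $\mathbb{R}^3$, applied to the right-hand side $R_g u^5 \in L^r_{-3-\epsilon}$, yields
\begin{align*}
u=1+\frac{C}{|\bar{z}|}+O_1(|\bar{z}|^{-1-\alpha})
\end{align*}
for a constant $C$ and some $\alpha>0$. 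Combining the expansions of $u^4$ and $\gamma$ then produces the desired Schwarzschildian first-order expansion of $g$ in the $\bar{z}$-coordinates, completing the proof.
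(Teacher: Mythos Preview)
Your proposal is correct and follows essentially the same route as the paper: compactify via Theorem~\ref{ThmCIntro}, pass to normal coordinates for $\hat{g}$ around $p_\infty$, decompactify by inversion to get $\gamma$ with a $O_1(|\bar{z}|^{-1-\alpha})$ expansion, track the composite coordinate change $z\mapsto x\mapsto y\mapsto\bar{y}\mapsto\bar{z}$ to obtain $\bar{z}-\mathrm{Id}\in C^1_{1-\alpha}$, and finally analyse $u$ via the conformal Laplacian equation. The only noteworthy difference is in the last step: where you invoke a Green's function expansion for $\Delta$, the paper instead uses a Fredholm argument combined with a spherical-harmonic decomposition of harmonic functions vanishing at infinity (Lemma~\ref{ImprovedDecayBartnik}) to extract the $C/|\bar{z}|$ term; both lead to the same conclusion.
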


The above theorem provides purely geometric conditions which guarantee the existence of asymptotic coordinates where a type of RT parity conditions are satisfied, up to first order. Remarkably, this will be enough to guarantee convergence of the COM in the given coordinates. Let us also notice that the existence of this preferred structure of infinity is related to asymptotic conformal properties of the AE manifold $(M^3,g)$. In that sense, the above result could be compared with results in \cite{CorvinoCOM}, where asymptotically conformally flat manifolds were analysed. Along those lines, Theorem \ref{MainAEthemIntro} above provides a measure of how far from asymptotic conformal flatness one can be in order to still obtain (at least to first order) a Schwarzschildian expansion, such measure being provided by the asymptotic $L^p$-control on the Cotton tensor. An interesting open question, which shall be addressed in future work, is the extension of the above theorem to higher dimensions.

Before moving on to the application of Theorem \ref{MainAEthemIntro} to the analysis of the COM, let us also refer the reader to the discussion presented after the proof of Theorem \ref{MainThmAE}, where a family of examples is presented showing that the expansion (\ref{ThmDIntroExpansion}) cannot be extracted actually from \cite[Proposition 3.3]{BartnikMass} alone, and that in order to extract such expansions from the Ricci tensor tensor alone within that family some rather restrictive assumptions would need to be made.

% notice that it is interesting to compare its conclusions with the control one knows is provided by the Ricci tensor \cite[Proposition 3.3]{BartnikMass}.\footnote{We believe that in \cite[Proposition 3.3]{BartnikMass} there is a small imprecision in the statement, since (in the notations of this paper) the appeal to \cite[Proposition 1.14]{BartnikMass} implies that one cannot improve further than $k^{-}(-\eta)$.} After the proof of Theorem \ref{MainThmAE}, a family of examples is presented showing that the expansion (\ref{ThmDIntroExpansion}) cannot be extracted actually from the decay of the Ricci tensor alone. This related to the fact that \cite[Proposition 3.3]{BartnikMass} applies to bootstrap $g_{ij}(x)-\delta_{ij}\in W^{2,p}_{-\eta}\mathbb{R}^3\backslash\overline{B_1(0)},\Phi_x$, $\eta>0$, to $g_{ij}(y)-\delta_{ij}\in W^{2,p}_{-\tau}(\mathbb{R}^3\backslash\overline{B_1(0)},\Phi_y)$ for $k^{-}(-\eta)<-\tau\leq-\eta$, as long as $\mathrm{Ric}_g\in L^p_{-2-\tau}$ with $\tau$ and where $k^{-}(-\eta)$ denotes the highest \emph{exceptional value} such that $k^{-}(-\eta)<\eta$. In particular, this implies that this proposition alone cannot be used to obtain a decay rate at order exactly $|y|^{-1}$, although if the  

\medskip
Finally, the above theorem will be used to address (at least partly) a conjecture posed by C. Cerderbaum and A. Sakovich concerning the convergence of the COM of a general relativistic initial data set. Before stating the result, let us notice that the conjecture posed in \cite{CederbaumSakovich} concerns a novel proposed definition for the COM via space-time constant mean curvature foliations of infinity, where the extrinsic geometry of the initial data set within the evolving space-time plays an important role. Nevertheless, whenever the initial data set is \emph{time-symmetric} (totally geodesic), the problem becomes properly Riemannian. That is, the initial data set reduces to the prescription of a Riemannian manifold $(M^3,g)$ (with some scalar curvature constraint), and the definition for the COM proposed in \cite{CederbaumSakovich} reduces to (\ref{COMIntro}). In this context, the Riemannian version of \cite[Conjecture 1]{CederbaumSakovich} can be stated as follows:
\begin{conj}[Cederbaum-Sakovich \cite{CederbaumSakovich}]\label{CSConjecture}
Given an AE manifold $(M^3,g)$ of order $\tau\in (-1,-\frac{1}{2})$ with respect to a structure of infinity $\Phi_z$ with coordinates $\{z^i\}_{i=1}^3$, there is a geometric condition on the coordinates $\{z^i\}_{i=1}^3$ ensuring that (\ref{COMIntro}) converges in the $z$-coordinates, provided the assumption that $z^iR_g\in L^1(M,\Phi_z)$ holds. 
\end{conj}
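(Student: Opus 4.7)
The plan is to take as the sought-after geometric condition on the $\{z^i\}_{i=1}^3$ chart precisely the hypotheses of Theorem \ref{MainAEthemIntro}: $R_g\in L^r_{-3-\epsilon}(M,\Phi_z)$ for some $r>3$, $\epsilon>0$, and $C_g\in L^{p_1}_{\sigma}(M,\Phi_z)$ for some $-6<\sigma<-4$, $p_1=3/(6+\sigma)$. These are intrinsic conditions on the $z$-structure of infinity, qualifying as geometric conditions on $\{z^i\}$. Applying Theorem \ref{MainAEthemIntro} produces an auxiliary chart $\{\bar z^i\}_{i=1}^3$, $C^{1,\alpha}$-compatible with $\{z^i\}$ via $\bar z-z\in C^1_{1-\alpha}$, in which $g$ has the first-order Schwarzschildian expansion (\ref{ThmDIntroExpansion}). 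The strategy is to exploit this auxiliary expansion to establish convergence of (\ref{COMIntro}) directly in the original $z$-coordinates, using the integrability of $z^iR_g$ to handle the part of the boundary integrand that is linear in $g-\delta$, and the Schwarzschildian decomposition to control the quadratic remainder.

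The main tool is the classical Stokes-type rewriting of the ADM COM integrand as a volume integral of linearised scalar curvature. A direct computation gives
\begin{align*}
\int_{S_r}\!\left[x^k(\partial_ig_{ij}-\partial_jg_{ii})\nu^j-(g_{ik}\nu^i-g_{ii}\nu^k)\right]d\omega_r=-\int_{B_r}x^kR^{\mathrm{lin}}_g\,dx,
\end{align*}
with $R^{\mathrm{lin}}_g=\partial_j\partial_ig_{ij}-\partial_i\partial_ig_{jj}$. Decomposing $R_g=R^{\mathrm{lin}}_g+Q(g-\delta,\partial g)$, where $Q$ is at least quadratic in $g-\delta$ and its derivatives, and using the hypothesis $z^iR_g\in L^1(M,\Phi_z)$, convergence of (\ref{COMIntro}) in the $z$-chart reduces to convergence of the quadratic volume integral $\int_{M} z^k Q\,dz$. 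A naive size bound $Q=O(|z|^{2\tau})$ does not suffice for $\tau\in(-1,-\tfrac12)$, so one must extract cancellations from the Schwarzschildian structure.

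To do this I would argue first in $\bar z$-coordinates, where $\tilde g-\delta=(4C/|\bar z|)\delta+h$ with $h=O_1(|\bar z|^{-1-\alpha})$. The pure Schwarzschild contribution to $Q$ is spherically even, so its integral against the odd weight $\bar z^k$ on centred balls vanishes exactly. The mixed Schwarzschild-$h$ and pure $h$ contributions are of sizes $O(|\bar z|^{-4-\alpha})$ and $O(|\bar z|^{-4-2\alpha})$ respectively, which are strictly integrable against the weight $|\bar z|$ in $\mathbb{R}^3$. Hence $\int \bar z^k Q(\tilde g-\delta)\,d\bar z$ converges. I would then pull back to the $z$-chart via $g_{ij}(z)=\tilde g_{kl}(\bar z)\,\partial_i\bar z^k\,\partial_j\bar z^l$ with $\bar z=z+\phi$, $\phi\in C^1_{1-\alpha}$, so that $\partial\phi=O(|z|^{-\alpha})$. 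The discrepancy between $\int z^k Q(g-\delta)\,dz$ and the convergent $\bar z$-integral decomposes into terms linear in $\partial\phi$ paired with Schwarzschildian derivatives $O(|z|^{-2})$, yielding densities of order $O(|z|^{-2-\alpha})$ (integrable against $z^k$), plus a boundary-at-infinity piece encoding the translation shift $T^k$ between the two asymptotic charts, so that in the limit $C^k_{B\acute{O}M}(z)=C^k_{B\acute{O}M}(\bar z)+T^k$.

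The main obstacle is the sublinear growth $\phi=O(|z|^{1-\alpha})$: the two asymptotic charts differ by more than a bounded translation, so standard coordinate-invariance results for ADM-type asymptotic charges (Bartnik, Chruściel, Michel) do not apply off the shelf. Overcoming this requires exploiting the divergence structure of $R^{\mathrm{lin}}_g$ and $Q$ to reorganise the $\phi$-dependent contributions into either (i) boundary terms whose limit is the definite translation $T^k$, or (ii) bulk integrals controlled by the derivative decay $\partial\phi=O(|z|^{-\alpha})$ together with the Schwarzschildian decay of $g-\delta$ in $\bar z$-coordinates; the $\alpha>0$ provided by Theorem \ref{MainAEthemIntro} is precisely what closes the estimate. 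Once this bookkeeping is done, convergence of (\ref{COMIntro}) in the $z$-chart follows from convergence in the $\bar z$-chart, confirming the Riemannian version of Conjecture \ref{CSConjecture} under the proposed geometric condition.
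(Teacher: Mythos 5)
The statement you are proving is a conjecture, and the paper does not prove it in the form stated; it proves only the weakened version (Conjecture \ref{CSConjectureWeak}, realised as Theorem \ref{ThmEIntro}), which asserts convergence of the COM in a \emph{compatible} chart $\{\bar{z}^i\}$ rather than in the originally given $\{z^i\}$. The paper is explicit about why: the examples of \cite{Huang2} give scalar-flat AE manifolds for which (\ref{COMIntro}) diverges in the given asymptotic chart, so the strongest reading of Conjecture \ref{CSConjecture} is expected to fail, and the whole point of the reformulation is to avoid the transfer step you are attempting. Your first stage — convergence of the COM in the $\bar{z}$-chart using the Schwarzschildian expansion, the parity of the leading term, and the integrability of the quadratic remainder — is essentially the paper's Lemma \ref{COMConvergenenceLemma} combined with Theorem \ref{MainAEthemIntro}, and is sound. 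The gap is entirely in the second stage.

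Concretely, the claim that the discrepancy between $\int z^k Q(g-\delta)\,dz$ and the $\bar{z}$-integral organises into a definite translation $T^k$ plus bulk terms of order $O(|z|^{-2-\alpha})$ is not justified, and is the crux of the matter rather than ``bookkeeping.'' The chart change satisfies only $\bar{z}-z\in C^1_{1-\alpha}$, i.e.\ $\phi=O(|z|^{1-\alpha})$ with $\partial\phi=O(|z|^{-\alpha})$ and \emph{no control on $\partial^2\phi$}; but the COM surface integrand involves $\partial g$, and relating $\partial_{z}g$ to $\partial_{\bar{z}}\tilde{g}$ produces terms containing $\partial^2\bar{z}^k/\partial z^i\partial z^j$ contracted against $\tilde{g}-\delta$, which the expansion (\ref{ThmDIntroExpansion}) does not estimate at all. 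Even granting second-derivative control, changes of asymptotic chart that are asymptotic to the identity only at the sublinear rate $O(|z|^{1-\alpha})$ are exactly the class under which the Beig--\'O Murchadha COM is known not to be invariant, and under which convergence in one chart does not imply convergence in the other; this is the mechanism behind the counterexamples the paper cites. If your argument went through, it would in particular contradict the reason the paper gives for weakening the conjecture in the first place. To make your strategy rigorous you would either have to impose conditions forcing $\alpha$ large enough that $\bar{z}-z$ is asymptotic to a genuine Euclidean motion plus decaying terms (which your hypotheses do not give), or accept the weaker conclusion — convergence in the $\bar{z}$-chart only — which is what Theorem \ref{ThmEIntro} actually establishes.
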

We shall address the above conjecture from a slightly different angle, in particular weakening the claim. From a conceptual level, the main difference in our approach will be that we will look for a condition on the coordinates $\{z^i\}_{i=1}^3$, such that if $z^iR_g\in L^1(M,\Phi_z)$ one can guarantee the existence of \emph{compatible} asymptotic coordinates $\{\bar{z}^i\}_{i=1}^3$ where the COM (\ref{COMIntro}) converges.\footnote{By compatible coordinates, we mean that the coordinate change is given by a transformation which is asymptotic to the identity, such as those described by (\ref{ThmDIntroExpansion}).} We believe this approach is a possible reformulation of Conjecture \ref{CSConjecture} based on the examples produced in \cite{Huang2}. In this last reference, the author produces a family of scalar flat (and thus solutions to the time-symmetric vacuum ECE) AE manifolds $(M^3,g)$ of order $\tau=1$ with respect to a given asymptotic coordinate system $\{z^i\}_{i=1}^3$, but for which the COM (\ref{COMIntro}) fails to converge in the given coordinates.\footnote{See \cite[Proposition 3.6 and Corollary 3.7]{Huang2}.} Thus, such family would present a counterexample to the strongest possible version of Conjecture \ref{CSConjecture}, but one may still consider the following alternative:  
\begin{conj}[Weak version of Conjecture \ref{CSConjecture}]\label{CSConjectureWeak}
Given an AE manifold $(M^3,g)$ of order $\tau\in (-1,-\frac{1}{2})$ with respect to a structure of infinity $\Phi_z$ with coordinates $\{z^i\}_{i=1}^3$, if $z^iR_g\in L^1(M,\Phi_z)$, then there is a geometric condition on the coordinates $\{z^i\}_{i=1}^3$ ensuring the existence of a compatible asymptotic chart $\{\bar{z}^i\}_{i=1}^3$ such that (\ref{COMIntro}) converges in the $\bar{z}$-coordinates. 
\end{conj}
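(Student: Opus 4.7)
The geometric condition we add to the conjecture's standing hypothesis is precisely the set of assumptions of Theorem \ref{MainAEthemIntro}: a weighted integrability $R_g\in L^r_{-3-\epsilon}(M,\Phi_z)$ for some $r>3$, $\epsilon>0$ (a natural strengthening of the standing assumption $z^iR_g\in L^1$) together with the weighted $L^{p_1}_{\sigma}$-control of the Cotton tensor $C_g$. With these in force, Theorem \ref{MainAEthemIntro} produces a compatible asymptotic chart $\{\bar{z}^i\}_{i=1}^{3}$ in which the metric admits the first-order Schwarzschildian expansion (\ref{ThmDIntroExpansion}), with the coordinate change satisfying $\bar{z}(z)-\mathrm{Id}(z)\in C^1_{1-\alpha}$.

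With this chart at hand, the plan is to prove convergence of the COM by the classical divergence-theorem manipulation, carried out entirely in the $\bar{z}$-coordinates. A direct computation yields, for $R_0<r$,
\begin{align*}
\int_{S_r}\bar{z}^k(\partial_i g_{ij}-\partial_j g_{ii})\nu^j d\omega_r - \int_{S_r}(g_{ik}\nu^i - g_{ii}\nu^k) d\omega_r = \int_{B_r\setminus B_{R_0}}\bar{z}^k R_g\,dV - \int_{B_r\setminus B_{R_0}}\bar{z}^k Q[\partial g]\,dV + I_{R_0},
\end{align*}
where $Q[\partial g]$ gathers the quadratic-in-$\partial g$ terms appearing in the identity $\partial_i\partial_j g_{ij}-\Delta g_{ii}=R_g-Q$, and $I_{R_0}$ is a bounded contribution from the inner boundary. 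Convergence of the COM thus reduces to the convergence of the two bulk integrals as $r\to\infty$.

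The scalar curvature integral converges absolutely: the bound $\bar{z}(z)-\mathrm{Id}(z)\in C^1_{1-\alpha}$ forces $|\bar{z}|\sim|z|$ and a bounded Jacobian, so the assumption $z^iR_g\in L^1(M,\Phi_z)$ transfers to $\bar{z}^iR_g\in L^1$ in the new chart. For the quadratic term, I would decompose $g=g^{\mathrm{Sch}}+h$ with $g^{\mathrm{Sch}}_{ij}=(1+4C/|\bar{z}|)\delta_{ij}$ and $h_{ij}=O_1(|\bar{z}|^{-1-\alpha})$. Every contribution to $Q$ involving at least one factor of $\partial h$ decays like $|\bar{z}|^{-4-\alpha}$ or faster, and its product with $\bar{z}^k$ is therefore absolutely integrable over the end. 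The remaining pure-Schwarzschild piece $Q[\partial g^{\mathrm{Sch}}]\sim|\bar{z}|^{-4}$ is borderline, but $g^{\mathrm{Sch}}$ is spherically symmetric in the $\bar{z}$-chart, so $\bar{z}^k Q[\partial g^{\mathrm{Sch}}]$ is odd under $\bar{z}\mapsto-\bar{z}$ and its integral over each $\bar{z}$-centered ball $B_r$ vanishes identically.

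The main obstacle is exactly this borderline term: convergence is obtained only as a principal-value limit over $\bar{z}$-centered spheres, and a displacement of the origin at order $|\bar{z}|^{-1}$ would break the parity and yield a genuinely divergent contribution. The preferred character of the chart delivered by Theorem \ref{MainAEthemIntro}---centering the Schwarzschildian leading part exactly at the origin of $\bar{z}$---is what rigorously allows this symmetric cancellation and distinguishes the argument from a naive estimate. A secondary simpler task is to verify convergence of the ADM energy $E$ itself in the chart (needed in order to form the quotient in (\ref{COMIntro})), but this follows from the same divergence-theorem identity without the problematic $\bar{z}^k$ weight and is strictly easier.
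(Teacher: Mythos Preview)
Your approach matches the paper's resolution of this conjecture (Theorem~\ref{ThmEIntro}, established as Theorem~\ref{ThmCederbaumSakovichConjecture} via Lemma~\ref{COMConvergenenceLemma}): the Cotton decay is the geometric condition, Theorem~\ref{MainAEthemIntro} supplies the Schwarzschildian chart, and the COM limit follows from a divergence-theorem identity with the borderline quadratic-in-$\partial g$ term handled by parity. The paper organises the parity step as an odd/even split of the full quadratic form---using $(\partial g)^{\Ev}=O(|\bar z|^{-2-\alpha})$ to force $Q^{\Odd}=O(|\bar z|^{-4-\alpha})$---which is equivalent to your Schwarzschild-plus-remainder decomposition. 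One correction: your parenthetical that $R_g\in L^r_{-3-\epsilon}$ is ``a natural strengthening of the standing assumption $z^iR_g\in L^1$'' is incorrect, as neither condition implies the other; the paper instead imposes the genuinely stronger $R_g\in L^r_{-4-\epsilon}$ with $r>3$, which embeds into $L^1_{-4}$ and transfers to the $\bar z$-chart via $\bar z(z)-\mathrm{Id}(z)\in C^1_{1-\alpha}$.
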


We shall pursue the validity of Conjecture \ref{CSConjectureWeak}. Although this does not guarantee the convergence with respect to the originally given $\{z^i\}_{i=1}^3$ coordinates, one can think that this weaker version of Conjecture \ref{CSConjecture} actually identifies preferred asymptotic coordinates $\{\bar{z}^i\}_{i=1}^3$ where the Conjecture \ref{CSConjecture} is true. In that precise sense, one may consider that the weaker claim we shall address provides an answer to Conjecture \ref{CSConjecture}.

On the more technical side, we shall address an $L^p$-version of Conjecture \ref{CSConjectureWeak}. That is, we replace $z^iR_g\in L^1(M,\Phi_z)$ with $z^iR_g\in L^p(M,\Phi_z)$ for some $p>1$, and the answer we shall provide below to this question is that the desired geometric condition is given by a weighted $L^p$-control of the Cotton tensor. That is, the coordinates $\{z^i\}_{i=1}^3$ should provide a structure of infinity $\Phi_z$ such that $L^{p_1}_{\sigma}(M,\Phi_z)$ for some $-6<\sigma<-4$ and $p_1=\frac{3}{6+\sigma}$. This will grant the existence of coordinate $\{\bar{z}^i\}_{i=1}^3$ satisfying (\ref{ThmDIntroExpansion}) and $\bar{z}^iR_g\in L^p(M,\Phi_{\bar{z}})$ as well, where the COM converges.
%\begin{align}\label{COMreminderInto}
%C^k_{B\acute{O}M}&\doteq\frac{1}{16\pi E} \lim_{r\rightarrow\infty}\left(\int_{S_r}x^k\left(\partial_ig_{ij}-\partial_jg_{ii} \right)\nu^jd\omega_r - \int_{S_r}\left(g_{ik}\nu^i - g_{ii}\nu^k \right)d\omega_r \right)
%\end{align}
%is well-defined.
\begin{theorem}\label{ThmEIntro}
Let $(M^3,g)$ be an AE Riemannian manifold satisfying the hypotheses of Theorem \ref{MainAEthemIntro}. If moreover $R_g\in L^r_{-4-\epsilon}(M,\Phi_z)$, with $r>3$ and $\epsilon>0$, then the center of mass (\ref{COMIntro}) converges in the coordinates given by (\ref{ThmDIntroExpansion}).
\end{theorem}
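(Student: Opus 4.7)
The plan is to invoke Theorem \ref{MainAEthemIntro} to work in the preferred asymptotic chart $\{\bar z^i\}_{i=1}^{3}$ where $g$ has the first-order Schwarzschildian expansion (\ref{ThmDIntroExpansion}), then to convert the COM surface integrals to a weighted volume integral of the scalar curvature via a divergence identity, and finally to handle the quadratic remainder by combining the spherical symmetry of the Schwarzschildian leading term with the decay of the residual perturbation.

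First, Theorem \ref{MainAEthemIntro} supplies coordinates $\{\bar z^i\}_{i=1}^{3}$ in which $g_{ij}=g^{S}_{ij}+h_{ij}$, with $g^{S}_{ij}=\bigl(1+\tfrac{4C}{|\bar z|}\bigr)\delta_{ij}$ and $h\in O_{1}(|\bar z|^{-1-\alpha})$. Since the transition map satisfies $\bar z(z)-\mathrm{Id}(z)\in C^{1}_{1-\alpha}$, i.e. it is asymptotic to the identity with Jacobian differing from $\mathrm{Id}$ by an $O(|z|^{-\alpha})$ term, weighted Lebesgue norms on the end are equivalent in the two charts. Hence $R_{g}\in L^{r}_{-4-\epsilon}(M,\Phi_{\bar z})$, and in particular $|\bar z|R_{g}\in L^{1}$ over the end. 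Next, setting $\beta^{j}\doteq \partial_{i}g_{ij}-\partial_{j}g_{ii}$, one has $\partial_{j}\beta^{j}=R_{\mathrm{lin}}(h)=R(g)-Q$, where $Q=Q(h,\partial h,\partial^{2}h)$ is the standard quadratic remainder in the expansion of scalar curvature about the flat metric. Applying the divergence theorem to $\bar z^{k}\beta^{j}$ on $\Omega_{r_{0},r}=B_{r}\setminus B_{r_{0}}$, using $\partial_{j}(\bar z^{k}\beta^{j})=\beta^{k}+\bar z^{k}(R(g)-Q)$ together with the auxiliary divergence computation $\int_{\Omega}\beta^{k}\,d\bar z=\int_{S_{r}}(h_{ik}\nu^{i}-h_{ii}\nu^{k})d\omega_{r}-(\text{boundary at }S_{r_{0}})$ and the parity fact $\int_{S_{r}}(g_{ik}\nu^{i}-g_{ii}\nu^{k})d\omega_{r}=\int_{S_{r}}(h_{ik}\nu^{i}-h_{ii}\nu^{k})d\omega_{r}$ (the Euclidean piece drops by oddness of $\nu^{k}$), one arrives at the key identity
$$16\pi E\,C^{k}_{(r)}=\int_{\Omega_{r_{0},r}}\bar z^{k}\bigl(R(g)-Q\bigr)\,d\bar z+\mathrm{const}(r_{0}).$$

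The right-hand side is then analyzed term by term as $r\to\infty$. The scalar-curvature term converges thanks to $|\bar z|R_{g}\in L^{1}$. For $Q$, decompose $g=g^{S}+h$ and view $Q$ as a symmetric bilinear form, so that $Q(g)=Q(g^{S},g^{S})+2Q(g^{S},h)+Q(h,h)$. The pointwise sizes $|Q(g^{S},h)|\lesssim |\bar z|^{-4-\alpha}$ and $|Q(h,h)|\lesssim |\bar z|^{-4-2\alpha}$ make $\bar z^{k}Q(g^{S},h)$ and $\bar z^{k}Q(h,h)$ integrable on the end. The only borderline piece is $\bar z^{k}Q(g^{S},g^{S})$, where $|Q(g^{S},g^{S})|\sim |\bar z|^{-4}$ and multiplication by $|\bar z|$ is only logarithmically non-integrable; here the spherical symmetry of $g^{S}$ forces $Q(g^{S},g^{S})$ to be radial, so $\bar z^{k}Q(g^{S},g^{S})$ is odd on each coordinate sphere and $\int_{\Omega_{r_{0},r}}\bar z^{k}Q(g^{S},g^{S})\,d\bar z=0$. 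With all pieces controlled, letting $r\to\infty$ in the identity yields convergence of $C^{k}_{(r)}$, proving the theorem.

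The main obstacle is twofold: the parity cancellation for $Q(g^{S},g^{S})$ is what rescues the Schwarzschildian leading order from the borderline integrability against the $|\bar z|$ weight; and the control of $\partial^{2}h$ is not supplied directly by the pointwise $O_{1}$ expansion. For the latter, one must exploit the underlying $W^{4,p}_{\tau}$ Sobolev structure of $g$, preserved up to the $C^{1,\alpha}$-compatible change of chart $z\mapsto\bar z$, and combine it with Hölder's inequality to close the cross-term estimate $|\bar z||g^{S}||\partial^{2}h|\in L^{1}$ on the end.
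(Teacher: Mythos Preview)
Your overall strategy---pass to the $\bar z$-chart via Theorem \ref{MainAEthemIntro}, convert the COM surface integrals into a weighted volume integral, and kill the quadratic remainder by symmetry/parity---is the same as the paper's (Lemma \ref{COMConvergenenceLemma} together with Theorem \ref{ThmCederbaumSakovichConjecture}). The cancellation you use for $Q(g^{S},g^{S})$ is equivalent to the paper's observation that the leading Schwarzschildian piece of $\partial g$ is odd, so the odd part of $(\partial g)^{2}$ picks up an extra $|\bar z|^{-\alpha}$.

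There is, however, a genuine gap in your treatment of the cross term. Your remainder $Q=R(g)-R_{\mathrm{lin}}(g-\delta)$ contains not only terms quadratic in $\partial g$ but also terms of the schematic form $(g^{-1}-\delta)\,\partial^{2}g$, coming from expanding $g^{ij}g^{kl}\partial^{2}g$ around $\delta$. Hence $Q(g^{S},h)$ carries a piece like $(g^{S}-\delta)\,\partial^{2}h$. Your pointwise bound $|Q(g^{S},h)|\lesssim|\bar z|^{-4-\alpha}$ would require $|\partial^{2}h|=O(|\bar z|^{-3-\alpha})$, which the $O_{1}$-expansion (\ref{ThmDIntroExpansion}) does not give. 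Your proposed remedy---that the $W^{4,p}_{\tau}$ structure of $g$ in the $z$-chart is preserved under the $C^{1,\alpha}$-compatible change to $\bar z$---is incorrect: a diffeomorphism whose Jacobian is only $C^{0,\alpha}$ (equivalently $W^{1,q}$) preserves $W^{1,p}$ but not $W^{2,p}$ or higher, since transforming the components of $g$ and then taking two derivatives forces third derivatives of the transition map, on which Theorem \ref{MainAEthemIntro} gives no control.

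The paper sidesteps this entirely by a different decomposition. Instead of peeling off $R_{\mathrm{lin}}$, it keeps the inverse-metric factors attached to the second derivatives and writes
\[
g^{ij}g^{kl}\partial_{ik}g_{jl}-g^{ij}g^{kl}\partial_{ij}g_{kl}
=\partial_{i}\!\bigl(g^{ij}g^{kl}\partial_{k}g_{jl}-g^{ij}g^{kl}\partial_{j}g_{kl}\bigr)+f(g,\partial g),
\]
an exact divergence plus a term that is genuinely quadratic in $\partial g$ only. After multiplying by $\bar z^{a}$ and integrating by parts on the annulus, every term that has to be \emph{estimated}---boundary integrands on $S_{R}$, the bulk terms $f$ and the correction $(g^{-1}-\delta)\partial g$ when expanding the boundary integrand---involves only $g$ and $\partial g$, which are controlled by the $O_{1}$-expansion and its parity structure. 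No bound on $\partial^{2}h$ is ever needed. If you want to salvage your linearized decomposition, the correct fix is not a Sobolev transfer argument but precisely this integration by parts on the offending $(g^{-1}-\delta)\partial^{2}g$ pieces, after which your argument collapses into the paper's.
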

 
We would like to once more stress that the above theorem relies on purely geometric hypotheses to grant convergence of the COM. Although the coordinates $\{\bar{z}^i\}_{i=1}^3$ will not be given explicitly, one can have explicit control over them up to first order under the hypotheses presented in Theorem \ref{ThmEIntro}. This kind of control is enough to provide an explicit computation of the ADM energy and COM, and as commented before, higher order controls should be expected if one demands higher order controls for the Cotton tensor.

\medskip
With all the above in mind, the paper is structured as follows. In Section \ref{Preliminaries} we will present detailed definitions concerning analytic tools to be used in the core of the paper. We will also set up our geometric conventions, and review some important results related mapping properties of linear partial differential operators with low regularity coefficients. Experts in the associated fields can certainly skip most of this section and use it just for reference purposes about our notations. In Section \ref{SemiFredhSection} we shall then establish semi-Fredholm properties of elliptic operators with low regularity coefficients on closed manifolds. Section \ref{SectionRegularity} is then devoted to the proofs of Theorems \ref{ThmAIntro} and \ref{ThmBInto}, and some application of this theory to the low regularity Yamabe problem. Section \ref{SectionCompactification} is dedicated to the proof of Theorem \ref{ThmCIntro}, Section \ref{SectionDecompactification} to the proof of Theorem \ref{MainAEthemIntro}, and finally Section \ref{SectionCOM} is devoted to the proof of Theorem \ref{ThmEIntro}. 

We also include two appendices: Appendix \ref{AppendixBartnik} is motivated by \cite[Proposition 1.6]{BartnikMass}, where a key part of the proof is related to regularity theory of scalar elliptic operators with low regularity coefficients. Due to subtleties associated to the corresponding regularity theory which are explained in this appendix, we have decided to provide a self-contained proof of the associated statements within the context that will be used in this paper, which concerns a special case of the operators treated in \cite{BartnikMass}. %We should highlight that, even in this case, we do not recover the full extent of \cite[Proposition 1.6]{BartnikMass}, although we do recover the associated Fredholm properties.
 Finally, within Appendix \ref{AppendixMaxwell} we rephrase \cite[Lemma 5.2]{MaxwellDiltsYamabeAE} with some explicit details which we believe may help the reader, since we shall appeal to subtleties of this result within Section \ref{SectionDecompactification}. Because our statement is slightly different than the one presented in \cite[Lemma 5.2]{MaxwellDiltsYamabeAE}, we provide a sketch of the proof for the benefit of the reader.% who can then clearly see how the different hypotheses come into play.
 
\section{Preliminaries}\label{Preliminaries}

\subsection{Geometric conventions}

To avoid any ambiguity, let us make explicit the curvature conventions we follow in this text, where, given an $(n+1)$-dimensional pseudo-Riemannian manifold $(M^{n+1},g)$ and denoting by $\nabla$ its associated Riemmanian connection, the curvature tensor is defined as:
\begin{align*}%\label{curvature1}
R(X,Y)Z=\nabla_X\nabla_YZ - \nabla_Y\nabla_XZ - \nabla_{[X,Y]}Z, \text{ for all } X,Y,Z\in \Gamma(TM).
\end{align*}
Also, given an arbitrary coordinate system $\{x^i\}^{n+1}_{i=1}$ on $M$, we label its components as follows:
%This convention is the opposite to O'Neill and to Besse, that is, $R_{ON}=-R$. Nevertheless, we also differ on how we label the components of this tensor. That is, in my notations, I define
\begin{align*}
\begin{split}
R^{i}_{jkl}&=dx^{i}(R(\partial_k,\partial_l)\partial_j)=\partial_k\Gamma^{i}_{lj}-\partial_l\Gamma^{i}_{kj} +  \Gamma^{i}_{ku}\Gamma^u_{jl}- \Gamma^{i}_{lu}\Gamma^u_{jk}.\\
%&={R_{{}_{ON}}}^i_{jkl}.
\end{split}
\end{align*}
%That is, when it comes to the components of the curvature tensor, we have the same convention, despite the fact that when we talk about the operator we use opposite conventions. This comes down to whether we relate the $\partial_l$ and $\partial_k$ to the third or fourth index of $R$. Then, concerning the Ricci tensor, we have that
where $\Gamma$ stands for the corresponding Christoffel symbols of the Riemannian connection associated with $g$. From this we get the Ricci tensor from the following contraction:
\begin{align*}
\mathrm{Ric}_{ij}\doteq R^l_{ilj}.%= \mathrm{Ric}_{{ON}_{ij}}.
\end{align*}
We shall also consider the $(0,4)$-curvature tensor, given by%\footnote{Due to the symmetries of the curvature tensor, our $(0,4)$-curvature tensor agrees with the one from \cite{Besse}, and thus our Weyl tensor also agrees with his. Therefore one can compare expressions with \cite[Definition 1.117]{Besse}.}
\begin{align*}
R(V,X,Y,Z)&=g(R(Y,Z)X,V).
%&=g(R(X,V)Y,Z)=-g(R(V,X)Y,Z)=g(R_{ON}(V,X)Y,Z)=R_{Besse}(V,X,Y,Z)
\end{align*}

Let us also recall the following local formulae the Ricci tensor, presented in arbitrary local coordinates $\{x^i\}_{i=1}^n$, which is known to be useful to understand optimal regularity for a Riemannian metric based on the regularity of its Ricci tensor:
%Modifparaf here a technicality : we think there is a typo
\begin{align}\label{gauge0}
\begin{split}
R_{ij}(g)&=-\frac{1}{2}g^{ab}\partial_{ab}g_{ij}+\frac{1}{2}(g_{ia}\partial_{j}F^{a}+g_{ja}\partial_{i}F^{a}) +f_{ij}(g,\partial g),\\
F^{a}&\doteq g^{kl}\Gamma^{a}_{kl}(g)
\end{split}
\end{align}
where $f_{ij}$ is a quadratic form on $\partial g$, explicitly given by
\begin{align}\label{f-tensor}
\begin{split}
\!\!f_{ij}(g,\partial g)&\doteq   - \frac{1}{2}\big\{  \partial_{i}g^{ab}\partial_{a}g_{b j}  + \partial_{j}g^{ab}\partial_{a}g_{b i} \}  + \frac{1}{2}F^{a}\partial_{a}g_{ij} - \Gamma^{a}_{b i}\Gamma^{b}_{a j}.
\end{split}
\end{align}

Let us now introduce the Cotton tensor associated to a Riemannian manifold $(M^n,g)$, which in an arbitrary coordinate system is given by:
\begin{align}\label{CottonTensorInto}
C_{ijk}(g)\doteq \nabla_k\mathrm{Ric}_{ij} - \nabla_j\mathrm{Ric}_{ik} + \frac{1}{4}(\nabla_jR_gg_{ik} - \nabla_kR_gg_{ij}).
\end{align}
It is important to recall the well-known fact that in three dimensions the Cotton tensor is a conformal invariant, which presents the main obstruction for a 3-manifold to be conformally flat.

\subsection{Analytic tools}

In this section we shall elaborate on several analytic tools needed for the PDE analysis of the next section. We will exploit this space to set up several notations and conventions, and review both well-known as well as some more subtle properties associated with Sobolev spaces and elliptic operators with coefficients of limited regularity. Experienced readers in PDEs and geometric analysis can easily skip the details of this section and use it just as a source for our conventions. 

\subsubsection{Sobolev spaces}

\begin{defn}
Let $U\subset \mathbb{R}^n$ be an arbitrary domain, $k$ be a non-negative integer and $1\leq p\leq \infty$ a real number. We define the Sobolev space $W^{k,p}(U)$ as the space of functions $f\in L^p(U)$ which possess weak derivatives $\{\partial^{\alpha}f\}_{0\leq |\alpha|\leq k}$ of order up to $k$ in $L^p(U)$. That is,
\begin{align}
W^{k,p}(U)\doteq \{ f\in L^p(U)\: :\: \partial^{\alpha}f\in L^p(U) \: \forall\: 0\leq |\alpha|\leq k\}.
\end{align}
We equip this vector subspaces of $L^p$ with the norm
\begin{align}\label{SobolevNorm1}
\begin{split}
\Vert f\Vert_{W^{k,p}(U)}&\doteq \left(\sum_{|\alpha|=0}^{k}\Vert \partial^{\alpha}f\Vert^p_{L^p(U)} \right)^{\frac{1}{p}} \text{ if } 1\leq p<\infty,\\
\Vert f\Vert_{W^{k,\infty}(U)}&\doteq \max_{0\leq |\alpha|\leq k}\Vert\partial^{\alpha}u\Vert_{L^{\infty}(U)}.
\end{split}
\end{align}
\end{defn}

\begin{defn}
Let $M^n$ be a closed smooth manifold, $n\geq 3$. Let $E\xrightarrow[]{\pi} M$ be a vector bundle over $M$ with fibre dimension $r$, let $\{U_i,\phi_i,\rho_i,\eta_i\}_{i=1}^N$ be a cover of $M$ by coordinate charts $\{U_i,\phi_i\}_{i=1}^N$ trivialising $E$ over each coordinate chart, with bundle charts $\{U_i,\phi_i,\rho_i\}_{i=1}^N$, where $\rho_i:\pi^{-1}(U_i)\to U_i\times \mathbb{R}^r$ denotes the bundle trivialisation, and $\{\eta_i\}$ is a partition of unity subordinate to such cover. Given a real number $1\leq p< \infty$ and a non-negative integer $k\in\mathbb{N}_0$, we then define the Sobolev spaces $W^{k,p}(E)$ of section of $E$ as
\begin{align}
\!\!\!\!W^{k,p}(E)=\{u\in L^p(E) \: : \: \tilde{\rho}^l_i\circ (\eta_iu)\circ\phi_i^{-1} \in W^{k,p}(\phi(U_i)) \text{ for all } i=1,\cdots,N \text{ and all } 1\leq l\leq r \},
\end{align}
where $\tilde{\rho}_i$ denotes the projection of $\rho_i$ onto its second factor, equipped with the norm
\begin{align}\label{SobolevNormVB}
\Vert u\Vert_{W^{k,p}(E)}\doteq \sum_{i=1}^N\sum_{l=0}^r\Vert \tilde{\rho}^l_i\circ(\eta_iu)\circ\phi_i^{-1} \Vert_{W^{k,p}(\phi(U_i))}.
\end{align}
\end{defn}

The above spaces are seen to be equivalently defined as the space of $L^p(E,dV_g)$ sections having weak covariant derivatives in a given smooth background metric $g$ up to order $k$ in $L^p(E,dV_g)$. Let us now notice that for negative integer value $-k$ and $1<p<\infty$, on any domain $\Omega\subset \mathbb{R}^n$, we have the definition of Sobolev spaces $W^{-k,p'}(\Omega)\doteq (W_0^{k,p}(\Omega))'$ for scalar functions, where $W_0^{k,p}(\Omega)$ denotes the closure of $C^{\infty}_0(\Omega)$ in the Sobolev norm (\ref{SobolevNorm1}) and $\frac{1}{p}+\frac{1}{p'}=1$. If one looks at distributional sections of a vector bundle $E\to M$ of rank $r$, as maps which are locally described as mappings from a local coordinate domain $U\to [\mathcal{D}'(\varphi(U))]^r$, we can still use the scalar version of Sobolev spaces to give meaning to negative order Sobolev spaces of sections of vector bundles as distributional sections whose local components take values in $W^{-k,p'}(\varphi(U))$. In order to make this more precise, let us introduce the following conventions, which we are extracting from \cite{HolstBehzadanSob1}. In this reference, the reader will find a very detailed description of Sobolev spaces of vector bundle sections on compact manifolds.

Let us start considering a smooth Riemannian manifold $(M^n,g_0)$ and a vector bundle $E\xrightarrow[]{\pi} M$ and denote by $(U_i,\rho_i)_{i=1}^N$ a set bundle charts for $E$. That is, $\rho_i:\pi^{-1}(U_i)\to U_i\times \mathbb{R}^r$ are the local trivialisation maps. Then, consider the density bundle over $M$ given by
\begin{align*}
|\Lambda|(M)=\coprod_{p\in M}|\Lambda|(T_pM)
\end{align*}
where $|\Lambda|(T_pM)$ denotes the space of 1-densities on $T_pM$. On a coordinate domain $(U_i,\varphi_i)$, with coordinates $x^i$, we fix $\mu_g$ to be the unique $1$-density satisfying $\mu_g=|\sqrt{\mathrm{det}(g_0)}dx^1\wedge\cdots\wedge dx^n|$, and we fix the associated bundle charts to $|\Lambda|(M)$ over $U_i$ by
\begin{align*}
(\rho_{\Lambda})_i:\pi^{-1}_{\Lambda}(U_i)&\to U_i\times \mathbb{R},\\ 
(p,\nu_p=a\mu_g|_{p})&\mapsto (p,a).
\end{align*}

Let us now introduce the bundle $E^{\vee}\doteq \mathrm{Hom}(E,|\Lambda|(M))$, given by
\begin{align*}
E^{\vee}\doteq \coprod_{p\in M}\mathrm{Hom}(E_p,|\Lambda|_p(M)),
\end{align*}
where $\mathrm{Hom}(E_p,|\Lambda|_p(M))$ denotes the set of linear maps from $E_p$ to $|\Lambda|_p(M)$, and, given a cover $\{U_i,\varphi_i\}_{i=1}^N$ by coordinate charts, the associated bundle charts to $E^{\vee}$ are given by
\begin{align*}
\rho^{\vee}_i:\pi^{-1}_{E^{\vee}}(U_i)&\to U_i\times\mathbb{R}^r\\
(p,A)&\mapsto (p,\Phi\circ (\tilde{\rho}_{\Lambda})_i|_{\pi^{-1}_{\Lambda}(p)}\circ A\circ (\rho_i\vert_{E_p})^{-1}),
\end{align*}
where $\rho_i$ and $\rho_{\Lambda}$ were fixed above, $\tilde{\rho}_{\Lambda}$ denotes the projection of $\rho_{\Lambda}$ onto its second factor, $A\in \mathrm{Hom}(E_p,|\Lambda|_p(M))$,  and, fixing a canonical basis $\{e_i\}^r_{i=1}$ for $\mathbb{R}^r$, $\Phi:(\mathbb{R}^r)'\to \mathbb{R}^r$ is the usual isomorphism identifying $(\mathbb{R}^r)'\cong \mathbb{R}^r$, via\footnote{Notice that $(\tilde{\rho}_{\Lambda})_i|_{\pi^{-1}_{\Lambda}(p)}\circ A\circ (\rho_i\vert_{E_p})^{-1}):\mathbb{R}^r\to \mathbb{R}$ defines an element of $(\mathbb{R}^r)'$.}
\begin{align*}
\Phi(u)=\sum_{i=1}^ru(e_i)e_i.
\end{align*}

We then take the space of test sections associated to $E$ to be the set $C_0^{\infty}(M;E^{\vee})$ equipped, as usual, with the inductive limit topology induced by the $C^{k}$ family of semi-norms on compact subsets. We denote this topological vector space by $\mathcal{D}(M;E^{\vee})$ and then define the set of distributions as $\mathcal{D}'(E)\doteq \left(\mathcal{D}(M;E^{\vee})\right)'$. There are a few things to highlight, whose details can be found in \cite[Section 6]{HolstBehzadanSob1}. First of all, over given a chart $(U,\varphi)$, let us denote
\begin{align*}
[\mathcal{D}(\varphi(U))]^r\doteq \underbrace{\mathcal{D}(\varphi(U))\times \cdots\times \mathcal{D}(\varphi(U))}_{\text{r-times}},
\end{align*}
and notice that the linear map
\begin{align*}
\tilde{T}_{E^{\vee},U,\varphi}:\mathcal{D}(U,E_U^{\vee})&\to [\mathcal{D}(\varphi(U))]^r\\
 \xi&\mapsto \tilde{\rho}^{\vee}\circ\xi\circ\varphi^{-1},
\end{align*}
is a topological isomorphism due to \cite[Theorem 46]{HolstBehzadanSob1} and we denote its (continuous) inverse by
\begin{align*}
T_{E^{\vee},U,\varphi}\doteq \tilde{T}^{-1}_{E^{\vee},U,\varphi}:[\mathcal{D}(\varphi(U))]^r&\to \mathcal{D}(U,E_U^{\vee})
\end{align*}
It then follows that the adjoint map
\begin{align*}
T^{*}_{E^{\vee},U,\varphi}:\left(\mathcal{D}(U,E_U^{\vee})\right)'&\to \left[\mathcal{D}(\varphi(U))]^r\right]^{'},\\
u&\mapsto (T^{*}_{E^{\vee},U,\varphi}u)(\xi_1,\cdots,\xi_r)=u\left(T_{E^{\vee},U,\varphi}(\xi_1,\cdots,\xi_r) \right)
\end{align*}
is also a linear topological isomorphism, where
\begin{align*}
T_{E^{\vee},U,\varphi}(\xi_1,\cdots,\xi_r)=(\tilde{\rho}^{\vee})^{-1}\circ(\xi_1,\cdots,\xi_r)\circ \varphi.
\end{align*}

Let us also introduce the linear map:
\begin{align*}
L:\left(\left[ \mathcal{D}(\varphi(U))\right]^r\right)'&\to \left[ \mathcal{D}'(\varphi(U))\right]^r,\\
 v&\mapsto Lv=(v\circ i_1,\cdots,v\circ i_r)
\end{align*}
where 
\begin{align*}
i_j: \mathcal{D}(\varphi(U))&\to \left[ \mathcal{D}(\varphi(U))\right]^r,\\
f&\mapsto (0,\cdots,\underbrace{f}_{j-th \text{ slot}},\cdots,0)
\end{align*}
which is also a topological isomorphism due to \cite[Theorem 24]{HolstBehzadanSob1}. Then, define
\begin{align*}
H_{E^{\vee},U,\varphi}\doteq L\circ T^{*}_{E^{\vee},U,\varphi}:\left(\mathcal{D}(U,E_U^{\vee})\right)'\to \left[ \mathcal{D}'(\varphi(U))\right]^r,
\end{align*}
which assigns to a distribution on $\mathcal{D}(U,E_U^{\vee})$ a vector of $r$-distributions on $\mathcal{D}'(\varphi(U))$, which we understand as the components of the vector bundle distributions in $\left(\mathcal{D}(U,E_U^{\vee})\right)'$.

Since multiplication by smooth functions is a continuous map on $\mathcal{D}(M,E^{\vee})$, then we can localise a distribution 
$\mathcal{D}'(E)$ in a coordinate chart $(U,\varphi)$ multiplying by a cut-off function $\eta\in C^{\infty}_0(U)$, so that
\begin{align*}
m_{\eta}:\mathcal{D}'(E)&\to \left(\mathcal{D}(U,E_U^{\vee})\right)',\\
u&\mapsto \eta u,
\end{align*}
where $\eta u$ is defined as usual by duality:
\begin{align*}
(\eta u)(v)=u(\eta v), \: \forall\: v\in \mathcal{D}(U,E_U^{\vee}),
\end{align*}
where we understand $\eta v\in \mathcal{D}(M,E^{\vee})$ extended by zero outside of $U$. All this notation gives us a natural way to introduce Sobolev sections of negative degree of regularity: Since we have a well-understood meaning for spaces $W^{-k,p}(\varphi(U))$, $k\in\mathbb{N}_0$ and $1<p<\infty$, in the case of scalar functions, in analogy to (\ref{SobolevNormVB}), we can define
\begin{align}\label{SobolevVBnegativeReg}
\!\!\!\!W^{-k,p}(E)=\{u\in \mathcal{D}'(E) \: : \: H_{E^{\vee},U_i,\phi}^l\circ (\eta_iu)\circ\phi_i^{-1} \in W^{-k,p'}(\phi(U_i)) \text{ for all } i=1,\cdots,N \text{ and all } 1\leq l\leq r \},
\end{align}
equipped with the norm (\ref{SobolevNormVB}), with $(k,p)$ replaced by $(-k,p)$ understood in the usual scalar case for each component. That is, $W^{-k,p}(E)$ denotes the subspace of $\mathcal{D}'(E)$ consisting of vector bundle distributions which components (understood via the maps $H_{E^{\vee},U_i,\varphi}$ described above) belong to $W^{-k,p}(\varphi(U_i))$.

Let us end these definitions highlighting a few important details. First, given a smooth closed Riemannian manifold $(M,g_0)$ and a vector bundle $E\to M$ with fibre metric $\langle \cdot,\cdot\rangle_E$, regular distributions are given by maps
\begin{align*}
l_u:\mathcal{D}(M;E^{\vee})&\to \mathbb{R},\\
v&\mapsto l_u(v)=\int_Mv(u)
\end{align*}
associated to $u\in C^{\infty}(M;E)$. In this context one has the maps
\begin{align}\label{RegDistComp1}
\begin{split}
\varphi(U)&\to \mathbb{R}^r\\
x&\mapsto (\tilde{\rho}^1\circ u\circ\varphi^{-1}(x),\cdots,\tilde{\rho}^r\circ u\circ\varphi^{-1}(x)),
\end{split}
\end{align} 
attaching a vector valued map to $u|_{U}$, and
\begin{align}\label{RegDistComp2}
\begin{split}
%\varphi(U)&\to \mathbb{R}^r\\
H^1_{E^{\vee},U,\varphi}l_u = \left(\left( H_{E^{\vee},U,\varphi}\circ l_u\right)^1,\cdots,\left( H_{E^{\vee},U,\varphi}\circ l_u\right)^r\right)
\end{split}
\end{align}
attaching $r$-distributions on $\varphi(U)\subset \mathbb{R}^n$ to $l_u|_{U}$. One might wonder whether the $k$-th component of the regular distribution $l_u$ given in (\ref{RegDistComp2}) agrees with the regular distribution $l_{\tilde{u}^k}$ defined by the $k$-th component $\tilde{u}^k\doteq \tilde{\rho}^k\circ u\circ\varphi^{-1}$ of $u$ as given in (\ref{RegDistComp1}). Using all the above definitions, one can see that this is actually the case (see \cite[Remark 32]{HolstBehzadanSob1}):
\begin{align*}
\left( H_{E^{\vee},U,\varphi}\circ l_u\right)^k=l_{\tilde{\rho}^k\circ u\circ\varphi^{-1}}, \text{ for all } u\in C^{\infty}(M;E).
\end{align*} 
This makes it possible to use Definition \ref{SobolevVBnegativeReg} for any $k\in\mathbb{Z}$ and $1<p<\infty$, which in the case of regular distributions agrees with Definition \ref{SobolevNormVB}.

Finally, notice that if in addition to a (smooth) Riemannian metric $g_0$ on $M$ we have a fibre metric $\langle\cdot,\cdot \rangle_E$ on $E$, with the conventions established above, the map
\begin{align*}
\mathcal{T}:\mathcal{D}(M;E)&\to \mathcal{D}(M;E^{\vee}),\\
v&\mapsto \langle v,\cdot\rangle_EdV_{g_0}
\end{align*}
is a linear topological isomorphism \cite[Lemma 13]{HolstBehzadanSob1}, where $\mathcal{T}(v)_p\in \mathrm{Hom}(E_p,|\Lambda|_p(M))$ is given by
\begin{align*}
\mathcal{T}(v)_p(u_p)=\langle v,u\rangle_E|_{p}dV_{g_0}.
\end{align*}
As a consequence, the adjoint map 
\begin{align*}
\mathcal{T}^{*}:\mathcal{D}'(E)\doteq \left(\mathcal{D}(M;E^{\vee})\right)'&\to \left(\mathcal{D}(M;E)\right)'
\end{align*}
is a linear continuous bijective map, which can be used to identify $\mathcal{D}'(E)\cong \left(\mathcal{D}(M;E)\right)'$ with the aid of these additional Riemannian structures.

\medskip
Let us highlight that in the case of vector bundles, from the definitions above, the relation between $W^{k,p}(E)$ and $W^{-k,p'}(E)$ does not come by definition as in the scalar case, although the following result holds (see \cite[Theorem 100]{HolstBehzadanSob1}):

\begin{theo}\label{DualIsomorphismHolst}
Let $(M^n,\bar{g})$ be a closed Riemannian manifold, $\bar{g}$ smooth, $E\xrightarrow[]{\pi} M$ a vector bundle over $M$ equipped with a smooth fibre metric $\langle \cdot,\cdot\rangle_E$. Let $k\in\mathbb{Z}$ and $1<p<\infty$ and denote by
\begin{align*}
\langle u,v \rangle_{L^2(M,\bar{g})}=\int_M\langle u,v\rangle_EdV_{\bar{g}}
\end{align*}
the associated $L^2$ inner product. Then, 
\begin{enumerate}
\item $\langle \cdot, \cdot \rangle_{L^2(M,\bar{g})}:\mathcal{D}(M;E)\times \mathcal{D}(M;E)\to \mathbb{R}$ extends by continuity to a bilinear pairing $\langle \cdot, \cdot \rangle_{(M,\bar{g})}:W^{-k,p'}(E)\times W^{k,p}(E)\to \mathbb{R}$;
\item The map
\begin{alignat*}{4}
S_{k,p}:W^{-k,p'}(M;E)&\to (W^{k,p}(M;E))',\\
u&\mapsto S_{k,p}(u)=\mathcal{L}_u: &&W^{s,p}(M;E) &&\to && \mathbb{R},\\
&&v &&\mapsto &&\langle u,v\rangle_{(M,\bar{g})}
\end{alignat*}
is a topological isomorphism.
\end{enumerate}
\end{theo}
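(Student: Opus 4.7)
The plan is to reduce both assertions, via a partition of unity and the local component maps $H_{E^{\vee},U,\varphi}$ and $T_{E^{\vee},U,\varphi}$ established in the preliminaries, to the familiar scalar duality $W^{-k,p'}(\Omega)\cong (W_0^{k,p}(\Omega))'$ on Euclidean domains.

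First I would prove part (1). Fix a finite atlas $\{(U_i,\varphi_i)\}_{i=1}^N$ of bundle-trivialising charts, with subordinate smooth partition of unity $\{\eta_i\}$. For $u\in C^{\infty}(M;E)$ and $v\in \mathcal{D}(M;E)$, decompose $u=\sum_i\eta_i u$, and use the identification noted in the preliminaries that the $l$-th scalar component of the regular distribution $l_u$ via $H_{E^{\vee},U_i,\varphi_i}$ equals $l_{\tilde{\rho}_i^{\,l}\circ u\circ\varphi_i^{-1}}$. The $L^2$ pairing then rewrites as a finite sum over $i$ and $l$ of scalar integrals
\begin{align*}
\int_{\varphi_i(U_i)} \bigl(\tilde{\rho}_i^{\,l}\circ (\eta_i u)\circ\varphi_i^{-1}\bigr)\,\bigl(\tilde{\rho}_i^{\,l}\circ v\circ\varphi_i^{-1}\bigr)\,w_i(x)\,dx,
\end{align*}
where $w_i$ is a smooth positive weight coming from $\sqrt{\det \bar g}$ and the fibre metric representation in the trivialisation. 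Each term is controlled by the scalar duality $|\langle f,g\rangle|\leq \|f\|_{W^{-k,p'}}\|g\|_{W^{k,p}}$ on $\varphi_i(U_i)$ (after multiplying $g$ by a cutoff equal to $1$ on $\mathrm{supp}(\eta_i)$ to land in $W_0^{k,p}$), and summing over $i,l$ yields the continuity estimate. Density of $\mathcal{D}(M;E)$ in $W^{k,p}(E)$ and in $W^{-k,p'}(E)$ for $1<p<\infty$ then extends the pairing to the claimed bilinear form $\langle\cdot,\cdot\rangle_{(M,\bar g)}$.

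For part (2), the map $S_{k,p}$ is continuous by (1). Injectivity: if $\langle u,v\rangle_{(M,\bar g)}=0$ for every $v\in \mathcal{D}(M;E)$, then for each $i$, inserting test sections of the form $T_{E,U_i,\varphi_i}(0,\ldots,\eta_i\phi,\ldots,0)$ shows that each scalar component $H_{E^{\vee},U_i,\varphi_i}^{\,l}(\eta_i u)\in W^{-k,p'}(\varphi_i(U_i))$ annihilates $C_0^{\infty}(\varphi_i(U_i))$ up to the weight $w_i$, hence vanishes, and therefore $u=0$ in $W^{-k,p'}(E)$. Surjectivity: given $\mathcal{L}\in (W^{k,p}(M;E))'$, define for each $i$ and each fibre index $l$ a scalar functional on $C_0^{\infty}(\varphi_i(U_i))$ by $f\mapsto \mathcal{L}\bigl(T_{E,U_i,\varphi_i}(0,\ldots,\eta_i f,\ldots,0)\bigr)$, which is continuous in the $W^{k,p}$-norm. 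Scalar duality provides distributions $u_i^{\,l}\in W^{-k,p'}(\varphi_i(U_i))$ representing these functionals, and the tuple $\{u_i^{\,l}\}$ assembles, via the inverse component maps $H_{E^{\vee},U_i,\varphi_i}^{-1}$, into elements of $W^{-k,p'}(E|_{U_i})$ which are then summed to produce a candidate $u$.

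The main obstacle will be the surjectivity step, specifically verifying that the locally reconstructed pieces glue into a single global distribution whose pairing reproduces $\mathcal{L}$ on \emph{all} of $W^{k,p}(M;E)$, not merely on the direct sum $\bigoplus_i W_0^{k,p}$ of localised pieces. To handle this, I would exploit that $\{\eta_i\}$ sums to $1$ so that for any $v\in W^{k,p}(E)$ one has $\mathcal{L}(v)=\sum_i\mathcal{L}(\eta_i v)$, and each $\mathcal{L}(\eta_i v)$ is by construction the scalar pairing of the local component distributions $u_i^{\,l}$ with the components of $\eta_i v$. Continuity of multiplication by smooth compactly-supported functions on the Sobolev scale, together with the weight factors $w_i$ already present in the $L^2$ pairing, then matches this sum with $\langle u,v\rangle_{(M,\bar g)}$. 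Finally, the open mapping theorem upgrades the resulting continuous algebraic bijection $S_{k,p}$ to a topological isomorphism.
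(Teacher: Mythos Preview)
The paper does not prove this theorem at all: it is stated with a citation to \cite[Theorem 100]{HolstBehzadanSob1} and no proof is given. So there is no ``paper's own proof'' to compare against; your proposal is effectively a fresh proof of a result the paper merely quotes.

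Your strategy---reduce via partition of unity and the local component maps to the scalar duality $W^{-k,p'}(\Omega)\cong (W_0^{k,p}(\Omega))'$---is the natural one and is essentially correct. Two small points to tighten. First, in your local expression for the $L^2$ pairing you write a sum $\sum_l(\tilde\rho_i^{\,l}\circ\eta_iu)(\tilde\rho_i^{\,l}\circ v)\,w_i$, but the fibre metric $\langle\cdot,\cdot\rangle_E$ in a trivialisation is generally $h_i^{lm}(x)u_l v_m$ with a smooth positive-definite matrix $h_i^{lm}$, not just a diagonal sum; either absorb $h_i^{lm}\sqrt{\det\bar g}$ into a matrix-valued smooth weight or pass to local orthonormal frames so the expression becomes diagonal. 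Second, in the surjectivity step the scalar duality hands you distributions $u_i^{\,l}$ such that $\mathcal{L}(\eta_iv)=\sum_l\langle u_i^{\,l},(\eta_iv)_l\rangle$, but the target pairing $\langle u,v\rangle_{(M,\bar g)}$ carries the weight $w_i$ (and the metric $h_i^{lm}$); to produce the correct global $u$ you must divide the local representatives by this smooth positive weight (and contract with $(h_i)^{-1}$), which is a continuous operation on $W^{-k,p'}$. Once these bookkeeping items are in place your gluing argument via $\mathcal{L}(v)=\sum_i\mathcal{L}(\eta_iv)$ goes through, and the open mapping theorem finishes the job as you indicate.
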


Let us now recall the following lemma about composition with Sobolev functions, which proof follows along the lines of \cite[Lemma 2.2]{MaxwellRoughClosed}:

\begin{lem}[Composition Lemma]\label{CompositionLemma}
Let $U$ be a smooth bounded domain in $\mathbb{R}^n$, and let $F:I\to \mathbb{R}$ be a function of class $C^{m}$ on some open interval $I\subset \mathbb{R}$ and $f\in W^{m,p}(U)$ with $m>\frac{n}{p}$ and $1\leq p<\infty$, satisfying $\overline{f(U)}\subset I$. Then, $F\circ f\in W^{m,p}(U)$.
\end{lem}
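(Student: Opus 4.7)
The plan is to prove the lemma via approximation by smooth functions, combined with the Faà di Bruno formula and Gagliardo--Nirenberg interpolation. The condition $m > n/p$ is crucial because it gives the Sobolev embedding $W^{m,p}(U) \hookrightarrow C^0(\overline U)$, which ensures that $f$ is continuous, that $\overline{f(U)}$ is a well-defined compact subset of $I$, and that the approximation can be carried out uniformly.

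First I would invoke the Sobolev embedding to conclude that $f \in C^0(\overline U)$ and pick a compact set $K \subset I$ with $\overline{f(U)} \subset \mathring K$. Since $F \in C^m(I)$, the derivatives $F^{(j)}$ for $0 \leq j \leq m$ are bounded on $K$, and moreover are uniformly continuous there. Next, by standard density results (e.g.\ convolution with a mollifier after extending $f$ beyond $\partial U$, or using Meyers--Serrin), choose $f_\varepsilon \in C^\infty(\overline U)$ with $f_\varepsilon \to f$ in $W^{m,p}(U)$ and, by the Sobolev embedding applied to $f_\varepsilon - f$, also $f_\varepsilon \to f$ uniformly on $\overline U$. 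In particular $f_\varepsilon(U) \subset K$ for $\varepsilon$ small enough, so $F \circ f_\varepsilon \in C^m(\overline U) \subset W^{m,p}(U)$ and the classical chain rule applies.

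For $|\alpha| = k \leq m$, the Faà di Bruno formula gives
\begin{equation*}
\partial^\alpha (F \circ f_\varepsilon) \;=\; \sum_{j=1}^{k} F^{(j)}(f_\varepsilon) \sum_{(\beta_1,\ldots,\beta_j)\in P_\alpha^j} c_{\beta_1,\ldots,\beta_j} \prod_{i=1}^{j} \partial^{\beta_i} f_\varepsilon,
\end{equation*}
where $P_\alpha^j$ denotes the set of ordered multi-index tuples with $|\beta_i| \geq 1$ and $\sum_i \beta_i = \alpha$. The key estimate is the control of each monomial $\prod_i \partial^{\beta_i} h$ in $L^p(U)$ in terms of $\|h\|_{W^{m,p}(U)}$ and $\|h\|_{L^\infty(U)}$. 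Setting $k_i = |\beta_i|$, the Gagliardo--Nirenberg inequality yields
\begin{equation*}
\|\partial^{\beta_i} h\|_{L^{q_i}(U)} \;\leq\; C \, \|h\|_{W^{m,p}(U)}^{k_i/m}\, \|h\|_{L^\infty(U)}^{1-k_i/m}, \qquad \frac{1}{q_i} = \frac{k_i}{mp},
\end{equation*}
so Hölder's inequality with $\sum_i 1/q_i = k/(mp) \leq 1/p$ (the remaining mass, if any, absorbed by $L^\infty$) gives
\begin{equation*}
\Bigl\| \prod_{i=1}^j \partial^{\beta_i} h \Bigr\|_{L^p(U)} \;\leq\; C\, \|h\|_{W^{m,p}(U)}^{k/m}\, \|h\|_{L^\infty(U)}^{j - k/m}.
\end{equation*}
The main obstacle is then the quantitative Cauchy estimate: I would apply Faà di Bruno to $F \circ f_\varepsilon - F \circ f_{\varepsilon'}$, split each term using $F^{(j)}(f_\varepsilon) - F^{(j)}(f_{\varepsilon'})$ (controlled by the uniform continuity of $F^{(j)}$ on $K$ together with $\|f_\varepsilon - f_{\varepsilon'}\|_{L^\infty} \to 0$) and the differences of monomials (telescoped into a single factor $\partial^{\beta_i}(f_\varepsilon - f_{\varepsilon'})$ multiplied by remaining factors estimated as above). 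This yields $\|F \circ f_\varepsilon - F \circ f_{\varepsilon'}\|_{W^{m,p}(U)} \to 0$, so $F \circ f_\varepsilon$ converges in $W^{m,p}(U)$; the limit must coincide with $F \circ f$ since uniform convergence of $f_\varepsilon$ together with continuity of $F$ on $K$ forces $F \circ f_\varepsilon \to F \circ f$ pointwise (and in $L^p$). The most delicate point in executing the plan is the bookkeeping of the Gagliardo--Nirenberg exponents when $k = m$, where no slack in Hölder is available and the estimate becomes sharp; here one must use precisely that each $k_i \geq 1$ so that $j \leq m$ and all factors sit in $L^{mp/k_i}$ with exponents summing exactly to $1/p$.
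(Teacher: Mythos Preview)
The paper does not supply its own proof of this lemma; it merely states that the argument ``follows along the lines of \cite[Lemma 2.2]{MaxwellRoughClosed}''. Your proposal is a correct and complete outline of the standard proof, and it is essentially the same strategy one finds in the cited reference: exploit the embedding $W^{m,p}\hookrightarrow C^0(\overline U)$ (valid since $mp>n$) to localise $F$ and its derivatives on a compact subset of $I$, approximate $f$ by smooth functions uniformly and in $W^{m,p}$, expand via Fa\`a di Bruno, and then control the resulting products of derivatives. The only cosmetic difference is that Maxwell's argument (and the multiplication theorems collected earlier in this paper, e.g.\ Theorem~\ref{BesselMultLocal}) phrase the product estimate as an iterated Sobolev multiplication $W^{m-k_1,p}\otimes\cdots\otimes W^{m-k_j,p}\hookrightarrow L^p$ (valid because $\sum_i(m-k_i)=jm-k\geq (j-1)m>(j-1)n/p$), whereas you invoke Gagliardo--Nirenberg directly; these are equivalent routes to the same inequality. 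Your handling of the Cauchy step---splitting into the difference of $F^{(j)}$ values (controlled by uniform continuity on $K$) and telescoped products---is the right way to close the argument.
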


Before presenting further results, let us highlight that, although during most of this paper we will only use Sobolev spaces with \emph{integer degree of regularity}, we will need in a few situations to appeal to interpolating spaces. We have then decided to present all the above definitions and results in the context of $W^{k,p}$-spaces with $k\in \mathbb{Z}$ to avoid unnecessary complications. Nevertheless, we will now introduce our conventions for the interpolating spaces, and appeal to them whenever necessary. With this in mind, let us introduce the space of Bessel potentials $H^{s,p}(\mathbb{R}^n)$ with $s\in \mathbb{R}$ and $1<p<\infty$:
\begin{align}\label{BesselPotentials.1}
H^{s,p}(\mathbb{R}^n)=\{u\in \mathcal{S}'(\mathbb{R}^n)\: :\: \mathcal{F}^{-1}\big((1+|\xi|^2)^{\frac{s}{2}}\hat{u}\big)\in L^p(\mathbb{R}^n)\},
\end{align}
where above $\mathcal{S}(\mathbb{R}^n)$ denotes the space of Schwartz functions, $\mathcal{S}'(\mathbb{R}^n)$ the space of tempered distributions, $\mathcal{F}:\mathcal{S}'(\mathbb{R}^n)\to \mathcal{S}'(\mathbb{R}^n)$ the Fourier transform, and we have denoted by $\hat{u}\doteq \mathcal{F}(u)$. We furthermore equip $H^{s,p}(\mathbb{R}^n)$ with the norm:
\begin{align}\label{BesselPotentials.2}
\Vert u\Vert_{H^{s,p}(\mathbb{R}^n)}\doteq \Big\Vert \mathcal{F}^{-1}\big((1+|\xi|^2)^{\frac{s}{2}}\hat{u}\big)\Big\Vert_{L^{p}(\mathbb{R}^n)}.
\end{align}
These are known to be Banach spaces which coincide with $W^{k,p}(\mathbb{R}^n)$ whenever $k\in \mathbb{N}_0$ (see, for instance, \cite[Chapter 13, Proposition 6.1]{Taylor3}), and also $H^{-k,p'}(\mathbb{R}^n)\cong \left(H^{k,p}(\mathbb{R}^n)\right)'$. Moreover, for $s\in \mathbb{R}$, the usual Sobolev embeddings extend to this setting under the same conditions, as can be seen from \cite[Proposition 6.3 and Proposition 6.4]{Taylor3}, and a summary of all these properties can be found in \cite[Chapter VII, Theorem 7.63]{Adams}. An additional important property of Bessel potential spaces, is that these are known to coincide with the interpolation spaces via complex interpolation, as introduced by A. P. Calderón, between $W^{k,p}$-spaces. Below, we shall appeal to a few of the properties which follow from such characterisation. For detailed descriptions of the corresponding interpolation theory, we refer the reader to \cite{Calderon1,Triebel1}, although most of the properties we shall use are contained in the concise description of \cite[Chapter 4, Section 2]{Taylor1}. 

Let us recall that, given a pair of Banach spaces $E$ and $F$ such that $F\hookrightarrow E$, complex interpolation produces a family of Banach spaces, denoted by $[E,F]_{\theta}\hookrightarrow E$, parametrised by $\theta\in (0,1)$, which interpolate between $E$ and $F$. In particular, given $1<p<\infty$, $k\in\mathbb{N}$, and setting $E=L^p(\mathbb{R}^n)$ and $F=W^{k,p}(\mathbb{R}^n)$ one has (see, for instance, \cite[Proposition 6.2]{Taylor3}):
\begin{align}
[L^p(\mathbb{R}^n),W^{k,p}(\mathbb{R}^n)]_{\theta}=H^{s,p}(\mathbb{R}^n), \text{ for } s=\theta k \text{ and } \theta\in (0,1).
\end{align}

These last set of properties can then be used to \emph{define} these spaces on bounded domains, for instance following \cite[Chapter VII, Section 7.66]{Adams1st}. That is, letting $\Omega$ be a smooth bounded domain in $\mathbb{R}^n$, $1<p<\infty$, $\theta\in (0,1)$ and $k\in \mathbb{N}_0$, we define:
\begin{align}\label{BesselPotentials.3}
H^{s,p}(\Omega)\doteq [L^p(\Omega),W^{k,p}(\Omega)]_{\theta}, \text{ for } s=\theta k.
\end{align}
We then denote by $H^{s,p}_0(\Omega)$ the closure of $C^{\infty}_0(\Omega)$ in $H^{s,p}(\Omega)$, and for $1<p<\infty$ and $s<0$ a real number, we define
\begin{align}\label{BesselPotentials.4}
H^{s,p'}(\Omega)\doteq [H_0^{-s,p}(\Omega)]'.
\end{align}
In the above setting, we extend these definitions for functions and vector vector bundle sections defined on a closed manifold $M$ appealing to a coordinate cover by coordinate balls, a partition of unity subordinate to it, trivialisations over it whenever necessary, and then using the same definitions as above for the $W^{k,p}(M)$-spaces, but this time demanding the coordinate expressions to belong to $H^{s,p}(\Omega)$-spaces instead of $W^{k,p}(\Omega)$.

\medskip
It shall now be crucial for us to understand multiplication properties of Sobolev sections of tensor bundles, even for negative Sobolev regularity. Such properties are key when analysing continuity properties of partial differential operators acting of Sobolev-type spaces, and have been analysed in several references, such as \cite[Section 9]{PalaisBook}, \cite[Chapter VI, Section 3]{CB-deWitt2}, \cite[Section 2]{MaxwellHolstRegularity} and \cite{HolstBehazdanMult}, establishing related results under a variety of hypotheses. The relevant results for us can be deduced from the following theorem, which is very close to the ones of  \cite[Section 9]{PalaisBook}. We shall present a detailed proof for the sake of completeness.

\begin{theo}\label{BesselMultLocal}
Consider a smooth bounded domain $\Omega\subset \mathbb{R}^n$. Let $s_1,s_2$ be real numbers and $s\in \mathbb{Z}$, satisfying $s_1+s_2\geq 0$, $s_i\geq s$ for $i=1,2$, and let $p_1,p_2$ and $p$ be real numbers $1<p, p_i< \infty$, $i=1,2$. If $s_1,s_2,s\geq 0$, then the following continuous multiplication property holds
\begin{align}\label{LocalMultiPropPositiveBessel}
H^{s_1,p_1}(\Omega)\otimes H^{s_2,p_2}(\Omega)\hookrightarrow H^{s,p}(\Omega)
\end{align}
as long as %$k_1,k_2\geq s$ and
%\begin{subequations}%\label{MultiplicationConditions}
\begin{align}
%\begin{split}
s_i-s&\geq n\left(\frac{1}{p_i} - \frac{1}{p}\right) \text{ and } s_1+s_2-s> n\left(\frac{1}{p_1} + \frac{1}{p_2} - \frac{1}{p} \right), \label{MultiplicationConditionsBesell}
%\\
%k_i-k&> n\left(\frac{1}{p_i} - \frac{1}{p}\right) \text{ and } k_1+k_2-k\geq n\left(\frac{1}{p_1} + \frac{1}{p_2} - \frac{1}{p} \right)\label{MultiplicationConditions.2}
%\end{split}
\end{align}
%\label{eqn:all-lines}
%\end{subequations}
Furthermore, under the same conditions, it also holds that
\begin{align}\label{LocalMultiPropPositive0Bessel}
H^{s_1,p_1}(\Omega)\otimes H_0^{s_2,p_2}(\Omega)\hookrightarrow H_0^{s,p}(\Omega)
\end{align}

In case that $\min(s_1,s_2)<0$, then (\ref{LocalMultiPropPositiveBessel}) holds if additionally one imposes
%\begin{subequations}%\label{MultiplicationConditionsDuals}
\begin{align}\label{MultiplicationConditionsDualsBessel} 
s_1+s_2&\geq n\left( \frac{1}{p_1}+\frac{1}{p_2} - 1 \right) %\text{ whenever (\ref{MultiplicationConditions}) is satisfied} , \text{ or }\\
%k_1+k_2&> n\left( \frac{1}{p_1}+\frac{1}{p_2} - 1 \right) \text{ whenever (\ref{MultiplicationConditions.2}) is satisfied}.\label{MultiplicationConditionsDuals.2}
\end{align}
%\label{eqn:all-lines}
%\end{subequations}

\end{theo}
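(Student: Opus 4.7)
The strategy is to establish the multiplication property first on all of $\mathbb{R}^n$, where Fourier-analytic and interpolation tools are directly available, and then transfer the result to $\Omega$ via a universal Stein-type extension operator, which exists on the Bessel potential scale because $\Omega$ has smooth boundary (continuity on $H^{s,p}(\Omega)$ follows from the definition (\ref{BesselPotentials.3}) by interpolating its continuity between $W^{k,p}$ endpoints). On $\mathbb{R}^n$, I will split the proof into three stages: an elementary H\"older--Sobolev argument for non-negative integer regularities, complex interpolation to remove the integer restriction on $s_1,s_2$, and a duality step to cover negative regularities.

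In the first stage I assume $s_1, s_2, s \in \mathbb{N}_0$ with $s_i \geq s$. Leibniz reduces the desired bound to estimates of the form $\|\partial^{\alpha} u \cdot \partial^{\beta} v\|_{L^p} \lesssim \|u\|_{W^{s_1,p_1}}\|v\|_{W^{s_2,p_2}}$ for every pair $(\alpha,\beta)$ with $|\alpha|+|\beta|\leq s$. Choosing intermediate exponents $q_\alpha,q_\beta\in(1,\infty)$ with $1/q_\alpha+1/q_\beta = 1/p$, H\"older reduces matters to the Sobolev embeddings $W^{s_1-|\alpha|,p_1}\hookrightarrow L^{q_\alpha}$ and $W^{s_2-|\beta|,p_2}\hookrightarrow L^{q_\beta}$. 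The pointwise condition $s_i-s\geq n(1/p_i-1/p)$ in (\ref{MultiplicationConditionsBesell}) permits each individual embedding, while the strict inequality $s_1+s_2-s>n(1/p_1+1/p_2-1/p)$ leaves enough slack to split the scaling budget between both factors and avoid the Sobolev endpoint. The companion statement (\ref{LocalMultiPropPositive0Bessel}) is then a density argument, since $C_0^\infty(\Omega)$ is dense in $H_0^{s_2,p_2}(\Omega)$ and the product of any $H^{s_1,p_1}$ function with a compactly supported function is compactly supported.

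To remove the restriction that $s_1,s_2$ be integers, I invoke complex interpolation of bilinear maps, relying on the Calder\'on identification (\ref{BesselPotentials.3}) of $H^{s,p}$ as a complex interpolation space between $L^p$ and $W^{k,p}$ for integer $k$. Continuity of the multiplication map at the integer endpoints, already established, yields continuity on every pair of interpolated spaces, giving (\ref{LocalMultiPropPositiveBessel}) for all admissible non-negative real $s_1, s_2$ and non-negative integer $s$. Since the hypothesis restricts $s$ to $\mathbb{Z}$, this stage completes the case $s\geq 0$.

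The main obstacle is the case $\min(s_1,s_2)<0$, where the pointwise product is no longer meaningful and one must work distributionally. My plan is to define $uv$ by transposition and reduce to the case just treated, using the duality (\ref{BesselPotentials.4}) and Theorem \ref{DualIsomorphismHolst}. Concretely, assuming $s_1<0\leq s_2$, I set $\langle uv,\phi\rangle := \langle u, v\phi\rangle$ for $\phi\in C_0^\infty(\Omega)$, and reduce the continuity of $(u,v)\mapsto uv$ as a map into $H^{s,p}$ to the continuity of the positive-regularity multiplication $H^{s_2,p_2}\otimes H^{-s,p'}\to H^{-s_1,p_1'}$ proved above. Verifying that the triple $(s_2,-s,-s_1)$ and the exponents $(p_2,p',p_1')$ still satisfy (\ref{MultiplicationConditionsBesell}) is a direct algebraic check, and this is precisely where the extra hypothesis (\ref{MultiplicationConditionsDualsBessel}) appears: after substituting $1/p_1'=1-1/p_1$, the dual scaling condition $s_2+(-s)-(-s_1)>n(1/p_2+1/p-(1-1/p_1))$ rearranges to exactly $s_1+s_2-s>n(1/p_1+1/p_2-1/p)$, with the additional floor $s_1+s_2\geq n(1/p_1+1/p_2-1)$ being the threshold needed for $v\phi$ to lie in the dual space of $H^{s_1,p_1}$. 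The symmetric case $s_2<0$ is handled identically, and a final application of the extension operator transfers every step back from $\mathbb{R}^n$ to $\Omega$, closing the argument.
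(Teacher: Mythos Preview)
Your overall strategy differs from the paper's in two substantive ways. First, the paper works directly on $\Omega$ rather than transferring from $\mathbb{R}^n$ via an extension operator; second, and more importantly, the paper never invokes complex interpolation to reach non-integer $s_1,s_2$. Instead, it establishes the base case $s=0$ directly for \emph{all} real $s_1,s_2\geq 0$, by a case analysis on whether $s_i$ lies above, at, or below $n/p_i$, reducing everything to the Sobolev embeddings $H^{s_i,p_i}(\Omega)\hookrightarrow L^{q_i}(\Omega)$ (valid for real $s_i$) together with H\"older's inequality. Once the $L^p$ target case is in hand for real $s_1,s_2$, the integer-$s$ case follows from Leibniz, estimating each term $\partial^{\beta}f\cdot\partial^{\alpha-\beta}g$ via the $s=0$ result with the shifted pair $(s_1-|\beta|,s_2-|\alpha|+|\beta|)$. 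Your duality step for $\min(s_1,s_2)<0$ is essentially the same as the paper's.

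The interpolation step in your plan, as written, has a gap at the endpoint of the non-strict constraint. Bilinear complex interpolation requires boundedness at \emph{both} integer endpoints, but if $s_1$ is non-integer and the hypothesis $s_1-s\geq n(1/p_1-1/p)$ holds with equality, then the lower endpoint $\lfloor s_1\rfloor$ violates that inequality and the integer-case result is unavailable there. For a concrete instance, take $s=0$, choose $p_1,p$ with $n(1/p_1-1/p)=1/2$, and $s_1=1/2$: any admissible integer endpoint must satisfy $k_1\geq 1/2$, hence $k_1\geq 1$, so no convex combination of two such endpoints can return $s_1=1/2$. This case is not vacuous; the paper treats it by observing that saturation $s_1-s=n(1/p_1-1/p)$ combined with the strict inequality in (\ref{MultiplicationConditionsBesell}) forces $s_2>n/p_2$, placing the problem back in the supercritical regime handled separately. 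You would need to isolate and treat this endpoint situation by hand before interpolating on the open region, or else abandon interpolation and argue the $s=0$ case directly for real $s_1,s_2$ as the paper does, which is both shorter and avoids the issue entirely.
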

\begin{proof}
Let us first establish the claim for $s=0$. That is, given $1<p_i, p\leq \infty$ and $s_1,s_2$ non-negative real numbers, it must hold that:
\begin{align}\label{MultiplicationPropZeroOrder}
\begin{split}
H^{s_1,p_1}(\Omega)\times H^{s_2,p_2}(\Omega)&\to L^p(\Omega), \\
(f,g)&\mapsto fg
\end{split}
\end{align}
and $\Vert fg\Vert_{L^p(\Omega)}\leq C\Vert f\Vert_{H^{s_1,p_1}(\Omega)}\Vert g\Vert_{H^{s_2,p_2}(\Omega)}$ for a fixed given constant $C>0$. We will divide this in two cases:

\begin{enumerate}
\item[1.] $(s_i,p_i)$ satisfying (\ref{MultiplicationConditionsBesell}) and at least one $s_i\geq \frac{n}{p_i}$, $i=1,2$.
\end{enumerate}
Assume without loss of generality that $s_1\geq \frac{n}{p_1}$. If actually $s_1>\frac{n}{p_1}$,\footnote{This condition together with  $s_2\geq n\left(\frac{1}{p_2} - \frac{1}{p} \right)$ already implies $s_1+s_2> n\left(\frac{1}{p_1} + \frac{1}{p_2} - \frac{1}{p} \right)$} and also $s_{2}\geq \frac{n}{p_2}$, then $H^{s_2,p_2}(\Omega)\hookrightarrow L^q(\Omega)$ for any $q<\infty$ and since $H^{s_1,p_1}(\Omega)\hookrightarrow L^{\infty}(\Omega)$, then $fg\in L^{q}(\Omega)$ for any $q<\infty$. If $s_2<\frac{n}{p_2}$, then $H^{s_2,p_2}(\Omega)\hookrightarrow L^{p}(\Omega)$, for all $p\leq \frac{np_2}{n-s_2p_2}$ and hence $fg\in L^{p}(\Omega)$ for such $p$. This condition is equivalent to
\begin{align}\label{Multiplication.2}
p\leq \frac{np_2}{n-s_2p_2}\Longleftrightarrow \frac{1}{p}\geq \frac{n-s_2p_2}{np_2}=\frac{1}{p_2}-\frac{s_2}{n}\Longleftrightarrow s_2\geq n\left(\frac{1}{p_2}-\frac{1}{p}\right),
\end{align}
which is satisfied by hypothesis.

It remains to examine the cases where $s_1=\frac{n}{p_1}$. Notice that if $s_2>\frac{n}{p_2}$, then the same argument as above, but with $(s_1,p_1)$ and $(s_2,p_2)$ in inverted roles establishes the result. Thus, the remaining cases are given by $s_2\leq \frac{n}{p_2}$. Assume first that $s_2=\frac{n}{p_2}$, and notice that Sobolev embeddings guarantee $H^{s_i,p_i}(\Omega)\hookrightarrow L^{q_i}(\Omega)$ for all $q_i<\infty$. Since we intend to use Hölder's generalised inequality $L^{q_1}(\Omega)\otimes L^{q_2}(\Omega)\hookrightarrow L^p(\Omega)$, one needs to have $\frac{1}{q_1}+\frac{1}{q_2}=\frac{1}{p}$. Since $1\leq q_i<\infty$ are arbitrary, this constrains $p$ only to the condition $\frac{1}{p}>0$, excluding the critical value $p=\infty$. Notice that the conditions $s_i=\frac{n}{p_i}$ together with (\ref{MultiplicationConditionsBesell}) grant $p<\infty$, and thus one can appeal to Hölder's multiplication to obtain the result.

We still need to consider the cases $s_1=\frac{n}{p_1}$ and $n\left(\frac{1}{p_2} - \frac{1}{p}\right)\leq s_2<\frac{n}{p_2}$. First, notice that the limit case of $s_2=n\left(\frac{1}{p_2} - \frac{1}{p}\right)$ is excluded by (\ref{MultiplicationConditionsBesell}), since it implies $s_1+s_2=n\left(\frac{1}{p_2}  + \frac{1}{p_1} - \frac{1}{p}\right)$ and thus it implies the non-strict version of both inequalities. Hence, we need only consider the case $n\left(\frac{1}{p_2} - \frac{1}{p}\right)< s_2<\frac{n}{p_2}$ and we see this case is accounted by (\ref{MultiplicationConditionsBesell}), where we have $H^{s_2,p_2}(\Omega)\hookrightarrow L^{r_2}(\Omega)$ for all $r_2\leq \frac{np_2}{n-s_2p_2}$, $H^{s_1,p_1}(\Omega)\hookrightarrow L^{r_1}(\Omega)$ for all $1\leq r_1<\infty$. We want to use Hölder's generalised inequality to guarantee that  $L^{r_1}(\Omega)\otimes L^{r_2}(\Omega)\hookrightarrow L^p(\Omega)$. For this, given $g\in L^{r_2}(\Omega)$, we need to guarantee $f\in L^{r_1}(\Omega)$ with $\frac{1}{r_1}\doteq \frac{1}{p}-\frac{1}{r_2}>0$. Since $r_1$ can be taken arbitrary within $1\leq r_1<\infty$, then the only constraint to apply Hölder's inequality is that $\frac{1}{p}-\frac{1}{r_2}>0$.  This is compatible iff
\begin{align*}
&\frac{1}{p}>\frac{1}{r_2} \geq \frac{1}{p_2}-\frac{s_2}{n}\Longleftrightarrow s_2>n\left(\frac{1}{p_2} - \frac{1}{p} \right),
%&\frac{1}{r_2}\doteq \frac{1}{p}-\frac{1}{r_1}\geq \frac{1}{p_2}-\frac{k_2}{n}\Longleftrightarrow s_1+s_2\geq n\left(\frac{1}{p_1} +\frac{1}{p_2}-\frac{1}{p} \right).
\end{align*}
which, as discussed above, holds by hypothesis.

\begin{enumerate}
\item[2.] $(s_i,p_i)$ satisfy (\ref{MultiplicationConditionsBesell})  with $n\left(\frac{1}{p_i} - \frac{1}{p}\right)\leq s_i<\frac{n}{p_i}$.
\end{enumerate}
First, notice that in the critical case $s_1=n\left(\frac{1}{p_1} - \frac{1}{p}\right)$ the condition $s_1+s_2\geq n\left(\frac{1}{p_1}+ \frac{1}{p_2} - \frac{1}{p}\right)$ implies $s_2\geq \frac{n}{p_2}$ and hence this case was already treated above. Therefore, one can assume $n\left(\frac{1}{p_i} - \frac{1}{p}\right)< s_i<\frac{n}{p_i}$ for both $i=1,2$, with the remaining critical cases having been treated above. In these cases $H^{s_1,p_1}(\Omega)\hookrightarrow L^{r_1}(\Omega)$ for $r_1\doteq \frac{np_1}{n-s_1p_1}$. We want to use Hölder's generalised inequality to guarantee that  $L^{r_1}(\Omega)\otimes L^{r_2}(\Omega)\hookrightarrow L^p(\Omega)$. For this, given $f\in L^{r_1}(\Omega)$, we need to guarantee $g\in L^{r_2}(\Omega)$ with $\frac{1}{r_2}\doteq \frac{1}{p}-\frac{1}{r_1}>0$. This is compatible iff
\begin{align*}
&r_1>p \Longleftrightarrow \frac{1}{p}>\frac{1}{p_1}-\frac{s_1}{n}\Longleftrightarrow s_1>n\left(\frac{1}{p_1} - \frac{1}{p} \right),\\
&\frac{1}{r_2}\doteq \frac{1}{p}-\frac{1}{r_1} \geq \frac{1}{p_2}-\frac{s_2}{n}\Longleftrightarrow s_1+s_2\geq n\left(\frac{1}{p_1} +\frac{1}{p_2}-\frac{1}{p} \right),
\end{align*}
which are satisfied by hypotheses.

\medskip
The above discussions establish the multiplication property (\ref{MultiplicationPropZeroOrder}) under the conditions given by (\ref{MultiplicationConditionsBesell}), with an inequality: 
\begin{align}\label{mult.0}
\Vert fg\Vert_{L^p(\Omega)}\leq \Vert f\Vert_{L^{r_1}(\Omega)}\Vert g\Vert_{L^{r_2}(\Omega)}\leq C\Vert f\Vert_{H^{s_1,p_1}(\Omega)}\Vert g\Vert_{H^{s_2,p_2}(\Omega)},
\end{align}

Let us now consider (\ref{LocalMultiPropPositiveBessel}) when $s\in \mathbb{N}$. First assume $f\in C^{\infty}(\Omega)$, $g\in H^{s_2,p_2}(\Omega)$, and notice that for $s_i\geq s\geq |\alpha|$:% $\partial^{\alpha}f\in L^{p_1}(\Omega)$ for all $|\alpha|\leq k_1$, $\partial^{\beta}g\in L^{p_2}(U)$ for all $|\beta|\leq k_2$ and
\begin{align}\label{mult.1}
\partial^{\alpha}(fg)=\sum_{|\beta|=0}^{|\alpha|}C_{\beta}\partial^{|\beta|}f\partial^{|\alpha|-|\beta|}g,
\end{align}
where $C_{\beta}$ denote constants and $\partial^{|\beta|}$ stands for some derivative of order $|\beta|$ while the summation runs through all multi-indices of each order up to $|\alpha|$, for some $|\alpha|\leq s$. For any given term in (\ref{mult.1}), we have
\begin{align*}
\partial^{|\beta|}f\in H^{s_1-|\beta|,p_1}(\Omega)  \;\; ; \;\; \partial^{|\alpha|-|\beta|}g\in  H^{s_2-|\alpha|+|\beta|,p_2}(\Omega).
\end{align*}
Thus, from our preliminary claim, as long as 
\begin{align*}
&s_1-|\beta|\geq n\left(\frac{1}{p_1} - \frac{1}{p} \right),\:\: s_2-(|\alpha|-|\beta|)\geq n\left(\frac{1}{p_2} - \frac{1}{p} \right),\: s_1+s_2-|\alpha|>n\left(\frac{1}{p_1} + \frac{1}{p_2} - \frac{1}{p}\right),% \text{ or }\\
%&k_1-|\beta|> n\left(\frac{1}{p_1} - \frac{1}{p} \right),\:\: k_2-(|\alpha|-|\beta|)> n\left(\frac{1}{p_2} - \frac{1}{p} \right) ,\: k_1+k_2-|\alpha|\geq n\left(\frac{1}{p_1} + \frac{1}{p_2} - \frac{1}{p}\right),
\end{align*}
the continuity estimate (\ref{mult.0}) holds. Since $|\beta|\leq|\alpha|\leq s\leq s_i$, then (\ref{MultiplicationConditionsBesell}) implies that the above set of conditions must hold. Therefore, in these cases it follows that
\begin{align}\label{mult.3}
\Vert fg\Vert_{H^{s,p}(\Omega)}\leq  C\Vert f\Vert_{H^{s_1,p_1}(\Omega)}\Vert g\Vert_{H^{s_2,p_2}(\Omega)}.
\end{align}
Now, for $f\in H^{s_1,p_1}(\Omega)$ and $g\in H^{s_2,p_2}(\Omega)$ arbitrary, we can approximate by smooth functions $\{f_j\}\subset C^{\infty}(\Omega)$ converging to $f$ in $H^{s_1,p_1}(\Omega)$ and use (\ref{mult.3}) to show that $f_jg$ is Cauchy is $H^{s,p}(\Omega)$, thus converging to a limit $h\in H^{s,p}(\Omega)$. On the other hand, one knows from the preliminary claim associated to (\ref{MultiplicationPropZeroOrder}) that $fg\in L^{p}(\Omega)$, and in this case (\ref{mult.0}) shows that
\begin{align*}
\Vert fg-f_jg \Vert_{L^{p}(\Omega)}\leq C\Vert f-f_j \Vert_{H^{s_1,p_1}(\Omega)}\Vert g \Vert_{H^{s_2,p_2}(\Omega)}\xrightarrow[j\rightarrow\infty]{} 0.
\end{align*} 
That is, $f_jg\xrightarrow[j\rightarrow\infty]{L^{p}(\Omega)} fg$. But since, in particular, $f_jg\xrightarrow[j\rightarrow\infty]{L^{p}(\Omega)} h$, then $fg=h\in H^{s,p}(\Omega)$, and furthermore
\begin{align}\label{mult.4}
\!\!\!\Vert fg\Vert_{H^{s,p}(\Omega)}=\lim_{j\rightarrow\infty}\Vert f_jg\Vert_{H^{s,p}(\Omega)}\leq  C\lim_{j\rightarrow\infty}\Vert f_j\Vert_{H^{s_1,p_1}(\Omega)}\Vert g\Vert_{H^{s_2,p_2}(\Omega)}=C\Vert f\Vert_{H^{s_1,p_1}(\Omega)}\Vert g\Vert_{H^{s_2,p_2}(\Omega)}
\end{align}

\medskip
Establishing (\ref{LocalMultiPropPositive0Bessel}) then follows, in a first step, from $H^{s_2,p}_0(\Omega)\hookrightarrow H^{s_2,p}(\Omega)$, which guarantees that $H^{s_1,p_1}(\Omega)\otimes H^{s_2,p_2}_0(\Omega)\hookrightarrow H^{s,p}(\Omega)$. But then, if $f\in H^{s_1,p_1}(\Omega)$ and $g\in H^{s_2,p_2}_0(\Omega)$, we have sequences $\{f_i\}\subset C^{\infty}(\Omega)$ and $\{g_j\}\subset C_0^{\infty}(\Omega)$ such that 
\begin{align*}
f_i\xrightarrow[]{H^{s_1,p_1}(\Omega)} f\:\: ; \:\: g_j\xrightarrow[]{H_0^{s_2,p_2}(\Omega)} g.
\end{align*}
Then $\{f_ig_i\}\subset C^{\infty}_0(\Omega)$ is seen to converge $f_ig_i\xrightarrow[]{H^{s,p}(\Omega)} fg$ as follows:
\begin{align*}
\Vert f_ig_i - fg\Vert_{H^{s,p}(\Omega)}&\leq \Vert (f_i-f)g_i \Vert_{H^{s,p}(\Omega)} + \Vert f(g_i - g)\Vert_{H^{s,p}(\Omega)} \\
&\lesssim \Vert f_i-f \Vert_{H^{s_1,p_1}(\Omega)}\Vert g_i\Vert_{H^{s_2,p_2}(\Omega)} + \Vert f\Vert_{H^{s_1,p_1}(\Omega)}\Vert g_i - g \Vert_{H^{s_2,p_2}(\Omega)},
\end{align*}
where in the second inequality we have used the already established (\ref{LocalMultiPropPositiveBessel}), and the right-hand side goes to zero by hypothesis.

\medskip
To establish the multiplication property for $\min(s_1,s_2)<0$, assume without loss of generality that $s_2=\min(s_1,s_2)<0$. Since $s_1+s_2\geq 0$ and $s_i\geq s$, we know that $s_1> 0$ and $s< 0$. Furthermore, the objective is to show that $fg$ is well-defined as an element of $H^{s,p}(\Omega)$ by duality. That is, that $ (fg)(\phi)= g(f\phi)$ is well defined for all $\phi\in H_0^{-s,p'}(\Omega)$. This implies that $fg\in H^{s,p}(\Omega)$ iff $f\phi\in H_0^{-s_2,p'_2}(\Omega)$ for all $\phi\in H_0^{-s,p'}(\Omega)$. Since $0> s_2\geq s$, then we can appeal to the previous analysis to see that 
\begin{align*}
H^{s_1,p_1}(\Omega)\otimes H_0^{-s,p'}(\Omega)\hookrightarrow H_0^{-s_2,p'_2}(\Omega)
\end{align*}
is guaranteed by
\begin{align}\label{MultDual.1}
\begin{split}
s_1+s_2&\geq n\left(\frac{1}{p_1}-\frac{1}{p'_2}\right)=n\left(\frac{1}{p_1}+\frac{1}{p_2} - 1\right),\\
s_2-s&\geq n\left(\frac{1}{p'}-\frac{1}{p'_2}\right)=n\left(\frac{1}{p_2}-\frac{1}{p}\right),\\
s_1-s+s_2&>n\left(\frac{1}{p_1}+\frac{1}{p'} - \frac{1}{p'_2} \right)=n\left(\frac{1}{p_1}+\frac{1}{p_2} - \frac{1}{p} \right),
\end{split}
\end{align}
%or
%\begin{align}\label{MultDual.2}
%\begin{split}
%k_1+k_2&> n\left(\frac{1}{p_1}-\frac{1}{p'_2}\right)=n\left(\frac{1}{p_1}+\frac{1}{p_2} - 1\right),\\
%k_2-k&> n\left(\frac{1}{p'}-\frac{1}{p'_2}\right)=n\left(\frac{1}{p_2}-\frac{1}{p}\right),\\
%k_1-k+k_2&\geq n\left(\frac{1}{p_1}+\frac{1}{p'} - \frac{1}{p'_2} \right)=n\left(\frac{1}{p_1}+\frac{1}{p_2} - \frac{1}{p} \right),
%\end{split}
%\end{align}
where (\ref{MultDual.1}) are required by (\ref{MultiplicationConditionsBesell}). Notice that the last two conditions are just extending (\ref{MultiplicationConditionsBesell}) to the case of negative values for $s$ and $s_2$, while the first condition corresponds to (\ref{MultiplicationConditionsDualsBessel}).
\end{proof}

Let us now appeal to the above theorem to establish the following multiplication properties for tensor bundles:
\begin{cor}\label{ContractionsSobReg}
Consider a closed manifold $M^n$, let $s_1,s_2\in\mathbb{R}$ and $s\in\mathbb{Z}$ satisfy $s_1+s_2\geq 0$, $s_i\geq s$ for $i=1,2$, and let $p_1,p_2$ and $p$ be real numbers $1<p, p_i< \infty$, $i=1,2$. If $s_1,s_2,s\geq 0$, then the following continuous multiplication property holds for sections of tensor bundles $T^l_mM$ and $T^q_rM$:
\begin{align}\label{LocalMultiPropManifolds}
H^{s_1,p_1}(T^l_mM)\otimes H^{s_2,p_2}(T^q_rM)\hookrightarrow H^{s,p}(T^l_mM\otimes T^q_rM)
\end{align}
as long as %$k_1,k_2\geq s$ and
%\begin{subequations}%\label{MultiplicationConditions}
\begin{align}\label{MultiplicationConditionsManifolds}
%\begin{split}
s_i-s&\geq n\left(\frac{1}{p_i} - \frac{1}{p}\right) \text{ and } s_1+s_2-s> n\left(\frac{1}{p_1} + \frac{1}{p_2} - \frac{1}{p} \right), 
%k_i-k&> n\left(\frac{1}{p_i} - \frac{1}{p}\right) \text{ and } k_1+k_2-k\geq n\left(\frac{1}{p_1} + \frac{1}{p_2} - \frac{1}{p} \right)\label{MultiplicationConditionsManifolds.2}
%\end{split}
\end{align}
%\label{eqn:all-lines}
%\end{subequations}
%Furthermore, under the same conditions, it also holds that
%\begin{align}\label{LocalMultiPropPositive0}
%W^{k_1,p_1}(U)\otimes W_0^{k_2,p_2}(U)\hookrightarrow W_0^{k,p}(U)
%\end{align}

In case that $\min(s_1,s_2)<0$, then (\ref{LocalMultiPropManifolds}) holds if additionally one imposes 
\begin{align} \label{MultiplicationConditionsDualsManifolds}
s_1+s_2&\geq n\left( \frac{1}{p_1}+\frac{1}{p_2} - 1 \right)
%k_1+k_2&> n\left( \frac{1}{p_1}+\frac{1}{p_2} - 1 \right) \text{ whenever (\ref{MultiplicationConditions.2}) is satisfied}.\label{MultiplicationConditionsTensorsDuals.2}
\end{align}
%\label{eqn:all-lines}
%\end{subequations}
Furthermore, if $u\in H^{s_1,p_1}(T^l_mM)$, $v\in H^{s_2,p_2}(T^q_rM)$ and we denote an arbitrary contraction between these two tensors by $u_{\cdot}v$, then, under the same conditions as above, $u_{\cdot}v\in H^{s,p}$ and there is a fixed constant $C>0$ such that $\Vert u_{\cdot}v\Vert_{H^{s,p}}\leq C\Vert u\Vert_{H^{s_1,p_1}}\Vert v\Vert_{H^{s_2,p_2}}$.
\end{cor}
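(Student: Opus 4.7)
The plan is to reduce the corollary to its Euclidean counterpart, Theorem \ref{BesselMultLocal}, via a standard partition-of-unity and trivialization argument. Fix a finite atlas $\{(U_i,\varphi_i)\}_{i=1}^N$ of coordinate charts on $M$ simultaneously trivializing the bundles $T^l_mM$ and $T^q_rM$, choose a partition of unity $\{\eta_i\}$ subordinate to this cover, together with auxiliary cutoffs $\chi_i\in C_c^\infty(U_i)$ equal to one on $\mathrm{supp}\,\eta_i$. With these choices, $u\in H^{s_1,p_1}(T^l_mM)$ and $v\in H^{s_2,p_2}(T^q_rM)$ correspond in each chart to vectors of scalar functions or distributions on $\varphi_i(U_i)$, obtained via the maps $H_{E^{\vee},U_i,\varphi_i}$ when $s_j<0$ and simply by trivialization when $s_j\geq 0$.

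When $s_1,s_2\geq 0$, the localized components $\eta_i\tilde u_i^{I}$ and $\chi_i\tilde v_i^{J}$ extend by zero to elements of $H^{s_1,p_1}_0(\varphi_i(U_i))$ and $H^{s_2,p_2}_0(\varphi_i(U_i))$ respectively. Applying Theorem \ref{BesselMultLocal} componentwise to each product under the conditions (\ref{MultiplicationConditionsManifolds}) and summing over $i$ (using $\sum_i\eta_i=1$) yields the desired continuous embedding for $u\otimes v$ into $H^{s,p}(T^l_mM\otimes T^q_rM)$. For the contraction statement, I would simply note that $u\cdot v$ is locally a finite linear combination, with smooth bounded coefficients (arising from Kronecker symbols and, if metric-type contractions are involved, from $g$ and $g^{-1}$), of products of scalar components of $u$ and $v$. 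These coefficients act as multipliers on $H^{s,p}$ and therefore do not spoil the estimate, so the same argument yields $\Vert u\cdot v\Vert_{H^{s,p}}\lesssim \Vert u\Vert_{H^{s_1,p_1}}\Vert v\Vert_{H^{s_2,p_2}}$.

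The main subtlety lies in the case $\min(s_1,s_2)<0$, where at least one of the tensor fields is a genuine distribution. Here the product $u\otimes v$ must first be defined chart-by-chart through the same duality mechanism used in Theorem \ref{BesselMultLocal}: the additional condition (\ref{MultiplicationConditionsDualsManifolds}), which translates locally into (\ref{MultiplicationConditionsDualsBessel}), is precisely what allows the product of the two localized components to be meaningfully paired against test functions in $H^{-s,p'}_0$. One must verify that the locally defined products agree under the smooth transition maps between overlapping charts, and that they glue back to a well-defined element of $H^{s,p}$ of the relevant bundle over $M$. This is the genuine technical point of the argument, but it follows directly from the facts that the bundle transition maps are smooth, that smooth diffeomorphisms induce topological isomorphisms on $H^{s,p}$-spaces of arbitrary order, and that multiplication by smooth compactly supported functions is continuous on these spaces; all three properties are standard consequences of the interpolation/duality definition (\ref{BesselPotentials.3})–(\ref{BesselPotentials.4}) adopted earlier. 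Once this global compatibility is settled, summing the local estimates obtained from Theorem \ref{BesselMultLocal} delivers the claimed continuous multiplication on the manifold, and the contraction statement again follows by the same multiplier argument applied componentwise.
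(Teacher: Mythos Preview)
Your proposal is correct and follows essentially the same approach as the paper: localize via a partition of unity and trivializations, apply Theorem \ref{BesselMultLocal} componentwise in each chart, and sum. The only cosmetic difference is that the paper uses the modified partition $\tilde\eta_i=\eta_i^2/\sum_j\eta_j^2$ so that both factors receive the same cutoff $\eta_i$, whereas you use $\eta_i$ on $u$ and an auxiliary $\chi_i\equiv 1$ on $\mathrm{supp}\,\eta_i$ on $v$; both devices serve the same purpose and your additional remarks on the distributional case and gluing are more detailed than what the paper records.
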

\begin{proof}
Considering a covering $\{U_i,\varphi_i\}_{i=1}^N$ of $M$ by coordinate charts where we take $U_i$ to be bounded domains with smooth boundary (for instance, small coordinate balls), and a partition of unity subordinate to the cover $\{\eta_i\}$, let us denote by $\rho_1,\rho_2$ and $\rho_3$ the trivialisation maps of $T^l_mM$, $T^q_rM$ and $T^l_mM\otimes T^q_rM$ respectively. Then introduce the partition of unity $\Big\{\tilde{\eta}_i\doteq \frac{\eta^2_i}{\sum_j\eta^2_j}\Big\}^N_{i=1}$:
\begin{align*}
\left[(\tilde{\rho}_3)_i\circ(\tilde{\eta}_iu\otimes v)\circ\varphi^{-1}_i\right]&=\frac{1}{\sum_j\eta^2_j}\left((\rho_1)_i\circ (\eta_iu)\circ \varphi^{-1}_i\otimes (\rho_2)_i\circ (\eta_iu)\circ \varphi^{-1}_i\right),\\
&=\frac{1}{\sum_j\eta^2_j}(\eta_i\circ\varphi^{-1}_iu^{a_1\cdots a_l}_{b_1\cdots b_m} \eta_i\circ\varphi^{-1}_iv^{c_1\cdots c_q}_{d_1\cdots d_r})_{1\leq a_j,b_j,c_j,d_j\leq n}
\end{align*}
Fixing a particular order for the components in the last line, we have by definition that 
\begin{align}\label{SobolevMultTensorProd.1}
(\eta_i\circ\varphi^{-1}_iu^{a_1\cdots a_l}_{b_1\cdots b_m}) (\eta_i\circ\varphi^{-1}_iv^{c_1\cdots c_q}_{d_1\cdots d_r})\ H^{s_1,p_1}(\varphi_i(U_i))\otimes H^{s_2,p_2}(\varphi_i(U_i))\hookrightarrow H^{s,p}(\varphi_i(U_i)), 
\end{align}
where the continuous embedding follows from Theorem \ref{BesselMultLocal}, which implies $\left[(\tilde{\rho}_3)_i\circ(\tilde{\eta}_iuv)\circ\varphi^{-1}_i\right]^l\in H^{s,p}(\varphi_i(U_i))$ and
\begin{align}\label{SobolevMultTensorProd.2}
\begin{split}
\Vert \left[(\tilde{\rho}_3)_i\circ(\tilde{\eta}_iu\otimes v)\circ\varphi^{-1}_i\right]^l\Vert_{H^{s,p}(\varphi_i(U_i))}\lesssim \Vert& \left[(\rho_1)_i\circ (\eta_iu)\circ \varphi^{-1}_i\right]^{a_1\cdots a_l}_{b_1\cdots b_m}\Vert_{H^{s_1,p_1}(\varphi_i(U_i))}\\
&\Vert \left[(\rho_2)_i\circ (\eta_iu)\circ \varphi^{-1}_i\right]^{c_1\cdots c_q}_{d_1\cdots d_r}\Vert_{H^{s_2,p_2}(\varphi_i(U_i))}
\end{split}
\end{align}
Using then the definition of (\ref{SobolevNormVB}), we can establish (\ref{LocalMultiPropManifolds}) quite straight-forwardly. Similarly, with the same constructions as above, an arbitrary tensor contraction implies the contraction a selected indices in (\ref{SobolevMultTensorProd.1}), which is again estimated by (\ref{SobolevMultTensorProd.2}), with summation along the corresponding indices, which is once more estimated by the product of the norms in (\ref{LocalMultiPropManifolds}) with simple manipulations.
\end{proof}

\subsection{Mapping properties of $\mathcal{L}^2(W^{2,q})$-operators - $q>\frac{n}{2}$}\label{SectionMappingProps}

In this section we shall recapitulate quite general mapping properties related to partial differential operators which apply in fairly low regularity. Related results have been analysed previously in the literature, for instance in relation to rough solutions to the Einstein constraint equations. Among such references, we highlight the recent paper \cite[Section 2]{MaxwellHolstRegularity}, where more general results than those presented here have been established. We shall follow some of the notations in this last paper, although we will try to provide a self-contained presentation for the benefit of the reader, which is more narrowly tailored to the results needed in the core of this paper. In the end, the associated local elliptic estimates in this section will be obtained through different techniques to those of \cite{MaxwellHolstRegularity}. %somewhat closer to more \emph{standard} techniques.

With the above goals in mind, let us start considering a closed manifold $M^n$ and a vector bundle $E\to M$ over $M$. We are interested in the class of second order linear operators $L$ on $C^{\infty}(E)$, which over any given coordinate patch trivialising $E$ via $(U,\varphi,\rho)$, can locally be written as
\begin{align}\label{Holst.1}
(Lu)\circ\varphi^{-1}=\sum_{|\alpha|\leq 2}A_{\alpha}\partial^{\alpha}(\tilde{\rho}\circ u\circ\varphi^{-1}), \text{ with } A^{\alpha}\in W_{loc}^{|\alpha|,q}(U,\mathbb{R}^{r\times r}),
\end{align}
where above $\alpha=(\alpha_1,\cdots,\alpha_n)$ denotes an arbitrary multi-index, $|\alpha|=\sum_{i}^n\alpha_i$, $\partial_{\alpha}=\partial_{(x^1)^{\alpha_1}\cdots (x^n)^{\alpha_n}}$ and $r$ is the dimension of the fibre of $E\to M$. Along the lines of \cite{MaxwellHolstRegularity}, we shall denote the set of such operators by $\mathcal{L}^2(W^{2,q})$. One can then establish the following lemma concerning the mapping properties of such an operator:
\begin{lem}\label{ContinuityPropsGeneralOps}
Let $L$ be a differential operator of class $\mathcal{L}^2(W^{2,q})$. Then, given $1<p< \infty$ and an integer $0\leq k\leq 2$, $L$ can be extended as a bounded linear map
\begin{align}
L:W^{k,p}(E)\to W^{k-2,p}(E)
\end{align}
as long as $q>\frac{n}{2}$ and $\frac{1}{q}+\frac{k-2}{n}\leq \frac{1}{p}\leq 1 - \frac{1}{q}+ \frac{k}{n}$.
\end{lem}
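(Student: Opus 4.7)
The plan is to reduce to a local estimate in a coordinate chart and then bound each summand $A_\alpha \partial^\alpha u$ by a direct application of Corollary \ref{ContractionsSobReg}. I would fix a finite cover $\{U_i,\varphi_i,\rho_i\}_{i=1}^N$ of $M$ by bundle charts trivialising $E$, together with a subordinate partition of unity $\{\eta_i\}$. By the definition of $W^{k,p}(E)$ and $W^{k-2,p}(E)$ reviewed in Section \ref{Preliminaries}, it suffices to control $\|\eta_i \cdot (Lu)\|_{W^{k-2,p}(\varphi_i(U_i))}$ by $\|u\|_{W^{k,p}(E)}$. Modulo lower-order commutator terms produced by distributing $\eta_i$ through the derivatives, each such local quantity is a sum of terms $A_\alpha \partial^\alpha(\tilde\rho_i\circ (\eta_i u)\circ\varphi_i^{-1})$ with $A_\alpha\in W^{|\alpha|,q}$ and $|\alpha|\leq 2$, so it is enough to bound each summand in $W^{k-2,p}$.

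For fixed $|\alpha|\leq 2$, I would invoke Corollary \ref{ContractionsSobReg} with $s_1=|\alpha|$, $p_1=q$, $s_2=k-|\alpha|$, $p_2=p$, target regularity $s=k-2\in\{-2,-1,0\}$ and target exponent $p$. The structural hypotheses are immediate: $s_1+s_2=k\geq 0$, and $s_i\geq s$ because $|\alpha|\geq 0\geq k-2$ and $k-|\alpha|\geq k-2$ (as $|\alpha|\leq 2$). The principal gap inequality $s_1+s_2-s>n\bigl(\tfrac{1}{p_1}+\tfrac{1}{p_2}-\tfrac{1}{p}\bigr)$ collapses to $2>n/q$, i.e.\ the standing assumption $q>\tfrac{n}{2}$. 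The condition $s_1-s\geq n\bigl(\tfrac{1}{q}-\tfrac{1}{p}\bigr)$ is most stringent at $|\alpha|=0$, where it reads $2-k\geq n\bigl(\tfrac{1}{q}-\tfrac{1}{p}\bigr)$, which is precisely the assumed lower bound $\tfrac{1}{p}\geq \tfrac{1}{q}+\tfrac{k-2}{n}$; meanwhile $s_2-s\geq 0$ amounts to $2-|\alpha|\geq 0$, which is trivial.

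The delicate case will be the duality regime $s_2=k-|\alpha|<0$, occurring only for $(k,|\alpha|)\in\{(0,1),(0,2),(1,2)\}$. Here $A_\alpha\partial^\alpha u$ is not a pointwise product but must be interpreted as an element of $W^{k-2,p}$ through the pairing $\langle A_\alpha\partial^\alpha u,\phi\rangle\doteq \langle \partial^\alpha u,A_\alpha\phi\rangle$ against test sections $\phi\in W^{2-k,p'}_0$, which is legitimate exactly when $A_\alpha\phi\in W^{|\alpha|-k,p'_2}_0$. This is the content of the supplementary condition (\ref{MultiplicationConditionsDualsManifolds}) in Corollary \ref{ContractionsSobReg}, and unpacking it for our choice of indices yields $k\geq n\bigl(\tfrac{1}{q}+\tfrac{1}{p}-1\bigr)$, i.e.\ the assumed upper bound $\tfrac{1}{p}\leq 1-\tfrac{1}{q}+\tfrac{k}{n}$; in the complementary regime $\min(s_1,s_2)\geq 0$ this inequality is automatically satisfied by $q>\tfrac{n}{2}$ and $p>1$. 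Once each term is controlled in this way, summing over $|\alpha|\leq 2$ and over the finite atlas produces the continuity estimate with operator norm controlled by $\sum_{|\alpha|\leq 2}\|A_\alpha\|_{W^{|\alpha|,q}}$ and the geometric data of the cover.
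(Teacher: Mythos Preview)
Your proposal is correct and follows essentially the same route as the paper: localise via a finite atlas and partition of unity, then control each term $A_\alpha\partial^\alpha u$ by the multiplication property, checking that the exponent conditions in Theorem~\ref{BesselMultLocal}/Corollary~\ref{ContractionsSobReg} with $(s_1,p_1)=(|\alpha|,q)$, $(s_2,p_2)=(k-|\alpha|,p)$, $(s,p)$ target, reduce precisely to $q>\tfrac{n}{2}$ and $\tfrac{1}{q}+\tfrac{k-2}{n}\leq \tfrac{1}{p}\leq \tfrac{1}{q'}+\tfrac{k}{n}$. The only organisational difference is that the paper avoids commutators by writing $Lu=\sum_i L(\eta_i u)$ and evaluating in a fixed chart, whereas you localise $Lu$ itself and absorb the commutator $[L,\eta_i]$ as a lower-order operator; both are fine.
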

\begin{remark}\label{RemarkContinuityProps}
Concerning the condition $\frac{1}{q}+\frac{k-2}{n}\leq \frac{1}{p}\leq 1 - \frac{1}{q}+ \frac{k}{n}$, let us highlight that:
\begin{itemize}
%\item Since $\frac{1}{q}+\frac{k-2}{n}\leq \frac{1}{q}\leq \frac{1}{q}+\frac{k}{n}$ for any $k=0,1,2$, then $p=q$ is always allowed;
\item If $k=2$ we we obtain $\frac{1}{q}\leq \frac{1}{p}\leq \frac{1}{q'}+\frac{2}{n}$, which is equivalent to $\frac{nq'}{n+2q'}\leq p\leq q$. Notice then that
\begin{align*}
\frac{nq'}{n+2q'}\leq 1 \Longleftrightarrow 1-\frac{1}{q}+\frac{2}{n}\geq 1\Longleftrightarrow \frac{1}{q}\leq \frac{2}{n},
\end{align*}
where the last inequality holds by hypothesis. That is, in this case the restrictions on $p$ are $1<p\leq q$;
\item If $k=1$, we find $\frac{1}{q}-\frac{1}{n}\leq \frac{1}{p}\leq \frac{1}{q'}+\frac{1}{n}$.\ Thus, if $q\geq n$, this translates into $\frac{nq'}{n+q'}\leq p<\infty $, while if $\frac{n}{2}<q<n$, we then find $\frac{nq'}{n+q'}\leq p<\frac{nq}{n-q}$;
\item If $k=0$, then $\frac{1}{q}-\frac{2}{n}\leq \frac{1}{p}\leq \frac{1}{q'}$ holds for all $p\geq q'$, since $\frac{1}{q}-\frac{2}{n}<0$ by hypothesis.
\item If $q\geq 2$, then $q'\leq 2\leq q$ and $\frac{1}{q}+\frac{k-2}{n}\leq \frac{1}{q}\leq  \frac{1}{q'}+ \frac{k}{n}$ for all $k=0,1,2$. Therefore, when $q\geq 2$, we see that $p=q$ is always allowed.
\end{itemize}
\end{remark}
\begin{proof}
Let $u\in C^{\infty}$ be compactly supported in a coordinate neighbourhood $(U,\varphi)$ where $L$ acts as in (\ref{Holst.1}), and let $(U,\varphi,\rho,\eta)$ be part of a trivialisation of $E$ by coordinate charts. Then,
\begin{align*}
\tilde{\rho}^l\circ (\eta Lu)\circ\varphi_x^{-1}&= \sum_{i=1}^N\tilde{\rho}^l\circ (\eta L(\eta_iu))\circ\varphi^{-1},\\
&= \sum_{i=1}^N\sum_{|\alpha|\leq 2}\tilde{\rho}^l\circ (\eta A_{\alpha})\circ \partial_x^{\alpha}(\tilde{\rho}_i(\eta_iu)\circ\varphi^{-1}),\\
&=\sum_{i=1}^N\sum_{|\alpha|\leq 2}\sum_{j=1}^r\eta\circ\varphi^{-1} (A_{\alpha})^l_j\partial_x^{\alpha}(\eta_ju^j\circ\varphi^{-1}).
%&= \sum_i\tilde{\rho}^l\circ (\eta A_{\alpha})\circ \partial_x^{\alpha}(\tilde{\rho}_i(\eta_iu)\circ\varphi^{-1}),
%&=\sum_i\eta (A_{\alpha})^l_j\partial_x^{\alpha}\eta_iu^j\circ\varphi^{-1}_i\circ\varphi^{-1}_i\circ\varphi^{-1},\\
\end{align*}
By hypothesis, $\rho^a_i\circ (\eta_iu)\circ \varphi^{-1}_i\in W^{k,p}(\varphi_i(U_i))$ for each $i=1,\cdots,N$ and $a=1,\cdots,k$. In fact, $\rho^a_i\circ (\eta_iu)\circ \varphi^{-1}_i\in W_0^{k,p}(\varphi_i(U_i))$, and so, in a neighbourhood of $\mathrm{supp}(\eta_i u)\subset U_i\cap U$,
\begin{align*}
\varphi_i\circ\varphi^{-1}:\varphi(U_i\cap U)\to \varphi_i (U_i\cap U)
\end{align*}
is a bounded diffeomorphism, and therefore by \cite[Theorem 3.35]{Adams1st}
\begin{align*}
\mathcal{A}_i\doteq (\varphi_i\circ\varphi^{-1})^{*}:W^{k,p}(\varphi_i(U_i\cap U))&\to W^{k,p}(\varphi(U_i\cap U)),\\
f&\mapsto f\circ \varphi_i\circ\varphi^{-1}
\end{align*}
is a bounded invertible map. Therefore, since 
\begin{align*}
\mathrm{supp}(\rho^a_i\circ (\eta_iu)\circ \varphi^{-1}_i)\subset\subset \varphi_i(U_i\cap U),
\end{align*}
we have
\begin{align*}
\mathcal{A}_i(\rho^a_i\circ (\eta_iu)\circ \varphi^{-1}_i)=\rho^a_i\circ (\eta_iu)\circ \varphi^{-1}\in W_0^{k,p}(\varphi(U_i\cap U)).
\end{align*}
Let us then chose a cut-off function $\chi_i$ such that $\mathrm{supp}(\chi_i)\subset\subset U_i\cap U$ and $\chi_i\equiv 1$ in a neighbourhood of $\mathrm{supp}(\eta_iu)\subset\subset U_i\cap U$. Then,
\begin{align*}
\tilde{\rho}^l\circ (\eta A_{\alpha}\circ \varphi^{-1})\circ \partial_x^{\alpha}(\tilde{\rho}(\eta_iu)\circ\varphi^{-1})&=\left(\eta A_{\alpha}\circ \varphi^{-1}\right)^l_a\partial_x^{\alpha}(\tilde{\rho}^a(\eta_iu)\circ\varphi^{-1}),\\
&=\chi_i \left(\eta A_{\alpha}\circ \varphi^{-1}\right)^l_a\partial_x^{\alpha}(\mathcal{A}_i(\rho^a_i\circ (\eta_iu)\circ \varphi^{-1}_i))\\
&\in W_0^{|\alpha|,q}(\varphi(U\cap U_i))\otimes W_0^{k-|\alpha|,p}(\varphi(U\cap U_i))
\end{align*}
% $A^{\alpha}\partial_{\alpha}u\in W^{|\alpha|,q}(U)\otimes W^{k-|\alpha|,p}(U)$. 
Therefore, we need to establish the embedding $W_0^{|\alpha|,q}(\varphi(U\cap U_i))\otimes W_0^{k-|\alpha|,p}(\varphi(U\cap U_i))\hookrightarrow W^{k-2,p}(\varphi(U\cap U_i))$. Appealing to Theorem \ref{BesselMultLocal}, one needs to guarantee 
\begin{align*}
&|\alpha|+k-|\alpha|=k\geq 0,\:\:\: (s_1+s_2\geq 0)\\
&|\alpha|,k-|\alpha|\geq k-2 \:\:\: \forall |\alpha|=0,1,2 \Longleftrightarrow k\leq 2, \:\:\:\ (s_1,s_2\geq s)
\end{align*}
and then from (\ref{MultiplicationConditionsBesell}), the following conditions guarantee the embedding:
\begin{align*}
%&|\alpha|+k-|\alpha|=k\geq 0,\\
%&\min\{|\alpha|,k-|\alpha|\}=k-|\alpha|\geq k-2\Longleftrightarrow |\alpha|\leq 2,\\
&|\alpha|-k+2\geq n\left(\frac{1}{q} - \frac{1}{p} \right)\Longleftrightarrow \frac{1}{p}\geq \frac{1}{q}+\frac{k-2-|\alpha|}{n},\\
&2-|\alpha|\geq 0,\\
&|\alpha|+k-|\alpha|-k+2>\frac{n}{q}\Longleftrightarrow q>\frac{n}{2},\\
&|\alpha|+k-|\alpha|\geq n\left(\frac{1}{p}+\frac{1}{q}-1 \right)\Longleftrightarrow \frac{1}{p}+\frac{1}{q}\leq 1+\frac{k}{n}.
\end{align*}
Notice that the last three conditions above are satisfied by hypotheses, and for the first one to hold for all $0\leq |\alpha|\leq 2$, we must have
\begin{align*}
\frac{1}{p}\geq \frac{1}{q}+\frac{k-2}{n}\geq \frac{1}{q}+\frac{k-2-|\alpha|}{n},
\end{align*}
which is satisfied by hypothesis.
%For the third one, notice that since $p\leq q$ and $0\leq |\alpha|\leq 2$, then $k\leq 2$, implies 
%\begin{align*}
%|\alpha|-k+2\geq 0\geq n\left(\frac{1}{q} - \frac{1}{p} \right).
%\end{align*}
Therefore $W_0^{|\alpha|,q}\otimes W_0^{k-|\alpha|,p}\hookrightarrow W^{k-2,p}$ holds and thus,\footnote{Below, the constant $C$ appearing in the estimates may vary from line to line, keeping its independence on $u$.} 
\begin{align*}
\Vert \tilde{\rho}^l\circ (\eta A_{\alpha}\circ \varphi^{-1})\circ \partial_x^{\alpha}(\tilde{\rho}(\eta_iu)\circ&\varphi^{-1})\Vert_{W^{k-2,p}(\varphi(U_i\cap U))}\leq \\ 
&\leq C\Vert \chi_i \left(\eta A_{\alpha}\circ \varphi^{-1}\right)^l_a\Vert_{W^{|\alpha|,q}(\varphi(U_i\cap U))} \Vert \partial_x^{\alpha}(\mathcal{A}_i(\rho^a_i\circ (\eta_iu)\circ \varphi^{-1}_i))\Vert_{W^{k-|\alpha|,p}(\varphi(U_i\cap U))}\\
&\leq C\Vert \chi_i \left(\eta A_{\alpha}\circ \varphi^{-1}\right)^l_a\Vert_{W^{|\alpha|,q}(\varphi(U_i\cap U))} \Vert \mathcal{A}_i(\rho^a_i\circ (\eta_iu)\circ \varphi^{-1}_i)\Vert_{W^{k,p}(\varphi(U_i\cap U))}\\
&\leq C\Vert \chi_i \left(\eta A_{\alpha}\circ \varphi^{-1}\right)^l_a\Vert_{W^{|\alpha|,q}(\varphi(U_i\cap U))} \Vert \rho^a_i\circ (\eta_iu)\circ \varphi^{-1}_i\Vert_{W^{k,p}(\varphi_i(U_i\cap U))}\\
&\leq C\Vert \left(\eta A_{\alpha}\circ \varphi^{-1}\right)^l_a\Vert_{W^{|\alpha|,q}(\varphi(U))} \Vert \rho^a_i\circ (\eta_iu)\circ \varphi^{-1}_i\Vert_{W^{k,p}(\varphi_i(U_i))}
\end{align*}
Hence
\begin{align*}
\Vert \tilde{\rho}\circ (\eta Lu)\circ\varphi^{-1}\Vert_{W^{k-2,p}(\varphi(U))}&=\sum_{l=1}^r\Vert \tilde{\rho}^l\circ (\eta Lu)\circ\varphi^{-1}\Vert_{W^{k-2,p}(\varphi(U))},\\
&\leq \sum_{l=1}^r\sum_{i=1}^N\sum_{|\alpha|\leq 2} \Vert \tilde{\rho}^l\circ (\eta A_{\alpha})\circ \partial_x^{\alpha}(\tilde{\rho}(\eta_iu)\circ\varphi^{-1})\Vert_{W^{k-2,p}(\varphi(U))},\\
&\leq C\sum_{l=1}^r\sum_{|\alpha|\leq 2}\sum_{a= 1}^r\Vert \left(\eta A_{\alpha}\circ \varphi^{-1}\right)^l_a\Vert_{W^{|\alpha|,q}(\varphi(U))} \sum_i\Vert \rho^a_i\circ (\eta_iu)\circ \varphi^{-1}_i\Vert_{W^{k,p}(\varphi_i(U_i))},\\
&\leq C\sum_{|\alpha|\leq 2}\sum_{l=1}^r\sum_{a= 1}^r\Vert \left(\eta A_{\alpha}\circ \varphi^{-1}\right)^l_a\Vert_{W^{|\alpha|,q}(\varphi(U))} \Vert u\Vert_{W^{k,p}(E)},\\
&\leq C\sum_{|\alpha|\leq 2}\Vert \tilde{\rho}\circ\left(\eta A_{\alpha}\right)\circ \varphi^{-1}\Vert_{W^{|\alpha|,q}(\varphi(U))} \Vert u\Vert_{W^{k,p}(E)}
\end{align*}

Appealing to a partition of unity to localise an arbitrary section $u\in C^{\infty}$ into a sum $u=\sum_iu_i$ with $u_i$ supported in coordinate patches trivialising the bundle, then
\begin{align}\label{OpContinuity2ndOrder}
\begin{split}
\Vert Lu\Vert_{W^{k-2,p}(E)}&\leq\sum_{i=1}^N\Vert Lu_i\Vert_{W^{k-2,p}(E)}= \sum_{i=1}^N\sum_{j=1}^N\Vert \tilde{\rho}_j\circ (\eta_jLu_i)\circ\varphi_j^{-1}\Vert_{W^{k-2,p}(\varphi_j(U_j))},\\
%&\leq \sum_i\sum_{j}\sum_{|\alpha|}\Vert \tilde{\rho}_j\circ (\eta_jA_{\alpha}\partial^{\alpha}u_i)\circ\varphi_j^{-1}\Vert_{W^{k-2,p}(\varphi_j(U_j\cap U_i))}\\
&\leq \sum_{i,j=1}^N\sum_{|\alpha|\leq 2}C_{i,j,\alpha}\Vert \tilde{\rho}_j\circ \left(\eta_j A_{\alpha}\right)\circ \varphi_j^{-1}\Vert_{W^{|\alpha|,q}(\varphi(U))} \Vert u\Vert_{W^{k,p}(E)}\\
&\leq C\left(\sum_{i,j=1}^N\sum_{|\alpha|\leq 2}\Vert \tilde{\rho}_j\circ \left(\eta_j A_{\alpha}\right)\circ \varphi_j^{-1}\Vert_{W^{|\alpha|,q}(\varphi(U))} \right)\Vert u\Vert_{W^{k,p}(E)}
%&\leq C_1\sum_{i,j=1}^N\Vert A_i^{\alpha}\Vert_{W^{|\alpha|,q}}\Vert \partial_{\alpha}u_i\Vert_{W^{k-|\alpha|,p}} \leq %C_2\left(\sum_i\Vert A_i^{\alpha}\Vert_{W^{|\alpha|,q}}\right)\Vert u\Vert_{W^{k,p}},
\end{split}
\end{align}
which establishes that
\begin{align*}
L:(C^{\infty}(M;E),\Vert \cdot\Vert_{W^{k,p}})\to W^{k-2,p}(M;E)
\end{align*}
is continuous under our hypotheses on $L$, $p$ and $k$. Finally, since $C^{\infty}$ is dense in $W^{k,p}(M;E)$, the general claim follows.

\end{proof}

Let us consider also first order operators, acting on smooth sections like 
\medskip
\begin{align}\label{1stOrderOp}
(Lu)\circ\varphi^{-1}=\sum_{|\alpha|\leq 1}A_{\alpha}\partial^{\alpha}(\tilde{\rho}\circ u\circ\varphi^{-1}), \text{ with } A^{\alpha}\in W_{loc}^{|\alpha|,q}(U,\mathbb{R}^{r\times r}),
\end{align}
where above $\alpha=(\alpha_1,\cdots,\alpha_n)$ denotes an arbitrary multi-index, $|\alpha|=\sum_{i}^n\alpha_i$, $\partial_{\alpha}=\partial_{(x^1)^{\alpha_1}\cdots (x^n)^{\alpha_n}}$ and $r$ is the dimension of the fibre of $E\to M$. Following \cite{MaxwellHolstRegularity}, we shall denote the set of such elliptic operators by $\mathcal{L}^1(W^{1,q})$. One can then establish the following lemma concerning the mapping properties of such an operator:
\begin{lem}\label{ContinuityPropsGeneralOps1stOrder}
Let $L$ be a differential operator of class $\mathcal{L}^1(W^{1,q})$. Then, given $1<p< \infty$ and an integer $0\leq k\leq 1$, $L$ can be extended as a bounded linear map
\begin{align}
L:W^{k,p}(E)\to W^{k-1,p}(E)
\end{align}
as long as $q>n$ and $\frac{1}{q}+\frac{k-1}{n}\leq \frac{1}{p}\leq 1-\frac{1}{q}+ \frac{k}{n}$.
\end{lem}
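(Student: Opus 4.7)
The plan is to imitate the proof of Lemma \ref{ContinuityPropsGeneralOps} with the second-order machinery replaced by its first-order analogue throughout. By density of $C^{\infty}(E)$ in $W^{k,p}(E)$, it suffices to establish the estimate $\Vert Lu\Vert_{W^{k-1,p}(E)}\leq C\Vert u\Vert_{W^{k,p}(E)}$ for smooth sections $u$, from which the extension to all of $W^{k,p}(E)$ follows.

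First I would fix a finite trivialising cover $\{(U_i,\varphi_i,\rho_i)\}_{i=1}^N$ of $E$ together with a subordinate partition of unity $\{\eta_i\}$, and decompose $u=\sum_i\eta_i u$. In any coordinate patch $(U,\varphi,\rho)$ of the cover together with a cut-off $\eta$ from the partition of unity, the local representation of $\eta L(\eta_i u)$ is, by (\ref{1stOrderOp}), a sum over $|\alpha|\leq 1$ of terms of the form $\chi_i\,(\eta A_\alpha\circ\varphi^{-1})^l_a\,\partial^\alpha\bigl(\mathcal{A}_i(\tilde{\rho}_i^a\circ(\eta_i u)\circ\varphi_i^{-1})\bigr)$, where $\chi_i$ is a cut-off supported in $U\cap U_i$ equal to $1$ on the support of $\eta\eta_i$, and $\mathcal{A}_i=(\varphi_i\circ\varphi^{-1})^{*}$ is the transition pullback, which is a bounded isomorphism on $W^{k,p}$ by \cite[Theorem 3.35]{Adams1st}. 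By the $\mathcal{L}^1(W^{1,q})$ hypothesis the first factor lies in $W_0^{|\alpha|,q}(\varphi(U\cap U_i))$, while the second factor belongs to $W_0^{k-|\alpha|,p}(\varphi(U\cap U_i))$ as an $|\alpha|$-th derivative of an element of $W_0^{k,p}$.

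The core step is then an application of Theorem \ref{BesselMultLocal} to obtain the continuous embedding $W_0^{|\alpha|,q}\otimes W_0^{k-|\alpha|,p}\hookrightarrow W^{k-1,p}$ on each $\varphi(U\cap U_i)$ for $|\alpha|\in\{0,1\}$. With the assignments $s_1=|\alpha|$, $s_2=k-|\alpha|$, $s=k-1$, $p_1=q$, $p_2=p$, the hypotheses of the theorem reduce to: $k\geq 0$ and $k\leq 1$, which are granted; the strict gap inequality $s_1+s_2-s>n(1/q+1/p-1/p)$, which collapses to $q>n$; the non-strict conditions $s_i-s\geq n(1/p_i-1/p)$, which, taken over both $|\alpha|=0,1$, yield the lower bound $1/p\geq 1/q+(k-1)/n$ appearing in the statement; and, in the only subcase where $\min(s_1,s_2)<0$, namely $k=0$ with $|\alpha|=1$, the dual condition (\ref{MultiplicationConditionsDualsBessel}), which produces $1/p\leq 1-1/q$, in agreement with the stated upper bound at $k=0$ (the upper bound $1/p\leq 1-1/q+1/n$ at $k=1$ being automatic under $q>n$, since there no dual condition is needed).

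The main point requiring care, as in Lemma \ref{ContinuityPropsGeneralOps}, is this bookkeeping of the multiplication conditions across the two values of $|\alpha|$ and the two values of $k$, so as to verify that the bilateral bound on $1/p$ in the statement exhausts exactly the admissible range of Theorem \ref{BesselMultLocal}; no new analytical difficulty arises beyond the first-order analogue of the previous argument. Once the local embedding is in place, summing over $|\alpha|\leq 1$ and over the partition of unity, following the template of (\ref{OpContinuity2ndOrder}), produces
\begin{equation*}
\Vert Lu\Vert_{W^{k-1,p}(E)}\leq C\left(\sum_{i,j=1}^N\sum_{|\alpha|\leq 1}\Vert \tilde{\rho}_j\circ(\eta_j A_\alpha)\circ\varphi_j^{-1}\Vert_{W^{|\alpha|,q}(\varphi_j(U_j))}\right)\Vert u\Vert_{W^{k,p}(E)},
\end{equation*}
with the coefficient in parentheses finite by the $\mathcal{L}^1(W^{1,q})$ hypothesis on $L$, which completes the proof.
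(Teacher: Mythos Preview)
Your proposal is correct and follows essentially the same approach as the paper's own proof: localise via a partition of unity and trivialising charts, reduce to the product estimate $W_0^{|\alpha|,q}\otimes W_0^{k-|\alpha|,p}\hookrightarrow W^{k-1,p}$ via Theorem \ref{BesselMultLocal}, and then assemble the global estimate by density. Your bookkeeping is in fact slightly sharper than the paper's, since you correctly observe that the dual condition (\ref{MultiplicationConditionsDualsBessel}) is only invoked in the single case $k=0$, $|\alpha|=1$, whereas the paper checks it uniformly; your remark that the upper bound at $k=1$ is automatic under $q>n$ is a valid refinement.
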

\begin{proof}
Let $u\in C^{\infty}$ be compactly supported in a coordinate neighbourhood $(U,\varphi)$ where $L$ acts as in (\ref{Holst.1}), and let $(U,\varphi,\rho,\eta)$ be part of a trivialisation of $E$ by coordinate charts. Then,
\begin{align*}
\tilde{\rho}^l\circ (\eta Lu)\circ\varphi_x^{-1}%&= \sum_{i=1}^N\tilde{\rho}^l\circ (\eta L(\eta_iu))\circ\varphi^{-1}).
%&= \sum_{i=1}^N\sum_{|\alpha|\leq 1}\tilde{\rho}^l\circ (\eta A_{\alpha})\circ \partial_x^{\alpha}(\tilde{\rho}_i(\eta_iu)\circ\varphi^{-1}),\\
&=\sum_{i=1}^N\sum_{|\alpha|\leq 1}\sum_{j=1}^r\eta\circ\varphi^{-1} (A_{\alpha})^l_j\partial_x^{\alpha}(\eta_ju^j\circ\varphi^{-1}).
%&= \sum_i\tilde{\rho}^l\circ (\eta A_{\alpha})\circ \partial_x^{\alpha}(\tilde{\rho}_i(\eta_iu)\circ\varphi^{-1}),
%&=\sum_i\eta (A_{\alpha})^l_j\partial_x^{\alpha}\eta_iu^j\circ\varphi^{-1}_i\circ\varphi^{-1}_i\circ\varphi^{-1},\\
\end{align*}
By hypothesis, $\rho^a_i\circ (\eta_iu)\circ \varphi^{-1}_i\in W^{k,p}(\varphi_i(U_i))$ for each $i=1,\cdots,N$ and $a=1,\cdots,k$. In fact, $\rho^a_i\circ (\eta_iu)\circ \varphi^{-1}_i\in W_0^{k,p}(\varphi_i(U_i))$, and, once more appealing to %so, in a neighbourhood of $\mathrm{supp}(\eta_i u)\subset U_i\cap U$,
%\begin{align*}
%\varphi_i\circ\varphi^{-1}:\varphi(U_i\cap U)\to \varphi_i (U_i\cap U)
%\end{align*}
%is a bounded diffeomorphism, and therefore by \cite[Theorem 3.35]{Adams}
\begin{align*}
\mathcal{A}_i\doteq (\varphi_i\circ\varphi^{-1})^{*}:W^{k,p}(\varphi_i(U_i\cap U))&\to W^{k,p}(\varphi(U_i\cap U)),\\
f&\mapsto f\circ \varphi_i\circ\varphi^{-1}
\end{align*}
being a bounded invertible map, we have %. Therefore, since 
%\begin{align*}
%\mathrm{supp}(\rho^a_i\circ (\eta_iu)\circ \varphi^{-1}_i)\subset\subset \varphi_i(U_i\cap U),
%\end{align*}
%we have
\begin{align*}
\mathcal{A}_i(\rho^a_i\circ (\eta_iu)\circ \varphi^{-1}_i)=\rho^a_i\circ (\eta_iu)\circ \varphi^{-1}\in W_0^{k,p}(\varphi(U_i\cap U)).
\end{align*}
Choosing a cut-off function $\chi_i$ such that $\mathrm{supp}(\chi_i)\subset\subset U_i\cap U$ and $\chi_i\equiv 1$ in a neighbourhood of $\mathrm{supp}(\eta_iu)\subset\subset U_i\cap U$. Then,
\begin{align*}
\tilde{\rho}^l\circ (\eta A_{\alpha}\circ \varphi^{-1})\circ \partial_x^{\alpha}(\tilde{\rho}(\eta_iu)\circ\varphi^{-1})&=\left(\eta A_{\alpha}\circ \varphi^{-1}\right)^l_a\partial_x^{\alpha}(\tilde{\rho}^a(\eta_iu)\circ\varphi^{-1}),\\
&=\chi_i \left(\eta A_{\alpha}\circ \varphi^{-1}\right)^l_a\partial_x^{\alpha}(\mathcal{A}_i(\rho^a_i\circ (\eta_iu)\circ \varphi^{-1}_i))\\
&\in W_0^{|\alpha|,q}(\varphi(U\cap U_i))\otimes W_0^{k-|\alpha|,p}(\varphi(U\cap U_i))
\end{align*}
% $A^{\alpha}\partial_{\alpha}u\in W^{|\alpha|,q}(U)\otimes W^{k-|\alpha|,p}(U)$. 
Therefore, we need to establish the embedding $W_0^{|\alpha|,q}(\varphi(U\cap U_i))\otimes W_0^{k-|\alpha|,p}(\varphi(U\cap U_i))\hookrightarrow W^{k-1,p}(\varphi(U\cap U_i))$. Appealing to Theorem \ref{BesselMultLocal}, one needs to guarantee 
\begin{align*}
&|\alpha|+k-|\alpha|=k\geq 0,\:\:\: (s_1+s_2\geq 0)\\
&|\alpha|,k-|\alpha|\geq k-1 \:\:\: \forall |\alpha|=0,1 \Longleftrightarrow k\leq 1, \:\:\:\ (s_1,s_2\geq s)
\end{align*}
and then from (\ref{MultiplicationConditionsBesell}), the following conditions guarantee the embedding:
\begin{align*}
%&|\alpha|+k-|\alpha|=k\geq 0,\\
%&\min\{|\alpha|,k-|\alpha|\}=k-|\alpha|\geq k-2\Longleftrightarrow |\alpha|\leq 2,\\
&|\alpha|-k+1\geq n\left(\frac{1}{q} - \frac{1}{p} \right)\Longleftrightarrow \frac{1}{p}\geq \frac{1}{q}+\frac{k-1-|\alpha|}{n},\\
&1-|\alpha|\geq 0,\\
&|\alpha|+k-|\alpha|-k+1>\frac{n}{q}\Longleftrightarrow q>n,\\
&|\alpha|+k-|\alpha|\geq n\left(\frac{1}{p}+\frac{1}{q}-1 \right)\Longleftrightarrow \frac{1}{p}+\frac{1}{q}\leq 1+\frac{k}{n}.
\end{align*}
Notice again that the last three conditions above are satisfied by hypotheses, while for the first one to hold for all $0\leq |\alpha|\leq 1$, we need to have
\begin{align*}
\frac{1}{p}\geq \frac{1}{q}+\frac{k-1}{n}\geq \frac{1}{q}+\frac{k-1-|\alpha|}{n},
\end{align*}
which does hold by hypothesis.
%For the third one, notice that since $p\leq q$ and $0\leq |\alpha|\leq 2$, then $k\leq 2$, implies 
%\begin{align*}
%|\alpha|-k+2\geq 0\geq n\left(\frac{1}{q} - \frac{1}{p} \right).
%\end{align*}
Therefore the embedding $W_0^{|\alpha|,q}\otimes W_0^{k-|\alpha|,p}\hookrightarrow W^{k-2,p}$ holds and thus, 
\begin{align*}
\Vert \tilde{\rho}^l\circ (\eta A_{\alpha}\circ \varphi^{-1})\circ \partial_x^{\alpha}(\tilde{\rho}(\eta_iu)\circ&\varphi^{-1})\Vert_{W^{k-1,p}(\varphi(U_i\cap U))}\leq \\ 
&\leq C\Vert \chi_i \left(\eta A_{\alpha}\circ \varphi^{-1}\right)^l_a\Vert_{W^{|\alpha|,q}(\varphi(U_i\cap U))} \Vert \partial_x^{\alpha}(\mathcal{A}_i(\rho^a_i\circ (\eta_iu)\circ \varphi^{-1}_i))\Vert_{W^{k-|\alpha|,p}(\varphi(U_i\cap U))}\\
&\leq C\Vert \chi_i \left(\eta A_{\alpha}\circ \varphi^{-1}\right)^l_a\Vert_{W^{|\alpha|,q}(\varphi(U_i\cap U))} \Vert \mathcal{A}_i(\rho^a_i\circ (\eta_iu)\circ \varphi^{-1}_i)\Vert_{W^{k,p}(\varphi(U_i\cap U))}\\
&\leq C\Vert \chi_i \left(\eta A_{\alpha}\circ \varphi^{-1}\right)^l_a\Vert_{W^{|\alpha|,q}(\varphi(U_i\cap U))} \Vert \rho^a_i\circ (\eta_iu)\circ \varphi^{-1}_i\Vert_{W^{k,p}(\varphi_i(U_i\cap U))}\\
&\leq C\Vert \left(\eta A_{\alpha}\circ \varphi^{-1}\right)^l_a\Vert_{W^{|\alpha|,q}(\varphi(U))} \Vert \rho^a_i\circ (\eta_iu)\circ \varphi^{-1}_i\Vert_{W^{k,p}(\varphi_i(U_i))}
\end{align*}
Hence
\begin{align*}
\Vert \tilde{\rho}\circ (\eta Lu)\circ\varphi^{-1}\Vert_{W^{k-1,p}(\varphi(U))}&=\sum_{l=1}^r\Vert \tilde{\rho}^l\circ (\eta Lu)\circ\varphi^{-1}\Vert_{W^{k-1,p}(\varphi(U))},\\
&\leq \sum_{l=1}^r\sum_{i=1}^N\sum_{|\alpha|\leq 1} \Vert \tilde{\rho}^l\circ (\eta A_{\alpha})\circ \partial_x^{\alpha}(\tilde{\rho}(\eta_iu)\circ\varphi^{-1})\Vert_{W^{k-1,p}(\varphi(U))},\\
%&\leq C\sum_{l=1}^k\sum_{|\alpha|\leq 2}\sum_{a= 1}^k\Vert \left(\eta A_{\alpha}\circ \varphi^{-1}\right)^l_a\Vert_{W^{|\alpha|,q}(\varphi(U))} \sum_i\Vert \rho^a_i\circ (\eta_iu)\circ \varphi^{-1}_i\Vert_{W^{k,p}(\varphi_i(U_i))},\\
%&\leq C\sum_{|\alpha|\leq 2}\sum_{l=1}^k\sum_{a= 1}^k\Vert \left(\eta A_{\alpha}\circ \varphi^{-1}\right)^l_a\Vert_{W^{|\alpha|,q}(\varphi(U))} \Vert u\Vert_{W^{k,p}(E)},\\
&\leq C\sum_{|\alpha|\leq 1}\Vert \tilde{\rho}\circ\left(\eta A_{\alpha}\right)\circ \varphi^{-1}\Vert_{W^{|\alpha|,q}(\varphi(U))} \Vert u\Vert_{W^{k,p}(E)}
\end{align*}

Appealing to a partition of unity to localise an arbitrary section $u\in C^{\infty}$ into a sum $u=\sum_iu_i$ with $u_i$ supported in coordinate patches trivialising the bundle, then
\begin{align*}
\Vert Lu\Vert_{W^{k-1,p}(E)}&\leq \sum_{i=1}^N\sum_{j=1}^N\Vert \tilde{\rho}_j\circ (\eta_jLu_i)\circ\varphi_j^{-1}\Vert_{W^{k-1,p}(\varphi_j(U_j))},\\
%&\leq \sum_{i,j=1}^N\sum_{|\alpha|\leq 2}C_{i,j,\alpha}\Vert \tilde{\rho}_j\circ \left(\eta_j A_{\alpha}\right)\circ \varphi_j^{-1}\Vert_{W^{|\alpha|,q}(\varphi(U))} \Vert u\Vert_{W^{k,p}(E)}\\
&\leq C\left(\sum_{i,j=1}^N\sum_{|\alpha|\leq 1}\Vert \tilde{\rho}_j\circ \left(\eta_j A_{\alpha}\right)\circ \varphi_j^{-1}\Vert_{W^{|\alpha|,q}(\varphi(U))} \right)\Vert u\Vert_{W^{k,p}(E)}
\end{align*}
which establishes that
\begin{align*}
L:(C^{\infty}(M;E),\Vert \cdot\Vert_{W^{k,p}})\to W^{k-1,p}(M;E)
\end{align*}
is continuous under our hypotheses on $L$, $p$ and $k$. Finally, since $C^{\infty}$ is dense in $W^{k,p}(M;E)$, the general claim follows.

\end{proof}

\subsection{AE Manifolds}\label{AESection}

Let us now introduce some definitions and technical results concerning AE manifolds. First, we will consider manifolds $M^n$ which consist of a compact core $K$ such that $M\backslash K$ is the disjoint union of a finite number of open sets $U_i$, such that each $U_i$ is diffeomorphic to the complement of a closed ball in Euclidean space. Such manifolds, in part of the classic literature, are referred to as \emph{Euclidean at infinity} (see \cite{CB-C}). The diffeomorphisms $\Phi_{i}:U_i\subset M\to \mathbb{R}^n\backslash\overline{B}$ induce charts, which are referred to as \emph{end coordinate systems} and are said to provide a \emph{structure of infinity} \cite{BartnikMass}. 

On these model manifolds we want to control the behaviour of fields near infinity. This can be done in different ways. For instance, some authors opt to fix the end coordinate systems and impose decay rates for fields written in those coordinates.  Another common option is to introduce function spaces with weights adapted to our manifold structure which provide good controls of the asymptotic behaviour of the fields (see, for instance, \cite{CB-C,Cantor-SplittingTensors,BartnikMass,Maxwell1}). For our purposes, the latter option is best, since, as we will see below, we can also tailor such weighted spaces to have good analytic properties useful for our PDE analysis. Such spaces have been investigated for a long time by different authors, such as \cite{NirenbergWalker,McOwen,Lockhart,CB-C,BartnikMass}. In what follows, we will adopt the conventions given in \cite{BartnikMass} for the weight parameters, which has become the most common one in current literature. Let us start with the following definition on $\mathbb{R}^n$.

\begin{defn}[Weighted Spaces]\label{WeightedSobolevNormsDefs}
Let $E\rightarrow \mathbb{R}^n$ be tensor bundle over $\mathbb{R}^n$. The weighted Sobolev space $W^{k,p}_{\delta}$, with $k$ a non-negative integer, $1<p<\infty$ and $\delta\in\mathbb{R}$, of sections $u$ of $E$, is defined as the subset of $W^{k,p}_{loc}$ for which the norm 
\begin{align}\label{WeightedSobolevNorm}
\Vert u\Vert_{W^{k,p}_{\delta}(\mathbb{R}^n)}\doteq \sum_{|\alpha|\leq k}\Vert \sigma^{-\delta-\frac{n}{p}+|\alpha|}\partial^{\alpha}u \Vert_{L^p(\mathbb{R}^n)}
\end{align}
is finite, where $\sigma(x)\doteq (1+|x|^2)^{\frac{1}{2}}$ and $\alpha$ denotes an arbitrary multi-index.

Similarly, the weighted $C^{k}_{\delta}$-spaces are given by sections $u\in \Gamma(E)$, whose components are $k$-times continuously differentiable and which satisfy
\begin{align}\label{WeightedC^k-Norm}
\Vert u\Vert_{C^{k}_{\delta}}\doteq \sum_{|\alpha|\leq k}\sup_{x\in\mathbb{R}^n}\sigma^{-\delta+|\alpha|}|\partial^{\alpha}u(x)|<\infty.
\end{align}
\end{defn}

Below we collect the main properties of these spaces, which proofs can be found in the previously cited references.
\begin{theo}\label{AEWeightedEmbeedings}
Let $E\rightarrow \mathbb{R}^n$ be a tensor bundle as in (\ref{WeightedSobolevNormsDefs}). Then, the following continuous embeddings hold:
\begin{enumerate}
\item If $1< p\leq q<\infty $ and $\delta_2<\delta_1$, then $L^{q}_{\delta_2}\hookrightarrow L^p_{\delta_1}$;
\item If $kp<n$, then $W^{k,p}_{\delta}\hookrightarrow L^{q}_{\delta}$ for all $p\leq q\leq \frac{np}{n-kp}$;
\item If $kp=n$, then $W^{k,p}_{\delta}\hookrightarrow L^q_{\delta}$ for all $q\geq p$;
\item If $kp>n$, then $W^{k+l,p}_{\delta}\hookrightarrow C^l_{\delta}$ for any $l=0,1,2\cdots$;
\item For any given $\epsilon>0$ there is a constant $C_{\epsilon}>0$ such that, for all $u\in W^{2,p}_{\delta}$, $1<p <\infty$ the following inequality holds
\begin{align}
\Vert u\Vert_{W^{1,p}_{\delta}}\leq \epsilon\Vert u\Vert_{W^{2,p}_{\delta}} + C_{\epsilon}\Vert u\Vert_{L^{p}_{\delta}}.
\end{align} 
\item If $1<p\leq q<\infty $ and $k_1+k_2>\frac{n}{q}+k$ where $k_1,k_2\geq k$ are non-negative integers, then, we have a continuous multiplication property $W^{k_1,p}_{\delta_1}\otimes W^{k_2,q}_{\delta_2}\hookrightarrow W^{k,p}_{\delta}$ for any $\delta>\delta_1+\delta_2$. In particular, $W^{k,p}_{\delta}$ is an algebra under multiplication for $k>\frac{n}{p}$ and $\delta<0$.
\end{enumerate}
\end{theo}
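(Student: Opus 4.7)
The theorem collects six standard facts about weighted Sobolev and $C^k$ spaces on $\mathbb{R}^n$, and the unifying technical device throughout is a dyadic decomposition combined with rescaling. My plan is to develop this machinery once and then apply it uniformly to each of the six claims.

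First I would set up the dyadic annular decomposition. For each integer $j\geq 0$, let $A_j\doteq \{x\in\mathbb{R}^n\,:\, 2^{j-1}\leq |x|\leq 2^{j+1}\}$ for $j\geq 1$, and $A_0\doteq \{|x|\leq 2\}$, so that $\mathbb{R}^n=\bigcup_j A_j$ with bounded overlap. On $A_j$ one has $\sigma(x)\sim 2^j$ (and $\sigma\sim 1$ on $A_0$). For each $j\geq 1$, I introduce the rescaled annulus $\tilde A\doteq \{1/2\leq |y|\leq 2\}$ and the dilation $x=2^j y$, which identifies $A_j$ with $\tilde A$. A tensor field $u$ on $A_j$ pulls back to $\tilde u(y)\doteq u(2^j y)$ on $\tilde A$, and each weak derivative transforms by $\partial^\alpha u(x)=2^{-j|\alpha|}(\partial^\alpha \tilde u)(y)$, while the Lebesgue measure picks up $dx=2^{jn}dy$. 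Putting these together, one obtains the scaling identities
\begin{equation*}
\Vert \partial^\alpha u\Vert_{L^p(A_j)}=2^{j(n/p-|\alpha|)}\Vert \partial^\alpha \tilde u\Vert_{L^p(\tilde A)},\qquad \Vert u\Vert_{W^{k,p}_{\delta}(A_j)}\sim 2^{-j\delta}\Vert \tilde u\Vert_{W^{k,p}(\tilde A)},
\end{equation*}
where in the second estimate I also absorb the factor $\sigma^{-\delta-n/p+|\alpha|}\sim 2^{j(-\delta-n/p+|\alpha|)}$ from the definition (\ref{WeightedSobolevNorm}). These identities are the heart of the argument: all weighted statements reduce to unweighted statements on the fixed model annulus $\tilde A$, with the weight $\delta$ keeping track of the decay rate.

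With this in hand, item (1) follows from a one-line Hölder argument on the whole space, writing $\sigma^{-\delta_1-n/p}u=\sigma^{-\delta_2-n/q}u\cdot \sigma^{\delta_2-\delta_1+n/q-n/p}$ and using that $\delta_2<\delta_1$ makes the second factor $L^r$-integrable for $\tfrac{1}{r}=\tfrac{1}{p}-\tfrac{1}{q}$. Items (2), (3), (4) are handled annulus-by-annulus: on $\tilde A$ the standard unweighted Sobolev embeddings yield $W^{k,p}(\tilde A)\hookrightarrow L^q(\tilde A)$ (resp.\ $C^l(\tilde A)$) under the stated conditions on $k,p,n$, and then the scaling identities above turn this into $W^{k,p}_\delta(A_j)\hookrightarrow L^q_\delta(A_j)$ with a constant independent of $j$ — the exponent of $2^j$ produced by the weight on the left and the weight on the right cancel exactly because both sides use the same $\delta$. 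One then sums (or takes supremum) over $j$ to conclude. Item (5), the scaled Ehrling-type inequality, is proved in the same way: apply the standard inequality $\Vert \tilde u\Vert_{W^{1,p}(\tilde A)}\leq \epsilon\Vert \tilde u\Vert_{W^{2,p}(\tilde A)}+C_\epsilon\Vert \tilde u\Vert_{L^p(\tilde A)}$ on the model annulus, translate back using the scaling identities — here the factors $2^{-j\delta}$ again match on both sides — and sum. Item (6) follows by applying Hölder on $\tilde A$ together with the unweighted multiplication property (an instance of Corollary \ref{ContractionsSobReg} on the fixed domain $\tilde A$) to bound $\Vert \tilde u\tilde v\Vert_{W^{k,p}(\tilde A)}\lesssim \Vert \tilde u\Vert_{W^{k_1,p}(\tilde A)}\Vert\tilde v\Vert_{W^{k_2,q}(\tilde A)}$, then transferring back: the total power of $2^j$ appearing is $2^{-j(\delta-\delta_1-\delta_2)}$, which is summable over $j\geq 0$ precisely when $\delta>\delta_1+\delta_2$.

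The only genuinely subtle point, which I would treat carefully, is the bookkeeping of the exponents of $2^j$ in each scaling identity, because the whole proof hinges on these cancelling (in items (2)–(5)) or being summable (in items (1) and (6)); this is where the exact exponent $-\delta-n/p+|\alpha|$ built into Definition \ref{WeightedSobolevNormsDefs} does its work, making the weighted norms scale-invariant in the precisely right way. Everything else is a routine repackaging of standard Euclidean Sobolev theory on a fixed bounded annulus, which is why the author is content to quote references rather than reproduce the details.
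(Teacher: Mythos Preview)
Your approach is correct and is precisely the standard one: the paper does not supply its own proof of this theorem but refers to the classical references \cite{NirenbergWalker,McOwen,Lockhart,CB-C,BartnikMass}, where exactly this dyadic-annulus-plus-rescaling machinery is developed (see in particular \cite[Theorem~1.2 and surrounding lemmas]{BartnikMass}). One small remark: for item~(6) the unweighted multiplication on the model annulus is the bounded-domain statement of Theorem~\ref{BesselMultLocal} rather than Corollary~\ref{ContractionsSobReg} (which is phrased for closed manifolds), and in the final summation you really only need the factor $2^{-j(\delta-\delta_1-\delta_2)}$ to be \emph{bounded} in $j\geq 0$, not summable---though of course summability holds too and does no harm.
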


Let us now consider a manifold $M^n$ Euclidean at infinity (recall that this only fixed the differential structure of the ends of $M$). Notice that we have a finite number of end charts, say $\{U_i,\Phi_i \}_{i=1}^{N_0}$, with $\Phi(U_i)\simeq \mathbb{R}^n\backslash \overline{B}$, and a finite number of coordinate charts covering the compact region $K$, say $\{U_i,\Phi_i\}_{i=N_0+1}^N$. We can consider a partition of unity $\{\eta_i\}_{i=1}^N$ subordinate to the coordinate  cover $\{U_i,\Phi_i\}_{i=1}^N$. Then, given a tensor bundle $E\xrightarrow{\pi} M$, consider the tuple $\{U_i,\Phi_i,\eta_i,\rho_i\}_{i=1}^N$, where are usual $\rho_i$ denote the trivialisations of the tensor bundle $E$ over $U_i$, so that we can define $W^{k,p}_{\delta}(M;E)$ to be the subset of $W^{k,p}_{loc}(M;E)$ such that 
\begin{align}\label{GlobalWeightedAENorm}
\Vert u\Vert_{W^{k,p}_{\delta}}&\doteq\sum_{i=1}^{N_0}\Vert {\Phi^{-1}_i}^{*}(\tilde{\rho}_i\circ \eta_i u)\Vert_{W^{k,p}_{\delta}(\mathbb{R}^n)} + \sum_{i=N_0+1}^N\Vert{\Phi^{-1}_i}^{*}(\tilde{\rho}_i\circ\eta_i u)\Vert_{W^{k,p}(U_i)}<\infty. 
\end{align}

We can now extend the embedding and multiplication properties to a general manifold $M^n$ Euclidean at infinity by an appeal to localisation of fields using a partition of unity, to obtain:

\begin{theo}\label{SobolevPorpsAE}
Let $M^n$ be a manifold Euclidean at infinity and $E\rightarrow M$ a tensor bundle over $M$. Then, all the properties of Theorem \ref{AEWeightedEmbeedings} hold for $W^{k,p}_{\delta}(E)$ under the same conditions stated in those theorems. %Furthermore, for all $k>\frac{1}{p}$, we have a continuous trace map $\tau: W^{k,p}_{\delta}(M,E)\rightarrow W^{k-\frac{1}{p},p}(\Sigma,E)$ and a continuous extension map $\mathrm{ext}:W^{k-\frac{1}{p},p}(\Sigma,E)\mapsto W^{k,p}_{\delta}(M,E)$.
\end{theo}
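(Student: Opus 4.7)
The plan is to reduce every statement to its Euclidean counterpart in Theorem \ref{AEWeightedEmbeedings} by using the fixed atlas $\{U_i,\Phi_i,\eta_i,\rho_i\}_{i=1}^N$ and the global norm definition \eqref{GlobalWeightedAENorm}. Let me first fix notation: write $u_i \doteq (\Phi_i^{-1})^{*}(\tilde{\rho}_i\circ \eta_i u)$, so for the end charts ($1\le i\le N_0$) $u_i$ is supported in $\mathbb{R}^n\setminus \overline{B}$ and lies in $W^{k,p}_\delta(\mathbb{R}^n)$, while for the core charts ($N_0+1\le i\le N$) it lies in $W^{k,p}(\Phi_i(U_i))$ with compact support. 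By \eqref{GlobalWeightedAENorm}, $\Vert u\Vert_{W^{k,p}_\delta}$ is the sum of the local $W^{k,p}_\delta(\mathbb{R}^n)$ and $W^{k,p}$ norms of the $u_i$.

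For the continuous embeddings in items 1--4, I would simply apply Theorem \ref{AEWeightedEmbeedings} to each $u_i$ in the end charts (those embeddings are precisely what is stated) and use the classical Sobolev embeddings on bounded domains for the $u_i$ with $i>N_0$ (where the weight function $\sigma$ is bounded above and below by positive constants on the compact core, so the weighted norms are equivalent to the ordinary ones). Summing the resulting inequalities over $i$ gives the global embedding, with the constant absorbed into the implicit atlas-dependent one. The interpolation inequality in item 5 follows in the same way: apply the local inequality chart by chart (weighted for the ends, unweighted for the core), sum, and pick the uniform $C_\epsilon$ as the maximum of the local constants.

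The only point requiring real care is the multiplication property in item 6, because localising a product $u\cdot v$ via a single partition of unity $\eta_i$ places all the cutoff on one factor. The standard fix, already used in the proof of Corollary \ref{ContractionsSobReg}, is to employ $\tilde{\eta}_i \doteq \eta_i^2/\sum_j \eta_j^2$, so that $\tilde\eta_i(uv) = (\eta_i u)(\eta_i v)/\sum_j\eta_j^2$, and the smooth factor $(\sum_j\eta_j^2)^{-1}$ together with the trivialisation matrices act as bounded multipliers on every $W^{k,p}_\delta$ space. Then on each end chart the required bound is exactly the Euclidean weighted multiplication estimate stated in Theorem \ref{AEWeightedEmbeedings}(6): the weights add as $\delta_1+\delta_2\le \delta$, and the differentiability/integrability hypothesis $k_1+k_2>\frac{n}{q}+k$ is passed through unchanged. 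On the core charts the compact supports let one invoke the analogous unweighted product estimate on a bounded domain.

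The main obstacle, if any, is purely bookkeeping: checking that the partition-of-unity trick yields components in the correct local weighted spaces and that the additional smooth factors (the $\eta_i$, $(\sum_j\eta_j^2)^{-1}$, and the coordinate Jacobians of the end charts, all of which are bounded with bounded derivatives of every order) do not spoil any of the weighted norms. This amounts to noting that multiplication by any $f\in C^\infty_b(\mathbb{R}^n)$ (bounded with bounded derivatives) is continuous on every $W^{k,p}_\delta(\mathbb{R}^n)$, which is immediate from the defining norm \eqref{WeightedSobolevNorm} and Leibniz. Once this is in hand, summing the local estimates over $i=1,\dots,N$ and invoking \eqref{GlobalWeightedAENorm} directly yields all six properties in the global manifold setting.
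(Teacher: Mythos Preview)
Your proposal is correct and follows essentially the same approach as the paper, which does not give a detailed proof but simply states that the result is obtained ``by an appeal to localisation of fields using a partition of unity''. Your write-up is a faithful and careful fleshing-out of exactly that strategy, including the squared-partition-of-unity device for the multiplication property.
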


Finally, let us highlight that the same type of localisation properties prove that $C^{\infty}_0(M)$ is dense in $W^{k,p}_{\delta}$ for all $k\geq 0$, $1<p<\infty$ and $\delta\in \mathbb{R}$.

Now that we can measure the behaviour of fields at infinity appealing to Sobolev spaces, let us introduce the key concept of AE manifolds.

\begin{defn}[AE manifolds]\label{AE-Manifolds}
	We will say that a Riemannian manifold $(M^n,g)$ is asymptotically Euclidean if:
\begin{enumerate}
\item $M^n$ is Euclidean at infinity, with end charts $\{\Phi_i\}$;
\item $g\in W^{k,p}_{loc}$ for some $k>\frac{n}{p}$;
\item $g-\Phi^{*}_ie\in W^{k,p}_{\delta}(E_i)$ for some $\delta<0$ and all end charts $\Phi_i$,
\end{enumerate}
where, above, ``e" denotes the Euclidean metric on $\mathbb{R}^n$.
\end{defn}

\subsubsection{ADM definitions}\label{SectionADM}

Let us recall that an initial data set for GR is a tuple of the form $\mathcal{I}\doteq (M^n,g,K,\mu,S)$, where $(M^n,g)$ is an $n$-dimensional Riemannian manifold; $K\in \Gamma(T^0_2M)$ is a symmetric tensor field, while $\mu$ and $S$ are, respectively, a function and a 1-form which stand for the energy and momentum densities induced by physical sources. These quantities are constrained via the ECE, which are given on $M^n$ by\!
%$\mu$ is a function which represents the energy density induced by physical sources and $S$ is a $1$-form which stands for the momentum density of such sources. These quantities are constrained via the ECE, which are given by
\begin{align}\label{constraints}
\begin{split}
R_g-|K|^2_{g}+(\mathrm{tr}_gK)^2&=2\mu, \\
\mathrm{div}_{g}\left(K-\mathrm{tr}_gK\: g \right)&=S.
\end{split}
\end{align}
In many situations, the above constraints stand as necessary and sufficient conditions for the initial data to admit a well-posed evolution problem. In this context, isolated gravitational systems are modelled by initial data sets which are asymptotic to the initial data of Minkowski's space-time $\mathbb{M}^{n+1}$, which is given by  $(\mathbb{R}^n,\delta,K=0,\mu=0,S=0)$, where $\mathbb{E}^n=(\mathbb{R}^n,\delta)$ denotes the usual Euclidean space. These kinds of initial data sets are typically referred to as AE initial data sets. Although in general within GR notions of conserved total energy and momenta are quite subtle, in the case of AE initial data sets, these are well-understood.
 
Let us consider 3-dimensional AE initial data satisfying the following asymptotic conditions:
\begin{align}\label{decaying.1}
g_{ij}=\delta_{ij}+O_k(|x|^{-\tau}),\; \; \; K_{ij}=O_{k-1}(|x|^{-\tau-1}),\; \; \; \mu,S_i=O_{k-2}(|x|^{-\rho}),
\end{align}
for some $\tau,\rho>0$, where the above expressions are meant to hold on some fixed asymptotic chart $\{x^i\}_{i=1}^3$. In particular, following \cite{CederbaumSakovich}, we will focus our attention on the case where $\frac{1}{2}<\tau\leq 1$ and $\rho>3$ and refer to these initial data sets as $C^2_{\nicefrac{1}{2}+\epsilon}$-AE, with $\epsilon\in (\frac{1}{2},1]$. This choice of decaying conditions is made to guarantee that the ADM energy and momentum of the initial data sets are well-defined \cite{Bartnik1}. Let us recall that, on some end $E_i\cong\mathbb{R}^3\backslash \overline{B_{R_0}(0)}$, these quantities are defined in asymptotic rectangular coordinates by:%\footnote{The Einstein summation convention will be adopted throughout this text without further comments.}
\begin{align}\label{energy-momentum}
\begin{split}
E\doteq \frac{1}{16\pi}\lim_{r\rightarrow\infty}\int_{S_r}\left(\partial_ig_{ij}-\partial_jg_{ii} \right)\nu^jd\omega_r \; \; ; \; \;
P_i\doteq \frac{1}{8\pi}\lim_{r\rightarrow\infty}\int_{S_r}\pi_{ij}\nu^jd\omega_r,
\end{split}
\end{align}
where $S_r$ denotes an euclidean sphere of radius $r$ embedded in $E_i$; $\nu$ stands for the euclidean outward pointing unit normal; $d\omega_r$ the Euclidean area measure induced on $S_r$ and $\pi_{ij}\doteq K_{ij}-\mathrm{tr}_gK\:g_{ij}$. These quantities have been shown to be well-defined under clear geometric conditions, namely $L^1$-integrability of the sources $\mu$ and $S$ (see, for instance, \cite{Bartnik1}). On the other hand, the angular momentum $J$ and center of mass $C_{B\acute{O}M}$ are typically defined as \cite{Corvino1,BOM}\footnote{B\'OM stands for Beig - \'O Murchadha, who introduced the expression $C_{B\acute{O}M}$ in \cite{BOM}.}
\begin{align}
J_i&\doteq \frac{1}{8\pi}\lim_{r\rightarrow\infty}\int_{S_r}\pi_{jk}Y_i^j\nu^kd\omega_r \label{angularmomentum},\\
C^k_{B\acute{O}M}&\doteq\frac{1}{16\pi E} \lim_{r\rightarrow\infty}\left(\int_{S_r}x^k\left(\partial_ig_{ij}-\partial_jg_{ii} \right)\nu^jd\omega_r - \int_{S_r}\left(g_{ik}\nu^i - g_{ii}\nu^k \right)d\omega_r \right)\label{BOM},
\end{align}
whenever the limits exist, and where we have denoted by $Y_j$ the basic rotation fields in $\mathbb{R}^3$.\footnote{For example $Y_1=x^2\frac{\partial}{\partial x^3}-x^3\frac{\partial}{\partial x^2}$.} In contrast to the energy-momentum of an AE initial data set, these quantities are not well-defined for general $C^2_{\nicefrac{1}{2}+\epsilon}$-vacuum initial data sets. In fact, certain asymptotic symmetry conditions are typically imposed on the initial data set in order to guarantee that the limits exist. These symmetry assumptions are called the Regge-Teitelboim conditions and (for vacuum initial data sets) their most common form is given by (see \cite{RT-parity,Corvino1,Huang1})\!
\begin{align}\label{RT}
\begin{split}
g^{odd}_{ij}=O_k(|x|^{-2}),\;\;\; K^{even}_{ij}=O_{k-1}(|x|^{-3}),%\;\;\; R_g^{odd}=O(|x|^{-4})
\end{split}
\end{align}
where the even/odd superscripts refer to the even/odd parts of the corresponding functions.

\section{Semi-Fredholm properties of $\mathcal{L}^2(W^{2,q})$-Elliptic operators}\label{SemiFredhSection}

%\medskip

In Section \ref{SectionMappingProps} we analysed the mapping properties of the general classes $\mathcal{L}^k(W^{k,q})$ ($k=1,2$) of linear operators. We now intend to analyse Fredholm properties of the subclass of elliptic operators. Related statements to the ones we present below can be found in the literature. In particular, \cite[Lemma 34]{HolstFarCMC} would cover a broader class of operators, but, as stated by two of the authors in the introduction to \cite{MaxwellHolstRegularity}, there is a mistake within \cite[Lemma 32]{HolstFarCMC} on which \cite[Lemma 34]{HolstFarCMC} relies. Thus, we prefer to provide a self-contained proof of the results we need. We would also like to comment that the following results could also be derived using the techniques and results of \cite[Section 2]{MaxwellHolstRegularity}. In this last reference, the authors have developed substantial analytic machinery to deal with local regularity of solutions to linear elliptic equations with low regularity coefficients in a variety of Sobolev-type spaces. In our case, we seek for related regularity results for very specific operators, and our strategy to obtain them is somewhat different, more strictly tailored to the operators we shall treat in the core of the paper. In particular, our strategy is first to prove some a-priori semi-Fredholm results, and then use this in combination with the theory associated to more regular operators to obtain regularity statements directly for the Laplace-Beltrami operator acting on sections of $S_2M$. We shall thus provide a nearly self-contained presentation here, although we would like to draw the reader's attention to \cite[Proposition 2.20 and Theorem 2.21]{MaxwellHolstRegularity}, which deal with related results.

With all the above in mind, let us first present the following two results, dealing with local elliptic estimates for the class of operators we are interested in. Due to the subtleties highlighted above, we shall present rather detailed proofs.

\begin{lem}\label{ElliptEstRoughW2}
Consider an elliptic operator $L$ of the form of (\ref{Holst.1}) with $q>\frac{n}{2}$ defined on an open domain $\Omega\subset \mathbb{R}^n$ with smooth boundary. Fixing $1<p\leq q$ and $R>0$ such that $\overline{B_R(0)}\subset \Omega$, there is some constant $C=C(L,p,n,R)>0$ such that the following estimate holds
\begin{align}\label{ElliptEstimateW2}
\Vert u\Vert_{W^{2,p}(B_R(0))}\leq C\left( \Vert Lu\Vert_{L^{p}(B_R(0))} + \Vert u\Vert_{L^p(B_R(0))} \right), \text{ for all } u\in W^{2,p}_0(B_R(0);\mathbb{R}^r).
\end{align}
\end{lem}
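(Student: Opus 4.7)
\emph{Proof plan.} The strategy is to freeze the top-order coefficients on small balls, apply classical constant-coefficient Calder\'on--Zygmund estimates, and absorb the resulting errors into the left-hand side. Since $2q>n$, we have $W^{2,q}(\Omega)\hookrightarrow C^{0,\sigma}(\overline{B_R(0)})$ for some $\sigma>0$, so the top-order coefficients $A_\alpha$, $|\alpha|=2$, are uniformly continuous on $\overline{B_R(0)}$ and the ellipticity constants of $L$ are uniformly controlled. Cover $\overline{B_R(0)}$ by finitely many balls $B_\rho(x_j)$ of radius $\rho$ (to be chosen small) and fix a subordinate partition of unity $\{\chi_j\}\subset C_0^\infty(B_\rho(x_j))$. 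It suffices to derive the estimate on each $\chi_j u\in W^{2,p}_0(B_\rho(x_j);\mathbb{R}^r)$ and sum, since the commutator $[L,\chi_j]u$ involves coefficients of the existing regularity and at most first-order derivatives of $u$ and is therefore handled by the same mechanism as the lower-order part below.

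For each $j$, let $L_j^0\doteq\sum_{|\alpha|=2}A_\alpha(x_j)\partial^\alpha$. Classical $L^p$ theory for constant-coefficient elliptic systems on $\mathbb{R}^n$ provides
\[
\|v\|_{W^{2,p}(B_\rho(x_j))}\leq C_0\bigl(\|L_j^0 v\|_{L^p(B_\rho(x_j))}+\|v\|_{L^p(B_\rho(x_j))}\bigr)
\]
for $v\in W^{2,p}_0(B_\rho(x_j);\mathbb{R}^r)$, with $C_0$ uniform in $j$. Decomposing $Lv=L_j^0 v+\sum_{|\alpha|=2}(A_\alpha-A_\alpha(x_j))\partial^\alpha v+L^{\mathrm{low}}v$ with $L^{\mathrm{low}}=\sum_{|\alpha|\leq 1}A_\alpha\partial^\alpha$, uniform continuity yields a modulus $\omega(\rho)\to 0$ such that $\|(A_\alpha-A_\alpha(x_j))\partial^\alpha v\|_{L^p}\leq\omega(\rho)\|v\|_{W^{2,p}}$, which is absorbable for $\rho$ small. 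To treat $L^{\mathrm{low}}$, I would apply Theorem \ref{BesselMultLocal} with $(s_1,p_1)=(|\alpha|,q)$, $(s_2,p_2)=(2-|\alpha|-\delta,p)$, $s=0$: all hypotheses are verified provided $\delta<2-n/q$, and such a $\delta>0$ exists precisely because $q>\tfrac{n}{2}$ \emph{strictly}. This yields the continuity estimate
\[
\|L^{\mathrm{low}}v\|_{L^p(B_\rho)}\leq C\|v\|_{H^{2-\delta,p}(B_\rho)}.
\]

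The final ingredient is Ehrling's lemma: since $B_\rho$ is bounded, one has the compact embedding $W^{2,p}(B_\rho)\hookrightarrow\hookrightarrow H^{2-\delta,p}(B_\rho)$ together with the continuous embedding $H^{2-\delta,p}(B_\rho)\hookrightarrow L^p(B_\rho)$, so for every $\epsilon>0$ there exists $C_\epsilon>0$ with $\|v\|_{H^{2-\delta,p}(B_\rho)}\leq\epsilon\|v\|_{W^{2,p}(B_\rho)}+C_\epsilon\|v\|_{L^p(B_\rho)}$. Combining the three estimates, choosing $\rho$ and $\epsilon$ small so as to absorb the $W^{2,p}$-contributions into the left-hand side, and then summing over the finite cover delivers (\ref{ElliptEstimateW2}). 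The main obstacle is the lower-order perturbation: the coefficients $A_1\in W^{1,q}$ and $A_0\in L^q$ are unbounded in general when $q<n$, so neither H\"older's inequality nor a naive Sobolev embedding applied to the coefficients alone can dominate $L^{\mathrm{low}}v$ by $\|v\|_{L^p}$ plus an arbitrarily small multiple of $\|v\|_{W^{2,p}}$. The strict inequality $q>\tfrac{n}{2}$ is essential, since it creates exactly the positive margin needed in Theorem \ref{BesselMultLocal} for Ehrling's interpolation to convert the $H^{2-\delta,p}$-bound into an absorbable estimate.
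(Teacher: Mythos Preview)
Your proposal is correct and follows essentially the same approach as the paper: freeze the top-order coefficients, apply constant-coefficient Calder\'on--Zygmund estimates, absorb the top-order perturbation using the H\"older continuity from $W^{2,q}\hookrightarrow C^{0,\sigma}$, control the lower-order terms via Theorem~\ref{BesselMultLocal} through an intermediate Bessel space $H^{s,p}$ with $\tfrac{n}{q}<s<2$ (your $s=2-\delta$), and finish by Ehrling's interpolation; the covering and commutator arguments are identical in spirit. The only cosmetic difference is that the paper first obtains the local estimate at an arbitrary frozen point and then extracts a finite subcover (allowing point-dependent constants $C_i$), whereas you assert a uniform $C_0$ from the outset---both are valid since the ellipticity constants and coefficient bounds are uniform on the compact set $\overline{B_R(0)}$.
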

\begin{proof}
%We first cover $\overline{B_R(0)}$ by a finite number of small balls of the form $B_r(x_i)$, centred at points $\{x_i\}_{i=1}^{\mathcal{N}}\subset \overline{B_R(0)}$ and with radii $r>0$ small, to be fixed latter. Then, consider a partition of unity subordinate to this cover of the form $\{\eta_i\}_{i=1}^{\mathcal{N}}$ and localise any $u\in W^{2,p}_0(B_R(0);\mathbb{R}^r)$ via
%\begin{align}
%u=\sum_i\eta_iu,
%\end{align}
%where we will denote $u_i\doteq \eta_iu$. Then, defining $v_i\doteq Lu_i$ and $L_i\doteq A_2(x_i)\partial^2$ we can rewrite

%Let us first fix an arbitrary point $x_i\in \overline{B_R(0)}$ and let $\eta_i\in C^{\infty}_0(B_r(x_i))$ be a cut-off function. Then, localise $u\in W^{2,p}_0(B_R(0);\mathbb{R}^r)$ via $u_i\doteq \eta_iu$, define $v_i\doteq Lu_i$ and $L_i\doteq A_2(x_i)\partial^2$, and rewrite

%We will start by extending the coefficients $A_{\alpha}\in W^{|\alpha|,q}(\Omega)$ of $L$ to $W^{|\alpha|,q}(\mathbb{R}^n)$, so that we can consider $L$ as an elliptic operator on $\mathbb{R}^n$. In order to avoid heavier notations, we shall not distinguish these extended coefficients notationally.

Let us first fix an arbitrary point $x_i\in \overline{B_R(0)}$ and define the constant coefficient operator $L_i\doteq A_2(x_i)\partial^2$. Then, given $r>0$, for any function $u_i\in W^{2,p}_0(B_r(x_i))$, define $v_i\doteq Lu_i$, and rewrite
\begin{align}\label{EllipticEstimateLocalisation1}
v_i=L_iu_i + \left(A_2(x) - A_2(x_i) \right)\partial^2u_i + \sum_{|\alpha|\leq 1}A_{\alpha}(x)\partial^{\alpha}u_i.
\end{align}
Notice that $L_i$ is a constant coefficient operator defined in all of $\Omega$ and $u_i\in W_0^{2,p}(B_r(x_i))\subset W^{2,p}(\mathbb{R}^n)$, so we can apply the theory of constant coefficient operators to find a constant $C_i=C(L_i)>0$, independent of $r$, for which a local elliptic estimate holds. This can be deduced, for instance, from \cite[Lemma 2.19]{MaxwellHolstRegularity}, and thus we obtain:\footnote{If we follow \cite[Lemma 2.19]{MaxwellHolstRegularity}, then the constant $C$ appearing in (\ref{W2estimatesLocal.1}) depends on the symbol $\sigma(L_i)(\xi)$ thus actually depending on the point $x_i$.}
\begin{align}\label{W2estimatesLocal.1}
\begin{split}
\Vert u_i\Vert_{W^{2,p}(\Omega)}&\leq C_i\Big( \Vert v_i\Vert_{L^{p}(\Omega)} + \Vert \left(A_2 - A_2(x_i) \right)\partial^2u_i\Vert_{L^{p}(\Omega)} \\
&+ \sum_{|\alpha|\leq 1}\Vert A_{\alpha}\partial^{\alpha}u_i\Vert_{L^{p}(\Omega)} + \Vert u_i\Vert_{L^{p}(\Omega)}\Big),
\end{split}
\end{align}
Let us now estimate the different terms in the right-hand side of the above expression. First, notice that since $A_2\in W^{2,q}(\Omega)\hookrightarrow C^{0,\alpha}(\Omega)$ for some $\alpha\in (0,1)$, then:
\begin{align*}
\Vert \left(A_2 - A_2(x_i) \right)\partial^2u_i\Vert^p_{L^{p}(\Omega)}&=\int_{B_r(x_i)}|\left(A_2(x) - A_2(x_i) \right)\partial^2u_i|^pdx,\\
%&\leq \sup_{\underset{x\neq y}{(x,y)\in B_r(x_i)}}\frac{|A_2(x) - A_2(x_i)|}{|x-x_i|^{\alpha}} r^{\alpha}\Vert u_i\Vert^p_{L^p(B_r(x_i))},\\
&\leq \Vert A_2 - A_2(x_i)\Vert^p_{C^{0,\alpha}(\Omega)}r^{p\alpha}\Vert u_i\Vert^p_{L^p(B_r(x_i))}.
\end{align*}

Concerning the lower order terms, we will appeal to interpolating Bessel potential spaces $H^{s,p}(\Omega)$, and notice that 
\begin{align*}
\sum_{|\alpha|\leq 1}\Vert A_{\alpha}\partial^{\alpha}u_i\Vert_{L^{p}(\Omega)}\leq C'\sum_{|\alpha|\leq 1}\Vert A_{\alpha}\Vert_{W^{|\alpha|,q}(\Omega)}\Vert \partial^{\alpha}u_i\Vert_{H^{s-|\alpha|,p}(\Omega)},
\end{align*}
holds for a constant $C'>0$ independent of $r$ due to Theorem \ref{BesselMultLocal}, as longs as $\alpha$ and $|\alpha|\leq s\leq 2$ satisfy
\begin{align*}
&|\alpha|\geq n\left(\frac{1}{q}-\frac{1}{p}\right), \text{ and } s>\frac{n}{q}.
%&s\geq 0,\\
\end{align*}
The first of the above conditions follows since $p\leq q$, while the second one imposes $s>\frac{n}{q}$. Since $2>\frac{n}{q}$, then any choice of $s\in (\frac{n}{q},2)$ works, and assuming without loss of generality that $\frac{n}{2}<q<n$, this also implies $s>1$ and thus $s>|\alpha|$ for both $|\alpha|=0,1$. Let us then fix some $s<2$ in this interval, and write
\begin{align*}
\sum_{|\alpha|\leq 1}\Vert A_{\alpha}\partial^{\alpha}u_i\Vert_{L^{p}(\Omega)}\leq C'\left(\sum_{|\alpha|\leq 1}\Vert A_{\alpha}\Vert_{W^{|\alpha|,p}(\Omega)}\right)\Vert u_i\Vert_{H^{s,p}(\Omega)}.
\end{align*}
Putting everything together, from (\ref{W2estimatesLocal.1}) we find:
\begin{align}
\Vert u_i\Vert_{W^{2,p}(\Omega)}&\leq C_i\Big( \Vert Lu_i\Vert_{L^{p}(\Omega)} + r^{\alpha}\Vert u_i\Vert_{L^{p}(B_r(x_i))} + \Vert u_i\Vert_{H^{s,p}(\Omega)}\Big),
\end{align}
for some constant $C_i$, still independent of $r>0$. Fixing then $r$ small enough, for instance so that $r_i^{\alpha}<\frac{1}{2C_i}$, we find:
\begin{align}\label{W2Estimate-final.1}
\Vert u_i\Vert_{W^{2,p}(\Omega)}&\leq C_i\Big( \Vert Lu_i\Vert_{L^{p}(\Omega)} + \Vert u_i\Vert_{H^{s,p}(\Omega)}\Big).
\end{align}
for some other constant $C_i>0$. The above estimate shows that, given an arbitrary point $x_i\in \overline{B_R(0)}$, there is a ball $B_{r_i}(x_i)$ and a constant $C_i>0$ such that (\ref{W2Estimate-final.1}) holds for any $u_i\in W^{2,p}_0(B_{r_i}(x_i))$. Then, we can cover $\overline{B_R(0)}$ by the corresponding balls $\{B_{r_{x}}(x) \: : \: x\in \overline{B_R(0)} \}$ and extract a finite subcover $\{B_{r_{i}}(x_i) \: : \: x_i\in \overline{B_R(0)}\}_{i=1}^{\mathcal{N}}$, where in each $B_{r_i}(x_i)$ (\ref{W2Estimate-final.1}) holds for any $u_i\in W^{2,p}_0(B_{r_i}(x_i))$. Then, consider a partition of unity $\{\eta_i\}^{\mathcal{N}}_{i=1}$ subordinate to such a cover, so that, given an arbitrary $u\in W^{2,p}_0(B_R(0))$, after the localisation $u=\sum_i\eta_iu$, each $\eta_iu\in W^{2,p}_0(B_{r_i}(x_i))$, and thus we can apply (\ref{W2Estimate-final.1}) to each of them. After doing so, we find
\begin{align}\label{W2Estimate-final.2}
\Vert \eta_iu\Vert_{W^{2,p}(\Omega)}&\leq C_i\Big( \Vert L(\eta_iu)\Vert_{L^{p}(\Omega)} + \Vert \eta_iu\Vert_{H^{s,p}(\Omega)}\Big).
\end{align}
We now estimate the first term in the right-hand side of (\ref{W2Estimate-final.2}). Notice that 
\begin{align}\label{W2est-Commutation.1}
\begin{split}
L(\eta_iu)%&=A_2\partial^2(\eta_i u)+A_1\partial(\eta_i u)+A_0\eta_iu,\\
%&=A_2\partial(\eta_i\partial u + \partial\eta_i u) + A_1(\eta_i\partial u + \partial\eta_i u) +A_0\eta_iu,\\
%&=A_2(\eta_i\partial^2 u + 2\partial\eta_i\partial u  + \partial^2\eta_i u) + A_1(\eta_i\partial u + \partial\eta_i u) +A_0\eta_iu,\\
%&=\eta_iLu + 2\partial\eta_iA_2\partial u + (\partial^2\eta_iA_2 + \partial\eta_i A_1)u,\\
&=\eta_iLu + [L,\eta_i]u,
\end{split}
\end{align}
where $[L,\eta_i]u=2\partial\eta_iA_2\partial u + (\partial^2\eta_iA_2 + \partial\eta_i A_1)u
$
%\begin{align*}
%[L,\eta_i]u=(2\partial\eta_iA_2+\eta_iA_1)\partial u + (\partial^2\eta_iA_2 + \partial\eta_iA_1 + \eta_iA_0)u
%\end{align*}
is an operator of order first order, with coefficients $\tilde{A}_1=2\partial\eta_iA_2\in W^{2,q}(\Omega)$ and $\tilde{A}_0=\partial^2\eta_iA_2 + \partial\eta_i A_1\in W^{1,q}(\Omega)$. Thus, 
\begin{align*}
\big\Vert [L,\eta_i]u\big\Vert_{L^p(\Omega)}\leq C'\sum_{|\alpha|\leq 1}\Vert \tilde{A}_{\alpha}\Vert_{W^{|\alpha|,p}(\Omega)}\Vert u\Vert_{H^{s,p}(\Omega)}
\end{align*}
follows for the same choice of $\frac{n}{q}<s<2$ made above, and thus (\ref{W2Estimate-final.2}) implies:
\begin{align*}
\begin{split}
\Vert \eta_iu\Vert_{W^{2,p}(\Omega)}&\leq C_i\Big( \Vert \eta_iLu \Vert_{L^{p}(\Omega)} + \Vert \eta_iu\Vert_{H^{s,p}(\Omega)}\Big)\leq C'_i\Big( \Vert Lu \Vert_{L^{p}(\Omega)} + \Vert u\Vert_{H^{s,p}(\Omega)}\Big)
\end{split}
\end{align*}
for some other constants $C_i,C'_i>0$. Adding all the contribution for $i=1,\cdots,\mathcal{N}$ in the localisation $u=\sum_{i}\eta_iu$ produces the estimate
\begin{align}\label{W2est-Global.1}
\Vert u\Vert_{W^{2,p}(\Omega)}&\leq C \Big( \Vert Lu \Vert_{L^{p}(\Omega)} + \Vert u\Vert_{H^{s,p}(\Omega)}\Big),
\end{align}
for some constant $C>0$, independent of $u\in W^{2,p}_0(B_R(0))$.

Finally, since for our choice of $s\in (\frac{n}{q},2)$ one has a chain of compact embeddings $W^{2,p}(\Omega)\hookrightarrow H^{s,p}(\Omega)\hookrightarrow L^p(\Omega)$,\footnote{See, for instance, \cite[Chapter 13, Section 6]{Taylor3}.} by Erhling-Lion's lemma we have an interpolation inequality so that for any $\epsilon>0$ there is a constant $C_{\epsilon}>0$ such that
\begin{align*}
\Vert f\Vert_{H^{s,p}(\Omega)}\leq \epsilon\Vert f\Vert_{W^{2,p}(\Omega)} + C_{\epsilon}\Vert f\Vert_{L^{p}(\Omega)}, \text{ for all } f \in W^{2,p}(\Omega).
\end{align*}
Thus, picking $\epsilon<\frac{1}{2C}$, the above interpolation inequality together with (\ref{W2est-Global.1}) gives
\begin{align}\label{W2est-FinalLocal}
\Vert u\Vert_{W^{2,p}(\Omega)}&\leq C\Big( \Vert Lu \Vert_{L^{p}(\Omega)} + \Vert u\Vert_{L^{p}(\Omega)}\Big),
\end{align}
for some other constant $C>0$. Then, (\ref{ElliptEstimateW2}) follows from (\ref{W2est-FinalLocal}) since by hypothesis $u$ is supported in $B_R(0)$, implying that also $\mathrm{supp}(Lu)\subset B_R(0)$. %Adding all the contribution for $i=1,\cdots,\mathcal{N}$ in the localisation $u=\sum_{i}\eta_iu$ produces (\ref{ElliptEstimateW2}).
\end{proof}

\medskip
We now address the related elliptic estimates for operators acting on $W^{1,p}$-spaces. The general idea of the proof is similar to the above lemma, although the techniques to go through some of the initial estimates will be somewhat different.

\begin{lem}\label{ElliptEstRoughW1}
Consider an elliptic operator $L$ of the form of (\ref{Holst.1}) with $q>\frac{n}{2}$ defined on an open domain $\Omega\subset \mathbb{R}^n$. Fixing $p\in (1,\infty)$ such that $\frac{1}{q}-\frac{1}{n}\leq \frac{1}{p}<\frac{1}{q'}+\frac{1}{n}$ and $R>0$ such that $\overline{B_R(0)}\subset \Omega$, there is some constant $C=C(L,p,n,R)>0$ such that the following estimate holds
\begin{align}\label{ElliptEstimateW1}
\Vert u\Vert_{W^{1,p}(B_R(0))}\leq C\left( \Vert Lu\Vert_{W^{-1,p}(\Omega)} + \Vert u\Vert_{L^p(B_R(0))} \right), \text{ for all } u\in W^{1,p}_0(B_R(0);\mathbb{R}^r).
\end{align}
\end{lem}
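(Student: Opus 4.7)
The plan is to follow the same scheme as the proof of Lemma \ref{ElliptEstRoughW2}, with the essential modification that error terms produced by coefficient freezing must now be measured in $W^{-1,p}$ rather than in $L^p$, so that the divergence structure of the principal part has to be exploited explicitly. Fixing an arbitrary $x_i\in\overline{B_R(0)}$ and freezing the top-order coefficient there, I would start from the constant-coefficient elliptic estimate
\begin{align*}
\|u_i\|_{W^{1,p}(\Omega)}\leq C_i\bigl(\|L_i u_i\|_{W^{-1,p}(\Omega)}+\|u_i\|_{L^p(\Omega)}\bigr),\qquad L_i\doteq A_2(x_i)\partial^2,
\end{align*}
valid for $u_i\in W^{1,p}_0(B_r(x_i))$ and extracted from the same constant-coefficient machinery used in \cite[Lemma 2.19]{MaxwellHolstRegularity}, now acting between $W^{1,p}$ and $W^{-1,p}$. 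Writing $L_i u_i = L u_i - (A_2-A_2(x_i))\partial^2 u_i - \sum_{|\alpha|\leq 1}A_\alpha\partial^\alpha u_i$, the task reduces to bounding each error term in $W^{-1,p}(\Omega)$ either by a constant with an arbitrarily small prefactor times $\|u_i\|_{W^{1,p}}$, or by $\|u_i\|_{H^{s,p}}$ for some intermediate $s\in(0,1)$ that will be removed later by interpolation.

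For the principal-part error I would use the identity
\begin{align*}
(A_2-A_2(x_i))\partial^2 u_i = \partial\bigl[(A_2-A_2(x_i))\partial u_i\bigr] - (\partial A_2)\,\partial u_i,
\end{align*}
which is meaningful since $A_2\in W^{2,q}\hookrightarrow C^{0,\alpha}$ for some $\alpha>0$ by $q>n/2$. The divergence term is controlled in $W^{-1,p}$ by the $L^p$ norm of its argument, which by Hölder continuity is $\leq C\|A_2\|_{C^{0,\alpha}}r^\alpha\|u_i\|_{W^{1,p}}$; shrinking $r$ makes the prefactor arbitrarily small. The residue $(\partial A_2)\partial u_i$, together with the genuine lower-order contributions $A_1\partial u_i$ and $A_0 u_i$, would then all be estimated in $W^{-1,p}$ via Theorem \ref{BesselMultLocal}, choosing a single interpolating exponent $s$ in the range
\begin{align*}
\max\Bigl(0,\;\tfrac{n}{q}-1,\;n\bigl(\tfrac{1}{q}+\tfrac{1}{p}-1\bigr)\Bigr)<s<1.
\end{align*}
The hypothesis $\frac{1}{q}-\frac{1}{n}\leq\frac{1}{p}<\frac{1}{q'}+\frac{1}{n}$, combined with $q>n/2$, is precisely what guarantees that this interval is non-empty and that each of the required embeddings $L^q\otimes H^{s,p}\hookrightarrow W^{-1,p}$ and $W^{1,q}\otimes H^{s-1,p}\hookrightarrow W^{-1,p}$ holds.

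With a localised estimate of the form $\|u_i\|_{W^{1,p}}\leq C_i(\|Lu_i\|_{W^{-1,p}}+\|u_i\|_{H^{s,p}})$ in hand, I would cover $\overline{B_R(0)}$ by finitely many balls $B_{r_i}(x_i)$, pick a subordinate partition of unity $\{\eta_i\}$, apply the local inequality to each $\eta_i u\in W^{1,p}_0(B_{r_i}(x_i))$, and handle the resulting commutators $[L,\eta_i]u$, which are first-order operators with $W^{2,q}$ and $W^{1,q}$ coefficients, through the same multiplication estimate. Summing yields a global bound
\begin{align*}
\|u\|_{W^{1,p}}\leq C\bigl(\|Lu\|_{W^{-1,p}}+\|u\|_{H^{s,p}}\bigr),
\end{align*}
and a final application of the Ehrling--Lions interpolation inequality on the compact chain $W^{1,p}\hookrightarrow H^{s,p}\hookrightarrow L^p$ absorbs the intermediate norm into an arbitrarily small fraction of $\|u\|_{W^{1,p}}$ plus a multiple of $\|u\|_{L^p}$, producing (\ref{ElliptEstimateW1}). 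The main obstacle is the simultaneous verification of the multiplication conditions of Theorem \ref{BesselMultLocal}, in particular the duality-type condition (\ref{MultiplicationConditionsDualsBessel}), for all error terms at once with a single intermediate exponent $s\in(0,1)$; it is the upper bound $\frac{1}{p}<\frac{1}{q'}+\frac{1}{n}$ that makes this compatible and distinguishes this $W^{1,p}$ estimate from the simpler $W^{2,p}$ case.
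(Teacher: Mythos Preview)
Your proposal is correct and follows the same overall architecture as the paper's proof: freeze the top-order coefficient, invoke the constant-coefficient $W^{1,p}$--$W^{-1,p}$ estimate, control the error terms, cover $\overline{B_R(0)}$, patch via a partition of unity, handle commutators, and interpolate. The divergence decomposition you use for $(A_2-A_2(x_i))\partial^2 u_i$ is exactly the paper's duality computation rephrased distributionally, and your treatment of the commutators $[L,\eta_i]u$ via Theorem~\ref{BesselMultLocal} with an intermediate exponent $s\in(s_0,1)$ is identical to the paper's.

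The one genuine difference is in how the \emph{local} lower-order pieces $(\partial A_2)\partial u_i$, $A_1\partial u_i$, and $A_0 u_i$ are estimated. The paper carries out an explicit duality argument against test functions $\phi\in W^{1,p'}_0(\Omega)$, with a rather lengthy case analysis ($p'<n$ versus $p'\geq n$, and within the former $p<n$, $p=n$, $p>n$), and obtains bounds with prefactors of the form $\|\partial A_2\|_{L^t(B_{\bar r}(x_i))}$, $\|A_1\|_{L^t(B_{\bar r}(x_i))}$, $\|A_0\|_{L^q(B_{\bar r}(x_i))}$ that vanish as $\bar r\to 0$ by absolute continuity of the integral; this already yields the local estimate $\|u_i\|_{W^{1,p}}\leq C_i(\|Lu_i\|_{W^{-1,p}}+\|u_i\|_{L^p})$ with an $L^p$ remainder. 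You instead bound all of these terms uniformly by $C\|u_i\|_{H^{s,p}}$ via Theorem~\ref{BesselMultLocal} (the very same multiplication estimate the paper reserves for the commutator step) and defer the absorption to a single interpolation at the end. Your route sidesteps the case analysis and is more streamlined; the paper's route produces a sharper local estimate at the cost of more bookkeeping. Both are valid and the hypothesis $\frac{1}{p}<\frac{1}{q'}+\frac{1}{n}$ enters at the same place in each.
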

\begin{proof}
%We first cover $\overline{B_R(0)}$ by a finite number of small balls of the form $B_r(x_i)$, centred at points $\{x_i\}_{i=1}^{\mathcal{N}}\subset \overline{B_R(0)}$ and with radii $r>0$ small, to be fixed latter. Then, consider a partition of unity subordinate to this cover of the form $\{\eta_i\}_{i=1}^{\mathcal{N}}$ and localise any $u\in C^{\infty}_0(E|_{B_R(0)})$ via
%\begin{align}
%u=\sum_i\eta_iu,
%\end{align}
%where we will denote $u_i\doteq \eta_iu$. Then, defining $v_i\doteq Lu_i$ and $L_i\doteq A_2(x_i)\partial^2$ we can rewrite
%\begin{align*}
%v_i=L_iu_i + \left(A_2(x) - A_2(x_i) \right)\partial^2u_i + \sum_{|\alpha|<2}A_{\alpha}(x)\partial^{\alpha}u_i.
%\end{align*}
%Notice that $L_i$ is a constant coefficient operator defined in all of $B_R(0)$ and $u_i\in C_0^{\infty}(B_r(x_i))\subset C_0^{\infty}(B_R(0))$, so we can apply Theorem \ref{XX} to them obtaining a constant $C$, depending on $R$ but not $r$, for which a local elliptic estimate holds. This constant depends on $\Vert \sigma(L_i)\Vert_{\infty}$, and to get rid of the dependence on the a priori arbitrary point $x_i$, we can use that $\Vert \sigma(L_i)\Vert_{\infty}\leq sup_{x\in B_R(0)}\Vert \sigma(L)_x\Vert_{\infty}$, so as to get a constant $C=C(L,R,n,p)$, independent of $x_i$ and $r$, which satisfies:

We begin with the same strategy as in Lemma \ref{ElliptEstRoughW2} above. Thus, first of all, we start considering a fixed point $x_i\in \overline{B_R(0)}$ the frozen coefficient operator $L_i=A_2(x_i)\partial^2$ defined on $\mathbb{R}^n$, some $\bar{r}>0$ defining a ball $B_{\bar{r}}(x_i)\subset \Omega$ and any $u_i\in W^{1,p}_0(B_{\bar{r}}(x_i))$. Defining again $v_i\doteq Lu_i$, we have (\ref{EllipticEstimateLocalisation1}) and then again the theory of constant coefficient operators gives:\footnote{Just as in Lemma \ref{ElliptEstRoughW2}, in order to obtain (\ref{Comment3}), we may apply \cite[Lemma 2.19]{MaxwellHolstRegularity}.}
\begin{align}\label{Comment3}
\begin{split}
\!\!\!\!\!\Vert u_i\Vert_{W^{1,p}(\Omega)}&\leq C_i\Big( \Vert v_i\Vert_{W^{-1,p}(\Omega)} + \Vert \left(A_2 - A_2(x_i) \right)\partial^2u_i\Vert_{W^{-1,p}(\Omega)} \\
&+ \sum_{|\alpha|\leq 1}\Vert A_{\alpha}\partial^{\alpha}u_i\Vert_{W^{-1,p}(\Omega)} + \Vert u_i\Vert_{L^{p}(\Omega)}\Big),
\end{split}
\end{align}
To estimate the different terms, let $\phi\in W_0^{1,p'}(\Omega;\mathbb{R}^r)$ and notice that
\begin{align}\label{W1est-toporder.1}
\begin{split}
[\left(A_2 - A_2(x_i) \right)\partial^2u_i](\phi)&=[\partial^2u_i](\left(A_2 - A_2(x_i) \right)^{*}\phi),\\
&=-[\partial u_i](\left(A_2 - A_2(x_i) \right)^{*}\partial\phi) - [\partial u_i]\left(\partial\left(A_2 - A_2(x_i)\right)^{*}\phi\right),
\end{split}
\end{align}
where since $\partial u_i\in L^p(B_{\bar{r}}(x_i)),\partial \phi\in L^{p'}(\Omega)$ and $A_2-A_2(x_i)\in W_{loc}^{2,q}(\Omega)$, we have
\begin{align}\label{W1est-toporder.2}
\begin{split}
\big\vert [\partial u_i](\left(A_2 - A_2(x_i)\right)^{*}\partial \phi) \big\vert&=\Big\vert \int_{\Omega} \partial u_i \cdot \left(A_2 - A_2(x_i) \right)^{*}\partial\phi dx  \Big\vert  ,\\
&=\Big\vert \int_{B_{\bar{r}}(x_i)} \left(A_2 - A_2(x_i) \right)\partial u_i \cdot \partial\phi dx  \Big\vert  ,\\
&\leq C_1\Vert A_2 - A_2(x_i) \Vert_{C^{0}(B_{\bar{r}}(x_i))}\Vert \partial u_i\Vert_{L^p(B_{\bar{r}}(x_i))}\Vert \partial\phi\Vert_{L^{p'}(B_r(x_i))},\\
&\leq C_1\Vert A_2 - A_2(x_i) \Vert_{C^{0}(B_{\bar{r}}(x_i))}\Vert  u_i\Vert_{W^{1,p}(B_{\bar{r}}(x_i))}\Vert \phi\Vert_{W^{1,p'}(B_r(x_i))},
\end{split}
\end{align}
where above the constant $C_1$, which comes from applying Hölder's inequality, is independent of ${\bar{r}}$, and we have used the notation $A\cdot B$ to denote the Euclidean dot-product between vectors. Along the same lines,
\begin{align*}
\big\vert[\partial u_i]\left(\partial\left(A_2 - A_2(x_i)\right)^{*}\phi\right)\big\vert&=\Big\vert \int_{\Omega}\partial u_i\cdot \partial A^{*}_2 \phi \:dx \Big\vert=\Big\vert \int_{B_{\bar{r}}(x_i)} \partial A_2\partial u_i\cdot  \phi \: dx \Big\vert.
\end{align*}
Let us now assume that $p'<n$, and notice that $\phi\in W_0^{1,p'}(\Omega)\hookrightarrow L^{\frac{np'}{n-p'}}(\Omega)$. Above, we intend to apply Hölder's inequality, for which we need to know that that $\partial A_2\partial u_i\in L_{loc}^{\left(\frac{np'}{n-p'}\right)'}(\Omega)$. This will follow from Hölder's generalised inequality if $\partial A_2\in L_{loc}^{t}(\Omega)$ for $t>1$ satisfying:
\begin{align*}
\frac{1}{t}+\frac{1}{p}=\frac{1}{\left(\frac{np'}{n-p'}\right)'}=1-\frac{n-p'}{np'}=1-\frac{1}{p'} + \frac{1}{n}=\frac{1}{p}+\frac{1}{n}.
\end{align*}
That is, one needs to have $\partial A_2\in L_{loc}^{n}(\Omega)$. Notice that this is clear in the case $q\geq n$, so restricting to $\frac{n}{2}<q<n$, we see this is implied by the embedding $W_{loc}^{1,q}(\Omega)\hookrightarrow L_{loc}^{\frac{nq}{n-q}}(\Omega)$, since
\begin{align*}
\frac{nq}{n-q}>n\Longleftrightarrow q>\frac{n}{2}.
\end{align*}
Therefore, we find
\begin{align}\label{W1est-toporder.3}
\begin{split}
\big\vert[\partial u_i]\left(\partial\left(A_2 - A_2(x_i)\right)^{*}\phi\right)\big\vert&\leq C_2\Vert \partial A_2\partial u_i\Vert_{L^{\left(\frac{np'}{n-p'}\right)'}(B_{\bar{r}}(x_i))} \Vert\phi\Vert_{L^{\frac{np'}{n-p'}}(\Omega)}\\
&\leq C_3\Vert \partial A_2\Vert_{L^{n}(B_{\bar{r}}(x_i))}\Vert u_i\Vert_{W^{1,p}(B_{\bar{r}}(x_i))} \Vert\phi\Vert_{W^{1,p'}(\Omega)},\end{split}
\end{align}
where the constant $C_3$ is again independent of ${\bar{r}}$. The case $p'\geq n$ is actually more straightforward, since in this case $\phi\in W^{1,p'}(\Omega)\hookrightarrow L^{s'}(\Omega)$ for any $s'<\infty$. Thus, all we need is to be able to guarantee is that $\partial A_2\in L^{t}_{loc}(\Omega)$ for some $t>1$ such that $\frac{1}{s}\doteq \frac{1}{t}+\frac{1}{p}<1$. If we can find such $t>1$, then we have
\begin{align}\label{W1est-toporder.3.2}
\begin{split}
\big\vert[\partial u_i]\left(\partial\left(A_2 - A_2(x_i)\right)^{*}\phi\right)\big\vert&\leq C'_2\Vert \partial A_2\partial u_i\Vert_{L^{s}(B_{\bar{r}}(x_i))} \Vert\phi\Vert_{L^{s'}(\Omega)}\\
&\leq C'_3\Vert \partial A_2\Vert_{L^{t}(B_{\bar{r}}(x_i))}\Vert u_i\Vert_{W^{1,p}(B_{\bar{r}}(x_i))} \Vert\phi\Vert_{W^{1,p'}(\Omega)},
\end{split}
\end{align}
which is the same type of estimate as (\ref{W1est-toporder.3}). Thus, to establish the above estimate, we just need to justify that $\partial A_2\in L^{t}_{loc}(\Omega)$ for some $t>1$ such that $\frac{1}{t}<1-\frac{1}{p}$. Recalling then that $\partial A_2\in W^{1,q}_{loc}(\Omega)$, we see that if $q\geq n$, then $\partial A_2\in L^{t}_{loc}(\Omega)$ for any $t<\infty$ and thus this case is covered, so we can concentrate on $q<n$. In this case we know that $\partial A_2\in L^{\frac{nq}{n-q}}_{loc}(\Omega)$ and therefore fixing $\frac{1}{t}\doteq \frac{1}{q}-\frac{1}{n}$ we see that
\begin{align*}
\frac{1}{t}<1-\frac{1}{p}\Longleftrightarrow \frac{1}{p}<\frac{1}{q'}+\frac{1}{n},
\end{align*}
which holds by hypothesis, and this establishes (\ref{W1est-toporder.3.2}) in the remaining cases.

Putting together (\ref{W1est-toporder.1})-(\ref{W1est-toporder.3.2}), we find (for the value $t$ corresponding to each case)
\begin{align*}
\begin{split}
\big\vert[\left(A_2 - A_2(x_i) \right)\partial^2u_i](\phi)\big\vert&\leq \big(C_1\Vert A_2 - A_2(x_i) \Vert_{C^{0}(B_{\bar{r}}(x_i))} + C_3\Vert \partial A_2\Vert_{L^{t}(B_{\bar{r}}(x_i))}\big)\Vert u_i\Vert_{W^{1,p}(B_{\bar{r}}(x_i))}\Vert \phi\Vert_{W^{1,p'}(\Omega)},
\end{split}
\end{align*}
for all $\phi\in W_0^{1,p'}(\Omega)$, which implies
\begin{align}\label{W1est-toporderfinal}
\!\!\!\!\!\!\!\!\!\!\!\!\Vert \left(A_2 - A_2(x_i) \right)\partial^2u_i\Vert_{W^{-1,p}(\Omega)}\leq C_4\big(\Vert A_2 - A_2(x_i) \Vert_{C^{0}(B_{\bar{r}}(x_i))} + \Vert \partial A_2\Vert_{L^{t}(B_{\bar{r}}(x_i))}\big)\Vert u_i\Vert_{W^{1,p}(B_{\bar{r}}(x_i))},
\end{align}
for a constant $C_4$ independent of ${\bar{r}}$.

A similar analysis can now be performed for the first order term, starting with:
\begin{align*}
\sup_{\Vert \phi\Vert_{W^{1,p'}(\Omega)}=1}\big\vert [A_1\partial u_i](\phi) \big\vert= \sup_{\Vert \phi\Vert_{W^{1,p'}(\Omega)}=1}\Big\vert \int_{B_{\bar{r}}(x_i)} A_1\partial u_i \cdot \phi \:dx \Big\vert,
\end{align*}
which follows since $A_1\partial u_i\in L^1_{loc}$ is a regular distribution. Since $A_1\in W_{loc}^{1,q}(\Omega)$, the same arguments as in (\ref{W1est-toporder.3})-(\ref{W1est-toporder.3.2}) show that
\begin{align}\label{W1est-1storder}
\begin{split}
\Vert A_1\partial u_i\Vert_{W^{-1,p}(\Omega)}&\leq C_5\sup_{\Vert \phi\Vert_{W^{1,p'}(\Omega)}=1}\Vert A_1\Vert_{L^{t}(B_{\bar{r}}(x_i))}\Vert u_i\Vert_{W^{1,p}(B_{\bar{r}}(x_i))}\Vert \phi\Vert_{L^{\frac{np'}{n-p'}}(\Omega)},\\
&\leq C_6\Vert A_1\Vert_{L^{t}(B_{\bar{r}}(x_i))}\Vert u_i\Vert_{W^{1,p}(B_{\bar{r}}(x_i))},
\end{split}
\end{align}
for a constant $C_6$ independent of ${\bar{r}}$.

Finally, for the zero order term, we need to estimate:
\begin{align*}
\sup_{\Vert \phi\Vert_{W^{1,p'}(\Omega)}=1}\big\vert [A_0 u_i](\phi) \big\vert= \sup_{\Vert \phi\Vert_{W^{1,p'}(\Omega)}=1}\Big\vert \int_{B_{\bar{r}}(x_i)} A_0 u_i \cdot \phi \:dx \Big\vert.
\end{align*}
Again, let us first assume that $p'<n$. Thus we know that $\phi\in L^{\frac{np'}{n-p'}}(\Omega)$, and we intend to guarantee that $A_0 u_i\in L^{\left(\frac{np'}{n-p'}\right)'}(B_{\bar{r}}(x_i))$. If additionally $p<n$, then $u_i\in W_0^{1,p}(B_{\bar{r}}(x_i))\hookrightarrow L^{\frac{np}{n-p}}(B_{\bar{r}}(x_i))$, and $A_0 u_i\in L^{\left(\frac{np'}{n-p'}\right)'}(B_{\bar{r}}(x_i))$ will follow from Hölder's inequality if $q\geq t$ with:
% which since $A_0\in L^q(\Omega)$ by hypothesis, will follow if $u_i\in L^{t}(\Omega)$ with
\begin{align*}
\frac{1}{t}+\frac{1}{\frac{np}{n-p}}=\frac{1}{\left(\frac{np'}{n-p'}\right)'}=\frac{1}{p}+\frac{1}{n}%&\Longleftrightarrow \frac{1}{q}+\frac{n-p}{np}=\frac{1}{p}+\frac{1}{n}\\
%&\Longleftrightarrow \frac{1}{q}+\frac{1}{p} - \frac{1}{n}=\frac{1}{p}+\frac{1}{n},\\
&\Longleftrightarrow \frac{1}{t} =\frac{2}{n}.
\end{align*}
That is, we need $q\geq \frac{n}{2}$, which is granted by hypothesis. Therefore, via the generalised Hölder inequality one finds:
\begin{align}\label{ZeroOrderEsti.01}
\begin{split}
\Big\vert \int_{B_{\bar{r}}(x_i)} A_0 u_i \cdot \phi \:dx \Big\vert&\leq C_7\Vert  A_0 u_i\Vert_{L^{\left(\frac{np'}{n-p'}\right)'}(B_{\bar{r}}(x_i))}\Vert \phi\Vert_{L^{\frac{np'}{n-p'}}(\Omega)},\\
&\leq C_8\Vert  A_0\Vert_{L^{\frac{n}{2}}(B_{\bar{r}}(x_i))} \Vert u_i\Vert_{L^{\frac{np}{n-p}}(B_{\bar{r}}(x_i))}\Vert \phi\Vert_{L^{\frac{np'}{n-p'}}(\Omega)},\\
&\leq C_9\Vert  A_0\Vert_{L^{\frac{n}{2}}(B_{\bar{r}}(x_i))} \Vert u_i\Vert_{W^{1,p}(B_{\bar{r}}(x_i))}\Vert \phi\Vert_{W^{1,p'}(\Omega)},
\end{split}
\end{align}
for a constant $C_9$ independent of ${\bar{r}}$.\footnote{Here we have used that, given a bounded open set $U\subset \mathbb{R}^n$, from the Sobolev inequality when $p<n$, there is a constant $C$ independent of $U$, such that $\Vert u\Vert_{L^{\frac{np}{n-p}}(U)}\leq C\Vert u\Vert_{W^{1,p}(U)}$, for all $u\in W^{1,p}_0(U)$.} Still assuming $p'<n$, if now we assume $p> n$, then actually $u_i\in W_0^{1,p}(B_{\bar{r}}(x_i))\hookrightarrow L^{\infty}(B_{\bar{r}}(x_i))$ and thus $A_0 u_i\in L^{\left(\frac{np'}{n-p'}\right)'}(B_{\bar{r}}(x_i))$ will follow from Hölder's inequality if $q\geq t$ with:\footnote{Notice that our current hypothesis $p>n$ guarantees that $t>1$.}
\begin{align*}
\frac{1}{t}=\frac{1}{\left(\frac{np'}{n-p'}\right)'}=\frac{1}{p}+\frac{1}{n}.
\end{align*}
That is, one needs to guarantee that $\frac{1}{q}\leq \frac{1}{p}+\frac{1}{n}\Longleftrightarrow \frac{1}{q} - \frac{1}{n}\leq \frac{1}{p}$, which holds by hypothesis. In this case we find:
\begin{align}\label{ZeroOrderEsti.02}
\begin{split}
\Big\vert \int_{B_{\bar{r}}(x_i)} A_0 u_i \cdot \phi \:dx \Big\vert %&\leq C'_7\Vert  A_0 u_i\Vert_{L^{\left(\frac{np'}{n-p'}\right)'}(B_{\bar{r}}(x_i))}\Vert \phi\Vert_{L^{\frac{np'}{n-p'}}(\Omega)},\\
&\leq C'_8\Vert  A_0\Vert_{L^{\frac{np}{n+p}}(B_{\bar{r}}(x_i))} \Vert u_i\Vert_{L^{\infty}(B_{\bar{r}}(x_i))}\Vert \phi\Vert_{L^{\frac{np'}{n-p'}}(\Omega)},\\
&\leq C'_9\bar{r}^{1-\frac{n}{p}}\Vert  A_0\Vert_{L^{\frac{np}{n+p}}(B_{\bar{r}}(x_i))} \Vert u_i\Vert_{W^{1,p}(B_{\bar{r}}(x_i))}\Vert \phi\Vert_{W^{1,p'}(\Omega)},
\end{split}
\end{align}
for a constant $C'_9$ independent of ${\bar{r}}$.\footnote{In this case we have used that, given a bounded open set $U\subset \mathbb{R}^n$, from the Sobolev inequality when $p>n$, there is a constant $C$ independent of $U$, such that $\Vert u\Vert_{L^{\infty}(U)}\leq C\mathrm{diam}(U)^{1-\frac{n}{p}}\Vert u\Vert_{W^{1,p}(U)}$, for all $u\in W^{1,p}_0(U)$.} Then, for the case $p=n$, we notice that $u_i\in L^{s}(B_{\bar{r}}(x_i))$ for any $s<\infty$, and thus $A_0 u_i\in L^{\left(\frac{np'}{n-p'}\right)'}(B_{\bar{r}}(x_i))$ will follow from Hölder's inequality if
\begin{align*}
\frac{1}{q}+\frac{1}{s}=\frac{1}{\left(\frac{np'}{n-p'}\right)'}=\frac{1}{p}+\frac{1}{n}=\frac{2}{n},
\end{align*}
for some $1<s<\infty$, which is equivalent to $\frac{1}{q}=\frac{2}{n}-\frac{1}{s}$. Since $\frac{1}{q}<\frac{2}{n}$ by hypothesis, we can fix $\frac{1}{s}\doteq \frac{2}{n}-\frac{1}{q}>0$, and estimate:
\begin{align*}
\Big\vert \int_{B_{\bar{r}}(x_i)} A_0 u_i \cdot \phi \:dx \Big\vert %\leq C''_7\Vert  A_0 u_i\Vert_{L^{\left(\frac{np'}{n-p'}\right)'}(B_{\bar{r}}(x_i))}\Vert \phi\Vert_{L^{\frac{np'}{n-p'}}(\Omega)},\\
&\leq C''_8\Vert  A_0\Vert_{L^{q}(B_{\bar{r}}(x_i))} \Vert u_i\Vert_{L^{s}(B_{\bar{r}}(x_i))}\Vert \phi\Vert_{L^{\frac{np'}{n-p'}}(\Omega)}.
%&\leq C''_9\bar{r}^{1-\frac{n}{p}}\Vert  A_0\Vert_{L^{\frac{np}{n+p}}(B_{\bar{r}}(x_i))} \Vert u_i\Vert_{W^{1,p}(B_{\bar{r}}(x_i))}\Vert \phi\Vert_{W^{1,p'}(\Omega)},
\end{align*}
Now, since $u_i\in W^{1,n}_0(B_{\bar{r}}(x_i))$, then $u_i\in W^{1,\sigma}_0(B_{\bar{r}}(x_i))$ for any $1\leq \sigma<n$, which implies that there is a constant $C$, independent of $u_i$ and $\bar{r}$, such that $\Vert u_i\Vert_{L^{\frac{n\sigma}{n-\sigma}}(B_{\bar{r}}(x_i))}\leq C\Vert\nabla u_i\Vert_{L^{\sigma}(B_{\bar{r}}(x_i))}$. Applying Hölder's inequality then produces:
\begin{align*}
\Vert u_i\Vert_{L^{\frac{n\sigma}{n-\sigma}}(B_{\bar{r}}(x_i))}\leq C\left(\mu(B_{\bar{r}}(x_i)))\right)^{\frac{1}{\sigma}-\frac{1}{n}}\Vert\nabla u_i\Vert_{L^{n}(B_{\bar{r}}(x_i))},
\end{align*}
where above $\mu(B_r(x_i))$ denotes the volume of the ball $B_r(x_i)$. We now intend to fix $\sigma$ so that $\frac{n\sigma}{n-\sigma}=s$, which is equivalent to $\frac{1}{\sigma}=\frac{1}{n}+\frac{1}{s}$.\footnote{Notice that since $\frac{1}{s}=\frac{2}{n}-\frac{1}{q}>0$, we have $\sigma<n$, and also that $\frac{1}{\sigma}=\frac{3}{n}-\frac{1}{q}<1$ for any $n\geq 3$, implying that $1\leq \sigma<n$.}
%\begin{align*}
%\frac{1}{\sigma}-\frac{1}{n}=\frac{1}{s}\Longleftrightarrow \frac{1}{\sigma}=\frac{1}{n}+\frac{1}{s}.
%\end{align*}
 Then, 
\begin{align*}
\Vert u_i\Vert_{L^{\frac{n\sigma}{n-\sigma}}(B_{\bar{r}}(x_i))}\leq C'\bar{r}^{\frac{n}{s}}\Vert\nabla u_i\Vert_{L^{n}(B_{\bar{r}}(x_i))},
\end{align*}
for a constant $C'$ independent of $u_i$ and $\bar{r}$. This implies that in this case where $p=n$, we have
\begin{align}\label{ZeroOrderEsti.03}
\Big\vert \int_{B_{\bar{r}}(x_i)} A_0 u_i \cdot \phi \:dx \Big\vert &\leq C''_9\bar{r}^{\frac{n}{s}}\Vert  A_0\Vert_{L^{q}(B_{\bar{r}}(x_i))} \Vert u_i\Vert_{W^{1,p}(B_{\bar{r}}(x_i))}\Vert \phi\Vert_{W^{1,p'}(\Omega)},
\end{align}
with $C''_9$ independent of $u_i$ and $\bar{r}$. Putting together (\ref{ZeroOrderEsti.01})-(\ref{ZeroOrderEsti.03}), when $p'<n$ and $\bar{r}<1$, we find that 
\begin{align}\label{ZeroOrderEsti.03}
\Big\vert \int_{B_{\bar{r}}(x_i)} A_0 u_i \cdot \phi \:dx \Big\vert &\leq C_{10}\Vert  A_0\Vert_{L^{q}(B_{\bar{r}}(x_i))} \Vert u_i\Vert_{W^{1,p}(B_{\bar{r}}(x_i))}\Vert \phi\Vert_{W^{1,p'}(\Omega)}, \:\forall \: u_i\in W^{1,p}_0((B_{\bar{r}}(x_i))),
\end{align}
with $C_{10}>0$ independent of $\bar{r}$ and $u_i$. 

Finally, in the case $p'\geq n$ we have $\phi\in L^{s'}(\Omega)$ for all $s'<\infty$, and we must have $p<n$. Thus, we first aim to guarantee that $A_0u_i\in L^{s}(B_{\bar{r}}(x_i))$ for some $s>1$ via Hölder's inequality. Since $u_i\in L^{\frac{np}{n-p}}(B_{\bar{r}}(x_i))$ and $A_0\in L^{q}_{loc}(\Omega)$, this holds as long as
\begin{align*}
\frac{1}{s}\doteq \frac{1}{q}+\frac{n-p}{np}<1\Longleftrightarrow \frac{1}{q}+\frac{1}{p}-\frac{1}{n}<1\Longleftrightarrow\frac{1}{p}<\frac{1}{q'}+\frac{1}{n},
\end{align*}
where the last inequality holds by hypothesis. Therefore, with $s$ fixed as above, we have $\phi\in L^{s'}(\Omega)$ and
\begin{align}\label{ZeroOrderEsti.04}
\begin{split}
\Big\vert \int_{B_{\bar{r}}(x_i)} A_0 u_i \cdot \phi \:dx \Big\vert &\leq K_1 \Vert  A_0 u_i\Vert_{L^{s}(B_{\bar{r}}(x_i))}\Vert \phi\Vert_{L^{s'}(\Omega)},\\
&\leq K_2\Vert  A_0\Vert_{L^{q}(B_{\bar{r}}(x_i))} \Vert u_i\Vert_{L^{\frac{np}{n-p}}(B_{\bar{r}}(x_i))}\Vert \phi\Vert_{L^{s'}(\Omega)},\\
&\leq K_3\Vert  A_0\Vert_{L^{q}(B_{\bar{r}}(x_i))} \Vert u_i\Vert_{W^{1,p}(B_{\bar{r}}(x_i))}\Vert \phi\Vert_{W^{1,p'}(\Omega)},
\end{split}
\end{align}
for another constant $K_3$, also independent of $\bar{r}$ and $u_i$, which extends (\ref{ZeroOrderEsti.03}) to the case $p'\geq n$. Therefore, putting together (\ref{ZeroOrderEsti.03})-(\ref{ZeroOrderEsti.04}), we finally find: 
\begin{align}\label{W1est-0order}
\Vert A_0 u_i\Vert_{W^{-1,p}(\Omega)}&\leq C_{11}\Vert  A_0\Vert_{L^{q}(B_{\bar{r}}(x_i))} \Vert u_i\Vert_{W^{1,p}(B_{\bar{r}}(0))},
\end{align}
where $C_{11}>0$ is independent of $u_i$ and $\bar{r}<1$.

Putting now (\ref{W1est-toporderfinal}),(\ref{W1est-1storder}) and (\ref{W1est-0order}) together with (\ref{Comment3}), we find
\begin{align*}
\Vert u_i&\Vert_{W^{1,p}(\Omega)}%&\leq C\Big( \Vert v_i\Vert_{W^{-1,p}(\Omega)} \\
%&+ C_4\big(\Vert A_2 - A_2(x_i) \Vert_{C^{0}(B_r(x_i))} + \Vert \partial A_2\Vert_{L^{n}(B_r(x_i))}\big)\Vert u_i\Vert_{W^{1,p}(B_r(x_i))} \\
%&+ C_6\Vert A_1\Vert_{L^{\frac{nq}{n-q}}(B_r(x_i))}\Vert u_i\Vert_{W^{1,p}(B_r(x_i))}\\
%&+C_9\Vert  A_0\Vert_{L^q(B_r(x_i))} \Vert u_i\Vert_{W^{1,p}(B_r(0))}\\
%& + \Vert u_i\Vert_{W^{-1,p}(B_R(x_i))}\Big),\\
\leq C_i\Big( \Vert v_i\Vert_{W^{-1,p}(\Omega)} + \mathcal{C}({\bar{r}})\Vert u_i\Vert_{W^{1,p}(B_{\bar{r}}(x_i))} + \Vert u_i\Vert_{L^{p}(\Omega)}\Big),
\end{align*}
for some other constant $C_i>0$, independent of ${\bar{r}}<1$, where we have defined
\begin{align*}
\mathcal{C}({\bar{r}})\doteq \Vert A_2 - A_2(x_i) \Vert_{C^{0}(B_{\bar{r}}(x_i))} + \Vert \partial A_2\Vert_{L^{t}(B_{\bar{r}}(x_i))} + \Vert A_1\Vert_{L^{t}(B_{\bar{r}}(x_i))} + \Vert  A_0\Vert_{L^{q}(B_{\bar{r}}(x_i))}.
\end{align*}
Since $\mathcal{C}({\bar{r}})\searrow 0$ as ${\bar{r}}\searrow 0$, we can choose ${\bar{r}}$ small enough so that $\mathcal{C}({\bar{r}})<\frac{1}{2C_i}$, and thus
\begin{align}\label{W1esti-final.1}
\Vert u_i\Vert_{W^{1,p}(\Omega)}\leq C_i'( \Vert Lu_i\Vert_{W^{-1,p}(\Omega)} + \Vert u_i\Vert_{L^{p}(\Omega)}).
\end{align}

Having established the above estimate, we now proceed with the same strategy as after (\ref{W2Estimate-final.1}). That is, we cover $\overline{B_R(0)}$ by the balls $\{B_{r_x}(x)\: :\: x\in \overline{B_R(0)}\}$, where there is a constant $C_x>0$ such that for all $u_x\in W^{1,p}_0(B_{r_{x}}(x))$ the estimate (\ref{W1esti-final.1}) holds. We then proceed to take a finite subcover  $\{B_{r_i}(x_i)\: :\: i=1,\cdots,\mathcal{N}\}$ and a partition of unity $\{\eta_i\: : \: i=1,\cdots,\mathcal{N}\}$ subordinate to such subcover, so that given $u\in W^{1,p}_0(B_R(0))$ we can localise $u=\sum_{i=1}^{\mathcal{N}}\eta_iu$ 
%\begin{align*}
%u=\sum_{i=1}^{\mathcal{N}}\eta_iu,
%\end{align*}
 with $\eta_iu\in W^{1,p}_0(B_{r_i}(x_i))$. We can therefore use (\ref{W1esti-final.1}) to estimate
\begin{align}\label{W1est-Global.1}
\Vert \eta_iu\Vert_{W^{1,p}(\Omega)}&\leq C_i( \Vert L(\eta_iu)\Vert_{W^{-1,p}(\Omega)} + \Vert \eta_ iu\Vert_{L^{p}(\Omega)}).
\end{align}
We now proceed to estimate the first term in the right-hand side. For this, we once more notice that
\begin{align}\label{W1est-Commutation.1}
L(\eta_iu)=\eta_iLu + [L,\eta_i]u,
\end{align}
where $[L,\eta_i]u$ 
%\begin{align*}
%[L,\eta_i]u=(2\partial\eta_iA_2+\eta_iA_1)\partial u + (\partial^2\eta_iA_2 + \partial\eta_iA_1 + \eta_iA_0)u
%\end{align*}
 is an operator of first order, with coefficients $\tilde{A}_1\in W^{2,q}(\Omega)$ and $\tilde{A}_0\in W^{1,q}(\Omega)$. To estimate these terms, we shall appeal to interpolation spaces $H^{s,p}$ of Bessel potentials. In particular, we know that $u\in H^{s,p}(\Omega)$ for any real $s\leq 1$ and therefore we analyse the multiplication
\begin{align}\label{BesselMultiplication.1}
H^{1,q}(\Omega)\otimes H^{s-1,p}(\Omega)\hookrightarrow H^{-1,p}(\Omega),
\end{align}
which by Theorem \ref{BesselMultLocal} is valid as long as $s\geq 0$ and
\begin{align*}
&2\geq n\left(\frac{1}{q}-\frac{1}{p}\right),\:\:
%&s\geq 0,\\
1+s>\frac{n}{q},\:\:
s\geq n\left(\frac{1}{q}+\frac{1}{p}-1\right).
\end{align*}
The first condition above is equivalent to $\frac{1}{p}\geq \frac{1}{q}-\frac{2}{n}$, which holds since $\frac{1}{p}\geq \frac{1}{q}-\frac{1}{n}$ by hypothesis. Concerning the second one, the case $q\geq n$ is trivial for any $s>0$, and for $n>q>\frac{n}{2}$  it is satisfied for any $\frac{n}{q}-1< s\leq 1$. Finally, in order for the last condition to be non-empty, we need to guarantee
\begin{align*}
s\geq n\left(\frac{1}{q}+\frac{1}{p}-1\right)\Longleftrightarrow \frac{1}{p}\leq \frac{s}{n}+1-\frac{1}{q}=\frac{s}{n}+\frac{1}{q'}.
\end{align*}
By hypothesis we know that $\frac{1}{p}< \frac{1}{n}+\frac{1}{q'}$, and therefore for $s<1$, but sufficiently close to $1$, $\frac{1}{p}\leq \frac{s}{n}+\frac{1}{q'}$ holds. That is, we have seen that for there is some $s_0<1$ such that for all $s\in (s_0,1]$ (\ref{BesselMultiplication.1}) holds, and therefore
\begin{align*}
\Vert \tilde{A}_1\partial u\Vert_{W^{-1,p}(\Omega)}&\leq C\Vert \tilde{A}_1\Vert_{W^{1,q}(\Omega)} \Vert \partial u\Vert_{H^{s-1,p}(\Omega)}\leq C\Vert \tilde{A}_1\Vert_{W^{1,q}(\Omega)} \Vert u\Vert_{H^{s,p}(\Omega)}.
\end{align*}

Dealing with the zero order term is similar, in this case analysing the multiplication
\begin{align}\label{BesselMultiplication.2}
L^{q}(\Omega)\otimes H^{s,p}(\Omega)\hookrightarrow H^{-1,p}(\Omega),
\end{align}
which now produces the restrictions $s\geq 0$ and
\begin{align*}
1\geq n\left(\frac{1}{q}-\frac{1}{p}\right),\:\: 1+s>\frac{n}{q},\:\: s\geq n\left(\frac{1}{q}+\frac{1}{p}-1\right).
\end{align*}
which are all again satisfied under our choice $s\in (s_0,1]$, and thus
\begin{align*}
\Vert \tilde{A}_0 u\Vert_{W^{-1,p}(\Omega)}&\leq C\Vert \tilde{A}_0\Vert_{L^{q}(\Omega)} \Vert u\Vert_{H^{s,p}(\Omega)}.
\end{align*}
Thus, picking $s_0<s<1$, we have 
\begin{align}\label{W1esti-commutation.2}
\Vert [L,\eta_i]u\Vert_{W^{-1,p}(\Omega)}\leq C\left(\Vert \tilde{A}_1\Vert_{W^{1,q}(\Omega)} +  \Vert \tilde{A}_0\Vert_{L^{q}(\Omega)}\right)\Vert u\Vert_{H^{s,p}(\Omega)}.
\end{align}
We can then put (\ref{W1est-Global.1}),(\ref{W1est-Commutation.1}) and (\ref{W1esti-commutation.2}) together to obtain:
\begin{align}
\Vert \eta_iu\Vert_{W^{1,p}(\Omega)}&\leq C_i( \Vert \eta_iLu\Vert_{W^{-1,p}(\Omega)} + \Vert u\Vert_{H^{s,p}(\Omega)}),
\end{align}
for some other constant $C_i>0$. Since multiplication by $\eta_i$ is a bounded map from $W^{-1,p}(\Omega)\to W^{-1,p}(\Omega)$, we rewrite the above as
\begin{align}\label{W1est-Global.2}
\Vert \eta_iu\Vert_{W^{1,p}(\Omega)}&\leq C_i( \Vert Lu\Vert_{W^{-1,p}(\Omega)} + \Vert u\Vert_{H^{s,p}(\Omega)}),
\end{align}
Applying these estimates to $u=\sum_{i=1}^{\mathcal{N}}\eta_iu$, we find
\begin{align}\label{W1est-Global.3}
\Vert u\Vert_{W^{1,p}(\Omega)}&\leq C( \Vert Lu\Vert_{W^{-1,p}(\Omega)} + \Vert u\Vert_{H^{s,p}(\Omega)}),
\end{align}
for a constant $C>0$, independent of $u$. 

%We now intend to localise the first term in the right-hand side of (\ref{W1est-Global.3}). For that, we notice that since $u\in W^{2,p}_0(B_R(0))$, then
%\begin{align*}
%(Lu)(\phi)=\int_{B_R(0)}\left(-A_2\partial u\cdot\partial\phi + (A_1 - \partial A^{*}_2)\partial u\cdot \phi + A_0u\cdot \phi \right)dx, \text{ for all } \phi\in W^{1,p'}(\mathbb{R}^n).
%\end{align*}
%Therefore, we see that given any $\eta\in C^{\infty}_0(\Omega)$ such that $\eta|_{\overline{B_R(0)}}\equiv 1$, we have $(Lu)(\phi)=(Lu)(\eta\phi)$, where now $\eta\phi\in W^{1,p'}_0(\Omega)$. This implies that $\Vert u\Vert_{W^{-1,p}(\mathbb{R}^n)}\leq \Vert u\Vert_{W^{-1,p}(\Omega)}$, and thus we rewrite (\ref{W1est-Global.3}) as
%\begin{align}\label{W1est-Global.4}
%\Vert u\Vert_{W^{1,p}(\mathbb{R}^n)}&\leq C( \Vert Lu\Vert_{W^{-1,p}(\Omega)} + \Vert u\Vert_{H^{s,p}(\Omega)}).
%\end{align}

Finally, since $W^{1,p}(\Omega)\hookrightarrow H^{s,p}(\Omega)$ is compact for any $s<1$, we then have an interpolation inequality so that, for any $\epsilon>0$, there is a constant $C_{\epsilon}>0$ such that
\begin{align*}
\Vert u\Vert_{H^{s,p}(\Omega)}\leq \epsilon\Vert u\Vert_{W^{1,p}(\Omega)} + C_{\epsilon}\Vert u\Vert_{L^{p}(\Omega)}.
\end{align*}
Using this interpolation in (\ref{W1est-Global.3}), and noticing that clearly $\Vert u\Vert_{W^{1,p}(\Omega)}=\Vert u\Vert_{W^{1,p}(B_R(0))}$ for any $u\in W^{1,p}_0(B_R(0))$, we find
\begin{align*}
\Vert u\Vert_{W^{1,p}(B_R(0))}\leq C\left( \Vert Lu\Vert_{W^{-1,p}(\Omega)} + \epsilon\Vert u\Vert_{W^{1,p}(B_R(0))} + C_{\epsilon}\Vert u\Vert_{L^{p}(B_R(0))}\right),
\end{align*}
for a constant $C$ independent of $\epsilon$. One can the choose $\epsilon<\frac{1}{2C}$, so that
\begin{align}
\Vert u\Vert_{W^{1,p}(B_R(0))}\leq C'\left( \Vert Lu\Vert_{W^{-1,p}(\Omega)} +\Vert u\Vert_{L^{p}(B_R(0))}\right),
\end{align}
for some other constant $C'>0$, which proves (\ref{ElliptEstimateW1}) and finishes the proof.

%\bigskip
%Therefore, the same analysis as in (\ref{W1est-1storder}) and (\ref{W1est-0order}) shows that
%\begin{align*}
%\Vert [L,\eta_i]u\Vert_{W^{-1,p}(B_R(0))}\leq C \left( \right)\Vert u\Vert_{W^{1,p}(B_r(x_i))}
%\end{align*}
%where $C$ depends on the norm of of the coefficients $L$ on $B_R(0)$, but not on $r$. Also $\Vert \eta_iLu\Vert_{L^q(B_r(x_i))}\leq \Vert Lu\Vert_{L^q(B_r(x_i))}$, therefore, we find that
%\begin{align*}
%\Vert u_i\Vert_{W^{m,q}(B_r(x_i))}&\leq C(\Vert Lu\Vert_{L^q(B_R(0))}  + \Vert u\Vert_{W^{s,q}(B_R(0))} ).
%\end{align*}
%where $\max\{\frac{n}{q},m-1\}<s<m$. Summing all the contributions we get
%\begin{align}
%\Vert u\Vert_{W^{m,q}(B_R(0))}\leq C(\Vert Lu\Vert_{L^q(B_R(0))} + \Vert u\Vert_{W^{s,q}(B_R(0))}),
%\end{align}
%Since the embedding $W^{m,q}(B_R(0))\hookrightarrow W^{s,q}(B_R(0))$ is compact for any $s<m$, given any $\epsilon>0$ we have an interpolation inequality
%\begin{align*}
%\Vert u\Vert_{W^{s,q}(B_R(0))}\leq \epsilon\Vert u\Vert_{W^{m,q}(B_R(0))} + C_{\epsilon}\Vert u\Vert_{L^{q}(B_R(0))},
%\end{align*}
%which we can use to get rid of the intermediate spaces, so that
%\begin{align}
%\Vert u\Vert_{W^{m,q}(B_R(0))}\leq C(\Vert Lu\Vert_{L^q(B_R(0))}  + \Vert u\Vert_{L^q(B_R(0))}),
%\end{align}
%which is the desired estimate.
\end{proof}

%Appealing to Lemmas \ref{ElliptEstRoughW2} and \ref{ElliptEstRoughW2}, we can now globalise the estimate and obtain a priori semi-Fredholm properties.

%\begin{theo}
%Let $\Omega\subset \mathbb{R}^n$ be a bounded open set and consider an elliptic operator $L\in \mathcal{L}(W^{2,q};E)$ of the form of (\ref{Holst.1}) over this domain with $q>\frac{n}{2}$. Then, given a point $x\in \Omega$ and a ball $\overline{B_r(x)}\subset \Omega$, the following statements hold:
%\begin{enumerate}
%\item[1.] There is a constant $C_1>0$ such that for all $u\in W^{2,p}_0(B_r(x))$ 
%\end{enumerate}
%\end{theo}

We can now establish the following general semi-Fredholm result for the types of operators treated above:

\begin{theo}\label{FredholmLemmaHolst}
Let $L\in \mathcal{L}^2(W^{2,q};E)$ be an elliptic operator satisfying $q>\frac{n}{2}$, where $E$ stands for some tensor bundle over the closed manifold $M$. Then, the operator 
\begin{align}\label{Fred.1}
L:W^{2,p}(E)\to L^{p}(E),
\end{align}
is semi-Fredholm for all $1<p\leq q$, while 
\begin{align}\label{Fred.2}
L:W^{1,p}(E)\to W^{-1,p}(E),
\end{align}
is semi-Fredholm for all $\frac{1}{q}-\frac{1}{n}\leq \frac{1}{p}<\frac{1}{q'}+\frac{1}{n}$.
% satisfying $k-\frac{n}{p}\in (2-n+2+\frac{n}{q},2-\frac{n}{q})$
\end{theo}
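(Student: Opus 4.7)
The plan is to globalise the local elliptic estimates established in Lemmas \ref{ElliptEstRoughW2} and \ref{ElliptEstRoughW1} and then appeal to the classical characterisation of semi-Fredholm operators in terms of compact-perturbation estimates. More precisely, I will aim for global a priori inequalities on $M$ of the form
\begin{align*}
\Vert u\Vert_{W^{2,p}(E)}&\leq C\left(\Vert Lu\Vert_{L^{p}(E)} + \Vert u\Vert_{L^{p}(E)}\right), \\
\Vert u\Vert_{W^{1,p}(E)}&\leq C\left(\Vert Lu\Vert_{W^{-1,p}(E)} + \Vert u\Vert_{L^{p}(E)}\right),
\end{align*}
for $1<p\leq q$ and $\tfrac{1}{q}-\tfrac{1}{n}\leq \tfrac{1}{p}<\tfrac{1}{q'}+\tfrac{1}{n}$ respectively. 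Once such global estimates are in place, the fact that the inclusion $W^{k,p}(E)\hookrightarrow L^{p}(E)$ is compact on the closed manifold $M$ (Rellich-Kondrachov) combined with the standard fact that a bounded linear map $T:X\to Y$ satisfying $\Vert u\Vert_X\leq C(\Vert Tu\Vert_Y + \Vert u\Vert_Z)$, with a compact inclusion $X\hookrightarrow Z$, has finite-dimensional kernel and closed range, gives the semi-Fredholm conclusion immediately.

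To produce the global estimates I would fix a finite cover $\{U_i,\varphi_i,\rho_i\}_{i=1}^{N}$ of $M$ by coordinate balls trivialising $E$, chosen so that each $\varphi_i(U_i)$ contains a Euclidean ball on which Lemmas \ref{ElliptEstRoughW2} or \ref{ElliptEstRoughW1} apply, together with a partition of unity $\{\eta_i\}$ subordinate to a slight refinement so that each $\eta_i u$ is compactly supported inside the admissible ball. Writing $L(\eta_i u)=\eta_i Lu + [L,\eta_i]u$, the commutator $[L,\eta_i]$ is a first order operator with coefficients of the form $2A_2\partial\eta_i\in W^{2,q}$ and $A_2\partial^2\eta_i+A_1\partial\eta_i\in W^{1,q}$ (the smooth factors $\partial\eta_i,\partial^2\eta_i$ being harmless). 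Applying the local estimate to $\eta_i u$, summing over $i$, and using the multiplication theorem \ref{BesselMultLocal} -- together with Lemma \ref{ContinuityPropsGeneralOps1stOrder} whenever the exponent range permits -- to bound the commutator contribution by $\Vert u\Vert_{W^{k-1,p}}$ or $\Vert u\Vert_{H^{s,p}}$ with $s<k$, yields a preliminary estimate
\begin{align*}
\Vert u\Vert_{W^{k,p}(E)}\leq C\left(\Vert Lu\Vert_{W^{k-2,p}(E)} + \Vert u\Vert_{H^{s,p}(E)}\right),
\end{align*}
with $s<k$. Since $W^{k,p}(E)\hookrightarrow H^{s,p}(E)\hookrightarrow L^{p}(E)$ is a chain of compact inclusions on the closed manifold, Ehrling's lemma produces an interpolation inequality $\Vert u\Vert_{H^{s,p}}\leq \epsilon\Vert u\Vert_{W^{k,p}} + C_\epsilon \Vert u\Vert_{L^{p}}$ which, absorbing the $W^{k,p}$ term for $\epsilon$ small, gives the desired global estimate.

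I expect the main obstacle to lie in the negative-order case $W^{1,p}\to W^{-1,p}$, where the commutator $[L,\eta_i]u$ has to be controlled in $W^{-1,p}$ by a norm of $u$ weaker than $W^{1,p}$. The duality arguments required here are precisely those already confronted within the proof of Lemma \ref{ElliptEstRoughW1}: the derivatives of $A_2$ only live in $L^{t}$ for $t$ controlled by $q>\frac{n}{2}$, which forces us to work at the boundary of the multiplication properties of Bessel-potential spaces in Theorem \ref{BesselMultLocal}, and it is exactly this border behaviour that dictates the admissible range $\tfrac{1}{q}-\tfrac{1}{n}\leq \tfrac{1}{p}<\tfrac{1}{q'}+\tfrac{1}{n}$. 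The delicate point is to verify that for every $p$ in that range there exists some $s_0<1$ for which the multiplication $W^{1,q}\otimes H^{s-1,p}\hookrightarrow W^{-1,p}$ (and its zero order analogue $L^{q}\otimes H^{s,p}\hookrightarrow W^{-1,p}$) is continuous for $s\in(s_0,1]$ -- this is exactly the gain needed to apply Ehrling's lemma and close the bootstrap. With this step handled, the rest of the argument assembles into the semi-Fredholm conclusion in both settings.
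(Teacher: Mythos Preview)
Your proposal is correct and follows essentially the same route as the paper: localise via a finite cover and partition of unity, apply Lemmas \ref{ElliptEstRoughW2} and \ref{ElliptEstRoughW1} to each $\eta_i u$, control the commutator $[L,\eta_i]$ as a first-order operator exactly as in the last parts of those lemmas (Bessel-potential multiplication followed by Ehrling interpolation), and conclude semi-Fredholmness from the resulting global estimate $\Vert u\Vert_{W^{k,p}}\leq C(\Vert Lu\Vert_{W^{k-2,p}}+\Vert u\Vert_{L^{p}})$ via the compact embedding $W^{k,p}(E)\hookrightarrow L^{p}(E)$. The paper's proof is terser---it simply refers back to ``the same lines of arguments as in the last parts of Lemmas \ref{ElliptEstRoughW2} and \ref{ElliptEstRoughW1}'' for the commutator and interpolation steps you spell out---but the content is the same.
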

\begin{remark}
Concerning (\ref{Fred.2}), notice that the condition $\frac{1}{q}-\frac{1}{n}\leq \frac{1}{p}<\frac{1}{q'}+\frac{1}{n}$ is non-empty as a condition on $p$ iff
\begin{align*}
\frac{1}{q}-\frac{1}{n}<\frac{1}{q'}+\frac{1}{n}\Longleftrightarrow \frac{1}{q}-\frac{1}{n}<1-\frac{1}{q}+\frac{1}{n} \Longleftrightarrow \frac{2}{q}<1+\frac{2}{n}.
\end{align*} 
Since $q>\frac{n}{2}$, then $\frac{2}{q}<\frac{4}{n}$, and noticing that
\begin{align*}
\frac{4}{n}\leq 1+ \frac{2}{n} \Longleftrightarrow n\geq 2,
\end{align*}
we see that $\frac{2}{q}<\frac{4}{n}\leq 1+ \frac{2}{n}$ for all $q>\frac{n}{2}$ and $n\geq 2$, and therefore (\ref{Fred.2}) is always non-empty under our hypotheses.
\end{remark}
\begin{proof}
%Under our conditions, appealing to \cite[Lemma 34]{HolstFarCMC}, writing (\ref{Fred.1})-(\ref{Fred.2}) together as $L:W^{k,p}\mapsto W^{k-2,p}$ with $k=1,2$, we need only prove that $k-\frac{n}{p}\in (-n+\frac{n}{q},2-\frac{n}{q})$, which is equivalent to
%\begin{align*}
%-n+\frac{n}{q}<k-\frac{n}{p}\leq 2-\frac{n}{q},\\
%-k-n+\frac{n}{q}<-\frac{n}{p}\leq 2-k-\frac{n}{q},\\
%k-2+\frac{n}{q}\leq \frac{n}{p}<n+k-\frac{n}{q},\\
%\frac{k-2}{n}+\frac{1}{q}\leq \frac{1}{p}<\frac{n+k}{n} - \frac{1}{q}
%\end{align*}
%If $k=2$, the above is equivalent to $\frac{1}{q}\leq \frac{1}{p}<1 + \frac{2}{n} - \frac{1}{q}$. The first inequality is true due to $p\leq q$ and to analyse the second one, notice that $q>\frac{n}{2}$ implies $\frac{2}{n}-\frac{1}{q}>0$, and hence $\frac{1}{p}<1 + \frac{2}{n} - \frac{1}{q}$ holds for all $p>1$.

%In the case where $k=1$, we need to satisfy 
%\begin{align*}
%-\frac{1}{n}+\frac{1}{q}\leq \frac{1}{p}<1+ \frac{1}{n} - \frac{1}{q}=\frac{1}{q'}+\frac{1}{n}.
%\end{align*}
%Once more, the first inequality holds since $p\leq q$, while the second one is equivalent to $p\geq \frac{nq'}{n+q'}$.

Let us start by writing (\ref{Fred.1})-(\ref{Fred.2}) together as $L:W^{k,p}(E)\mapsto W^{k-2,p}(E)$ with $k=1,2$. Then consider a cover of $M$ by coordinate balls $\{B_{R_i}(x_i)\: : \: x_i\in M, \: i=1,\cdots,\mathcal{N}\}$ trivialising $E$, and a partition of unity $\{\eta_i\}_{i=1}^{\mathcal{N}}$ subordinate to such a cover. Given $u\in W^{k,p}(E)$, we localise it via $u=\sum_{i=1}^{\mathcal{N}}\eta_iu$, where now $\eta_iu\in W^{k,p}_0(B_{R_i}(x_i))$. Thus, under the conditions of this theorem, we can apply Lemmas \ref{ElliptEstRoughW2} and \ref{ElliptEstRoughW1} to each $\eta_iu$, in the corresponding cases for $k=2$ and $k=1$ respectively. From these local estimates, one can produce the following global one following the same lines of arguments as in the last parts of Lemmas \ref{ElliptEstRoughW2} and \ref{ElliptEstRoughW1}:
\begin{align}\label{semiFredholm-estimate}
\Vert u\Vert_{W^{k,p}(E)}\leq C(\Vert Lu\Vert_{W^{k-2,p}(E)} + \Vert u\Vert_{L^{p}(E)}),
\end{align}
for a fixed constant $C>0$, independent of $u$. Once (\ref{semiFredholm-estimate}) has been established, the semi-Fredholm properties follow by functional analytical arguments. For instance, due to \cite[Proposition 19.1.3]{Hormander3}, this is equivalent to showing that every bounded sequence $\{u_j\}_{j=1}^{\infty}\subset W^{k,p}$ such that $Lu_j$ is convergent admits a convergent $W^{k,p}$-subsequence. Notice then that the compact embedding $W^{k,p}(M)\hookrightarrow L^p(M)$, valid for both $k=1,2$, guarantees that any such bounded sequence $\{u_j\}_{j=1}^{\infty}\subset W^{k,p}$ admits an $L^p$-convergent subsequence 
$\{u_{j_l}\}_{l=1}^{\infty}\subset W^{k,p}$. Then, since $\{Lu_{j_l}\}_{l=1}^{\infty}$ is convergent by hypothesis, from (\ref{semiFredholm-estimate}) one sees that 
\begin{align*}
\Vert u_{j_l} - u_{j_i}\Vert_{W^{k,p}(E)}\leq C(\Vert Lu_{j_l} - Lu_{j_i} \Vert_{W^{k-2,p}(E)} + \Vert u_{j_l} - u_{j_i}\Vert_{L^{p}(E)}),
\end{align*}
where the right-hand side goes to zero as $l,i\rightarrow\infty$ by hypothesis, and thus $\{u_{j_l}\}_{l=1}^{\infty}$ is Cauchy in $W^{k,p}$, which finishes the proof.
\end{proof}

\section{Regularity theory for $\Delta_g$ on $S_2M$}\label{SectionRegularity}

In this section we shall establish the main regularity results needed in this paper. Let start with the following result related to Lemma \ref{DualIsomorphismHolst}, but for the case of metrics of limited regularity.

\begin{lem}\label{DualPairingLowRegLemma}
Let $(M^n,g)$ be a closed Riemannian manifold with $g\in W^{2,q}(M)$, $q>\frac{n}{2}$, and let $p\in (1,\infty)$ and $k\in\mathbb{N}_0\cap \left[0,2\right]$ satisfy $\frac{1}{q}-\frac{2-k}{n}\leq \frac{1}{p}\leq \frac{1}{q'}+\frac{2+k}{n}$. Then, the inner product
\begin{align}\label{DualPairingLowReg.1}
\langle u,v\rangle_{L^2(M,dV_g)}=\int_M\langle u,v\rangle_gdV_g
\end{align}
extends from $C^{\infty}(T_rM)\times C^{\infty}(T_rM)$ to a bilinear map 
\begin{align}\label{DualPairingLowReg.2}
\langle \cdot,\cdot\rangle_{(M,g)}: W^{-k,p'}(T_rM)\times W^{k,p}(T_rM)\to \mathbb{R}.
\end{align}
Furthermore, given $(u,v)\in W^{-k,p'}(T_rM)\times W^{k,p}(T_rM)$, there is a sequence $\{u_k\}\subset L^{p'}(T_lM)$ such that
\begin{align}\label{LpDensity}
\langle u,v\rangle_{(M,g)}=\lim_k\langle u_k,v\rangle_{L^2(M,dV_g)}
\end{align}
and this pairing induces an isomoprhism $W^{-k,p'}(T_rM)\cong (W^{k,p}(T_rM))'$ via
\begin{alignat}{4}\label{DualIsomorphLowReg}
\mathcal{S}_{k,p}:W^{-k,p'}(T_rM)&\to (W^{k,p}(T_rM))',\nonumber\\
u&\mapsto \mathcal{S}_{k,p}(u): &&W^{k,p}(T_rM) &&\to && \mathbb{R},\\
&&v &&\mapsto  &&[\mathcal{S}_{k,p}u](v)=\langle u,v\rangle_{(M,g)}\nonumber
\end{alignat}
Finally, if $v\in \Gamma(T_rM)$ is compactly supported in a coordinate chart $(U,\varphi)$, then
\begin{align*}
\langle u,v\rangle_{(M,g)}=\langle \sqrt{\mathrm{det}(g)} u^{\sharp}{}^{i_1\cdots i_r},v_{i_1\cdots i_r}\rangle_{(U,\delta)},
\end{align*}
where $\langle \cdot,\cdot\rangle_{(U,\delta)}$ denotes the usual paring in $\mathbb{R}^n$ between $W^{-k,p'}(U)$ and $W^{k,p}(U)$ induced by the Euclidean metric; while $\tilde{\rho}\circ v\circ\varphi^{-1}= v_{i_1\cdots i_r}\in W^{k,p}(\varphi(U))$ denote the components of $v$ with respect to the trivialisation $(U,\varphi,\rho)$, while $u^{\sharp}\in W^{-k,p'}(T^{r}M)$ denotes the tensor field obtained from $U$ by raising its indices with $g$, and $u^{\sharp}{}^{i_1\cdots i_r}$ denote the components of $u^{\sharp}$ with respect to the corresponding trivialisation $(U,\varphi,\rho^{\sharp})$. 
\end{lem}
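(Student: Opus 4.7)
The plan is to reduce everything to the smooth-background case (Theorem~\ref{DualIsomorphismHolst}) combined with the multiplication properties of Corollary~\ref{ContractionsSobReg} applied to the $g$-dependent geometric factors. The starting point is the pointwise identity, valid for smooth sections $u,v$ supported in a coordinate chart $(U,\varphi,\rho)$,
\[
\langle u,v\rangle_g\, dV_g \;=\; \sqrt{\det g}\,(u^{\sharp})^{i_1\cdots i_r}\,v_{i_1\cdots i_r}\, dx,
\]
which reduces the $L^{2}$-inner product to a Euclidean pairing after raising indices of $u$ via $g^{-1}$ and multiplying by $\sqrt{\det g}$. Since $g\in W^{2,q}$ with $q>n/2$, Sobolev embedding gives $g\in C^{0}(M)$, so $\det g$ is continuous and bounded away from zero on the compact manifold $M$; the Composition Lemma then yields $g^{ij},\sqrt{\det g},1/\sqrt{\det g}\in W^{2,q}(M)$.

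The first technical step is to verify that the hypothesis $\tfrac{1}{q}-\tfrac{2-k}{n}\leq\tfrac{1}{p}\leq\tfrac{1}{q'}+\tfrac{2+k}{n}$ is precisely what Corollary~\ref{ContractionsSobReg} requires in order to ensure that the multiplication $W^{2,q}\otimes W^{-k,p'}\hookrightarrow W^{-k,p'}$ is continuous. Taking $s_1=2,\ s_2=-k,\ s=-k,\ p_1=q,\ p_2=p'$, the condition $s_i-s\geq n(1/p_i-1/p)$ reduces to $\tfrac{1}{p}\leq\tfrac{2+k}{n}+\tfrac{1}{q'}$, the negative-index condition $s_1+s_2\geq n(1/p_1+1/p_2-1)$ reduces to $\tfrac{1}{p}\geq\tfrac{1}{q}-\tfrac{2-k}{n}$, and the remaining strict inequality becomes $q>n/2$. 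Consequently, index raising (multiplication by $g^{ij}$) and multiplication by $\sqrt{\det g}$ are continuous self-maps of $W^{-k,p'}$, and being multiplication operators whose inverses lie again in $W^{2,q}$, they are in fact topological isomorphisms of $W^{-k,p'}$.

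With this in place, the pairing (\ref{DualPairingLowReg.2}) is constructed by a partition-of-unity argument: fixing a finite atlas $\{(U_i,\varphi_i,\rho_i)\}$ trivializing $T_rM$ and a subordinate partition $\{\eta_i\}$, I set
\[
\langle u,v\rangle_{(M,g)}\doteq\sum_{i}\Bigl\langle \sqrt{\det g}\,(\eta_i u^{\sharp})^{i_1\cdots i_r},\,v_{i_1\cdots i_r}\Bigr\rangle_{(\varphi_i(U_i),\delta)},
\]
each Euclidean pairing being well-defined by the multiplication property just established together with the scalar duality $W^{-k,p'}(\Omega)\cong (W_0^{k,p}(\Omega))'$. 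For smooth $u,v$ this sum reproduces $\int_M\langle u,v\rangle_g\,dV_g$ through the pointwise identity, and the density of $C^{\infty}$ in $W^{-k,p'}$ (obtained locally by mollification through the charts) shows both that the construction is independent of the chosen atlas and partition and that the $L^{p'}$-approximation~(\ref{LpDensity}) holds. The local coordinate formula in the statement of the lemma is then the special case in which a single $\eta_i$ equals $1$ on $\mathrm{supp}(v)$.

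For the isomorphism (\ref{DualIsomorphLowReg}), boundedness of $\mathcal{S}_{k,p}$ is automatic from the construction. Injectivity follows by testing against smooth sections supported in a single chart, where the invertibility of multiplication by $\sqrt{\det g}$ and of index raising reduces the vanishing of $\mathcal{S}_{k,p}(u)$ to the scalar Euclidean statement. Surjectivity proceeds in reverse: any $\mathcal{L}\in (W^{k,p}(T_rM))'$ restricts, via composition with $\eta_i$ and the component maps, to continuous functionals on the scalar spaces $W_0^{k,p}(\varphi_i(U_i))$, which by Euclidean duality are represented by distributions that, after multiplication by $1/\sqrt{\det g}$ and index lowering, assemble into a global $u\in W^{-k,p'}(T_rM)$. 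The subtlest point, and the one I expect to require the most care, is the compatibility of these local pre-images on chart overlaps: the limited regularity of $g$ rules out pointwise reasoning, so this must be handled by testing against a dense family of smooth sections and transferring the compatibility through Theorem~\ref{DualIsomorphismHolst} applied to a smooth auxiliary background metric.
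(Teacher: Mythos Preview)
Your approach is correct, and the verification that the hypothesis $\tfrac{1}{q}-\tfrac{2-k}{n}\leq\tfrac{1}{p}\leq\tfrac{1}{q'}+\tfrac{2+k}{n}$ is exactly the Corollary~\ref{ContractionsSobReg} condition for $W^{2,q}\otimes W^{-k,p'}\hookrightarrow W^{-k,p'}$ matches the paper's computation. The route, however, is organised differently. You build the pairing \emph{locally} via a partition of unity and Euclidean dualities, and then argue injectivity and surjectivity of $\mathcal{S}_{k,p}$ by hand---which, as you yourself flag, forces you to confront the compatibility of the local pre-images on chart overlaps. The paper sidesteps this entirely by working \emph{globally}: it fixes a smooth background metric $\bar{g}$, observes that for smooth sections
\[
\langle u,v\rangle_{L^2(M,dV_g)}=\Bigl\langle \tfrac{\sqrt{\det g}}{\sqrt{\det\bar{g}}}\,\overline{u^{\sharp}}{}^{\flat},\,v\Bigr\rangle_{L^2(M,dV_{\bar{g}})},
\]
where the $\sharp$ is taken with $g$ and the $\flat$ with $\bar{g}$, and then proves that the map $\Phi:u\mapsto \tfrac{\sqrt{\det g}}{\sqrt{\det\bar{g}}}\,\overline{u^{\sharp}}{}^{\flat}$ extends to a bounded isomorphism $\Phi_{-k,p'}:W^{-k,p'}\to W^{-k,p'}$ under exactly the stated index restrictions. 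The pairing is then \emph{defined} as $\langle u,v\rangle_{(M,g)}\doteq\langle\Phi_{-k,p'}(u),v\rangle_{(M,\bar{g})}$, and the isomorphism $\mathcal{S}_{k,p}=S_{k,p}\circ\Phi_{-k,p'}$ is immediate as a composition of two isomorphisms, the second one being Theorem~\ref{DualIsomorphismHolst}. This buys the isomorphism claim and the well-definedness for free, with no patching argument; your approach is more direct in spirit but pays for it in the overlap bookkeeping. Note that your closing remark---handling overlaps by transferring through a smooth auxiliary background---is essentially a rediscovery of the paper's strategy, and if you pursue it fully you will end up with the same global $\Phi$.
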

\begin{proof}
From Theorem \ref{DualIsomorphismHolst}, fixing a smooth background metric $\bar{g}$ on $M$, we know that (\ref{DualPairingLowReg.2}) and (\ref{DualIsomorphLowReg}) hold for the pairing induced by $dV_{\bar{g}}$. Notice then that given $u,v\in C^{\infty}(T_rM)$,
\begin{align*}
\langle u,v\rangle_{L^2(M,dV_{g})}&=\int_M\langle u,v\rangle_{g}dV_{g}=\int_M  u^{\sharp}(v)dV_{g}=\int_M\langle \frac{\sqrt{\mathrm{det}(g)}}{\sqrt{\mathrm{det}(\bar{g})}}\bar{u^{\sharp}}^{\flat},v\rangle_{\bar{g}}dV_{\bar{g}},\\
&=\langle \frac{\sqrt{\mathrm{det}(g)}}{\sqrt{\mathrm{det}(\bar{g})}}\bar{u^{\sharp}}^{\flat},v\rangle_{L^2(M,dV_{\bar{g}})}
\end{align*}
where $u^{\sharp}$ denotes the tensor field obtained by raising the indices of $u$ with $g$, and thus, by Corollary \ref{ContractionsSobReg}, $u^{\sharp}\in W^{2,q}(T^lM)$, while $\bar{u^{\sharp}}^{\flat}\in W^{2,q}(T_lM)$ denotes the the tensor field on obtained by lowering the indices of $u^{\sharp}$ with $\bar{g}$.  Thus, again by Corollary \ref{ContractionsSobReg}, $\frac{\sqrt{\mathrm{det}(g)}}{\sqrt{\mathrm{det}(\bar{g})}}\bar{u^{\sharp}}^{\flat}\in W^{2,q}(T_lM)$. In fact, the map
\begin{align*}
\Phi:\left(C^{\infty}(T_lM),\Vert\cdot\Vert_{W^{k,p}}\right)&\to W^{2,q}(T_lM),\\
u&\mapsto \frac{\sqrt{\mathrm{det}(g)}}{\sqrt{\mathrm{det}(\bar{g})}}\bar{u^{\sharp}}^{\flat},
\end{align*}
is a bounded injective map, which extends to a bounded isomorphism:
\begin{align*}
\Phi_{k,p}:W^{k,p}(T_lM)&\to W^{k,p}(T_lM),\\
u&\mapsto \frac{\sqrt{\mathrm{det}(g)}}{\sqrt{\mathrm{det}(\bar{g})}}\bar{u^{\sharp}}^{\flat},
\end{align*}
for each $(k,p)$ such that $k\in\mathbb{Z}$ and $0\leq k\leq 2$ as long as $2>\frac{n}{q}$ and
\begin{align*}
2-k\geq n\left(\frac{1}{q}-\frac{1}{p}\right)\Longleftrightarrow \frac{1}{p}\geq \frac{1}{q}+\frac{k-2}{n}.
\end{align*}
In the case of negative exponents $k<0$, then the above holds if in addition we have $k\geq -2$ and
\begin{align*}
2+k\geq n\left(\frac{1}{q}+\frac{1}{p} -1\right)\Longleftrightarrow \frac{1}{p}\leq \frac{1}{q'}+\frac{2+k}{n}.
\end{align*}
%$k\in\mathbb{Z}\cap \left[-2,2\right]$ and $q'\leq p\leq q$ due to Corollary \ref{ContractionsSobReg}. Notice that under these conditions one has
%\begin{align*}
%\frac{1}{q} \leq \frac{1}{p}\leq \frac{1}{q'}&\Longrightarrow 1-\frac{1}{q'} \leq 1-\frac{1}{p}\leq 1 -\frac{1}{q}\Longrightarrow \frac{1}{q}\leq \frac{1}{p'}\leq \frac{1}{q'}\Longrightarrow q'\leq p'\leq q.
%\end{align*}
%That is, if $q'\leq p\leq q$, then $\Phi:W^{k,p'}(T_lM)\to W^{k,p'}(T_lM)$ is also a bounded isomorphism for $k\in \mathbb{Z}\cap \left[-2,2 \right]$. 
Therefore, if $k=0,1,2$, $q>\frac{n}{2}$ and $ \frac{1}{q}-\frac{2-k}{n}\leq \frac{1}{p}\leq \frac{1}{q'}+\frac{2+k}{n}$, then notice that
\begin{align*}
1-\frac{1}{q'}-\frac{2+k}{n}\leq \frac{1}{p'}\leq 1-\frac{1}{q}+\frac{2-k}{n}\Longrightarrow \frac{1}{q}-\frac{2-(-k)}{n}\leq \frac{1}{p'}\leq \frac{1}{q'}+\frac{2+(-k)}{n}
\end{align*}
That is, the pair $(-k,p')$ satisfied the hypotheses required for $\Phi_{-k,p'}:W^{-k,p'}(T_lM)\to W^{-k,p'}(T_lM)$ to be a bounded isomorphism. Thus, given $(u,v)\in W^{-k,p'}(T_lM)\times W^{k,p}(T_lM)$,
\begin{align}\label{SobolevDualityLowReg.1}
\langle u,v\rangle_{(M,g)}\doteq \langle \Phi_{-k,p'}(u),v\rangle_{(M,\bar{g})}
\end{align}
agrees with (\ref{DualPairingLowReg.1}) for $(u,v)\in C^{\infty}(T_lM)\times C^{\infty}(T_lM)$, and we know from Theorem \ref{DualIsomorphismHolst},
\begin{align*}
\vert \langle u,v\rangle_{(M,g)}\vert\leq C\Vert \Phi_{-k,p'}(u)\Vert_{W^{-k,p'}}\Vert v\Vert_{W^{k,p}}\leq C'\Vert u\Vert_{W^{-k,p'}}\Vert v\Vert_{W^{k,p}} , \;\; \forall\; u,v\in C^{\infty}.
\end{align*}
Being $C^{\infty}$ dense in both $W^{-k,p'}(T_lM)$ and $W^{k,p}(T_lM)$, it follows that (\ref{SobolevDualityLowReg.1}) extends uniquely by continuity to a bounded bilinear map as in (\ref{DualPairingLowReg.2}). %This, in particular, implies that given $u\in W^{-k,p'}(T_lM)$ and $v\in W^{k,p}(T_lM)$, then
%\begin{align*}
%\langle u,v\rangle_{(M,g)}=\lim_k\langle u_k,v\rangle_{(M,g)},
%\end{align*}
%with $\{v_k\}\subset C^{\infty}$. 
 Also, given $\phi\in L^{p'}$ and $v\in W^{k,p}$, by definition one has
\begin{align*}
\langle \phi,v\rangle_{(M,g)}=\lim_{k,j}\langle \phi_k,v_j\rangle_{(M,g)}=\lim_{k,j}\langle \phi_k,v_j\rangle_{L^2(M,dV_{g})}
\end{align*}
where $\{\phi_k\},\{v_k\}\subset C^{\infty}$ and $\phi_k\xrightarrow[]{L^{p'}} \phi$, $v_k\xrightarrow[]{W^{k,p}} v$. It then follows that
\begin{align*}
\vert \langle \phi,v\rangle_{L^2(M,dV_{g})} - \langle \phi_k,v_k\rangle_{L^2(M,dV_{g})}\vert&\leq \vert \langle \phi - \phi_k,v\rangle_{L^2(M,dV_{g})}\vert  + \vert \langle \phi,v-v_k\rangle_{L^2(M,dV_{g)}}\vert,\\
&\leq \Vert \phi-\phi_k\Vert_{L^{p'}}\Vert v\Vert_{L^p}  + \Vert  \phi\Vert_{L^{p'}}\Vert v-v_k\Vert_{L^p}
\end{align*}
which shows that, whenever $\phi\in L^{p'}$ and $v\in W^{k,p}$:
\begin{align*}
\langle \phi,v\rangle_{(M,g)}= \lim_{k,j}\langle \phi_k,v_j\rangle_{L^2(M,dV_{g})}=\langle \phi,v\rangle_{L^2(M,dV_{g})}.
\end{align*}

Therefore, given $u\in W^{-k,p'}(T_lM)$, $v\in W^{k,p}(T_lM)$ and a sequence $\{u_k\}\subset  C^{\infty}(T_lM)$ such that $u_k\xrightarrow[]{W^{-k,p'}}u$, we have 
\begin{align*}
\langle u,v\rangle_{(M,g)}=\lim_k\langle u_k,v\rangle_{(M,g)}=\lim_k\langle u_k,v\rangle_{L^2(M,dV_g)}%=\lim_k L_{u_k}(v),
\end{align*}
which establishes (\ref{LpDensity}). Now using (\ref{SobolevDualityLowReg.1}) together with the map $S$ defined in Theorem \ref{DualIsomorphismHolst}, we have the isomorphism claim of (\ref{DualIsomorphLowReg}) defining $\mathcal{S}_{k,p}\doteq S_{k,p}\circ \Phi_{-k,p'}:W^{-k,p'}(T_lM)\to \left(W^{k,p}(T_lM)\right)'$, which is a composition of bounded linear isomorphisms, which provides a topological isomorphism $W^{-k,p'}(T_lM)\cong (W^{k,p}(T_lM))'$.

Finally, if $v\in W^{k,p}(T_lM)$ is compactly supported in a coordinate chart as described in the theorem, then, from (\ref{SobolevDualityLowReg.1}) we have a sequence $u_k\subset L^{p'}$ such that
\begin{align*}
\langle u,v\rangle_{(M,g)}&=\lim_k\langle u_k,v\rangle_{L^2(M,dV_g)}=\lim_k \int_{U}\langle  u_k,v\rangle_{g}dV_g=\lim_k \int_{\varphi(U)}\langle  u_k,v\rangle_{g}\sqrt{\mathrm{det}(g)}dx,\\
&=\lim_k\int_{\varphi(U)}  \sqrt{\mathrm{det}(g)}u_k^{\sharp}(v)dx=\langle \sqrt{\mathrm{det}(g)}{u^{\sharp}}^{i_1\cdots i_l},v_{i_1\cdots i_l}\rangle_{(U,\delta)}.
\end{align*}
\end{proof}

\medskip
Let us now continue by noticing that, given a Riemannian manifold $(M^n,g)$, $g\in W^{2,q}$, $q>\frac{n}{2}$, and a tensor field $u\in L^p(M,dV_g)$, from Lemma \ref{ContinuityPropsGeneralOps} we can make sense of covariant derivatives of $u$ up to second order, which shall be defined from its local coordinate form, where both partial derivatives as well as multiplication by Christoffel symbols are operations defined by duality. For instance, let us explicitly present the following proposition:
\begin{prop}\label{WeakCommutation}
Let $(M^n,g)$ be a Riemannian manifold, with $n\geq 3$, $g\in W^{2,q}$ for $q>\frac{n}{2}$. If $u\in L^p(S_2M)$, where $S_2M$ denotes the bundle of symmetric $(0,2)$-tensor fields and $q'\leq p$, then $\nabla^2u\in W^{-2,p}(T_4M)$ and also the following usual commutation of second derivatives holds, so that for any $u\in L^p(M)$ compactly supported in a coordinate system $(U,\varphi)$:
\begin{align}\label{Local2ndDerComm}
\nabla_j\nabla_iu_{ab}=\nabla_i\nabla_ju_{ab} - R^{k}_{aji}u_{kb} - R^{k}_{bji}u_{ak}
\end{align}
\end{prop}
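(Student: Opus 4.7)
My plan is to treat $\nabla^2$ as a second-order linear differential operator whose coefficients have exactly the regularity needed to invoke Lemma \ref{ContinuityPropsGeneralOps}, and then extend the classical Ricci commutation identity from smooth sections by a density argument.

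First I would verify that $\nabla^2 u\in W^{-2,p}(T_4 M)$. In any coordinate chart trivialising $S_2 M$, a direct expansion gives schematically
\begin{align*}
\nabla_j \nabla_i u_{ab} = \partial_j\partial_i u_{ab} + \Gamma\cdot\partial u + (\partial\Gamma + \Gamma\cdot\Gamma)\cdot u,
\end{align*}
so $\nabla^2$ is a second-order operator with principal coefficient the (smooth) identity, first-order coefficients built from $\Gamma\in W^{1,q}$, and zero-order coefficients built from $\partial\Gamma\in L^q$ together with $\Gamma\cdot\Gamma$. The only nontrivial point is $\Gamma\cdot\Gamma\in L^q$, which is an instance of the multiplication $W^{1,q}\otimes W^{1,q}\hookrightarrow L^q$ provided by Corollary \ref{ContractionsSobReg}: taking $s_1=s_2=1$, $s=0$ and $p_1=p_2=p=q$, the nontrivial inequality $s_1+s_2-s> n/q$ becomes precisely $q>n/2$. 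Hence $\nabla^2$ belongs to the class $\mathcal{L}^2(W^{2,q})$, and Lemma \ref{ContinuityPropsGeneralOps} with $k=0$ yields a bounded map $\nabla^2:L^p(S_2 M)\to W^{-2,p}(T_4 M)$ precisely when $p\geq q'$.

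Next I would treat the curvature side of (\ref{Local2ndDerComm}). The Riemann tensor has local expression $R = \partial\Gamma + \Gamma\cdot\Gamma$, so the same multiplication argument places $R$ in $L^q$ as a section of the relevant tensor bundle. Then $u\mapsto R\cdot u$ (the appropriate contraction) is a zero-order differential operator with $L^q$ coefficients, so Lemma \ref{ContinuityPropsGeneralOps} applied once more with $k=0$ gives continuity $L^p(S_2 M)\to W^{-2,p}(T_4 M)$ under the same condition $p\geq q'$.

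To close the argument I would take $u\in L^p$ compactly supported in a coordinate chart $(U,\varphi)$ and approximate by $u_m\in C^\infty_c(U;S_2 U)$ with $u_m\to u$ in $L^p$. For each smooth $u_m$ the identity (\ref{Local2ndDerComm}) holds as a pointwise a.e. statement, since it is obtained by purely algebraic manipulations of the Christoffel symbols, which are well-defined a.e. from $g\in W^{2,q}$. Both sides of (\ref{Local2ndDerComm}) depend continuously on the input in the $L^p\to W^{-2,p}$ topology by the previous two steps, so letting $m\to\infty$ transfers the identity to $u$. The main obstacle is the sharp multiplication $W^{1,q}\otimes W^{1,q}\hookrightarrow L^q$ at $q>n/2$: this is what makes the zero-order coefficients of $\nabla^2$ and the Riemann tensor itself lie in $L^q$, and without it the whole scheme—treating $\nabla^2$ as an $\mathcal{L}^2(W^{2,q})$ operator—would fail. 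Once that input is granted, everything else is a formal application of the mapping results of Section \ref{SectionMappingProps} together with the density of $C^\infty_c(U;S_2 U)$ in $L^p$.
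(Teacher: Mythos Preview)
Your proposal is correct and follows essentially the same approach as the paper: write $\nabla^2$ locally as a second-order operator, verify that its coefficients lie in the class $\mathcal{L}^2(W^{2,q})$ (the only nontrivial check being $\Gamma\cdot\Gamma\in L^q_{loc}$ via $W^{1,q}\otimes W^{1,q}\hookrightarrow L^q$ when $q>n/2$), apply Lemma \ref{ContinuityPropsGeneralOps} with $k=0$, and then extend the algebraic commutation identity from $C^\infty_0(U)$ to $L^p$ by continuity. The paper carries out the local coordinate expansion and the algebraic cancellation more explicitly, and invokes Corollary \ref{ContractionsSobReg} rather than Lemma \ref{ContinuityPropsGeneralOps} for the curvature term, but these are cosmetic differences.
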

\begin{proof}
%Using a partition of unity argument we can simplify the problem to the case of $u$ compactly supported in a given local coordinate chart $(U,\varphi)$, $U$ with smooth boundary, where we may write:
Using Remark \ref{RemarkContinuityProps}, we first notice that exponents $q$ and $p$ satisfy the conditions of Lemma \ref{ContinuityPropsGeneralOps} in the case $k=0$. Thus, to establish $\nabla^2u\in W^{-2,p}(T_4M)$ appealing to Lemma \ref{ContinuityPropsGeneralOps}, we need to consider the local expression of $\nabla^2u$ for $u\in C^{\infty}$ and then check they satisfy the hypotheses of the lemma. Thus, noticing that in a given local coordinate chart $(U,\varphi)$ the action of the linear operator $\nabla^2$ on $u$ is given by:
\begin{align}\label{WeakHessian}
\nabla_j\nabla_iu_{ab}&=\partial_{ij}u_{ab} + A^{cdl}_{abij}\partial_cu_{dl}+B^{dl}_{abij}u_{dl}
\end{align}
where
\begin{align*}
A^{cdl}_{abij}&=- \left(\Gamma^d_{ia}\delta^c_j\delta^l_b + \Gamma^l_{ib}\delta^c_j\delta^d_a + \Gamma^c_{ji}\delta^d_a\delta^l_b + \Gamma^d_{ja}\delta^c_i\delta^l_b + \Gamma^l_{jb}\delta^c_i\delta_a^d\right),\\
B^{dl}_{abij}&=\left(\Gamma^c_{ja}\Gamma^d_{ic}\delta^l_b  + \Gamma^d_{ja}\Gamma^l_{ib}  + \Gamma^l_{jb}\Gamma^d_{ia} + \Gamma^c_{jb}\Gamma^l_{ic}\delta^d_a - \partial_j\Gamma^d_{ia}\delta^l_b - \partial_j\Gamma^l_{ib}\delta^d_a + \Gamma^c_{ji}\Gamma^d_{ca}\delta^l_b + \Gamma^c_{ji}\Gamma^l_{cb}\delta^d_a\right).
\end{align*}
Thus, if we show that the above operator has coefficients satisfying (\ref{Holst.1}), then Lemma \ref{ContinuityPropsGeneralOps} shows that $\nabla^2$ extends to a bounded linear map from $L^p\to W^{-2,p}$ as long as $q'\leq p$. Since the top order coefficient is constant, it is clearly in $W^{2,q}_{loc}$, and thus we must only check that $A^{cdl}_{abij}\in W^{1,q}_{loc}$ and $B^{dl}_{abij}\in L^q_{loc}$. For the first one of these, just notice that since $\Gamma^i_{jk}(g)\in W_{loc}^{1,q}(U)$, then $A^{cdl}_{abij}\in W_{loc}^{1,q}(U)$. 

Finally, concerning the zero order term in (\ref{WeakHessian}), we have $\partial\Gamma\in L_{loc}^q(U)$, so we need only show that products of the form $\Gamma_{\cdot}\Gamma\in L^q_{loc}(U)$. For that pick any function $\chi\in C^{\infty}_0(U)$ and then chose $\eta\in C^{\infty}_0(U)$ a cut-off function such that $\eta\equiv 1$ on $\mathrm{supp}(\chi)$, so that 
\begin{align*}
\chi\Gamma_{\cdot}\Gamma=\chi\Gamma_{\cdot}\eta\Gamma\in W^{1,q}(U)\otimes W^{1,q}(U)\hookrightarrow L^q(U)
\end{align*}
which shows that $\Gamma_{\cdot}\Gamma\in L^q_{loc}(U)$ and thus $\nabla^2:L^p(S_2M)\to W^{-2,p}(S_2M)$ continuously as a consequence of Lemma \ref{ContinuityPropsGeneralOps}. 

\medskip 
Concerning the commutation rule for second covariant derivatives, we need only notice that, even for $g\in W^{2,q}$, this is an algebraic result which follows from the local expression on $(U,\varphi)$ for any $u\in C^{\infty}_0(U)$:
\begin{align*}
\nabla_j\nabla_iu_{ab}%&=\partial_{ji}u_{ab} - \Gamma^l_{ia}\partial_ju_{lb} - \Gamma^l_{ib}\partial_ju_{al}- \partial_j\Gamma^l_{ia}u_{lb} - \partial_j\Gamma^l_{ib}u_{al} -\Gamma^l_{ji}\nabla_lu_{ab} -\Gamma^l_{ja}\nabla_iu_{lb} -\Gamma^l_{jb}\nabla_iu_{al},\\
%&=\partial_{ji}u_{ab} - \Gamma^l_{ia}\partial_ju_{lb} - \Gamma^l_{ib}\partial_ju_{al}- \partial_j\Gamma^l_{ia}u_{lb} - \partial_j\Gamma^l_{ib}u_{al}-\Gamma^l_{ji}(\partial_lu_{ab} - \Gamma^k_{la}u_{kb} - \Gamma^k_{lb}u_{ak} )\\
%&-\Gamma^l_{ja}(\partial_iu_{lb} - \Gamma^k_{il}u_{kb} - \Gamma^k_{ib}u_{lk} ) -\Gamma^l_{jb}(\partial_iu_{al} - \Gamma^k_{ia}u_{kl} - \Gamma^k_{il}u_{ak}),\\
%&=\partial_{ji}u_{ab} - \Gamma^l_{ia}\partial_ju_{lb} - \Gamma^l_{ib}\partial_ju_{al}- \partial_j\Gamma^l_{ia}u_{lb} - \partial_j\Gamma^l_{ib}u_{al}-\Gamma^l_{ji}\partial_lu_{ab} + \Gamma^l_{ji}\Gamma^k_{la}u_{kb} + \Gamma^l_{ji}\Gamma^k_{lb}u_{ak} \\
%&-\Gamma^l_{ja}\partial_iu_{lb} + \Gamma^l_{ja}\Gamma^k_{il}u_{kb} + \Gamma^l_{ja}\Gamma^k_{ib}u_{lk} -\Gamma^l_{jb}\partial_iu_{al} + \Gamma^l_{jb}\Gamma^k_{ia}u_{kl} + \Gamma^l_{jb}\Gamma^k_{il}u_{ak},\\
&=\partial_{ji}u_{ab} - \Gamma^l_{ia}\partial_ju_{lb} - \Gamma^l_{ja}\partial_iu_{lb} - \Gamma^l_{ib}\partial_ju_{al} - \Gamma^l_{ji}\partial_lu_{ab}  - \Gamma^l_{jb}\partial_iu_{al}\\
&+ \Gamma^l_{ja}\Gamma^k_{ib}u_{lk} + \Gamma^l_{jb}\Gamma^k_{ia}u_{kl} + \Gamma^l_{ji}\Gamma^k_{la}u_{kb} + \Gamma^l_{ji}\Gamma^k_{lb}u_{ak}  \\
&- \partial_j\Gamma^l_{ia}u_{lb} - \partial_j\Gamma^l_{ib}u_{al} + \Gamma^l_{jb}\Gamma^k_{il}u_{ak} + \Gamma^l_{ja}\Gamma^k_{il}u_{kb}   ,
%&=\partial_{ji}u_{ab} - \left(\Gamma^d_{ia}\delta^c_j\delta^l_b + \Gamma^l_{ib}\delta^c_j\delta^d_a + \Gamma^c_{ji}\delta^d_a\delta^l_b + \Gamma^d_{ja}\delta^c_i\delta^l_b + \Gamma^l_{jb}\delta^c_i\delta_a^d\right)\partial_cu_{dl}\\
%& + \left(\Gamma^c_{ja}\Gamma^d_{ic}\delta^l_b  + \Gamma^d_{ja}\Gamma^k_{ib}\delta^l_k  + \Gamma^l_{jb}\Gamma^d_{ia} + \Gamma^c_{jb}\Gamma^l_{ic}\delta^d_a - \partial_j\Gamma^d_{ia}\delta^l_b - \partial_j\Gamma^l_{ib}\delta^d_a + \Gamma^c_{ji}\Gamma^d_{ca}\delta^l_b + \Gamma^c_{ji}\Gamma^l_{cb}\delta^d_a\right)u_{dl}
\end{align*}
where the first two lines in the last expression are explicitly symmetric under the interchange of indices $i\longleftrightarrow j$. Therefore,
\begin{align}\label{Commutation.1}
\nabla_j\nabla_iu_{ab}-\nabla_i\nabla_ju_{ab}%&= \left(\partial_i\Gamma^k_{ja} - \partial_j\Gamma^k_{ia} + \Gamma^l_{ja}\Gamma^k_{il} - \Gamma^l_{ia}\Gamma^k_{jl}\right)u_{kb}  + \left( \partial_i\Gamma^k_{jb} - \partial_j\Gamma^k_{ib} + \Gamma^l_{jb}\Gamma^k_{il} - \Gamma^l_{ib}\Gamma^k_{jl}\right)u_{ak},\\
&=-R^{k}_{aji}u_{kb} - R^{k}_{bji}u_{ak}, \:\: \forall \: u\in C^{\infty}_0(U).
\end{align}
Then, since $\mathrm{Riem}_g\in L^q$ and $u\in C^{\infty}$, from Corollary \ref{ContractionsSobReg} we know the contractions in the right-hand side of the the above expression satisfy 
\begin{align*}
\Vert (\mathrm{Riem}_g)_{\cdot}u\Vert_{W^{-2,p}}\leq C\Vert \mathrm{Riem}_g\Vert_{L^{q}}\Vert u\Vert_{L^{p}}, \:\: \forall \: u\in C^{\infty}_0(U), \: p\geq q'
\end{align*}
Since both sides in (\ref{Commutation.1}) extend by continuity to $u\in L^{p}(U)$, the result follows. 
%Notice that the two terms in the right-hand side of the last line above belong to $L_{loc}^{q}(U)\otimes L_{loc}^p(U)\hookrightarrow W^{-2,p}(U)$, where the inclusion holds under our conditions by the same arguments as in the analysis of the zero order term in (\ref{WeakHessian}).
\end{proof}

The above will be particularly useful in the next section, but it also serves as a warm-up for the next proposition, where we study precise mapping properties of the tensor Laplacian, given by the trace of the second covariant derivative, on symmetric $(0,2)$-tensor fields. 

\begin{prop}\label{LapContPropertiesProp}
Let $(M^n,g)$ be a Riemannian manifold with $g\in W^{2,q}(M)$, $q>\frac{n}{2}$. Let us consider the bundle of $(0,2)$-symmetric tensor fields, denoted by $S_2M$, and the natural induced covariant derivative $\nabla$ on it by $g$. Consider then the Laplacian
\begin{align}\label{Lap.0}
\begin{split}
\Delta_g:C^{\infty}(S_2M)&\to W^{1,q}(S_2M),\\
u&\mapsto \Delta_gu=g^{ij}\nabla_i\nabla_ju=\nabla_i(g^{ij}\nabla_ju)
\end{split}
\end{align}
Then, $\Delta_g\in \mathcal{L}^2(W^{2,q})$ and given $1<p\leq q$, (\ref{Lap.0}) extends to a continuous operator
\begin{align}\label{LaplacianContinuityLp}
\begin{split}
\Delta_g:W^{2,p}(S_2M)&\to L^{p}(S_2M),\\
u&\mapsto \Delta_gu=g^{ij}\nabla_i\nabla_ju=\nabla_i(g^{ij}\nabla_ju).
\end{split}
\end{align}
Furthermore, if $\frac{1}{q}-\frac{1}{n}\leq \frac{1}{p}\leq \frac{1}{q'}+\frac{1}{n}$, then (\ref{Lap.0}) extends to a continuous operator
\begin{align}\label{LaplacianContinuityWeak}
\begin{split}
\Delta_g:W^{1,p}(S_2M)&\to W^{-1,p}(S_2M),\\
u&\mapsto \Delta_gu=g^{ij}\nabla_i\nabla_ju=\nabla_i(g^{ij}\nabla_ju).
\end{split}
\end{align}
Finally, if $q'\leq p$, then (\ref{Lap.0}) extends to a continuous operator
\begin{align}\label{LaplacianContinuityWeak2}
\begin{split}
\Delta_g:L^{p}(S_2M)&\to W^{-2,p}(S_2M),\\
u&\mapsto \Delta_gu=g^{ij}\nabla_i\nabla_ju=\nabla_i(g^{ij}\nabla_ju).
\end{split}
\end{align}

In all of the above cases, the following local formula, valid from smooth sections, extends to (\ref{LaplacianContinuityLp}), (\ref{LaplacianContinuityWeak}) and (\ref{LaplacianContinuityWeak2}) for $u$ compactly supported in a coordinate domain:
\begin{align}\label{TensorLapLoc}
\nabla_i(g^{ij}\nabla_ju^{ab})=\frac{1}{\sqrt{\mathrm{det}(g)}}\partial_i\left(\sqrt{\mathrm{det}(g)}\nabla^{i}u^{ab} \right) + \Gamma^a_{il}\nabla^{i}u^{lb} + \Gamma^b_{il}\nabla^{i}u^{al},
\end{align}
%extends to $g\in W^{2,q}$, $q>\frac{n}{2}$ and $u\in W^{\sigma,p}(S_2M)$ for any $\sigma\in \mathbb{Z}$ and $1<p<\infty$.
\end{prop}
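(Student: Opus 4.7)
The plan is to read off the local coordinate expression of $\Delta_g$ acting on a smooth symmetric $(0,2)$-tensor, verify that its coefficients have the regularity required by the definition of $\mathcal{L}^2(W^{2,q})$ in (\ref{Holst.1}), and then invoke Lemma \ref{ContinuityPropsGeneralOps} three times, once for each pair $(k,p) \in \{(2,p),(1,p),(0,p)\}$ of interest. First, starting from the identity already recorded in Proposition \ref{WeakCommutation},
\begin{align*}
\nabla_i\nabla_j u_{ab} = \partial_{ij} u_{ab} + A^{cdl}_{abij}\partial_c u_{dl} + B^{dl}_{abij} u_{dl},
\end{align*}
contracting with $g^{ij}$ gives a local expression for $\Delta_g u_{ab}$ whose top-order coefficient is $g^{ij}\in W^{2,q}_{loc}$ (the inverse matrix is a smooth function of $g$, so membership in $W^{2,q}_{loc}$ follows from Lemma \ref{CompositionLemma}), whose first-order coefficients are linear combinations of $g^{ij}$ times Christoffel symbols and therefore in $W^{1,q}_{loc}$, and whose zero-order coefficients are linear combinations of $g^{ij}\partial\Gamma$ and $g^{ij}\Gamma_{\cdot}\Gamma$. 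The first of these lies in $L^{q}_{loc}$ directly, and the second does as well by the product estimate $W^{1,q}_{loc}\cdot W^{1,q}_{loc}\hookrightarrow L^{q}_{loc}$ obtained from Corollary \ref{ContractionsSobReg} (using a cut-off $\eta\equiv 1$ on the support of a localising $\chi$, exactly as in the last paragraph of the proof of Proposition \ref{WeakCommutation}). This shows $\Delta_g\in \mathcal{L}^2(W^{2,q})$.

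Once membership in $\mathcal{L}^2(W^{2,q})$ is established, the three continuity statements are direct applications of Lemma \ref{ContinuityPropsGeneralOps} together with the case analysis in Remark \ref{RemarkContinuityProps}: taking $k=2$ gives (\ref{LaplacianContinuityLp}) under the hypothesis $1<p\le q$; taking $k=1$ gives (\ref{LaplacianContinuityWeak}) under $\frac{1}{q}-\frac{1}{n}\le \frac{1}{p}\le \frac{1}{q'}+\frac{1}{n}$; and taking $k=0$ gives (\ref{LaplacianContinuityWeak2}) under $p\ge q'$. Density of $C^{\infty}(S_2M)$ in each of $W^{2,p}$, $W^{1,p}$ and $L^p$ then extends the map originally defined in (\ref{Lap.0}) uniquely to the desired Banach-space setting, and the extended operator necessarily agrees with the $\mathcal{L}^2(W^{2,q})$-extension produced by Lemma \ref{ContinuityPropsGeneralOps}.

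For the local divergence formula (\ref{TensorLapLoc}), the key observation is purely algebraic on smooth sections: for a $(2,0)$-tensor field $S^{iab}$ one has
\begin{align*}
\nabla_i S^{iab} = \partial_i S^{iab} + \Gamma^i_{ik} S^{kab} + \Gamma^a_{ik} S^{ikb} + \Gamma^b_{ik} S^{iak},
\end{align*}
and the identity $\Gamma^i_{ik}=\partial_k\log\sqrt{\det g}$ combines the first two terms into $\frac{1}{\sqrt{\det g}}\partial_i(\sqrt{\det g}\,S^{iab})$. Applying this with $S^{iab}=g^{ij}\nabla_j u^{ab}=\nabla^i u^{ab}$ yields (\ref{TensorLapLoc}) for $u\in C^{\infty}$. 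To pass to a compactly supported $u$ in the Sobolev regularity relevant to each extension, I would approximate $u$ by smooth sections and use continuity of each piece of the right-hand side: the divergence term is handled because $\sqrt{\det g}\in W^{2,q}$ (again by Lemma \ref{CompositionLemma}), so multiplication by it and by its reciprocal is continuous on the relevant Sobolev spaces in view of Corollary \ref{ContractionsSobReg}, and the Christoffel corrections $\Gamma\cdot \nabla u$ are continuous by the same multiplication results.

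The main obstacle, and the only step requiring genuine care, is the verification that every coefficient appearing in the local form of $\Delta_g u_{ab}$ has the regularity demanded by (\ref{Holst.1}); in particular, controlling the products of two Christoffel symbols in the zero-order coefficient requires invoking the multiplication property $W^{1,q}\cdot W^{1,q}\hookrightarrow L^{q}$, which needs $q>\frac{n}{2}$ exactly. After this is in place, the rest of the proof is bookkeeping plus invocations of Lemma \ref{ContinuityPropsGeneralOps} and a standard density argument.
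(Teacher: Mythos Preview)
Your proposal is correct and follows essentially the same approach as the paper: write $\Delta_g u_{ab}$ locally by contracting the Hessian formula from Proposition \ref{WeakCommutation} with $g^{ij}$, verify that the resulting coefficients lie in $W^{2,q}_{loc}$, $W^{1,q}_{loc}$, $L^q_{loc}$ respectively (the only nontrivial check being $\Gamma_{\cdot}\Gamma\in L^q_{loc}$ via $W^{1,q}\cdot W^{1,q}\hookrightarrow L^q$), conclude $\Delta_g\in\mathcal{L}^2(W^{2,q})$, and then read off the three mapping properties from Lemma \ref{ContinuityPropsGeneralOps} and Remark \ref{RemarkContinuityProps}; the divergence formula (\ref{TensorLapLoc}) is derived from $\Gamma^i_{ik}=\partial_k\log\sqrt{\det g}$ exactly as you outline. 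The paper's proof is organized identically, with the coefficient-regularity check isolated as a separate claim.
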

\begin{remark}
%Notice that the condition $\frac{nq'}{n+q'}\leq q$ is granted for all $n\geq 3$ as long as $q>\frac{n}{2}$. Also, this last condition guarantees 
%\begin{align*}
%\frac{1}{n}+\frac{1}{q'}>1-\frac{1}{n}=\frac{n-1}{n}=\frac{1}{n'}.
%\end{align*}
%Since $p\geq \frac{nq'}{n+q'}\Longleftrightarrow \frac{1}{p}\leq \frac{1}{n}+\frac{1}{q'}$, we can deduce the \emph{suboptimal} %condition $n'\leq p\leq q$ as sufficient to guarantee (\ref{LaplacianContinuityWeak}). Despite this condition being suboptimal, it has the advantage of being explicitly computable in terms only of $n$. 

Recall from Remark \ref{RemarkContinuityProps}, that in the case of (\ref{LaplacianContinuityWeak}) the restrictions on $p$ are given by
\begin{itemize}
\item $p\geq \frac{nq'}{n+q'}$ if $q\geq n$;
\item $\frac{nq'}{n+q'}\leq p\leq \frac{nq}{n-q}$ if $\frac{n}{2}<q<n$.
\end{itemize}

\end{remark}
\begin{proof}
Let $(U,x^i)_{i=1}^n$ be a local coordinate system, $U$ a bounded open set with smooth boundary, and let $u\in C^{\infty}(S_2M)$ be compactly supported in $U$. Then, locally (\ref{Lap.0}) is given by
\begin{align}\label{LaplacianContinuity.1}
\Delta_gu_{ab}&=g^{ij}\partial_{ij}u_{ab} + \mathcal{A}_{ab}^{cdl}\partial_cu_{dl} + \mathcal{B}_{ab}^{dl}u_{dl},
\end{align}
with
\begin{align}\label{LaplacianContinuity.2}
\begin{split}
%A_{ab}^{cdl}&=g^{ij}\left(\Gamma^d_{ia}\delta^c_j\delta^l_b + \Gamma^l_{ib}\delta^c_j\delta^d_a + \Gamma^c_{ji}\delta^d_a\delta^l_b + \Gamma^d_{ja}\delta^c_i\delta^l_b + \Gamma^l_{jb}\delta^c_i\delta_a^d\right),\\
\mathcal{A}_{ab}^{cdl}=g^{ij}A^{cdl}_{abij},\:\: \mathcal{B}_{ab}^{dl}=g^{ij}B_{abij}^{dl},
\end{split}
\end{align}
where the functions $A^{cdl}_{abij}$ and $B_{abij}^{dl}$ are those given in (\ref{WeakHessian}). In order to establish (\ref{LaplacianContinuityLp}),(\ref{LaplacianContinuityWeak}) and (\ref{LaplacianContinuityWeak2}), we shall appeal to Lemma \ref{ContinuityPropsGeneralOps}. Thus, let us first establish that $\Delta_g\in \mathcal{L}^2(W^{2,q})$. For this we need to establish the following claim:

\begin{claim}\label{ClaimMult1}
The coefficients in (\ref{LaplacianContinuity.2}) satisfy $\mathcal{A}_{ab}^{cdl}\in W_{loc}^{1,q}$ and $\mathcal{B}_{ab}^{dl}\in L_{loc}^{q}$
\end{claim}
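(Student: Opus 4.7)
The plan is to reduce the claim to direct applications of the multiplication theorem (Theorem \ref{BesselMultLocal} or its manifold version, Corollary \ref{ContractionsSobReg}), after first establishing the Sobolev regularity of the building blocks appearing in $\mathcal{A}$ and $\mathcal{B}$. Since the statement is local, I work throughout on a coordinate neighborhood $U$ with smooth boundary and use cut-off functions to localize the multiplication claims.

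The first step is to upgrade the regularity of the auxiliary tensors. Since $g\in W^{2,q}_{loc}(U)$ with $q>\tfrac{n}{2}$, one has $W^{2,q}\hookrightarrow C^{0,\alpha}$ so that $\det(g)$ is bounded and bounded away from zero on compact subsets; an application of the Composition Lemma (Lemma \ref{CompositionLemma}) to the smooth matrix inversion map then yields $g^{ij}\in W^{2,q}_{loc}(U)$. From the explicit formula $\Gamma^k_{ij}=\tfrac{1}{2}g^{kl}(\partial_i g_{jl}+\partial_j g_{il}-\partial_l g_{ij})$, the Christoffel symbols are products of a $W^{2,q}_{loc}$ factor and a $W^{1,q}_{loc}$ factor, and Theorem \ref{BesselMultLocal} with $s_1=2$, $s_2=1$, $s=1$, $p_1=p_2=p=q$ gives the required embedding $W^{2,q}_{loc}\otimes W^{1,q}_{loc}\hookrightarrow W^{1,q}_{loc}$, the only nontrivial condition being $s_1+s_2-s=2>n/q$, which is precisely $q>\tfrac{n}{2}$. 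Hence $\Gamma^k_{ij}\in W^{1,q}_{loc}(U)$.

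The claim for $\mathcal{A}^{cdl}_{ab}$ is then immediate: inspecting (\ref{WeakHessian}) and (\ref{LaplacianContinuity.2}), $\mathcal{A}^{cdl}_{ab}$ is a finite linear combination of products of one factor $g^{ij}\in W^{2,q}_{loc}$ against one factor $\Gamma\in W^{1,q}_{loc}$ (with constant Kronecker symbols); localizing via a cut-off $\chi\in C_0^\infty(U)$ and an $\eta\in C_0^\infty(U)$ with $\eta\equiv 1$ on $\mathrm{supp}(\chi)$, the same multiplication embedding used above shows $\chi\mathcal{A}^{cdl}_{ab}\in W^{1,q}(U)$, and thus $\mathcal{A}^{cdl}_{ab}\in W^{1,q}_{loc}(U)$.

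For $\mathcal{B}^{dl}_{ab}$ I split the terms in (\ref{WeakHessian}) into the triple products of type $g^{-1}\cdot \Gamma \cdot \Gamma$ and the products of type $g^{-1}\cdot \partial\Gamma$. For the triple products, the critical step is the embedding $W^{1,q}_{loc}\otimes W^{1,q}_{loc}\hookrightarrow L^q_{loc}$, obtained from Theorem \ref{BesselMultLocal} with $s_1=s_2=1$, $s=0$, $p_1=p_2=p=q$; the binding condition is once more $s_1+s_2-s=2>n/q$, equivalent to $q>\tfrac{n}{2}$. Multiplication by the additional $g^{ij}\in W^{2,q}_{loc}\hookrightarrow L^\infty_{loc}$ (which uses $2q>n$) keeps the result in $L^q_{loc}$. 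For the remaining terms, $\partial\Gamma\in L^q_{loc}$ by definition and again multiplication by the bounded factor $g^{-1}$ preserves $L^q_{loc}$. Combining these two cases yields $\mathcal{B}^{dl}_{ab}\in L^q_{loc}(U)$, completing the claim. The only genuinely delicate input is the sharp requirement $q>\tfrac{n}{2}$ used to control $\Gamma\cdot\Gamma$ in $L^q$; every other step is routine in view of the multiplication framework already established.
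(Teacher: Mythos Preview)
Your proof is correct and follows essentially the same approach as the paper: the paper invokes the analysis after (\ref{WeakHessian}) in Proposition \ref{WeakCommutation} to obtain $A^{cdl}_{abij}\in W^{1,q}_{loc}$ and $B^{dl}_{abij}\in L^q_{loc}$, and then applies the embeddings $W^{2,q}_{loc}\otimes W^{1,q}_{loc}\hookrightarrow W^{1,q}_{loc}$ and $W^{2,q}_{loc}\otimes L^q_{loc}\hookrightarrow L^q_{loc}$ from Theorem \ref{BesselMultLocal}, which is exactly what you do with slightly more explicit parameter-checking. The only cosmetic difference is that for the $\mathcal{B}$ term you multiply by $g^{-1}$ via the $L^\infty$ embedding rather than via $W^{2,q}\otimes L^q\hookrightarrow L^q$, but both routes are equivalent here.
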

%\begin{enumerate}
%\item[Claim 1.] The following inclusions hold for the coefficients in (\ref{LaplacianContinuity.2}) $A^{cdl}\in W^{1,q}$ and $B^{dl}\in L^{q}$.
%\end{enumerate}
\begin{proof}
From the analysis after (\ref{WeakHessian}), we know that $A^{cdl}_{abij}\in W_{loc}^{1,q}(U)$ and $B_{abij}^{dl}\in L^q_{loc}(U)$, and hence the claim amounts to showing that $W_{loc}^{2,q}\otimes W_{loc}^{1,q}\hookrightarrow W_{loc}^{1,q}$ and $W_{loc}^{2,q}\otimes L_{loc}^q\hookrightarrow L_{loc}^q$, both following directly from Theorem \ref{BesselMultLocal} and Corollary \ref{ContractionsSobReg}, due to $q>\frac{n}{2}$.
\end{proof}

Given the above claim, since $g^{ij}\in W_{loc}^{2,q}$, then we see that $\Delta_g\in \mathcal{L}^2(W^{2,q})$ and therefore Lemma \ref{ContinuityPropsGeneralOps} establishes our continuity properties (\ref{LaplacianContinuityLp}), (\ref{LaplacianContinuityWeak}) and (\ref{LaplacianContinuityWeak2}) are granted as long as $\frac{1}{q}+\frac{k-2}{n}\leq \frac{1}{p}\leq \frac{1}{q'}+\frac{k}{n}$ for $k=2,1,0$ respectively. Using Remark \ref{RemarkContinuityProps}, we know that for $k=2$ this holds as long as $1<p\leq q$, while in the case of $k=0$, since $q>\frac{n}{2}$, this reduces to $p\geq q'$. In the case of $k=1$ the condition is optimally expressed as $\frac{1}{q}-\frac{1}{n}\leq \frac{1}{p}\leq \frac{1}{q'}+\frac{1}{n}$.

%Notice this is equivalent to
%\begin{align*}
%\frac{1}{p}+\frac{1}{q}\leq 1+\frac{k}{n}\Longleftrightarrow \frac{1}{p}\leq 1 - \frac{1}{q}+\frac{k}{n}=\frac{1}{q'} + \frac{k}{n}=\frac{n+kq'}{nq'}\Longleftrightarrow p\geq\frac{nq'}{n + q'k}.
%\end{align*}
%From the above formula one can directly see that for $k=0$ the condition is given by $p\geq q'$, and since $p\leq q$ as well, this can only be consistent if $q\geq 2$, and hence (\ref{LaplacianContinuityWeak2}) follows. In the case $k=2$, notice that 
%\begin{align*}
%\frac{nq'}{n + 2q'}\leq 1\Longleftrightarrow q'\leq \frac{n}{n-2},
%\end{align*}
%and the condition $q>\frac{n}{2}$ implies $q'<\frac{n}{n-2}$ and therefore (\ref{LaplacianContinuityLp}) also holds. The intermediary condition is optimally expressed by $p\geq \frac{nq'}{n+q'}$.

Finally, to establish (\ref{TensorLapLoc}), let us notice that locally one can write the \emph{contravariant} version of (\ref{LaplacianContinuity.1}) more compactly as
\begin{align}\label{TensorLapLoc2}
\nabla_i(g^{ij}\nabla_ju^{ab})&=\partial_i\nabla^iu^{ab} + \Gamma^i_{il}\nabla^lu^{ab} + \Gamma^a_{il}\nabla^iu^{lb} + \Gamma^b_{il}\nabla^iu^{al}.
\end{align}
Now, the following formula for the derivative of a determinant extends to $W^{2,q}$ metrics, $q>\frac{n}{2}$,\footnote{The composition lemma for Sobolev functions allows us to use the chain rule in the same way as in the smooth case and therefore standard proofs extend to this level of regularity.}
\begin{align*}
\frac{\partial_l\sqrt{\mathrm{det}(g)}}{\sqrt{\mathrm{det}(g)}}=\frac{1}{2\mathrm{det}(g)}\partial_l\mathrm{det}(g)=\frac{1}{2}g^{ij}\partial_lg_{ij}=\Gamma^i_{li}.
\end{align*}
Putting this together with (\ref{TensorLapLoc2}), proves (\ref{TensorLapLoc}).
\end{proof}

We shall be interested in the regularity theory for the operators (\ref{LaplacianContinuityLp}) and (\ref{LaplacianContinuityWeak}), which shall be established appealing to Fredholm theory. For that, we will pivot around the theory for the case of smooth metrics to obtain properties about the Fredholm index of these operators. Thus, the following general properties of these operators in the case of smooth metrics shall be of interest. 

Let $(M^n,g)$ be a closed smooth Riemannian manifold. From  (\ref{LaplacianContinuityLp})-(\ref{LaplacianContinuityWeak}) we know that $\Delta_g:W^{k,p}(S_2M)\to W^{k-2,p}(S_2M)$, $k=1,2$, is a bounded map for any $1<p<\infty$, since $g\in W^{2,q}$ for any $q<\infty$. Notice then that the adjoint maps, given by
\begin{align*}
\Delta_g^{*}:\left( W^{k-2,p}(S_2M)\right)'&\to \left(W^{k,p}(S_2M)\right)'
\end{align*}
and defined by duality
\begin{align*}
[\Delta_g^{*}(u)](v)=u(\Delta_gv) \text{ for all } u\in \left(W^{k-2,p}(S_2M)\right)' \text{ and all } v\in W^{k,p}(S_2M),
\end{align*}
can be computed explicitly. To see this, let $u\in (W^{k-2,p})'$ be given by $u=S_{k-2,p}\tilde{u}$ with $\tilde{u}\in C^{\infty}(S_2M)$ and $S_{k-2,p}:W^{2-k,p'}(S_2M)\to (W^{k-2,p}(S_2M))'$ is the isomorphism described in Lemma \ref{DualIsomorphismHolst}. Then, for all $v\in C^{\infty}(S_2M)$ we have
\begin{align*}
[\Delta^{*}_g|_{(W^{k-2,p})'}u](v)=\< \tilde{u},\Delta_gv\>_{(M,g)}=\< \Delta_g\tilde{u},v\>_{(M,g)}=[S_{k,p}\Delta_g\tilde{u}](v)=[S_{k,p}\circ\Delta_g\circ S_{k-2,p}^{-1}(u)](v),
\end{align*}
where in the second identity we have just appealed to standard integration by parts. Thus, 
\begin{align*}
[\Delta^{*}_g|_{(W^{k-2,p}(S_2M))'}u]\vert_{C^{\infty}(S_2M)}=S_{k,p}\circ\Delta_g\circ S_{k-2,p}^{-1}(u)\vert_{C^{\infty}(S_2M)},
\end{align*}
which by density of $C^{\infty}(S_2M)$ in $W^{k,p}(S_2M)$ implies $[\Delta^{*}_g|_{(W^{k-2,p})'}u]\vert_{W^{k,p}(S_2M)}=S_{k,p}\circ\Delta_g\circ S_{k-2,p}^{-1}(u)\vert_{W^{k,p}(S_2M)}$. Finally, since
\begin{align*}
S_{k-2,p}:W^{2-k,p'}\to (W^{k-2,p})'
\end{align*}
is a topological isomorphism and $C^{\infty} $ is dense in $W^{2-k,p'}$, then the elements $u=S_{k-2,p}\tilde{u}\in (W^{k-2,p})'$ with $\tilde{u}\in C^{\infty}$ are dense in $(W^{k-2,p})'$. Since both $\Delta^{*}_g$ and $S_{k,p}\circ \Delta_g\vert_{W^{2-k,p'}}\circ S_{k,p}^{-1}$ are bounded maps on $(W^{k-2,p})'$ and agree on a dense subset, they agree on all of $(W^{k-2,p})'$. That is, we have shown that for this case of smooth metrics, it holds that
\begin{align}\label{LaplacianSelfAdjointness}
\Delta^{*}_g|_{(W^{k-2,p}(S_2M))'}=S_{k,p}\circ\Delta_g\vert_{W^{2-k,p'}}\circ S_{k-2,p}^{-1}
\end{align}

When the metric $g$ has limited regularity, suitably extending the above formula will be of importance to understand regularity properties. For that, we will need to justify ``integration by parts" type formulas for objects of very limited regularity. 

\subsection{$W^{2,p}$-regularity theory}

The mapping properties (\ref{LaplacianContinuityWeak2}) and (\ref{TensorLapLoc}) give us the necessary tools to prove the following duality property, which generalises an integration by parts type formula to the regularity setting of this section:

\begin{prop}\label{IntByPartsWeakProp}
Let $(M^n,g)$ be closed Riemannian manifold with $g\in W^{2,q}$, $q>\frac{n}{2}$ . Given $u\in L^{p'}(S_2M)$,
and $v\in W^{2,p}(S_2M)$ with $1< p\leq q$, the following integration by parts type formula holds:
\begin{align}\label{IntByPartsWeakFormula}
-\langle \Delta_gu, v  \rangle_{(M,g)}=\langle \nabla u,\nabla v \rangle_{(M,g)}=-\langle u,\Delta_g v  \rangle_{(M,g)}.
\end{align}
\end{prop}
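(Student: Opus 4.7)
The plan is to obtain (\ref{IntByPartsWeakFormula}) by a double density argument: first verify the identity for smooth $u$ and $v$, then pass to the limit first in $v$ and then in $u$, using the continuity of $\Delta_g$ on the various Sobolev scales already established in Proposition \ref{LapContPropertiesProp} together with the Sobolev duality identification of Lemma \ref{DualPairingLowRegLemma}.

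For $u,v\in C^\infty(S_2M)$, the identity
\[
-\int_M\langle \Delta_g u,v\rangle_g\,dV_g=\int_M\langle \nabla u,\nabla v\rangle_g\,dV_g=-\int_M\langle u,\Delta_g v\rangle_g\,dV_g
\]
follows from a local computation using (\ref{TensorLapLoc}) and a partition of unity. In a coordinate chart the integrand on the left is of the form $\partial_j\bigl(\sqrt{\det g}\,g^{ij}\nabla_i u\bigr)\cdot v$ (modulo smooth lower order Christoffel terms that are integrated against $v$ without derivatives), and since $\sqrt{\det g}\,g^{ij}\nabla_i u$ lies in $W^{1,q}\subset W^{1,1}$ and $v$ is smooth, the classical integration-by-parts formula against a test function applies and has no boundary contribution because $M$ is closed.

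Now fix $u_k\in C^\infty(S_2M)$ and approximate $v$ by $v_j\in C^\infty$ with $v_j\to v$ in $W^{2,p}$. By (\ref{LaplacianContinuityLp}) of Proposition \ref{LapContPropertiesProp} we have $\Delta_g v_j\to\Delta_g v$ in $L^p$, while $\nabla v_j\to \nabla v$ in $W^{1,p}\hookrightarrow L^p$; combined with the boundedness of the smooth objects $u_k$, $\nabla u_k$ and $\Delta_g u_k$, passing $j\to\infty$ yields the three-term identity for every $u_k\in C^\infty$ and every $v\in W^{2,p}$. Next approximate $u$ by $u_k\in C^\infty$ with $u_k\to u$ in $L^{p'}$. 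Since $p\le q$ implies $p'\ge q'$, (\ref{LaplacianContinuityWeak2}) gives $\Delta_g u_k\to\Delta_g u$ in $W^{-2,p'}$; by the same mapping machinery applied to the first order operator $\nabla$ on sections of $S_2M$, together with Corollary \ref{ContractionsSobReg} applied to the zeroth order term $\Gamma\cdot u$ (with $\Gamma\in W^{1,q}$, $q>\tfrac{n}{2}$), one obtains $\nabla u_k\to\nabla u$ in $W^{-1,p'}$. The rightmost integral converges to $\langle u,\Delta_g v\rangle_{(M,g)}$ by Hölder since $\Delta_g v\in L^p$ and $u_k\to u$ in $L^{p'}$; Lemma \ref{DualPairingLowRegLemma} identifies the $L^2(M,dV_g)$ pairings of smooth sections with the bounded duality pairings $W^{-2,p'}\times W^{2,p}\to\mathbb{R}$ and $W^{-1,p'}\times W^{1,p}\to\mathbb{R}$, and continuity of those pairings transports convergence on both factors to convergence of the pairings, producing (\ref{IntByPartsWeakFormula}).

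The main technical point is the convergence $\nabla u_k\to\nabla u$ in $W^{-1,p'}$ near the borderline $q>\tfrac{n}{2}$: since $q$ is not assumed strictly greater than $n$, Lemma \ref{ContinuityPropsGeneralOps1stOrder} does not apply directly, so one must combine the bounded map $\partial:L^{p'}\to W^{-1,p'}$ with a Hölder-Sobolev estimate showing $W^{1,q}\otimes L^{p'}\hookrightarrow L^t\hookrightarrow W^{-1,p'}$ for a suitable $t$. A cleaner alternative, which sidesteps this issue entirely, is to define the middle bilinear form by $B(u,v):=-\langle u,\Delta_g v\rangle_{(M,g)}$ directly from its $W^{2,p}\to L^p$ continuity, verify the leftmost identity by passing $u_k\to u$ only in the outer pairings, and then invoke Lemma \ref{DualPairingLowRegLemma} a posteriori to identify $B(u,v)=\langle \nabla u,\nabla v\rangle_{(M,g)}$ as the Sobolev pairing.
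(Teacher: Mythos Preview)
Your density strategy is sound and does yield the result, but the worry you raise at the end about Lemma~\ref{ContinuityPropsGeneralOps1stOrder} not applying because $q$ need not exceed $n$ is misplaced. The first-order operator $\nabla$ has top-order coefficient equal to the identity (hence in $W^{1,r}_{loc}$ for any $r$) and zeroth-order coefficient built from $\Gamma\in W^{1,q}_{loc}$; since $q>\tfrac{n}{2}$, the Sobolev embedding $W^{1,q}\hookrightarrow L^r$ with $r=\tfrac{nq}{n-q}>n$ (or any $r<\infty$ when $q\ge n$) places $\nabla$ in $\mathcal{L}^1(W^{1,r})$ for some $r>n$. One then checks the remaining condition $\tfrac{1}{p'}\le\tfrac{1}{r'}$ directly from $r\ge q$ and $p'\ge q'$, so Lemma~\ref{ContinuityPropsGeneralOps1stOrder} applies as stated and gives $\nabla:L^{p'}\to W^{-1,p'}$ boundedly. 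This is exactly how the paper secures $\nabla u\in W^{-1,p'}$, so your proposed workarounds are unnecessary, and your ``cleaner alternative'' does not in fact sidestep anything: identifying the middle term with $\langle\nabla u,\nabla v\rangle_{(M,g)}$ as a Sobolev pairing still requires $\nabla u\in W^{-1,p'}$.

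The paper's proof differs from yours mainly in organisation. Rather than approximate and pass to limits, it first verifies that each of the three pairings in (\ref{IntByPartsWeakFormula}) is well-defined (via Proposition~\ref{LapContPropertiesProp}, the embedding trick just described, and Lemma~\ref{DualPairingLowRegLemma}), and then carries out the integration by parts \emph{directly at the distributional level} in local coordinates, using the formula (\ref{TensorLapLoc}) together with a partition of unity localising $v$. Your approximation route is equally valid and conceptually standard; the paper's direct computation avoids having to track convergence of the intermediate $\langle\nabla u_k,\nabla v\rangle$ terms, at the cost of manipulating distributional pairings explicitly.
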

\begin{proof}
Since by hypotheses $u\in L^{p'}(S_2M)$ with $q'\leq p'$, then Proposition \ref{LapContPropertiesProp} establishes that $\Delta_gu\in W^{-2,p'}(S_2M)$. Also, Lemma \ref{DualPairingLowRegLemma} guarantees that $W^{-2,p'}(S_2M)\cong_{\mathcal{S}_{2,p}} (W^{2,p}(S_2M))'$ as long as $\frac{1}{q}\leq \frac{1}{p}\leq \frac{1}{q'}+\frac{4}{n}$. Notice that
\begin{align*}
\frac{1}{q'}+\frac{4}{n}=1-\frac{1}{q}+\frac{4}{n}=1+\frac{2}{n}+\frac{2}{n}-\frac{1}{q}>1+\frac{2}{n}>1.
\end{align*} 
That is, the condition $\frac{1}{p}\leq \frac{1}{q'}+\frac{4}{n}$ is trivially satisfied for all $p\geq 1$, while $p\leq q$ holds by hypothesis. Therefore, the pairing $\langle \Delta_gu,v\rangle_{(M,g)}$ is well defined for all $v\in W^{2,p}$. Furthermore, given $u\in L^{p'}(S_2M)$ compactly supported in a bounded coordinate patch $(\Omega,x^i)_{i=1}^n$ with smooth boundary, then
\begin{align}\label{IntByParts1orderReg}
\begin{split}
\nabla_iu_{ab}&=\partial_iu_{ab} - \Gamma^l_{ia}u_{lb} - \Gamma^l_{ib}u_{al}=\partial_iu_{ab} - \left(\Gamma^l_{ia}\delta^k_b + \Gamma^k_{ib}\delta^l_a\right)u_{lk}
\end{split}
\end{align}
Therefore, to guarantee that $\nabla u\in W^{-1,p'}(T_3M)$ we can appeal to Lemma \ref{ContinuityPropsGeneralOps1stOrder}, as long as we verify its hypotheses. For that, we need to check that the coefficients satisfy the hypotheses of (\ref{1stOrderOp}), with $A_{\alpha}\in W_{loc}^{|\alpha|,r}$ for $r>n$, and that $\frac{1}{r}-\frac{1}{n}\leq \frac{1}{p'}\leq \frac{1}{r'}$, but since $r>n$ the only non-trivial inequality to be checked in this case is $\frac{1}{p'}\leq \frac{1}{r'}$.  Notice then that the top order coefficients in (\ref{IntByParts1orderReg}) are constant and hence clearly in $W^{1,r}_{loc}(U)$, for any $r$. Now, the regularity of the zero order coefficients is given by that of $\Gamma^l_{ij}(g)\in W^{1,q}_{loc}(U)$. If $\frac{n}{2}< q<n$, then $W^{1,q}_{loc}(U)\hookrightarrow L^{\frac{nq}{n-q}}_{loc}(U)$, and notice that 
\begin{align*}
\frac{nq}{n-q}>n\Longleftrightarrow q>\frac{n}{2},
\end{align*}
and thus, in the notations of (\ref{1stOrderOp}), we can find some $r>n$, such that $A_{|\alpha|}\in W^{|\alpha|,r}_{loc}$. We now need to guarantee that $\frac{1}{r}+\frac{1}{p'}\leq 1$, but since $r\geq q$ and $p'\geq q'$, then
\begin{align*}
\frac{1}{r}+\frac{1}{p'}\leq \frac{1}{q}+\frac{1}{q'}=1,
\end{align*}
and thus we are under the hypotheses of Lemma \ref{ContinuityPropsGeneralOps1stOrder}, and we can deduce then that $\nabla u\in W^{-1,p'}(T_3M)$. Moreover, from Lemma \ref{DualIsomorphLowReg}, to guarantee $W^{-1,p'}(T_3M)\cong_{\mathcal{S}_{1,p}}\left( W^{1,p}(T_3M)\right)'$, we need to guarantee $\frac{1}{q}-\frac{1}{n}\leq \frac{1}{p} \leq \frac{1}{q'}+\frac{3}{n}$. The first of these inequalities follows since actually $\frac{1}{p}\geq \frac{1}{q}>\frac{1}{q}-\frac{1}{n}$ by hypothesis. For the second one, we again notice that  
\begin{align*}
\frac{1}{q'}+\frac{3}{n}=1-\frac{1}{q}+\frac{3}{n}=1+\frac{1}{n}+\frac{2}{n}-\frac{1}{q}>1+\frac{1}{n}>1,
\end{align*} 
and therefore $\frac{1}{p}< 1<\frac{1}{q'}+\frac{3}{n}$ holds for all $p\geq 1$. We can therefore conclude that 
%one needs to establish $\Gamma_{\cdot}u\in W_{loc}^{1,q}(U)\otimes L^{q'}(U)\hookrightarrow W^{-1,q'}(U)$. Thus, let us prove the following intermediary claim:
%\begin{enumerate}
%\item[\textbf{Claim}:] Given a bounded open set $U\subset\mathbb{R}^n$ having the cone property, if $q>\frac{n}{2}$ and $q\geq 2$, then $W_{loc}^{1,q}(U)\otimes L^{q'}(U)\hookrightarrow W^{-1,q'}(U)$.%\footnote{Notice that $\frac{n}{n-1}\leq \frac{n}{2}$ iff $n\geq 3$.}
%\end{enumerate}
%\begin{proof}
%To establish the claim, from (\ref{MultiplicationConditions}), we need to check
%\begin{align}\label{Mult.5}
%2\geq n\left(\frac{1}{q} -\frac{1}{q'} \right) \:, \: 2>\frac{n}{q}.
%\end{align}
%Notice that the second condition is given by hypothesis, while the fist one holds since $q\geq 2$ implies $q'\leq q$ and therefore $\frac{1}{q} -\frac{1}{q'}\leq 0$. Also, from (\ref{MultiplicationConditionsDuals}), one needs to satisfy
%\begin{align}\label{Mult.6}
%1\geq n\left(\frac{1}{q}+\frac{1}{q'}-1 \right)=0%\Longleftrightarrow \frac{1}{q}+\frac{1}{p}\leq \frac{1}{n}+1.
%\end{align}
%and therefore the claim holds.
%\end{proof}
%The above analysis implies that
\begin{align*}
\nabla u\in W^{-1,p'}(T_3M)\cong_{\mathcal{S}_{1,p}} \left(W^{1,p}(T_3M)\right)^{'} \text{ and } \nabla v\in W^{1,p}(T_3M).
\end{align*}
Since $\Delta_gv\in L^{p}$, if $v$ is compactly supported in a coordinate patch $(\Omega,x^i)_{i=1}^n$ the following computation is justified: 
\begin{align}\label{IntParts.1}
\begin{split}
\langle \nabla u,\nabla v \rangle_{(M,g)}&=\langle \sqrt{\mathrm{det}(g)}g^{ij}g^{ab}g^{cd}\nabla_i u_{ac},\nabla_j v_{bd} \rangle_{(\Omega,\delta)},\\
%&=\langle \nabla_i u_{ac},\sqrt{\mathrm{det}(g)}g^{ij}g^{ab}g^{cd}\nabla_j v_{bd} \rangle_{(\Omega,\delta)},\\
%&=\langle \partial_i u_{ac} - \Gamma^l_{ia} u_{lc} - \Gamma^l_{ic} u_{al},\sqrt{\mathrm{det}(g)}\nabla^i v^{ac} \rangle_{(\Omega,\delta)},\\
%&=\langle \partial_i u_{ac},\sqrt{\mathrm{det}(g)}\nabla^i v^{ac} \rangle_{(\Omega,\delta)} - \langle \Gamma^l_{ia} u_{lc} ,\sqrt{\mathrm{det}(g)}\nabla^i v^{ac} \rangle_{(\Omega,\delta)} - \langle \Gamma^l_{ic} u_{al},\sqrt{\mathrm{det}(g)}\nabla^i v^{ac} \rangle_{(\Omega,\delta)},\\
%&=-\langle u_{ac},\partial_i\left(\sqrt{\mathrm{det}(g)}\nabla^i v^{ac}\right) \rangle_{(\Omega,\delta)} - \langle  u_{ac} ,\Gamma^a_{il}\sqrt{\mathrm{det}(g)}\nabla^i v^{lc} \rangle_{(\Omega,\delta)} - \langle  u_{ac},\Gamma^c_{il}\sqrt{\mathrm{det}(g)}\nabla^i v^{al} \rangle_{(\Omega,\delta)},\\
&=-\langle u_{ac},\partial_i\left(\sqrt{\mathrm{det}(g)}\nabla^i v^{ac}\right) + \Gamma^a_{il}\sqrt{\mathrm{det}(g)}\nabla^i v^{lc} + \Gamma^c_{il}\sqrt{\mathrm{det}(g)}\nabla^i v^{al}  \rangle_{(\Omega,\delta)} ,\\
%&=-\langle u_{ac},\sqrt{\mathrm{det}(g)}\left(\frac{1}{\sqrt{\mathrm{det}(g)}}\partial_i\left(\sqrt{\mathrm{det}(g)}\nabla^i v^{ac}\right) + \Gamma^a_{il}\nabla^i v^{lc} + \Gamma^c_{il}\nabla^i v^{al}\right)  \rangle_{(\Omega,\delta)} ,\\
&=-\langle u_{ac},\sqrt{\mathrm{det}(g)}\nabla_i\nabla^i v^{ac}  \rangle_{(\Omega,\delta)} ,\\
&=-\langle u,\Delta_g v  \rangle_{(M,g)},
\end{split}
\end{align}
where we have used (\ref{TensorLapLoc}). Similarly, we know $\langle \Delta_gu,v \rangle_{(M,g)}$ is well-defined and gives:
\begin{align}\label{IntParts.2}
\begin{split}
\langle \Delta_gu,v \rangle_{(M,g)}&=\langle \sqrt{\mathrm{det}(g)}g^{ij}g^{ab}\Delta_gu_{ia},v_{jb} \rangle_{(\Omega,\delta)},\\
&=\langle \partial_c\left(\sqrt{\mathrm{det}(g)}\nabla^c u_{ia}\right) - \sqrt{\mathrm{det}(g)}\Gamma^l_{ci}\nabla^c u_{la} - \sqrt{\mathrm{det}(g)}\Gamma^l_{ca}\nabla^c u_{il},v^{ia}\rangle_{(\Omega,\delta)},\\
%&=\langle \partial_c\left(\sqrt{\mathrm{det}(g)}\nabla^c u_{ia}\right),v^{ia}\rangle_{(\Omega,\delta)} - \langle \sqrt{\mathrm{det}(g)}\Gamma^l_{ci}\nabla^c u_{la},v^{ia}\rangle_{(\Omega,\delta)} - \langle\sqrt{\mathrm{det}(g)}\Gamma^l_{ca}\nabla^c u_{il},v^{ia}\rangle_{(\Omega,\delta)},\\
%&= - \langle \sqrt{\mathrm{det}(g)}\nabla^c u_{ia},\partial_cv^{ia}\rangle_{(\Omega,\delta)} - \langle\sqrt{\mathrm{det}(g)}\nabla^c u_{la},\Gamma^l_{ci}v^{ia}\rangle_{(\Omega,\delta)} - \langle\sqrt{\mathrm{det}(g)}\nabla^c u_{il},\Gamma^l_{ca}v^{ia}\rangle_{(\Omega,\delta)},\\
&= - \langle \sqrt{\mathrm{det}(g)}\nabla^c u_{ia},\partial_cv^{ia} + \Gamma^i_{cl}v^{la} + \Gamma^a_{cl}v^{il} \rangle_{(\Omega,\delta)},\\
&= - \langle \sqrt{\mathrm{det}(g)}\nabla^c u_{ia},\nabla_cv^{ia} \rangle_{(\Omega,\delta)},\\
&= - \langle \sqrt{\mathrm{det}(g)}g^{cd}g^{ij}g^{ab}\nabla_c u_{ia},\nabla_dv_{jb} \rangle_{(\Omega,\delta)},\\
&=- \langle \nabla u,\nabla v \rangle_{(M,g)}.
\end{split}
\end{align}
where in the second step, to justify the duality arguments, one is appealing to the arguments of Lemma \ref{ContinuityPropsGeneralOps}.
%\begin{claim}
%Given a bounded open set $U\subset\mathbb{R}^n$ having the cone property and fix $q>\frac{n}{2}$ and $1<p\leq q$. If $q\geq 2$ and $q'\leq p\leq q$, then $W^{1,q}(U)\otimes W^{-1,p}(U)\hookrightarrow W^{-2,p}$. 
%\end{claim}
%\begin{proof}
%From (\ref{MultiplicationConditions}) we need to check
%\begin{align}\label{Mult.3}
%3\geq n\left(\frac{1}{q} -\frac{1}{p} \right) \:, \: 2>\frac{n}{q},
%\end{align}
%which are both satisfied by hypotheses. Also, from (\ref{MultiplicationConditionsDuals}), one needs to satisfy
%\begin{align}\label{Mult.4}
%0\geq n\left(\frac{1}{q}+\frac{1}{p}-1 \right)\Longleftrightarrow \frac{1}{q}+\frac{1}{p}\leq 1\Longleftrightarrow \frac{1}{p}\leq \frac{1}{q'}.
%\end{align}
%and therefore the claim holds.
%\end{proof}
Therefore, putting together (\ref{IntParts.1}), (\ref{IntParts.2}) and using a partition of unity to localise the argument, one finds that (\ref{IntByPartsWeakFormula}) holds.
\end{proof}

\medskip

The above integration by parts formula can now be used to find the explicit action of the adjoint operator $\Delta_g^{*}:(L^{p}(S_2M))'\to \left(W^{2,p}(S_2M)\right)'$ on a Riemannian manifold $(M^n,g)$, $g\in W^{2,q}$, $q>\frac{n}{2}$. %Before doing this, using the isomorphism XY in Theorem XX, notice we have a representation of $\Delta^{*}_g$ on as an operator from $L^{q}\to W^{-2,q'}$, given by

\begin{cor}\label{SelfAdj}
Let $(M^n,g)$ be closed Riemannian manifold with $g\in W^{2,q}$, $q>\frac{n}{2}$. Consider the Laplacian $\Delta_g:W^{2,p}(S_2M)\to L^{p}(S_2M)$ with $1< p\leq q$ and its adjoint operator
\begin{align*}
\Delta^{*}_g: \left(L^{p}(S_2M)\right)'\to \left(W^{2,p}(S_2M)\right)',
\end{align*}
which is defined via the relation
\begin{align*}
\left(\Delta^{*}_g\phi\right)(\varphi)\doteq \phi\left(\Delta_g\varphi\right) \text{ for all } \phi\in \left(L^{p}(S_2M)\right)' \text{ and all } \varphi\in W^{2,p}(S_2M).
%\langle \Delta^{*}_g\phi,\varphi\rangle_{(M,g)}\doteq \langle \phi,\Delta_g\varphi\rangle_{(M,g)} \text{ for all } \phi\in L^{q'} \text{ and all } \varphi\in W^{2,q},
\end{align*}
Then,
\begin{align}\label{LapAdj}
\Delta^{*}_g=\mathcal{S}_{2,p}\circ \Delta_g\vert_{L^{p'}}\circ \mathcal{S}_{0,p}^{-1}: \left(L^{p}(S_2M)\right)'\to \left(W^{2,p}(S_2M)\right)',
\end{align}
where $\Delta_g\vert_{L^{p'}}:L^{p'}(S_2M)\to W^{-2,p'}(S_2M)$ is the map given in (\ref{LaplacianContinuityWeak2}) and $\mathcal{S}_{k,p}:W^{-k,p'}(S_2M)\to \left(W^{k,p}(S_2M)\right)'$ given in Lemma \ref{DualPairingLowRegLemma} by (\ref{DualIsomorphLowReg}).
\end{cor}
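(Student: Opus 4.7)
The plan is to unpack both sides on an arbitrary $\phi\in (L^p(S_2M))'$ and match them using the integration by parts identity of Proposition \ref{IntByPartsWeakProp}.

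First, I would invoke Lemma \ref{DualPairingLowRegLemma} with $k=0$: since $1<p\leq q$, the hypothesis $\frac{1}{q}-\frac{2}{n}\leq \frac{1}{p}\leq \frac{1}{q'}+\frac{2}{n}$ is trivially met (the left side is $\leq\frac{1}{q}\leq \frac{1}{p}$ and $\frac{1}{q'}+\frac{2}{n}>1$), so $\mathcal{S}_{0,p}:L^{p'}(S_2M)\to (L^p(S_2M))'$ is a topological isomorphism. Set $\tilde{u}\doteq \mathcal{S}_{0,p}^{-1}(\phi)\in L^{p'}(S_2M)$, so that $\phi(\psi)=\langle \tilde{u},\psi\rangle_{(M,g)}$ for every $\psi\in L^p(S_2M)$.

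Next, I verify that the right-hand side of (\ref{LapAdj}) produces an element of $(W^{2,p}(S_2M))'$. Since $p\leq q$ implies $q'\leq p'$, Proposition \ref{LapContPropertiesProp} yields $\Delta_g\tilde{u}\in W^{-2,p'}(S_2M)$. Applying Lemma \ref{DualPairingLowRegLemma} once more, this time with $k=2$, the conditions reduce to $\frac{1}{q}\leq \frac{1}{p}\leq \frac{1}{q'}+\frac{4}{n}$, which again hold under our assumption $1<p\leq q$; hence $\mathcal{S}_{2,p}(\Delta_g\tilde{u})\in (W^{2,p}(S_2M))'$ and
\begin{align*}
[\mathcal{S}_{2,p}\circ\Delta_g\vert_{L^{p'}}\circ \mathcal{S}_{0,p}^{-1}(\phi)](\varphi)=\langle \Delta_g\tilde{u},\varphi\rangle_{(M,g)}\quad \text{for all } \varphi\in W^{2,p}(S_2M).
\end{align*}

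The decisive step is then to compare this with the definition of $\Delta_g^{*}$: for $\varphi\in W^{2,p}(S_2M)$ one has $\Delta_g\varphi\in L^p(S_2M)$ by (\ref{LaplacianContinuityLp}), so
\begin{align*}
(\Delta_g^{*}\phi)(\varphi)=\phi(\Delta_g\varphi)=\langle \tilde{u},\Delta_g\varphi\rangle_{(M,g)}.
\end{align*}
Now Proposition \ref{IntByPartsWeakProp} applies directly to the pair $(\tilde{u},\varphi)\in L^{p'}(S_2M)\times W^{2,p}(S_2M)$ with $1<p\leq q$, giving
\begin{align*}
\langle \tilde{u},\Delta_g\varphi\rangle_{(M,g)}=\langle \Delta_g\tilde{u},\varphi\rangle_{(M,g)}.
\end{align*}
Combining the last two displays yields $(\Delta_g^{*}\phi)(\varphi)=[\mathcal{S}_{2,p}\circ\Delta_g\vert_{L^{p'}}\circ \mathcal{S}_{0,p}^{-1}(\phi)](\varphi)$ for every $\varphi\in W^{2,p}(S_2M)$, proving (\ref{LapAdj}).

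The only delicate point is bookkeeping of exponent conditions so that each use of Lemma \ref{DualPairingLowRegLemma} (identifying $W^{-k,p'}\cong (W^{k,p})'$ with the low-regularity pairing) and of Proposition \ref{LapContPropertiesProp} is actually legitimate under the single assumption $1<p\leq q$; once this is checked, the proof is essentially a formal manipulation since Proposition \ref{IntByPartsWeakProp} supplies the key self-adjointness identity that would otherwise require direct integration by parts, which is not available at this regularity level.
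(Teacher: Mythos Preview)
Your proposal is correct and follows essentially the same approach as the paper: identify $\phi$ with $\tilde{u}=\mathcal{S}_{0,p}^{-1}\phi\in L^{p'}$, apply the integration by parts formula of Proposition \ref{IntByPartsWeakProp} to move $\Delta_g$ from $\varphi$ to $\tilde{u}$, and then recognize the result as $\mathcal{S}_{2,p}(\Delta_g\tilde{u})$. Your additional verification of the exponent conditions for Lemma \ref{DualPairingLowRegLemma} at $k=0$ and $k=2$ is more explicit than the paper's proof, but the argument is otherwise identical.
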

\begin{proof}
From the isomorphism 
\begin{align*}
\mathcal{S}_{k,p}:W^{-k,p'}\to (W^{k,p})'
\end{align*}
established in Lemma \ref{DualPairingLowRegLemma} by (\ref{DualIsomorphLowReg}), we have that
\begin{align*}
u(v)=\<\mathcal{S}_{0,p}^{-1}u,v\>_{(M,g)} \:\: \forall \: u\in (L^{p})' \text{ and } v\in  L^{p},
\end{align*}
and thus, for all $u\in (L^{p})'$ and $v\in  W^{2,p}$:
\begin{align*}
(\Delta_g^{*}u)(v)\overset{def}{=}u(\Delta_gv)=\<\mathcal{S}_{0,p}^{-1}u,\Delta_gv\>_{(M,g)}=\<\Delta_g\mathcal{S}_{0,p}^{-1}u,v\>_{(M,g)}=\left(\mathcal{S}_{2,p}(\Delta_g\vert_{L^{p'}}\mathcal{S}_{0,p}^{-1}u)\right)(v),
\end{align*}
where in the third identity above we have used the integration by parts established in Proposition \ref{IntByPartsWeakProp}. Thus, we find $\Delta_g^{*}=\mathcal{S}_{2,p}\circ \Delta_g\vert_{L^{p'}}\circ \mathcal{S}_{0,p}^{-1}:\left(L^{p}\right)'\to (W^{2,p})'$.

%Under our conditions, given $\phi\in L^{q'}$, we can apply Proposition \ref{IntByPartsWeakProp} to get that
%\begin{align}
%\langle \Delta^{*}_g\phi,\varphi\rangle_{(M,g)}\overset{def}{=}\langle \phi,\Delta_g\varphi\rangle_{(M,g)}=\langle \Delta_g\phi,\varphi\rangle_{(M,g)} \text{ for all } \varphi\in W^{2,q},
%\end{align}
%which implies that $\Delta^{*}_g\phi=\Delta_g\phi$ for all such $\phi\in L^{q'}$, and notice that the mapping property (\ref{LapAdj}) is granted by (\ref{LaplacianContinuityWeak2}).
\end{proof}

We will now appeal to the above analysis when establishing the following regularity result:

\begin{theo}\label{LapBelRegTHM}
Let $(M^n,g)$ be a closed Riemannian manifold with $g\in W^{2,q}(M)$, $q>\frac{n}{2}$. Then, %given $q'\leq p\leq q$, %given a vector bundle $E\mapsto M$, 
the Laplace-Beltrami operator
%\begin{align*}
%\Delta_g:C^{\infty}(M;E)&\mapsto W^{1,q}(M;E),\\
%u&\mapsto \Delta_gu=g^{ij}\nabla_i\nabla_ju
%\end{align*}
%extends to a well-defined continuous operator
\begin{align}\label{LapBelSob}
\Delta_g:W^{2,p}(M;S_2M)\to L^{p}(M;S_2M), \text{ for all } 1<p\leq q
\end{align}
%Furthermore, (\ref{LapBelSob}) 
is Fredholm of index zero, $\mathrm{Ker}(\Delta_g\vert_{W^{2,p}})\subset W^{2,q}(M)$ and the following regularity implication follows:
\begin{align}\label{LapBelSobReg}
\text{ if } u\in L^{q'}(S_2M), \text{ and } \Delta_gu\in L^p(S_2M)\Longrightarrow u\in W^{2,p}(S_2M).
\end{align}
\end{theo}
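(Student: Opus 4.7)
My plan follows the three-stage strategy anticipated in the introduction: upgrade the semi-Fredholm property to full Fredholm of index zero by smooth approximation, identify the cokernel and carry out a regularity bootstrap for its elements, and then close the regularity implication by a duality argument. For the first stage, Theorem \ref{FredholmLemmaHolst} already delivers the semi-Fredholm property of $\Delta_g:W^{2,p}(S_2M)\to L^p(S_2M)$ for every $1<p\leq q$. To obtain index zero I would approximate $g$ by smooth Riemannian metrics $g_k\to g$ in $W^{2,q}(M)$ (by density of $C^\infty$ in $W^{2,q}$); the continuity estimate (\ref{OpContinuity2ndOrder}) of Lemma \ref{ContinuityPropsGeneralOps} is linear in the coefficients, hence $\Delta_{g_k}\to \Delta_g$ in operator norm on $\mathcal{L}(W^{2,p}(S_2M),L^p(S_2M))$. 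Each $\Delta_{g_k}$ is Fredholm of index zero by classical elliptic theory on smooth closed manifolds, and local constancy of the Fredholm index on the open set of semi-Fredholm operators (e.g.\ \cite[Corollary 19.1.6]{Hormander3}) forces $\mathrm{ind}(\Delta_g)=0$.

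Next, by Corollary \ref{SelfAdj} the dual operator factorises as $\Delta_g^*=\mathcal{S}_{2,p}\circ \Delta_g|_{L^{p'}}\circ \mathcal{S}_{0,p}^{-1}$, so via $\mathcal{S}_{0,p}$ the annihilator $\mathrm{Ker}(\Delta_g^*)$ is identified with the weak kernel
\begin{align*}
\mathcal{K}_{p'}\doteq \{w\in L^{p'}(S_2M):\, \Delta_g w=0 \text{ in } W^{-2,p'}(S_2M)\}.
\end{align*}
The crucial auxiliary statement, which I expect to constitute the \emph{main obstacle} of the proof, is the bootstrap $\mathcal{K}_{p'}\subset W^{2,q}(S_2M)$. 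The difficulty is that one starts from very low a priori regularity ($L^{p'}$), close to the borderline of elliptic theory for $W^{2,q}$-coefficient operators. My strategy is to localise $w$ via a smooth cut-off $\chi\in C_0^\infty$ in a coordinate patch and use $\Delta_g(\chi w)=[\Delta_g,\chi]w$, where the commutator is a first order operator whose top-order coefficients lie in $W^{2,q}_{\mathrm{loc}}$ and whose zero-order ones in $W^{1,q}_{\mathrm{loc}}$; reinterpreting this equation through the duality-based mapping properties of Section \ref{SectionMappingProps} produces an incremental gain of regularity. Iterating the scheme and invoking at each step the local a priori estimate of Lemma \ref{ElliptEstRoughW2} together with the Sobolev multiplication and embedding properties of Theorem \ref{BesselMultLocal} and Corollary \ref{ContractionsSobReg} eventually places $w$ in $W^{2,q}$, provided the exponents are tracked carefully to remain within the admissible range of Remark \ref{RemarkContinuityProps}.

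Granted this bootstrap, the remaining pieces fall neatly into place. Given $u\in L^{q'}$ with $f\doteq \Delta_g u\in L^p$ and any $w\in \mathcal{K}_{p'}$, the membership $w\in W^{2,q}$ allows the application of Proposition \ref{IntByPartsWeakProp} at the exponent $q$ (with $u\in L^{q'}$ playing the role of the low-regularity argument and $w\in W^{2,q}$ the role of the test tensor):
\begin{align*}
\langle w,f\rangle_{(M,g)}=\langle w,\Delta_g u\rangle_{(M,g)}=\langle \Delta_g w,u\rangle_{(M,g)}=0.
\end{align*}
By the Fredholm alternative from Stage 1 we then have $f\in \mathrm{Im}(\Delta_g|_{W^{2,p}})$, hence there exists $v\in W^{2,p}$ with $\Delta_g v=f$. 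Since $q>n/2$ and $p\leq q$ one has $\tfrac{1}{p}+\tfrac{1}{q}<1+\tfrac{2}{n}$, and Sobolev embedding on the closed manifold $M$ yields $v\in W^{2,p}\hookrightarrow L^{q'}$; consequently $u-v\in L^{q'}$ and $\Delta_g(u-v)=0$ in $W^{-2,q'}$, i.e.\ $u-v\in \mathcal{K}_{q'}$. A second application of the bootstrap gives $u-v\in W^{2,q}\subset W^{2,p}$, so that $u\in W^{2,p}$. The kernel inclusion $\mathrm{Ker}(\Delta_g|_{W^{2,p}})\subset W^{2,q}$ is obtained in exactly the same manner, by applying the bootstrap to any $u\in W^{2,p}\hookrightarrow L^{q'}$ with $\Delta_g u=0$, which lies a fortiori in $\mathcal{K}_{q'}$.
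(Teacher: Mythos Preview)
Your Stage 1 (semi-Fredholm plus smooth approximation to get index zero) and Stage 3 (Fredholm alternative plus Proposition \ref{IntByPartsWeakProp} to place $\Delta_gu$ in the range) match the paper's argument essentially line for line.

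The divergence is entirely in what you call the ``main obstacle'': the inclusion $\mathcal{K}_{p'}\subset W^{2,q}$. The paper \emph{never} attempts a direct local bootstrap of kernel elements. Instead it exploits the fact that you have already proved index zero simultaneously for every $1<p\leq q$, and runs a dimension-counting chain. From the inclusion $\Ker(\Delta_g|_{W^{2,q}})\subset \Ker(\Delta_g|_{W^{2,p}})$, index zero at $p$, the embedding $(L^p)'\hookrightarrow (L^q)'$, and index zero at $q$, one obtains
\[
\dim\Ker(\Delta_g|_{W^{2,q}})\leq \dim\Ker(\Delta_g|_{W^{2,p}})=\dim\Ker(\Delta_g^{*}|_{(L^p)'})\leq \dim\Ker(\Delta_g^{*}|_{(L^q)'})=\dim\Ker(\Delta_g|_{W^{2,q}}),
\]
forcing all inclusions to be equalities. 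A second pass through the same trick, using $W^{2,q}\hookrightarrow L^{q'}$ and the identification $\Ker(\Delta_g^{*}|_{(L^q)'})=\mathcal{S}_{0,q}(\Ker(\Delta_g|_{L^{q'}}))$ from Corollary \ref{SelfAdj}, yields $\Ker(\Delta_g|_{L^{q'}})=\Ker(\Delta_g|_{W^{2,q}})$ and hence $\mathcal{K}_{p'}\subset\Ker(\Delta_g|_{L^{q'}})=\Ker(\Delta_g|_{W^{2,q}})$ with no regularity argument whatsoever.

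Your proposed direct bootstrap, by contrast, has a genuine gap as written. Lemma \ref{ElliptEstRoughW2} is an \emph{a priori estimate} for functions already in $W^{2,p}_0$, not a regularity statement, and the results in Section \ref{SectionMappingProps} are continuity properties only. To turn the commutator identity $\Delta_g(\chi w)=[\Delta_g,\chi]w\in W^{-1,p'}$ into $\chi w\in W^{1,p'}$ you would need precisely the $W^{1,p}\to W^{-1,p}$ regularity of Theorem \ref{LapBelRegTHMweak}, which in the paper is proved \emph{after} the present theorem and relies on it. (The one place a local bootstrap of this flavour does appear, Corollary \ref{LocalLapReg}, explicitly invokes both Corollaries \ref{ScalarLap-W2RegTHM} and \ref{ScalrLapBelRegTHMweak}.) So the scheme you sketch is either circular within the paper's logical order or requires independent machinery you have not supplied. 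The fix is cheap: drop the bootstrap and use the dimension argument above, which is available the moment Stage 1 is complete.
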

\begin{proof}
First notice that (\ref{LapBelSob}) is continuous due to Proposition \ref{LapContPropertiesProp} and belongs to $\mathcal{L}^2(W^{2,q})$. Thus, from Lemma \ref{FredholmLemmaHolst}, we know this operator is semi-Fredholm. Therefore, to show it is actually a Fredholm operator, one must only show its cokernel is finite dimensional. With this in mind, first notice that when $g\in C^{\infty}$, then the $L^2$-theory implies that $\mathrm{Ker}(\Delta_g:L^p\to W^{-2,p})\subset C^{\infty}$ is independent of $p>1$. Therefore, since $\Delta_g^{*}\vert_{(L^p)'}=S_{2,p}\circ \Delta_g\vert_{L^{p'}}\circ S_{0,p}^{-1}:\left(L^{p}\right)'\to \left(W^{2,p}\right)'$ and the maps $\mathcal{S}_{k,p}$ are isomorphism, 
\begin{align*}
\mathrm{dim}(\mathrm{Ker}(\Delta^{*}_g:(L^p)'\to (W^{2,p})'))&=\mathrm{dim}(\mathrm{Ker}(\Delta_g:L^{p'}\to W^{-2,p'}))=\mathrm{dim}(\mathrm{Ker}(\Delta_g:W^{2,p}\to L^p)),
\end{align*}
where the last equality follows by the previous discussion. Hence, these maps are all Fredholm of index zero.\footnote{For details about the smooth theory, see, for instance \cite[Theorem 19.2.1]{Hormander3}.} 
In the general case, let $\{g_k\}\subset C^{\infty}$ such that $g_k\xrightarrow[]{W^{2,q}} g$, and notice that since $q>\frac{n}{2}$, $1<p\leq q$ and $\frac{1}{p}< 1<1+\left(\frac{2}{n} -\frac{1}{q}\right) $, we can appeal to the same computations of (\ref{OpContinuity2ndOrder}) to show that 
%\begin{align*}
%\Vert (\Delta_g-\Delta_{g_k})u\Vert_{L^{p}}&\leq \Vert (g^{ij}-g^{ij}_k)\partial_{ij}u_{ab}\Vert_{L^{p}} + \Vert (A_{ab}^{cdl}(g)-A_{ab}^{cdl}(g_k))\partial_c u_{dl}\Vert_{L^{p}} \\
%&+ \Vert (B_{ab}^{dl}(g)-B_{ab}^{dl}(g_k))u_{dl}\Vert_{L^{p}},\\
%&\lesssim \Vert g^{ij}-g^{ij}_k\Vert_{W^{2,q}} \Vert\partial_{ij}u_{ab}\Vert_{L^{p}} + \Vert A_{ab}^{cdl}(g)-A_{ab}^{cdl}(g_k)\Vert_{W^{1,q}} \Vert \partial_c u_{dl}\Vert_{W^{1,p}} \\
%&+ \Vert B_{ab}^{dl}(g)-B_{ab}^{dl}(g_k)\Vert_{L^{q}}\Vert u_{dl}\Vert_{W^{2,p}},\\
%&\lesssim \left( \Vert g^{ij}-g^{ij}_k\Vert_{W^{2,q}} + \Vert A_{ab}^{cdl}(g)-A_{ab}^{cdl}(g_k)\Vert_{W^{1,q}}  + \Vert B_{ab}^{dl}(g)-B_{ab}^{dl}(g_k)\Vert_{L^{q}}\right)\Vert u_{ab}\Vert_{W^{2,p}}
%\end{align*}
\begin{align}\label{Approximation.1}
\begin{split}
\Vert \Delta_g-\Delta_{g_k}\Vert_{\mathrm{Op}(W^{2,p},L^{p})}&\lesssim \sum_{r=1}^N\sum_{i,j=1}^n\Vert \eta_r(g^{ij}-g^{ij}_k)\Vert_{W^{2,q}(U_r)} + \sum_{r=1}^N\sum_{a,b,c,d,l=1}^n\Vert \eta_r(\mathcal{A}_{ab}^{cdl}(g) - \mathcal{A}_{ab}^{cdl}(g_k))\Vert_{W^{1,q}(U_r)} \\
& + \sum_{r=1}^N\sum_{a,b,c,d,l=1}^n\Vert \eta_r(\mathcal{B}_{ab}^{dl}(g) - \mathcal{B}_{ab}^{dl}(g_k))\Vert_{L^{q}(U_r)}
\end{split}
\end{align}
where the coefficients $\mathcal{A}$ and $\mathcal{B}$ are the ones appearing in (\ref{LaplacianContinuity.2}), $\{U_r\}_{r=1}^N$ is a covering by coordinate systems of $M$ and $\{\eta_r\}_{r=1}^N$ is a partition of unity subordinate to such a cover, and we have already used the standard trivialisation for associated tensor bundles in a given coordinate system, associating to $u$ its components in the given coordinate system. The fact that $g_k\xrightarrow[]{W^{2,q}} g$ clearly implies the first set of sums in the right-hand side of (\ref{Approximation.1}) is going to zero as $k\rightarrow\infty$. Concerning the two other sets of sums, we appeal to the explicit expressions given in (\ref{LaplacianContinuity.2}) and (\ref{WeakHessian}), to deduce that since
\begin{align*}
\eta_r\Gamma^i_{jl}(g_k)&\xrightarrow[]{W^{1,q}(U_r)} \eta_r\Gamma^i_{jl}(g),\\
\eta_r\Gamma^i_{jl}(g_k)\Gamma^a_{bc}(g_k)&\xrightarrow[]{L^{q}(U_r)} \eta_r\Gamma^i_{jl}(g)\Gamma^a_{bc}(g),
\end{align*}
then all of the right-hand side in (\ref{Approximation.1}) goes to zero, and hence $\Delta_{g_k}$ converges to $\Delta_{g}$ in the operator norm form $W^{2,p}\to L^{p}$. Since each $\Delta_{g_k}$ is Fredholm of index zero and the index of a semi-Fredholm map is locally constant (see, for instance, \cite[Theorem 19.1.5]{Hormander3}), then (\ref{LapBelSob}) has also index zero and is therefore also Fredholm.

%\bigskip
%where the second inequality follows from the continuous multiplication property $W^{k,q}\otimes W^{2-k,p}\hookrightarrow L^p$ for $k=0,1,2$. Then, using a (finite) partition of unity $\{\eta_i\}_{i=1}^N$, with $\mathrm{supp}(\eta_i)\subset U_i$, where $U_i$ a bounded coordinate patch with smooth boundary, we can write $u=\sum_iu^i$, with $\mathrm{supp}(u^i)\subset U_i$, and then
%\begin{align*}
%\Vert \Delta_g-\Delta_{g_k}\Vert_{\mathcal{L}(W^{2,p},L^{p})}\lesssim \sum_{i}\Vert g^{ij}-g^{ij}_k\Vert_{W^{2,q}(U_i)} + \Vert A_{ab}^{cdl}(g)-A_{ab}^{cdl}(g_k)\Vert_{W^{1,q}(U_i)}  + \Vert B_{ab}^{dl}(g)-B_{ab}^{dl}(g_k)\Vert_{L^{q}(U_i)}
%\end{align*}
%The same multiplication properties established in Claim \ref{ClaimMult1} within the proof of Proposition \ref{LapContPropertiesProp}, now show that $A^{cdl}_{ab}(g_k)\xrightarrow[]{W^{1,q}(U_i)} A^{cdl}_{ab}(g)$ and $B^{dl}_{ab}(g_k)\xrightarrow[]{L^{q}(U_i)} B^{dl}_{ab}(g)$, and therefore the right-hand side of the above expression goes to zero. Since each $\Delta_{g_k}$ is Fredholm of index zero and the index of a semi-Fredholm map is locally constant (see, for instance, \cite[19.1.5]{Hormander3}), then (\ref{LapBelSob}) has also index zero and is therefore also Fredholm.

%Since the index of a semi-Fredholm map is locally constant (see, for instance, \cite[19.1.5]{Hormander3}), then, for a general $g$ satisfying the hypotheses of our theorem, we can approximate $g\in W^{2,q}$ by $\{g_k\}\subset C^{\infty}$ to prove that (\ref{LapBelSob}) also has index zero, and is therefore also Fredholm. 

\medskip
The regularity claim for the kernel can now be established as follows. Since $\mathrm{Ker}(\Delta_g\vert_{W^{2,q}})\subset \mathrm{Ker}(\Delta_g\vert_{W^{2,p}})$ for any $1< p\leq q$, then
\begin{align*}
\mathrm{dim}(\mathrm{Ker}(\Delta_g\vert_{W^{2,q}}))\leq \mathrm{dim}(\mathrm{Ker}(\Delta_g\vert_{W^{2,p}}))= \mathrm{dim}(\mathrm{Ker}(\Delta^{*}_g\vert_{(L^{p})'}))\leq \mathrm{dim}(\mathrm{Ker}(\Delta^{*}_g\vert_{(L^{q})'}))=\mathrm{dim}(\mathrm{Ker}(\Delta_g\vert_{W^{2,q}})),
\end{align*}
where the first identity follows since the index of the operator is zero, the second inequality since $(L^{p})'\hookrightarrow (L^{q})'$ and the last identity again by the index property. Therefore, all of the above dimensions are equal. Putting this together with the inclusions
\begin{align*}
\mathrm{Ker}(\Delta_g\vert_{W^{2,q}})\subset \mathrm{Ker}(\Delta_g\vert_{W^{2,p}}), \: \: \mathrm{Ker}(\Delta^{*}_g\vert_{(L^{p})'}) \subset \mathrm{Ker}(\Delta^{*}_g\vert_{(L^{q})'}),
\end{align*}
it follows that
\begin{align}\label{RegularityDuality1}
\mathrm{Ker}(\Delta_g\vert_{W^{2,q}})= \mathrm{Ker}(\Delta_g\vert_{W^{2,p}})\cong\mathrm{Ker}(\Delta^{*}_g\vert_{(L^{p})'}) = \mathrm{Ker}(\Delta^{*}_g\vert_{(L^{q})'}),
\end{align}
where the isomorphism in the middle follows since all these spaces have the same (finite) dimension. This isomorphism can be made explicit, since by Corollary \ref{SelfAdj} it holds that
\begin{align}\label{RegularityDuality3}
\Delta^{*}_{g}\vert_{(L^q)'}=\mathcal{S}_{2,q}\circ \Delta_g\vert_{L^{q'}}\circ \mathcal{S}^{-1}_{0,q}:(L^q)'\to (W^{2,q})',
\end{align}
implying
\begin{align}\label{RegularityDuality2}
\Ker(\Delta^{*}_{g}\vert_{(L^{q})'})=\mathcal{S}_{0,q}\left( \Ker(\Delta_g\vert_{L^{q'}}) \right).
\end{align}
Also, since $W^{2,q}\hookrightarrow L^{q'}$, we have $\Ker(\Delta_g\vert_{W^{2,q}})\subset \Ker(\Delta_g\vert_{L^{q'}})$. But then, from (\ref{RegularityDuality1})-(\ref{RegularityDuality2}), these last spaces must have the same dimension, and thus we conclude $\Ker(\Delta_g\vert_{W^{2,q}})= \Ker(\Delta_g\vert_{L^{q'}})$, implying
\begin{align}\label{RegularityDuality4}
\Ker(\Delta^{*}_{g}\vert_{(L^{p})'})=\Ker(\Delta^{*}_{g}\vert_{(L^{q})'})=\mathcal{S}_{0,q}\left( \Ker(\Delta_g\vert_{W^{2,q}}) \right).
\end{align}
We can actually extract some further information from the above arguments, since actually from Corollary \ref{SelfAdj} it holds that:
\begin{align}\label{RegularityDuality3.1}
\Delta^{*}_{g}\vert_{(L^p)'}=\mathcal{S}_{2,p}\circ \Delta_g\vert_{L^{p'}}\circ \mathcal{S}^{-1}_{0,p}:(L^p)'\to (W^{2,p})', \text{ for all } 1<p\leq q,
\end{align}
which implies 
\begin{align}\label{RegularityDuality3.2}
\Ker(\Delta^{*}_{g}\vert_{(L^{p})'})=\mathcal{S}_{0,p}\left( \Ker(\Delta_g\vert_{L^{p'}}) \right), \text{ for all } 1<p\leq q.
\end{align}
Again, since $W^{2,q}(M)\hookrightarrow L^{p'} \Longrightarrow \Ker(\Delta_g\vert_{W^{2,q}})\subset \Ker(\Delta_g\vert_{L^{p'}})$, but this time we already know that $\Ker(\Delta_g\vert_{L^{p'}})\subset \Ker(\Delta_g\vert_{L^{q'}})=\Ker(\Delta_g\vert_{W^{2,q}})$, where the last inclusion follows since $p'\geq q'$ by hypothesis. Therefore $\Ker(\Delta_g\vert_{W^{2,q}})= \Ker(\Delta_g\vert_{L^{p'}})$, and  putting this together with (\ref{RegularityDuality3.2}), we find the following generalisation of (\ref{RegularityDuality4}):
\begin{align}\label{RegularityDuality4.1}
\Ker(\Delta^{*}_{g}\vert_{(L^{p})'})=\Ker(\Delta^{*}_{g}\vert_{(L^{q})'})=\mathcal{S}_{0,p}\left( \Ker(\Delta_g\vert_{W^{2,q}}) \right).
\end{align}

Finally, because $\Delta_g$ is Fredholm, we know that $\mathrm{Im}(\Delta_g\vert_{W^{2,p}})=\mathrm{Ker}^{\perp}(\Delta^{*}_g\vert_{(L^{p})'})\subset L^p$, where $\mathrm{Ker}^{\perp}(\Delta^{*}_g\vert_{(L^{p})'})$ stands for the annihilator space of $\mathrm{Ker}(\Delta^{*}_g\vert_{(L^{p})'})$. That is, $f\in \mathrm{Im}(\Delta_g\vert_{W^{2,p}})$ iff $ v(f)=0$ for all $v\in \mathrm{Ker}(\Delta^{*}_g\vert_{(L^{p})'})$. Since $\Ker(\Delta^{*}_{g}\vert_{(L^{p})'})=\Ker(\Delta^{*}_{g}\vert_{(L^{q})'})=\mathcal{S}_{0,p}(\mathrm{Ker}(\Delta_g\vert_{W^{2,q}}))$, given $v\in \Ker(\Delta^{*}_{g}\vert_{(L^{p})'})$, we know that there is some $\tilde{v}\in W^{2,q}$ such that $v=\mathcal{S}_{0,p}(\tilde{v})\in (L^q)'$. Thus, if $u\in L^{q'}$ one can justify integration by parts through Proposition \ref{IntByPartsWeakProp}:
\begin{align*}
\langle \Delta_gu,\tilde{v}\rangle_{(M,g)}&=\langle u,\Delta_g\tilde{v} \rangle_{(M,g)}=\langle u,\Delta_g\circ\mathcal{S}^{-1}_{0,p}v \rangle_{(M,g)},
%&=[\mathcal{S}_{0,q}\circ\Delta_g\circ\mathcal{S}^{-1}_{0,q}v](u), \text{ for all }v\in \mathrm{Ker}(\Delta^{*}\vert_{(L^{p})'}),
%&=[\Delta^{*}_g\big\vert_{(L^q)'}v](u) =0, \text{ for all }v\in \mathrm{Ker}(\Delta^{*}\vert_{(L^{p})'}).
\end{align*}
where the above expression is well-defined, since by hypothesis $u\in L^{q'}$, $\tilde{v}\in W^{2,q}$, and thus $\Delta_g\tilde{v}\in L^q$. Furthermore, from (\ref{RegularityDuality4}) we know that $v\in \Ker(\Delta^{*}_{g}\vert_{(L^{p})'})$ iff $v\in \Ker(\Delta^{*}_{g}\vert_{(L^{q})'})$, which by (\ref{RegularityDuality3.1}) means
\begin{align*}
\Delta^{*}_gv=0\Longleftrightarrow \Delta_g\vert_{L^{p'}}\circ \mathcal{S}^{-1}_{0,p}v=0,
\end{align*}
which implies that
\begin{align*}
\langle \Delta_gu,\tilde{v}\rangle_{(M,g)}&=0, \text{ for all }v\in \mathrm{Ker}(\Delta^{*}\vert_{(L^{p})'}).
\end{align*}
Noticing now that 
\begin{align*}
v(\phi)=(\mathcal{S}_{0,p}\tilde{v})(\phi)=\< \tilde{v},\phi\>_{(M,g)} \text{ for all } \phi\in L^{p},
\end{align*}
if $u\in L^{q'}$ and $\Delta_gu\in L^p$ a priori, using the above relations we find
\begin{align*}
v(\Delta_gu)&=\< \tilde{v},\Delta_gu\>_{(M,g)}=0, \text{ for all }v\in \mathrm{Ker}(\Delta^{*}\vert_{(L^{p})'})
\end{align*} 
proving that actually $\Delta_gu\in \mathrm{Ker}^{\perp}(\Delta^{*}_g\vert_{(L^{p})'})\subset L^p$, and therefore there is some $\varphi\in W^{2,p}$ such that
\begin{align*}
\Delta_g\varphi=\Delta_gu\Longleftrightarrow \varphi-u\in\mathrm{Ker}(\Delta_g\vert_{L^{q'}})=\mathrm{Ker}(\Delta_g\vert_{W^{2,q}})\Longrightarrow u=\varphi + (u-\varphi)\in W^{2,p}.
\end{align*}
\end{proof}

Let us highlight that, clearly, everything we have done above for the tensor Laplacian $\Delta_g$ acting on $S_2M$ holds under the same conditions for the scalar Laplacian. Therefore, we can state the following:
\begin{cor}\label{ScalarLap-W2RegTHM}
Let $(M^n,g)$ be a closed Riemannian manifold with $g\in W^{2,q}(M)$, $q>\frac{n}{2}$. Then, the Laplace-Beltrami operator
\begin{align}\label{ScalrLapBelSob}
\Delta_g:W^{2,p}(M)\to L^{p}(M), \text{ for all } 1<p\leq q
\end{align}
is Fredholm of index zero, $\mathrm{Ker}(\Delta_g\vert_{W^{2,p}})\subset W^{2,q}(M)$ and the following regularity implication follows:
\begin{align}\label{ScalrLapBelSobReg}
\text{ if } u\in L^{q'}(M), \text{ and } \Delta_gu\in L^p(M)\Longrightarrow u\in W^{2,p}(M).
\end{align}
\end{cor}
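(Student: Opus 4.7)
The plan is to replicate the proof of Theorem \ref{LapBelRegTHM} line by line, with the bundle $S_2M$ replaced by the trivial bundle $M\times \mathbb{R}$. Every intermediate result invoked there applies with no modification in this scalar setting. Concretely, Proposition \ref{LapContPropertiesProp} applies to any tensor bundle on which $\Delta_g$ acts with coefficients of class $\mathcal{L}^2(W^{2,q})$, and in the scalar case the local expression simplifies to $\Delta_g u = g^{ij}\partial_{ij}u - g^{ij}\Gamma^k_{ij}\partial_k u$, whose top- and first-order coefficients clearly lie in $W^{2,q}_{loc}$ and $W^{1,q}_{loc}$ respectively by Claim~\ref{ClaimMult1} restricted to scalars. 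Hence $\Delta_g:W^{2,p}(M)\to L^p(M)$ is bounded and belongs to $\mathcal{L}^2(W^{2,q})$, so the semi-Fredholm statement follows directly from Theorem \ref{FredholmLemmaHolst}.

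To upgrade to the full Fredholm claim with index zero, I would argue exactly as in Theorem \ref{LapBelRegTHM}: approximate $g$ by smooth metrics $g_k\to g$ in $W^{2,q}$, so that the classical smooth-coefficient theory yields that each scalar operator $\Delta_{g_k}:W^{2,p}(M)\to L^p(M)$ is Fredholm of index zero. The same estimate as in (\ref{Approximation.1}) (specialised to the scalar coefficients above) gives $\Delta_{g_k}\to \Delta_g$ in operator norm, and local constancy of the semi-Fredholm index then forces $\Delta_g$ to have index zero.

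For the regularity of the kernel and the final implication, the proof of Theorem \ref{LapBelRegTHM} relied on three further ingredients: the integration-by-parts formula of Proposition \ref{IntByPartsWeakProp}, the identification of the adjoint via Corollary \ref{SelfAdj}, and the duality isomorphism $\mathcal{S}_{k,p}$ of Lemma \ref{DualPairingLowRegLemma}. Each of these carries over verbatim to scalars. In fact, the scalar analogue of Proposition \ref{IntByPartsWeakProp} is strictly easier, since the local expression for $\nabla u$ on functions is simply $\partial u$, so the manipulations in (\ref{IntParts.1})--(\ref{IntParts.2}) reduce to
\begin{align*}
\langle \nabla u,\nabla v\rangle_{(M,g)} = \langle \sqrt{\det g}\, g^{ij}\partial_i u,\partial_j v\rangle_{(\Omega,\delta)} = -\langle u,\partial_j(\sqrt{\det g}\, g^{ij}\partial_i v)\rangle_{(\Omega,\delta)} = -\langle u,\Delta_g v\rangle_{(M,g)},
\end{align*}
for $u\in L^{p'}$ compactly supported in a chart and $v\in W^{2,p}$, and similarly for the other pairing. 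Consequently the adjoint formula $\Delta_g^{*} = \mathcal{S}_{2,p}\circ \Delta_g|_{L^{p'}}\circ \mathcal{S}_{0,p}^{-1}$ of Corollary \ref{SelfAdj} holds identically, and the chain of identifications $\Ker(\Delta_g|_{W^{2,p}}) = \Ker(\Delta_g|_{W^{2,q}}) = \Ker(\Delta_g|_{L^{q'}})$ together with the annihilator argument establishing (\ref{ScalrLapBelSobReg}) goes through without change.

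The main (and only) obstacle is therefore notational: one needs to check that none of the cited propositions used the symmetric $(0,2)$-tensor structure of $S_2M$ in a nontrivial way. Inspection shows this is the case --- all arguments were phrased in terms of local trivialisations, partitions of unity and the multiplication properties of Corollary \ref{ContractionsSobReg}, which are insensitive to the rank of the bundle. Thus the proof transcribes without incident.
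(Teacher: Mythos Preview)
Your proposal is correct and matches the paper's approach exactly: the paper simply observes that everything done for the tensor Laplacian on $S_2M$ holds under the same conditions for the scalar Laplacian, and states the corollary without a separate proof. Your write-up is in fact more detailed than the paper's one-line justification, but the content is identical.
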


\subsection{$W^{1,p}$-regularity theory}

We can now sharpen the regularity results of the previous section to the $W^{1,p}$-setting, which shall follow along similar lines of arguments, but before doing that we need suitable extensions of Proposition \ref{IntByPartsWeakProp} and Corollary \ref{SelfAdj} to this setting.

\begin{prop}\label{IntByPartsWeakProp-W1p}
Let $(M^n,g)$ be closed Riemannian manifold with $g\in W^{2,q}$, $q>\frac{n}{2}$. Given $p\in (1,\infty)$ satisfying $\frac{1}{q}-\frac{1}{n}\leq \frac{1}{p}\leq \frac{1}{q'}+\frac{1}{n}$, $u\in W^{1,p'}(S_2M)$ and $v\in W^{1,p}(S_2M)$, the following integration by parts type formula holds:
\begin{align}\label{IntByPartsWeakFormula-W1p}
-\langle \Delta_gu, v  \rangle_{(M,g)}=\langle \nabla u,\nabla v \rangle_{(M,g)}=-\langle \Delta_g v,u  \rangle_{(M,g)}.
\end{align}
\end{prop}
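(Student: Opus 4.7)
The plan is to mirror the strategy used for Proposition \ref{IntByPartsWeakProp}, adapted to the situation where both arguments have only one order of Sobolev regularity. The crucial structural observation is that the hypothesis $\tfrac{1}{q}-\tfrac{1}{n}\leq \tfrac{1}{p}\leq \tfrac{1}{q'}+\tfrac{1}{n}$ is symmetric under $p\leftrightarrow p'$, which makes the statement viable at all: the roles of $u$ and $v$ can be exchanged within the same admissible window, so both outer pairings in (\ref{IntByPartsWeakFormula-W1p}) sit in the same regularity framework.

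First I would verify that each of the three pairings is well-defined and continuous on $W^{1,p'}(S_2M)\times W^{1,p}(S_2M)$. The middle term $\langle\nabla u,\nabla v\rangle_{(M,g)}$ is an absolutely convergent $L^{p'}\!\times\! L^p$ integral, twisted by three factors of $g$, which poses no problem because $g\in W^{2,q}\hookrightarrow C^{0,\alpha}$. The outer pairings use Proposition \ref{LapContPropertiesProp} (case $k=1$) to place $\Delta_g u\in W^{-1,p'}(S_2M)$ and $\Delta_g v\in W^{-1,p}(S_2M)$; testing these against $v\in W^{1,p}$ and $u\in W^{1,p'}$ respectively is legitimate by Lemma \ref{DualPairingLowRegLemma}, since one checks that our hypothesis on $p$ lies inside the admissible window $[\tfrac{1}{q}-\tfrac{1}{n},\tfrac{1}{q'}+\tfrac{3}{n}]$ required there with $k=1$. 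Next I would approximate by smooth sections: pick $\{u_k\},\{v_k\}\subset C^{\infty}(S_2M)$ with $u_k\to u$ in $W^{1,p'}$ and $v_k\to v$ in $W^{1,p}$. For each smooth pair, identity (\ref{IntByPartsWeakFormula-W1p}) reduces to a standard integration by parts, which I would carry out after a partition of unity using the divergence form (\ref{TensorLapLoc}); the manipulation is justified because $\sqrt{\det(g)}\,\nabla^i v_k^{ac}\in W^{1,q}_{\mathrm{loc}}$ pairs against the smooth $u_k$ without boundary contributions on the closed manifold. Passing $k\to\infty$ and invoking the continuity established in the first step delivers the claimed identity.

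The main obstacle I anticipate is the bookkeeping of the duality pairings. Because $\Delta_g$ now drops regularity by only one order, neither argument can be absorbed into a pure $L^p$-space as was done in Proposition \ref{IntByPartsWeakProp}; every step must be handled at the distributional level, and one has to double-check that the isomorphism $W^{-1,p'}\cong(W^{1,p})'$ supplied by Lemma \ref{DualPairingLowRegLemma} genuinely applies simultaneously to both ordered pairings $(p,p')$ and $(p',p)$. A secondary, more delicate point is that the local divergence computation involves products of $\Gamma\in W^{1,q}$ with $\nabla u_k$ and $\nabla v_k$, and after the limit one needs these products to converge in the appropriate negative-order Sobolev space; this is handled exactly by the multiplication properties of Theorem \ref{BesselMultLocal} and Corollary \ref{ContractionsSobReg} under the symmetric window on $p$, so no further restriction beyond the stated one is needed.
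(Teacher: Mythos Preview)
Your proposal is correct and follows the same overall architecture as the paper (verify that all three pairings are well-defined and continuous on $W^{1,p'}\times W^{1,p}$, exploit the $p\leftrightarrow p'$ symmetry of the hypothesis, then justify the identity), but the way you carry out the last step differs. You reduce to smooth $u_k,v_k$ via density and pass to the limit using the continuity you established; the paper instead keeps $u\in W^{1,p'}$, $v\in W^{1,p}$ fixed and performs the integration by parts \emph{directly} at the distributional level in local coordinates, using the explicit pairing formula from Lemma \ref{DualPairingLowRegLemma} together with the divergence form (\ref{TensorLapLoc}). Along the way the paper also verifies explicitly, via Lemma \ref{ContinuityPropsGeneralOps1stOrder}, that $\nabla u\in L^{p'}(T_3M)$ (your remark that the middle term is ``twisted by three factors of $g$'' glosses over the Christoffel contributions to $\nabla$, which is exactly what that lemma handles). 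Your density argument is arguably cleaner and more robust; the paper's direct computation is more explicit and avoids introducing approximating sequences, at the cost of having to track carefully how the local duality pairings $\langle\cdot,\cdot\rangle_{(\Omega,\delta)}$ behave under the manipulations. Either route works.
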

\begin{proof}
Proposition \ref{LapContPropertiesProp} establishes that $\Delta_gu\in W^{-1,p'}(S_2M)$ as long as
\begin{align*}
\frac{1}{q}-\frac{1}{n}\leq \frac{1}{p'}\leq \frac{1}{q'}+\frac{1}{n}\Longleftrightarrow 1-\frac{1}{q'}-\frac{1}{n}\leq 1-\frac{1}{p'}\leq 1-\frac{1}{q}+\frac{1}{n} \Longleftrightarrow \frac{1}{q}-\frac{1}{n}\leq \frac{1}{p}\leq \frac{1}{q'}+\frac{1}{n},
\end{align*}
which holds by hypothesis. Also, Lemma \ref{DualPairingLowRegLemma} grants that $W^{-1,p'}(S_2M)\cong_{\mathcal{S}_{1,p}} \left(W^{1,p}(S_2M)\right)'$ if $\frac{1}{q}-\frac{1}{n}\leq \frac{1}{p}\leq \frac{1}{q'}+\frac{3}{n}$, which also holds by hypothesis. Therefore, the pairing $\langle \Delta_gu,v\rangle_{(M,g)}$ is well-defined for all $v\in W^{1,p}$. In the same way as in (\ref{IntByParts1orderReg}), to guarantee that $\nabla u\in L^{p'}(T_3M)$ we can appeal to Lemma \ref{ContinuityPropsGeneralOps1stOrder}, as long as we verify its hypotheses. For that, we need to check that the coefficients satisfy the hypotheses of (\ref{1stOrderOp}), with $A_{\alpha}\in W_{loc}^{|\alpha|,r}$ for $r>n$, and that $\frac{1}{r}\leq \frac{1}{p'}\leq \frac{1}{r'}+\frac{1}{n}$. From (\ref{IntByParts1orderReg}), we need only concentrate on the zero order term, whose local regularity is given by the Christoffel symbols $\Gamma^{i}_{jk}(g)\in W^{1,q}_{loc}$. Notice that if $q\geq n$, then $\Gamma^{i}_{jk}(g)\in L^{r}_{loc}$ for $r<\infty$, and so taking $r>\max\{q,p'\}\geq n$ the condition $r>p'$ guarantees $\frac{1}{r}<\frac{1}{p'}$, while the condition $r>q$ implies $r'<q'$, which together with $\frac{1}{p'}\leq \frac{1}{q'}+\frac{1}{n}$ established above guarantees $\frac{1}{p'}\leq \frac{1}{q'}+\frac{1}{n}<\frac{1}{r'}+\frac{1}{n}$, and hence the case $q\geq n$ is covered. Thus, we need only concentrate on the cases $\frac{n}{2}<q<n$, where $W^{1,q}_{loc}\hookrightarrow L^{\frac{nq}{n-q}}_{loc}$ and $r\doteq \frac{nq}{n-q}>n$ for any $q>\frac{n}{2}$. In this cases we have
\begin{align*}
\frac{1}{r}\leq \frac{1}{p'}\leq \frac{1}{r'}+\frac{1}{n}&\Longleftrightarrow \frac{1}{r}\leq 1-\frac{1}{p}\leq \frac{1}{r'}+\frac{1}{n}\Longleftrightarrow -\frac{1}{r'}\leq -\frac{1}{p}\leq -\frac{1}{r}+\frac{1}{n},\\
&\Longleftrightarrow \frac{1}{r}-\frac{1}{n}\leq \frac{1}{p}\leq \frac{1}{r'}.
\end{align*}
Since $r>n$, the first condition above is obvious for any $p\in (1,\infty)$, and also
\begin{align*}
\frac{1}{r'}=1-\frac{1}{r}=1-\frac{1}{q}+\frac{1}{n}=\frac{1}{q'}+\frac{1}{n}\geq \frac{1}{p},
\end{align*}
where the last inequality holds by hypothesis. We are therefore under the hypotheses of Lemma \ref{ContinuityPropsGeneralOps1stOrder}, and we can deduce then that $\nabla u\in L^{p'}(T_3M)$. Finally, also notice that Lemma \ref{DualPairingLowRegLemma} grants that $L^{p'}(T_3M)\cong_{\mathcal{S}_{0,p}} \left(L^{p}(T_3M)\right)'$ as long as $\frac{1}{q}-\frac{2}{n}\leq \frac{1}{p}\leq \frac{1}{q'}+\frac{2}{n}$, both inequalities satisfied by hypotheses. 

The above analysis implies that
\begin{align*}
\nabla u\in L^{p'}(T_3M)\cong_{\mathcal{S}_{0,p}} \left(L^{p}(T_3M)\right)' \text{ and } \nabla v\in L^{p}(T_3M).
\end{align*}
Since $\Delta_gv\in W^{-1,p}$, if $v$ is compactly supported in a coordinate patch $(\Omega,x^i)_{i=1}^n$ the following computation is justified: 
\begin{align*}
\begin{split}
\langle \nabla u,\nabla v \rangle_{(M,g)}&=\langle \sqrt{\mathrm{det}(g)}g^{ij}g^{ab}g^{cd}\nabla_i u_{ac},\nabla_j v_{bd} \rangle_{(\Omega,\delta)},\\
%&=\langle \nabla_i u_{ac},\sqrt{\mathrm{det}(g)}g^{ij}g^{ab}g^{cd}\nabla_j v_{bd} \rangle_{(\Omega,\delta)},\\
%&=\langle \partial_i u_{ac} - \Gamma^l_{ia} u_{lc} - \Gamma^l_{ic} u_{al},\sqrt{\mathrm{det}(g)}\nabla^i v^{ac} \rangle_{(\Omega,\delta)},\\
%&=\langle \partial_i u_{ac},\sqrt{\mathrm{det}(g)}\nabla^i v^{ac} \rangle_{(\Omega,\delta)} - \langle \Gamma^l_{ia} u_{lc} ,\sqrt{\mathrm{det}(g)}\nabla^i v^{ac} \rangle_{(\Omega,\delta)} - \langle \Gamma^l_{ic} u_{al},\sqrt{\mathrm{det}(g)}\nabla^i v^{ac} \rangle_{(\Omega,\delta)},\\
%&=-\langle u_{ac},\partial_i\left(\sqrt{\mathrm{det}(g)}\nabla^i v^{ac}\right) \rangle_{(\Omega,\delta)} - \langle  u_{ac} ,\Gamma^a_{il}\sqrt{\mathrm{det}(g)}\nabla^i v^{lc} \rangle_{(\Omega,\delta)} - \langle  u_{ac},\Gamma^c_{il}\sqrt{\mathrm{det}(g)}\nabla^i v^{al} \rangle_{(\Omega,\delta)},\\
&=-\langle \partial_i\left(\sqrt{\mathrm{det}(g)}\nabla^i v^{ac}\right) + \Gamma^a_{il}\sqrt{\mathrm{det}(g)}\nabla^i v^{lc} + \Gamma^c_{il}\sqrt{\mathrm{det}(g)}\nabla^i v^{al},u_{ac}  \rangle_{(\Omega,\delta)} ,\\
%&=-\langle \sqrt{\mathrm{det}(g)}\left(\frac{1}{\sqrt{\mathrm{det}(g)}}\partial_i\left(\sqrt{\mathrm{det}(g)}\nabla^i v^{ac}\right) + \Gamma^a_{il}\nabla^i v^{lc} + \Gamma^c_{il}\nabla^i v^{al}\right),u_{ac}  \rangle_{(\Omega,\delta)} ,\\
&=-\langle \sqrt{\mathrm{det}(g)}\nabla_i\nabla^i v^{ac} ,u_{ac} \rangle_{(\Omega,\delta)} ,\\
&=-\langle \Delta_g v ,u \rangle_{(M,g)},
\end{split}
\end{align*}
where we have used (\ref{TensorLapLoc}). Similarly, we know $\langle \Delta_gu,v \rangle_{(M,g)}$ is well-defined and gives:
\begin{align*}
\begin{split}
\langle \Delta_gu,v \rangle_{(M,g)}&=\langle \sqrt{\mathrm{det}(g)}g^{ij}g^{ab}\Delta_gu_{ia},v_{jb} \rangle_{(\Omega,\delta)},\\
&=\langle \partial_c\left(\sqrt{\mathrm{det}(g)}\nabla^c u_{ia}\right) - \sqrt{\mathrm{det}(g)}\Gamma^l_{ci}\nabla^c u_{la} - \sqrt{\mathrm{det}(g)}\Gamma^l_{ca}\nabla^c u_{il},v^{ia}\rangle_{(\Omega,\delta)},\\
%&=\langle \partial_c\left(\sqrt{\mathrm{det}(g)}\nabla^c u_{ia}\right),v^{ia}\rangle_{(\Omega,\delta)} - \langle \sqrt{\mathrm{det}(g)}\Gamma^l_{ci}\nabla^c u_{la},v^{ia}\rangle_{(\Omega,\delta)} - \langle\sqrt{\mathrm{det}(g)}\Gamma^l_{ca}\nabla^c u_{il},v^{ia}\rangle_{(\Omega,\delta)},\\
%&= - \langle \sqrt{\mathrm{det}(g)}\nabla^c u_{ia},\partial_cv^{ia}\rangle_{(\Omega,\delta)} - \langle\sqrt{\mathrm{det}(g)}\nabla^c u_{la},\Gamma^l_{ci}v^{ia}\rangle_{(\Omega,\delta)} - \langle\sqrt{\mathrm{det}(g)}\nabla^c u_{il},\Gamma^l_{ca}v^{ia}\rangle_{(\Omega,\delta)},\\
&= - \langle \sqrt{\mathrm{det}(g)}\nabla^c u_{ia},\partial_cv^{ia} + \Gamma^i_{cl}v^{la} + \Gamma^a_{cl}v^{il} \rangle_{(\Omega,\delta)},\\
&= - \langle \sqrt{\mathrm{det}(g)}\nabla^c u_{ia},\nabla_cv^{ia} \rangle_{(\Omega,\delta)},\\
&= - \langle \sqrt{\mathrm{det}(g)}g^{cd}g^{ij}g^{ab}\nabla_c u_{ia},\nabla_dv_{jb} \rangle_{(\Omega,\delta)},\\
&=- \langle \nabla u,\nabla v \rangle_{(M,g)}.
\end{split}
\end{align*}
where in the second step, to justify the duality arguments, one is appealing to the arguments of Lemma \ref{ContinuityPropsGeneralOps}.

%\begin{claim}
%Given a bounded open set $U\subset\mathbb{R}^n$ having the cone property and fix $q>\frac{n}{2}$, $q\geq 2$ and $q'\leq p\leq q$. Then $W_{loc}^{1,q}(U)\otimes L^{p'}(U)\hookrightarrow W^{-1,p'}$. 
%\end{claim}
%\begin{proof}
%From (\ref{MultiplicationConditions}) we need to check
%\begin{align*}
%2\geq n\left(\frac{1}{q} -\frac{1}{p'} \right) \:, \: 2>\frac{n}{q},
%\end{align*}
%which are both satisfied by hypotheses. Also, from (\ref{MultiplicationConditionsDuals}), one needs to satisfy
%\begin{align*}
%1\geq n\left(\frac{1}{q}+\frac{1}{p'}-1 \right) \Longleftrightarrow \frac{1}{q}+\frac{1}{p'}\leq \frac{1}{n}+1\Longleftrightarrow \frac{1}{p'}\leq \frac{1}{n}+ \frac{1}{q'},
%\end{align*}
%which holds by hypothesis, and therefore the claim holds.
%\end{proof}
\end{proof}

Up to this point we have been working very explicitly with the different operators involved and their corresponding domains, at the expense of carrying some extra notations making explicit certain natural isomorphisms. We shall now present a proposition which is intended to unify and simplify certain notations concerning these isomorphisms, which would otherwise get increasingly heavy. Also in that spirit, let us adopt the following standard convention. Since the spaces $W^{k,p}$ are reflexive whenever $1<p<\infty$, we know that the evaluation map
\begin{align*}
E:W^{k,p}&\to (W^{k,p})'',
\end{align*}
given by
\begin{align*}
(Eu)(v)\doteq v(u), \text{ for all } u\in W^{k,p} \text{ and all } v\in (W^{k,p})'
\end{align*}
is a topological isomorphism that allows us to identify $(W^{k,p})''\cong W^{k,p}$. We shall adopt this identification below, and thus, for instance, given $u\in W^{1,p} \cong (W^{1,p})''$ and $v\in (W^{1,p})'$, we understand the action of $u$ as an element of $(W^{1,p})''$ on $v$ by
\begin{align}\label{EvaluationMap}
u(v)\doteq v(u)
\end{align}
where the right-hand side of the above expression is naturally well-defined, and we do not make explicit the evaluation map on the left-hand side.

\begin{prop}\label{DualTopologicalIsoProp}
Given $k\in\mathbb{N}_0$ and $1<p<\infty$, let $\mathcal{S}_{k,p}:W^{-k,p'}\to (W^{k,p})'$ be a topological isomorphism. Then, the map
\begin{align}\label{DualTopologicalIso}
\mathcal{S}^{*}_{k,p}:W^{k,p} \to \left( W^{-k,p'}\right)',
\end{align}
described by duality is also a topological isomorphism. Moreover, $\left(\mathcal{S}^{*}_{k,p}\right)^{-1}=\left(\mathcal{S}^{-1}_{k,p}\right)^{*}$.
\end{prop}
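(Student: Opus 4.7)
The plan is to exploit the fact that $W^{k,p}$ is reflexive for $1<p<\infty$, and then to apply the classical principle that the Banach-space adjoint of a topological isomorphism is again a topological isomorphism, whose inverse is the adjoint of the original inverse.

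First, I would make the action of $\mathcal{S}^{*}_{k,p}$ completely explicit. For $u \in W^{k,p}$, identified via the evaluation map $E$ with $E(u) \in (W^{k,p})''$ by $(Eu)(\phi)=\phi(u)$, and for $v \in W^{-k,p'}$, the adjoint is given by
\begin{align*}
\left(\mathcal{S}^{*}_{k,p}u\right)(v) = (Eu)\!\left(\mathcal{S}_{k,p}v\right) = \left(\mathcal{S}_{k,p}v\right)(u),
\end{align*}
which is an unambiguous real number since $\mathcal{S}_{k,p}v \in (W^{k,p})'$ and $u \in W^{k,p}$. Continuity and the operator norm bound $\Vert \mathcal{S}^{*}_{k,p}\Vert \leq \Vert \mathcal{S}_{k,p}\Vert$ then follow from the standard Banach-space estimate $|(\mathcal{S}_{k,p}v)(u)| \leq \Vert \mathcal{S}_{k,p}v\Vert_{(W^{k,p})'}\Vert u\Vert_{W^{k,p}} \leq \Vert \mathcal{S}_{k,p}\Vert \Vert v\Vert_{W^{-k,p'}}\Vert u\Vert_{W^{k,p}}$.

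Next, I would apply the same construction to the continuous inverse $\mathcal{S}^{-1}_{k,p}:(W^{k,p})'\to W^{-k,p'}$, producing a bounded operator $(\mathcal{S}^{-1}_{k,p})^{*}:(W^{-k,p'})'\to W^{k,p}$, where the target is again realised through reflexivity of $W^{k,p}$ as $((W^{k,p})')'\cong W^{k,p}$. The key computation is then purely formal: using $(A\circ B)^{*}=B^{*}\circ A^{*}$ for bounded operators between Banach spaces together with $\mathcal{S}^{-1}_{k,p}\circ \mathcal{S}_{k,p}=\mathrm{Id}_{W^{-k,p'}}$ and $\mathcal{S}_{k,p}\circ \mathcal{S}^{-1}_{k,p}=\mathrm{Id}_{(W^{k,p})'}$, one obtains
\begin{align*}
\mathcal{S}^{*}_{k,p}\circ \left(\mathcal{S}^{-1}_{k,p}\right)^{*}&=\left(\mathcal{S}^{-1}_{k,p}\circ \mathcal{S}_{k,p}\right)^{*}=\mathrm{Id}_{(W^{-k,p'})'},\\
\left(\mathcal{S}^{-1}_{k,p}\right)^{*}\circ \mathcal{S}^{*}_{k,p}&=\left(\mathcal{S}_{k,p}\circ \mathcal{S}^{-1}_{k,p}\right)^{*}=\mathrm{Id}_{(W^{k,p})''}\cong \mathrm{Id}_{W^{k,p}},
\end{align*}
where in the last identification one uses the reflexivity convention (\ref{EvaluationMap}). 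Thus $\mathcal{S}^{*}_{k,p}$ is a continuous bijection with a continuous two-sided inverse $(\mathcal{S}^{-1}_{k,p})^{*}$, hence a topological isomorphism, and the identity $(\mathcal{S}^{*}_{k,p})^{-1}=(\mathcal{S}^{-1}_{k,p})^{*}$ is established.

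I expect no real obstacle here: the statement is a clean instance of the functorial behaviour of the adjoint, and the only subtlety is bookkeeping, namely consistently using the identification $(W^{k,p})''\cong W^{k,p}$ coming from reflexivity so that the target space of $(\mathcal{S}^{-1}_{k,p})^{*}$ is interpreted as $W^{k,p}$ rather than $(W^{k,p})''$. This is precisely the convention fixed in (\ref{EvaluationMap}), so once that identification is made explicit at the outset the proof is essentially two lines of composition.
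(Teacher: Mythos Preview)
Your proof is correct and follows essentially the same approach as the paper: both construct $(\mathcal{S}^{-1}_{k,p})^{*}$ and verify it is a two-sided inverse of $\mathcal{S}^{*}_{k,p}$, with boundedness inherited from $\mathcal{S}_{k,p}$ and $\mathcal{S}^{-1}_{k,p}$. The only cosmetic difference is that the paper checks the two composition identities by direct evaluation on elements $u\in W^{k,p}$, $v\in (W^{k,p})'$ and $\phi\in (W^{-k,p'})'$, $\varphi\in W^{-k,p'}$, whereas you invoke the abstract identity $(A\circ B)^{*}=B^{*}\circ A^{*}$; these are equivalent formulations of the same argument.
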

\begin{proof}
First, since $\mathcal{S}_{k,p}$ is bounded, then the map (\ref{DualTopologicalIso}) is bounded by definition. Let us then show that this map is invertible. With this in mind, just notice that since $\mathcal{S}^{-1}_{k,p}:(W^{k,p})'\to W^{-k,p'}$ is a bounded map, we also get a bounded adjoint map,
\begin{align*}
\left(\mathcal{S}^{-1}_{k,p}\right)^{*}: \left( W^{-k,p'}\right)' \to W^{k,p},
\end{align*} 
for which
\begin{align*}
[\left(\mathcal{S}^{-1}_{k,p}\right)^{*}\left( \mathcal{S}^{*}_{k,p}(u) \right)](v)=[\mathcal{S}^{*}_{k,p}(u)]\left(\mathcal{S}^{-1}_{k,p}v \right)=u(v), \text{ for all } u\in W^{k,p} \text{ and all } v\in \left(W^{k,p}\right)',
\end{align*}
That is, $\left(\mathcal{S}^{-1}_{k,p}\right)^{*}\circ \mathcal{S}^{*}_{k,p}=\mathrm{Id}: W^{k,p}\to W^{k,p}$. Similarly, we can show that 
\begin{align*}
[\mathcal{S}^{*}_{k,p} \circ \left(\mathcal{S}^{-1}_{k,p}\right)^{*}(\phi)](\varphi)=[\left(\mathcal{S}^{-1}_{k,p}\right)^{*}(\phi)](\mathcal{S}_{k,p}\varphi)=\phi(\varphi), \text{ for all } \phi\in \left( W^{-k,p'}\right)' \text{ and all } \varphi\in W^{-k,p'},
\end{align*}
and thus $\left( \mathcal{S}^{*}_{k,p}\right)^{-1}=\left(\mathcal{S}^{-1}_{k,p}\right)^{*}$. Finally, the continuity of the inverse also follows from that of $\mathcal{S}^{-1}_{k,p}$.
\end{proof}

With the above proposition in mind, we extend the definition of the isomorphisms of Lemma \ref{DualPairingLowRegLemma} to the cases of negative $k$ in a way which unifies notations:
\begin{defn}\label{WeakTopologicalIsoProp}
Let $(M^n,g)$ be a closed Riemannian manifold with $g\in W^{2,q}$, $q>\frac{n}{2}$, and let $k\in \mathbb{N}_0\cap [0,2]$ and $p\in (1,\infty)$ satisfy $\frac{1}{q}-\frac{2-k}{n}\leq \frac{1}{p}\leq \frac{1}{q'}+\frac{2+k}{n}$. Then, the family of topological isomorphisms
\begin{align*}
\mathcal{S}_{k,p}:W^{-k,p'}\to (W^{k,p})'
\end{align*} 
described by (\ref{DualIsomorphLowReg}) in Lemma \ref{DualPairingLowRegLemma}, can be extended to incorporate values $k\in\mathbb{Z}\cap [-2,2]$ defining
\begin{align}
\mathcal{S}_{-k,p}\doteq \mathcal{S}^{*}_{k,p'}:W^{k,p'} \to \left( W^{-k,p}\right)', \text{ for } -k<0.
\end{align}
\end{defn}

We can now extend Corollary \ref{SelfAdj} to this setting:

\begin{cor}\label{SelfAdj-W1p}
Let $(M^n,g)$ be closed Riemannian manifold with $g\in W^{2,q}$, $q>\frac{n}{2}$. Assume that $p\in (1,\infty)$ satisfies $\frac{1}{q}-\frac{1}{n}\leq \frac{1}{p}\leq \frac{1}{q'}+\frac{1}{n}$ and consider the Laplacian $\Delta_g:W^{1,p}(S_2M)\mapsto W^{-1,p}(S_2M)$ and its adjoint operator
\begin{align*}
\Delta^{*}_g: \left(  W^{-1,p}(S_2M) \right)'\to \left( W^{1,p}(S_2M) \right)',
\end{align*}
which is defined via the relation
\begin{align*}
\left(\Delta^{*}_g\phi\right)(\varphi)\doteq \phi\left(\Delta_g\varphi\right) \text{ for all } \phi\in \left( W^{-1,p}(S_2M) \right)' \text{ and all } \varphi\in W^{1,p}(S_2M).
%\langle \Delta^{*}_g\phi,\varphi\rangle_{(M,g)}\doteq \langle \phi,\Delta_g\varphi\rangle_{(M,g)} \text{ for all } \phi\in L^{q'} \text{ and all } \varphi\in W^{2,q},
\end{align*}
Then,
\begin{align}\label{LapAdj-W1p}
\Delta^{*}_g=\mathcal{S}_{1,p}\circ \Delta_g\vert_{W^{1,p'}}\circ \mathcal{S}_{-1,p}^{-1}: \left(  W^{-1,p}(S_2M) \right)'\to \left( W^{1,p}(S_2M) \right)',
\end{align}
where $\Delta_g\vert_{W^{1,p'}}:W^{1,p'}(S_2M)\to W^{-1,p'}(S_2M)$ is the map given in (\ref{LaplacianContinuityWeak2}) and $\mathcal{S}_{k,p}:W^{-k,p'}(S_2M)\to \left(W^{k,p}(S_2M)\right)'$ given in Definition \ref{WeakTopologicalIsoProp}.
\end{cor}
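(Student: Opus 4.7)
The plan is to follow the exact template of Corollary \ref{SelfAdj}, but using the integration by parts formula from Proposition \ref{IntByPartsWeakProp-W1p} in place of Proposition \ref{IntByPartsWeakProp}, and the extended family of isomorphisms from Definition \ref{WeakTopologicalIsoProp} in place of those of Lemma \ref{DualPairingLowRegLemma}. The first step is to unpack the meaning of the isomorphism
\begin{align*}
\mathcal{S}_{-1,p}=\mathcal{S}^{*}_{1,p'}:W^{1,p'}(S_2M)\to \left(W^{-1,p}(S_2M)\right)'.
\end{align*}
Namely, for $\tilde{\phi}\in W^{1,p'}(S_2M)$ and $v\in W^{-1,p}(S_2M)$, using the reflexivity identification (\ref{EvaluationMap}) together with the pairing characterisation in Lemma \ref{DualPairingLowRegLemma}, one has
\begin{align*}
[\mathcal{S}_{-1,p}(\tilde{\phi})](v)=[\mathcal{S}^{*}_{1,p'}(\tilde{\phi})](v)=[\mathcal{S}_{1,p'}(v)](\tilde{\phi})=\langle v,\tilde{\phi}\rangle_{(M,g)}.
\end{align*}
I would state and justify this identity as a short initial lemma within the proof.

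With that in hand, given any $\phi\in (W^{-1,p}(S_2M))'$, set $\tilde{\phi}\doteq \mathcal{S}_{-1,p}^{-1}\phi\in W^{1,p'}(S_2M)$, so that $\phi(v)=\langle v,\tilde{\phi}\rangle_{(M,g)}$ for every $v\in W^{-1,p}(S_2M)$. Then for an arbitrary $\varphi\in W^{1,p}(S_2M)$, the definition of the adjoint together with the continuity result of Proposition \ref{LapContPropertiesProp} (under the stated constraint $\tfrac{1}{q}-\tfrac{1}{n}\leq \tfrac{1}{p}\leq \tfrac{1}{q'}+\tfrac{1}{n}$, which by the symmetry $p\leftrightarrow p'$ also holds for $p'$) gives
\begin{align*}
(\Delta_g^{*}\phi)(\varphi)=\phi(\Delta_g\varphi)=\langle \Delta_g\varphi,\tilde{\phi}\rangle_{(M,g)}.
\end{align*}
The key step is then to apply Proposition \ref{IntByPartsWeakProp-W1p}, which is applicable precisely because both $\varphi\in W^{1,p}$ and $\tilde\phi\in W^{1,p'}$ and the constraint on $p$ is the one required there, yielding
\begin{align*}
\langle \Delta_g\varphi,\tilde{\phi}\rangle_{(M,g)}=\langle \Delta_g\tilde{\phi},\varphi\rangle_{(M,g)}.
\end{align*}

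Finally, since $\tilde{\phi}\in W^{1,p'}(S_2M)$ implies $\Delta_g\tilde{\phi}\in W^{-1,p'}(S_2M)$ (Proposition \ref{LapContPropertiesProp}), by Lemma \ref{DualPairingLowRegLemma} this last pairing equals $[\mathcal{S}_{1,p}(\Delta_g\tilde{\phi})](\varphi)$. Since $\varphi\in W^{1,p}(S_2M)$ was arbitrary, this establishes the identity of functionals
\begin{align*}
\Delta_g^{*}\phi=\mathcal{S}_{1,p}(\Delta_g\tilde{\phi})=\mathcal{S}_{1,p}\circ \Delta_g\vert_{W^{1,p'}}\circ \mathcal{S}_{-1,p}^{-1}(\phi),
\end{align*}
which is (\ref{LapAdj-W1p}). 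The main obstacle — or rather, the point that requires most care — is the initial bookkeeping identification of $\mathcal{S}_{-1,p}(\tilde\phi)$ with the pairing $\langle \cdot,\tilde\phi\rangle_{(M,g)}$, since the definition of $\mathcal{S}_{-1,p}$ is via the adjoint of $\mathcal{S}_{1,p'}$ and reflexivity, rather than directly from Lemma \ref{DualPairingLowRegLemma}. Once this compatibility is in place, the result is a direct application of the integration by parts of Proposition \ref{IntByPartsWeakProp-W1p}, exactly paralleling the $W^{2,p}$ case of Corollary \ref{SelfAdj}.
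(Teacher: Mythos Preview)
Your proposal is correct and follows essentially the same approach as the paper: unpack $\phi\in(W^{-1,p})'$ via $\tilde\phi=\mathcal{S}_{-1,p}^{-1}\phi$, use the definition $\mathcal{S}_{-1,p}=\mathcal{S}^{*}_{1,p'}$ together with reflexivity to identify $\phi(v)=\langle v,\tilde\phi\rangle_{(M,g)}$, then apply Proposition \ref{IntByPartsWeakProp-W1p} and repackage with $\mathcal{S}_{1,p}$. The only difference is presentational: you isolate the identity $[\mathcal{S}_{-1,p}(\tilde\phi)](v)=\langle v,\tilde\phi\rangle_{(M,g)}$ as a preliminary lemma, whereas the paper derives it inline by tracing through $\tilde u(\mathcal{S}_{1,p'}\circ\Delta_g v)$ and the evaluation-map convention; the substance and the checks on the exponents (including the $p\leftrightarrow p'$ symmetry) are identical.
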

\begin{proof}
Using the conventions of Definition \ref{WeakTopologicalIsoProp}, we have a topological isomorphism
\begin{align*}
\mathcal{S}_{-1,p}:W^{1,p'}(S_2M)\to (W^{-1,p}(S_2M))',
\end{align*}
whose properties were established in Proposition \ref{DualTopologicalIsoProp}. Thus, given $u\in \left(  W^{-1,p}(S_2M) \right)'$, there is a unique $\tilde{u}\in W^{1,p'}(S_2M)$ such that $u=\mathcal{S}_{-1,p}(\tilde{u})$. Since $\mathcal{S}_{-1,p}=\mathcal{S}^{*}_{1,p'}$, we have
\begin{align*}
(\Delta^{*}_gu)(v)=[\mathcal{S}_{-1,p}(\tilde{u})](\Delta_gv)=\tilde{u}(\mathcal{S}_{1,p'}\circ \Delta_gv), \text{ for all } v\in W^{1,p}(S_2M).
\end{align*}
Notice then that $\Delta_gv\in W^{-1,p}(S_2M)$ follows from Proposition \ref{LapContPropertiesProp} since $\frac{1}{q}-\frac{1}{n}\leq \frac{1}{p}\leq \frac{1}{q'}+\frac{1}{n}$, and $\mathcal{S}_{1,p'}:W^{-1,p}\to (W^{1,p'})'$, thus $\mathcal{S}_{1,p'}\circ \Delta_gv\in \left(W^{1,p'}(S_2M)\right)'$ and we recall that the above is meant to be understood via (\ref{EvaluationMap}), so that
\begin{align*}
(\Delta^{*}_gu)(v)=[\mathcal{S}_{1,p'}\circ \Delta_gv](\tilde{u}).
\end{align*}

Lemma \ref{DualPairingLowRegLemma} now guarantees that $W^{-1,p}\cong_{\mathcal{S}_{1,p'}}\left(W^{1,p'}\right)'$ as long as
\begin{align*}
\frac{1}{q}-\frac{1}{n}\leq \frac{1}{p'}\leq \frac{1}{q'}+\frac{3}{n}\Longleftrightarrow 1- \frac{1}{q'}-\frac{1}{n}\leq 1-\frac{1}{p}\leq 1-\frac{1}{q}+\frac{3}{n}\Longleftrightarrow \frac{1}{q}-\frac{3}{n}\leq \frac{1}{p}\leq \frac{1}{q'}+\frac{1}{n} ,
\end{align*}
which holds by hypothesis. Therefore, we can compute:
\begin{align*}
(\Delta^{*}_gu)(v)=[\mathcal{S}_{1,p'}\circ \Delta_gv](\tilde{u})&=\< \mathcal{S}^{-1}_{1,p'}\circ \mathcal{S}_{1,p'}\circ \Delta_gv,\tilde{u} \>_{(M,g)}=\< \Delta_gv,\tilde{u} \>_{(M,g)}.
\end{align*}
Appealing then to Proposition \ref{IntByPartsWeakProp-W1p} to integrate by parts, and once more to the isomorphism $W^{-1,p}\cong_{\mathcal{S}_{1,p'}}\left(W^{1,p'}\right)'$ we find:
\begin{align*}
(\Delta^{*}_gu)(v)&=\< \Delta_g\tilde{u},v \>_{(M,g)}=[\mathcal{S}_{1,p}(\Delta_g\tilde{u})](v)=[\mathcal{S}_{1,p}\circ \Delta_g\vert_{W^{1,p'}}\circ \mathcal{S}_{-1,p}^{-1}(u)](v), \text{ for all } v\in W^{1,p}(S_2M).
\end{align*}
That is, 
\begin{align*}
\Delta^{*}_gu=\mathcal{S}_{1,p}\circ \Delta_g\vert_{W^{1,p'}}\circ \mathcal{S}_{-1,p}^{-1}(u), \text{ for all } u\in \left(  W^{-1,p}(S_2M) \right)',
\end{align*}
from which (\ref{LapAdj-W1p}) follows.
%\begin{align*}
%u(v)=\<\mathcal{S}_{0,q}^{-1}u,v\>_{(M,g)} \:\: \forall \: u\in (L^{q})' \text{ and } v\in  L^{q},
%\end{align*}
%and thus, for all $u\in (L^{q})'$ and $v\in  W^{2,q}$:
%\begin{align*}
%(\Delta_g^{*}u)(v)\overset{def}{=}u(\Delta_gv)=\<\mathcal{S}_{0,q}^{-1}u,\Delta_gv\>_{(M,g)}=\<\Delta_g\mathcal{S}_{0,q}^{-1}u,v\>_{(M,g)}=\left(\mathcal{S}_{2,q}(\Delta_g\mathcal{S}_{0,q}^{-1}u)\right)(v),
%\end{align*}
%where in the third identity above we have used the integration by parts established in (\ref{IntByPartsWeakProp}). Thus, we find $\Delta_g^{*}=\mathcal{S}_{2,q}\circ \Delta_g\circ \mathcal{S}_{0,q}^{-1}:\left(L^{q}\right)'\to (W^{2,q})'$.
\end{proof}

\medskip
With the above results, we can now follow the strategy of Theorem \ref{LapBelRegTHM} to prove the following refined version of it.

\begin{theo}\label{LapBelRegTHMweak}
Let $(M^n,g)$ be a closed Riemannian manifold with $g\in W^{2,q}(M)$, $q>\frac{n}{2}$, and let $p\in (1,\infty)$ be a real number such that $\frac{1}{q}-\frac{1}{n}\leq \frac{1}{p}< \frac{1}{q'}+\frac{1}{n}$. Then, the Laplace-Beltrami operator
\begin{align}\label{LapBelSobWeak.1}
\Delta_g:W^{1,p}(S_2M)\to 	W^{-1,p}(S_2M), %\text{ for all } \frac{1}{q}-\frac{1}{n}\leq \frac{1}{p}< \frac{1}{q'}+\frac{1}{n},
\end{align}
is Fredholm of index zero, $\mathrm{Ker}(\Delta_g\vert_{W^{1,p}})\subset W^{2,q}(S_2M)$ and the following regularity implication follows:
\begin{align}\label{LapBelSobRegweak}
\text{ if } u\in 	L^{q'}(S_2M), \text{ and } \Delta_gu\in W^{-1,p}(S_2M)\Longrightarrow u\in W^{1,p}(S_2M).
\end{align}
\end{theo}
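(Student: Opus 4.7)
The plan is to mirror the strategy used for Theorem \ref{LapBelRegTHM}, now pivoting the arguments around the integration-by-parts formula of Proposition \ref{IntByPartsWeakProp-W1p} and the adjoint identity of Corollary \ref{SelfAdj-W1p}. First, continuity of (\ref{LapBelSobWeak.1}) follows from Proposition \ref{LapContPropertiesProp}, and since $\Delta_g \in \mathcal{L}^2(W^{2,q})$ is elliptic, Theorem \ref{FredholmLemmaHolst} applies under the exponent restrictions $\tfrac{1}{q}-\tfrac{1}{n}\leq \tfrac{1}{p}<\tfrac{1}{q'}+\tfrac{1}{n}$ to yield that the map is semi-Fredholm. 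Therefore it only remains to upgrade this to Fredholm of index zero, plus extract the two regularity claims.

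To establish index zero, I would approximate $g$ by a sequence $\{g_k\}\subset C^{\infty}$ with $g_k \to g$ in $W^{2,q}$, and carry out an operator-norm estimate analogous to (\ref{Approximation.1}), but now measuring convergence of $\Delta_{g_k}$ to $\Delta_g$ in $\mathrm{Op}(W^{1,p},W^{-1,p})$. This requires a careful bookkeeping of which products of coefficients land in which Sobolev spaces, using Theorem \ref{BesselMultLocal} to absorb the loss of a derivative into the target $W^{-1,p}$. In the smooth case the operator is Fredholm of index zero by the standard $L^2$-theory, together with the adjoint identity analogous to (\ref{LaplacianSelfAdjointness}); then stability of the index of semi-Fredholm operators under small perturbations (\cite[Theorem 19.1.5]{Hormander3}) promotes the general case to index zero as well.

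For the kernel regularity, I would compare dimensions along the chain
\begin{align*}
\dim\mathrm{Ker}(\Delta_g|_{W^{2,q}}) \leq \dim\mathrm{Ker}(\Delta_g|_{W^{1,p}}) = \dim\mathrm{Ker}(\Delta_g^{*}|_{(W^{-1,p})'}) \leq \dim\mathrm{Ker}(\Delta_g^{*}|_{(W^{-1,q})'}),
\end{align*}
where the middle identity is the index-zero property and the outer inequalities follow from the inclusions $W^{2,q}\hookrightarrow W^{1,p}$ and $(W^{-1,p})'\hookrightarrow(W^{-1,q})'$ valid in the allowed range. Using Corollary \ref{SelfAdj-W1p} to rewrite $\Delta_g^{*}|_{(W^{-1,q})'}=\mathcal{S}_{1,q}\circ \Delta_g|_{W^{1,q'}}\circ \mathcal{S}_{-1,q}^{-1}$ and the isomorphism property of $\mathcal{S}_{-1,q}$ (Definition \ref{WeakTopologicalIsoProp}), the outermost dimension reduces to that of $\mathrm{Ker}(\Delta_g|_{W^{1,q'}})$, which by Theorem \ref{LapBelRegTHM} applied with exponent $q$ coincides with $\mathrm{Ker}(\Delta_g|_{W^{2,q}})$. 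Thus all four dimensions coincide, forcing the inclusions to be equalities and hence $\mathrm{Ker}(\Delta_g|_{W^{1,p}}) = \mathrm{Ker}(\Delta_g|_{W^{2,q}})$.

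For the main implication (\ref{LapBelSobRegweak}), the Fredholm property gives $\mathrm{Im}(\Delta_g|_{W^{1,p}}) = \mathrm{Ker}^{\perp}(\Delta_g^{*}|_{(W^{-1,p})'})\subset W^{-1,p}$. Given $u\in L^{q'}$ with $\Delta_g u\in W^{-1,p}$, I need to verify that $\Delta_g u$ annihilates every $\phi\in \mathrm{Ker}(\Delta_g^{*}|_{(W^{-1,p})'})$. By the previous paragraph and Corollary \ref{SelfAdj-W1p}, such $\phi$ is of the form $\phi=\mathcal{S}_{-1,p}(\tilde{v})=\mathcal{S}^{*}_{1,p'}(\tilde{v})$ with $\tilde{v}\in \mathrm{Ker}(\Delta_g|_{W^{2,q}})$. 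Unwinding the adjoint convention reduces $\phi(\Delta_g u)$ to the pairing $\langle \Delta_g u,\tilde{v}\rangle_{(M,g)}$, which since $u\in L^{q'}$ and $\tilde{v}\in W^{2,q}$ is covered by Proposition \ref{IntByPartsWeakProp} (with the exponent there taken equal to $q$), giving $\langle \Delta_g u,\tilde{v}\rangle_{(M,g)}=\langle u,\Delta_g\tilde{v}\rangle_{(M,g)}=0$. Hence $\Delta_g u=\Delta_g\varphi$ for some $\varphi\in W^{1,p}$; since $u-\varphi\in L^{q'}$ lies in the kernel of $\Delta_g$ acting on $L^{q'}$, which by the dimension argument above coincides with $\mathrm{Ker}(\Delta_g|_{W^{2,q}})\subset W^{1,p}$, we conclude $u\in W^{1,p}$.

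The hard part I anticipate is the operator-norm convergence $\Delta_{g_k}\to \Delta_g$ in $\mathrm{Op}(W^{1,p},W^{-1,p})$: unlike the $W^{2,p}\to L^p$ case, the continuity argument of Lemma \ref{ContinuityPropsGeneralOps} now places the zero-order and first-order coefficients into negative Sobolev spaces by duality, so the corresponding estimate must be redone via the multiplication properties of Theorem \ref{BesselMultLocal} applied in the interpolating Bessel scale exactly as in the proof of Lemma \ref{ElliptEstRoughW1}. Once this norm convergence is secured, the rest of the argument is purely functional-analytic.
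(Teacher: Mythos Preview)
Your overall strategy matches the paper's: semi-Fredholm from Theorem \ref{FredholmLemmaHolst}, index zero by smooth approximation and index stability, then kernel regularity and the image characterisation via Corollary \ref{SelfAdj-W1p} and an integration-by-parts identity. Two points deserve comment.

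First, your dimension chain for kernel regularity has a gap. The claimed inclusion $(W^{-1,p})'\hookrightarrow(W^{-1,q})'$ is, via the isomorphisms $\mathcal{S}_{-1,\cdot}$, the embedding $W^{1,p'}\hookrightarrow W^{1,q'}$, which requires $p'\geq q'$, i.e.\ $p\leq q$. But the hypothesis $\tfrac{1}{q}-\tfrac{1}{n}\leq \tfrac{1}{p}$ allows $p$ strictly larger than $q$ (up to $\tfrac{nq}{n-q}$ when $q<n$), and in that regime your chain breaks. The paper bypasses this by a direct argument: since $\tfrac{1}{p}<\tfrac{1}{q'}+\tfrac{1}{n}$ one has $W^{1,p}\hookrightarrow L^{q'}$, so $\mathrm{Ker}(\Delta_g|_{W^{1,p}})\subset \mathrm{Ker}(\Delta_g|_{L^{q'}})$, and the latter equals $\mathrm{Ker}(\Delta_g|_{W^{2,q}})$ by Theorem \ref{LapBelRegTHM}. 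The reverse inclusion comes from $W^{2,q}\hookrightarrow W^{1,p}$, which holds precisely because $\tfrac{1}{p}\geq\tfrac{1}{q}-\tfrac{1}{n}$. The paper then observes that the exponent condition is symmetric under $p\leftrightarrow p'$, so the same argument gives $\mathrm{Ker}(\Delta_g|_{W^{1,p'}})=\mathrm{Ker}(\Delta_g|_{W^{2,q}})$, which is what you need to identify $\mathrm{Ker}(\Delta_g^{*}|_{(W^{-1,p})'})$. Your final regularity paragraph is then correct once this identification is in place (note that your last sentence actually relies on $\mathrm{Ker}(\Delta_g|_{L^{q'}})=\mathrm{Ker}(\Delta_g|_{W^{2,q}})$, which is Theorem \ref{LapBelRegTHM} itself rather than your dimension argument).

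Second, the part you anticipate as hard is not: the continuity estimate (\ref{OpContinuity2ndOrder}) in Lemma \ref{ContinuityPropsGeneralOps} was already proved for all $k=0,1,2$ simultaneously, with the operator norm bounded by the $W^{|\alpha|,q}$-norms of the local coefficients $A_\alpha$. Applying it with $k=1$ to $\Delta_g-\Delta_{g_k}$ gives the desired convergence in $\mathrm{Op}(W^{1,p},W^{-1,p})$ directly, exactly as in (\ref{Approximation.1}); no new Bessel-scale multiplication estimates are needed.
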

\begin{proof}
The proof of this statement follows along the same lines as that of Theorem \ref{LapBelRegTHM}. That is, under our conditions on the exponents $p$ and $q$ we know (\ref{LapBelSobWeak.1}) is continuous map from (\ref{LaplacianContinuityWeak}), which belongs to $\mathcal{L}^{2}(W^{2,q})$, and from Lemma \ref{FredholmLemmaHolst} we furthermore know that this is a semi-Fredholm map. In the case of smooth coefficients, notice that $\mathrm{Ker}(\Delta_g:W^{1,p}\to W^{-1,p})\subset \mathrm{Ker}(\Delta_g:L^{p}\to W^{-2,p})\subset C^{\infty}$ is independent of $p>1$, which follows by the discussion in the proof of Theorem \ref{LapBelRegTHM}. Using that
\begin{align*}
\Delta_g^{*}\vert_{(W^{-1,p})'}=S_{1,p}\circ \Delta_g\vert_{W^{1,p'}}\circ S^{-1}_{-1,p},
\end{align*}
and the maps $S_{k,p}$ are isomorphisms, 
\begin{align*}
\mathrm{dim}(\mathrm{Ker}(\Delta^{*}_g:(W^{-1,p})'\to (W^{1,p})'))&=\mathrm{dim}(\mathrm{Ker}(\Delta_g:W^{1,p'}\to W^{-1,p'}))=\mathrm{dim}(\mathrm{Ker}(\Delta_g:W^{1,p}\to W^{-1,p})),
\end{align*}
Hence, these maps are Fredholm of index zero. In the general case, let $\{g_k\}\subset C^{\infty}$ such that $g_k\xrightarrow[]{W^{2,q}} g$. Since $q>\frac{n}{2}$ and $\frac{1}{q}-\frac{1}{n}\leq \frac{1}{p}\leq \frac{1}{q'}+\frac{1}{n}$, we can once more appeal to the same computations of (\ref{OpContinuity2ndOrder}) to show that 
\begin{align}\label{Approximation.2}
\begin{split}
\Vert \Delta_g-\Delta_{g_k}\Vert_{\mathrm{Op}(W^{1,p},W^{-1,p})}&\lesssim \sum_{r=1}^N\sum_{i,j=1}^n\Vert \eta_r(g^{ij}-g^{ij}_k)\Vert_{W^{2,q}(U_r)} + \sum_{r=1}^N\sum_{a,b,c,d,l=1}^n\Vert \eta_r(\mathcal{A}_{ab}^{cdl}(g) - \mathcal{A}_{ab}^{cdl}(g_k))\Vert_{W^{1,q}(U_r)} \\
& + \sum_{r=1}^N\sum_{a,b,c,d,l=1}^n\Vert \eta_r(\mathcal{B}_{ab}^{dl}(g) - \mathcal{B}_{ab}^{dl}(g_k))\Vert_{L^{q}(U_r)}
\end{split}
\end{align}
where the coefficients $\mathcal{A}$ and $\mathcal{B}$ are the ones appearing in (\ref{LaplacianContinuity.2}), $\{U_r\}_{r=1}^N$ is a covering by coordinate systems of $M$ and $\{\eta_r\}_{r=1}^N$ is a partition of unity subordinate to such a cover, and we have already used the standard trivialisation for associated tensor bundles in a given coordinate system, associating to $u$ its components in the given coordinate system. Then, the same arguments as in Theorem \Ref{LapBelRegTHM} show that all the terms in the right-hand side in (\ref{Approximation.2}) go to zero, and hence $\Delta_{g_k}$ converges to $\Delta_{g}$ in the operator norm form $W^{1,p}\to W^{-1,p}$. Since each $\Delta_{g_k}$ is Fredholm of index zero and the index of a semi-Fredholm map is locally constant, then (\ref{LapBelSobWeak.1}) has also index zero and is therefore also Fredholm.

Concerning the regularity claims, we first assert that the regularity of $\mathrm{Ker}(\Delta_g\vert_{W^{1,p}})$ can be deduced from Theorem \ref{LapBelRegTHM}. To see this, first notice that $W^{1,p}\hookrightarrow L^{q'}$ holds. This is clear if $p\geq n$, so we can restrict to the case $p<n$, where by Sobolev embeddings we have $W^{1,p}\hookrightarrow L^{\frac{np}{n-p}}$, and we notice that
\begin{align*}
\frac{np}{n-p}\geq q' \Longleftrightarrow \frac{1}{p}-\frac{1}{n}\leq \frac{1}{q'}\Longleftrightarrow \frac{1}{p}\leq \frac{1}{q'}+\frac{1}{n},
\end{align*}
which is satisfied by hypothesis. Therefore, we find that $\mathrm{Ker}(\Delta_g|_{W^{1,p}})\subset \mathrm{Ker}(\Delta_g|_{L^{q'}})\subset W^{2,q}$, where the last inclusion follows from Theorem \ref{LapBelRegTHM}. Let us furthermore notice that $W^{2,q}\hookrightarrow W^{1,p}$, which follows automatically if $q\geq n$, and so we concentrate now in the case $\frac{n}{2}< q<n$, where $W^{1+1,q}\hookrightarrow W^{1,\frac{nq}{n-q}}$, and
\begin{align*}
\frac{nq}{n-q}\geq p\Longleftrightarrow \frac{1}{p}\geq \frac{1}{q}-\frac{1}{n}%\Longleftrightarrow \frac{1}{p}\leq \frac{1}{q'}+\frac{1}{n},
\end{align*}
which holds by hypothesis. Therefore, we also have that $\Ker(\Delta_g|_{W^{2,q}})\subset \Ker(\Delta_g|_{W^{1,p}})$, and thus $\Ker(\Delta_g|_{W^{1,p}})=\Ker(\Delta_g|_{W^{2,q}})$ for all $p$ satisfying the hypotheses of the theorem. Moreover, let us notice the following symmetry under duality of our hypotheses: 
\begin{align*}
\frac{1}{q}-\frac{1}{n}\leq \frac{1}{p}\leq \frac{1}{q'}+\frac{1}{n}\Longleftrightarrow 1 -\frac{1}{q'}-\frac{1}{n}\leq 1- \frac{1}{p'}\leq 1-\frac{1}{q}+\frac{1}{n}\Longleftrightarrow  \frac{1}{q}-\frac{1}{n} \leq  \frac{1}{p'}\leq\frac{1}{q'}+\frac{1}{n}.  
\end{align*}
Therefore, we see that $W^{1,p'}$ satisfies the same hypotheses as a domain for $\Delta_g$ as $W^{1,p}$ does, implying that $\Ker(\Delta_g\vert_{W^{1,p'}})= \Ker(\Delta_g\vert_{W^{2,q}})$ by the result obtained above.

Finally, for the full regularity claim (\ref{LapBelSobRegweak}), since $\Delta_g:W^{1,p}\mapsto W^{-1,p}$ is Fredholm, we know that $\mathrm{Im}(\Delta_g\vert_{W^{1,p}})=\mathrm{Ker}^{\perp}(\Delta^{*}_g\vert_{(W^{-1,p})'})$. That is, $f\in \mathrm{Im}(\Delta_g\vert_{W^{1,p}})$ iff $ v(f)=0$ for all $v\in \mathrm{Ker}(\Delta^{*}_g\vert_{(W^{-1,p})'})$. Notice then that under our hypotheses we can appeal to Corollary \ref{SelfAdj-W1p} to guarantee that: 
\begin{align*}
v\in \mathrm{Ker}(\Delta^{*}_g\vert_{(W^{-1,p})'})\Longleftrightarrow \tilde{v}\doteq \mathcal{S}^{-1}_{-1,p}v\in \Ker(\Delta_g\vert_{W^{1,p'}})=\Ker(\Delta_g\vert_{W^{2,q}}),
\end{align*}
where the last identity follows by the observation in the previous paragraph. Then, since $\tilde{v}\in W^{2,q}$, if $u\in L^{q'}$ a priori, the following integration by parts is justified via Proposition \ref{IntByPartsWeakProp}:
\begin{align*}
\langle \Delta_gu,\tilde{v}\rangle_{(M,g)}&=\langle u,\Delta_g\tilde{v} \rangle_{(M,g)}=\langle u,\Delta_g\circ\mathcal{S}^{-1}_{-1,p}v \rangle_{(M,g)}=0, \text{ for all }v\in \mathrm{Ker}(\Delta^{*}\vert_{(W^{1,p})'}).
%&=[\mathcal{S}_{1,q}\circ\Delta_g\circ\mathcal{S}^{-1}_{-1,q}v](u)=[\Delta^{*}_g\big\vert_{(W^{1,q})'}v](u) =0, \text{ for all }v\in \mathrm{Ker}(\Delta^{*}\vert_{(W^{1,p})'}).
\end{align*}

Notice now that since $W^{2,q}\hookrightarrow W^{1,p'}$ holds, then, if $\Delta_gu\in W^{-1,p}$ a priori, the pairing $\< \Delta_gu,\tilde{v} \>_{(M,g)}$ is defined as a pairing on $W^{-1,p}\times W^{1,p'}$, and thus using the above identities and the isomorphism $\mathcal{S}_{1,p'}:W^{-1,p}\to (W^{1,p'})'$ of Lemma \ref{DualPairingLowRegLemma}, we find
\begin{align*}
0=\< \Delta_gu,\tilde{v} \>_{(M,g)}=[\mathcal{S}_{1,p'}\circ \Delta_gu](\tilde{v})=[\left(\mathcal{S}^{-1}_{-1,p}\right)^{*}\circ\mathcal{S}_{1,p'}\circ \Delta_gu](\tilde{v}),
%=[\Delta_gu](\mathcal{S}_{1,p'}^{*}\circ \mathcal{S}^{-1}_{-1,p}(v))=[\Delta_gu](v)=v(\Delta_gu),
\end{align*}
and, by Definition \ref{WeakTopologicalIsoProp} and Proposition \ref{DualTopologicalIsoProp}, we have that $\mathcal{S}^{-1}_{-1,p}=\left( \mathcal{S}^{*}_{1,p'}\right)^{-1}=\left( \mathcal{S}^{-1}_{1,p'}\right)^{*}$, implying that $\left(\mathcal{S}^{-1}_{-1,p}\right)^{*}=\left( \mathcal{S}^{-1}_{1,p'}\right)^{**}=\mathcal{S}^{-1}_{1,p'}$, where the last identity holds by reflexivity of $W^{1,p'}$. Therefore, $\left(\mathcal{S}^{-1}_{-1,p}\right)^{*}\circ\mathcal{S}_{1,p'}=\mathrm{Id}|_{W^{-1,p}}$, and the above computations imply:
\begin{align*}
0=\< \Delta_gu,\tilde{v} \>_{(M,g)}=[\Delta_gu](v)=v(\Delta_gu)
\end{align*}
where in the last steps we used that $\Delta_gu\in W^{-1,p}$ defined an element of $(W^{-1,p})''$ via (\ref{EvaluationMap}).
%\begin{align*}
%v(\Delta_gu)=\langle \Delta_gu,\tilde{v}\rangle_{(M,g)}=0, \text{ for all }v\in \mathrm{Ker}(\Delta^{*}\vert_{(W^{-1,p})'})
%\end{align*}

The above proves that actually $\Delta_gu\in \mathrm{Ker}^{\perp}(\Delta^{*}_g\vert_{(W^{-1,p})'})\subset W^{-1,p}$, and hence there is some $\varphi\in W^{1,p}$ such that
\begin{align*}
\Delta_g\varphi=\Delta_gu\Longleftrightarrow \varphi-u\in\mathrm{Ker}(\Delta_g\vert_{L^{q'}})\subset W^{2,q}\Longrightarrow u=\varphi + (u-\varphi)\in W^{1,p}.
\end{align*}
\end{proof}

Similarly to the case of Corollary \ref{ScalrLapBelSobReg}, Theorem \ref{LapBelRegTHMweak} immediately implies the following result for the scalar Laplacian:% translate under the same hypotheses for the scalar Laplacian $\Delta_g$ of a $W^{2,q}(M)$-metric, $q>\frac{n}{2}$, on closed manifolds $M$. 

\begin{cor}\label{ScalrLapBelRegTHMweak}
Let $(M^n,g)$ be a closed Riemannian manifold with $g\in W^{2,q}(M)$, $q>\frac{n}{2}$, and let $p\in (1,\infty)$ be a real number such that $\frac{1}{q}-\frac{1}{n}\leq \frac{1}{p}< \frac{1}{q'}+\frac{1}{n}$. Then, the Laplace-Beltrami operator
\begin{align}\label{LapBelSobWeak.1.1}
\Delta_g:W^{1,p}(M)\to 	W^{-1,p}(M), %\text{ for all } \frac{1}{q}-\frac{1}{n}\leq \frac{1}{p}< \frac{1}{q'}+\frac{1}{n},
\end{align}
is Fredholm of index zero, $\mathrm{Ker}(\Delta_g\vert_{W^{1,p}})\subset W^{2,q}(M)$ and the following regularity implication follows:
\begin{align}\label{LapBelSobRegweak}
\text{ if } u\in 	L^{q'}(M), \text{ and } \Delta_gu\in W^{-1,p}(M)\Longrightarrow u\in W^{1,p}(M).
\end{align}
\end{cor}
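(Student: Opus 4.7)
The plan is to transcribe the proof of Theorem \ref{LapBelRegTHMweak} to the scalar setting, observing at each stage that every tool invoked there (Fredholm approximation in $\mathcal{L}^2(W^{2,q})$, the duality pairing $\langle\cdot,\cdot\rangle_{(M,g)}$, the adjoint formula, and the scalar integration-by-parts identity) applies verbatim, and in several places more simply, to the trivial line bundle $E = M\times\mathbb{R}$. In particular, the scalar Laplace-Beltrami operator in local coordinates is $\Delta_g u = \frac{1}{\sqrt{\det g}}\partial_i(\sqrt{\det g}\,g^{ij}\partial_j u)$, whose top-order coefficients lie in $W^{2,q}_{\mathrm{loc}}$ and whose first-order coefficients are algebraic expressions in $\Gamma^k_{ij}(g)\in W^{1,q}_{\mathrm{loc}}$, so that $\Delta_g\in\mathcal{L}^2(W^{2,q})$. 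Thus Lemma \ref{ContinuityPropsGeneralOps} and Theorem \ref{FredholmLemmaHolst} (applied to $E=M\times\mathbb{R}$) give both the continuity of $\Delta_g:W^{1,p}(M)\to W^{-1,p}(M)$ and its semi-Fredholm property under the hypothesis $\frac{1}{q}-\frac{1}{n}\leq \frac{1}{p}< \frac{1}{q'}+\frac{1}{n}$.

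To upgrade semi-Fredholm to Fredholm of index zero, I would mimic the approximation argument of Theorem \ref{LapBelRegTHMweak}: pick $g_k\xrightarrow[]{W^{2,q}} g$ with $g_k\in C^{\infty}$, observe that for each smooth $g_k$ the scalar Laplacian is Fredholm of index zero by standard elliptic theory (since $\mathrm{Ker}(\Delta_{g_k}|_{W^{1,p}})\subset C^\infty$ is independent of $p$, and $\Delta^*_{g_k}=\mathcal{S}_{1,p}\circ\Delta_{g_k}\circ\mathcal{S}^{-1}_{-1,p}$ via the scalar analog of Corollary \ref{SelfAdj-W1p}), and then use the same coefficient-convergence estimate of the form (\ref{Approximation.2}) together with the local-constancy of the semi-Fredholm index under operator-norm perturbations to conclude that $\Delta_g$ itself has index zero.

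Regularity of the kernel then follows from Corollary \ref{ScalarLap-W2RegTHM}, which is the scalar $W^{2,p}$-regularity statement already proved. Specifically, under $\frac{1}{p}\leq \frac{1}{q'}+\frac{1}{n}$ one has the Sobolev embedding $W^{1,p}(M)\hookrightarrow L^{q'}(M)$ (automatic if $p\geq n$; else via $W^{1,p}\hookrightarrow L^{np/(n-p)}$, with $\frac{np}{n-p}\geq q'$ equivalent to the standing inequality), so $\mathrm{Ker}(\Delta_g|_{W^{1,p}})\subset \mathrm{Ker}(\Delta_g|_{L^{q'}})\subset W^{2,q}(M)$; conversely the inequality $\frac{1}{q}-\frac{1}{n}\leq \frac{1}{p}$ yields $W^{2,q}\hookrightarrow W^{1,p}$, so the kernel is independent of $p$ within the admissible range. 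The hypotheses are invariant under $p\leftrightarrow p'$, and therefore $\mathrm{Ker}(\Delta_g|_{W^{1,p'}})=\mathrm{Ker}(\Delta_g|_{W^{2,q}})$ as well.

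For the final regularity implication, I would apply the scalar analog of Corollary \ref{SelfAdj-W1p} to identify $v\in\mathrm{Ker}(\Delta_g^*|_{(W^{-1,p})'})$ with $\tilde v\doteq \mathcal{S}_{-1,p}^{-1}v\in \mathrm{Ker}(\Delta_g|_{W^{1,p'}})=\mathrm{Ker}(\Delta_g|_{W^{2,q}})$. The scalar version of Proposition \ref{IntByPartsWeakProp} is immediate (the only covariant-derivative tensor operations appearing reduce to the trivial $\nabla u=du$ on functions), and it yields $\langle \Delta_g u,\tilde v\rangle_{(M,g)}=\langle u,\Delta_g\tilde v\rangle_{(M,g)}=0$ whenever $u\in L^{q'}$, since $\tilde v\in W^{2,q}$ and $\Delta_g\tilde v=0$. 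Unwinding the duality isomorphisms exactly as at the end of the proof of Theorem \ref{LapBelRegTHMweak}, this shows $\Delta_g u\in\mathrm{Ker}^\perp(\Delta_g^*|_{(W^{-1,p})'})=\mathrm{Im}(\Delta_g|_{W^{1,p}})$, so that $\Delta_g\varphi=\Delta_g u$ for some $\varphi\in W^{1,p}$; then $u-\varphi\in\mathrm{Ker}(\Delta_g|_{L^{q'}})\subset W^{2,q}\subset W^{1,p}$ forces $u\in W^{1,p}$. There is no genuine obstacle beyond verifying that each intermediate statement (integration by parts, adjoint identification, duality pairing) specialises correctly to scalars, which it does because every such result has been proved at the level of general tensor bundles and in particular for the trivial line bundle.
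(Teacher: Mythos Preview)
Your proposal is correct and follows essentially the same approach as the paper: the paper simply notes that the proof of Theorem \ref{LapBelRegTHMweak} for $S_2M$ carries over verbatim to the scalar case (the trivial line bundle), and you have spelled out that transcription in detail. There is nothing to correct.
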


The above results can be used to establish the following useful local result, which we shall state for the scalar Laplacian since it will be useful for us later on.

\begin{cor}\label{LocalLapReg}
Let $(M^n,g)$ be Riemannian manifold with $g\in W_{loc}^{2,q}(M)$, $q>\frac{n}{2}$, and let us consider the the scalar Laplace-Beltrami operator $\Delta_g$. If $p$ is a real number satisfying $1<p\leq q$ and $\frac{1}{q}-\frac{1}{n}\leq \frac{1}{p}< \frac{1}{q'}+\frac{1}{n}$, then the following regularity implication follows:
\begin{align}
\text{ if } u\in 	L_{loc}^{q'}(M) \text{ and } \Delta_gu\in L_{loc}^{p}(M)\Longrightarrow u\in W_{loc}^{2,p}(M).
\end{align}
\end{cor}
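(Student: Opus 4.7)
The plan is to reduce this local statement to the closed-manifold results (Corollary \ref{ScalarLap-W2RegTHM} and Corollary \ref{ScalrLapBelRegTHMweak}) via a cutoff and transplantation argument, combined with a two-step bootstrap in regularity. I would fix an arbitrary point $x_0\in M$ and choose nested precompact balls $B_3\subset\subset B_2\subset\subset B_1\subset\subset B_0$ all contained in a single coordinate chart around $x_0$; it suffices to prove $u\in W^{2,p}(B_3)$. Next, I would transplant to a torus $T^n$ containing a diffeomorphic copy of $B_0$, extending $g$ to a metric $\tilde g\in W^{2,q}(T^n)$ by interpolating to a fixed smooth background metric outside the image of $B_0$ via a smooth cutoff; the $W^{2,q}$ regularity is preserved by such a gluing.

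\textbf{First bootstrap} ($u\in L^{q'}_{loc}\Rightarrow u\in W^{1,p}_{loc}$). Pick $\eta_1\in C^\infty_0(B_1)$ with $\eta_1\equiv 1$ on $B_2$ and consider $\tilde u_1\doteq \eta_1 u\in L^{q'}(T^n)$. Distributionally,
\begin{equation*}
\Delta_{\tilde g}\tilde u_1=\eta_1\Delta_g u+[\Delta_g,\eta_1]u,
\end{equation*}
the first term lying in $L^p(T^n)\hookrightarrow W^{-1,p}(T^n)$ by hypothesis. Rewriting the commutator in divergence form,
\begin{equation*}
[\Delta_g,\eta_1]u=2\,\mathrm{div}_g(u\nabla\eta_1)-u\,\Delta_g\eta_1,
\end{equation*}
the coefficients $g(\nabla\eta_1,\cdot)$ and $\Delta_g\eta_1$ lie in $W^{2,q}$ and $W^{1,q}$ respectively. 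Using Theorem \ref{BesselMultLocal} together with the admissibility range $\frac{1}{q}-\frac{1}{n}\leq \frac{1}{p}<\frac{1}{q'}+\frac{1}{n}$, the commutator can be placed in $W^{-1,p}(T^n)$; for the full range of $p$ this is done by an iterative bootstrap starting from the admissible exponent $p_0=q'$ (for which $u\in L^{q'}\hookrightarrow L^{p_0}$ locally makes the multiplication estimates immediate), then feeding the improved integrability $W^{1,p_0}\hookrightarrow L^{p_0^*}$ back into the commutator estimate in successive passes. Applying Corollary \ref{ScalrLapBelRegTHMweak} on $(T^n,\tilde g)$ gives $\tilde u_1\in W^{1,p}(T^n)$, whence $u\in W^{1,p}(B_2)$.

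\textbf{Second bootstrap} ($u\in W^{1,p}_{loc}\Rightarrow u\in W^{2,p}_{loc}$). Take $\eta_2\in C^\infty_0(B_2)$ with $\eta_2\equiv 1$ on $B_3$ and set $\tilde u_2\doteq \eta_2 u\in W^{1,p}(T^n)$. Now
\begin{equation*}
\Delta_{\tilde g}\tilde u_2=\eta_2\Delta_g u+2\,g(\nabla\eta_2,\nabla u)+u\,\Delta_g\eta_2,
\end{equation*}
and each summand is in $L^p(T^n)$: the first by hypothesis, the second because $\nabla u\in L^p$ on $\mathrm{supp}\,\eta_2$ while $g(\nabla\eta_2,\cdot)\in L^\infty$ (using $g\in W^{2,q}\hookrightarrow C^0$), and the third by H\"older using $u\in W^{1,p}\hookrightarrow L^{p^*}$ and $\Delta_g\eta_2\in W^{1,q}\hookrightarrow L^{q^*}$, the constraint $1/p^*+1/q^*\leq 1/p$ reducing to $q\geq n/2$, which holds. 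Corollary \ref{ScalarLap-W2RegTHM} then yields $\tilde u_2\in W^{2,p}(T^n)$, so $u\in W^{2,p}(B_3)$; since $x_0$ was arbitrary, $u\in W^{2,p}_{loc}(M)$.

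The main obstacle is the commutator analysis in the first bootstrap: knowing only $u\in L^{q'}_{loc}$, the first-order contribution $2g(\nabla\eta_1,\nabla u)$ involves $\nabla u$ merely as a distribution, and placing its product with a $W^{2,q}$ coefficient in $W^{-1,p}$ sits at the boundary of what Theorem \ref{BesselMultLocal} yields; the iterative bootstrap described above, using Sobolev embeddings to upgrade the integrability of $u$ before each application of Corollary \ref{ScalrLapBelRegTHMweak}, is the route around this difficulty, whereas the second bootstrap is essentially routine once $\nabla u$ has been placed in $L^p$ locally.
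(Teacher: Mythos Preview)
Your proposal is correct and follows essentially the same strategy as the paper: localise via a cutoff, transplant to a torus carrying an extended $W^{2,q}$ metric, place the cutoff commutator in $W^{-1,r}$ and invoke Corollaries~\ref{ScalrLapBelRegTHMweak} and~\ref{ScalarLap-W2RegTHM}, then iterate via Sobolev embedding from $r=q'$ up to $p$. The only notable difference is that you handle the first-order commutator term through the divergence form $[\Delta_g,\eta]u=2\,\mathrm{div}_g(u\nabla\eta)-u\,\Delta_g\eta$, whereas the paper treats $\langle\nabla\eta,\nabla u\rangle_g$ directly by observing $\nabla u\in W^{-1,q'}$ and extending by duality; both routes lead to the same bootstrap.
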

\begin{proof}
Letting $\eta\in C^{\infty}_0(M)$, with $\mathrm{supp}(\eta)\subset \Omega\subset\subset M$ such that $\partial\Omega$ is smooth, we can cover $\overline{\Omega}$ by finitely many small coordinate balls $\{B_i\doteq B_{\epsilon_i}(p_i)\}_{i=1}^N$, $p_i\in \Omega$, and then consider a partition of unity $\{\chi_i\}_{i=1}^N$ subordinate to such a cover, and obtain $\eta u=\sum_i\chi_i\eta u$. Denoting $u_i\doteq \chi_i\eta u$, we have $u_i\in L^{q'}(B_i)$ and one has $\Delta_{g}u_i\in W^{-2,q'}(B_i)$ a priori by the same arguments as for (\ref{LaplacianContinuityWeak2}) in Proposition \ref{LapContPropertiesProp}. Furthermore, one can compute
\begin{align*}
\Delta_{g} u_i&=\chi_i\Delta_{g}(\eta u)+2\langle \nabla\chi_i,\nabla \eta u \rangle_{g}+\eta u\Delta_{g}\chi_i,\\
&=\chi_i\eta\Delta_{g}u +2\chi_i\langle\nabla \eta,\nabla u\rangle_g + \chi_iu\Delta_{g}\eta + 2\langle \nabla\chi_i,\nabla \eta \rangle_{g}u +  2\eta\langle \nabla\chi_i,\nabla u \rangle_{g}+\eta u\Delta_{g}\chi_i.
\end{align*}
From the hypotheses $u\in L^{q'}_{loc}$ and $\Delta_gu\in L^p_{loc}$, one has $\chi_i\eta\Delta_{g}u,\chi_iu\Delta_{g}\eta,2\langle \nabla\chi_i,\nabla \eta \rangle_{g}u,\eta u\Delta_{g}\chi_i\in L^{\min\{q',p\}}(B_i) $. We need to consider the remaining terms, which can be written as $\langle \chi_i\nabla \eta + \eta\nabla\chi_i,\nabla u\rangle_g$. With this in mind, notice that $du\in W^{-1,q'}(B_i)$, and thus $g^{ij}\partial_ju\in W^{-1,q'}(B_i)$. Now, let us consider box $[-L,L]^n$ in $\mathbb{R}^n$  containing the coordinate ball $B_i$. Then, extend $g$ to a $W^{2,q}$ Riemannian metric $\tilde{g}$ in a neighbourhood $U_{\epsilon}$ of $B_i\subset [-L,L]^n$ containing a collar $\partial B_i\times [0,\epsilon)$ and consider a cut-off function $0\leq \chi\leq 1$ satisfying $\chi=1$ on a sufficiently small neighbourhood $B_i$ while $\chi=0$ on $[-L,L]^n\backslash U_{\epsilon'}$ for some $\epsilon'<\epsilon$. Then $\bar{g}=\chi\tilde{g} + (1-\chi)\delta$ is a $W^{2,q}$ Riemannian metric in the interior of the box, which is exactly Euclidean near the boundary of the box. Thus, compactifying $[-L,L]^n$ into a torus $T^n$, the metric $\bar{g}$ glues smoothly into a $W^{2,q}(T^n)$ metric, which we still call by $\bar{g}$. The above analysis shows that $\chi_i\eta\Delta_{\bar{g}}u,\chi_iu\Delta_{\bar{g}}\eta,2\langle \nabla\chi_i,\nabla \eta \rangle_{\bar{g}}u,\eta u\Delta_{\bar{g}}\chi_i\in L^{\min\{q',p\}}(B_i)\hookrightarrow L^{\min\{q',p\}}(T^n)$, while $g^{ij}\partial_ju\in W^{-1,q'}(B_i)$. Denoting $\phi\doteq \chi_i\nabla \eta + \eta\nabla\chi_i\in C^{\infty}_0(B_i;\mathbb{R}^n)$, then $\phi_k g^{kl}\partial_lu$ extends by duality to a distribution on $T^n$. Actually, given any $v\in W^{1,q}(T^n)$, multiplication by $C^{\infty}_0(B_i)$ gives a continuous map 
\begin{align*}
C^{\infty}_0(B_i)\otimes W^{1,q}(T^n)\to W_0^{1,q}(B_i),
\end{align*}
and thus, by duality, one has that $C^{\infty}_0(B_i)\otimes W^{-1,q'}(B_i)\to W^{-1,q'}(T^n)$ continuously and thus $\phi_i g^{ij}\partial_ju\in W^{-1,q'}(T^n)$. Therefore, $\Delta_{\bar{g}}(\eta\chi_i u)\in W^{-1,\min\{p,q'\}}(T^n)$ and Corollary \ref{ScalrLapBelRegTHMweak} implies that $\eta\chi_i u\in W^{1,\min\{p,q'\}}(T^n)$, which in turn implies $\Delta_{\bar{g}}(\eta\chi_i u)\in L^{\min\{p,q'\}}(T^n)$, and then Corollary \ref{ScalrLapBelRegTHMweak} guarantees $\chi_i\eta u\in W^{2,\min\{p,q'\}}(T^n)$. From this we now get
\begin{align*}
\chi_i\eta\Delta_{\bar{g}}u\in L^{p}(T^n) \text{ and }\chi_iu\Delta_{\bar{g}}\eta,2\langle \nabla\chi_i,\nabla \eta \rangle_{\bar{g}}u,\eta u\Delta_{\bar{g}}\chi_i\in W^{2,\min\{q',p\}}(T^n),
\end{align*}
and also $\phi_i g^{ij}\partial_ju\in W^{1,\min\{q',p\}}(T^n)$. Thus, if $\min\{p,q'\}=p$, then we already obtain $\Delta_{\bar{g}}(\eta\chi_i u)\in L^{p}(T^n)$, from which we conclude $\eta\chi_i u\in W^{2,p}(T^n)$ from Corollary \ref{ScalarLap-W2RegTHM}. On the other hand, if $\min\{p,q'\}=q'$, then first notice that $q'<n$, since this is equivalent to $\frac{1}{q}<1-\frac{1}{n}$, and since $\frac{1}{q}<\frac{2}{n}$ and $\frac{2}{n}\leq 1-\frac{1}{n}$ for all $n\geq 3$, then $\frac{1}{q}<1-\frac{1}{n}$ holds. Therefore, appealing to $W^{1,q'}\hookrightarrow L^{\frac{nq'}{n-q'}}$, and setting $q_1\doteq \frac{nq'}{n-q'}>q'$, obtain
\begin{align*}
\Delta_{\bar{g}}(\eta\chi_i u)\in L^{\min\{p,q_1\}}(T^n)\Longrightarrow \eta\chi_i u\in W^{2,\min\{p,q_1\}}
\end{align*}
improving the regularity by $q_1-q'=\frac{q'}{n-q'}q'$. Iterating from there, after finitely many steps we obtain $\eta\chi_i u\in W^{2,p}(T^n)$. Therefore, since $\mathrm{supp}(\chi_i\eta u)\subset B_i\subset T^n$, implies $\chi_i\eta u\in  W^{2,p}(B_i)$ for all $i=1$. Thus, $\eta u\in W^{2,p}(\Omega)$ and we conclude $u\in W^{2,p}_{loc}(M)$.    
\end{proof}

\subsection{Conformal properties of low regularity closed Riemannian manifolds}\label{SectionConformalProps}

In this section we shall apply some of the above regularity results to establish a low regularity Yamabe-type classification which is of interest to us for upcoming sections, where we shall be interested in analysing the controls that conformal objects provide to bootstrap regularity and decay properties of a metric. Although the scalar curvature alone cannot fully control the metric, low regularity Yamabe-type classifications point in this direction. Such classifications have been studied in the literature under a variety of hypotheses, and, to the best of our knowledge, the original reference is \cite{MaxwellRoughClosed}, establishing a classification for rough metrics in $H^{s,2}(M)$-spaces, with $s>\frac{n}{2}$. Although the full Yamabe problem seems to remain open for rough metrics, this classification establishes the existence of a conformal metric with continuous scalar curvature with sign given by the corresponding Yamabe invariant.\footnote{For recent important advances related to the full resolution of this problem, we also highlight \cite{AubinThmRough}, where T. Aubin's criteria for the Yamabe problem to be solved was extended to rough metrics.} Related to our topic of interest, in this case, moving within the conformal class allows one to improve the a priori regularity of the scalar curvature. Below, we shall closely follow the strategy presented in \cite[Section 3]{MaxwellRoughClosed}, but within the regularity class of metrics $g\in W^{2,p}$, with $p>\frac{n}{2}$, and our objective will be to guarantee such a conformal improvement on the scalar curvature regularity (see Theorem \ref{YamabeLowRegThm.1}). Let us notice that the results of \cite[Section 3]{MaxwellRoughClosed} have clear overlap with the ones presented here, but do not directly imply them.\footnote{We refer the reader to the discussion after Theorem \ref{YamabeLowRegThm.1} for some comments on results within the literature that would actually imply those of Theorem \ref{YamabeLowRegThm.1}.}

With the above motivations in mind, let us consider a closed Riemannian manifold $(M^n,\gamma)$ with $n\geq3$, $\gamma\in W^{2,p}$ and $p>\frac{n}{2}$, and define
\begin{align*}
\begin{split}
\mathcal{A}:W^{1,2}(M)\times W^{1,2}(M)&\to \mathbb{R}\\
(\varphi_1,\varphi_2)&\mapsto \int_M\left(a_n\langle \nabla\varphi_1,\nabla\varphi_2\rangle_{\gamma} + R_{\gamma}\varphi^2\right)dV_{\gamma}
\end{split}
\end{align*}
Sobolev embeddings guarantee that the above is a well-defined bilinear functional on $W^{1,2}(M)$. Related to the associated quadratic form, for each $1\leq q\leq \frac{n}{n-2}$, we define:
\begin{align}\label{q-Yamabe.1}
J_{\gamma,q}(\varphi)\doteq \frac{\mathcal{A}(\varphi,\varphi)}{\Vert\varphi\Vert^{2}_{L^{2q}(M,dV_{\gamma})}} =\frac{\int_M\left(a_n|\nabla\varphi|^2_{\gamma} + R_{\gamma}\varphi^2 \right)dV_{\gamma}}{\left(\int_M\varphi^{2q}dV_{\gamma} \right)^{\frac{1}{q}}}.
\end{align}
%It is then quite straightforward to see that under these conditions the functionals $J_{\gamma,q}$ are all bounded from below for any $1\leq q\leq \frac{n}{n-2}$.

Following the same ideas of \cite[Section 3]{MaxwellRoughClosed}, we shall show $J_{\gamma,q}$ are all bounded from below for any $1\leq q\leq \frac{n}{n-2}$. First, let us present the following result:
\begin{lem}\label{YamabeWeakCont.1}
Let $(M^n,\gamma)$ be a closed Riemannian manifold with $\gamma\in W^{2,p}$, $p>\frac{n}{2}$, and $n\geq 3$. Then, the map $W^{1,2}(M)\to \mathbb{R}$ given by
\begin{align}\label{EnergyMap1}
u&\mapsto \int_{M}R_{\gamma}u^2dV_{\gamma}
\end{align}
is weakly sequentially continuous. Moreover, given $\epsilon>0$, there are constants $C,C_{\epsilon}>0$ such that the following estimate holds:
\begin{align}\label{EnergyMapInterpolation}
\Big\vert \int_{M}R_{\gamma}u^2dV_{\gamma}\Big\vert\leq C\Vert R_{\gamma}\Vert_{L^p(M)}\left(\epsilon^2\Vert u\Vert^2_{W^{1,2}(M)} + C_{\epsilon}\Vert u\Vert^2_{L^{2}(M)}\right), \:\: \forall \: u\in W^{1,2}(M)
\end{align}
\end{lem}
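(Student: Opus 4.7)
The plan is to reduce both statements to a single Hölder-and-Sobolev computation, using that the regularity hypothesis $\gamma\in W^{2,p}(M)$ with $p>\tfrac{n}{2}$ forces $R_\gamma\in L^p(M)$. First I would justify this integrability. From the local formula (\ref{gauge0}) together with (\ref{f-tensor}), $R_\gamma$ is a sum of terms of the schematic form $g^{\ast}\,\partial^2 g$ and $g^{\ast}\,\partial g\,\partial g$. Since $g\in W^{2,p}$, Corollary \ref{ContractionsSobReg} (with $s_1=2$, $s_2=0$ for the top-order term and $s_1=s_2=1$ for the quadratic term, and target $s=0$) covers both products: the condition $s_1+s_2-s>n(1/p_1+1/p_2-1/p)$ reduces in the quadratic case precisely to $p>n/2$. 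Hence $R_\gamma\in L^p(M)$ with $\|R_\gamma\|_{L^p(M)}\lesssim P(\|g\|_{W^{2,p}})$ for a polynomial $P$.

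Next I would establish the interpolation bound (\ref{EnergyMapInterpolation}). Setting $p'=p/(p-1)$, Hölder's inequality gives
\begin{equation*}
\Bigl|\int_M R_\gamma u^2\,dV_\gamma\Bigr|\le \|R_\gamma\|_{L^p(M)}\,\|u\|_{L^{2p'}(M)}^2 .
\end{equation*}
Because $p>n/2$ we have $p'<n/(n-2)$, so $2p'<2n/(n-2)$ and the Rellich--Kondrachov theorem yields the compact embedding $W^{1,2}(M)\hookrightarrow\hookrightarrow L^{2p'}(M)$, together with $L^{2p'}(M)\hookrightarrow L^2(M)$. Applying Ehrling's lemma to this chain, for any $\epsilon>0$ there exists $C_\epsilon>0$ with $\|u\|_{L^{2p'}}\le \epsilon\|u\|_{W^{1,2}}+C_\epsilon\|u\|_{L^2}$. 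Squaring and absorbing constants gives (\ref{EnergyMapInterpolation}).

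For the weak sequential continuity of the map (\ref{EnergyMap1}), I would use the compact embedding above as the essential tool. Let $u_k\rightharpoonup u$ in $W^{1,2}(M)$; then $\{u_k\}$ is $W^{1,2}$-bounded and, by compactness, $u_k\to u$ strongly in $L^{2p'}(M)$, so $\{u_k+u\}$ stays bounded in $L^{2p'}$. Writing
\begin{equation*}
\int_M R_\gamma u_k^2\,dV_\gamma - \int_M R_\gamma u^2\,dV_\gamma = \int_M R_\gamma(u_k-u)(u_k+u)\,dV_\gamma ,
\end{equation*}
and applying Hölder's inequality with exponents $(p,2p',2p')$ yields
\begin{equation*}
\Bigl|\int_M R_\gamma(u_k^2-u^2)\,dV_\gamma\Bigr|\le \|R_\gamma\|_{L^p}\,\|u_k-u\|_{L^{2p'}}\,\|u_k+u\|_{L^{2p'}}\to 0 .
\end{equation*}

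I do not expect any genuine obstacles: the only step that requires more than a line is verifying $R_\gamma\in L^p$, which is entirely handled by the multiplication results already developed in Section~\ref{Preliminaries}. The hypothesis $p>n/2$ plays a double role: it gives the integrability of $R_\gamma$ via Corollary \ref{ContractionsSobReg}, and it produces the strict inequality $2p'<2^\ast$ needed for the compact embedding that underlies both the interpolation bound and the weak continuity.
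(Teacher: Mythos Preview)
Your proof is correct and takes a genuinely different, more elementary route than the paper. The paper works through the Bessel potential interpolation space $H^{1-\eta,2}(M)$: it invokes Theorem \ref{BesselMultLocal} to obtain the continuous multiplication $H^{1-\eta,2}\otimes H^{1-\eta,2}\hookrightarrow L^{p'}$ for a suitably chosen $\eta\in(0,1)$, and then uses the compact embedding $W^{1,2}\hookrightarrow H^{1-\eta,2}$ to get both the weak continuity and, via an Ehrling interpolation between $W^{1,2}$, $H^{1-\eta,2}$ and $L^2$, the estimate (\ref{EnergyMapInterpolation}). You bypass the interpolation spaces entirely: a single H\"older inequality $|\langle R_\gamma,u^2\rangle|\le\|R_\gamma\|_{L^p}\|u\|_{L^{2p'}}^2$ and the Rellich--Kondrachov embedding $W^{1,2}\hookrightarrow\hookrightarrow L^{2p'}$ (valid precisely because $p>n/2$ forces $2p'<2^\ast$) do all the work. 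The paper's approach is consistent with its systematic use of the multiplication machinery of Section \ref{Preliminaries}, while yours is self-contained and avoids fractional Sobolev spaces altogether; both hinge on the same numerical coincidence that $p>n/2$ yields a subcritical exponent.
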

\begin{proof}
Consider any number $0<\eta<1$ such that
\begin{align}\label{etachoice}
0<2\eta<\min\Big\{2-\frac{n}{p},1-n\left(\frac{1}{p} - \frac{1}{2} \right)\Big\},
\end{align}
and notice that %since $2-\frac{n}{p}>0$, then
\begin{align*}
n\left(\frac{1}{p} - \frac{1}{2} \right)<1\Longleftrightarrow p> \frac{2n}{n+2}
\end{align*}
and $\frac{n}{2}\geq \frac{2n}{n+2}$ for all $n\geq 3$. Thus, since $p>\frac{n}{2}$, $n\left(\frac{1}{p} - \frac{1}{2} \right)<1$ holds and the allowed interval in (\ref{etachoice}) is non-empty. Then, appealing to Theorem \ref{BesselMultLocal}, we analyse the embedding:  
\begin{align}\label{NonIntergerEmbedding.1}
H^{1-\eta,2}(M)\otimes H^{1-\eta,2}(M)\hookrightarrow L^{p'}(M)
\end{align}
which will hold as long as $\eta\leq 1$ satisfies
\begin{align*}
&1-\eta\geq n\left(\frac{1}{2}-\frac{1}{p'} \right)=n\left(\frac{1}{p}-\frac{1}{2} \right) %\Longleftrightarrow \frac{1}{p}\leq \frac{1-\eta}{n}+\frac{1}{2} \Longleftrightarrow p\geq \frac{2n}{n+2(1-\eta)}\\
\text{ and } 2-2\eta>n\left(\frac{1}{2}+\frac{1}{2}-\frac{1}{p'}\right)=\frac{n}{p}
%1\geq n\left(\frac{1}{p}+\frac{1}{2} - 1 \right)=n\left(\frac{1}{p}-\frac{1}{2} \right) \Longleftrightarrow  p\geq \frac{2n}{n+2(1-\eta)}.
\end{align*}
The above two conditions are granted as long as (\ref{etachoice}) holds, and thus the embedding (\ref{NonIntergerEmbedding.1}) follows under our assumptions. Then, $R_{\gamma}\in L^{p}(M)$ defines a continuous bilinear functional on $H^{1-\eta,2}(M)\times H^{1-\eta,2}(M)$, given by
\begin{align*}
\Phi_{R_{\gamma}}:H^{1-\eta,2}(M)\times H^{1-\eta,2}(M)&\to \mathbb{R},\\
(\varphi_1,\varphi_2)&\mapsto \Phi_{R_{\gamma}}(\varphi_1, \varphi_2)=\<R_{\gamma},\varphi_1\varphi_2\>=\int_MR_{\gamma}\varphi_1\varphi_2dV_{\gamma}
\end{align*}
such that
\begin{align}\label{WeakScalarCurvCont}
|\< R_{\gamma},\varphi_1\varphi_2 \>|\leq C \Vert R_{\gamma}\Vert_{L^p(M)}\Vert \varphi_1\Vert_{H^{1-\eta,2}(M)}\Vert \varphi_2\Vert_{H^{1-\eta,2}(M)}, \:\: \forall \: \varphi_1,\varphi_2\in H^{1-\eta,2}(M).
\end{align}
Consider then a weakly convergent sequence $\{u_k\}\subset W^{1,2}(M)$, with limit $u_k\rightharpoonup u\in W^{1,2}(M)$. Appealing to (\ref{WeakScalarCurvCont}) we find:
\begin{align*}
|\< R_{\gamma},u^2 - u^2_k \>|\leq C\Vert R_{\gamma}\Vert_{L^p}\left(\Vert u\Vert_{W^{1-\eta,2}}\Vert u-u_k\Vert_{W^{1-\eta,2}} + \Vert u_k\Vert_{W^{1-\eta,2}}\Vert u-u_k\Vert_{W^{1-\eta,2}} \right).
\end{align*}
Then, notice that (up to restriction to a subsequence) the right-hand side of the above expression goes to zero, since $\{u_k\}_{k=1}^{\infty}\subset W^{1,2}(M)$ must be bounded, and the embedding $\iota:W^{1,2}(M)\to H^{1-\eta,2}(M)$ is compact. Thus, 
\begin{align*}
\< R_{\gamma},u^2_k\>\rightarrow \< R_{\gamma},u^2\>,
\end{align*} 
which proves the weak sequential continuity of the map (\ref{EnergyMap1}). Furthermore (\ref{WeakScalarCurvCont}) also implies
\begin{align*}
|\< R_{\gamma},\varphi^2 \>|\leq C\Vert R_{\gamma}\Vert_{L^p(M)}\Vert \varphi\Vert^2_{H^{1-\eta,2}(M)}, \:\: \forall \: \varphi\in H^{1-\eta,2}(M).
\end{align*}
Given any $\epsilon>0$, we may then appeal to an interpolation inequality of the form:
\begin{align*}
\Vert \varphi\Vert_{H^{1-\eta,2}(M)}\leq \epsilon \Vert \varphi\Vert_{W^{1,2}(M)} + C_{\epsilon}\Vert \varphi\Vert_{L^{2}(M)},
\end{align*}
valid for all $\varphi\in W^{1,2}(M)$, to rewrite
\begin{align*}
|\< R_{\gamma},\varphi^2 \>|\leq 2C\Vert R_{\gamma}\Vert_{L^p(M)}\left( \epsilon^2 \Vert \varphi\Vert^2_{W^{1,2}(M)} + C^2_{\epsilon}\Vert \varphi\Vert^2_{L^{2}(M)}\right), \:\: \forall \: \varphi\in W^{1,2}(M)
\end{align*}
which proves the estimate (\ref{EnergyMapInterpolation}).
\end{proof}

Appealing to the above lemma, we can now show that the functionals (\ref{q-Yamabe.1}) are all bounded by below: 

\begin{lem}\label{YamabeLowerBound}
Let $(M^n,\gamma)$ be a closed Riemannian manifold with $\gamma\in W^{2,p}$, $p>\frac{n}{2}$, and $n\geq 3$. Then, the functionals $J_{\gamma,q}$ are all bounded from below for any $1\leq q\leq \frac{n}{n-2}$.
\end{lem}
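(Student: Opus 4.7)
The proof proposal is as follows. The plan is to combine the interpolation estimate (\ref{EnergyMapInterpolation}) from Lemma \ref{YamabeWeakCont.1} with a standard Hölder estimate valid on the closed manifold $M$. The only real content is that the scalar curvature term can absorbed into a fraction of the Dirichlet integral plus a lower-order $L^2$-term, and that lower-order term is in turn dominated (up to a multiplicative constant) by any $L^{2q}$-norm with $q\geq 1$, thanks to compactness of $M$.

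More precisely, first I would apply (\ref{EnergyMapInterpolation}) to a test function $\varphi\in W^{1,2}(M)$, obtaining
\begin{align*}
\int_M R_\gamma\,\varphi^2\,dV_\gamma \geq -C\Vert R_\gamma\Vert_{L^p(M)}\Bigl(\epsilon^2\Vert\varphi\Vert_{W^{1,2}(M)}^2+C_\epsilon\Vert\varphi\Vert_{L^2(M)}^2\Bigr),
\end{align*}
for any $\epsilon>0$, where I expand $\Vert\varphi\Vert_{W^{1,2}(M)}^2=\Vert\nabla\varphi\Vert_{L^2(M,\gamma)}^2+\Vert\varphi\Vert_{L^2(M)}^2$. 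Substituting this into the numerator of $J_{\gamma,q}(\varphi)$ gives
\begin{align*}
\mathcal{A}(\varphi,\varphi)&\geq \bigl(a_n-C\Vert R_\gamma\Vert_{L^p}\epsilon^2\bigr)\Vert\nabla\varphi\Vert_{L^2(M,\gamma)}^2 - C\Vert R_\gamma\Vert_{L^p}(\epsilon^2+C_\epsilon)\Vert\varphi\Vert_{L^2(M)}^2.
\end{align*}
Choosing $\epsilon$ small enough so that $C\Vert R_\gamma\Vert_{L^p}\epsilon^2\leq a_n/2$, the gradient term remains non-negative, and we get
\begin{align*}
\mathcal{A}(\varphi,\varphi)\geq \tfrac{a_n}{2}\Vert\nabla\varphi\Vert_{L^2(M,\gamma)}^2 - K\Vert\varphi\Vert_{L^2(M)}^2,
\end{align*}
for some constant $K=K(\gamma,n,p)>0$ depending only on $\Vert R_\gamma\Vert_{L^p}$, $n$ and $p$.

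The second step is to control $\Vert\varphi\Vert_{L^2(M)}^2$ by $\Vert\varphi\Vert_{L^{2q}(M,dV_\gamma)}^2$. Since $M$ is closed, $dV_\gamma$ defines a finite measure, so for any $1\leq q\leq \tfrac{n}{n-2}$ Hölder's inequality yields
\begin{align*}
\Vert\varphi\Vert_{L^2(M,dV_\gamma)}^2\leq \mathrm{vol}_\gamma(M)^{\frac{q-1}{q}}\Vert\varphi\Vert_{L^{2q}(M,dV_\gamma)}^2.
\end{align*}
Combining the two steps, discarding the non-negative gradient term in the lower bound, and dividing by $\Vert\varphi\Vert_{L^{2q}(M,dV_\gamma)}^2$, gives
\begin{align*}
J_{\gamma,q}(\varphi)\geq -K\,\mathrm{vol}_\gamma(M)^{\frac{q-1}{q}},
\end{align*}
which is a uniform lower bound depending only on $\gamma$, $n$, $p$ and $q$. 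This proves the lemma.

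I do not anticipate any genuine obstacle here: the nontrivial analytic input, namely the weak continuity and interpolation estimate (\ref{EnergyMapInterpolation}) for $R_\gamma\in L^p$ with $p>n/2$, has already been established in Lemma \ref{YamabeWeakCont.1}, and the remainder of the argument is the classical coercivity reduction for the Yamabe functional on a compact manifold. The only point that deserves a brief check is that the constant $K$ above really depends only on fixed data (not on $\varphi$), which is clear from the construction since $\epsilon$ is chosen once based on $\Vert R_\gamma\Vert_{L^p}$.
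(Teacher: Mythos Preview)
Your proof is correct and follows essentially the same approach as the paper's own argument: both use the interpolation estimate (\ref{EnergyMapInterpolation}) to absorb the scalar curvature term into a fraction of the gradient term plus an $L^2$-remainder, then bound $\Vert\varphi\Vert_{L^2}$ by $\Vert\varphi\Vert_{L^{2q}}$ via the embedding $L^{2q}(M)\hookrightarrow L^2(M)$ on the closed manifold. The only cosmetic difference is that you make the H\"older constant $\mathrm{vol}_\gamma(M)^{(q-1)/q}$ explicit, whereas the paper just invokes the embedding with an unspecified constant.
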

\begin{proof}
Appealing to (\ref{EnergyMapInterpolation}), we can write
\begin{align*}
E(\varphi)&=\int_M\left(a_n|\nabla\varphi|^2_{\gamma} + R_{\gamma}\varphi^2\right)dV_{\gamma}\geq a_n\Vert \nabla\varphi\Vert^2_{L^2}- |\< R_{\gamma},\varphi^2 \>|,\\
&\geq a_n\Vert \nabla\varphi\Vert^2_{L^2} - C\epsilon^2\Vert R_{\gamma}\Vert_{L^p}\Vert \varphi\Vert^2_{W^{1,2}} - CC_{\epsilon}\Vert R_{\gamma}\Vert_{L^p}\Vert \varphi\Vert^2_{L^{2}},\\
&= (a_n-C\epsilon^2\Vert R_{\gamma}\Vert_{L^p})\Vert \nabla\varphi\Vert^2_{L^2} - C(C_{\epsilon}+\epsilon^2)\Vert R_{\gamma}\Vert_{L^p}\Vert \varphi\Vert^2_{L^{2}}
\end{align*}
Picking $\epsilon$ small enough, we see that there exist positive constants $C_i=(n,\Vert R_{\gamma}\Vert_{L^p})$, $i=1,2$, such that
\begin{align}
E(\varphi)&\geq C_1\Vert \nabla\varphi\Vert^2_{L^2} - C_2 \Vert \varphi\Vert^2_{L^{2}}, \:\: \forall \: \varphi\in W^{1,2}.
\end{align}
Since $L^{2q}(M)\hookrightarrow L^2(M)$ for all $1\leq q\leq \frac{n}{n-2}$, then $\Vert \varphi\Vert_{L^2(M)}\leq C_3\Vert \varphi\Vert_{L^{2q}}(M)$ for all such $q$, and thus
\begin{align*}
E(\varphi)&\geq - C_2 \Vert \varphi\Vert^2_{L^{2}}\geq -C_2C^2_3\Vert \varphi\Vert^2_{L^{2q}}, \:\: \forall \: \varphi\in W^{1,2}(M)
\end{align*}
which proves the claim in the lemma.
\end{proof}

We can therefore introduce the following notation for their infima:
\begin{align*}
\mathcal{Y}_{\gamma,q}\doteq \inf_{\underset{\varphi\not\equiv 0}{\varphi\in W^{1,2}}}J_{\gamma,q}(\varphi). 
\end{align*}

Let us now highlight the role played by the following two especial infima.
\begin{defn}
Let $(M^n,\gamma)$ be a closed Riemannian manifold with $\gamma\in W^{2,p}(M)$, $p>\frac{n}{2}$ and $n\geq 3$. Then, we denote the \emph{first eigenvalue of the conformal Laplacian} by
\begin{align}\label{1stEigenVal}
\lambda_{\gamma}\doteq \mathcal{Y}_{\gamma,1},
\end{align}
while we denote the \emph{Yamabe invariant} by
\begin{align}\label{YamabeInvariant}
\mathcal{Y}([\gamma])\doteq \mathcal{Y}_{\gamma,\frac{n}{n-2}}
\end{align}
\end{defn}

%We now intend to examine the existence of positive minimisers of $\lambda_{\gamma}$. In this process, we shall appeal to the following version of Kato's inequality:
%\begin{prop}\label{AubinPropWeak}
%Let $(M^n,g)$ be a closed manifold with $g\in W^{2,p}$, $p>\frac{n}{2}$, and $\varphi\in W^{1,p}$. Then $\vert \nabla\vert\varphi\vert\vert_g=\vert \nabla\varphi\vert_g$ almost everywhere. 
%\end{prop}
%\begin{proof}
%%%% THE PROOF IS ACTUALLY THE SAME AS IN \cite[Proposition 3.49]{AubinBook}. 
%\end{proof} 

We can now present the main result of this section, which is key in the low-regularity Yamabe classification.

\begin{theo}\label{YamabeLowRegThm.1}
Let $(M^n,\gamma)$ be a closed Riemannian manifold with $\gamma\in W^{2,p}$, $p>\frac{n}{2}$, and $n\geq 3$. Then, there exists a $W^{2,p}$ function $\varphi>0$ such that
\begin{align}\label{YamabeEigenValue.1}
-a_n\Delta_{\gamma}\varphi + R_{\gamma}\varphi=\lambda_{\gamma}\varphi.
\end{align} 
In particular, %$\gamma$ is conformal to a metric with continuous scalar curvature having the same sing as $\lambda_{\gamma}$.
in the conformal class $[\gamma]$ there is a metric $g\in W^{2,p}(M)$ such that 
\begin{align}\label{ScalCurvSign}
R_g=\lambda_{\gamma}\varphi^{-\frac{4}{n-2}}.
\end{align}
\end{theo}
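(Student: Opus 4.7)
My plan is to produce the eigenfunction $\varphi$ via the direct method of the calculus of variations applied to the functional $J_{\gamma,1}$, then bootstrap its regularity to $W^{2,p}(M)$ using the theory developed in Section \ref{SectionRegularity}. The strong maximum principle will then yield positivity, and the conformal change $g=\varphi^{\frac{4}{n-2}}\gamma$ will be justified via the Composition Lemma, with the scalar curvature identity following from the standard conformal transformation law.

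For the first step, I would take a minimising sequence $\{\varphi_k\}\subset W^{1,2}(M)$ for $J_{\gamma,1}$ normalised by $\Vert\varphi_k\Vert_{L^2(M,dV_\gamma)}=1$. The interpolation estimate (\ref{EnergyMapInterpolation}) from Lemma \ref{YamabeWeakCont.1}, combined with the definition of $\lambda_\gamma$ and the $L^2$-normalisation, yields a uniform bound on $\Vert\nabla\varphi_k\Vert_{L^2}$ and hence a $W^{1,2}$-bound on $\{\varphi_k\}$. Extracting a weakly convergent subsequence $\varphi_k\rightharpoonup\varphi$, Rellich compactness upgrades the convergence to strong $L^2$-convergence (so $\Vert\varphi\Vert_{L^2}=1$), weak lower semicontinuity handles the Dirichlet energy, and the weak sequential continuity of $\varphi\mapsto\int_M R_\gamma\varphi^2\,dV_\gamma$ provided by Lemma \ref{YamabeWeakCont.1} handles the potential term. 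Therefore $J_{\gamma,1}(\varphi)\leq\lambda_\gamma$, so $\varphi$ is a minimiser, and a standard first variation computation produces the Euler--Lagrange equation (\ref{YamabeEigenValue.1}) in the distributional sense. Since $J_{\gamma,1}(|\varphi|)=J_{\gamma,1}(\varphi)$, I may replace $\varphi$ by $|\varphi|\geq 0$.

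For the regularity bootstrap, I would rewrite the equation as $\Delta_\gamma\varphi=a_n^{-1}(R_\gamma-\lambda_\gamma)\varphi$ and run a Brezis--Kato/Moser iteration, which is well suited to the borderline setting $R_\gamma\in L^p$ with $p>\frac{n}{2}$ and $\varphi\in L^{\frac{2n}{n-2}}$, producing $\varphi\in L^\infty(M)$. This is the step I expect to be the main technical obstacle, since the usual procedure of testing against powers of $\varphi$ must be justified carefully in the low-regularity framework; the integration-by-parts identities in Propositions \ref{IntByPartsWeakProp} and \ref{IntByPartsWeakProp-W1p} together with the multiplication properties of Corollary \ref{ContractionsSobReg} are the key tools. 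Once $\varphi\in L^\infty$ is established, the right-hand side of the equation lies in $L^p(M)$, and Corollary \ref{ScalarLap-W2RegTHM} (applied with $q=p$, noting that $\varphi\in L^\infty\subset L^{p'}$ on the compact manifold $M$) upgrades $\varphi$ to $W^{2,p}(M)$, which embeds into $C^0(M)$ since $2p>n$. The Aleksandrov--Trudinger form of the strong maximum principle, applicable precisely because the zero-order coefficient $R_\gamma-\lambda_\gamma$ lies in $L^p$ with $p>\frac{n}{2}$, together with $\varphi\geq 0$ and $\Vert\varphi\Vert_{L^2}=1$, then forces $\varphi>0$ everywhere on $M$.

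Finally, I would set $g\doteq\varphi^{\frac{4}{n-2}}\gamma$. Since $\varphi$ is continuous and strictly positive on the compact manifold $M$, the image $\varphi(M)$ is a compact subset of $(0,\infty)$, so the function $F(t)=t^{\frac{4}{n-2}}$ is $C^\infty$ on an open neighborhood of $\varphi(M)$. The Composition Lemma \ref{CompositionLemma}, applied in coordinate charts through a partition of unity, then gives $\varphi^{\frac{4}{n-2}}\in W^{2,p}(M)$, so $g\in W^{2,p}(M)$. The standard conformal transformation law for the scalar curvature, which remains valid pointwise almost everywhere at this level of regularity, together with the eigenfunction equation, yields
\begin{align*}
R_g=\varphi^{-\frac{n+2}{n-2}}\left(-a_n\Delta_\gamma\varphi+R_\gamma\varphi\right)=\varphi^{-\frac{n+2}{n-2}}\lambda_\gamma\varphi=\lambda_\gamma\varphi^{-\frac{4}{n-2}},
\end{align*}
which is precisely (\ref{ScalCurvSign}).
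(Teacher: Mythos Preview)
Your variational argument and the final conformal change are essentially identical to the paper's. The genuine difference lies in the regularity bootstrap. You propose to run a Brezis--Kato/Moser iteration to reach $L^\infty$ in one stroke and then invoke Corollary \ref{ScalarLap-W2RegTHM} a single time. The paper instead avoids importing Moser iteration altogether: it observes directly that $W^{1,2}\hookrightarrow L^{\frac{2n}{n-2}}\hookrightarrow L^{p'}$, so the minimiser already satisfies the $L^{q'}$-hypothesis of Corollary \ref{ScalarLap-W2RegTHM} with $q=p$, and then iterates that corollary. Concretely, $(\lambda_\gamma-R_\gamma)\psi\in L^p\otimes L^{\frac{2n}{n-2}}\hookrightarrow L^{r_0}$ with $\frac{1}{r_0}=\frac{1}{p}+\frac{1}{2}-\frac{1}{n}$, giving $\psi\in W^{2,r_0}$; each further step improves the exponent by the fixed amount $\frac{2}{n}-\frac{1}{p}>0$, so finitely many iterations yield $r_i>\frac{n}{2}$, hence $\psi\in C^0$ and then $\psi\in W^{2,p}$. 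What this buys is self-containment: the paper uses only the Fredholm/regularity machinery it has just built, and never has to justify testing against powers of $\psi$ in the rough-metric setting, which you yourself flag as the delicate point in your route. Your approach is the standard one in the smooth Yamabe literature and does go through here, but the paper's linear iteration is cleaner given the tools at hand. For positivity the paper cites Trudinger's weak Harnack inequality rather than the strong maximum principle; the two are essentially interchangeable in this context.
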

\begin{proof}
Let us first notice that (\ref{YamabeEigenValue.1}) is the Euler-Lagrange equation associated to the functional
\begin{align*}
J_{\gamma,1}(\varphi)=\frac{\int_M\left(a_n|\nabla\varphi|^2_{\gamma} + R_{\gamma}\varphi^2 \right)dV_{\gamma}}{\left(\int_M\varphi^{2} dV_{\gamma}\right)}
\end{align*}	
Since $J_{\gamma,1}(\varphi)$ is bounded by below on $W^{1,2}$ due to Lemma \ref{YamabeLowerBound}, finding a minimizer follows by standard variational arguments. That is, let us consider a minimising sequence $\{\varphi_k\}\subset W^{1,2}$ of $J_{\gamma,1}$, which we take to be $L^2$-normalised (i.e $\Vert \varphi_k\Vert_{L^{2}}=1$). Then, the interpolation inequality (\ref{EnergyMapInterpolation}) guarantees that such sequence must be bounded in $W^{1,2}$ and thus, by compactness of the embedding $W^{1,2}\hookrightarrow L^2$, there is an $L^2$ convergent subsequence to which we now restrict, with limit $\varphi_0\in L^{2}$. Furthermore, since $W^{1,2}$ is reflexive, it is weakly sequentially compact (\textit{i.e}, every bounded sequence has a weakly convergent subsequence), and therefore we can extract a subsequence which converges weakly in $W^{1,2}$ to some $\varphi_1\in W^{1,2}$. Since strong convergence implies weak convergence, $\varphi_k\rightharpoonup \varphi_0$ weakly in $L^2$. But this weak $L^2$ limit must agree with the weak $W^{1,2}$ limit due to $W^{1,2}\hookrightarrow L^2$. Then, since the weak limit is unique, we must have $\varphi_0=\varphi_1\doteq \varphi\in W^{1,2}$. Also, it follows that $\Vert \varphi\Vert_{L^2}=1$ and thus $\varphi\not\equiv 0$. Also, from \cite[Chapter 3, Proposition 3.5]{BrezisBook}, we see that:
\begin{align*}
\Vert\varphi\Vert_{W^{1,2}}\leq \liminf_{k\rightarrow\infty}\Vert \varphi_k\Vert_{W^{1,2}}.
\end{align*}
Since $\Vert\varphi_k\Vert_{L^2}=\Vert\varphi\Vert_{L^2}=1$ for all $k$, the above implies 
\begin{align*}
\Vert\nabla\varphi\Vert^2_{L^{2}}\leq \liminf_{k\rightarrow\infty}\Vert\nabla\varphi_k\Vert^2_{L^{2}}.
\end{align*}
Since the map $u\in W^{1,2}\mapsto \int_{M}R_{\gamma}u^2$ is weakly sequentially continuous, we see that
\begin{align*}
\lambda_{\gamma}&=\lim_{k\rightarrow\infty}\int_{M}\left(a_n|\nabla\varphi_k|^2_{\gamma} + R_{\gamma}\varphi_k^2\right)dV_{\gamma}\geq %\int_{M}a_n|\nabla\varphi|^2_{\gamma}dV_{\gamma} + \lim_{k\rightarrow\infty}\int_MR_{\gamma}\varphi_k^2dV_{\gamma}
\int_{M}\left(a_n|\nabla\varphi|^2_{\gamma} + R_{\gamma}\varphi^2\right)dV_{\gamma}.
\end{align*}
This implies that $J_{\gamma,1}(\varphi)\leq \lambda_{\gamma}$, with $\varphi\in W^{1,2}$, therefore $J_{\gamma,1}(\varphi)=\lambda_{\gamma}$ and hence $\varphi\in W^{1,2}$ is a minimizer. This, in particular, proves that $\varphi$ is a weak solution of (\ref{YamabeEigenValue.1}) and since $J_{\gamma,1}(\varphi)=J_{\gamma,1}(|\varphi|)$ holds from \cite[Proposition 3.49]{AubinBook},\footnote{Notice that the proof given in this reference works perfectly well for a $W^{2,p}$-metric with $p>\frac{n}{2}$.} then $\psi\doteq |\varphi|\in W^{1,2}$ is also a minimizer and thus also a weak solution of (\ref{YamabeEigenValue.1}). Let us now notice that $W^{1,2}(M)\hookrightarrow L^{\frac{2n}{n-2}}(M)\hookrightarrow L^{p'}(M)$ holds as long as
\begin{align*}
\frac{2n}{n-2}\geq p'\Longleftrightarrow 1-\frac{1}{p}\geq \frac{1}{2}-\frac{1}{n}\Longleftrightarrow \frac{1}{2}+\frac{1}{n}\geq \frac{1}{p},
\end{align*}
and since $p>\frac{n}{2}$ by hypothesis, then $\frac{1}{p}<\frac{2}{n}$ and $\frac{2}{n}\leq \frac{1}{2}+\frac{1}{n}\Longleftrightarrow 2\leq n$. Therefore, $\psi\in L^{p'}(M)$ and thus we see that Corollary \ref{ScalarLap-W2RegTHM} justifies the following implication
\begin{align}\label{YamabeBootstrap}
-a_n\Delta_g\psi=(\lambda_{\gamma}-R_{\gamma})\psi\in L^{r}(M), \: 1<r\leq p \Longrightarrow \psi\in W^{2,r}(M).
\end{align}

Notice then that $(\lambda_{\gamma}-R_{\gamma})\psi\in L^p(M)\otimes L^{\frac{2n}{n-2}}(M)\hookrightarrow L^{r_0}(M)$ with $\frac{1}{r_0}=\frac{1}{p}+\frac{1}{2}-\frac{1}{n}$, and $\frac{1}{r_0}<1\Longleftrightarrow p>\frac{2n}{n+2}$, which is satisfied by hypothesis since $\frac{n}{2}>\frac{2n}{n+2}\Longleftrightarrow n> 2$. Thus (\ref{YamabeBootstrap}) yields $\psi\in W^{2,r_0}$. If $r_0> \frac{n}{2}$, this implies that $R_{\gamma}\psi\in L^{p}(M)$ and then applying once more (\ref{YamabeBootstrap}) with $r=p$ gives $\psi\in W^{2,p}(M)$. Therefore, to establish the regularity claim, we need only show that $\psi\in W^{2,r}$ for some $r>\frac{n}{2}$. If $r_0\leq \frac{n}{2}$, we assume the strict inequality holds since we have $\psi\in W^{2,r}$ for any $r\leq r_0$. Then $W^{2,r_0}\hookrightarrow L^{\frac{nr_0}{n-2r_0}}$ and (\ref{YamabeBootstrap}) gives $\psi \in W^{2,r_1}(M)$ with $\frac{1}{r_1}=\frac{1}{p}+\frac{1}{r_0}-\frac{2}{n}$. We can now iterate this process as long as $r_i<\frac{n}{2}$ and get $\psi \in W^{2,r_{i}}(M)$ with $\frac{1}{r_i}=\frac{1}{p}+\frac{1}{r_{i-1}}-\frac{2}{n}$, where
\begin{align*}
\frac{1}{r_{i-1}}-\frac{1}{r_i}=\frac{2}{n}-\frac{1}{p}\doteq \delta>0.
\end{align*}
Then, 
\begin{align*}
\frac{1}{r_{i}}=\frac{1}{r_{i-1}}-\delta=\frac{1}{r_0}-i\delta
\end{align*}
This means that after a finite number of steps gives $\frac{1}{r_i}<\frac{2}{n}$. Therefore, we see that $\psi\in W^{2,p}(M)$, and we also know it satisfies $\psi\geq 0$. Thus, along the same lines as in \cite[Lemma 5.3]{MaxwellRoughAE}, an appeal to the weak Harnack inequality of \textcite[Theorem 5.2]{Trudinger1} shows that either $\psi\equiv 0$ or $\psi>0$. The first choice is clearly not possible, since it would imply $\varphi\equiv 0$, although by construction $\Vert \varphi\Vert_{L^2}=1$. Thus $\psi>0$, which finishes the first part of the proof.

Finally, to prove the scalar curvature statement, we can consider the Riemannian metric $g=\psi^{\frac{4}{n-2}}\gamma$, where $\psi\in W^{2,p}$ is the minimizer just constructed above. Appealing to the conformal covariance formula for the scalar curvature, we know that
\begin{align}
R_g=\psi^{-\frac{n+2}{n-2}}\left(- a_n\Delta_{\gamma}\psi + R_{\gamma}\psi\right)=\lambda_{\gamma}\psi^{1 - \frac{n+2}{n-2}}=\lambda_{\gamma}\psi^{-\frac{4}{n-2}},
\end{align}
which, since $\psi>0$ is continuous, proves that $R_{g}\in W^{2,p}(M)\hookrightarrow C^{0}(M)$, and has the same sing as $\lambda_{\gamma}$.
\end{proof}

\begin{remark}
With the aid of Theorem \ref{YamabeLowRegThm.1} above, one can actually parallel the discussion of \cite{MaxwellRoughClosed} concerning the weak Yamabe classification in the regularity classes $g\in W^{2,p}$, $p>\frac{n}{2}$. We would like to highlight that, actually, in the literature associated to the Einstein constraint equations, even lower regularity classes have been considered and corresponding Yamabe-type classifications can be found, for instance, in \cite{HolstFarCMC,HolstLichCompact,HolstFarCMCWithBoundary}. Nevertheless, for instance in the case of \cite[Theorem 12]{HolstFarCMC}, the classification is obtained via an appeal to \cite[Theorem 11]{HolstFarCMC}, which stands as a more general version of Theorem \ref{YamabeLowRegThm.1} above. In the case of \cite[Theorem 11]{HolstFarCMC}, the bootstrap of $\psi\in W^{1,2}(M)$ to $W^{s,p}(M)$ is claimed to follow from  \cite[Corollary 5]{HolstFarCMC}, which itself follows from \cite[Lemma 32]{HolstFarCMC}. Nevertheless, this last lemma in \cite{HolstFarCMC} has a subtle mistake, which has been pointed out in the introduction to \cite{MaxwellHolstRegularity}. %We highlight that the the regularity results in this paper would not actually cover the full range needed in \cite[Theorem 11]{HolstFarCMC}. Since we will not actually appeal to these Yamabe classifications in the core of the paper, and for our purposes Theorem \ref{YamabeLowRegThm.1} is enough, we will not delve into these issues any further in this paper.
\end{remark}

\section{Compactification of AE 3-manifolds - improved regularity}\label{SectionCompactification}

In this section we shall be concerned with the study of the Sobolev regularity of a conformal compactification $(\hat{M}^3,\hat{g})$ of an AE manifold $(M^3,g)$, which, for simplicity, will be assumed to have only one end. From \cite{MaxwellDiltsYamabeAE} one knows that under quite general conditions such a compactification can be obtained,\footnote{For the sake of completeness, the relevant results are presented within Appendix \ref{AppendixMaxwell}.} but that the regularity of $\hat{g}$ around the point of compactification $p_{\infty}$ is in general limited and it is closely related to the rate of decay of $g$ at infinity. Also, from \cite{Herzlich1997}, one knows that improved regularity is related to the decay of the Cotton tensor $C_{g}$ near infinity. Along this section, we shall establish that certain mild controls over $C_g$ actually guarantee improved regularity of $\hat{g}$ into a $W^{2,q}$-metric, $q>3$.

\begin{lem}\label{ApriopriCompactifiedRegularity}
Let $(M^3,g)$ be a smooth $W^{k,p}_{\tau}$-AE manifold, relative to a structure of infinity with coordinates $\{z^i\}_{i=1}^3$ and $p> 2$, $\tau\in (-1,-\frac{1}{2})$, $k\geq 4$, and let $\hat{M}$ be the one point compactification of $M$. Then, there is a conformal factor $\varphi$ such that
\begin{enumerate}
\item $\hat{g}\doteq \varphi^4g$ extends to a $W^{2,q_0}$ metric on $\hat{M}$, with $q_0>2$. Moreover, $\varphi,\hat{g}\in C^{\infty}(\hat{M}\backslash\{ p_{\infty}\})$;
\item $\varphi=|z|^{-1}\phi$ in a neighbourhood of $p_{\infty}$, where $\phi\in W^{2,q_0}(\hat{M})$, $\phi\in C^{\infty}(\hat{M}\backslash\{ p_{\infty}\})$ and one can furthermore take $\phi(p_{\infty})=1$. Moreover, the inversion $x(z)=\frac{z}{|z|^2}$ for the coordinates $\{z^i\}_{i=1}^3$ near infinity provides a coordinate system around $p_{\infty}$ in which $x(p_{\infty})=0$ and $\hat{g}(\partial_{x^i},\partial_{x^j})|_{0}=\delta_{ij}$;
\item $R_{\hat{g}}\in W^{2,q_0}(\hat{M})$. Also, the scalar curvature has definite sign, with $\mathrm{sign}(R_{\hat{g}})=\lambda_{\hat{g}}$.
\end{enumerate}
Furthermore, if the Cotton tensor satisfies $C_g\in L^{p_1}_{\sigma}(M,dV_g)$, for some $-6<\sigma<-3$ and $p_1=\frac{3}{6+\sigma}$, then:%$\sigma\leq 0$ and $p_1\geq 1$, then:
%$|C_g|=O_1(|z|^{-\sigma})$, for some $\sigma<-4$, then
\begin{enumerate}
\item[4.] $C_{\hat{g}}\in L^{p_1}(\hat{M})$; 
\item[5.] If moreover $-6<\sigma<-4$ and $C_{\hat{g}}\in L^{q_1}(\hat{M})$ for some $q_1> \frac{3}{2}$, then $\Delta_{\hat{g}}\mathrm{Ric}_{\hat{g}}\in W^{-1,q_2}(\hat{M})$ for some $\frac{3}{2}<q_2\leq q_1$.
%If furthermore $C_{\hat{g}}\in L^{q_1}(\hat{M})$ for some $q_1> \frac{3}{2}$, then $\Delta_{\hat{g}}\mathrm{Ric}_{\hat{g}}\in W^{-1,q_2}(\hat{M})$ for some $\frac{3}{2}<q_2\leq q_1$.
\end{enumerate}
\end{lem}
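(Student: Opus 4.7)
The plan is to proceed in the order the five claims are stated, each built on its predecessor. First, I would invoke \cite[Lemma 5.2]{MaxwellDiltsYamabeAE} (restated in Appendix \ref{AppendixMaxwell}) applied to the given $W^{k,p}_\tau$-AE metric with $\tau\in(-1,-\tfrac12)$ and $p>2$: since $kp>n=3$ and $\tau>-1$, this produces an a priori conformal compactification $(\hat M,\bar g)$ of some Sobolev regularity $W^{2,q_0}(\hat M)$ with $q_0>2$ determined by $p$ and $\tau$, together with a conformal factor of the announced form $\bar\varphi=|z|^{-1}\bar\phi$, $\bar\phi(p_\infty)=1$, smoothness away from $p_\infty$, and the inversion $x=z/|z|^2$ as a coordinate chart around $p_\infty$. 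To obtain claim (2) in the stated form (that $\hat g(\partial_{x^i},\partial_{x^j})|_0=\delta_{ij}$), I would compose this inversion with a linear change of coordinates in $x$ and rescale $\bar\varphi$ by a positive constant; this yields a new conformal factor with the same structure and a metric normalised at the origin, which handles claims (1)--(2).

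Next, for claim (3), I would apply the low-regularity Yamabe-type selection of Section \ref{SectionConformalProps}, specifically Theorem \ref{YamabeLowRegThm.1}, to the $W^{2,q_0}$ metric $\bar g$ on the closed manifold $\hat M$: this produces a positive function $\psi\in W^{2,q_0}(\hat M)$ solving $-a_n\Delta_{\bar g}\psi+R_{\bar g}\psi=\lambda_{\bar g}\psi$, so that $\hat g\doteq\psi^{4/(n-2)}\bar g\in[\bar g]$ has scalar curvature $R_{\hat g}=\lambda_{\bar g}\psi^{-4/(n-2)}$. Since $\psi\in W^{2,q_0}$ is bounded away from zero and $W^{2,q_0}(\hat M)$ is an algebra (as $2q_0>3$), the composition lemma (Lemma \ref{CompositionLemma}) gives $R_{\hat g}\in W^{2,q_0}(\hat M)$ with sign equal to $\mathrm{sign}(\lambda_{\bar g})$, and rechristening the composite conformal factor $\varphi$ keeps the structural form in claim (2) (with a new $\phi$ of the same regularity).

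For claim (4), I would exploit the conformal invariance of the Cotton tensor in dimension three: under $\hat g=\varphi^4 g$, one has $C_{\hat g}{}_{ijk}=C_{g}{}_{ijk}$ pointwise on $M\subset\hat M$. Computing the volume elements, $dV_{\hat g}=\varphi^{6}\,dV_g$, while $|C_{\hat g}|_{\hat g}^2=\varphi^{-12}|C_g|_g^2$, so that
\[
\int_{\hat M}|C_{\hat g}|_{\hat g}^{p_1}\,dV_{\hat g}=\int_M\varphi^{6-6p_1}|C_g|_g^{p_1}\,dV_g.
\]
Near infinity $\varphi\sim|z|^{-1}$, giving the weight $|z|^{6p_1-6}$; the hypothesis $C_g\in L^{p_1}_\sigma(M,dV_g)$ with $p_1=3/(6+\sigma)$ is exactly the balance that makes this integral finite. (Away from infinity $\varphi$ is smooth and bounded, so the compact part poses no issue.) This gives $C_{\hat g}\in L^{p_1}(\hat M)$.

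The final claim (5) is where the core analytic work lies and will be the main obstacle. Under $-6<\sigma<-4$ the exponent $p_1$ is at least $3/2$, and upgrading to $q_1>3/2$ is assumed. The strategy is to use the second Bianchi identity together with the defining relation (\ref{CottonTensorInto}): in three dimensions
\[
\nabla^k C_{ijk}=\Delta_{\hat g}\mathrm{Ric}_{ij}-\nabla_j\nabla^k\mathrm{Ric}_{ik}+\tfrac{1}{4}(\nabla_j\nabla_iR_{\hat g}-g_{ij}\Delta_{\hat g}R_{\hat g}),
\]
and after applying the contracted Bianchi identity $\nabla^k\mathrm{Ric}_{ik}=\tfrac12\nabla_iR_{\hat g}$, the zero-order curvature commutator terms coming from $[\nabla_j,\nabla^k]\mathrm{Ric}_{ik}$ and the surviving Hessian of $R_{\hat g}$ combine, so that $\Delta_{\hat g}\mathrm{Ric}_{\hat g}$ can be expressed as $\mathrm{div}_{\hat g}C_{\hat g}$ plus terms built from $\nabla^2 R_{\hat g}$ and from products $\mathrm{Riem}_{\hat g}\cdot\mathrm{Ric}_{\hat g}$. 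Since $C_{\hat g}\in L^{q_1}(\hat M)$, its distributional divergence lies in $W^{-1,q_1}(\hat M)$; the Hessian terms in $R_{\hat g}$ lie in $L^{q_0}\subset W^{-1,q_2}$ for suitable $q_2$ thanks to claim (3); and the curvature products need to be controlled in $W^{-1,q_2}$ using the multiplication properties of Corollary \ref{ContractionsSobReg}, where the fact that $\mathrm{Ric}_{\hat g}\in L^{q_0}(\hat M)$ and $\hat g\in W^{2,q_0}$ with $q_0>2$ and $n=3$ lets me choose $q_2\in(3/2,q_1]$. The delicate point will be to verify all the duality-based multiplication inequalities to ensure each product indeed lives in $W^{-1,q_2}$, since the operator $\Delta_{\hat g}$ itself has coefficients of only $W^{2,q_0}$ regularity and the relevant commutator identities must be justified weakly, along the lines of Proposition \ref{WeakCommutation}.
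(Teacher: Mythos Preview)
Your proposal is correct and follows essentially the same route as the paper: Maxwell--Dilts compactification, then the low-regularity Yamabe eigenfunction to get $R_{\hat g}\in W^{2,q_0}$, conformal invariance of the Cotton tensor for item (4), and for item (5) the divergence identity expressing $\Delta_{\hat g}\mathrm{Ric}_{\hat g}$ as $\hat\nabla^k C_{ijk}$ plus $\nabla^2 R_{\hat g}$ and $\mathrm{Riem}\cdot\mathrm{Ric}$ terms, the latter controlled in $W^{-1,q_2}$ via $L^{q_0/2}\hookrightarrow W^{-1,q}$ for some $q>3/2$ when $q_0>2$.

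Two minor points to tighten. First, Lemma \ref{LemmaMaxwellDilts} as stated requires the tied relation $\tau=\tfrac{n}{p}-2$, so you cannot apply it directly with your given $(p,\tau)$; the paper first embeds $W^{2,p}_\tau\hookrightarrow W^{2,q_0}_\delta$ with $\delta\in(\tau,-\tfrac12)$ and $q_0=\tfrac{3}{2+\delta}>2$, and it is this step that produces the specific $q_0$ (the normalisation $\bar g(\partial_{x^i},\partial_{x^j})|_0=\delta_{ij}$ already comes from the lemma, so no extra linear change is needed; the constant rescaling comes \emph{after} the Yamabe step, to force $\phi(p_\infty)=1$). Second, in item (5) your use of the contracted Bianchi identity $\hat\nabla^k\mathrm{Ric}_{ik}=\tfrac12\hat\nabla_i R_{\hat g}$ is only automatic on $\hat M\setminus\{p_\infty\}$ where $\hat g$ is smooth; the paper justifies it globally by observing that the defect lies in $W^{-1,q_0}$ and is supported at the single point $p_\infty$, hence vanishes by Proposition \ref{PropOnePointSupport} since $q_0>\tfrac32$.
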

\begin{remark}\label{ApriopriCompactifiedRegularityRemark}
Notice that the fourth item guarantees $C_{\hat{g}}\in L^{q_1}(\hat{M})$ for some $q_1>\frac{3}{2}$ as long as $p_1>\frac{3}{2}$, while $C_{g}\in L^{p_1}_{\sigma}(M)$ satisfies $p_1\doteq \frac{3}{6+\sigma}>\frac{3}{2}$ as long as $\sigma$ satisfies:
\begin{align*}
\frac{3}{6+\sigma}>\frac{3}{2}\Longleftrightarrow 2>6+\sigma\Longleftrightarrow \sigma<-4.
\end{align*}
\end{remark}

\begin{remark}\label{RemarkComparisonHerzlich}
Assume $|C_g|=O(|z|^{-5})$, then
\begin{align*}
\int_{\mathbb{R}^3\backslash\overline{B}}|C_g|^p|z|^{-\sigma p- 3}dz&\lesssim \int_{\mathbb{R}^3\backslash\overline{B}}|z|^{-5p}|z|^{-\sigma p - 3}dz=\int_{\mathbb{R}^3\backslash\overline{B}}|z|^{-p(5+\sigma)}|z|^{ - 3}dz\lesssim \int_{r>1}r^{-p(5+\sigma)}r^{ - 1}dr,
\end{align*}
which is finite for any $\sigma<-5$. That is, $|C_g|=O(|z|^{-5})$ implies $C_g\in L^p_{\sigma}(M)$ for all $p\geq 1$ and $\sigma<-5$. Thus, the decaying conditions used in \cite{Herzlich1997} are strictly contained within the hypotheses $C_g\in L^p_{\sigma}(M)$ for some $-6<\sigma<-3$ and $p=\frac{3}{6+\sigma}$.
\end{remark}
\begin{proof}
We first claim that there is some $\tau<\delta<-\frac{1}{2}$ such that $W^{2,p}_{\tau}\hookrightarrow W^{2,q_0}_{\delta}$, where $q_0\doteq \frac{3}{2+\delta}$. This is granted by Theorem \ref{SobolevPorpsAE} if we can guarantee that
\begin{align*}
\tau<\delta<-\frac{1}{2} \text{ and } 1<q_0=\frac{3}{2+\delta}\leq p.
\end{align*}
Noticing that
\begin{align*}
1<\frac{3}{2+\delta}\leq p\Longleftrightarrow \frac{3}{p}-2\leq \delta<1,
\end{align*}
we see that if there some $\delta$ satisfying
\begin{align}
-\frac{1}{2}>\delta>\max(\tau,\frac{3}{p}-2),
\end{align}
then the desired inclusion $W^{2,p}_{\tau}\hookrightarrow W^{2,q_0}_{\delta}$ follows. Since $\tau<-\frac{1}{2}$, this is granted provided that
\begin{align*}
\frac{3}{p}-2<-\frac{1}{2}\Longleftrightarrow \frac{3}{p}<\frac{3}{2} \Longleftrightarrow 2<p,
\end{align*}
which is satisfied by hypothesis.

The first item above now follows from Lemma \ref{LemmaMaxwellDilts} for $q_0=\frac{3}{2+\delta}$ as long as $q_0>\frac{3}{2}$ and $g$ is $W^{2,q_0}_{\delta}$-AE with $\delta>-2$. Noticing that
\begin{align*}
q_0> 2 \Longleftrightarrow \frac{3}{2+\delta}> 2 \Longleftrightarrow \frac{3}{2}>2+\delta\Longleftrightarrow -\frac{1}{2}> \delta. 
\end{align*}
from the inclusion $W^{2,p}_{\tau}\hookrightarrow W^{2,q_0}_{\delta}$ established above, we see that $\bar{g}\in W^{2,q_0}(\hat{M})$, with $2<q_0\leq p$ and $\hat{M}$ compact. For the remaining statements, it is important to notice that the metric $\bar{g}$ obtained above from Lemma \ref{LemmaMaxwellDilts} is of the form $\bar{g}=u^{4}g$, $u\in C^{\infty}(\hat{M}\backslash\{p_{\infty}\})$, and, in a neighbourhood of the point of compactification $p_{\infty}\in \hat{M}$, the conformal factor satisfies $u=|z|^{-1}$. Moreover, using the inverted coordinates $x^i=\frac{z^i}{|z|^2}$, we also know from Lemma \ref{LemmaMaxwellDilts} that $\bar{g}(\partial_{x^i},\partial_{x^j})|_{x=0}=\delta_{ij}$.

Now, from Theorem \ref{YamabeLowRegThm.1}, there is some element $\tilde{g}\in [\bar{g}]$, such that $\tilde{g}=\phi^4\bar{g}$, where $\tilde{\phi}>0$ is a $W^{2,q_0}$-solution of (\ref{YamabeEigenValue.1}), and the scalar curvature satisfies $R_{\tilde{g}}=\lambda_{\bar{g}}\tilde{\phi}^{-4}$. Notice that, due to Sobolev multiplication, $\tilde{g}\in W^{2,q_0}$. Also, Lemma \ref{CompositionLemma} actually shows that $\tilde{\phi}^{-4}\in W^{2,q_0}$, so we see that $R_{\tilde{g}}\in W^{2,q_0}(\hat{M})$. The regularity statement away of $p_{\infty}$ follows since $\tilde{g}$ constructed above is smooth away of $p_{\infty}$, and thus, given a cut-off function $\eta$ supported in a coordinate ball $B$ which does not meet $p_{\infty}$, since $\tilde{\phi}$ solves (\ref{YamabeEigenValue.1}), we have that
\begin{align*}
-a_3\Delta_{\bar{g}}(\eta\tilde{\phi})=\eta(\lambda_{\bar{g}}-R_{\bar{g}})\tilde{\phi} +\Delta_{\bar{g}}\eta \tilde{\phi} + 2\bar{g}(\nabla\eta,\nabla\tilde{\phi})\in W_0^{1,q_0}(B).
\end{align*}  
Since the coefficients of $\Delta_{\bar{g}}$ are smooth on $B$, from a classical elliptic regularity bootstrap we find $\eta\tilde{\phi}\in C^{\infty}(B)$, and thus $\tilde{\phi}\in C^{\infty}(\hat{M}\backslash\{p_{\infty}\})$. This, in turn, also implies that $\tilde{g}\in C^{\infty}(\hat{M}\backslash\{p_{\infty}\})$.

We have therefore found that $(\hat{M},\tilde{g})$ satisfies $\tilde{g},R_{\tilde{g}}\in W^{2,q_0}(\hat{M})$, for some $q_0>2$, and, keeping track of our conformal transformations:% in a neighbourhood of $p_{\infty}$ we have that
\begin{align*}
\tilde{g}&=\tilde{\phi}^{4}\bar{g}=\tilde{\varphi}^4g, \text{ with } \tilde{\phi}\in C^{\infty}(\hat{M}\backslash\{p_{\infty}\}),\\
\tilde{\varphi}&= |z|^{-1}\tilde{\phi},\text{ in an neighbourhood of } p_{\infty} \text{ and } \tilde{\phi}\in W^{2,q_0}(\hat{M}).
\end{align*}

Now, to establish the second claim in the lemma, one can consider $\hat{g}=\tilde{\phi}^{-4}(p_{\infty})\tilde{g}$, so that
\begin{align*}
\hat{g}=(\tilde{\phi}^{-1}(p_{\infty})\tilde{\phi})^{4}\bar{g}=(\tilde{\phi}^{-1}(p_{\infty})\tilde{\varphi})^4g. %\;\; \tilde{\phi}^{-1}(p_{\infty})\tilde{\varphi}=|z|^{-1}\tilde{\phi}^{-1}(p_{\infty})\tilde{\phi}.
\end{align*}
Setting $\phi\doteq \tilde{\phi}^{-1}(p_{\infty})\tilde{\phi}\in W^{2,q_0}(\hat{M})$ and $\varphi\doteq \tilde{\phi}^{-1}(p_{\infty})\tilde{\varphi}=u\phi$, the metric $\hat{g}$ now obeys the same properties as $\tilde{g}$ did, namely properties $1.$ and $3.$ in the lemma, plus the regularity properties in the second statement, and now additionally that $\phi(p_{\infty})=1$, establishing the second property as well, since this also implies 
\begin{align*}
\hat{g}(\partial_{x^i},\partial_{x^j})|_{x=0}=\phi^4(p_{\infty})\bar{g}(\partial_{x^i},\partial_{x^j})|_{x=0}=\delta_{ij}.
\end{align*}
%To simplify notations, below we will therefore denote the metric satisfying items $1.$  and $3.$ by $\hat{g}$.

Now, to analyse the behaviour of the Cotton tensor, recall that
\begin{align}\label{CottonTensor}
C_{ijk}(g)\doteq \nabla_k\mathrm{Ric}_{ij} - \nabla_j\mathrm{Ric}_{ik} + \frac{1}{4}(\nabla_jR_gg_{ik} - \nabla_kR_gg_{ij}).
\end{align}
Since this is a conformal invariant in three dimensions, $C_{ijk}(\hat{g})=C_{ijk}(g)$ on $\hat{M}\backslash\{p_{\infty}\}$. Our first objective is to show that $C_{\hat{g}}$ is given by an $L^{q}(\hat{M})$-field. Notice that, a priori, $C_{\hat{g}}\in \mathcal{D}'$ is given by a distribution, which is represented by a smooth field on $\hat{M}\backslash\{p_{\infty}\}$. 

\begin{claim}\label{ClaimCotton1}
The Cotton tensor $C_{\hat{g}}$ satisfies $C_{\hat{g}}\in W^{-1,q_0}(\hat{M})$.
\end{claim}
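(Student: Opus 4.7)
The plan is to exploit the fact that both $\hat g$ and $R_{\hat g}$ have been improved to $W^{2,q_0}$ regularity, and to treat the two structurally different parts of the Cotton tensor (\ref{CottonTensorInto}), namely $\nabla \mathrm{Ric}$ and the $\nabla R\cdot g$ terms, separately. The argument will be local: combine the coordinate formula $\nabla_k \mathrm{Ric}_{ij}=\partial_k\mathrm{Ric}_{ij}-\Gamma^l_{ki}\mathrm{Ric}_{lj}-\Gamma^l_{kj}\mathrm{Ric}_{il}$ with the Sobolev multiplication properties of Corollary \ref{ContractionsSobReg} and the dual Sobolev embedding, and then reassemble via a partition of unity on the closed manifold $\hat M$.

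The contributions involving $\nabla R_{\hat g}$ are straightforward: since $R_{\hat g}\in W^{2,q_0}(\hat M)$, its gradient is a $W^{1,q_0}$ section, and multiplication with $\hat g\in W^{2,q_0}$ yields, via Corollary \ref{ContractionsSobReg} with $n=3$ and $q_0>2>\tfrac{3}{2}$, that $\nabla R_{\hat g}\otimes \hat g\in W^{1,q_0}(\hat M)\hookrightarrow W^{-1,q_0}(\hat M)$.

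The main work lies in the $\nabla \mathrm{Ric}_{\hat g}$ contribution. First, using the local expression (\ref{gauge0})--(\ref{f-tensor}) for the Ricci tensor together with the multiplication $W^{1,q_0}\otimes W^{1,q_0}\hookrightarrow L^{q_0}$ (valid for $q_0>\tfrac{n}{2}$ by Corollary \ref{ContractionsSobReg}), I would show $\mathrm{Ric}_{\hat g}\in L^{q_0}(\hat M)$. In a local chart the partial derivative part $\partial\mathrm{Ric}_{\hat g}$ is then in $W^{-1,q_0}$ by definition. For the Christoffel products $\Gamma(\hat g)\cdot \mathrm{Ric}_{\hat g}$, with $\Gamma(\hat g)\in W^{1,q_0}$, I would split into cases: if $q_0\geq 3$ the Christoffels lie in $L^{s}$ for every $s<\infty$ and Hölder gives $\Gamma\cdot\mathrm{Ric}\in L^{q_0}$; if $2<q_0<3$, the Sobolev embedding $W^{1,q_0}\hookrightarrow L^{3q_0/(3-q_0)}$ combined with Hölder yields $\Gamma\cdot\mathrm{Ric}\in L^r$ with $\tfrac{1}{r}=\tfrac{2}{q_0}-\tfrac{1}{3}$. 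In either case, a direct check using Sobolev duality $L^r\hookrightarrow W^{-1,q_0}$, which holds as long as $\tfrac{1}{r}\leq\tfrac{1}{q_0}+\tfrac{1}{3}$, gives the desired $W^{-1,q_0}$ control, since this inequality reduces to $q_0\geq\tfrac{3}{2}$.

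Once the local estimates are in place, a standard partition of unity argument on $\hat M$ glues them into $C_{\hat g}\in W^{-1,q_0}(\hat M)$. The delicate point is the treatment of $\Gamma\cdot\mathrm{Ric}$ when $2<q_0\leq 3$, where $\Gamma(\hat g)$ is not a priori bounded and one must keep careful track of the correct integrability exponent and the dual Sobolev embedding; the cushion that makes this work is that the hypothesis $q_0>2$ exceeds the borderline $\tfrac{n}{2}=\tfrac{3}{2}$ by enough that all multiplications and embeddings close up.
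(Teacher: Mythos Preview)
Your proposal is correct and follows essentially the same route as the paper: both split the Cotton tensor into the scalar-curvature terms (which land in $W^{1,q_0}$ since $R_{\hat g}\in W^{2,q_0}$) and the $\hat\nabla\mathrm{Ric}_{\hat g}$ terms, and both treat the latter via the local formula $\hat\nabla_k\mathrm{Ric}_{ij}=\partial_k\mathrm{Ric}_{ij}-\Gamma\cdot\mathrm{Ric}$. The only difference is packaging: the paper invokes the pre-built mapping Lemma~\ref{ContinuityPropsGeneralOps1stOrder} for $\hat\nabla\in\mathcal{L}^1(W^{1,r})$ with $r>3$, while you unpack that lemma by hand, estimating $\Gamma\cdot\mathrm{Ric}\in L^r$ via H\"older and then $L^r\hookrightarrow W^{-1,q_0}$ via dual Sobolev---the arithmetic you carry out ($\tfrac{1}{r}=\tfrac{2}{q_0}-\tfrac{1}{3}\leq\tfrac{1}{q_0}+\tfrac{1}{3}$, reducing to $q_0\geq\tfrac{3}{2}$) is exactly what underlies that lemma in this instance.
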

\begin{proof}
Since $R_{\hat{g}}\in W^{2,q_0}(\hat{M})$, from the expression 
\begin{align}\label{CottonTensor}
C_{ijk}(\hat{g})\doteq \hat{\nabla}_k{\mathrm{Ric}_{\hat{g}}}_{ij} - \hat{\nabla}_i{\mathrm{Ric}_{\hat{g}}}_{ik} + \frac{1}{4}(\hat{\nabla}_jR_{\hat{g}}\hat{g}_{ik} - \hat{\nabla}_kR_{\hat{g}}\hat{g}_{ij}),
\end{align}
we see that we only need to control the Ricci terms, since the scalar curvature ones are already in $W^{1,q_0}(\hat{M})$. A priori we know that $\mathrm{Ric}_{\hat{g}}\in L^{q_0}(\hat{M})$. To guarantee that $\nabla \mathrm{Ric}_{\hat{g}}\in W^{-1,q_0}(T_3M)$, we can appeal to Lemma \ref{ContinuityPropsGeneralOps1stOrder}. For that, we need to check that the coefficients satisfy the hypotheses of (\ref{1stOrderOp}), with $A_{\alpha}\in W_{loc}^{|\alpha|,r}$ for $r>n$, and that $\frac{1}{r}+\frac{1}{q_0}\leq 1$. This follows from a similar analysis to that done for (\ref{IntByParts1orderReg}), with $u=\mathrm{Ric}_{\hat{g}}$. Noticing that the top order coefficients in $\hat{\nabla}_k{\mathrm{Ric}_{\hat{g}}}_{ij}$ are constant, they are clearly in $W^{1,r}_{loc}(U)$, for any $r$. The regularity of the zero order coefficients is given by that of $\Gamma^l_{ij}(\hat{g})\in W^{1,q_0}_{loc}(U)$. We already know from the analysis done for (\ref{IntByParts1orderReg}) that, if $\frac{n}{2}< q_0<n$, then $W^{1,q_0}_{loc}(U)\hookrightarrow L^{\frac{nq_0}{n-q_0}}_{loc}(U)$, with $\frac{nq_0}{n-q_0}>n$ 
%\begin{align*}
%\frac{nq}{n-q}>n\Longleftrightarrow q>\frac{n}{2},
%\end{align*}
and thus, in the notations of (\ref{1stOrderOp}), we can find some $r>n$, such that $A_{|\alpha|}\in W^{|\alpha|,r}_{loc}$. We now need to guarantee that $\frac{1}{r}+\frac{1}{q_0}\leq 1$, but since $r\geq q_0$ and $q_0\geq q_0'$, then
\begin{align*}
\frac{1}{r}+\frac{1}{q_0}\leq \frac{1}{q_0}+\frac{1}{q_0'}=1,
\end{align*}
and thus we are under the hypotheses of Lemma \ref{ContinuityPropsGeneralOps1stOrder}, and we can deduce then that $\nabla \mathrm{Ric}_{\hat{g}}\in W^{-1,q_0}(T_3M)$.

%Thus, to control $\hat{\nabla}\mathrm{Ric}_{\hat{g}}$ around $p_{\infty}$, we need to control $\partial \mathrm{Ric}_{\hat{g}}$ and contractions of the form $\hat{\Gamma}_{\cdot}\mathrm{Ric}_{\hat{g}}$. Concerning the former, given a small coordinate neighbourhood $U$ of $p_{\infty}$, we know that $\partial \mathrm{Ric}_{\hat{g}}\in W^{-1,q_0}(U)$. To deal with the latter, noticing that  $\hat{\Gamma}_{\cdot}\mathrm{Ric}_{\hat{g}}\in W^{1,q_0}(U)\otimes L^{q_0}(U)$, we need to analyse the embedding $W^{1,q_0}(U)\otimes L^{q_0}(U)\hookrightarrow W^{-1,q_0}(U)$. Appealing to Theorem \ref{SobolevMultLocal}, we need to satisfy (\ref{MultiplicationConditions})-(\ref{MultiplicationConditionsDuals}) under our conditions. Concerning (\ref{MultiplicationConditions}), the first two conditions are trivial, while the third one is equivalent to $2>\frac{3}{q_0}$, which is satisfied by hypothesis. Then, (\ref{MultiplicationConditionsDuals}) is equivalent to
%\begin{align*}
%1\geq 3\left(\frac{2}{q_0} - 1 \right)\Longleftrightarrow \frac{2}{q_0}\leq \frac{4}{3} \Longleftrightarrow q_0\geq \frac{3}{2},
%\end{align*}
%which is again satisfied. 
\end{proof}

From the above claim, we know that $C_{\hat{g}}\in W^{-1,q_0}(\hat{M})$ a priori, but also $C_{\hat{g}}\in C^{\infty}(\hat{M}\backslash\{p_{\infty}\})$. We intend to show that, if $C_g$ decays fast enough, $C_{\hat{g}}$ can actually be extended to an $L^q$-field on $\hat{M}$.
\begin{claim}
If the Cotton tensor satisfies $C_g\in L^{p_1}_{\sigma}(M,dV_g)$ for some $-6<\sigma< -3$ and $p_1= \frac{3}{6+\sigma}$, then $C_{\hat{g}}\in L^{p_{1}}(\hat{M})$.% for all $1\leq q\leq \frac{3-p_1\sigma}{6}$
\end{claim}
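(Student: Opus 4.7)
The plan is to split $\hat{M}=U\cup V$, where $U$ is a small neighbourhood of $p_{\infty}$ given by the inverted coordinates $x=\frac{z}{|z|^{2}}$, and $V=\hat{M}\setminus U$ is a compact set on which the original metric $g$ is smooth and the conformal factor $\varphi$ is bounded above and below away from zero. On $V$, the identity $C_{\hat{g}}=\varphi^{-6}C_{g}$ (see below) shows $C_{\hat{g}}\in C^{\infty}(V)$, so only the contribution from $U$ requires careful estimation, and the whole argument reduces to a change-of-variables computation near $p_{\infty}$ that matches the conformal weight to the weighted Lebesgue weight on the end.

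The key observation is that in dimension $3$ the Cotton tensor, viewed as a $(0,3)$-tensor, is conformally invariant: if $\hat{g}=\varphi^{4}g$ on $M\setminus\{p_{\infty}\}$ then $C_{ijk}(\hat{g})=C_{ijk}(g)$ pointwise. Since $\hat{g}^{ij}=\varphi^{-4}g^{ij}$, raising all three indices gives $|C_{\hat{g}}|_{\hat{g}}=\varphi^{-6}|C_{g}|_{g}$, and the volume forms transform as $dV_{\hat{g}}=\varphi^{6}\,dV_{g}$ in $n=3$. Combining both:
\begin{align*}
\int_{U}|C_{\hat{g}}|_{\hat{g}}^{p_{1}}\,dV_{\hat{g}}
=\int_{U}\varphi^{6-6p_{1}}\,|C_{g}|_{g}^{p_{1}}\,dV_{g}.
\end{align*}
On the end, using $\varphi=|z|^{-1}\phi$ with $\phi\in W^{2,q_{0}}(\hat{M})\hookrightarrow C^{0}(\hat{M})$ and $\phi(p_{\infty})=1$, the factor $\phi$ is bounded above and below on a neighbourhood of $p_{\infty}$, so $\varphi^{6-6p_{1}}\asymp|z|^{6p_{1}-6}$ there. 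Moreover, because $g$ is AE with $\tau<-\tfrac{1}{2}$, both $dV_{g}$ and the tensor norm $|\cdot|_{g}$ are uniformly equivalent to their Euclidean counterparts on the end.

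The proof is then closed by a purely algebraic check: the defining choice $p_{1}=\tfrac{3}{6+\sigma}$ is equivalent to $(6+\sigma)p_{1}=3$, i.e.
\begin{align*}
6p_{1}-6=-\sigma p_{1}-3,
\end{align*}
which is exactly what is needed so that the conformal weight $|z|^{6p_{1}-6}$ coincides, up to uniform constants, with the weight $|z|^{-\sigma p_{1}-3}$ appearing in $\|C_{g}\|_{L^{p_{1}}_{\sigma}}^{p_{1}}$. Therefore
\begin{align*}
\int_{U}|C_{\hat{g}}|_{\hat{g}}^{p_{1}}\,dV_{\hat{g}}
\lesssim\int_{|z|>R_{0}}|C_{g}|^{p_{1}}|z|^{-\sigma p_{1}-3}\,dz
\lesssim\|C_{g}\|_{L^{p_{1}}_{\sigma}(M,\Phi_{z})}^{p_{1}}<\infty,
\end{align*}
and adding the trivially finite contribution from $V$ yields $C_{\hat{g}}\in L^{p_{1}}(\hat{M})$. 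There is no real obstacle here beyond keeping track of the conformal weights carefully; the non-trivial input is the algebraic miracle that the prescribed exponent $p_{1}=\tfrac{3}{6+\sigma}$ forces the conformal weight arising from $dV_{\hat{g}}$ and the dualisation of $C_{ijk}$ to cancel precisely against the weighted-$L^{p_{1}}$ weight imposed on $C_{g}$.
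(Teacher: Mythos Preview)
Your proof is correct and follows essentially the same route as the paper: both use the conformal invariance $C_{ijk}(\hat{g})=C_{ijk}(g)$, the resulting norm relation $|C_{\hat{g}}|_{\hat{g}}=\varphi^{-6}|C_g|_{g}$ together with $dV_{\hat{g}}=\varphi^{6}dV_g$, and the algebraic identity $6p_1-6=-\sigma p_1-3$ (equivalent to $p_1=\tfrac{3}{6+\sigma}$) to match the conformal weight against the $L^{p_1}_{\sigma}$-weight. One small notational slip: near the start you write ``the identity $C_{\hat{g}}=\varphi^{-6}C_g$'', which is not literally correct as an identity of $(0,3)$-tensors (the covariant components are \emph{equal}); the $\varphi^{-6}$ factor only appears after passing to the $\hat{g}$-norm, which you state correctly later.
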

\begin{proof}
We need only to establish the result in a neighbourhood of $p_{\infty}$. Notice that on $\hat{M}\backslash\{p_{\infty}\}$
\begin{align*}
\vert C_{\hat{g}}\vert^2_{\hat{g}}=\hat{g}^{ij}\hat{g}^{kl}\hat{g}^{ab}{C_{\hat{g}}}_{ika}{C_{\hat{g}}}_{jlb}=\varphi^{-12}g^{ij}g^{kl}g^{ab}{C_{g}}_{ika}{C_{g}}_{jlb}=\varphi^{-12}\vert C_g\vert^2_{g}.
\end{align*}
That is, $\vert C_{\hat{g}}\vert_{\hat{g}}=\varphi^{-6}\vert C_g\vert_{g}$. Since near $p_{\infty}$ we know that $\varphi=|z|^{-1}\phi$, with $\phi\in W^{2,q_0}(\hat{M})\hookrightarrow C^{0}(\hat{M})$, then given $\epsilon>0$ sufficiently small:
\begin{align*}
\int_{B_{\epsilon}}\vert \eta C_{\hat{g}}\vert^{p_{1}}_{\hat{g}}dV_{\hat{g}}(x)%\lesssim \int_{B_{\epsilon}}\varphi^{-6 q_{\sigma}}\eta\vert C_g\vert^{q_{\sigma}}_{g}\varphi^{6}dV_{g}(x)\\
&\lesssim \int_{\mathbb{R}^3\backslash B_{\epsilon^{-1}}}\eta\vert C_g\vert^{p_{1}}_{g}|z|^{6p_{1}-6}dV_{g}(z)\lesssim \Vert C_{g}\Vert^{p_1}_{L^{p_1}_{\sigma}}
\end{align*}
where $\eta\in C^{\infty}_{0}(B_{\epsilon}(p_{\infty}))$ is a cut-off function, $0\leq \eta\leq 1$, equal to one in a neighbourhood of $p_{\infty}$, and the last estimate holds because $6p_{1}-6=-3-\sigma p_1$, which follows from our hypothesis $p_1=\frac{3}{6+\sigma}$.
%\begin{align*}
%\int_{B_{\epsilon}}\vert \eta\tilde{C}\vert^{q_{\sigma}}_{\hat{g}}dV_{\hat{g}}(x)%\lesssim \int_{B_{\epsilon}}\varphi^{-6 q_{\sigma}}\eta\vert C_g\vert^{q_{\sigma}}_{g}\varphi^{6}dV_{g}(x)\\
%&\lesssim \int_{\mathbb{R}^3\backslash B_{\epsilon^{-1}}}\eta\vert C_g\vert^{q_{\sigma}}_{g}|z|^{6q_{\sigma}-6}dV_{g}(z),\\
%&=\int_{\mathbb{R}^3\backslash B_{\epsilon^{-1}}}\eta\vert C_g\vert^{q_{\sigma}}_{g}|z|^{-3-\sigma p_1}dV_{g}(z),
%\end{align*}
Since by hypothesis $C_g\in L^{p_1}_{\sigma}(M)$, then the integral in the right hand side of the above expression is finite, and thus $C_{\hat{g}}\in L^{p_1}(\hat{M})$.
%if we grant that
%\begin{align*}
%6q-6\leq -3-p_1\sigma\Longleftrightarrow q\leq \frac{3-p_1\sigma}{6}.
%\end{align*}
%That is, $\tilde{C}\in L^{q}(\hat{M})$ for all $1\leq q\leq \frac{3-p_1\sigma}{6}$.
\end{proof}

\medskip

The claim above thus establishes item $4$ in the lemma, and finally to establish the item $5$, we first consider the following claim:
\begin{claim}
%If $C_{\hat{g}}\in L^{q_1}$ for some $q_1\geq \frac{3q'_0}{3+q_0'}\doteq q_0^{*}$, then $\hat{\nabla}C_{\hat{g}}\in W^{-1,q_0^{*}}$.
If $C_{\hat{g}}\in L^{q_1}$ for some $\frac{3q'_0}{3+q'_0}\leq q_1\leq q_0$, then $\hat{\nabla}C_{\hat{g}}\in W^{-1,q}(\hat{M})$ for all $1<q\leq q_1$.
\end{claim}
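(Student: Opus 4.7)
The plan is to reduce the claim to an application of Lemma \ref{ContinuityPropsGeneralOps1stOrder}, in direct analogy with the proof of Claim \ref{ClaimCotton1} above, treating $\hat\nabla$ as a first-order linear differential operator acting on the section $C_{\hat{g}}$ of the appropriate tensor bundle. In a local trivialisation one has
\begin{align*}
\hat{\nabla}_i C_{jkl} = \partial_i C_{jkl} - \Gamma^{m}_{ij}(\hat{g})C_{mkl} - \Gamma^{m}_{ik}(\hat{g})C_{jml} - \Gamma^{m}_{il}(\hat{g})C_{jkm},
\end{align*}
so the top-order coefficient is constant and the zero-order coefficients are polynomial in the Christoffel symbols of $\hat{g}$, which lie in $W^{1,q_0}_{loc}$.

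The next step is to verify that this operator lies in $\mathcal{L}^1(W^{1,r})$ for some $r>n=3$. Since $\hat{g}\in W^{2,q_0}$ with $q_0>2$, Sobolev embedding gives $\Gamma(\hat{g})\in W^{1,q_0}_{loc}\hookrightarrow L^{r}_{loc}$ for $r=\frac{3q_0}{3-q_0}$ when $2<q_0<3$, and for arbitrarily large $r<\infty$ when $q_0\ge 3$. In both regimes one has $r>3$ strictly, so the zero-order coefficients fit within the $\mathcal{L}^1(W^{1,r})$-framework (the top-order coefficient being constant is trivially in $W^{1,r}_{loc}$).

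Lemma \ref{ContinuityPropsGeneralOps1stOrder} with $k=0$ then extends $\hat{\nabla}$ to a bounded map $L^{q}(T_3\hat{M})\to W^{-1,q}(T_4\hat{M})$ as long as $\frac{1}{r}-\frac{1}{3}\le \frac{1}{q}\le \frac{1}{r'}$; only the upper bound is non-trivial, and it reads $q\ge r'$. In the case $2<q_0<3$ the optimal choice $r=\frac{3q_0}{3-q_0}$ gives $r'=\frac{3q_0}{4q_0-3}=\frac{3q_0'}{3+q_0'}$, matching exactly the lower bound on $q_1$ in the statement; in the case $q_0\ge 3$ one can send $r\to\infty$ so that $r'\to 1$, and any $q>1$ is admissible. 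Combining this with the assumption $C_{\hat{g}}\in L^{q_1}$ one obtains $\hat{\nabla}C_{\hat{g}}\in W^{-1,q_1}(\hat{M})$, and then for every $1<q\le q_1$ the continuous embedding $W^{-1,q_1}(\hat{M})\hookrightarrow W^{-1,q}(\hat{M})$ on the closed manifold $\hat{M}$ yields the full conclusion.

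The only subtlety is bookkeeping: one must check that the borderline exponent $r'=\frac{3q_0'}{3+q_0'}$ is genuinely attained by the Sobolev embedding (it is, since $q_0>2$ gives $r>3$ strictly), so that the endpoint $q=q_1=\frac{3q_0'}{3+q_0'}$ is included and no loss of a small $\epsilon$ is required. All remaining steps are routine and follow the pattern already laid out in Claim \ref{ClaimCotton1}.
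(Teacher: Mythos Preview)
Your proposal is correct and follows essentially the same approach as the paper: both invoke Lemma \ref{ContinuityPropsGeneralOps1stOrder} for $\hat{\nabla}$ viewed as a first-order operator with zero-order coefficients $\Gamma(\hat{g})\in W^{1,q_0}_{loc}\hookrightarrow L^r_{loc}$ for $r=\frac{3q_0}{3-q_0}>3$, and then verify the admissibility condition $\frac{1}{r}+\frac{1}{q_1}\le 1$, which is exactly $q_1\ge \frac{3q_0'}{3+q_0'}$. Your write-up is in fact slightly more complete than the paper's, since you make explicit the case $q_0\ge 3$ and the final embedding $W^{-1,q_1}(\hat{M})\hookrightarrow W^{-1,q}(\hat{M})$ for $1<q\le q_1$.
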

\begin{proof}
In Claim \ref{ClaimCotton1} we have already seen that the operator $\hat{\nabla}$ acting on tensor fields satisfies the hypotheses of Lemma \ref{ContinuityPropsGeneralOps1stOrder}, with coefficients $A_{\alpha}\in W_{loc}^{|\alpha|,r}$ for $r\doteq \frac{3q_0}{3-q_0}>3$. Therefore, from Lemma \ref{ContinuityPropsGeneralOps1stOrder} one knows $C_{\hat{g}}\in W^{-1,q_1}$ as long as $\frac{1}{r}+\frac{1}{q_1}\leq 1$. Notice then that
\begin{align*}
\frac{1}{r}+\frac{1}{q_1}=\frac{3-q_0}{3q_0} +\frac{1}{q_1}=\frac{1}{q_0} - \frac{1}{3} + \frac{1}{q_1}\leq 1 \Longleftrightarrow \frac{1}{q_1}\leq \frac{1}{q'_0}+\frac{1}{3}\Longleftrightarrow q_1\geq \frac{3q'_0}{3+q_0'}.
\end{align*}

\end{proof}

Let us now assume $C_{\hat{g}}\in L^{q_1}$, $q_1>\frac{3}{2}$, and notice this implies $q_1>\frac{3q'_0}{3+q'_0}$, since $\frac{3}{2}>\frac{3q'_0}{3+q'_0}$ iff $3>q'_0$, which is equivalent to $q_0>\frac{3}{2}$. Thus, we can use the above claim and $W^{2,q_0}(\hat{M})\otimes W^{-1,q_1}(\hat{M})\hookrightarrow W^{-1,q_{1}}(\hat{M})$, to deduce that $\hat{g}^{kl}\hat{\nabla}_{l}{C_{\hat{g}}}_{ijk}\in W^{-1,q_1}(U)$ for any bounded coordinate neighbourhood. Furthermore, from Proposition \ref{WeakCommutation}, we know that $\hat{\nabla}^2\mathrm{Ric}_{\hat{g}}\in W^{-2,q_0}(S_2\hat{M})$, and since $W^{2,q_0}(\hat{M})\otimes W^{-2,q_0}(\hat{M})\hookrightarrow W^{-2,q_0}(\hat{M})$, then traces of $\hat{\nabla}^2\mathrm{Ric}_{\hat{g}}$ belong to $ W^{-2,q_0}(\hat{M})$ a priori.\footnote{Notice these last two claims require $q_0\geq 2$.} Along the lines of some of the computations done above, before moving further, let us establish the following claim, which concerns the Schur lemma in this level of regularity:
\begin{claim}\label{SchurLemma}
Given our closed $W^{2,q_0}$-Riemannian manifold $(\hat{M},\hat{g})$, it holds that $\mathrm{div}_{\hat{g}}\left(\mathrm{Ric}_{\hat{g}}-\frac{1}{2}R_{\hat{g}}\hat{g}\right)=0$.
\end{claim}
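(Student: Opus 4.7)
The plan is to obtain the identity by approximation, transporting the classical contracted second Bianchi identity from smooth metrics to $\hat g$ in a negative-order Sobolev topology. First I would choose a sequence $\{g_k\}\subset C^\infty(\hat M;S_2\hat M)$ with $g_k\to\hat g$ in $W^{2,q_0}(\hat M)$; since $q_0>2>\tfrac{n}{2}$, the embedding $W^{2,q_0}\hookrightarrow C^{0}$ ensures that $g_k$ is positive definite for all sufficiently large $k$. For each smooth $g_k$ the classical contracted Bianchi identity furnishes
\[
\mathrm{div}_{g_k}\!\left(\mathrm{Ric}_{g_k}-\tfrac{1}{2}R_{g_k}g_k\right)=0 \quad \text{in } C^{\infty}(\hat M;T^*\hat M),
\]
and the task becomes to pass this identity to the limit in $W^{-1,q_0}(\hat M;T^*\hat M)$.

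The key convergences I would establish are as follows. Writing $\mathrm{Ric}_g$ schematically as $g^{-1}\ast\partial^{2}g+(g^{-1})^{\otimes 2}\ast(\partial g)^{\otimes 2}$, the continuity of metric inversion on $W^{2,q_0}$ (via Lemma \ref{CompositionLemma}), together with the multiplication embeddings $W^{2,q_0}\otimes L^{q_0}\hookrightarrow L^{q_0}$ and $W^{1,q_0}\otimes W^{1,q_0}\hookrightarrow L^{q_0}$ supplied by Corollary \ref{ContractionsSobReg} (both valid since $q_0>\tfrac{n}{2}$), yields $\mathrm{Ric}_{g_k}\to\mathrm{Ric}_{\hat g}$ and $R_{g_k}\to R_{\hat g}$ in $L^{q_0}(\hat M)$. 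Next, the local expression
\[
(\mathrm{div}_{g}T)_{i}=g^{jl}\partial_{j}T_{li}-g^{jl}\Gamma^{m}_{jl}(g)T_{mi}-g^{jl}\Gamma^{m}_{ji}(g)T_{lm}
\]
exhibits $\mathrm{div}_{g}$ as an operator in $\mathcal{L}^{1}(W^{1,q_0})$, so Lemma \ref{ContinuityPropsGeneralOps1stOrder} extends it to a bounded map $L^{q_0}\to W^{-1,q_0}$; estimating coefficient differences in the spirit of (\ref{Approximation.1}) one concludes that $\|\mathrm{div}_{g_k}-\mathrm{div}_{\hat g}\|_{\mathrm{Op}(L^{q_0},W^{-1,q_0})}\to 0$, which, combined with the previous step, gives $\mathrm{div}_{g_k}\mathrm{Ric}_{g_k}\to\mathrm{div}_{\hat g}\mathrm{Ric}_{\hat g}$ in $W^{-1,q_0}$. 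For the trace term, the metric compatibility $\nabla^{g}g\equiv 0$ reduces the divergence to the exterior derivative, $\mathrm{div}_{g_k}(R_{g_k}g_k)=dR_{g_k}$, which converges to $dR_{\hat g}$ in $W^{-1,q_0}$ from the $L^{q_0}$-convergence $R_{g_k}\to R_{\hat g}$.

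Passing the Bianchi identity to the limit in $W^{-1,q_0}(T^{*}\hat M)$ then yields the claim. The step I expect to demand the most care is the joint continuity in the metric of the full divergence operator when acting on objects—such as $\mathrm{Ric}$ itself—whose a priori regularity is only $L^{q_0}$: one must simultaneously track the $W^{2,q_0}$-perturbation of the coefficients of $\mathrm{div}_{g_k}$ and the $L^{q_0}$-perturbation of $\mathrm{Ric}_{g_k}$, and verify through Sobolev multiplication that products such as $g_k^{jl}\Gamma^{m}_{jl}(g_k)(\mathrm{Ric}_{g_k})_{mi}$ truly converge in $W^{-1,q_0}$. Once this is in place, the identity follows immediately.
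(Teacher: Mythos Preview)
Your approximation argument is sound and would establish the claim for an arbitrary $W^{2,q_0}$ metric, but it is a genuinely different route from the paper's proof. The paper exploits the specific provenance of $\hat g$: by construction $\hat g$ is smooth on $\hat M\setminus\{p_\infty\}$, so the classical Bianchi identity already gives $V\doteq\mathrm{div}_{\hat g}(\mathrm{Ric}_{\hat g}-\tfrac12 R_{\hat g}\hat g)=0$ pointwise there. Knowing a priori that $V\in W^{-1,q_0}$, the support of $V$ is contained in $\{p_\infty\}$, and Proposition~\ref{PropOnePointSupport} (any element of $W^{-1,s}$ with $s>\tfrac{n}{2}$ supported at a single point vanishes) finishes the argument in one line. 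Your approach buys generality---it does not use that $\hat g$ is singular only at one point---at the cost of tracking several joint continuity statements; the paper's approach is much shorter precisely because it leans on the special structure of the compactified metric.

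One small technical point in your write-up: you invoke Lemma~\ref{ContinuityPropsGeneralOps1stOrder} by placing $\mathrm{div}_{\hat g}$ in the class $\mathcal L^{1}(W^{1,q_0})$, but that lemma requires the class parameter to exceed $n$, and here $q_0$ is only assumed $>2$ with $n=3$. The fix is the one used in Claim~\ref{ClaimCotton1}: since $g^{jl}\in W^{2,q_0}\hookrightarrow W^{1,\bar q}$ and $g^{jl}\Gamma^{m}_{jl}\in W^{1,q_0}\hookrightarrow L^{\bar q}$ with $\bar q=\tfrac{3q_0}{3-q_0}>3$, the divergence actually lies in $\mathcal L^{1}(W^{1,\bar q})$ with $\bar q>n$, and the lemma then applies to give the bounded extension $L^{q_0}\to W^{-1,q_0}$ you need. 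With this adjustment the rest of your limiting argument goes through.
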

\begin{proof}
From previous analysis we know that $\mathrm{Ric}_{\hat{g}}\in L^{q_0}$, $\hat{\nabla}\mathrm{Ric}_{\hat{g}}\in W^{-1,q_0}$, $R_{\hat{g}}\in W^{2,q_0}$ and hence $V\doteq \mathrm{div}_{\hat{g}}\left(\mathrm{Ric}_{\hat{g}}-\frac{1}{2}R_{\hat{g}}\hat{g}\right)\in W^{-1,q_0}$, and actually $V=0$ on $\hat{M}\backslash\{p_{\infty}\}$. Thus, $\mathrm{supp}(V)$, if non-empty, equals $\{p_{\infty}\}$. But Proposition \ref{PropOnePointSupport} shows that any $W^{-1,s}(B_1(0))$-function with $s>\frac{3}{2}$ supported at a single point vanishes identically, and hence since $q_0>\frac{3}{2}$ we deduce $V\equiv 0$. 

%Being $V\in W^{-1,q_0}$, if $V\not\equiv 0$, since it is a first order distribution supported at a point, we know that $V=a_0\delta_{p_{\infty}}+\sum_{|\alpha|=1}a_{\alpha}\partial^{\alpha}\delta_{p_{\infty}}$, for constants $a_0,a_{\alpha}$. Since $\delta_{\infty},\partial^{\alpha}\delta_{p_{\infty}}\not\in W^{-1,q_0}$ in three dimensions, we find $a_0,a_{\alpha}=0$ and $V\equiv 0$.
\end{proof}

Since $R_{\hat{g}}\in W^{2,q_0}(M)$, then we also know that $\hat{\nabla}^2R_{\hat{g}}\in L^{q_0}(M)$ a priori, and we can make the following local computation, where the right-hand side is a priori understood (term by term) in $W^{-2,q_0}(M)$:
\begin{align*}
\hat{\nabla}^{k}{C_{\hat{g}}}_{ijk}&=\hat{\nabla}^{k}\hat{\nabla}_k{\mathrm{Ric}_{\hat{g}}}_{ij} - \hat{\nabla}^{k}\hat{\nabla}_j{\mathrm{Ric}_{\hat{g}}}_{ik} + \frac{1}{4}(\hat{\nabla}^{k}\hat{\nabla}_jR_{\hat{g}}\hat{g}_{ik} - \hat{\nabla}^{k}\hat{\nabla}_kR_{\hat{g}}\hat{g}_{ij}),\\
&=\Delta_{\hat{g}}{\mathrm{Ric}_{\hat{g}}}_{ij} - (\hat{\nabla}_j\hat{\nabla}^{k}{\mathrm{Ric}_{\hat{g}}}_{ik} - \hat{g}^{ka}\hat{R}^l_{iaj}{\mathrm{Ric}_{\hat{g}}}_{lk} -\hat{g}^{ka}\hat{R}^l_{kaj}{\mathrm{Ric}_{\hat{g}}}_{il}) + \frac{1}{4}(\hat{\nabla}_{i}\hat{\nabla}_jR_{\hat{g}} - \Delta_{\hat{g}}R_{\hat{g}}\hat{g}_{ij}),\\
&=\Delta_{\hat{g}}{\mathrm{Ric}_{\hat{g}}}_{ij} + \hat{R}_{liaj}{\mathrm{Ric}_{\hat{g}}}^{la} - {\mathrm{Ric}_{\hat{g}}}^l_{j}{\mathrm{Ric}_{\hat{g}}}_{il} - \frac{1}{2}\hat{\nabla}_j\hat{\nabla}_{i}{R_{\hat{g}}} + \frac{1}{4}(\hat{\nabla}_{i}\hat{\nabla}_jR_{\hat{g}} - \Delta_{\hat{g}}R_{\hat{g}}\hat{g}_{ij}),\\
&=\Delta_{\hat{g}}{\mathrm{Ric}_{\hat{g}}}_{ij} + \hat{R}_{liaj}{\mathrm{Ric}_{\hat{g}}}^{la} - {\mathrm{Ric}_{\hat{g}}}^l_{j}{\mathrm{Ric}_{\hat{g}}}_{il} -  \frac{1}{4}(\hat{\nabla}_{i}\hat{\nabla}_jR_{\hat{g}} + \Delta_{\hat{g}}R_{\hat{g}}\hat{g}_{ij}).
\end{align*}
Notice that in the second line we have used $\mathrm{Ric}_{\hat{g}}\in L^{q_0}(M)$, with $q_0>2$, so as to appeal to Proposition \ref{WeakCommutation} and apply the commutation rule (\ref{Local2ndDerComm}), while in the third line we have appealed to Claim \ref{SchurLemma} to rewrite $\mathrm{div}_g\mathrm{Ric}_{\hat{g}}=\frac{1}{2}dR_{\hat{g}}$. Therefore,
\begin{align}\label{LapRicciEstimate}
\Delta_{\hat{g}}{\mathrm{Ric}_{\hat{g}}}_{ij}&=-\hat{R}_{liaj}{\mathrm{Ric}_{\hat{g}}}^{la} + {\mathrm{Ric}_{\hat{g}}}^l_{j}{\mathrm{Ric}_{\hat{g}}}_{il} + \frac{1}{4}\underbrace{(\hat{\nabla}_{i}\hat{\nabla}_jR_{\hat{g}} + \Delta_{\hat{g}}R_{\hat{g}}\hat{g}_{ij})}_{\in L^{q_0}} + \underbrace{\hat{\nabla}^{k}{C_{\hat{g}}}_{ijk}}_{\in W^{-1,q_1}}.
\end{align}
Since $\hat{g}\in W^{2,q_0}$, $q_0>2$, then $|\mathrm{Ric}_{\hat{g}}|^2_{\hat{g}}\in L^{\frac{q_0}{2}}$, $\frac{q_0}{2}>1$. This also implies that $\mathrm{Riem}_{\hat{g}}\in L^{q_0}$ and hence $-{R_{\hat{g}}}_{liaj}{\mathrm{Ric}_{\hat{g}}}^{la}+{\mathrm{Ric}_{\hat{g}}}^l_{j}{\mathrm{Ric}_{\hat{g}}}_{il}\in L^{\frac{q_0}{2}}$. Let us now consider the following claim: 
\begin{claim}
%If $q_0>\frac{3}{2}$ then $L^{q}(\hat{M})\hookrightarrow W^{-1,q_0^{*}}(\hat{M})$ for all $q\geq 1$.
If $q_0>2$ then $L^{\frac{q_0}{2}}(\hat{M})\hookrightarrow W^{-1,q}(\hat{M})$ for some $q> \frac{3}{2}$.
\end{claim}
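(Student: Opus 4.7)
The plan is to reduce the claim to a standard Sobolev embedding on the closed $3$-manifold $\hat{M}$ via duality. More precisely, since Lemma \ref{DualPairingLowRegLemma} (applied with $k=1$) identifies $W^{-1,q}(\hat{M})\cong (W^{1,q'}(\hat{M}))'$ for appropriate exponents under the regularity $\hat{g}\in W^{2,q_0}(\hat{M})$ with $q_0>\frac{n}{2}=\frac{3}{2}$, establishing $L^{q_0/2}(\hat{M})\hookrightarrow W^{-1,q}(\hat{M})$ amounts to producing a continuous embedding $W^{1,q'}(\hat{M})\hookrightarrow L^{(q_0/2)'}(\hat{M})$, where $(q_0/2)'=\frac{q_0}{q_0-2}$. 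Indeed, if such an embedding is available, then for any $f\in L^{q_0/2}(\hat{M})$ and $\phi\in W^{1,q'}(\hat{M})$, Hölder's inequality gives
\begin{align*}
\left| \int_{\hat{M}}\langle f,\phi\rangle_{\hat{g}}\,dV_{\hat{g}}\right|\leq \Vert f\Vert_{L^{q_0/2}(\hat{M})}\Vert \phi\Vert_{L^{(q_0/2)'}(\hat{M})}\leq C\Vert f\Vert_{L^{q_0/2}(\hat{M})}\Vert \phi\Vert_{W^{1,q'}(\hat{M})},
\end{align*}
which, via Lemma \ref{DualPairingLowRegLemma}, yields $f\in W^{-1,q}(\hat{M})$ together with the continuity of the embedding.

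Next, I will invoke the standard Sobolev embedding on the closed $3$-manifold $\hat{M}$, which is defined through coordinate charts and a partition of unity and thus holds in the usual form regardless of the Sobolev regularity of $\hat{g}$. In the regime $q'<3$, this yields $W^{1,q'}(\hat{M})\hookrightarrow L^s(\hat{M})$ for every $s$ with $\frac{1}{s}\geq \frac{1}{q'}-\frac{1}{3}$. Taking $s=(q_0/2)'=\frac{q_0}{q_0-2}$, the embedding condition becomes
\begin{align*}
\frac{q_0-2}{q_0}\geq \frac{1}{q'}-\frac{1}{3}=1-\frac{1}{q}-\frac{1}{3},
\end{align*}
which rearranges to $\frac{1}{q}\geq \frac{2}{q_0}-\frac{1}{3}$.

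Finally, I will check that this constraint is compatible with $q>\frac{3}{2}$, i.e., with $\frac{1}{q}<\frac{2}{3}$. Since by hypothesis $q_0>2$, we have $\frac{2}{q_0}<1$, and hence
\begin{align*}
\frac{2}{q_0}-\frac{1}{3}<1-\frac{1}{3}=\frac{2}{3},
\end{align*}
so any $q\in \left(\frac{3}{2},\, \left(\frac{2}{q_0}-\frac{1}{3}\right)^{-1}\right]$ (interpreting the right endpoint as $+\infty$ when $\frac{2}{q_0}\leq \frac{1}{3}$, i.e., $q_0\geq 6$) satisfies both $q>\frac{3}{2}$ and the Sobolev condition above, while keeping $q'<3$. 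There is no serious obstacle here: the hypothesis $q_0>2$ is exactly the threshold needed to ensure $\frac{2}{q_0}-\frac{1}{3}<\frac{2}{3}$, and the duality identification is already at our disposal via Lemma \ref{DualPairingLowRegLemma}, whose hypotheses are easily verified for the pair $(k,p)=(1,q)$ given the range of admissible $q$.
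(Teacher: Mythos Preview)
Your proof is correct and follows essentially the same route as the paper's: both reduce the claim by duality to the Sobolev embedding $W^{1,q'}(\hat{M})\hookrightarrow L^{(q_0/2)'}(\hat{M})$ for $q'<3$, arrive at the same constraint $\tfrac{1}{q}\geq \tfrac{2}{q_0}-\tfrac{1}{3}$, and use $q_0>2$ to verify that this is compatible with $q>\tfrac{3}{2}$. Your explicit invocation of Lemma~\ref{DualPairingLowRegLemma} to justify the duality in the low-regularity setting is a minor added care, but otherwise the arguments coincide.
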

\begin{proof}
%Since $W^{-1,q}(\hat{M})=(W^{1,q'}(\hat{M}))'$, it is enough to consider the case $q'<3$,
First, since $W^{-1,q}(\hat{M})= (W^{1,q'}(\hat{M}))'$, then 
\begin{align*}
L^{\frac{q_0}{2}}(\hat{M})\hookrightarrow W^{-1,q}(\hat{M})\Longleftrightarrow W^{1,q'}(\hat{M})\hookrightarrow \left(L^{\frac{q_0}{2}}(\hat{M})\right)'\cong L^{\left(\frac{q_0}{2}\right)'}(\hat{M}).
\end{align*}
We first notice that 
\begin{align*}
q>\frac{3}{2}\Longleftrightarrow \frac{1}{q}=1-\frac{1}{q'}<\frac{2}{3}\Longleftrightarrow q'<3,
\end{align*}
and therefore we are only interested in analysing the embedding $W^{1,q'}(\hat{M})\hookrightarrow L^{\left(\frac{q_0}{2}\right)'}(\hat{M})$ for $q'<3$. Since $\frac{1}{\left(\frac{q_0}{2}\right)'}=1-\frac{2}{q_0}$ and  $W^{1,q'}(\hat{M})\hookrightarrow L^{\frac{3q'}{3-q'}}(\hat{M})$ whenever $q'<3$, we then need to satisfy:
\begin{align*}
\frac{3q'}{3-q'}\geq \left(\frac{q_0}{2}\right)'\Longleftrightarrow \frac{1}{q'}-\frac{1}{3}\leq 1-\frac{2}{q_0} \Longleftrightarrow \frac{1}{q}\geq \frac{2}{q_0}-\frac{1}{3}.
\end{align*}
Since $q_0>2$, then $\frac{2}{q_0}-\frac{1}{3}<\frac{2}{3}$ and then the interval $[\frac{2}{q_0}-\frac{1}{3},\frac{2}{3})$ is non-empty. Thus, any $q\in (1,\infty)$ such that   $\frac{1}{q}\in [\frac{2}{q_0}-\frac{1}{3},\frac{2}{3})$ satisfies
\begin{align*}
\frac{2}{q_0}-\frac{1}{3}\leq \frac{1}{q}<\frac{2}{3}.
\end{align*}
Hence, for such $q>\frac{3}{2}$, the embedding $W^{1,q'}(\hat{M})\hookrightarrow L^{\left(\frac{q_0}{2}\right)'}(\hat{M})$ holds, and therefore $L^{\frac{q_0}{2}}(\hat{M})\hookrightarrow W^{-1,q}(\hat{M})$.

%Thus, we need to analyse only the case $q'<3$, where we can exploit the inclusion $W^{1,q'}\hookrightarrow L^{\frac{3q'}{3-q'}}$, which implies $(L^{\frac{3q'}{3-q'}})'\hookrightarrow (W^{1,q'})'\cong W^{-1,q}$. So, if $\frac{q_0}{2}\geq \left(\frac{3q'}{3-q'}\right)'$, the chain of continuous inclusions $L^{\frac{q_0}{2}}\hookrightarrow (L^{\frac{3q'}{3-q'}})'\hookrightarrow (W^{1,q'})'$ establishes the claim. Let us then start by computing the conjugate exponent $\left(\frac{3q'}{3-q'}\right)'$:
%\begin{align*}
%\frac{1}{\left(\frac{3q'}{3-q'}\right)'}=1-\frac{3-q'}{3q'}=1-\frac{1}{q'}+\frac{1}{3}=\frac{1}{3}+\frac{1}{q}.
%\end{align*}
%Thus,
%\begin{align*}
%\frac{q_0}{2}\geq \left(\frac{3q'}{3-q'}\right)'\Longleftrightarrow\frac{2}{q_0}\leq \frac{1}{3}+\frac{1}{q}\Longleftrightarrow\frac{1}{q}\geq \frac{2}{q_0}-\frac{1}{3}=\frac{6-q_0}{3q_0}\Longleftrightarrow q\leq \frac{3q_0}{6-q_0},
%\end{align*}
%which is compatible with $q>\frac{3}{2}$ iff
%\begin{align*}
%\frac{3}{2}< \frac{3q_0}{6-q_0}\Longleftrightarrow 6-q_0<2q_0\Longleftrightarrow q_0>2.
%\end{align*}

\end{proof}

Putting the above claim together with (\ref{LapRicciEstimate}), if $q_1>\frac{3}{2}$, we find
\begin{align}
\Delta_{\hat{g}}{\mathrm{Ric}_{\hat{g}}}_{ij}\in W^{-1,q_2}, \text{ for some } q_2>\frac{3}{2},
\end{align}
where $\hat{g}\in W^{2,q_0}(\hat{M})$. 
\end{proof}

In order to use the above to improve the regularity of the metric $\hat{g}$ on $\hat{M}$, we will need the following result concerning the existence of harmonic coordinates for low regularity metrics:

\begin{theo}\label{HarmonicCoord}
Let $(M^n,g)$ be a Riemannian manifold with $g\in W^{1,q}_{loc}$ and $q>n$. Then, given a point $p\in M$ and a coordinate system $\{x^i\}_{i=1}^n$ around $p$, such that $x(p)=0$ and $g(\partial_{x^i},\partial_{x^j})|_{x=0}=\delta_{ij}$, there exists a harmonic coordinate system $(U,y)$ centred at $p$, \textit{i.e} $y(p)=0$, with coordinate functions $y^i\in W_{loc}^{2,q}(U)$. Finally, one can choose such coordinates satisfying $\frac{\partial y^i}{\partial x^j}(0)=\delta^i_j$.%, for some fixed smooth coordinate chart $\{x^i\}_{i=1}^n$.
\end{theo}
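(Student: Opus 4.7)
The strategy is classical: within the chart $\{x^i\}_{i=1}^n$, fix a small ball $B_r=\{|x|<r\}$ around $p$ and set $y^i=x^i+v^i$, where $v^i$ solves the Dirichlet problem
\begin{align*}
\Delta_g v^i = -\Delta_g x^i \quad\text{on } B_r, \qquad v^i|_{\partial B_r}=0.
\end{align*}
The right-hand side equals $g^{jk}\Gamma^i_{jk}$ and lies in $L^q(B_r)$ by the hypothesis $g\in W^{1,q}_{loc}$, while the leading coefficients $g^{jk}$ of $\Delta_g$ are H\"older continuous because $q>n$ gives $W^{1,q}\hookrightarrow C^{0,\alpha}$ with $\alpha=1-n/q$. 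The classical Calder\'on--Zygmund theory for second-order elliptic operators with continuous principal part and $L^q$ right-hand side (as reviewed in Appendix \ref{AppendixBartnik} for the closely related scalar problem) then produces, for all sufficiently small $r$, a unique solution $v^i\in W^{1,2}_0(B_r)\cap W^{2,q}(B_r)$.

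To show the Jacobian $\frac{\partial y^i}{\partial x^j}(0)$ is close to the identity when $r$ is small, rescale via $\tilde x=x/r$ and set $\tilde v^i(\tilde x)\doteq r^{-1}v^i(r\tilde x)$, $\tilde g_{ij}(\tilde x)\doteq g_{ij}(r\tilde x)$. A direct computation using the transformation laws for the Christoffel symbols shows that $\tilde v^i$ solves $\Delta_{\tilde g}\tilde v^i=-\Delta_{\tilde g}\tilde x^i$ on $B_1$ with $\tilde v^i|_{\partial B_1}=0$, and the chain rule yields $\frac{\partial\tilde v^i}{\partial\tilde x^j}(0)=\frac{\partial v^i}{\partial x^j}(0)$. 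The H\"older continuity of $g$ at $0$ with $g(0)=\delta$ gives $\tilde g\to\delta$ uniformly on $B_1$, while
\begin{align*}
\|\partial\tilde g\|_{L^q(B_1)}=r^{1-n/q}\|\partial g\|_{L^q(B_r)}\xrightarrow[r\to 0]{}0
\end{align*}
thanks to $q>n$. Consequently $\Delta_{\tilde g}\tilde x^i=-\tilde g^{jk}\tilde\Gamma^i_{jk}\to 0$ in $L^q(B_1)$, and because the coefficients of $\Delta_{\tilde g}$ converge to those of $\Delta_\delta$ in operator norm from $\{u\in W^{2,q}(B_1):u|_{\partial B_1}=0\}$ to $L^q(B_1)$, the corresponding Dirichlet inverses converge as well. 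Hence $\tilde v^i\to 0$ in $W^{2,q}(B_1)\hookrightarrow C^1(B_1)$, and evaluation at $\tilde x=0$ gives $A^i_j\doteq\frac{\partial y^i}{\partial x^j}(0)=\delta^i_j+\frac{\partial v^i}{\partial x^j}(0)\to\delta^i_j$ as $r\to 0$.

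Fix $r$ so small that $A$ is invertible and set $\hat y^i\doteq(A^{-1})^i_j(y^j-y^j(0))$; these functions are still $\Delta_g$-harmonic, satisfy $\hat y(0)=0$ and $\frac{\partial\hat y^i}{\partial x^j}(0)=\delta^i_j$, and lie in $W^{2,q}(B_r)\hookrightarrow C^{1,\alpha}(B_r)$. The $C^1$-inverse function theorem then yields a neighbourhood $U$ of $p$ on which $\hat y$ is a bijection onto its image with $C^{1,\alpha}$-inverse; differentiating $\hat y\circ\hat y^{-1}=\mathrm{Id}$ and invoking the Sobolev composition lemma (Lemma \ref{CompositionLemma}) together with the smoothness of matrix inversion on invertible matrices, one concludes $\hat y^{-1}\in W^{2,q}$, so the two charts are $W^{2,q}$-compatible. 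The main technical obstacle is the first step: establishing $W^{2,q}$-solvability of the scalar Dirichlet problem for $\Delta_g$ when $g$ has only $W^{1,q}$ regularity (so that the Christoffel symbols are merely $L^q$). This sits slightly outside the $W^{2,q}$-coefficient framework of Section \ref{SectionRegularity} and is precisely the type of low-regularity scalar elliptic statement that Appendix \ref{AppendixBartnik} is tailored to address.
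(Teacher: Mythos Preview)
Your argument is correct and shares the same skeleton as the paper's: obtain harmonic functions near $p$ via elliptic PDE, then apply an affine correction $\hat y^i=(A^{-1})^i_j(y^j-y^j(0))$ to normalise the value and Jacobian at $p$. The final normalisation step is identical to the paper's.

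The difference lies in the existence step. The paper does not carry out a construction: it cites \cite[Chapter 3, Section 9]{TaylorToolsForPDEs} (or \cite[Corollary 2.8]{Salo}) for the existence of $C^{1,\alpha}$ harmonic coordinates, and then upgrades their regularity to $W^{2,q}$ by invoking local elliptic regularity for $\Delta_g u=0$ with a $W^{1,q}$-metric, again via a citation. Your proof instead builds $y^i=x^i+v^i$ directly from a Dirichlet problem and uses a rescaling $\tilde v^i(\tilde x)=r^{-1}v^i(r\tilde x)$ to force the Jacobian toward the identity, so that $y$ is a genuine coordinate system. This is essentially what lives inside the references the paper invokes, so your version is more self-contained, at the cost of having to justify the Dirichlet solvability for $\Delta_g$ with merely $L^q$ first-order coefficients.

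One correction on that last point: your pointer to Appendix~\ref{AppendixBartnik} for this solvability is off target---that appendix treats weighted Fredholm properties of $\Delta_g$ on AE manifolds, not a local Dirichlet problem. The result you need follows instead from the perturbation argument already implicit in your scaling: on a small ball the principal part is uniformly close to $\Delta_\delta$, while the first-order term $v\mapsto g^{jk}\Gamma^l_{jk}\partial_l v$ has small operator norm $W^{2,q}\cap W^{1,q}_0\to L^q$ because $\|\Gamma\|_{L^q(B_r)}\to 0$ and $W^{2,q}\hookrightarrow C^1$ when $q>n$; hence $\Delta_g$ is a small perturbation of an invertible constant-coefficient Dirichlet operator (cf.\ \cite[Theorem 9.15]{GilbargTrudinger}). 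This is the substance behind the paper's citation of \cite[Chapter 3, Proposition 1.12]{TaylorToolsForPDEs}.
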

\begin{proof}

Start by fixing the coordinate system $\{x^i\}_{i=1}^n$ around the chosen point $p\in M$. Since under our conditions $g\in C_{loc}^{0,\gamma}(M)$, for some $\gamma\in (0,1)$, under our hypotheses the existence of a change of coordinates $y^i=y^i(x)$ for $C^{1,\alpha}$-harmonic coordinates $\{y^i\}_{i=1}^n$ in a neighbourhood of $p\in M$ such that $y(p)=0$ and $\alpha\in (0,\alpha)$ follows from \cite[Section 9 in Chapter 3]{TaylorToolsForPDEs}. In an even more general setting, this follows from \cite[Corollary 2.8]{Salo}. The exact $W^{2,q}$-regularity for these coordinate functions follows from the local regularity of $W^{1,p}$-solutions, $1<p\leq q$, to $\Delta_gu=0$ for a $W^{1,q}$-metric (see, for instance, \cite[Chapter 3, Proposition 1.12]{TaylorToolsForPDEs}).

Finally, notice that $A^{i}_j\doteq \frac{\partial y^i}{\partial x^j}(0)\in GL(\mathbb{R}^n)$. Therefore, if necessary, one can consider the change of coordinates $\bar{y}^i\doteq (A^{-1})^{i}_jy^j\in W_{loc}^{2,q}$, which are still harmonic and also satisfy
\begin{align*}
\bar{y}^i(0)&=0,\\
\frac{\partial\bar{y}^i}{\partial x^j}(0)&=(A^{-1})^{i}_l\frac{\partial y^l}{\partial x^j}(0)=\delta^i_j.
\end{align*}
\end{proof}

From the above general result and Lemma \ref{ApriopriCompactifiedRegularity}, we get the following:
\begin{cor}\label{HarmoniCoordCorollary}
%Let $(M^3,g)$ be an AE manifold with respect to a structure of infinity with coordinates $z^i$, $i=1,2,3$, satisfying the hypotheses of Lemma \ref{ApriopriCompactifiedRegularity} 
Let $(M^3,g)$ be a smooth $W^{k,p}_{\tau}$-AE manifold, relative to a structure of infinity with coordinates $\{z^i\}_{i=1}^3$ and $p> 2$, $\tau\in (-1,-\frac{1}{2})$, $k\geq 4$, and let $(\hat{M}^3,\hat{g})$ be the conformal compactification obtained for it in Lemma \ref{ApriopriCompactifiedRegularity}. Then, around $p_{\infty}$ there is a $\hat{g}$-harmonic coordinate system $(U,y^i)_{i=1}^3$ with coordinate function $y^i=y^i(x)$, $y^i\in W^{2,\bar{q}}_{loc}(U)$, $\bar{q}\doteq \frac{3q_0}{3-q_0}>3$, where $x^i$ denote the inverted coordinates $x^i\doteq \frac{z^i}{|z|^2}$. Furthermore, we have that $\hat{g}(\partial_{y^i},\partial_{y^j})\vert_{y=0}=\delta_{ij}$ and the functions $y^j\in C^{\infty}(U\backslash \{p_{\infty}\})$ satisfy $y^i(p_{\infty})=0$ and  $\frac{\partial y^i}{\partial x^j}(0)=\delta^i_j$.
\end{cor}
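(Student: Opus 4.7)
The strategy is to invoke Theorem \ref{HarmonicCoord} directly, taking as the initial chart the inverted coordinates $\{x^i\}_{i=1}^3$ provided by Lemma \ref{ApriopriCompactifiedRegularity}, which are already adapted to $p_{\infty}$ in the sense that $x(p_{\infty})=0$ and $\hat{g}(\partial_{x^i},\partial_{x^j})\vert_{x=0}=\delta_{ij}$. The only real work is to verify the regularity hypothesis of Theorem \ref{HarmonicCoord}, namely that $\hat{g}\in W^{1,q}_{loc}$ for some $q>3$, and then to upgrade the harmonic coordinate functions in the ways required by the statement.

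First, I would exploit the $W^{2,q_0}$-regularity of $\hat{g}$ obtained in Lemma \ref{ApriopriCompactifiedRegularity}, which holds for some $q_0>2$. If $q_0\geq 3$ then Sobolev embedding on $\hat{M}$ gives $\hat{g}\in W^{1,q}(\hat{M})$ for every $q<\infty$ and the hypothesis of Theorem \ref{HarmonicCoord} is trivially satisfied. Otherwise, when $2<q_0<3$, the Sobolev embedding $W^{2,q_0}\hookrightarrow W^{1,\bar{q}}$ with $\bar{q}\doteq \tfrac{3q_0}{3-q_0}$ applies, and one checks
\[
\bar{q}>3\;\Longleftrightarrow\;\tfrac{3q_0}{3-q_0}>3\;\Longleftrightarrow\;q_0>\tfrac{3}{2},
\]
which holds by hypothesis. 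Thus, in both cases $\hat{g}\in W^{1,\bar{q}}_{loc}$ for some $\bar{q}>3$, and Theorem \ref{HarmonicCoord} applied at $p_{\infty}$ relative to the coordinate chart $\{x^i\}_{i=1}^3$ yields a harmonic coordinate system $(U,y^i)$ centred at $p_{\infty}$ with $y^i\in W^{2,\bar{q}}_{loc}(U)$, $y^i(p_{\infty})=0$, and $\frac{\partial y^i}{\partial x^j}(0)=\delta^i_j$.

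It remains to extract two additional properties: the smoothness of the $y^i$ away from $p_{\infty}$, and the normalisation $\hat{g}(\partial_{y^i},\partial_{y^j})\vert_{y=0}=\delta_{ij}$. For the former, I would recall from Lemma \ref{ApriopriCompactifiedRegularity} that $\hat{g}\in C^{\infty}(\hat{M}\setminus\{p_{\infty}\})$. Since the coordinate functions $y^i$ solve the harmonic equation $\Delta_{\hat{g}}y^i=0$, classical elliptic bootstrapping (localising with a cutoff away from $p_{\infty}$ and iterating Schauder estimates for operators with smooth coefficients) improves their regularity to $C^{\infty}(U\setminus\{p_{\infty}\})$. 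For the latter, the chain rule combined with $\frac{\partial y^i}{\partial x^j}(0)=\delta^i_j$ gives
\[
\hat{g}(\partial_{y^i},\partial_{y^j})\vert_{y=0}
=\frac{\partial x^a}{\partial y^i}(0)\frac{\partial x^b}{\partial y^j}(0)\,\hat{g}(\partial_{x^a},\partial_{x^b})\vert_{x=0}
=\delta^a_i\delta^b_j\,\delta_{ab}=\delta_{ij},
\]
since the Jacobian $\left(\tfrac{\partial x}{\partial y}\right)(0)$ is the inverse of $\left(\tfrac{\partial y}{\partial x}\right)(0)=\mathrm{Id}$.

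No real obstacle stands in the way here; the content of the corollary is essentially organisational, packaging Theorem \ref{HarmonicCoord} with Lemma \ref{ApriopriCompactifiedRegularity}. The mild point to be mindful of is tracking the exponent $\bar{q}$ so that both $\bar{q}>3$ (needed to invoke Theorem \ref{HarmonicCoord} in dimension three) and $y^i\in W^{2,\bar{q}}_{loc}$ (the regularity claim of the corollary) are obtained simultaneously, which is what motivates the precise choice $\bar{q}=\tfrac{3q_0}{3-q_0}$ in the borderline case $q_0<3$.
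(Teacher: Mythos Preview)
Your proposal is correct and follows essentially the same approach as the paper: invoke the Sobolev embedding $W^{2,q_0}\hookrightarrow W^{1,\bar q}$ with $\bar q=\tfrac{3q_0}{3-q_0}>3$ to meet the hypothesis of Theorem \ref{HarmonicCoord}, apply that theorem at $p_\infty$ in the inverted coordinates, deduce $\hat g(\partial_{y^i},\partial_{y^j})\vert_{y=0}=\delta_{ij}$ from the Jacobian normalisation, and bootstrap $y^i\in C^{\infty}(U\setminus\{p_\infty\})$ from $\Delta_{\hat g}y^i=0$ and the smoothness of $\hat g$ away from $p_\infty$. The only minor deviation is your case split on $q_0\geq 3$, which is harmless but unnecessary since Lemma \ref{ApriopriCompactifiedRegularity} already produces $2<q_0<3$.
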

\begin{proof}
From Lemma \ref{ApriopriCompactifiedRegularity}, we know that $\hat{g}\in W^{2,q_0}(\hat{M})$ for some $2<q_0<3$. Thus, appealing to the embedding $W^{2,q_0}(\hat{M})\hookrightarrow W^{1,\bar{q}}(\hat{M})$, we know that $\bar{q}>3$ due to $q_0>\frac{3}{2}$. Setting the a priori $x$-coordinates there to be our inverted coordinates $x^i=\frac{z^i}{|z|^2}$, we furthermore know from Lemma \ref{ApriopriCompactifiedRegularity} that $\hat{g}(\partial_{x^i},\partial_{x_j})|_{x=0}=\delta_{ij}$, and thus appealing to Theorem \ref{HarmonicCoord} we know that there exist a $\hat{g}$-harmonic coordinate system $\{y^i\}$, where the functions $y^i=y^i(x)\in W_{loc}^{2,\bar{q}}(U)$ and also satisfy $y^i(p_{\infty})=0$ and  $\frac{\partial y^i}{\partial x^j}(0)=\delta^i_j$. This furthermore implies that 
\begin{align*}
\hat{g}(\partial_{y^i},\partial_{y^j})\vert_{y=0}=\frac{\partial x^a}{\partial y^{i}}(0)\frac{\partial x^b}{\partial y^{j}}(0)\hat{g}(\partial_{x^a},\partial_{x^b})\vert_{x=0}=\hat{g}(\partial_{x^j},\partial_{x^i})\vert_{x=0}=\delta_{ij}.
\end{align*}
Finally, since each function $y^i$ satisfies
\begin{align}
\Delta_{\hat{g}}y^i(x)=0 \text{ on } U\backslash\{p_{\infty}\},
\end{align}
and $\hat{g}(\partial_{x^j},\partial_{x^i})\in C^{\infty}(U\backslash\{p_{\infty}\})$ by Lemma \ref{ApriopriCompactifiedRegularity}, then local elliptic regularity guarantees that $y^i\in C^{\infty}(U\backslash\{p_{\infty}\})$.
\end{proof}

Let us now highlight the following improved regularity for the conformally compactified $\mathrm{Ric}_{\hat{g}}$, which follows from Lemma \ref{ApriopriCompactifiedRegularity} and Theorem \ref{LapBelRegTHMweak}:
\begin{lem}\label{RicciBootstrapLemma}
Assume the same hypotheses as in Lemma \ref{ApriopriCompactifiedRegularity} and consider the metric $\hat{g}$ obtained in it on $\hat{M}$. If $C_g\in L^{p_1}_{\sigma}(M,dV_g)$ with $-6<\sigma<-4$ and $p_1=\frac{3}{6+\sigma}$, then $\mathrm{Ric}_{\hat{g}}\in L^{q}(\hat{M})$ for some $q>3$.
%$\sigma p_1<-6$, then $\mathrm{Ric}_{\hat{g}}\in L^{q}(\hat{M})$ for some $q>3$.
\end{lem}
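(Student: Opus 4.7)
The plan is to bootstrap the a priori regularity $\mathrm{Ric}_{\hat g}\in L^{q_0}(\hat M)$ obtained from $\hat g\in W^{2,q_0}(\hat M)$ (with $2<q_0<3$) to $\mathrm{Ric}_{\hat g}\in W^{1,p}(S_2\hat M)$ for some $p>3/2$ by applying Theorem \ref{LapBelRegTHMweak} to the tensor Laplacian $\Delta_{\hat g}:W^{1,p}(S_2\hat M)\to W^{-1,p}(S_2\hat M)$, and then to Sobolev embed into $L^{q}(\hat M)$ with $q>3$. The crucial input is item 5 of Lemma \ref{ApriopriCompactifiedRegularity}, which gives $\Delta_{\hat g}\mathrm{Ric}_{\hat g}\in W^{-1,q_2}(\hat M)$ for some $q_2>3/2$. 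The hypothesis $\sigma<-4$ is precisely what enables this, via Remark \ref{ApriopriCompactifiedRegularityRemark}: it yields $p_1>3/2$ and hence $C_{\hat g}\in L^{q_1}(\hat M)$ with $q_1>3/2$.

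More concretely, I would proceed as follows. First, note that $\mathrm{Ric}_{\hat g}\in L^{q_0}(\hat M)\hookrightarrow L^{q_0'}(\hat M)$, since $q_0>2$ implies $q_0'<2<q_0$; this provides the a priori regularity required by Theorem \ref{LapBelRegTHMweak}. Second, one must verify that there exists an exponent $p$ satisfying the admissible range
\[
\frac{1}{q_0}-\frac{1}{3}\;\leq\;\frac{1}{p}\;<\;\frac{1}{q_0'}+\frac{1}{3}\;=\;\frac{4}{3}-\frac{1}{q_0},
\]
together with $p\leq q_2$ (so that $\Delta_{\hat g}\mathrm{Ric}_{\hat g}\in W^{-1,p}(S_2\hat M)$) and $p>3/2$ (so that the subsequent Sobolev embedding is sharp enough). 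Since $q_0>2$ the upper endpoint is bounded below by $\tfrac{6}{5}$ and the lower endpoint is bounded above by $\tfrac{1}{6}$, so the admissible interval for $1/p$ contains a neighbourhood of $(\tfrac{1}{6},\tfrac{2}{3})$, and we may choose $p=\min\{q_2,\bar q-\varepsilon\}$ with $\bar q=\tfrac{3q_0}{3-q_0}>6$; because $q_2>3/2$, this yields some $p>3/2$ in the admissible range. Theorem \ref{LapBelRegTHMweak} then delivers $\mathrm{Ric}_{\hat g}\in W^{1,p}(S_2\hat M)$.

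Finally, the Sobolev embedding $W^{1,p}(\hat M)\hookrightarrow L^{\frac{3p}{3-p}}(\hat M)$ (or $W^{1,p}\hookrightarrow L^s$ for all $s<\infty$ if $p\geq 3$) gives $\mathrm{Ric}_{\hat g}\in L^q(\hat M)$ with $q=\tfrac{3p}{3-p}>3$, because $p>3/2$. I expect the main technical point to be checking that the admissible ranges of Theorem \ref{LapBelRegTHMweak} really do intersect the interval $(3/2,q_2]$ when $q_0$ is only marginally above $2$; that is, one needs $\frac{1}{q_0}-\frac{1}{3}<\frac{2}{3}$, which is automatic for $q_0>2$, but the verification should be presented carefully since the whole argument is delicate to the numerology of the exponents. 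No new analytic machinery beyond Lemma \ref{ApriopriCompactifiedRegularity} and Theorem \ref{LapBelRegTHMweak} should be needed.
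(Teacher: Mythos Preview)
Your proposal is correct and follows essentially the same route as the paper: use item~5 of Lemma~\ref{ApriopriCompactifiedRegularity} to get $\Delta_{\hat g}\mathrm{Ric}_{\hat g}\in W^{-1,q_2}$ with $q_2>\tfrac{3}{2}$, feed this and the a~priori $L^{q_0'}$ regularity of $\mathrm{Ric}_{\hat g}$ into Theorem~\ref{LapBelRegTHMweak}, and then Sobolev embed. The only cosmetic difference is the choice of working exponent: the paper takes $q_3=\min\{q_0,q_2\}$ (which automatically satisfies $q_3<3<\bar q$ and hence the lower endpoint $\tfrac{1}{q_0}-\tfrac{1}{3}\leq \tfrac{1}{q_3}$), whereas you enforce the same constraint via $p\leq \bar q-\varepsilon$; both lead to $p>\tfrac{3}{2}$ and hence $\tfrac{3p}{3-p}>3$.
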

\begin{proof}
Under our conditions, from Lemma \ref{ApriopriCompactifiedRegularity} we know that $\hat{g}\in W^{2,q_0}(\hat{M})$, $\mathrm{Ric}_{\hat{g}}\in L^{q_0}$, with $2<q_0<3$, and $\Delta_{\hat{g}}\mathrm{Ric}_{\hat{g}}\in W^{-1,q_2}$, with $q_2>\frac{3}{2}$. We intend to apply Theorem \ref{LapBelRegTHMweak}, and thus we must check that $\frac{1}{q_0}-\frac{1}{3}\leq \frac{1}{q_2}\leq \frac{1}{q'_0}+\frac{1}{3}$. Noticing that $\frac{1}{q_2}<\frac{2}{3}$ and
\begin{align*}
%\frac{1}{\frac{3q'_0}{3+q'_0}}=\frac{1}{3}+\frac{1}{q'_0}=\frac{4}{3}-\frac{1}{q_0}>\frac{4}{3}-\frac{1}{2}=\frac{8-3}{6}=\frac{5}{6}\Longrightarrow \frac{3q'_0}{3+q'_0}<\frac{6}{5}<\frac{3}{2}<q_2.
\frac{2}{3}\leq \frac{1}{q'_0}+\frac{1}{3}=1-\frac{1}{q_0}+\frac{1}{3}\Longleftrightarrow \frac{1}{q_0}\leq \frac{2}{3},
\end{align*}
which holds, since actually $\frac{1}{q_0}<\frac{1}{2}$ and thus $\frac{1}{q_2}<\frac{2}{3}\leq \frac{1}{q'_0}+\frac{1}{3}$. Also notice that $\frac{1}{q_0}-\frac{1}{3}<\frac{1}{6}$ and thus $\frac{1}{q_0}-\frac{1}{3}\leq \frac{1}{q_2}$ is granted as long as $q_2\leq 6$. Therefore, taking $\frac{3}{2}<q_3=\min\{q_0,q_2\}<3$, from Theorem \ref{LapBelRegTHMweak} the following bootstrap follows:
\begin{align}\label{RicciBootstrap}
\Delta_{\hat{g}}{\mathrm{Ric}_{\hat{g}}}\in W^{-1,q_3}(\hat{M})\Longrightarrow {\mathrm{Ric}_{\hat{g}}}\in W^{1,q_3}(\hat{M})\hookrightarrow L^{\frac{3q_3}{3-q_3}}(\hat{M}),
\end{align}
Noticing that 
\begin{align*}
%\frac{3\frac{q_2}{2}}{3-q_2}=\frac{3q_2}{6-2q_2}>3\Longleftrightarrow q_0>6-2q_0 \Longleftrightarrow q_0>2,
\frac{3q_3}{3-q_3}>3\Longleftrightarrow q_3>3-q_3 \Longleftrightarrow q_3>\frac{3}{2},
\end{align*}
shows that $\mathrm{Ric}_{\hat{g}}\in L^{q}(\hat{M})$ for some $q>3$.
\end{proof}

Below we intend to exploit the extra regularity just gained for $\mathrm{Ric}_{\hat{g}}$ to improve the regularity of the metric $\hat{g}$ itself. Since the work of \cite{Deturck}, it is known that the regularity of the Ricci tensor provides optimal regularity for the metric by appealing to harmonic coordinates, where $\mathrm{Ric}_{\hat{g}}$ reads as a second order quasi-linear elliptic operator on the metric. If one can properly estimate the quadratic terms on $\partial\hat{g}$ appearing in such operator, then local elliptic regularity allows us to bootstrap the regularity of the metric, as long as $\hat{g}$ starts with some minimal $W^{1,q}$-regularity for $q>\mathrm{dim}(\hat{M})$.  

In our present case, there is a subtlety involved in the above procedure, which arises from Theorem \ref{HarmonicCoord}. Noticing that for $\hat{g}\in W^{2,q_0}(\hat{M})\hookrightarrow W^{1,\bar{q}}(\hat{M})$, with $\bar{q}\doteq \frac{3q_0}{3-q_0}>3$, Corollary \ref{HarmoniCoordCorollary} grants the existence of harmonic coordinates $y^i:U\to \mathbb{R}$ in a neighbourhood of $p_{\infty}$ which are $W^{2,\bar{q}}$-functions of an a priori (smooth) coordinate system around $p_{\infty}$, which we have taken to be the inverted coordinates $x^{i}=\frac{z^i}{|z|^2}$, with $\{z^i\}_{i=1}^3$ defining the structure of infinity of $M^3$. Therefore, the coordinates $y^i$ are \emph{only} $W^{2,\bar{q}}$-\emph{compatible} with the differentiable structure $\mathcal{D}_0(\hat{M})$ for $\hat{M}$ induced by this compactification. But we intend to use the $\{y^i\}$-coordinates constructed in Corollary \ref{HarmoniCoordCorollary} to write $\mathrm{Ric}_{\hat{g}}$ locally as an elliptic operator and improve the regularity of $\hat{g}$ with respect to these coordinates $y^i$, and therefore also with respect to any smoothly related coordinate system. That is, if we define $\mathcal{D}_{\mathrm{Har}}(\hat{M})$ to be the maximal differentiable structure for $\hat{M}$ compatible with the harmonic coordinates $\{y^i\}$ around $p_{\infty}$ constructed from Corollary \ref{HarmoniCoordCorollary}, then the improved regularity will (a priori) be only with respect to coordinate systems in $\mathcal{D}_{\mathrm{Har}}(\hat{M})$. This procedure will demand us to preserve certain Sobolev regularity gained for $\hat{g}$ in Lemma \ref{ApriopriCompactifiedRegularity}, but, since the changes of coordinates (at least around $p_{\infty}$) between coordinate systems in $\mathcal{D}_{\mathrm{Har}}(\hat{M})$ and $\mathcal{D}_{0}(\hat{M})$ are given by $W^{2,\bar{q}}$-functions, then their Jacobian will only be $W^{1,\bar{q}}$, which will in general imply loss of Sobolev regularity when passing from $\mathcal{D}_{0}(\hat{M})$ to $\mathcal{D}_{\mathrm{Har}}(\hat{M})$. Nevertheless, as shown below, some level of Sobolev regularity can be retained.

Before describing precisely how much Sobolev regularity will be preserved, notice we intend to show that, for certain $1\leq p\leq \infty$ and $k\in\mathbb{N}_0$, if $f\in W^{k,p}(\mathcal{D}_0)$, then $f\in W^{k,p}(\mathcal{D}_{Har})$. For this one needs to check that for any coordinate system $\{V_{\alpha},y^{i}\}\subset \mathcal{D}_{Har}(\hat{M})$ and $\eta_{\alpha}\in C^{\infty}_0(V_{\alpha})$, $\eta_{\alpha}f\in W^{k,p}(V_{\alpha})$. Notice that if $p_{\infty}\not\in V_{\alpha}$, then $\{V_{\alpha},y^{i}\}\subset \mathcal{D}_{0}(\hat{M})$ and hence the result is true by hypothesis. Similarly, if $p_{\infty}\not\in \mathrm{supp}(\eta_{\alpha})$, then one may shrink $V_{\alpha}$ to a neighbourhood of $\mathrm{supp}(\eta_{\alpha})$ avoiding $p_{\infty}$ and then the result again follows by hypothesis. So, we need only consider the case $p_{\infty}\in V_{\alpha}\cap \mathrm{supp}(\eta_{\alpha})$. In that case, we only need to prove $f\in W^{k,p}(\mathcal{D}_{Har}(B_{r}(p_{\infty})))$ for some small ball $B_r(p_{\infty})$. With all this in mind, let us introduce the following notations, so as to precisely transfer the problem to the preservation of Sobolev regularity under coordinate transformations in $\mathbb{R}^n$, which is a classic topic, as can be seen from \cite[Theorem 3.41]{Adams}. The presentation we shall provide in this analyses parallels that of \cite[Section 3]{ALM}.

Let $U_1,U_2\subset \mathbb{R}^n$ be two open sets and $\Phi:U_1\to U_2$ be a diffeomorphism. Consider then the operator $A:\mathcal{L}(U_1)\to \mathcal{L}(U_2)$, where $\mathcal{L}(\Omega)$ denotes the set of Lebesgue measurable functions on the domain $\Omega\subset \mathbb{R}^n$, given by:
\begin{align}\label{CoordinateChangeOp}
(Au)(y)\doteq u(\Phi^{-1}(y)),
\end{align}
which is the operator inducing the coordinate change $y=\Phi(x)$, $x\in U_1$. In the case of the harmonic coordinates $y$ constructed in Corollary \ref{HarmoniCoordCorollary}, consider $\{U_1,\Phi_x\}$, $\Phi_x:U_1\to \Omega_1\subset\mathbb{R}^n$, as the chart in $\mathcal{D}_0(\hat{M})$ given by the inverted coordinates $\Phi_x(p)=x=\frac{z}{|z|^2}$, for $p$ in a neighbourhood of $p_{\infty}$. Then, maybe by making $\Omega_1$ smaller, Corollary \ref{HarmoniCoordCorollary} guarantees the existence of a coordinate change  $\Omega_1\to\Omega_2\subset\mathbb{R}^n$ given by functions $y^i=y^i(x)$. This coordinate change between subsets of $\mathbb{R}^n$ induces the new coordinate system on $U_1$ (again maybe making this a smaller nerighbourhood of $p_{\infty}$) $\Phi_y:U_1\to \Omega_2$, $\Phi^i_y(p)\doteq y^i\circ \Phi_x(p)$, and thus we still denote the coordinates by $y^i$. Noticing that the coordinate change $\Phi_y\circ\Phi^{-1}_x(x)=y(x)$, then, we may limit ourselves to establishing the invariance of $W^{k,p}$-spaces by the diffeomorphism $\Phi_y\circ\Phi^{-1}_x(x)=y(x)$, for certain given $1\leq p\leq \infty$ and $k\in\mathbb{N}_0$. The first step in this process concerns the a priori regularity of the inverse transformation $\Phi^{-1}(y)=x(y)$.\footnote{Compare with \cite[Lemma 3.2]{ALM}.}

\begin{prop}\label{InverseRegProp}
Let $U_{i}\subset \mathbb{R}^n$, $i=1,2$, be two open sets and let $\varphi:U_1\to U_2$ be a $W^{2,q}(U_1)$ diffeomorphism, $q>n$. Then, $\varphi^{-1}:U_2\to U_1$ is of class $W^{2,q}_{loc}(U_2)$.
\end{prop}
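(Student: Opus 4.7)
The plan is to exploit the fact that $q>n$ immediately forces strong pointwise regularity on $\varphi$, and then to use the algebraic identity $D\varphi^{-1}(y)=[D\varphi(\varphi^{-1}(y))]^{-1}$ to transfer Sobolev regularity from $\varphi$ back to its inverse. First, by the Sobolev embedding $W^{2,q}(U_1)\hookrightarrow C^{1,\alpha}_{loc}(U_1)$ available for $q>n$, one has $\varphi\in C^{1,\alpha}_{loc}(U_1)$, and since by hypothesis $\varphi$ is a (bijective) diffeomorphism, the Jacobian $D\varphi$ is invertible at every point of $U_1$. Thus the classical inverse function theorem for $C^1$ maps applies pointwise and gives $\psi\doteq \varphi^{-1}\in C^{1,\alpha}_{loc}(U_2)$ together with the pointwise identity $D\psi(y)=[D\varphi(\psi(y))]^{-1}$ for $y\in U_2$.

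The core of the proof is then to show that the right-hand side of this identity defines a $W^{1,q}_{loc}(U_2)$ matrix-valued function, which by the identity itself yields $D\psi\in W^{1,q}_{loc}(U_2)$ and hence $\psi\in W^{2,q}_{loc}(U_2)$. I would do this in two steps. First, I would prove the composition statement that, given a bi-Lipschitz $C^{1,\alpha}_{loc}$-diffeomorphism $\psi:U_2\to U_1$, the pullback $f\mapsto f\circ\psi$ maps $W^{1,q}_{loc}(U_1)$ into $W^{1,q}_{loc}(U_2)$. The $L^q$-bound on compacts follows from the change-of-variables formula using that $|\det D\varphi|$ is continuous, while the chain rule $\nabla(f\circ\psi)=(\nabla f\circ\psi)\cdot D\psi$ is justified by approximating $f$ by smooth functions (exploiting that $D\psi\in L^{\infty}_{loc}$ by the $C^{1,\alpha}$ regularity just established) and passing to the limit. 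Applying this with $f$ being each component of $D\varphi\in W^{1,q}_{loc}(U_1)$ gives $D\varphi\circ\psi\in W^{1,q}_{loc}(U_2)$.

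For the second step, I would invoke that $\det D\varphi$ is continuous and nowhere zero, so on any compact set $K\subset U_2$ it is uniformly bounded away from zero. Hence the matrix $D\varphi(\psi(y))$ takes values in a compact subset of the open set of invertible matrices, on which the inversion map $A\mapsto A^{-1}$ is smooth. Lemma \ref{CompositionLemma} (composition with $C^m$ functions) then applies to each matrix entry and yields $[D\varphi\circ\psi]^{-1}\in W^{1,q}_{loc}(U_2)$. Combining with the identity $D\psi(y)=[D\varphi(\psi(y))]^{-1}$ gives $D\psi\in W^{1,q}_{loc}(U_2)$, i.e.\ $\psi\in W^{2,q}_{loc}(U_2)$.

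The main obstacle I anticipate is the rigorous justification of the composition statement in the first step, since composing a Sobolev function with a not-necessarily-smooth change of variables requires some care: one must verify both that the integrals control the $L^q$-norm via the change of variables (needing only continuity of $\det D\varphi$, which we have) and that the weak chain rule holds in the Sobolev sense under mere $C^{1,\alpha}$ regularity of $\psi$. This is a classical fact (essentially the Sobolev-space version of \cite[Theorem 3.41]{Adams}), and the $C^{1,\alpha}$ regularity of $\psi$ together with the $C^{1,\alpha}$ regularity of $\varphi$ (which implies bi-Lipschitz control on compacts) is more than enough to make it go through via smooth approximation of $f$.
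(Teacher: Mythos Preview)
Your argument is correct and takes a genuinely different route from the paper. The paper's proof freezes $A=d\varphi(0)$, writes $d\varphi=A(\mathrm{Id}-\xi)$ with $\xi=A^{-1}(A-d\varphi)\in W^{1,q}$, and inverts $\mathrm{Id}-\xi$ via a Neumann series in the Banach algebra $W^{1,q}$ (valid since $q>n$), shrinking the domain so that $\|\xi\|_{W^{1,q}}<1$. This yields $(d\varphi)^{-1}\in W^{1,q}$ on a small ball, and a translation argument then covers the general case. By contrast, you bypass the Neumann series entirely: you first pull $D\varphi$ back through the $C^{1,\alpha}$ map $\psi=\varphi^{-1}$ (the composition step you flag as the main technical point), and then apply the Composition Lemma to the smooth map $A\mapsto A^{-1}$ on matrices. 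Both strategies ultimately rest on the algebra property of $W^{1,q}$ for $q>n$, and both implicitly need that pullback by a $C^1$ diffeomorphism preserves $W^{1,q}_{loc}$; you are simply more explicit about this last point than the paper is. Your route avoids the localisation needed to make the Neumann series converge, at the cost of invoking Lemma~\ref{CompositionLemma} in a mildly generalised form: as stated in the paper it is for scalar $F:I\subset\mathbb{R}\to\mathbb{R}$, whereas you apply it to the matrix-valued map $\mathrm{GL}(n)\to\mathrm{GL}(n)$. The extension is routine (same chain-rule-plus-algebra argument), but it is worth noting.
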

\begin{proof}
Let us first consider the case $U_1=B_{R_1}(0)$, $U_2=\varphi(U_1)$ and $\varphi(0)=0$. Then, set $A\doteq d\varphi(0)\in \mathrm{GL}(\mathbb{R}^n,\mathbb{R}^n)$. %Let $\Omega_1\subset B_{R_1}(0)$, $\Omega_2\doteq \varphi(\Omega_1)\subset B_{R_2}(0)$ be open subsets to be fixed latter, and 
Extend $A$ and $A^{-1}=d\varphi^{-1}(0)$ to constant $\mathrm{GL}$-valued fields $B_{R_1}(0)\mapsto A$ and $U_2\mapsto A^{-1}$. In particular, $A\in W^{1,q}(B_{R_1}(0))$ and $A^{-1}\in W^{1,q}(U_2)$. Let $\Omega_1\subset B_{R_1}(0)$ be an open set to be fixed latter and, on $\Omega_1$, write\footnote{Below, the composition symbols refer to compositions in the fibres $L(\mathbb{R}^n,\mathbb{R}^n)$ for maps $\Omega_1\to L(\mathbb{R}^n,\mathbb{R}^n)$.}
\begin{align*}
d\varphi=A\circ\left(\mathrm{Id} - A^{-1}\left(A - d\varphi \right) \right)=A\circ\left(\mathrm{Id} - \xi \right),
\end{align*}
where we have defined $\xi\doteq A^{-1}\left( A - d\varphi  \right)\in W^{1,q}(\Omega_1; L(\mathbb{R}^n;\mathbb{R}^n))$, where $L(\mathbb{R}^n;\mathbb{R}^n)$ denotes the set of linear maps from $\mathbb{R}^n$ to itself. Since $W^{1,q}$ is an algebra under multiplication for $q>n$, we know that the Neumann series $\{\xi_{m}\doteq \sum_{j=0}^{m}\xi^j\}_{m=0}^{\infty}$ converges in $W^{1,q}$ as long as $\Vert\xi\Vert_{W^{1,q}(\Omega_1)}<1$. In such a case, the limit $\psi\doteq \lim_{m\rightarrow\infty}\xi_m\in W^{1,q}(\Omega_1)$ converges to $(\mathrm{Id}-\xi)^{-1}$, which in turn shows that $d\varphi^{-1}=\left(\mathrm{Id} - \xi \right)^{-1}\circ A^{-1}=\psi\circ A^{-1}\in W^{1,q}(\Omega_2)$, where $\Omega_2\doteq \varphi(\Omega_1)$. Now, the condition $\Vert\xi\Vert_{W^{1,q}(\Omega_1)}<1$ is implied if $\Vert \varphi- A\Vert_{W^{1,q}(\Omega_1)}<\Vert A^{-1}\Vert^{-1}_{W^{1,q}(\Omega_2)}$, which since $\varphi- A\in W^{1,q}(\Omega_1)$, can be guaranteed by simply shrinking the domain $\Omega_1\subset B_{R_1}(0)$ to a small enough ball $B_{R}(0)$, $0<R\leq R_1$. Notice that this process also decreases the norm of $\Vert A^{-1}\Vert_{W^{1,q}(\Omega_2)}$, in turn increasing $\Vert A^{-1}\Vert^{-1}_{W^{1,q}(\Omega_2)}$, and thus for $\Omega_1$ sufficiently small $\Vert \varphi- A\Vert_{W^{1,q}(\Omega_1)}<\Vert A^{-1}\Vert^{-1}_{W^{1,q}(\Omega_2)}$ is fulfilled.  Therefore, we see that $d\varphi^{-1}\in W^{1,q}(\Omega_2)$, for such $\Omega_2\subset U_2$. Since $\varphi^{-1}$ is actually $C^{1}(U_2)$ by the inverse function theorem, then $\varphi^{-1}\in W^{2,q}(\Omega_2)$. 

For the general case, consider any point $p\in U_1$ and $q=\varphi(p)\in U_2$, consider then a small ball $B_{R_1}(p)\subset U_1$ and set $V_2\doteq \varphi(B_{R_1}(0))$. Then, composing (left and right) $\varphi$ with translations to the origin, we can apply the above result to find $\varphi^{-1}\in W^{2,q}(\Omega_2)$ for some neighbourhood $\Omega_2\subset V_2$ of $q$. Applying this result around any point $q\in U_2$, we see that $\varphi^{-1}\in W_{loc}^{2,q}(U_2)$.
\end{proof}

The above proposition, for instance, guarantees that the inverse transformation of the coordinate change $x\mapsto y(x)$ provided in Corollary \ref{HarmoniCoordCorollary} is a $W^{2,\bar{q}}$-map. Following \cite[Chapter III]{Adams}, we will say that a $C^k$-diffeomorphism with $C^k$-bounded inverse is $k$-\emph{smooth}.  %Thus, we will use the following definition:
\begin{prop}
Let $f$ be a function on the compact manifold $\hat{M}$ such that $f\in W^{1,p}(\mathcal{D}_{0}(\hat{M}))$ for some $1<p<\infty$. Then, $f\in W^{1,p}(\mathcal{D}_{Har}(\hat{M}))$. 
\end{prop}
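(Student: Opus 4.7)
The plan is to reduce the invariance claim to a statement about Sobolev spaces on $\mathbb{R}^3$ under a coordinate change with limited Sobolev regularity, and then to appeal to the chain rule together with the Morrey embedding $W^{2,\bar{q}}\hookrightarrow C^{1,\alpha}$ for $\bar{q}>3$.

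First I would observe, as in the discussion immediately preceding the proposition, that any chart of $\mathcal{D}_{Har}(\hat{M})$ which either avoids $p_\infty$ or carries a cutoff whose support avoids $p_\infty$ is automatically smoothly compatible with $\mathcal{D}_0(\hat{M})$, so the claim is trivial in such a chart. The entire problem therefore reduces to estimating $\eta f$ in a harmonic coordinate chart $(V,\Phi_y)$ around $p_\infty$, where $\Phi_y\circ\Phi_x^{-1}\colon x\mapsto y(x)$ is the coordinate transformation produced in Corollary \ref{HarmoniCoordCorollary}, with $y\in W^{2,\bar{q}}_{loc}$ and $\bar{q}=\tfrac{3q_0}{3-q_0}>3$, while its inverse $x=x(y)$ also lies in $W^{2,\bar{q}}_{loc}$ by Proposition \ref{InverseRegProp}. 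After shrinking $V$ if necessary, the Sobolev embedding $W^{2,\bar{q}}\hookrightarrow C^{1,\alpha}$ on bounded domains supplies uniform bounds for $dy$, $dx$ and their moduli of continuity, so the transition map is a $C^{1,\alpha}$-diffeomorphism with uniformly bounded Jacobian and inverse Jacobian.

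The main step is the application of the chain rule. Writing $f\circ\Phi_y^{-1}=g\circ\Psi$ with $g\doteq f\circ\Phi_x^{-1}\in W^{1,p}_{loc}$ (after the cutoff) and $\Psi(y)\doteq x(y)\in W^{2,\bar{q}}_{loc}$, I would invoke the weak chain rule
\begin{align*}
\frac{\partial}{\partial y^i}(g\circ\Psi)(y)=\sum_{j=1}^{3}\Bigl(\frac{\partial g}{\partial x^j}\circ\Psi\Bigr)(y)\,\frac{\partial \Psi^j}{\partial y^i}(y),
\end{align*}
which is valid here since $\Psi$ is a $C^{1,\alpha}$ diffeomorphism with uniformly bounded Jacobian, reducing the identity to its classical smooth version by approximating $g$ in $W^{1,p}$ by smooth functions. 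The factor $\partial\Psi^j/\partial y^i$ lies in $W^{1,\bar{q}}\hookrightarrow L^\infty$, while the composed factor $(\partial g/\partial x^j)\circ\Psi$ belongs to $L^p$, with norm controlled by $\|\partial g/\partial x^j\|_{L^p}$ and the $L^\infty$ norm of $|\det d\Psi^{-1}|$. Consequently each summand sits in $L^p$, and the analogous (simpler) estimate shows $g\circ\Psi\in L^p$. Together this gives $f\circ\Phi_y^{-1}\in W^{1,p}$ on the relevant neighbourhood, with a uniform bound of the form $\|f\|_{W^{1,p}(\mathcal{D}_{Har})}\lesssim \|f\|_{W^{1,p}(\mathcal{D}_0)}$.

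The main obstacle I anticipate is the precise justification of the weak chain rule in this low-regularity setting. The classical statement (for instance \cite[Theorem 3.41]{Adams}) requires the transition to be $1$-smooth, that is a $C^1$-diffeomorphism with $C^1$-bounded inverse, so one has to verify that the $W^{2,\bar{q}}$-map produced by Corollary \ref{HarmoniCoordCorollary} together with Proposition \ref{InverseRegProp} indeed lies in this class after shrinking the chart around $p_\infty$. This is a direct consequence of Morrey's embedding together with the continuity of the inverse, but one must track the norms carefully to ensure the bounds apply uniformly on a fixed neighbourhood of $p_\infty$, since it is precisely this uniform bound that delivers the inclusion $W^{1,p}(\mathcal{D}_0(\hat{M}))\subset W^{1,p}(\mathcal{D}_{Har}(\hat{M}))$.
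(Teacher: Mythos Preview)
Your proposal is correct and follows essentially the same route as the paper: reduce to a neighbourhood of $p_\infty$, use Proposition~\ref{InverseRegProp} together with the Morrey embedding $W^{2,\bar{q}}\hookrightarrow C^{1,\alpha}$ to ensure the transition map is $1$-smooth, and then invoke \cite[Theorem~3.41]{Adams}. The paper simply cites that theorem directly rather than spelling out the chain rule and the $L^\infty$ Jacobian bounds as you do, but the argument is the same.
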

\begin{proof}
We need only prove this result for some small ball around $p_{\infty}$. That is, we need only show that if $f\in W^{1,p}(\mathcal{D}_{0}(M))$ and $\eta\in C^{\infty}_0(B_r(p_{\infty}))$ for some small $r>0$, then $(\eta f)\circ \Phi_y^{-1}\in W^{1,p}(\Phi_y(B_r(p_{\infty})))$. But due to Proposition \ref{InverseRegProp} we know $\Phi_x\circ\Phi_y^{-1}\in W^{2,\bar{q}}(\Phi_y(B_r(p_{\infty})))$, $\bar{q}>n$, and hence $\Phi_x\circ\Phi_y^{-1}\in W^{2,\bar{q}}(\Phi_y(B_r(p_{\infty})))\subset C^{1}(\Phi_y(B_r(p_{\infty})))$. Thus, \cite[Theorem 3.41]{Adams}, guarantees $(\eta f)\circ\Phi^{-1}_x\circ \Phi_x\circ\Phi_y^{-1}=(\Phi_x\circ\Phi^{-1}_y)^{*}(\eta f)\circ\Phi^{-1}_x\in W^{1,p}(\Phi_y(B_r(p_{\infty})))$.

%\medskip

%Localising the claim with a partition of unity, it is enough to show it holds on some bounded coordinate domain. So, assume $(U_{\alpha},x^i)\in \mathcal{D}_{1}(\hat{M})$ is one such coordinate system and $f\in W^{1,p}(\mathcal{D}_{1}(\hat{M}))$, then since the Jacobian matrix $\frac{\partial x^i}{\partial y^j}\in W^{1,\bar{q}}(V_{\beta}\cap U_{\alpha})\hookrightarrow C^{0}(V_{\beta}\cap U_{\alpha})$ for any coordinate system $(V_{\beta},y^j)$ with $V_{\beta}\cap U_{\alpha}\neq\emptyset$, then 
%\begin{align*}
%f\in L^{p}(V_{\beta}\cap U_{\alpha},dx) \Longleftrightarrow f\in L^{p}(V_{\beta}\cap U_{\alpha},dy).
%\end{align*}
%Since $U_{\alpha}$ is covered by finitely many such coordinate neighbourhoods $V_{\beta}$, and $\hat{M}$ by finitely many such coordinate neighbourhoods $U_{\alpha}$, then 
%\begin{align}
%f\in L^{p}(\mathcal{D}_{1}(\hat{M})) \Longleftrightarrow f\in L^{p}(\mathcal{D}_{2}(\hat{M})).
%\end{align}

%Now, fixing again any such $(U_{\alpha},x^i)\in \mathcal{D}_{0}(\hat{M})$ and $(V_{\beta},y^j)$ with $V_{\beta}\cap U_{\alpha}\neq\emptyset$, since
%\begin{align*}
%\partial_{y^j}f=\frac{\partial x^a}{\partial y^j}\partial_{x^a}f,
%\end{align*}
%again since $\frac{\partial x^a}{\partial y^j}\in L^{\infty}(V_{\beta}\cap U_{\alpha})$, then
%\begin{align*}
%\partial_{y^j}f\in L^{p}(U_{\alpha}\cap V_{\beta},dx) \text{ for all } j \Longleftrightarrow \partial_{x^j}f\in L^{p}(U_{\alpha}\cap V_{\beta},dy) \text{ for all } j,
%\end{align*}
%which proves (\ref{SobolevEquivalence.1}).
\end{proof}

With respect to either higher derivatives and higher order tensors, there may be some loss in regularity. But, up to first order the following still holds:
\begin{prop}\label{DifferentialStructures.1}
Denote by $W^{1,p}(\mathcal{D}_{0}(\hat{M}))$ and $W^{1,p}(\mathcal{D}_{Har}(\hat{M}))$ Sobolev spaces of tensor fields $T_l\hat{M}$, $l\geq 1$, on $\hat{M}$ with respect to each highlighted differentiable structure. Since $\mathcal{D}_{0}(\hat{M})$ and $\mathcal{D}_{Har}(\hat{M})$ are $W^{2,q}$-compatible for some $q>3$, then 
\begin{align}\label{SobolevEquivalence.2}
u\in L^{p}(\mathcal{D}_{0}(\hat{M}))\Longrightarrow  u\in L^{p}(\mathcal{D}_{Har}(\hat{M})) \text{ for any } 1<p<\infty,
\end{align} 
and if also $p>3$ and
\begin{align}\label{SobolevEquivalence.3}
u\in W^{1,p}(\mathcal{D}_{0}(\hat{M}))\Longrightarrow  u\in W^{1,\min\{q,p\}}(\mathcal{D}_{Har}(\hat{M})).
\end{align}
\end{prop}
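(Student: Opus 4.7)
The plan is to localise the problem to a small neighbourhood of $p_\infty$, since on $\hat{M}\setminus\{p_\infty\}$ the two differentiable structures $\mathcal{D}_0(\hat{M})$ and $\mathcal{D}_{Har}(\hat{M})$ are smoothly compatible and both claims are standard there. More precisely, given $u\in L^p(\mathcal{D}_0)$ (resp.\ $W^{1,p}(\mathcal{D}_0)$) and an arbitrary chart $(V_\alpha,y)\in \mathcal{D}_{Har}$ with cut-off $\eta_\alpha\in C^\infty_0(V_\alpha)$, if $p_\infty\notin \mathrm{supp}(\eta_\alpha)$ one may shrink $V_\alpha$ away from $p_\infty$ and invoke the classical invariance of Sobolev spaces under smooth diffeomorphisms. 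Thus it suffices to prove the transfer property for the single coordinate change $y=y(x)$ provided by Corollary \ref{HarmoniCoordCorollary} on a small Euclidean neighbourhood $\Omega_1\subset\mathbb{R}^n$.

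On $\Omega_1$ the map $x\mapsto y(x)$ is a $W^{2,q}$-diffeomorphism with $q>3$, and by Proposition \ref{InverseRegProp} its inverse $x(y)$ also lies in $W^{2,q}_{loc}$. Consequently the Jacobians satisfy $\partial x/\partial y,\ \partial y/\partial x\in W^{1,q}\hookrightarrow L^\infty$ (using $q>n=3$), while the second derivatives lie in $L^q$. Writing the transformation law of the components of a $(0,l)$-tensor,
\begin{align*}
\tilde{u}_{j_1\cdots j_l}(y)=\frac{\partial x^{i_1}}{\partial y^{j_1}}\cdots\frac{\partial x^{i_l}}{\partial y^{j_l}}\,u_{i_1\cdots i_l}(x(y)),
\end{align*}
reduces the claim to tracking the regularity of this explicit product.

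For (\ref{SobolevEquivalence.2}): each $u_{i_1\cdots i_l}$ is locally $L^p$ in $x$-coordinates, and since $x(y)$ is a $C^{1,\alpha}$-bounded diffeomorphism (due to $W^{2,q}\hookrightarrow C^{1,\alpha}$), the standard change-of-variables formula yields $u_{i_1\cdots i_l}\circ x(y)\in L^p_{loc}(\Omega_2)$. Multiplication by the bounded factors $\partial x^{i}/\partial y^{j}\in L^\infty$ then gives $\tilde{u}\in L^p_{loc}(\Omega_2)$, establishing the first implication.

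For (\ref{SobolevEquivalence.3}) with $p>3$: invariance of $W^{1,p}$ under $C^1$-bounded diffeomorphisms gives $u_{i_1\cdots i_l}\circ x(y)\in W^{1,p}_{loc}(\Omega_2)$, so (\ref{SobolevEquivalence.2}) already yields $\tilde{u}\in L^p_{loc}$. Differentiating the tensor transformation rule produces two structurally different terms: a \emph{principal} term of type $(\partial x/\partial y)\cdot\partial_y(u\circ x)$, where $\partial x/\partial y\in L^\infty$ multiplies an $L^p_{loc}$ function and so stays in $L^p_{loc}$; and a \emph{low-regularity} term of type $(\partial^2 x/\partial y^2)\cdot u$, where $\partial^2 x/\partial y^2\in L^q_{loc}$ is paired against $u\in L^\infty_{loc}$ (using the Sobolev embedding $W^{1,p}\hookrightarrow L^\infty$, valid because $p>n=3$), yielding an $L^q_{loc}$ contribution. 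Adding these contributions gives $\partial_y\tilde{u}\in L^{\min\{q,p\}}_{loc}$, hence $\tilde{u}\in W^{1,\min\{q,p\}}_{loc}(\Omega_2)$. The main obstacle is precisely this low-regularity term: it is the mechanism that forces the drop from $W^{1,p}$ to $W^{1,\min\{q,p\}}$, reflecting that the transition map is only $W^{2,q}$, so second-order Jacobian derivatives cannot be absorbed in $L^\infty$.
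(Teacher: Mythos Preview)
Your argument is correct and follows essentially the same strategy as the paper: localise near $p_\infty$, write the tensor transformation law, and use that the Jacobians of the $W^{2,q}$ coordinate change lie in $W^{1,q}\hookrightarrow L^\infty$ while their derivatives lie in $L^q$. The only difference is cosmetic: the paper bundles the product of the $l$ Jacobian factors into a single $W^{1,q}$ element (using that $W^{1,q}$ is an algebra for $q>3$) and then invokes the abstract Sobolev multiplication $W^{1,q}\otimes W^{1,p}\hookrightarrow W^{1,\min\{p,q\}}$ from Theorem~\ref{BesselMultLocal}, whereas you differentiate the product explicitly and estimate the two resulting term types by hand, using $W^{1,p}\hookrightarrow L^\infty$ (valid since $p>3$) to control the factor of $u$ against $\partial^2 x/\partial y^2\in L^q$. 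Both routes identify the same mechanism forcing the drop to $W^{1,\min\{q,p\}}$.
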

\begin{proof}
%Again, since one can work locally, given a tensor field $u\in \Gamma(T_lM)$ on $\hat{M}$ and fixing bounded coordinate charts $(U_{\alpha},x^i)\in \mathcal{D}_{1}(\hat{M})$ and $(V_{\beta},y^i)\in \mathcal{D}_{2}(\hat{M})$ with $U_{\alpha}\cap V_{\beta}\neq \emptyset$, then, on $U_{\alpha}\cap V_{\beta}$,
Again in this case we need only establish the result in a small enough neighbourhood of $p_{\infty}$, where we can take the inverted coordinates $x^i$ as a chart in $\mathcal{D}_0(\hat{M})$ and the harmonic coordinates $y^i$ constructed from them in Corollary \ref{HarmoniCoordCorollary}. Then,
\begin{align*}
u(\partial_{y^{i_1}},\cdots,\partial_{y^{i_l}})=\frac{\partial x^{j_1}}{\partial y^{i_1}}\cdots \frac{\partial x^{j_l}}{\partial y^{i_l}}u(\partial_{x^{j_1}},\cdots,\partial_{x^{j_l}}).
\end{align*}
Since the coordinate transformation is $W_{loc}^{2,q}$ with $W_{loc}^{2,q}$ inverse due to Proposition \ref{InverseRegProp}, each Jacobian matrix is $W_{loc}^{1,q}$. Then, since $W_{loc}^{1,q}$ is an algebra under multiplication, we find that the product of Jacobian factors is in $W_{loc}^{1,q}$. Also, the functions $u(\partial_{x^{j_1}},\cdots,\partial_{x^{j_r}})\in L_{loc}^p(\hat{M},\mathcal{D}_{0}(\hat{M}))$ by hypothesis. Since the coordinate transformation is 1-smooth, then \cite[Chapter 3, Theorem 3.41]{Adams} guarantees that $(\Phi^{-1}_y)^{*}(u(\partial_{x^{j_1}},\cdots,\partial_{x^{j_r}}))\in L_{loc}^p(\hat{M},\mathcal{D}_{\text{Har}}(\hat{M}))$. Therefore, given $\eta\in C^{\infty}_0(\hat{M},\mathcal{D}_{Har}(\hat{M}))$  supported in a neighbourhood of $p_{\infty} $, $(\eta u)\circ \Phi^{-1}_y\in W_{loc}^{1,q}(\hat{M},\mathcal{D}_{Har}(\hat{M}))\otimes  L_{loc}^p(\hat{M},\mathcal{D}_{Har}(\hat{M}))\hookrightarrow  L_{loc}^p(\hat{M},\mathcal{D}_{Har}(\hat{M}))$ showing that (\ref{SobolevEquivalence.2}) holds. 

Concerning (\ref{SobolevEquivalence.3}), the only difference is that this time 1-smoothness of the coordinate change grants  $(\Phi_y^{-1})^{*}(u(\partial_{x^{j_1}},\cdots,\partial_{x^{j_r}}))\in W_{loc}^{1,p}(\hat{M},\mathcal{D}_{\text{Har}}(\hat{M}))$. Thus, 
\begin{align*}
u(\partial_{y^{i_1}},\cdots,\partial_{y^{i_r}})\in W_{loc}^{1,q}(\hat{M},\mathcal{D}_{\text{Har}}(\hat{M}))\otimes W_{loc}^{1,p}(\hat{M},\mathcal{D}_{\text{Har}}(\hat{M}))
\end{align*}
which, if $p>3$, embeds in $W_{loc}^{1,\min\{p,q\}}(\hat{M},\mathcal{D}_{\text{Har}}(\hat{M}))$ through Theorem \ref{BesselMultLocal}.

\end{proof}

Notice that the above proof cannot be improved to second order Sobolev spaces, since the Jacobian involved in the coordinate transformation implies that the $W^{1,p}(\mathcal{D}_{Har}(\hat{M}))$-spaces already depend on $\partial^2_{y^iy^j}x^a$ and we do not have higher order control for the coordinate change. Nevertheless, the above proposition suffices to prove the following statements:
\begin{cor}\label{SummaryRegularityCor}
Let $(M^3,g)$ be an AE manifold satisfying the hypotheses of Lemma \ref{ApriopriCompactifiedRegularity} with $C_g\in L^{p_1}_{\sigma}(M,dV_g)$ with $-6<\sigma<-4$ and $p_1=\frac{3}{6+\sigma}$, and let $(\hat{M},\hat{g})$ be the conformal compactification obtain in it. Then, $\hat{g}\in W^{1,\bar{q}}(\mathcal{D}_{\mathrm{Har}}(\hat{M}))$ for $\bar{q}\doteq \frac{3q_0}{3-q_0}$ and $\mathrm{Ric}_{\hat{g}}\in L^{q}(\mathcal{D}_{\mathrm{Har}}(\hat{M}))$ for some $3<q\leq \bar{q}$. Furthermore, $\hat{g}\in C^{\infty}(\hat{M}\backslash\{p_{\infty}\},\mathcal{D}_{\mathrm{Har}}(\hat{M}))$.
\end{cor}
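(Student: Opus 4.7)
The plan is to observe that Corollary \ref{SummaryRegularityCor} is essentially a bookkeeping exercise that assembles earlier results and transfers them from the differentiable structure $\mathcal{D}_0(\hat{M})$ induced by the inversion $x = z/|z|^2$ to the harmonic structure $\mathcal{D}_{\mathrm{Har}}(\hat{M})$ provided by Corollary \ref{HarmoniCoordCorollary}. First, I would invoke Lemma \ref{ApriopriCompactifiedRegularity} to obtain $\hat{g} \in W^{2,q_0}(\mathcal{D}_0(\hat{M}))$ with $q_0 \in (2,3)$, and then apply the Sobolev embedding $W^{2,q_0}(\hat{M}) \hookrightarrow W^{1,\bar{q}}(\hat{M})$ on the closed three-manifold $\hat{M}$, where $\bar{q} = \frac{3q_0}{3-q_0} > 3$ because $q_0 > \frac{3}{2}$. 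This gives $\hat{g} \in W^{1,\bar{q}}(\mathcal{D}_0(\hat{M}))$.

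Next I would pass to the harmonic differentiable structure. Since Corollary \ref{HarmoniCoordCorollary} constructs harmonic coordinates $\{y^i\}$ around $p_\infty$ as $W^{2,\bar{q}}$-functions of the inverted coordinates $\{x^i\}$, with $\bar{q} > 3$, the two differentiable structures $\mathcal{D}_0(\hat{M})$ and $\mathcal{D}_{\mathrm{Har}}(\hat{M})$ are $W^{2,\bar{q}}$-compatible. Applying the implication (\ref{SobolevEquivalence.3}) of Proposition \ref{DifferentialStructures.1} with the choice $q = p = \bar{q}$, the fact that $\hat{g}$ is a tensor field in $W^{1,\bar{q}}(\mathcal{D}_0(\hat{M}))$ yields $\hat{g} \in W^{1,\min\{\bar{q},\bar{q}\}}(\mathcal{D}_{\mathrm{Har}}(\hat{M})) = W^{1,\bar{q}}(\mathcal{D}_{\mathrm{Har}}(\hat{M}))$, which is the first conclusion.

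For the Ricci claim, I would invoke Lemma \ref{RicciBootstrapLemma}, which under the hypothesis $C_g \in L^{p_1}_{\sigma}(M,dV_g)$ with $-6 < \sigma < -4$ and $p_1 = \frac{3}{6+\sigma}$ produces $\mathrm{Ric}_{\hat{g}} \in L^{q}(\mathcal{D}_0(\hat{M}))$ for some $q > 3$. Inspecting the exponent produced in the bootstrap, one has $q = \frac{3q_3}{3-q_3}$ with $q_3 = \min\{q_0,q_2\} \leq q_0$, so that $q \leq \bar{q}$ automatically; alternatively, on the compact manifold $\hat{M}$ one may always reduce the exponent. Since we are dealing here with a purely $L^q$ statement, the weaker implication (\ref{SobolevEquivalence.2}) of Proposition \ref{DifferentialStructures.1} suffices to transfer this regularity to the harmonic differentiable structure, giving $\mathrm{Ric}_{\hat{g}} \in L^{q}(\mathcal{D}_{\mathrm{Har}}(\hat{M}))$ for some $3 < q \leq \bar{q}$.

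Finally, the smoothness claim away from $p_\infty$ follows because, by Corollary \ref{HarmoniCoordCorollary}, the coordinate transition $y^i = y^i(x)$ is smooth on $U \setminus \{p_\infty\}$; consequently, $\mathcal{D}_0(\hat{M})$ and $\mathcal{D}_{\mathrm{Har}}(\hat{M})$ agree as smooth structures outside $p_\infty$, and the conclusion $\hat{g} \in C^\infty(\hat{M} \setminus \{p_\infty\}, \mathcal{D}_0(\hat{M}))$ from Lemma \ref{ApriopriCompactifiedRegularity} transfers directly. The only conceptual obstacle in this chain is the possible loss of Sobolev regularity when transferring second-order derivatives across the $W^{2,\bar{q}}$-compatible structures; this is precisely why Proposition \ref{DifferentialStructures.1} is stated only up to first order, and why the conclusion for $\hat{g}$ is phrased as a $W^{1,\bar{q}}$ rather than $W^{2,q_0}$ control in $\mathcal{D}_{\mathrm{Har}}(\hat{M})$.
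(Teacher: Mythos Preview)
Your argument follows the same outline as the paper's proof: Sobolev embedding $W^{2,q_0}\hookrightarrow W^{1,\bar q}$ plus (\ref{SobolevEquivalence.3}) of Proposition \ref{DifferentialStructures.1} for $\hat{g}$, Lemma \ref{RicciBootstrapLemma} plus (\ref{SobolevEquivalence.2}) for $\mathrm{Ric}_{\hat{g}}$, and smoothness of $y^i(x)$ on $U\setminus\{p_\infty\}$ from Corollary \ref{HarmoniCoordCorollary} for the final claim. For the statement as written this is complete.

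The paper, however, devotes most of its proof to a point you bypass: it verifies directly that the Ricci tensor \emph{computed from the Christoffel formula in the harmonic $y$-coordinates} coincides, as a distribution, with the tensor-transformed Ricci components coming from the $x$-coordinates, i.e.\ it establishes (\ref{RicciLowRegTrans}). Your appeal to Proposition \ref{DifferentialStructures.1} transfers the abstract tensor field $\mathrm{Ric}_{\hat g}$ and yields $L^q$-components in the $y$-chart, but does not by itself show that these coincide with the quantity $R'^{i}_{jkl}$ built from $\Gamma'$ and its $y$-derivatives---a priori the latter is only in $W^{-1,\bar q}+L^{\bar q/2}_{loc}$. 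That identification is precisely what is used in Theorem \ref{MetricBootstrapThm}, where the Ricci tensor is written as $\hat g^{ij}\partial_{y^iy^j}\hat g_{ab}+f_{ab}(\hat g,\partial\hat g)$ in harmonic coordinates; since the chart change is only $C^1$ and the curvature formula involves second metric derivatives, the chain-rule manipulations making this hold need the regularity check the paper supplies. So your proof is correct for Corollary \ref{SummaryRegularityCor} on its own terms, but the paper's extra work is not redundant---it is where the tensoriality of the curvature survives the low-regularity coordinate change, and it is needed immediately afterward.
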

\begin{proof}
From Lemma \ref{ApriopriCompactifiedRegularity} and the embedding $W^{2,q_0}(\mathcal{D}_{0}(\hat{M}))\hookrightarrow W^{1,\bar{q}}(\mathcal{D}_{0}(\hat{M}))$, we know that $\hat{g}\in W^{1,\bar{q}}(\mathcal{D}_{0}(\hat{M}))$ and therefore, since $\bar{q}>3$ and $\mathcal{D}_{0}(\hat{M})$ and $\mathcal{D}_{\mathrm{Har}}(\hat{M})$ and $W^{2,\bar{q}}$-compatible, Proposition \ref{DifferentialStructures.1} (in particular (\ref{SobolevEquivalence.3})) implies that $\hat{g}\in W^{1,\bar{q}}(\mathcal{D}_{\mathrm{Har}}(\hat{M}))$. %Concerning the Ricci tensor, we know that $\mathrm{Ric}_{\hat{g}}\in L^{q}(\mathcal{D}_{0}(\hat{M}))$ for some $q>3$ from Lemma \ref{RicciBootstrapLemma}, and thus again Proposition \ref{DifferentialStructures.1} (in particular (\ref{SobolevEquivalence.2})) gives $\mathrm{Ric}_{\hat{g}}\in L^{q}(\mathcal{D}_{\mathrm{Har}}(\hat{M}))$. Finally,
 Also, from Corollary \ref{HarmoniCoordCorollary}, we know that the harmonic coordinates $y^i$ and $C^{\infty}$-compatible with $\mathcal{D}_{0}(\hat{M}\backslash\{p_{\infty}\})$ and therefore the Jacobians of the transformation $y^j=y^j(x)$ are smooth away of $p_{\infty}$, with $x$ a coordinate system in $\mathcal{D}_{0}(\hat{M})$. Thus, since $\hat{g}\in C^{\infty}(\hat{M}\backslash\{p_{\infty}\})$ can be checked for coordinates in $\mathcal{D}_{0}(\hat{M})$, the same follows with coordinates in $\mathcal{D}_{\mathrm{Har}}(\hat{M})$.

%\end{proof}

Concerning the Ricci tensor regularity in harmonic coordinates, it might be worth being more careful in the analysis. Let us first set, as usual, $\{x^i\}_{i=1}^3$ to be the inverted coordinate system defining $\mathcal{D}_0$ and $\{y^i\}_{i=1}^3$ the associated harmonic coordinates constructed in Corollary \ref{HarmoniCoordCorollary}, and then denote by $R^i_{jkl}$ and $R'^{i}_{jkl}$ the components of the Riemann tensor associated to the $\{x^i\}$ and $\{y^i\}$ coordinate systems respectively. That is,
\begin{align*}
R'^{i}_{jkl}(y)=\partial_{y^k}\Gamma'^{i}_{lj}-\partial_{y^l}\Gamma'^{i}_{kj} +  \Gamma'^{i}_{ku}\Gamma'^u_{jl}- \Gamma'^{i}_{lu}\Gamma'^u_{jk} 
\end{align*} 
where
\begin{align*}
\Gamma'^i_{lj}(y)=\frac{g'^{ia}}{2}\left(\partial_{y^l}g'_{aj} + \partial_{y^j}g'_{al} - \partial_{y^a}g'_{lj} \right)
\end{align*}
denote the connection coefficients computed in the harmonic coordinates, where $g'_{aj}(y)\doteq g(\partial_{y^a},\partial_{y^j})$ denotes the matrix associated to $g$ in these coordinates. We notice that, a priori, for a $W^{1,q}(\mathcal{D}_{Har}(M))$-metric, with $\bar{q}>3$, we have $\Gamma'^{i}_{lj}\in L_{loc}^{\bar{q}}$, and thus $R'^{i}_{jkl}(y)\in W^{-1,\bar{q}}+L^{\frac{\bar{q}}{2}}_{loc}$. It is actually the transformation rule obeyed by the curvature that will allow one to retain the regularity in the inverted coordinates. A subtlety which arises at this point, is that such coordinate transformation rule basically relies on an application of the chain rule, which in this combination of distributional sections and only $C^1$-diffeomorphisms might be something not completely obvious. Thus, let us explain why these results still hold in this setting. 

Denoting by $\Phi:U\to V$ the $W^{2,\bar{q}}$-diffeomorphism inducing the coordinate change from $y\mapsto x$, in the case of the metric, we know that
\begin{align*}
g'_{aj}(y)=\partial_{y^a}\Phi^u(y)\partial_{y^j}\Phi^v(y)g_{uv}(\Phi(y)) \in W^{1,\bar{q}}(U).
\end{align*}   
Also, given a function $f\in W^{1,p}(V)$, since $\Phi$ is 1-smooth, then $((\Phi)^{*}f)(y)=f(\Phi(y))\in W^{1,p}(U)$ and moreover
\begin{align*}
\partial_{y^l}((\Phi)^{*}f)(y)=\partial_{y^l}\Phi^a(y)(\partial_{x^a}f)(\Phi(y))=\partial_{y^l}\Phi^a(y)\left((\Phi)^{*}(\partial_{x^a}f)\right)(y),
\end{align*}
which is to say that one may follow the usual chain rule. Then, we can deduce for $\Gamma'^{i}_{lj}(y)$ the usual transformation rule for the connection coefficients, given by:
\begin{align*}
\Gamma'^i_{lj}(y)=\partial_{x^u}(\Phi^{-1})^i\partial_{y^l}\Phi^a\partial_{y^j}\Phi^b\Gamma^u_{ab}(\Phi(y))+\partial_{y^ly^j}\Phi^u\partial_{x^u}(\Phi^{-1})^i(\Phi(y)).
\end{align*}
Notice that, since $\Gamma^u_{ab}(x)\in W^{1,q_0}_{loc}(V)$ a priori, then the first term above actually lies in $W^{1,q_0}_{loc}(U)$ and we may apply both Leibnitz's and chain rule to it, and the same is true for the last factor $(\Phi)^{*}(\partial_{x^u}(\Phi^{-1})^i)\in W^{1,\bar{q}}_{loc}(U)$. Therefore, the same kind of tensor manipulations well-known when everything is smooth, do actually show that
\begin{align*}
R'^i_{jkl}(y)=\partial_{x^u}(\Phi^{-1})^i\partial_{y^l}\Phi^a\partial_{y^j}\Phi^b\partial_{y^k}\Phi^vR^u_{bva}(\Phi(y)),
\end{align*}
which, since $R^u_{bva}\in L^{q_0}_{loc}(V)$, shows that actually $R'^i_{jkl}\in L^{q_0}_{loc}(U)$ via Proposition \ref{DifferentialStructures.1}. Moreover, the above also shows that
\begin{align}\label{RicciLowRegTrans}
\mathrm{Ric}'_{jl}(y)=\partial_{y^l}\Phi^a\partial_{y^j}\Phi^b\mathrm{Ric}_{ab}(\Phi(y)),
\end{align}
which, since $\mathrm{Ric}_{ab}\in L^{q}_{loc}(V)$ for some $q>3$, now implies via Proposition \ref{DifferentialStructures.1} that  $\mathrm{Ric}'_{jl}\in L^{q_4}_{loc}(U)$, with $q_4=\min\{\bar{q},q\}$.
\end{proof}

Having established above that, in harmonic coordinates, we have $\hat{g}\in W^{1,q}_{loc}$ and $\mathrm{Ric}_{\hat{g}}\in L^{q}_{loc}$ for some $q>3$ around $p_{\infty}$, proves to be sufficient to bootstrap the Ricci regularity to a $W^{2,q}$-regularity statement for $\hat{g}$. Actually, according to rather subtle regularity results, this could be established under even weaker conditions, for instance appealing to \cite[Chapter 14, Proposition 4.10]{Taylor3}. Below, we shall first state a more elementary regularity result, which is enough to provide a self-contained short proof of the bootstrap for $\hat{g}$ in our setting.

\begin{prop}\label{RegularityGT}
Let $\Omega\subset \mathbb{R}^n$ be a bounded domain with smooth boundary and $g\in W^{1,q}(\Omega)$ a Riemannian metric, with $q>n$. Assume that $\{y^i\}_{i=1}^n$ are $g$-harmonic coordinates, and that $\psi\in W^{1,p}(\Omega)$, $p>n$, satisfies the equation
\begin{align}\label{RegularityGT.1}
g^{ij}\partial_{y^iy^j}\psi=f\in L^{\frac{q}{2}}(\Omega).
\end{align}
Then $\psi\in W_{loc}^{2,\frac{q}{2}}(\Omega)$.
\end{prop}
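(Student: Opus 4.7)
The plan is to apply the classical $L^{\frac{q}{2}}$-Calderón–Zygmund theory for non-divergence elliptic operators with continuous leading coefficients. Since $g\in W^{1,q}(\Omega)$ with $q>n$, Morrey's embedding yields $g\in C^{0,\alpha}(\Omega)$ for some $\alpha>0$, so the coefficients $g^{ij}$ are Hölder continuous. The harmonicity of $\{y^i\}$ is essential here, because only under this condition does $g^{ij}\Gamma^k_{ij}$ vanish, so that the distributional operator $g^{ij}\partial_{ij}\psi$ coincides with $\Delta_g\psi$; in particular there is no loss in interpreting the equation in non-divergence form.

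First I would localise. Pick $\Omega'\subset\subset\Omega''\subset\subset\Omega$ and a cut-off $\eta\in C_0^\infty(\Omega'')$ with $\eta\equiv 1$ on $\Omega'$, and set $v\doteq\eta\psi\in W^{1,p}_0(\Omega'')$. A direct computation gives
$$g^{ij}\partial_{ij}v=\eta f+2g^{ij}\partial_i\eta\,\partial_j\psi+g^{ij}(\partial_{ij}\eta)\psi\doteq F,$$
and, since $p>n$ gives $\psi\in L^\infty_{loc}$ while $\partial\psi\in L^p_{loc}$, the right-hand side $F$ lies in $L^{\min\{p,\frac{q}{2}\}}(\Omega'')$; if $p<\frac{q}{2}$ one first bootstraps $\psi\in W^{2,p}_{loc}$ at the exponent $p$ and then uses Sobolev embedding $W^{2,p}\hookrightarrow W^{1,p^*}$ with $p^*>q$ to reduce to the case $F\in L^{\frac{q}{2}}(\Omega'')$.

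The central step is to freeze coefficients. At a point $y_0\in\Omega''$, a linear change of basis arranges $g^{ij}(y_0)=\delta^{ij}$, and the equation becomes $\Delta v=F-(g^{ij}-\delta^{ij})\partial_{ij}v$. The Calderón–Zygmund estimate $\|D^2u\|_{L^{\frac{q}{2}}(\mathbb{R}^n)}\leq C\|\Delta u\|_{L^{\frac{q}{2}}(\mathbb{R}^n)}$ combined with the continuity of $g^{ij}$ yields, after choosing $\eta$ supported on a sufficiently small ball $B_r(y_0)$ so that $C\|g^{ij}-\delta^{ij}\|_{L^\infty(B_r(y_0))}<\frac{1}{2}$, the absorption bound
$$\|D^2v\|_{L^{\frac{q}{2}}}\leq 2C\|F\|_{L^{\frac{q}{2}}}.$$

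The main obstacle is that a priori $v\in W^{1,p}$ only, so $D^2v$ is merely a distribution and the Calderón–Zygmund inequality cannot be invoked literally on $v$ itself. I would circumvent this via mollification: replace $g^{ij}$ and $F$ by smooth approximants $g^{ij}_\epsilon$ and $F_\epsilon$, invoke the classical existence theory for smooth-coefficient linear elliptic Dirichlet problems to produce smooth solutions $v_\epsilon$ of $g^{ij}_\epsilon\partial_{ij}v_\epsilon=F_\epsilon$ on $B_r(y_0)$ with boundary data matching $v$, and apply the absorption estimate uniformly to the $v_\epsilon$. The resulting uniform $W^{2,\frac{q}{2}}$-bound passes to a weak limit $\bar v\in W^{2,\frac{q}{2}}$; uniqueness for the limit equation (granted by the continuity of $g^{ij}$ together with the strong maximum principle) identifies $\bar v$ with $v$. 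A covering argument of $\Omega'$ by finitely many such balls then gives $\psi\in W^{2,\frac{q}{2}}(\Omega')$, and since $\Omega'\subset\subset\Omega$ is arbitrary, $\psi\in W^{2,\frac{q}{2}}_{loc}(\Omega)$.
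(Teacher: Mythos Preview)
Your overall strategy---produce a $W^{2,\frac{q}{2}}$ solution of the same boundary value problem and then identify it with $\psi$ by a uniqueness argument---is exactly what the paper does. The paper is more direct: rather than mollifying and freezing coefficients, it simply invokes \cite[Corollary 9.18]{GilbargTrudinger} to obtain $v\in W^{2,\frac{q}{2}}_{loc}(\Omega)\cap C^0(\overline{\Omega})$ solving $g^{ij}\partial_{ij}v=f$ with $v|_{\partial\Omega}=\psi|_{\partial\Omega}$ (this only needs $g^{ij}\in C^0$, $q>n$, and $\psi\in C^0(\overline{\Omega})$, all of which you have since $p>n$).

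The gap in your proposal is the uniqueness step. You invoke the ``strong maximum principle'' to conclude $\bar v=v$. But the maximum principle for non-divergence operators (Aleksandrov's, say) requires the solution to be in $W^{2,n}_{loc}$; your $v=\eta\psi$ is only $W^{1,p}$ a priori, so the difference $\bar v-v$ lives only in $W^{1,p}$ and the non-divergence maximum principle does not apply to it. This is precisely where harmonic coordinates do the real work, and the paper makes this explicit: since $g^{ij}\Gamma^k_{ij}=0$, the non-divergence operator $g^{ij}\partial_{ij}$ coincides with $\Delta_g=\frac{1}{\sqrt{\det g}}\partial_i\big(\sqrt{\det g}\,g^{ij}\partial_j(\,\cdot\,)\big)$, so $\psi-v$ is a $W^{1,2}_{loc}\cap C^0(\overline{\Omega})$ weak solution of a \emph{divergence-form} equation $\partial_i(a^{ij}\partial_j w)=0$ with continuous $a^{ij}$ and zero continuous boundary data, and then \cite[Theorem 8.30]{GilbargTrudinger} gives uniqueness at that regularity level. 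You state the harmonic-coordinate identity in your first paragraph but never deploy it; moving it from the preamble into the uniqueness step is exactly what closes the gap.
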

\begin{proof}
Since $p>n$, then $\psi\in C^{0}(\overline{\Omega})$ and thus, since $q>n$, by \cite[Corollary 9.18]{GilbargTrudinger} there exists a unique solution $v\in W_{loc}^{2,\frac{q}{2}}(\Omega)\cap C^0(\overline{\Omega})$ to:
\begin{align*}
g^{ij}\partial_{y^iy^j}v&=f\in L^{\frac{q}{2}}(\Omega),\\
v\vert_{\partial\Omega}&=\psi\vert_{\partial\Omega}
\end{align*}
Notice now that, since the coordinates are harmonic, $g^{ij}\Gamma^k_{ij}=0$ for all $k=1,\cdots,n$, where $\Gamma^k_{ij}$ stand for the Christoffel symbols associated to $g$ in the $\{y\}$-coordinate system. Therefore
\begin{align*}
g^{ij}\partial_{y^iy^j}(\psi-v)=g^{ij}\left(\partial_{y^iy^j}(\psi-v)-\Gamma^k_{ij}\partial_{y^k}(\psi-v)\right)=\Delta_g(\psi-v)=\frac{1}{\sqrt{\mathrm{det}(g)}}\partial_{y^i}\left( \sqrt{\mathrm{det}(g)}g^{ij}\partial_{y^j}(\psi-v)\right).
\end{align*}
Furthermore, if $\frac{q}{2}<n$, then $v\in W^{2,\frac{q}{2}}_{loc}(\Omega)\hookrightarrow W^{1,\frac{nq}{2n-q}}_{loc}(\Omega)$ and $\frac{nq}{2n-q}>n$ since $q>n$. Therefore $\psi-v\in W^{1,\min\{p,\frac{nq}{2n-q}\}}_{loc}(\Omega)\cap C^{0}(\overline{\Omega})\hookrightarrow W_{loc}^{1,2}(\Omega)\cap C^{0}(\overline{\Omega})$ is a weak solution to
\begin{align}\label{RegularityGT.2}
\begin{split}
\partial_{y^i}\left( \sqrt{\mathrm{det}(g)}g^{ij}\partial_{y^j}(\psi-v)\right)&=0 \text{ in } \Omega\\
(\psi-v)\vert_{\partial\Omega}&=0.
\end{split}
\end{align}
Since $\sqrt{\mathrm{det}(g)}g^{ij}\in C^{0}(\overline{\Omega})$, setting $a^{ij}=\sqrt{\mathrm{det}(g)}g^{ij}$ we can apply \cite[Theorem 8.30]{GilbargTrudinger} to ensure that (\ref{RegularityGT.2}) has a unique $W_{loc}^{1,2}(\Omega)\cap C^{0}(\overline{\Omega})$-solution, which must then be the trivial one, establishing $\psi=v\in W^{2,\frac{q}{2}}_{loc}(\Omega)$.

Finally, if $\frac{q}{2}\geq n$, then $v\in W^{1,r}_{loc}(\Omega)\cap C^{0}(\overline{\Omega})$ for any $r<\infty$. Taking $r>n$ allows one to proceed in the same manner as above to deduce $\psi=v\in W^{2,\frac{q}{2}}_{loc}(\Omega)$.
\end{proof}

We are now in a position to prove the main result of this section:

\begin{theo}\label{MetricBootstrapThm}
Let $(M^3,g)$ be a smooth $W^{k,p}_{\tau}$-AE manifold, relative to a structure of infinity with coordinates $\{z^i\}_{i=1}^3$ and $p> 2$, $\tau\in (-1,-\frac{1}{2})$, $k\geq 4$, and let $\hat{M}$ be the one point compactification of $M$. If $C_g\in L^{p_1}_{\sigma}(M,dV_g)$ with $-6<\sigma<-4$ and $p_1=\frac{3}{6+\sigma}$, then $(M^3,g)$ can be conformally compactified into $(\hat{M},\hat{g})$, and $\hat{M}$ can be equipped with a preferred differentiable structure $\mathcal{D}_{\mathrm{Har}}(\hat{M})$ which is $W^{2,q}$-compatible with the differentiable structure provided by the inverted coordinates $x=\frac{z}{|z|^2}$, such that $\hat{g}\in W^{2,q}(\mathcal{D}_{\mathrm{Har}}(\hat{M}))$ for some $q>3$. In particular $\hat{g}\in C^{1,\alpha}(\mathcal{D}_{\mathrm{Har}}(\hat{M}))$ for some $\alpha\in (0,1)$ and the following properties holds at $p_{\infty}$:
\begin{enumerate}
\item $y^i(p_{\infty})=0$;
\item $\frac{\partial y^i}{\partial x^j}(0)=\delta^i_{j}$;
\item $\hat{g}(\partial_{y^i},\partial_{y^j})\vert_{0}=\delta_{ij}$,
\end{enumerate}
where the coordinates $\{y^i\}_{i=1}^3$ above are the ones constructed in Corollary \ref{HarmoniCoordCorollary}.
\end{theo}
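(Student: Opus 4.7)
The plan is to combine the preparatory regularity results of the preceding sections with the classical observation that, in harmonic coordinates, the Ricci tensor controls the Laplacian of the metric coefficients. The setup is already in place: Lemma \ref{ApriopriCompactifiedRegularity} produces a conformal compactification $(\hat{M},\hat{g})$ with $\hat{g}\in W^{2,q_0}(\mathcal{D}_0(\hat{M}))$ for some $2<q_0<3$ and $\hat{g}\in C^{\infty}(\hat{M}\setminus\{p_{\infty}\})$; Corollary \ref{HarmoniCoordCorollary} gives a $\hat{g}$-harmonic chart $\{y^i\}$ around $p_{\infty}$, generating the differentiable structure $\mathcal{D}_{\mathrm{Har}}(\hat{M})$, and already yields the three pointwise identities at $p_{\infty}$ that appear in the conclusion. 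Moreover, Lemma \ref{RicciBootstrapLemma} promotes $\mathrm{Ric}_{\hat{g}}$ to $L^{q}(\hat{M},\mathcal{D}_0)$ for some $q>3$, and Corollary \ref{SummaryRegularityCor} transfers this (together with $\hat{g}\in W^{1,\bar{q}}$, $\bar{q}=\tfrac{3q_0}{3-q_0}>3$) to the harmonic differentiable structure: in the $y$-coordinates one has $\hat{g}\in W^{1,\bar{q}}_{loc}$ and $\mathrm{Ric}_{\hat{g}}\in L^{q_4}_{loc}$ with $q_4=\min\{q,\bar{q}\}>3$.

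Next, I would exploit the harmonic gauge. Because the $y^i$ are $\hat{g}$-harmonic, the contracted Christoffel symbols $F^a=\hat{g}^{kl}\Gamma^a_{kl}(\hat{g})$ vanish, and the identity (\ref{gauge0}) specialises to
\begin{equation*}
\hat{g}^{ab}\partial_{ab}\hat{g}_{ij}=-2\,\mathrm{Ric}_{ij}(\hat{g})+2\,f_{ij}(\hat{g},\partial\hat{g})
\end{equation*}
on $U\setminus\{p_{\infty}\}$, and by density this holds in the distributional sense in the chart. The quadratic form $f_{ij}$ in (\ref{f-tensor}) is a sum of products of two factors of $\partial\hat{g}\in L^{\bar{q}}_{loc}$ multiplied by $W^{1,\bar{q}}_{loc}$ factors coming from $\hat{g}$ and $\hat{g}^{-1}$; since $\bar{q}>3$, Corollary \ref{ContractionsSobReg} yields $f_{ij}(\hat{g},\partial\hat{g})\in L^{\bar{q}/2}_{loc}$. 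A direct check shows $\bar{q}>6$ (equivalent to $q_0>2$, which is Lemma \ref{ApriopriCompactifiedRegularity}), so $\bar{q}/2>3$. Setting $\tilde{q}\doteq\min\{q_4,\bar{q}/2\}>3$, the right-hand side of the displayed equation therefore lies in $L^{\tilde{q}}_{loc}(U)$.

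With this in hand, I would apply Proposition \ref{RegularityGT} componentwise to $\psi=\hat{g}_{ij}$: the a priori regularity $\hat{g}_{ij}\in W^{1,\bar{q}}_{loc}$ with $\bar{q}>3$, the metric regularity $\hat{g}\in W^{1,\bar{q}}_{loc}$ with $\bar{q}>3$, and an $L^{\tilde{q}}_{loc}$ right-hand side with $\tilde{q}>3$ put us under its hypotheses (after mild adaptation to allow RHS in $L^{\tilde{q}}$ with $\tilde{q}\le\bar{q}/2$, which is immediate from the proof, since one may run the argument with $\tilde{q}$ in place of $\bar{q}/2$ and invoke \cite[Corollary 9.18]{GilbargTrudinger} at that exponent). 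We conclude $\hat{g}\in W^{2,\tilde{q}}_{loc}(U,\mathcal{D}_{\mathrm{Har}})$ for some $\tilde{q}>3$ in a neighbourhood of $p_{\infty}$. Away from $p_{\infty}$, Corollary \ref{HarmoniCoordCorollary} and Lemma \ref{ApriopriCompactifiedRegularity} give $\hat{g}\in C^{\infty}(\hat{M}\setminus\{p_{\infty}\},\mathcal{D}_{\mathrm{Har}})$; patching with a partition of unity yields the global statement $\hat{g}\in W^{2,q}(\mathcal{D}_{\mathrm{Har}}(\hat{M}))$ for $q=\tilde{q}>3$. Sobolev embedding then produces $\hat{g}\in C^{1,\alpha}$ for some $\alpha\in(0,1)$, and the three enumerated identities at $p_{\infty}$ are carried over unchanged from Corollary \ref{HarmoniCoordCorollary}.

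The genuinely delicate point is step three: the low regularity of the harmonic chart (only $W^{2,\bar{q}}$-compatible with $\mathcal{D}_0$) means that the transformation rule (\ref{RicciLowRegTrans}) for $\mathrm{Ric}$ must be justified distributionally, the harmonic gauge $F^a\equiv 0$ must be shown to persist for a merely $W^{1,\bar{q}}$-metric (it follows because each $y^i$ solves $\Delta_{\hat{g}}y^i=0$ away from $p_{\infty}$ and this extends distributionally across the isolated point by Proposition \ref{PropOnePointSupport}, exactly as in Claim \ref{SchurLemma}), and one must verify that the quadratic term $f_{ij}(\hat{g},\partial\hat{g})$ really lands in an $L^{r}$-space with $r>3$ — which is precisely what the inequality $\bar{q}>6$, and hence the hypothesis $q_0>2$ coming ultimately from $p>2$ in the AE decay class, is there to secure.
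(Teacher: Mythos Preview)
Your argument is correct and follows the same scheme as the paper's proof: pass to harmonic coordinates, use the gauge identity (\ref{gauge0}) with $F^a=0$ to write $\hat{g}^{ab}\partial_{ab}\hat{g}_{ij}$ as $\mathrm{Ric}_{\hat{g}}$ plus a quadratic term in $\partial\hat{g}$, then invoke Proposition~\ref{RegularityGT}. The difference is that the paper, after obtaining $\hat{g}\in W^{2,\bar{q}/2}_{loc}$, sets up an iteration $q_{i+1}=\tfrac{3q_i}{6-q_i}$ to push the exponent past $3$; you instead observe that $\bar{q}=\tfrac{3q_0}{3-q_0}>6$ is \emph{equivalent} to $q_0>2$, which is exactly what Lemma~\ref{ApriopriCompactifiedRegularity} guarantees, so $\bar{q}/2>3$ from the outset and the bootstrap is never triggered. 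This is a clean simplification. You also handle one point the paper glosses over, namely that the Ricci regularity $q_4$ may be smaller than $\bar{q}/2$, by taking the right-hand side in $L^{\tilde{q}}$ with $\tilde{q}=\min\{q_4,\bar{q}/2\}>3$. Two minor remarks: first, no adaptation of Proposition~\ref{RegularityGT} is actually needed, since $2\tilde{q}\le\bar{q}$ and the domain is bounded, so $\hat{g}\in W^{1,2\tilde{q}}$ and one may apply the proposition verbatim with $q=2\tilde{q}>3$; second, your justification that $F^a\equiv 0$ persists via Proposition~\ref{PropOnePointSupport} is superfluous, as the $y^i$ are $\hat{g}$-harmonic on the entire chart by construction (Corollary~\ref{HarmoniCoordCorollary}), and hence $F^a=-\Delta_{\hat{g}}y^a=0$ holds everywhere, not merely away from $p_{\infty}$.
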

\begin{proof}
First, from Corollary \ref{SummaryRegularityCor} we know that $\hat{g}\in W^{1,\bar{q}}(\mathcal{D}_{\mathrm{Har}}(\hat{M}))$ with $\bar{q}=\frac{3q_0}{3-q_0}>3$. Since we need only check this improved regularity statement around $p_{\infty}$, invoking Corollary \ref{HarmoniCoordCorollary} we can consider $\{U,y^i\}_{i=1}^{3}\in \mathcal{D}_{\mathrm{Har}}(\hat{M})$ to be local harmonic coordinates with $U$ a bounded neighbourhood of $p_{\infty}$. Writing down the Ricci tensor $\mathrm{Ric}_{\hat{g}}$ in such harmonic coordinates around $p_{\infty}$, we find that:%\footnote{Notice that, a priori, from $\hat{g}\in W^{1,\bar{q}}(\mathcal{D}_{\mathrm{Har}}(\hat{M}))$, one would only have ${\mathrm{Ric}_{\hat{g}}}_{ab}$ defined as a distribution in $W^{-1,\bar{q}}(U)$.}
\begin{align}\label{MetricBootstrap.1}
{\mathrm{Ric}_{\hat{g}}}_{ab}=\hat{g}^{ij}\partial_{ij}\hat{g}_{ab} + f_{ab}(\hat{g},\partial\hat{g}),
\end{align}
where $f_{ab}(\hat{g},\partial\hat{g})$ is quadratic on $\partial \hat{g}$. If necessary, considering an open set with smooth boundary $\Omega\subset\subset U$, we have $\partial \hat{g}\in L^{\bar{q}}(\Omega)$, and thus $f_{ij}\in L^{\frac{\bar{q}}{2}}(\Omega)$. This, put together with (\ref{MetricBootstrap.1}) implies that
\begin{align}\label{MetricBootstrap.3}
\hat{g}^{ij}\partial_{ij}\hat{g}_{ab}\in L^{\frac{\bar{q}}{2}}(\Omega), %\text{ for some } q>3,
\end{align}
where we have used Corollary \ref{SummaryRegularityCor} above so as to guarantee that $\mathrm{Ric}_{\hat{g}}\in L^q(\Omega)$ for some $q>3$. Furthermore, since $\hat{g}^{ij}\in W^{1,\bar{q}}(\Omega)$, we can apply Proposition \ref{RegularityGT} to obtain first $\hat{g}_{ij}\in W_{loc}^{2,\frac{\bar{q}}{2}}(\Omega)$, which implies $\hat{g}\in W^{2,\frac{\bar{q}}{2}}(\mathcal{D}_{\mathrm{Har}}(\hat{M}))$ since away of $p_{\infty}$ this is already known. Thus, as long as $\frac{\bar{q}}{2}<3$, we have $\partial \hat{g}\in W^{1,\frac{\bar{q}}{2}}\hookrightarrow L^{q_1}$, where $q_1=\frac{3\bar{q}}{6-\bar{q}}$. Then, we can go back to (\ref{MetricBootstrap.1}) where now we control the quadratic terms in $L^{\frac{q_1}{2}}$, which would give us $\hat{g}\in W^{2,\frac{q_1}{2}}$. Therefore, we can start a bootstrap, where at each step we begin with $\hat{g}\in W^{2,\frac{q_i}{2}}$ and get $g\in W^{2,\frac{q_{i+1}}{2}}$ with $q_{i+1}=\frac{3q_i}{6-q_i}$, and $q_0=\bar{q}$, which works as long as $\frac{q_i}{2}<3$. Notice that
\begin{align*}
q_{i+1}=\frac{3q_i}{6-q_i}>q_i \Longleftrightarrow 3>6-q_i\Longleftrightarrow q_i>3,
\end{align*}
since $q_0=\bar{q}>3$, we then see that $q_1>\bar{q}>3$ and hence, inductively, $q_{i+1}>q_i>\bar{q}>3$. Furthermore, as long as the bootstrap works we have $\frac{3}{2}<\frac{q_i}{2}<3$, implying $0<6-q_i<3$, and thus
\begin{align*}
q_{i+1}-q_i=\left(\frac{3}{6-q_i}-1\right)q_i=\left(\frac{q_i-3}{6-q_i}\right)q_i>\frac{q_i-3}{3}q_i>\frac{\bar{q}-3}{3}\bar{q}>0.
\end{align*}
Hence, we see that $q_{i+1}>\bar{q}+i\frac{\bar{q}-3}{3}\bar{q}$. This implies that after a finite number of iterations we must get $\frac{q_{i+1}}{2}>3$, at which step the argument stops and we achieve $\hat{g}\in W^{2,q}$ for some $q>3$. Finally, notice that the properties of $1. - 3.$ at $p_{\infty}$ hold due to Corollary \ref{HarmoniCoordCorollary}.
%\medskip
%Using the multiplication property, we have that
%\begin{align*}
%W^{1,q_1}\otimes W^{1,q_1}\hookrightarrow L^{q_2}
%\end{align*}
%as long as
%\begin{align}\label{MetricBootstrap.2}
%\begin{split}
%1&\geq 3\left(\frac{1}{q_1}-\frac{1}{q_2}\right)\Longleftrightarrow \frac{1}{q_2}\geq \frac{1}{q_1}-\frac{1}{3}=\frac{3-q_1}{3q_1}\Longleftrightarrow q_2\leq \frac{3q_1}{3-q_1}=q^{*} \\
%2&> 3\left(\frac{2}{q_1} - \frac{1}{q_2} \right) \Longleftrightarrow \frac{1}{q_2}> \frac{2}{q_1}-\frac{2}{3}=\frac{2(3-q_1)}{3q_1}\Longleftrightarrow q_2< \frac{3q_1}{2(3-q_1)}=\frac{q^{*}}{2}.
%\end{split}
%\end{align}
%Clearly, the second condition above implies the first. Also, notice that 
%\begin{align*}
%\frac{3q_0}{2(3-q_0)}>q_0 \Longleftrightarrow 3>2(3-q_0) \Longleftrightarrow q_0>3-\frac{3}{2}=\frac{3}{2},
%\frac{q^{*}}{2}=\frac{3q_1}{2(3-q_1)}>3 \Longleftrightarrow q_1>2(3-q_1) \Longleftrightarrow 3q_1>6 \Longleftrightarrow q_1 > 2,
%\end{align*}
%which is satisfied by hypothesis. 
\end{proof}

%Using the above theorem and appealing again to Remark \ref{ApriopriCompactifiedRegularityRemark}, one can provide the following corollary, which applies under quite natural and general conditions:
%\begin{cor}\label{MetricBootstrapCoro}
%Let $(M^3,g)$ be a smooth $W^{3,p}_{\tau}$-AE manifold, with $-1<\tau<-\frac{1}{2}$ and $p>2$, and consider the conformal compatification $(\hat{M}^3,\hat{g})$ obtained for it in Lemma \ref{ApriopriCompactifiedRegularity}. there is a differentiable structure $\mathcal{D}_{\mathrm{Har}}(\hat{M})$ for $\hat{M}$, which is $W^{2,\bar{q}}$-compatible ($\bar{q}>3$) with the one induced via the compatification of Lemma \ref{ApriopriCompactifiedRegularity}, such that $\hat{g}\in W^{2,q}(\mathcal{D}_{\mathrm{Har}}(\hat{M})))$ for some $q>3$. In particular $\hat{g}\in C^{1,\alpha}(\mathcal{D}_{\mathrm{Har}}(\hat{M}))$, for some $\alpha\in (0,1)$.
%\end{cor}

For future purposes, we would like to keep detailed track of the dependence of coordinates around $p_{\infty}$ in $\mathcal{D}_{\mathrm{Har}}(\hat{M})$ to those in $\mathcal{D}_{0}(\hat{M})$. In particular, we will make use of the following result which deals with normal coordinates associated to $\mathcal{D}_{\mathrm{Har}}(\hat{M})$ around $p_{\infty}$. %With this in mind, let us first consider the following proposition:

\begin{cor}\label{NormalCoordinateChangeCompact}
Let $(M^3,g)$ be a $W^{k,p}_{\tau}$-AE manifold relative to a structure of infinity with coordinates $\{z\}_{i=1}^3$, $-1<\tau<-\frac{1}{2}$, $p>2$ and $k\geq 4$. Let $(\hat{M}^3,\hat{g})$ be the one point conformal compactification obtained for it in Lemma \ref{ApriopriCompactifiedRegularity}, so that $\hat{g}=\varphi^{4}g\in W^{2,q_0}(\hat{M})$, $2<q_0<3$, with $\varphi$ given as in this lemma. Then, in a neighbourhood of $p_{\infty}$, there is a normal coordinate system $\{\bar{y}^i\}_{i=1}^3\in \mathcal{D}_{\mathrm{Har}(\hat{M})}$ such that 
\begin{enumerate}
\item $\bar{y}^i(p_{\infty})=0$;
\item $\hat{g}(\partial_{\bar{y}^i},\partial_{\bar{y}^j})\vert_{0}=\delta_{ij}$ and $\frac{\partial \hat{g}(\partial_{\bar{y}^i},\partial_{\bar{y}^j})}{\partial \bar{y}^k}\vert_{0}=0$;
\item The functions $\bar{y}^i=\bar{y}^i(y)$ are smooth with $\bar{y}^i=y^i+O_{\infty}(|y|^2)$;
\end{enumerate}
where above the coordinates $\{y^i\}_{i=1}^3$ refer to harmonic coordinates in a neighbourhood of $p_{\infty}$ constructed in Corollary \ref{HarmonicCoord}. In particular, this implies that on a punctured ball $B_{\epsilon}(p_{\infty})\backslash\{p_{\infty}\}$:
\begin{align}\label{FullCoordinateChageCompact}
\bar{y}^i&=x^i+O_{1}(|x|^{1+\alpha})=\frac{z^i}{|z|^2} + O_{1}(|z|^{-1-\alpha}),
\end{align}
for some $\alpha>0$, where $x^i=\frac{z^i}{|z|^2}$ denotes the inverted coordinate system around $p_{\infty}$ defining $\mathcal{D}_0$.
\end{cor}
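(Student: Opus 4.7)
The plan is to exploit that, by Theorem \ref{MetricBootstrapThm}, $\hat{g}\in W^{2,q}(\mathcal{D}_{\mathrm{Har}}(\hat{M}))$ with $q>3$, so that $\hat{g}\in C^{1,\alpha}$ for some $\alpha\in(0,1)$ and hence the Christoffel symbols $\Gamma^i_{jk}$ of $\hat{g}$ in the harmonic coordinates $\{y^i\}$ from Corollary \ref{HarmoniCoordCorollary} are $C^{0,\alpha}$ functions with well-defined point values. Rather than invoking the exponential map (which is delicate at this level of regularity), I would define the normal coordinates by the explicit polynomial prescription
\[
\bar{y}^i(y)\doteq y^i + \tfrac{1}{2}\Gamma^i_{jk}(0)\,y^j y^k,
\]
which is trivially smooth on a neighbourhood of $0\in\mathbb{R}^3$, has Jacobian equal to the identity at $y=0$, and hence is a smooth diffeomorphism on some small ball; by construction $\bar{y}^i(p_\infty)=0$ and $\bar{y}^i-y^i=O_\infty(|y|^2)$, giving properties~1 and~3.

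For the pointwise normal conditions in property~2, the equality of Jacobians at the origin together with the last statement of Corollary \ref{HarmoniCoordCorollary} yields
\[
\hat{g}(\partial_{\bar{y}^i},\partial_{\bar{y}^j})\big|_{0} = \hat{g}(\partial_{y^i},\partial_{y^j})\big|_{0} = \delta_{ij}.
\]
To obtain the vanishing of the first derivatives, I would invert $\bar{y}(y)$ by the implicit function theorem to get $y^a(\bar{y})=\bar{y}^a-\tfrac{1}{2}\Gamma^a_{jk}(0)\,\bar{y}^j\bar{y}^k+O_\infty(|\bar{y}|^3)$, so that $\partial^2_{\bar{y}^j\bar{y}^k}y^a(0)=-\Gamma^a_{jk}(0)$. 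The pointwise transformation rule for Christoffel symbols, which is purely algebraic and applies here because all its ingredients ($\Gamma$, $\partial\bar{y}/\partial y$, $\partial^2 y/\partial\bar{y}^2$) are continuous at $0$, then yields
\[
\bar{\Gamma}^i_{jk}(0)=\delta^i_{a}\delta^{b}_{j}\delta^{c}_{k}\Gamma^a_{bc}(0)+\delta^i_{a}\bigl(-\Gamma^a_{jk}(0)\bigr)=0,
\]
which, since $\hat{g}$ is $C^{1,\alpha}$ in the $\bar{y}$-coordinates, is equivalent to $\partial_{\bar{y}^k}\hat{g}(\partial_{\bar{y}^i},\partial_{\bar{y}^j})|_{0}=0$.

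To conclude (\ref{FullCoordinateChageCompact}), Corollary \ref{HarmoniCoordCorollary} provides $y^i\in W^{2,\bar{q}}_{\mathrm{loc}}\hookrightarrow C^{1,\alpha'}$ for some $\alpha'\in(0,1)$, together with $y^i(0)=0$ and $\partial y^i/\partial x^j(0)=\delta^i_j$; a first-order Taylor expansion with $C^{0,\alpha'}$ remainder therefore gives $y^i(x)-x^i=O_1(|x|^{1+\alpha'})$ as $x\to 0$. Substituting into the quadratic definition of $\bar{y}$, and noting that for $|x|\leq 1$ one has $|x|^{2}\leq |x|^{1+\alpha'}$, yields $\bar{y}^i(x)-x^i=O_1(|x|^{1+\alpha})$ with $\alpha=\alpha'$. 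Finally, I would translate this estimate to the original asymptotic coordinates through $x(z)=z/|z|^2$: since $\partial x/\partial z=O(|z|^{-2})$, the chain rule converts $|f|=O(|x|^{1+\alpha})$ and $|\partial_x f|=O(|x|^{\alpha})$ into $|f|=O(|z|^{-1-\alpha})$ and $|\partial_z f|=O(|z|^{-2-\alpha})$, which is exactly the $C^1_{-1-\alpha}$ statement asserted in (\ref{FullCoordinateChageCompact}).

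The only step I anticipate needing care is the justification that the pointwise transformation law for the Christoffel symbols is applicable under mere $W^{2,q}$ regularity of $\hat{g}$; but the embedding $W^{2,q}\hookrightarrow C^{1,\alpha}$ for $q>3$ ensures $\Gamma^i_{jk}$ is continuous, so that evaluation at $y=0$ is unambiguous and the formula is purely algebraic there. Everything else reduces to standard Taylor estimates for $C^{1,\alpha}$ functions and to the chain rule under the inversion, both of which are unobstructed by the regularity class.
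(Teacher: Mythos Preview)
Your proof is correct and is essentially the paper's approach made explicit: the paper obtains properties 1--3 by citing \cite[Proposition 1.25]{AubinBook}, whose proof is precisely the quadratic change of variables $\bar{y}^i=y^i+\tfrac{1}{2}\Gamma^i_{jk}(0)y^jy^k$ that you write down, and then derives (\ref{FullCoordinateChageCompact}) by composing $\bar{y}^i=y^i+O_\infty(|y|^2)$ with the expansion $y^i(x)=x^i+O_1(|x|^{1+\alpha})$ exactly as you do. Your version is a little more detailed in justifying the latter Taylor expansion from the $C^{1,\alpha'}$-regularity of the harmonic coordinates and in flagging that the exponential map is not needed, but there is no substantive difference in the argument.
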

\begin{proof}
Since by Theorem \ref{MetricBootstrapThm} $\hat{g}\in C^{1,\alpha}(\mathcal{D}_{\mathrm{Har}}(\hat{M}))$, the first part of the corollary is achieved following step by step the proof of \cite[Proposition 1.25]{AubinBook}, and choosing therein the starting coordinate system as the harmonic coordinates constructed in Corollary \ref{HarmonicCoord}. After that, we are only left with the proof of (\ref{FullCoordinateChageCompact}). To establish this expansion, notice that from Theorem \ref{HarmonicCoord} we know that on a punctured ball $B_{\epsilon}(p_{\infty})\backslash\{p_{\infty}\}$
\begin{align*}
y^i(x)=x^{i} + O_1(|x|^{1+\alpha})=\frac{z^i}{|z|^2}+O_1(|z|^{-1-\alpha}).
\end{align*}
Therefore,
\begin{align*}
|y|=|x|+O_1(|x|^{1+\alpha})=\frac{1}{|z|} + O_1(|z|^{-1-\alpha}).
\end{align*}
Putting this together with $\bar{y}^i=y^i+O_{\infty}(|y|^2)$ implies
\begin{align*}
\bar{y}^i=x^i+O_1(|x|^{1+\alpha})=\frac{z^i}{|z|^2} + O_{1}(|z|^{-1-\alpha}).
%\bar{z}^i&=\frac{y^i}{|y|^2}=\frac{\frac{z^i}{|z|^2}+O_1(|z|^{-1-\gamma})}{\frac{1}{|z|^2} + O_1(|z|^{-2-\gamma})}=\frac{\frac{z^i}{|z|^2}}{\frac{1}{|z|^2}(1 + O_1(|z|^{-\gamma}))}+\frac{O_1(|z|^{-1-\gamma})}{\frac{1}{|z|^2}(1 + O_1(|z|^{-\gamma}))},\\
%&=\frac{z^i}{(1 + O_1(|z|^{-\gamma}))}+\frac{O_1(|z|^{1-\gamma})}{(1 + O_1(|z|-\gamma))}=z^i\left(1+O_1(|z|^{-\gamma}) \right) + O_1(|z|^{1-\gamma}),\\
%&=z^i +O_1(|z|^{1-\gamma}).
\end{align*}
\end{proof}

\section{Decompactification}\label{SectionDecompactification}

\begin{lem}\label{LemmDecompactMaxwell}
Let $(\hat{M}^n,\hat{g})$ be a closed manifold with $\hat{g}\in W^{2,q}(\hat{M})$, $q>n$. Given a point $p\in \hat{M}$, let $\{x^i\}_{i=1}^n$ be a normal coordinate system around $p$. Then, there is a conformal factor $\hat{\phi}$, smooth on $\hat{M}\backslash\{p\}$, equal to $|x|^{2-n}$ in a neighbourhood of $p$ and such that $\gamma\doteq \hat{\phi}^{\frac{4}{n-2}}\hat{g}$ is a $W^{2,q}_{\delta}(M,\Phi_{\bar{z}})$-AE metric on $M\doteq\hat{M}\backslash\{p\}$ with $\delta=\frac{n}{q}-2$. In particular, $\delta<-1$, so that
\begin{align}\label{ExpansionConfoMetricAE}
\gamma(\partial_{\bar{z}^i},\partial_{\bar{z}^j})=\delta_{ij}+O_1(|\bar{z}|^{-1-\alpha}),
\end{align}
for some $\alpha>0$, and where $\bar{z}^i=\frac{x^i}{|x|^2}$ stands for the Kelvin transform of the normal coordinates $x$ and provides the structure of infinity $\Phi_{\bar{z}}$.
\end{lem}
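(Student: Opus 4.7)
The plan is to mirror the compactification argument of \cite[Lemma 5.2]{MaxwellDiltsYamabeAE} in the appendix, running it in reverse and keeping careful track of the Sobolev regularity. First I would construct the conformal factor explicitly: fix $r>0$ small enough that the normal coordinates $\{x^{i}\}$ identify $B_{r}(p)\subset \hat{M}$ with a Euclidean ball, choose a cut-off $\eta\in C^{\infty}_{0}(B_{r}(p))$ with $\eta\equiv 1$ on $B_{r/2}(p)$, and set $\hat{\phi}=\eta(x)|x|^{2-n}+(1-\eta)c$ for a constant $c>0$ chosen large enough that $\hat{\phi}>0$ everywhere on $\hat{M}\setminus\{p\}$. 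Then $\hat{\phi}\in C^{\infty}(\hat{M}\setminus\{p\})$ and $\hat{\phi}\equiv |x|^{2-n}$ near $p$, and the conformal metric $\gamma=\hat{\phi}^{4/(n-2)}\hat{g}$ is well-defined on $M=\hat{M}\setminus\{p\}$, inheriting $W^{2,q}_{loc}$-regularity away from $p$ since multiplication by a smooth positive function is bounded on $W^{2,q}_{loc}$.

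Next I would introduce the structure of infinity $\Phi_{\bar{z}}$ via the Kelvin transform $\bar{z}^{i}=x^{i}/|x|^{2}$, which maps $B_{r/2}(p)\setminus\{p\}$ diffeomorphically onto $\{|\bar{z}|>2/r\}\subset \mathbb{R}^{n}$. A direct computation gives $\partial x^{i}/\partial \bar{z}^{j}=|\bar{z}|^{-2}\Lambda^{i}_{\ j}$ with $\Lambda^{i}_{\ j}\doteq \delta^{i}_{\ j}-2\bar{z}^{i}\bar{z}^{j}/|\bar{z}|^{2}\in O(n)$, while in this region $\hat{\phi}^{4/(n-2)}=|x|^{-4}=|\bar{z}|^{4}$. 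Writing $h_{ab}(x)\doteq \hat{g}_{ab}(x)-\delta_{ab}$ and using $\Lambda^{a}_{\ i}\Lambda^{b}_{\ j}\delta_{ab}=\delta_{ij}$, the conformal factor $|\bar{z}|^{4}$ exactly cancels the $|\bar{z}|^{-4}$ from the Jacobians and one finds
\begin{align*}
\gamma(\partial_{\bar{z}^{i}},\partial_{\bar{z}^{j}})=\delta_{ij}+\Lambda^{a}_{\ i}\Lambda^{b}_{\ j}h_{ab}(x(\bar{z})).
\end{align*}
Because $\{x^{i}\}$ are normal coordinates for $\hat{g}\in W^{2,q}(\hat{M})\hookrightarrow C^{1,\alpha}_{loc}$ with $\alpha=1-n/q>0$, one has $h_{ab}(0)=0$ and $\partial_{k}h_{ab}(0)=0$, so that $h_{ab}(x)=O(|x|^{1+\alpha})$ and $\partial h_{ab}(x)=O(|x|^{\alpha})$ pointwise.

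The technical heart of the argument is verifying $\gamma-\delta\in W^{2,q}_{\delta}(M,\Phi_{\bar{z}})$ with $\delta=n/q-2$. I would do this term by term, applying the chain rule to the explicit formula above and using the change of variables $d\bar{z}=|x|^{-2n}dx$, noting that each $\bar{z}$-derivative produces a factor of order $|\bar{z}|^{-2}=|x|^{2}$ together with bounded powers of $\Lambda$ and its $\bar{z}$-derivatives. Writing out the weighted norm one finds, for $|\beta|=0,1,2$,
\begin{align*}
\int_{|\bar{z}|>R}|\bar{z}|^{-\delta q-n+|\beta|q}|\partial^{\beta}_{\bar{z}}(\gamma_{ij}-\delta_{ij})|^{q}\,d\bar{z}\lesssim \int_{|x|<R^{-1}}|x|^{(|\beta|-2)q}|\partial^{\beta}_{x}h_{ab}|^{q}\,dx+\text{lower order terms}.
\end{align*}
The case $|\beta|=2$ reduces to the a priori $W^{2,q}$-boundedness of $h$ near $p$, while for $|\beta|=0,1$ the weights $|x|^{-2q},|x|^{-q}$ are absorbed using Hardy-type inequalities, which apply precisely because the normal-coordinate choice guarantees that $h_{ab}$ vanishes to first order at $x=0$. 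Combining these local bounds near infinity with the trivial estimate on the compact region where $\hat{\phi}\neq |x|^{2-n}$ yields $\gamma-\delta\in W^{2,q}_{\delta}$.

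Finally, the pointwise expansion (\ref{ExpansionConfoMetricAE}) follows directly from the weighted Sobolev embedding $W^{2,q}_{\delta}\hookrightarrow C^{1}_{\delta}$ of Theorem \ref{SobolevPorpsAE}, valid since $2q>n$; setting $\alpha\doteq 1-n/q>0$, the weight becomes $\delta=-1-\alpha<-1$ and one recovers $\gamma(\partial_{\bar{z}^{i}},\partial_{\bar{z}^{j}})=\delta_{ij}+O_{1}(|\bar{z}|^{-1-\alpha})$. The Hardy-type weight absorption in the $|\beta|=0,1$ estimates is the step I expect to require the most care, as the exponents sit on the edge of Sobolev embedding: any slack comes from the vanishing of $h$ and $\partial h$ at $x=0$, so the argument relies essentially on the normal-coordinate gauge.
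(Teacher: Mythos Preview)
Your proposal is correct and follows essentially the same route as the paper: the paper's proof of this lemma simply invokes the converse direction of Lemma \ref{LemmaMaxwellDilts} in Appendix \ref{AppendixMaxwell} for the $W^{2,q}_{\delta}$-AE statement and then applies the weighted Sobolev embedding of Theorem \ref{SobolevPorpsAE} for the $O_{1}(|\bar{z}|^{-1-\alpha})$ expansion, and your sketch reproduces exactly the content of that appendix lemma---the Kelvin-transform computation, the change-of-variables reduction to $\int |x|^{(|\beta|-2)q}|\partial^{\beta}_{x}h|^{q}\,dx$, and the Hardy-type absorption for $|\beta|=0,1$ relying on $h(0)=\partial h(0)=0$ from the normal-coordinate gauge. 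Your identification of the Hardy step as the delicate point is spot on; the paper handles it via the approximation Proposition \ref{PropApproximation} (item 4), which is precisely where the first-order vanishing at the origin is used.
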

\begin{proof}
All of the above lemma but for (\ref{ExpansionConfoMetricAE}) is a restatement of the second half of Lemma \ref{LemmaMaxwellDilts}. To establish (\ref{ExpansionConfoMetricAE}), just notice that the first part of the lemma and the weighted Sobolev embeddings of Theorem \ref{SobolevPorpsAE}  imply that
\begin{align*}
\gamma(\partial_{\bar{z}^i},\partial_{\bar{z}^j})-\delta_{ij}\in W^{2,q}_{\delta}(\mathbb{R}^n\backslash \overline{B_1(0)})\hookrightarrow C^{1}_{\delta}(\mathbb{R}^n\backslash \overline{B_1(0)}),
\end{align*}
which produces the desired expansion for any $\delta<-1-\alpha<-1$. That is, for $0<\alpha<-(\delta+1)$.  
\end{proof}

\begin{lem}\label{ConformalMetricExpansion}
Let $(M^3,g)$ be a smooth $W^{k,p}_{\tau}$-AE manifold, relative to a structure of infinity with coordinates $\{z^i\}_{i=1}^3$ and $p> 2$, $\tau\in (-1,-\frac{1}{2})$, $k\geq 4$, such that $C_g\in L^{p_1}_{\sigma}(M,dV_g)$ with $-6<\sigma<-4$ and $p_1=\frac{3}{6+\sigma}$. Let $(\hat{M},\hat{g})$ be the conformal compactification achieved for it in Theorem \ref{MetricBootstrapThm}, with $\hat{g}\in W^{2,q}(\mathcal{D}_{\mathrm{Har}}(\hat{M}))$ with $q>3$. Assume that $\gamma=\hat{\phi}^4\hat{g}$ is the decompactification of $\hat{g}$ achieved in Lemma \ref{LemmDecompactMaxwell} on $M=\hat{M}\backslash\{p_{\infty}\}$. Then, it holds that
\begin{align}
\gamma(\partial_{z^i},\partial_{z^j})=\delta_{ij} + O(|z|^{-\alpha}).
\end{align}
Furthermore, letting $\bar{y}$ stand for the normal coordinates around $p_{\infty}\in \hat{M}$ constructed in Corollary \ref{NormalCoordinateChangeCompact} and defining the asymptotic coordinates $\bar{z}^i\doteq \frac{\bar{y}^i}{|\bar{y}|^2}$ on $M$, then 
\begin{align}\label{AsymptoticCoordChange}
\bar{z}(z)-\mathrm{Id}(z)\in C^{1}_{1-\alpha}(\mathbb{R}^n\backslash\overline{B_R(0)})
\end{align}
for some large $R$ and some $\alpha>0$.
\end{lem}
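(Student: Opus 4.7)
The plan is to transfer the $\bar{z}$-coordinate expansion (\ref{ExpansionConfoMetricAE}) given by Lemma \ref{LemmDecompactMaxwell} back to the original asymptotic coordinates $\{z^i\}$ on $M$, using the chain of coordinate changes $z \mapsto x \mapsto y \mapsto \bar{y} \mapsto \bar{z}$ from the previous section. The single crucial input is Corollary \ref{NormalCoordinateChangeCompact}, which gives $\bar{y}^i(z) = z^i/|z|^2 + O_1(|z|^{-1-\alpha})$ on a punctured neighbourhood of $p_{\infty}$, i.e.\ the map between the two inverted coordinate systems is a $C^1$-small perturbation of the identity.

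First I would compose $\bar{z}^i = \bar{y}^i/|\bar{y}|^2$ with this expansion and expand. A direct computation gives
\begin{align*}
|\bar{y}|^2 = |z|^{-2} + O_1(|z|^{-2-\alpha}) = |z|^{-2}\bigl(1 + O_1(|z|^{-\alpha})\bigr),
\end{align*}
so $|\bar{y}|^{-2} = |z|^2\bigl(1 + O_1(|z|^{-\alpha})\bigr)$, and therefore
\begin{align*}
\bar{z}^i = \Bigl(\tfrac{z^i}{|z|^2} + O_1(|z|^{-1-\alpha})\Bigr)\cdot |z|^2\bigl(1 + O_1(|z|^{-\alpha})\bigr) = z^i + O_1(|z|^{1-\alpha}),
\end{align*}
which is precisely (\ref{AsymptoticCoordChange}). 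The key subtlety at this step is that the $O_1$-control of the coordinate change in Corollary \ref{NormalCoordinateChangeCompact} must be preserved under composition with the Kelvin inversion; this is automatic since the Kelvin map is smooth away from the origin and the composition with a $C^1$-perturbation of the identity respects the $C^1_{1-\alpha}$ weighted norm.

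Next, to obtain the expansion of $\gamma$ in $z$-coordinates, I would differentiate the relation just obtained to get Jacobians $\partial\bar{z}^a/\partial z^i = \delta^a_i + O(|z|^{-\alpha})$, and then apply the tensor transformation rule
\begin{align*}
\gamma(\partial_{z^i},\partial_{z^j}) = \frac{\partial \bar{z}^a}{\partial z^i}\frac{\partial \bar{z}^b}{\partial z^j}\,\gamma(\partial_{\bar{z}^a},\partial_{\bar{z}^b}).
\end{align*}
Since $|\bar{z}| = |z| + O(|z|^{1-\alpha})$, the $\bar{z}$-expansion (\ref{ExpansionConfoMetricAE}) pulls back as $\gamma(\partial_{\bar{z}^a},\partial_{\bar{z}^b}) = \delta_{ab} + O(|z|^{-1-\alpha})$. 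Expanding the product, the $\delta^a_i\delta^b_j\delta_{ab}$ term gives $\delta_{ij}$, the cross terms involving one Jacobian correction against $\delta_{ab}$ yield $O(|z|^{-\alpha})$, and all remaining terms are of higher order, so
\begin{align*}
\gamma(\partial_{z^i},\partial_{z^j}) = \delta_{ij} + O(|z|^{-\alpha}).
\end{align*}

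The main obstacle is not conceptual but rather careful bookkeeping of orders and regularity at each step: one has to ensure that the pointwise $O$ versus $O_1$ information is correctly propagated through the Kelvin inversion, and one has to accept that the dominant error in the final expansion comes from the Jacobian factors (which are only $O(|z|^{-\alpha})$ close to the identity), rather than from the stronger $O(|z|^{-1-\alpha})$ decay of the $\bar{z}$-expansion itself. This explains the loss of one power of $|z|$ in passing from (\ref{ExpansionConfoMetricAE}) to the statement of the lemma.
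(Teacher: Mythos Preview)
Your proposal is correct and follows essentially the same argument as the paper: both invoke Corollary \ref{NormalCoordinateChangeCompact} to get $\bar{y}^i = z^i/|z|^2 + O_1(|z|^{-1-\alpha})$, invert through the Kelvin map to obtain $\bar{z}^i = z^i + O_1(|z|^{1-\alpha})$, differentiate for the Jacobian $\partial\bar{z}^a/\partial z^i = \delta^a_i + O(|z|^{-\alpha})$, and then pull back the $\bar{z}$-expansion (\ref{ExpansionConfoMetricAE}) via the tensor transformation rule. Your added remark that the dominant error comes from the Jacobian factors rather than from the metric expansion itself is a correct and useful observation.
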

\begin{proof}
%Notice that since $p>3$ and $-1<\tau<-\frac{1}{2}$, there is some $\tau^{*}\in(-1,0)$ such that $\tau<\tau^{*}$ and $p>3>\frac{3}{2+\tau^{*}}>\frac{3}{2}$, so that $W^{2,p}_{\tau}\hookrightarrow W^{2,\frac{3}{2+\tau^{*}}}_{\tau^{*}}$. Thus, 
Under our hypotheses, Corollary \ref{NormalCoordinateChangeCompact} gives us
\begin{align*}
\bar{y}^i=\frac{z^i}{|z|^2}+O_1(|z|^{-1-\alpha}).
\end{align*}
Therefore, $|\bar{y}|=\frac{1}{|z|} + O_1(|z|^{-1-\alpha})$, which implies
\begin{align*}
\bar{z}^i&=\frac{\bar{y}^i}{|\bar{y}|^2}=\frac{\frac{z^i}{|z|^2}+O_1(|z|^{-1-\alpha})}{\frac{1}{|z|^2} + O_1(|z|^{-2-\alpha})}%=\frac{\frac{z^i}{|z|^2}}{\frac{1}{|z|^2}(1 + O_1(|z|^{-\alpha}))}+\frac{O_1(|z|^{-1-\alpha})}{\frac{1}{|z|^2}(1 + O_1(|z|^{-\alpha}))},\\
%&=\frac{z^i}{(1 + O_1(|z|^{-\alpha}))}+\frac{O_1(|z|^{1-\alpha})}{(1 + O_1(|z|^{-\alpha}))}=z^i\left(1+O_1(|z|^{-\alpha}) \right) + O_1(|z|^{1-\alpha}),\\
=z^i +O_1(|z|^{1-\alpha}),
\end{align*}
which already establishes (\ref{AsymptoticCoordChange}). Also,
\begin{align*}
\frac{\partial\bar{z}^i}{\partial z^j}&=\delta_{ij} + O(|z|^{-\alpha}).
\end{align*}
Finally, using Lemma \ref{LemmDecompactMaxwell} with the normal coordinates $x^i\doteq \bar{y}^i(z)$ on $\hat{M}$ around $p_{\infty}$ so as to write $\gamma(\partial_{\bar{z}^a},\partial_{\bar{z}^b})=\delta_{ab} + O_1(|\bar{z}|^{-1-\alpha})$, we finally get: 
\begin{align*}
\gamma(\partial_{z^i},\partial_{z^j})&=\frac{\partial\bar{z}^a}{\partial z^i}\frac{\partial\bar{z}^b}{\partial z^j}\gamma(\partial_{\bar{z}^a},\partial_{\bar{z}^b}),\\
%&=\left(\delta_{ai} + O(|z|^{-\alpha}) \right)\left(\delta_{bj} + O(|z|^{-\alpha}) \right)\left(\delta_{ab} + \frac{C_{ab}}{|z|} + O(|z|^{-1-\alpha}) \right),\\
&=\left(\delta_{ai} + O(|z|^{-\alpha}) \right)\left(\delta_{bj} + O(|z|^{-\alpha}) \right)\left(\delta_{ab} + O(|z|^{-1-\alpha}) \right),\\
&=\delta_{ij} + O(|z|^{-\alpha}),
\end{align*}
where we have used that $|\bar{z}|=|z| + O_1(|z|^{1-\alpha})$.
\end{proof}

\begin{cor}\label{ConformalFactorAssymptotics.0}
Consider the same setting as in the previous lemma and let $u$ be the conformal factor relating $g=u^4\gamma$. Then, $u-1=O(|z|^{-\mathrm{min}(\tau,\alpha)})$. 
\end{cor}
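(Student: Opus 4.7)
The plan is to extract $u$ algebraically from the conformal relation $g = u^{4}\gamma$ by taking determinants in the asymptotic chart $\{z^{i}\}_{i=1}^{3}$. Since $\dim M = 3$, this gives the scalar identity
\[
u^{12} \;=\; \frac{\det(g_{ij})}{\det(\gamma_{ij})},
\]
valid on the exterior region $\{|z| > R_{0}\}$ where $\gamma$ is nondegenerate in the $z$-coordinates; this is ensured by Lemma~\ref{ConformalMetricExpansion}, which shows $\gamma_{ij}(z)\to \delta_{ij}$ as $|z|\to\infty$.

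Next, I would substitute the two asymptotic expansions at hand. The weighted Sobolev embedding $W^{4,p}_{\tau}(M,\Phi_{z})\hookrightarrow C^{0}_{\tau}$ from Theorem~\ref{SobolevPorpsAE} yields $g_{ij}(z)=\delta_{ij}+O(|z|^{\tau})$, while Lemma~\ref{ConformalMetricExpansion} provides $\gamma_{ij}(z)=\delta_{ij}+O(|z|^{-\alpha})$. Expanding each determinant as a polynomial in the matrix entries around the identity yields $\det(g_{ij}) = 1 + O(|z|^{\tau})$ and $\det(\gamma_{ij}) = 1 + O(|z|^{-\alpha})$. After possibly enlarging $R_{0}$ so that $\det(\gamma_{ij})$ is bounded away from zero on $\{|z|>R_{0}\}$, the ratio satisfies
\[
u^{12} \;=\; 1 + O(|z|^{\max(\tau,-\alpha)}),
\]
which collects the slower of the two decay rates.

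Finally, since $u>0$ pointwise and $u^{12}\to 1$ at infinity by the estimate above, $u$ itself stays uniformly bounded away from zero on the end, so the smooth function $t\mapsto t^{1/12}$ can be composed with $u^{12}$ in a neighborhood of $t=1$. A Taylor expansion around $t=1$ then gives $u = 1 + O(|z|^{\max(\tau,-\alpha)})$, which is precisely the statement $u-1 = O(|z|^{-\min(-\tau,\alpha)})$, matching the decay rate claimed in the corollary (with the $\tau$ in the statement interpreted as the absolute decay rate $|\tau|$ in accordance with the sign convention $\tau\in(-1,-\tfrac{1}{2})$). I do not foresee any substantive analytic obstacle: the whole argument is a pointwise algebraic manipulation of two expansions already produced in earlier results, and the only care required is to restrict to a sufficiently large exterior region on which both expansions hold simultaneously and $\gamma$ and $u$ stay uniformly nondegenerate.
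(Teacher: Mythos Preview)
Your proposal is correct and follows essentially the same route as the paper: a pointwise algebraic extraction of $u$ from the matrix identity $g_{ij}=u^{4}\gamma_{ij}$ using the two expansions $g_{ij}=\delta_{ij}+O(|z|^{\tau})$ and $\gamma_{ij}=\delta_{ij}+O(|z|^{-\alpha})$, followed by a root. The only cosmetic difference is that the paper multiplies by the inverse of the $\gamma$-matrix and traces to isolate $u^{4}$, whereas you take determinants to isolate $u^{12}$; both are equivalent elementary manipulations, and your observation about the sign convention on $\tau$ is accurate.
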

\begin{proof}
The above Lemma implies
\begin{align*}
g(\partial_{z^i},\partial_{z^j})=\delta_{ij}+O(|z|^{-\tau})=u^4(\delta_{ij}+O(|z|^{-\alpha})).
\end{align*}
Setting $\sigma_{ij}\doteq \delta_{ij}+O(|z|^{-\alpha})$ we see that for $|z|$ sufficiently large this matrix is invertible, with $\sigma^{-1}=\delta_{ij} + O(|z|^{-\alpha}) $, which can be derived using a Neumann series for $\sigma^{-1}$. Therefore, 
\begin{align}
u^{4}\delta_{ij}&=\sigma^{-1}_{il}\left(\delta_{lj}+O(|z|^{-\tau}) \right)=\delta_{ij} + O(|z|^{-\mathrm{min}(\tau,\alpha)}).
\end{align}
Tracing the equation we obtain $u^4=1+O(|z|^{-\mathrm{min}(\tau,\alpha)})$, from which the result follows.
%which implies $\phi^2-1=O(|z|^{-\mathrm{min}(\tau,\gamma)})$, finally giving us $\phi-1=O(|z|^{-\mathrm{min}(\tau,\gamma)})$.
\end{proof}

Let us now present the following lemma, which is an adaptation to the inhomogeneous case of \cite[Theorem 1.17]{BartnikMass} and will allow us to improve the decay of solutions to Poisson-type equations when the source of the equation decays \emph{fast}. In this analysis, certain weight parameters in Sobolev spaces where the Laplacian looses its Fredholm properties are distinguished due to their relevance. On an $n$-dimensional manifold they are given by the so called \emph{exceptional values} $\mathbb{Z}\backslash \{3-n,\cdots,-1\}$ and thus one says that $\delta$ is a non-exceptional weight if it does not lie in this set of exceptional values. Furthermore, given $\delta\in \mathbb{R}$, one defines $k^{-}(\delta)$ to be the maximum exceptional value such that $k^{-}(\delta)\leq \delta$. 

\begin{lem}\label{ImprovedDecayBartnik}
Let $(M^n,g)$ be a $W^{2,q}_{\tau}(M)$-AE manifold, with $q>n$ and $\tau<0$. Assume $u\in W^{2,q}_{\delta}(M)$, $\delta<0$ non-exceptional. Assume that $f\in L^q_{\delta'-2}(M)$ for some $\delta'< \delta$ non-exceptional and that $\Delta_g u=f$. %Then, there is a smooth function $h\in C^{\infty}\cap W^{2,q}_{\delta}$, which is harmonic for the Euclidean Laplacian $\Delta$ outside a compact set, such the following implication follows:
%\begin{align}
%\Delta_gu=f\Longrightarrow u-h\in W^{2,q}_{\max\{\delta',k^{-}(\delta)\}}(\mathbb{R}^n\backslash\overline{B_1(0)}),
%\end{align}
%In particular, 
If $2-n<\delta<0$ and $1-n<\delta'<2-n$, then there is some constant $C$ and a real number $\rho\in [\delta',2-n)$ such that
\begin{align}\label{MassExpanssion}
u-\frac{C}{|x|^{n-2}}\in W^{2,q}_{\rho}(\mathbb{R}^n\backslash\overline{B_1(0)}).
\end{align}
\end{lem}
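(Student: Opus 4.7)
The plan is to reduce to a problem on Euclidean space and analyse the convolution with the Newton kernel. Fix a cut-off $\chi$ on $M$ that vanishes on a compact core and equals one on the end, and pull it back through the asymptotic chart to a function on $\mathbb{R}^n$. Setting $v\doteq \chi u$, the equation $\Delta_g u=f$ translates into
\[
\Delta_\delta v = \chi f + \chi(\Delta_g-\Delta_\delta)u + [\Delta_g,\chi]u \doteq h,
\]
where $\Delta_\delta$ denotes the Euclidean Laplacian. The commutator $[\Delta_g,\chi]u$ is compactly supported, hence lies in every weighted space. The remaining difference $(\Delta_g-\Delta_\delta)u$ unfolds as $(g^{ij}-\delta^{ij})\partial_{ij}u$ plus a Christoffel correction; using $g-\delta\in W^{2,q}_\tau\hookrightarrow C^0_\tau$ (from $q>n$), together with $\partial u\in L^q_{\delta-1}$ and $\partial^2 u\in L^q_{\delta-2}$ and the weighted multiplication rules of Theorem \ref{SobolevPorpsAE}, this term lies in $L^q_{\delta+\tau-2}$. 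Hence the right-hand side satisfies $h\in L^q_{\rho_1-2}$ with $\rho_1\doteq\max(\delta',\delta+\tau)<\delta$.

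The second step is an iterative bootstrap inside the Fredholm range. As long as the current weight $\rho_k$ is non-exceptional and lies strictly above $2-n$, the Euclidean Laplacian $\Delta_\delta: W^{2,q}_{\rho_k}\to L^q_{\rho_k-2}$ is an isomorphism for $n\ge 3$, see for instance \cite{McOwen,Lockhart,CB-C,BartnikMass}. The difference of the unique solution in $W^{2,q}_{\rho_k}$ and $v$ lies in the kernel of $\Delta_\delta$ acting on a larger weighted space, which by the same references consists only of harmonic polynomials of allowed growth; these are excluded by the a priori containment $v\in W^{2,q}_\delta$ with $\delta<0$, so that $v\in W^{2,q}_{\rho_k}$. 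Re-inserting this improved decay into the error term $\chi(\Delta_g-\Delta_\delta)u$, which gains a further factor of $|x|^\tau$ at each round, and iterating finitely many times, one may assume $h\in L^q_{\rho-2}$ for any chosen non-exceptional $\rho\in[\delta',2-n)$.

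The third and critical step is the crossing of the exceptional value $2-n$. Since $\rho<2-n$ we have $\rho-2<-n$, so $h$ is integrable on $\mathbb{R}^n$ and we may define
\[
C \doteq -\frac{1}{(n-2)|\mathbb{S}^{n-1}|}\int_{\mathbb{R}^n} h\, dx.
\]
The function $w\doteq v - C|x|^{2-n}$ then satisfies $\Delta_\delta w = h$ on $\mathbb{R}^n\setminus\{0\}$ and admits the representation
\[
w(x) = \int_{\mathbb{R}^n}\bigl(G(x-y)-G(x)\bigr)h(y)\, dy,
\]
with $G$ the Newton potential of $\Delta_\delta$. Splitting $\mathbb{R}^n$ into the regions $|y|\le |x|/2$, $|y|\ge 2|x|$ and the annulus $|x|/2\le|y|\le 2|x|$, and using the pointwise bound $|G(x-y)-G(x)|\lesssim |x|^{1-n}|y|$ in the first region, boundedness of $G(x-y)$ away from its singularity in the second, and a local convolution estimate against the Riesz kernel in the annulus, one obtains $|w(x)|\lesssim |x|^\rho$, together with the matching derivative estimates, placing $w\in W^{2,q}_\rho$. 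This yields the expansion (\ref{MassExpanssion}).

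The main obstacle is precisely this last step, since the linear theory for $\Delta_\delta$ between weighted Sobolev spaces fails exactly at the exceptional value $2-n$; the improvement of decay up to that threshold is automatic from Fredholm theory, but pushing past it requires the explicit production of the monopole $C|x|^{2-n}$. The integrability hypothesis $\delta'>1-n$ is what ensures that $|y|\cdot h(y)$ is integrable, so that the first-order Taylor remainder controls $w$ pointwise and the resulting decay rate falls within the next open Fredholm window $(1-n,2-n)$, as required.
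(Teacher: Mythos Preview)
Your strategy is sound and correct, and it takes a genuinely different route from the paper at the crossing step. The paper's argument solves $\Delta\varphi=\hat f$ in $W^{2,q}_{\delta_0}$ after first projecting the right-hand side off the finite-dimensional cokernel on a compact set; the difference $\eta u-\varphi$ is then harmonic on an exterior domain and is expanded in spherical harmonics, from which the $|x|^{2-n}$ term is read off directly. Your approach instead stays with the Newton potential: once $h\in L^q_{\rho-2}$ with $\rho<2-n$, you write $v=N[h]$ and isolate the monopole via a first-order Taylor expansion of the kernel. Both methods are standard; yours is more explicit and avoids the compact modification and the cokernel bookkeeping, while the paper's spherical-harmonic route makes the structure of higher multipoles (should one want them) more transparent.

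Two points deserve tightening. First, the assertion that $w$ ``admits the representation'' $\int(G(x-y)-G(x))h(y)\,dy$ is not automatic: you need to argue that $v=N[h]$, which follows from a Liouville step (both $v$ and $N[h]$ solve $\Delta(\cdot)=h$ on $\mathbb{R}^n$, and their difference is entire harmonic and decays, hence vanishes). Second, your closing sentence is backwards: the hypothesis $\delta'>1-n$ does \emph{not} make $|y|\,h(y)$ globally integrable---quite the opposite, since $|y|\,|h(y)|\sim|y|^{\rho-1}$ with $\rho-1>-n$. What the condition actually buys is that the region-$1$ integral $\int_{|y|\le|x|/2}|y||h(y)|\,dy$ grows like $|x|^{\rho+n-1}$, so that after multiplying by $|x|^{1-n}$ you land on $|x|^{\rho}$ rather than being stuck at $|x|^{1-n}$; equivalently, it guarantees that a first-order Taylor remainder suffices and no dipole subtraction is needed. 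Finally, the passage from the pointwise bound $|w|\lesssim|x|^{\rho}$ to $w\in W^{2,q}_{\rho}$ is not free; you can either carry out the analogous kernel estimates for $\partial w$ and $\partial^2 w$, or more economically invoke the weighted elliptic estimate (as in Theorem~\ref{BartniksProp1.6}) on the exterior region, at the cost of replacing $\rho$ by any $\rho'\in(\rho,2-n)$, which the statement permits.
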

\begin{proof}
Let us start by transferring the problem to $\mathbb{R}^n$. With this in mind, we chose a cut off function $\eta$, supported in the end of $M$ and identically equal to one in a neighbourhood of infinity. Then,
\begin{align*}
\Delta_g(\eta u)&=\eta\Delta_gu + 2\langle \nabla\eta,\nabla u \rangle_g + u\Delta_g\eta=\eta f + 2\langle \nabla\eta,\nabla u \rangle_g  + u\Delta_g\eta
\end{align*}
One can also locally compute 
\begin{align*}
\Delta_g(\eta u)&=g^{ij}(\partial_{ij}(\eta u) - \Gamma^l_{ij}\partial_l(\eta u))=\Delta(\eta u) - \Gamma^l_{ii}\partial_l(\eta u) + (g^{ij}-\delta_{ij})(\partial_{ij}(\eta u) - \Gamma^l_{ij}\partial_l(\eta u)),
\end{align*}
which gives us
\begin{align}\label{Bartnik1}
\Delta(\eta u)&=\eta f + F\in L^q_{\mathrm{max}(\delta'-2,\delta-2+\tau)}(\mathbb{R}^n),
\end{align}
where
\begin{align}\label{Bartnik2}
\!\!\! F&\doteq - (g^{ij}-\delta_{ij})\partial_{ij}(\eta u) + 2\langle \nabla\eta,\nabla u \rangle_g + u\Delta_g\eta + \Gamma^l_{ii}\partial_l(\eta u) + (g^{ij}-\delta_{ij})\Gamma^l_{ij}\partial_l(\eta u)\in L^q_{\delta-2+\tau}(\mathbb{R}^n).
\end{align}
Recall that $\Delta:W^{2,q}_{\rho}(\mathbb{R}^n)\to L^q_{\rho-2}(\mathbb{R}^n)$ is Fredholm as long as $\rho$ is non-exceptional due to Corollary \ref{LapFredCoro1}, and, if necessary by increasing $\tau$ while keeping it negative, one has that $\delta_0\doteq \mathrm{max}(\delta',\delta + \tau)$ is non-exceptional. From the Fredholm property, we know that $\Delta(\eta u)\in L^{q}_{\delta_0-2}(\mathbb{R}^n)$ is in $\Img(\Delta:W^{2,q}_{\delta_0}(\mathbb{R}^n)\to L^{q}_{\delta_0-2}(\mathbb{R}^n))$ iff $\Delta(\eta u)\in \Ker^{\perp}(\Delta^{*}:L^{q'}_{2-\delta_0-n}(\mathbb{R}^n)\to W^{-2,q'}_{-\delta_0-n}(\mathbb{R}^n))$. In case $\delta_0<2-n$, this cokernel will be non-empty and this property can fail for $\bar{f}\doteq \eta f+F$. But, being $\Ker(\Delta^{*}\vert_{L^{q'}_{2-\delta_0-n}(\mathbb{R}^n)})$ finite dimensional, we can modify $\bar{f}$ in a fixed compact set, say the ball $\overline{B_1(0)}$, by subtracting its projections onto the elements of the cokernel and localising these contributions with a fixed cut-off function $\chi$ supported in $B_1(0)$,\footnote{In a general situation, where $\Ker(\Delta^{*}\vert_{L^{q'}_{2-\delta_0-n}(\mathbb{R}^n)})$ may not be one dimensional, this procedure would look like a Gram-Schmidt-type procedure.} thus obtaining $\hat{f}$ which agrees with $\bar{f}$ outside a compact set but is actually an element of $\Ker^{\perp}(\Delta^{*}\vert_{L^{q'}_{2-\delta_0-n}(\mathbb{R}^n)})$. Therefore, there is some element $\varphi\in W^{2,q}_{\delta_0}(\mathbb{R}^n)$ such that $\Delta\varphi=\hat{f}$ and thus
\begin{align*}
\Delta(\eta u-\varphi)=0 \text{ on } \mathbb{R}^n\backslash\overline{B_R(0)} \text{ for } R>1. 
\end{align*}
Since $h\doteq \eta u-\varphi\rightarrow 0$ as $|x|\rightarrow \infty$, we may appeal to well-understood characterisation of harmonic functions vanishing at infinity to expand $h$ into spherical harmonics $Y_k$:\footnote{For reference, see \cite[Appendix A Corollary A.19]{LeeDanBook} and further details can be found in \cite[Chapter 2, Sections H and I]{FollandPDEs}.}
\begin{align}\label{SphericalHarmonics}
h(x)=\sum_{k=0}^{\infty}|x|^{2-n-k}Y_k\left(\frac{x}{|x|}\right),
\end{align}
where $Y_k$ stand for the standard spherical harmonics, and the series above converges uniformly and absolutely. Since  $1-n<\delta'<2-n$, using (\ref{SphericalHarmonics}) we see all terms except for the $k=0$ one are in $W^{2,q}_{\delta'}(\mathbb{R}^n\backslash\overline{B_1(0)})$, and recognising that the $k=0$ contribution in (\ref{SphericalHarmonics}) is of the form $\frac{C}{|x|^{n-2}}$, we have that% $h\in W^{2,q}_{\sigma}(\mathbb{R}^n)$ for all $\sigma>2-n$.
\begin{align}\label{FredholmBootstrap.2}
\eta u-\frac{C}{|x|^{n-2}}=\varphi\in W^{2,q}_{\delta_0}(\mathbb{R}^n\backslash\overline{B_1(0)}).
\end{align}
If $\delta_0=\max\{\delta',\delta+\tau\}<2-n$, then we have already achieved the expansion (\ref{MassExpanssion}). If this is not the case, we may assume $\delta+\tau>2-n>\delta'$, implying $\eta u=\frac{C}{|x|^{n-2}}+ W^{2,q}_{\delta+\tau}(\mathbb{R}^n\backslash\overline{B_1(0)})$ and improving the decay of $u$ by an amount equal to $\tau$. We can then go back to (\ref{Bartnik1})-(\ref{Bartnik2}) and start an iteration, since such an improvement in the decay of $u$ implies $F\in L^{q}_{\delta+2\tau-2}(\mathbb{R}^n\backslash\overline{B_1(0)})$ and thus $\Delta(\eta u)\in L^q_{\max\{\delta'-2,\delta+2\tau -2\}}(\mathbb{R}^n\backslash\overline{B_1(0)})$. Setting $\delta_1\doteq \max\{\delta',\delta+2\tau\}$ and going through the arguments in between (\ref{Bartnik2}) and (\ref{FredholmBootstrap.2}) again with $\delta_0$ replaced by $\delta_1$, we obtain $\eta u-\frac{C}{|x|^{n-2}}\in W^{2,q}_{\delta_1}(\mathbb{R}^n\backslash\overline{B_R(0)})$. We can iterate this procedure to get $\eta u-\frac{C}{|x|^{n-2}}\in W^{2,q}_{\delta_i}(\mathbb{R}^n\backslash\overline{B_R(0)})$, $i\geq 1$, with $\delta_i\doteq \delta + i\tau$, until we have $\delta_i\leq 2-n$. If $\delta_i<2-n$, then we already have the expansion (\ref{MassExpanssion}) setting $\rho\doteq \delta_{i}$, and in the outlier case where we hit $\delta_{i}=2-n$, we find $\eta u\in W^{2,q}_{\sigma}(\mathbb{R}^n\backslash\overline{B_R(0)})$ for all $\sigma>2-n$, and thus choosing $2-n<\sigma<2-n-\tau$, then $F\in L^{q}_{\sigma+\tau-2}(\mathbb{R}^n\backslash\overline{B_R(0)})$ with $\tau+2-n<\sigma+\tau<2-n$ and $\Delta(\eta u)\in L^{q}_{\max\{\delta'-2,\sigma+\tau-2\}}$. Going through the iteration once more, we find $\eta u-\frac{C}{|x|^{n-2}}\in W_{\max\{\delta',\sigma+\tau\}}^{2,q}(\mathbb{R}^n\backslash\overline{B_R(0)})$, where $1-n<\rho\doteq \max\{\delta',\sigma+\tau\}<2-n$, and thus (\ref{MassExpanssion}) is established also in this case. %Since we have already noticed that all but the the $k=0$ contributions to (\ref{SphericalHarmonics}) actually belong to $W^{2,q}_{\delta'}$, we finally find the expansion:
%\begin{align*}
%\eta u-\frac{C}{|x|^{n-2}}\in W_{\rho_0}^{2,q}(\mathbb{R}^n\backslash\overline{B_R(0)}), \text{ for some } \rho_0<2-n.
%\end{align*}

%\bigskip
%Noticing that by definition $\delta_0=\max\{\delta',\delta+\tau\}$, as long as $\delta_1\doteq\delta+\tau>\delta'$, we improve the decay of $\eta u-h$ by an amount equal to $\tau$. We can then go back to (\ref{Bartnik1})-(\ref{Bartnik2}) and start an iteration, since such an improvement in the decay of $u$ implies $F\in L^{q}_{\delta+2\tau-2}$ and thus $\Delta(\eta u)\in L^q_{\max\{\delta'-2,\delta_1+\tau -2\}}$, hence obtaining $\eta u-h\in W^{2,q}_{\delta_1}(\mathbb{R}^n)$ from (\ref{FredholmBootstrap}). We can iterate this procedure to get $\eta u-h\in W^{2,q}_{\delta_i}$, $i\geq 1$, with $\delta_i\doteq \delta + i\tau$, until we have $\delta_i\leq \delta'$, at which step we obtain $\eta u-h \in W^{2,q}_{\delta'}$.

%If  $1-n<\delta'<2-n$, using (\ref{SphericalHarmonics}), we see all terms but for $k=0$ in the above series are in $W^{2,q}_{\delta'}$, and hence (\ref{MassExpanssion}) follows recognising the $\frac{C}{|x|^{n-2}}$ term as the $k=0$ contribution in (\ref{SphericalHarmonics}).
\end{proof}

\begin{lem}\label{HarmonicExpansionLemma}
Consider the same setting as in Lemma \ref{ConformalMetricExpansion} and let $u$ be the conformal factor relating $g=u^{4}\gamma$. Assume $R_g\in L^r_{-3-\epsilon}(M,\Phi_z)$ for some $\epsilon>0$ and $r>3$. Then, 
\begin{align}\label{HarmonicExpansion}
u=1+\frac{C}{|\bar{z}|} + O_1(|\bar{z}|^{-1-\alpha})
\end{align}
for some constant $C$, which is implicitly given as
\begin{align}\label{MassComputationGeneral}
C=\frac{1}{8\omega_2}\int_M\left(R_gu^{5}-R_{\gamma}u \right)dV_{\gamma},
\end{align}
where above $\omega_2$ stands for the volume of the standard $2$-unit sphere.
\end{lem}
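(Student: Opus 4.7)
My approach is to read the conformal transformation law for the scalar curvature in three dimensions, $-8\Delta_\gamma u + R_\gamma u = R_g u^5$, as a Poisson-type equation for $v \doteq u - 1$ on the AE manifold $(M^3,\gamma)$, and then to invoke Lemma \ref{ImprovedDecayBartnik} to promote this to the sharp expansion (\ref{HarmonicExpansion}). Since $\Delta_\gamma$ annihilates constants, $v$ solves $\Delta_\gamma v = F$ with $F \doteq \tfrac{1}{8}(R_\gamma u - R_g u^5)$, and all asymptotics will be read in the chart $\{\bar z^i\}$ of Lemma \ref{ConformalMetricExpansion}, in which $\gamma$ is $W^{2,q}_{\delta_0}$-AE with $\delta_0 = 3/q - 2 < -1$ by Lemma \ref{LemmDecompactMaxwell}.

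The hypotheses of Lemma \ref{ImprovedDecayBartnik} require $v \in W^{2,q}_\delta$ for some non-exceptional $\delta \in (-1,0)$ and $F \in L^q_{\delta'-2}$ for some non-exceptional $\delta' \in (-2,-1)$. For $F$: boundedness of $u$ and Theorem \ref{SobolevPorpsAE} give $R_\gamma u \in L^q_{\delta_0-2}$ with $\delta_0 - 2 < -3$; the hypothesis $R_g \in L^r_{-3-\epsilon}$, combined with the coordinate identification (\ref{AsymptoticCoordChange}) and the boundedness of $u^5$, yields $R_g u^5 \in L^q_{-3-\eta}(M,\Phi_{\bar z})$ for suitable $0 < \eta < \epsilon$ and $q \le r$; setting $\delta' \doteq -1 - \eta$ then places $F$ in $L^q_{\delta'-2}$ as required. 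For $v$, Corollary \ref{ConformalFactorAssymptotics.0} supplies pointwise decay at infinity, which I would upgrade to the required weighted Sobolev control through a Fredholm bootstrap based on the isomorphism property of $\Delta_\gamma : W^{2,q}_\delta \to L^q_{\delta-2}$ for non-exceptional $\delta \in (-1,0)$ on AE manifolds: solve $\Delta_\gamma w = F$ for $w \in W^{2,q}_\delta$ and identify $v$ with $w$ via a Liouville-type argument applied to the harmonic difference $v - w$, whose decay is controlled by Corollary \ref{ConformalFactorAssymptotics.0}.

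With these hypotheses in hand, Lemma \ref{ImprovedDecayBartnik} produces a constant $C$ and some $\rho \in [\delta',-1)$ such that $v - C/|\bar z| \in W^{2,q}_\rho$ on the end of $M$. Since $q > 3$, the weighted embedding $W^{2,q}_\rho \hookrightarrow C^1_\rho$ from Theorem \ref{SobolevPorpsAE} converts this into the pointwise statement $u - 1 - C/|\bar z| = O_1(|\bar z|^\rho)$, which is exactly (\ref{HarmonicExpansion}) with $\alpha \doteq -1 - \rho > 0$. Formula (\ref{MassComputationGeneral}) will then follow by integrating $\Delta_\gamma u = \tfrac{1}{8}(R_\gamma u - R_g u^5)$ over coordinate balls $\{|\bar z| \le r\}$ and letting $r \to \infty$: the divergence theorem together with the expansion just obtained gives $\int_M \Delta_\gamma u\,dV_\gamma = \lim_{r \to \infty}\int_{\bar S_r}\langle \nabla u,\nu\rangle_\gamma\, d\sigma_\gamma = -\omega_2 C$ (since $\gamma$ and its volume form are asymptotic to the flat ones at the rate dictated by Lemma \ref{ConformalMetricExpansion}), while the weighted $L^q$-controls on $F$ established in the previous step ensure that $\int_M(R_\gamma u - R_g u^5)\,dV_\gamma$ converges absolutely; rearranging signs yields (\ref{MassComputationGeneral}).

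The main obstacle I anticipate is the Sobolev bootstrap that upgrades the pointwise decay of Corollary \ref{ConformalFactorAssymptotics.0} to $W^{2,q}_\delta$-control with $\delta$ in the narrow window $(-1,0)$ demanded by Lemma \ref{ImprovedDecayBartnik}. Executing it forces a simultaneous choice of $q$ and $\eta$ with $3 < q \le r$ and $3/q - 2 < -1 - \eta$, and a careful transport of all the weighted-space estimates across the coordinate change $z \leftrightarrow \bar z$ between the original structure of infinity $\Phi_z$ and the preferred one $\Phi_{\bar z}$.
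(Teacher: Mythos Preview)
Your proposal is correct and follows the same architecture as the paper's proof: read the conformal scalar-curvature law as $\Delta_\gamma(u-1)=\tfrac18(R_\gamma u-R_g u^5)$, place the right-hand side in a weighted $L^q_{\delta'-2}$ with $\delta'\in(-2,-1)$, feed this into Lemma~\ref{ImprovedDecayBartnik} to extract the $C/|\bar z|$ term, and then integrate over large coordinate balls to identify $C$. The one technical difference is in how you obtain the preliminary $W^{2,q}_\delta$-control on $v=u-1$: you propose solving $\Delta_\gamma w=F$ via the isomorphism property on $W^{2,q}_\delta$ for $\delta\in(-1,0)$ and identifying $v=w$ by a Liouville/maximum-principle argument, whereas the paper simply invokes Theorem~\ref{BartniksProp1.6} (which is set up in Appendix~A precisely for this purpose) to pass directly from $u-1\in W^{2,r}_{loc}\cap L^r_{-\epsilon}$ with $\Delta_\gamma(u-1)\in L^r_{-3-\epsilon}$ to $u-1\in W^{2,r}_{-\epsilon}$; both routes work and yield the same conclusion.
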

\begin{proof}
First, from Lemma \ref{ConformalMetricExpansion}, in particular equation (\ref{AsymptoticCoordChange}), one knowns that $L^r_{\delta}$-spaces are preserved by the coordinate change $z\mapsto \bar{z}$, implying that $R_g\in L^r_{-3-\epsilon}(M,\Phi_{\bar{z}})$. Also, from Lemma \ref{LemmDecompactMaxwell} $\gamma$ is $W^{2,q}_{\delta}(M,\Phi_{\bar{z}})$-AE, with $\delta<-1$ and $q>3$, and from conformal covariance of the conformal Laplacian we know that
\begin{align}\label{FinalConformalChange}
-8\Delta_{\gamma}u=R_gu^{5}-R_{\gamma}u.
\end{align}
Thus, from Corollary \ref{ConformalFactorAssymptotics.0} and our first observation above, we know that $u-1\in W^{2,q}_{loc}\cap L^{q}_{\rho}(M,\Phi_{\bar{z}})$ for some $\tau<\rho<0$.\footnote{The $W^{2,q}_{loc}$-condition actually follows since $u$ is smooth.} Furthermore, from Corollary \ref{ConformalFactorAssymptotics.0} we know that $u=1+O(|\bar{z}|^{-\rho})$, which implies that $R_gu^5\in L^r_{-3-\epsilon}(M,\Phi_{\bar{z}})$ and $R_{\gamma}u\in L^q_{\delta-2}(M,\Phi_{\bar{z}})$ with $\delta<-1$. Now, since $L^r_{-3-\epsilon}(M,\Phi_{\bar{z}})\hookrightarrow L^{r'}_{-3-\epsilon'}(M,\Phi_{\bar{z}})$ for any $3<r'\leq r$ and $0<\epsilon'<\epsilon$, we can (if necessary) decrease $\epsilon$ and $r$ and assume that $3<r\leq q$ and $\delta-2<-3-\epsilon$, so that $L^q_{\delta-2}(M,\Phi_{\bar{z}})\hookrightarrow L^r_{-3-\epsilon}(M,\Phi_{\bar{z}})$, implying $\Delta_{\gamma}(u-1)\in L^r_{-3-\epsilon}(M,\Phi_{\bar{z}})$ with $u-1\in W^{2,q}_{loc}\cap L^{q}_{\rho}(M,\Phi_{\bar{z}})$, with $\rho>-1$ by hypothesis ($\tau>-1$). Since $L^r_{-3-\epsilon}(M,\Phi_{\bar{z}})\hookrightarrow L^r_{-\epsilon}(M,\Phi_{\bar{z}})$, again by decreasing $\epsilon$ if necessary this implies $L^q_{\rho}(M,\Phi_{\bar{z}})\hookrightarrow L^r_{-\epsilon}(M,\Phi_{\bar{z}})$. That is, we have found
\begin{align*}
\Delta_{\gamma}(u-1)\in L^r_{-3-\epsilon}(M,\Phi_{\bar{z}}) \text{ with } u-1\in W^{2,r}_{loc}\cap L^{r}_{-\epsilon}(M,\Phi_{\bar{z}}).
\end{align*}
Then, Theorem \ref{BartniksProp1.6} implies that $u-1\in W^{2,r}_{-\epsilon}(M,\Phi_{\bar{z}})$.  Thus, we find $u-1\in W^{2,r}_{-\epsilon}(M,\Phi_{\bar{z}})$ with $\Delta_{\gamma}(u-1)\in L^r_{-3-\epsilon}$ and $\gamma$ a $W^{2,q}_{\delta}(M,\Phi_{\bar{z}})$-AE metric. But then, Lemma \ref{ImprovedDecayBartnik} implies 
\begin{align}\label{SharpAsympt.0}
u-1-\frac{C}{|\bar{z}|}\in W^{2,r}_{-1-\alpha}(\mathbb{R}^3\backslash\overline{B_1(0)},\Phi_{\bar{z}}),\: \text{ for some } \alpha>0 \text{ and } r>3. 
\end{align}
This, in particular, implies that
\begin{align}
u=1+\frac{C}{|\bar{z}|}+O_1(|\bar{z}|^{-1-\alpha}).
\end{align}

Finally, to compute the constant $C$, integrate (\ref{FinalConformalChange}) on a large compact set $D_R$ such that $\partial D_R=S^2_R$, where $S^2_R$ denotes a topological $2$-sphere of radius $|\bar{z}|=R$, for some large $R$. Doing this, one gets:
\begin{align*}
\int_{D_R}\left(R_gu^{5} - R_{\gamma}u\right)dV_{\gamma}&=-8\int_{D_R}\Delta_{\gamma}udV_{\gamma}=-8\int_{S_R}\langle \nabla u,\nu_{\gamma}\rangle_{\gamma}d\omega_{\gamma},
\end{align*}
where $\nu_{\gamma}$ denotes the outward-pointing $\gamma$-unit normal to $S^2_R\hookrightarrow M$, while $d\omega_{\gamma}$ denotes the induced volume form on $S^2_R$ by $\gamma$. Since asymptotically one has
\begin{align*}
\langle \nabla u,\nu_{\gamma}\rangle_{\gamma}&=\langle \nabla u,\nu_{\gamma}\rangle_{\delta} + (\gamma_{ij}-\delta_{ij})\nabla^iu\nu^j_{\gamma}=\langle \nabla u,\nu_{\delta}\rangle_{\delta} + \langle \nabla u,\nu_{\gamma}-\nu_{\delta}\rangle_{\delta} + (\gamma_{ij}-\delta_{ij})\nabla^iu\nu^j_{\gamma},\\
&=\partial_{\nu_{\delta}}u + O(|\bar{z}|^{-3}).
\end{align*}
where above $\nu_{\delta}=\frac{\bar{z}^i}{|\bar{z}|}$ denotes the Euclidean unit normal to $S^2_{R}$ and $\partial_{\nu_{\delta}}u$ denotes the radial derivative of $u$ in the $\nu_{\delta}$-direction. Thus, we see that
\begin{align*}
\int_{D_R}\left(R_gu^{5}-R_{\gamma}u\right)dV_{\gamma}&=-8\int_{S_R}\partial_{\nu_{\delta}}ud\omega_{\gamma} + O(R^{-1})=-8\int_{S_R}\partial_{\nu_{\delta}}ud\omega_{\delta} + O(R^{-1}),\\
&=8C\omega_2 + O(R^{-\alpha})
\end{align*}
Passing to the limit establishes (\ref{MassComputationGeneral}):
\begin{align*}
C=\frac{1}{8\omega_2}\int_M\left(R_gu^{5}-R_{\gamma}u\right)dV_{\gamma}
\end{align*}
\end{proof}

We can now present the main result of this section:

\begin{theo}\label{MainThmAE}
Let $(M^3,g)$ be a $W^{4,p}_{\tau}$-AE manifold with respect to a structure of infinity with coordinates $\{z^i\}_{i=1}^3$, with $\tau\in (-1,-\frac{1}{2})$ and $p>2$. Assume furthermore that: 
\begin{enumerate}
%\item[1.] $\tau\in (-1,-\frac{1}{2})$;
\item $R_g\in L^r_{-3-\epsilon}(M,dV_g)$ for some $r>3$ and $\epsilon>0$;
\item $C_{g}\in L^{p_1}_{\sigma}(M,dV_g)$ for some $-6<\sigma<-4$ and $p_1=\frac{3}{6+\sigma}$.
\end{enumerate}
Then, there is a structure of infinity with coordinates $\{\bar{z}^i\}_{i=1}^3$, which is $C^{1,\alpha}$-compatible with the original one, such that
\begin{align}
%\begin{split}
g(\partial_{\bar{z}^i},\partial_{\bar{z}^j})&=\left(1+\frac{4C}{|\bar{z}|}\right)\delta_{ij} + O_1(|\bar{z}|^{-1-\alpha}),\label{ImprovedDecayExpansion01}\\
\bar{z}(z)-\mathrm{Id}(z)&\in C^1_{1-\alpha}(\mathbb{R}^n\backslash\overline{B_{R_0}(0)}),\label{ImprovedDecayExpansion02}
%\end{split}
\end{align}
for some $\alpha>0$ and where the constant $C$ is given by (\ref{MassComputationGeneral}).
\end{theo}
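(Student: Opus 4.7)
The plan is to assemble Theorem \ref{MainThmAE} from the chain of results developed in Sections \ref{SectionCompactification} and \ref{SectionDecompactification}. The strategy has four stages: compactify, find privileged coordinates at $p_\infty$, decompactify, and read off the first order expansion of $g$ from the conformal factor analysis.

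First, I will apply Theorem \ref{MetricBootstrapThm} to $(M^3,g)$. The hypothesis on the Cotton tensor (item 2) is exactly what is needed there, and produces a conformal compactification $(\hat{M},\hat{g})$ with $\hat{g}\in W^{2,q}(\mathcal{D}_{\mathrm{Har}}(\hat M))$ for some $q>3$, hence $\hat{g}\in C^{1,\alpha}$. Then Corollary \ref{NormalCoordinateChangeCompact} provides normal coordinates $\{\bar y^i\}$ centred at $p_\infty$ satisfying $\bar y^i(z)=z^i/|z|^2+O_1(|z|^{-1-\alpha})$ on a punctured neighbourhood of $p_\infty$.

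Next, I invoke Lemma \ref{LemmDecompactMaxwell} with these normal coordinates as the starting chart around $p_\infty$. This yields a conformal decompactification $\gamma=\hat\phi^{4}\hat g$ of $\hat g$ which is $W^{2,q}_{\delta}$-AE with $\delta<-1$ with respect to the structure of infinity $\Phi_{\bar z}$ induced by the Kelvin transform $\bar z^i=\bar y^i/|\bar y|^2$. By Lemma \ref{ConformalMetricExpansion}, the relation between the original coordinates $\{z^i\}$ and the new ones $\{\bar z^i\}$ is precisely
\begin{align*}
\bar z(z)-\mathrm{Id}(z)\in C^{1}_{1-\alpha}(\mathbb{R}^{3}\setminus\overline{B_{R_0}(0)}),
\end{align*}
which establishes (\ref{ImprovedDecayExpansion02}) and also implies $\gamma(\partial_{\bar z^i},\partial_{\bar z^j})=\delta_{ij}+O_1(|\bar z|^{-1-\alpha})$. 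Moreover the weighted $L^r_{\sigma}$-spaces are preserved by this coordinate change (as used already in the proof of Lemma \ref{HarmonicExpansionLemma}), so the hypothesis $R_g\in L^r_{-3-\epsilon}$ transports from $\Phi_z$ to $\Phi_{\bar z}$.

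Now write $g=u^{4}\gamma$. Corollary \ref{ConformalFactorAssymptotics.0} supplies a preliminary decay $u-1=O(|\bar z|^{-\min(\tau,\alpha)})$, and Lemma \ref{HarmonicExpansionLemma}, whose two hypotheses (Cotton control and $R_g\in L^r_{-3-\epsilon}$) are exactly items 1--2 above, sharpens this to
\begin{align*}
u=1+\frac{C}{|\bar z|}+O_1(|\bar z|^{-1-\alpha}),
\end{align*}
with $C$ given by (\ref{MassComputationGeneral}). Expanding the power and using $|\bar z|^{-2}\leq |\bar z|^{-1-\alpha}$ for $\alpha\leq 1$ (shrinking $\alpha$ if necessary), one gets $u^{4}=1+4C/|\bar z|+O_1(|\bar z|^{-1-\alpha})$. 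Multiplying this with the expansion of $\gamma(\partial_{\bar z^i},\partial_{\bar z^j})$ yields
\begin{align*}
g(\partial_{\bar z^i},\partial_{\bar z^j})=\Bigl(1+\frac{4C}{|\bar z|}\Bigr)\delta_{ij}+O_1(|\bar z|^{-1-\alpha}),
\end{align*}
which is (\ref{ImprovedDecayExpansion01}).

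The proof is essentially a careful bookkeeping exercise; the real work has already been carried out in the preceding sections. The main potential obstacle is controlling the regularity of all coordinate changes involved (inverted coordinates on $M$, harmonic coordinates on $\hat M$, normal coordinates on $\hat M$, and inverted normal coordinates on $M$) and verifying that $C^{1}$-type asymptotic expansions together with weighted $L^{r}$-conditions survive each step. These compatibility issues are precisely what Corollary \ref{NormalCoordinateChangeCompact} and Lemma \ref{ConformalMetricExpansion} were designed to handle, so with them in place the concluding step is a direct multiplication of asymptotic expansions.
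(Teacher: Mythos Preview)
Your proof is correct and follows essentially the same route as the paper's own argument: invoke Theorem \ref{MetricBootstrapThm} to compactify, Lemma \ref{LemmDecompactMaxwell} to decompactify into $\gamma$, Lemma \ref{HarmonicExpansionLemma} for the expansion of $u$, and Lemma \ref{ConformalMetricExpansion} for (\ref{ImprovedDecayExpansion02}), then multiply $u^4$ against the expansion of $\gamma$. Your write-up is slightly more explicit (mentioning Corollary \ref{NormalCoordinateChangeCompact} and Corollary \ref{ConformalFactorAssymptotics.0} directly), but the content is the same.
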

\begin{proof}
Under our hypotheses Theorem \ref{MetricBootstrapThm} follows. From this we obtain a conformal compactification $(\hat{M},\hat{g})$ satisfying the hypotheses of Lemma \ref{LemmDecompactMaxwell}, and therefore we know that $g$ is conformal to $\gamma\in W_{\delta}^{2,q}(M)$, $q>3$, satisfying (\ref{ExpansionConfoMetricAE}). Writing $g=u^{4}\gamma$, Lemma \ref{HarmonicExpansionLemma} implies $u$ satisfies (\ref{HarmonicExpansion}). Therefore, we can expand
\begin{align}
u^4=1+\frac{4C}{|\bar{z}|} + O_1(|\bar{z}|^{-1-\alpha}),
\end{align}
which put together with (\ref{ExpansionConfoMetricAE}) implies (\ref{ImprovedDecayExpansion01}), where the constant $C$ is given by (\ref{MassComputationGeneral}). Finally, the expansion (\ref{ImprovedDecayExpansion02}) follows from Lemma \ref{ConformalMetricExpansion}.
%possibly different constants $C_{ij}$.
\end{proof}

%\begin{proof}
%The existence of $\phi$ as well as the extendibility and regularity of $\hat{g}$ follow from \cite[Lemma 5.2]{MaxwellDiltsYamabeAE} for $p=\frac{3}{\tau+2}$. So $p>3\leftrightarrow 0<\tau+2<1\leftrightarrow -2<\tau<-1$.
%\end{proof}

It is interesting to contrast the above result with the decay control one can achieve via the Ricci tensor. In particular, from \cite[Proposition 3.3]{BartnikMass}, one knowns the Ricci tensor provides \emph{optimal} control for the decay of the metric to the Euclidean one in harmonic coordinates, which in turn are known to provide optimal decay control among compatible end coordinate systems (see \cite[Theorem 3.1]{BartnikMass} and for related results \cite[Theorem 5.2]{ALM}). Thus, one may think that expansions of the type of (\ref{ImprovedDecayExpansion01}) should be readable in harmonic coordinates directly from controls on the Ricci tensor. In fact, if one knew that $\mathrm{Ric}_g\in L^p_{\delta-2}(M,\Phi_z)$ for some $p>3$ and $\delta<-1$, then would be able to extract (\ref{ImprovedDecayExpansion01}) in harmonic coordinates directly from the Ricci tensor. We would like to highlight that, in fact, such an a priori decay condition on the Ricci tensor is quite strong. To see this, let us explore the case of a metric which is known to be asymptotically \emph{Schwarzschildian}, and show that such a decay cannot be extracted from the Ricci tensor if one does not a priori know it.  That is, let us consider $(M^3,g)$ AE with the following asymptotics:
\begin{align}\label{SchwarzExpan}
g_{ij}=\delta_{ij}+\frac{A\delta_{ij}}{|z|} + O_{\infty}(|z|^{-2})
\end{align}
Then, using (\ref{gauge0}), we find:
\begin{align*}
{\mathrm{Ric}_g}_{ij}=-\frac{1}{2}g^{ab}\partial_{ab}g_{ij}+\frac{1}{2}(g_{ia}\partial_{j}F^{a}+g_{ja}\partial_{i}F^{a}) + O_{\infty}(|z|^{-4}),
\end{align*}
with $F^a=g^{kl}\Gamma^a_{kl}(g)$. Then, since $g^{ab}\partial_{ab}g_{ij}=A\delta_{ij}\Delta|z|^{-1} + O_{\infty}(|z|^{-4})$, we find
\begin{align*}
{\mathrm{Ric}_g}_{ij}=\frac{1}{2}(g_{ia}\partial_{j}F^{a}+g_{ja}\partial_{i}F^{a}) + O_{\infty}(|z|^{-4}).
\end{align*}
Since
\begin{align*}
\Gamma^a_{kl}(g)&=\frac{g^{ba}}{2}\left( \partial_kg_{bl} + \partial_lg_{bk} - \partial_bg_{kl} \right)=\frac{1}{2}\left( \partial_kg_{al} + \partial_lg_{ak} - \partial_ag_{kl} \right) + O_{\infty}(|z|^{-3}),\\
&=\frac{A}{2}\left( - \delta_{al}\frac{z^k}{|z|^3} - \delta_{ak}\frac{z^l}{|z|^3} + \delta_{kl}\frac{z^a}{|z|^3} \right) + O_{\infty}(|z|^{-3}),\\
F^a&=g^{kl}\Gamma^a_{kl}(g)=\frac{A}{2}\left( - \delta_{ak}\frac{z^k}{|z|^3} - \delta_{al}\frac{z^l}{|z|^3} + 3\frac{z^a}{|z|^3} \right) + O_{\infty}(|x|^{-3})=\frac{A}{2}\frac{z^a}{|z|^3}  + O_{\infty}(|z|^{-3}),\\
\partial_jF^a&=-\frac{A}{2}\left(\frac{\delta_{aj}}{|z|^3} - 3\frac{z^az^j}{|z|^5} \right) + O_{\infty}(|z|^{-4}),\\
g_{ia}\partial_jF^a&=-\frac{A}{2}\left(\frac{\delta_{ij}}{|z|^3} - 3\frac{z^iz^j}{|z|^5} \right) + O_{\infty}(|z|^{-4}),\\
g_{ia}\partial_jF^a+g_{ja}\partial_iF^a&=-A\left(\frac{\delta_{ij}}{|z|^3} - 3\frac{z^iz^j}{|z|^5} \right) + O_{\infty}(|z|^{-4}),
\end{align*}
which finally implies
\begin{align}\label{SchwarzRicci}
{\mathrm{Ric}_g}_{ij}=-\frac{A}{2}\left(\frac{\delta_{ij}}{|z|^3} - 3\frac{z^iz^j}{|z|^5} \right) + O(|z|^{-4}).
\end{align}
Notice then that, as long as $A\neq 0$, the decay rate for the Ricci tensor is critical, in the sense that $\mathrm{Ric}_g$ can be seen to be in $L^p_{\delta-2}$ for any $\delta>-1$, but it is not in any $\delta\leq -1$. Thus, if one were to have the metric $g$ in any other coordinate system $\{y^i\}_{i=1}^3$ such that the transformation $y=y(z)$ is asymptotic to the identity to a sufficiently high order, but where the Schwarzschildian expansion (\ref{SchwarzExpan}) is not explicit,\footnote{Notice that any transformation of the form $y^i=z^i+O_{\infty}(|z|^0)$ will in general introduce terms at order $\frac{1}{|y|}$ for $g(\partial_{y^i},\partial_{y^j})$, which would spoil the Schwarzschildian decay.} then one would not be able to deduce the existence of coordinates where (\ref{SchwarzExpan}) holds analysing the decay of the Ricci tensor. Nevertheless, notice that
\begin{align}\label{SchwarzScal}
R_g=O_{\infty}(|z|^{-4})
\end{align}
and thus
\begin{align*}
C_{ijk}(g)&=\nabla_k{\mathrm{Ric}_g}_{ij} - \nabla_j{\mathrm{Ric}_g}_{ik} + \frac{1}{4}\left( \nabla_jR_gg_{ik} - \nabla_kR_gg_{ij} \right),\\
&=\nabla_k{\mathrm{Ric}_g}_{ij} - \nabla_j{\mathrm{Ric}_g}_{ik} + O(|z|^{-5})=\partial_k{\mathrm{Ric}_g}_{ij} - \partial_j{\mathrm{Ric}_g}_{ik} + O(|z|^{-5}),
\end{align*}
and from (\ref{SchwarzRicci}), we can compute
\begin{align*}
\partial_k{\mathrm{Ric}_g}_{ij}&=-\frac{A}{2}\left(-3\frac{\delta_{ij}x^k}{|z|^5} - 3\left(\frac{\delta_{ik} z^j + \delta_{jk} z^i }{|z|^5} \right) +15\frac{z^iz^jz^k}{|z|^{7}} \right) + O(|z|^{-5}),\\
&=-\frac{A}{2}\left( - 3\frac{\delta_{ij}x^k +\delta_{ik} z^j + \delta_{jk} z^i }{|z|^5} +15\frac{z^iz^jz^k}{|z|^{7}} \right) + O(|z|^{-5}).
\end{align*}
Noticing the leading order in the above expression is symmetric under the interchange $j\longleftrightarrow k$, we find
\begin{align}\label{SchwarzCotton}
C_{ijk}(g)&= O(|z|^{-5}).
\end{align}

We then see from (\ref{SchwarzScal})-(\ref{SchwarzCotton}) that $g$ satisfies the hypotheses of Theorem \ref{MainThmAE}, and therefore from it we can deduce the existence of a Schwarzschild-type expansion up to first order in some other coordinate system. Notice this implies that Theorem \ref{MainThmAE} is able to detect if a metric has an asymptotically Schwarzschild expansion in some coordinate system from the decay of $R_g$ and $C_g$, something we have seen is not possible from the analysis of the Ricci-tensor alone. Some interesting and physically relevant examples related to this analysis will be presented in subsequent work.

\section{The center of mass of an AE 3-manifold}\label{SectionCOM}

In this section we aim to address a slightly strengthened version of Conjecture \ref{CSConjectureWeak} stated in the introduction and motivated by \cite{CederbaumSakovich}. The proof of our result, which is an $L^p$-version ($p>1$) of Conjecture \ref{CSConjectureWeak}, is based on the results of the previous section, in particular on Theorem \ref{MainThmAE}.

Let $(M^3,g)$ be an AE manifold, with $\tau>\frac{1}{2}$, and assume then there is a structure of infinity with coordinates $x^i$ where metric obeys the following decaying conditions:
\begin{align}\label{ImprovedAsymptotics}
g_{ij}=\delta_{ij}+\frac{A_{ij}}{|x|}+O_{1}(|x|^{-1-\alpha})
\end{align}

We would like to prove that the asymptotics (\ref{ImprovedAsymptotics}) imply the associated COM
\begin{align}\label{COMreminder}
C^k_{B\acute{O}M}&\doteq\frac{1}{16\pi E} \lim_{r\rightarrow\infty}\left(\int_{S_r}x^k\left(\partial_ig_{ij}-\partial_jg_{ii} \right)\nu^jd\omega_r - \int_{S_r}\left(g_{ik}\nu^i - g_{ii}\nu^k \right)d\omega_r \right)
\end{align}
is well-defined, where above we are using the same notations adopted in Section \ref{SectionADM}.
\begin{lem}\label{COMConvergenenceLemma}
If $(M^3,g)$ is a smooth AE manifold satisfying the decaying conditions (\ref{ImprovedAsymptotics}) for some $\alpha>0$ and $R_g\in L^1_{-4}(\mathbb{R}^3\backslash\overline{B_1(0)})$, then the center of mass (\ref{COMreminder}) is well-defined. 
\end{lem}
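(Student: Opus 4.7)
The plan is to convert both surface integrals in (\ref{COMreminder}) into a single volume integral via the divergence theorem and then read off convergence from the hypothesis $R_g \in L^1_{-4}$, together with a parity cancellation in the quadratic-in-$\partial g$ remainder.

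First I would set $F_1^j(x) \doteq x^k(\partial_i g_{ij} - \partial_j g_{ii})$ and $F_2^j(x) \doteq g_{jk} - g_{ii}\delta_{jk}$, so that the quantity inside the limit in (\ref{COMreminder}) equals $\int_{S_r}(F_1^j - F_2^j)\nu^j\, d\omega_r$. Applying Stokes' theorem on the annulus $A_{r,R_0}\doteq \{R_0\leq |x|\leq r\}$ gives
\begin{align*}
\int_{S_r}(F_1^j-F_2^j)\nu^j\,d\omega_r - \int_{S_{R_0}}(F_1^j-F_2^j)\nu^j\,d\omega_{R_0} = \int_{A_{r,R_0}} \partial_j(F_1^j-F_2^j)\,dx.
\end{align*}
A direct computation yields $\partial_j F_1^j = (\partial_i g_{ik} - \partial_k g_{ii}) + x^k L(g)$ and $\partial_j F_2^j = \partial_j g_{jk} - \partial_k g_{ii}$, where $L(g)\doteq \partial_i\partial_j g_{ij} - \Delta g_{ii}$ is the linearised scalar curvature around $\delta$. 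The first-order pieces cancel, leaving $\partial_j(F_1^j-F_2^j) = x^k L(g)$, so the whole problem reduces to the convergence of $\int_{A_{r,R_0}} x^k L(g)\,dx$ as $r\to\infty$.

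Next I would exploit the classical identity $R_g = L(g) + Q(g,\partial g)$, where $Q$ is quadratic in $\partial g$ with coefficients that are smooth rational functions of $g$ (compare (\ref{gauge0})--(\ref{f-tensor})). The term $\int x^k R_g\,dx$ converges by hypothesis: $R_g\in L^1_{-4}$ is equivalent to $|x|\,|R_g|\in L^1$ near infinity, and $|x^k|\leq |x|$. It remains to control $\int x^k Q(g,\partial g)\,dx$. From (\ref{ImprovedAsymptotics}), $\partial g$ splits as a leading piece $\partial(A_{ij}/|x|)$ of size $|x|^{-2}$, odd in the angular variable $\omega=x/|x|$, plus a correction of size $|x|^{-2-\alpha}$. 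The purely leading quadratic contribution to $Q$ is homogeneous of degree $2$ in $\omega$ and of radial order $|x|^{-4}$, so $x^k Q_{\mathrm{lead}}\,dx$ equals $\omega^k\omega^a\omega^b\cdot c_{ab}\cdot |x|^{-1}\,d|x|\,d\omega$, whose angular integral vanishes by the oddness of $\omega^k\omega^a\omega^b$ on $S^2$. The remaining cross terms are $O(|x|^{-4-\alpha})$ and the error-error terms $O(|x|^{-4-2\alpha})$; after multiplication by $x^k$ and the volume factor $|x|^2\,d|x|\,d\omega$ they produce integrands $O(|x|^{-1-\alpha})$ and $O(|x|^{-1-2\alpha})$ in the radial variable, both of which are absolutely integrable on $[R_0,\infty)$.

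Combining these estimates shows that $\int_{A_{r,R_0}} x^k L(g)\,dx$ converges to a finite limit as $r\to\infty$, hence so does $\int_{S_r}(F_1^j-F_2^j)\nu^j\,d\omega_r$, which is precisely what the definition (\ref{COMreminder}) requires. The main obstacle is the quadratic remainder $x^k Q(g,\partial g)$: although $|\partial g|^2=O(|x|^{-4})$ gives only the borderline bound $x^k |Q|=O(|x|^{-3})$, which is \emph{not} integrable at infinity in $\mathbb{R}^3$, the specific algebraic form forced by the expansion (\ref{ImprovedAsymptotics}) makes the leading radially non-integrable contribution an odd angular polynomial and hence harmless after spherical integration. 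Tracking this cancellation carefully, and separating it cleanly from the genuinely integrable subleading pieces, is the delicate point of the argument.
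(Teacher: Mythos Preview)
Your divergence-theorem reduction to $\int_{A_{r,R_0}} x^k L(g)\,dx$ is correct and is essentially the same opening move as in the paper. The gap is in the next step: the identity $R_g = L(g) + Q(g,\partial g)$ with $Q$ \emph{quadratic in $\partial g$} is false. The scalar curvature in coordinates has principal part $g^{ij}g^{kl}(\partial_{ik}g_{jl}-\partial_{ij}g_{kl})$, so
\[
R_g - L(g) \;=\; (g^{ij}g^{kl}-\delta^{ij}\delta^{kl})(\partial_{ik}g_{jl}-\partial_{ij}g_{kl}) \;+\; \tilde Q(g,\partial g),
\]
where only $\tilde Q$ is genuinely quadratic in $\partial g$. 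The first term on the right is of schematic type $(g^{-1}-\delta)\,\partial^2 g$, and the hypothesis (\ref{ImprovedAsymptotics}) is only an $O_1$ expansion: it gives no pointwise decay for $\partial^2 g$ beyond the explicit leading $A_{ij}/|x|$ piece. Your parity/decay argument for $x^k Q$ therefore does not cover this contribution, and without it $\int x^k(R_g - L(g))\,dx$ is not shown to converge.

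The paper avoids this by not linearising the principal part prematurely. It keeps the full $g^{ij}g^{kl}\partial^2 g$ block and writes it as an exact divergence $\partial_i\bigl(g^{ij}g^{kl}(\partial_k g_{jl}-\partial_j g_{kl})\bigr)$ plus $(\partial g)^2$-terms; after multiplying by $x^a$ and integrating by parts, every remaining expression involves only $g$ and $\partial g$. The mismatch between $g^{ij}g^{kl}(\partial_k g_{jl}-\partial_j g_{kl})$ and the Euclidean integrand $\partial_l g_{jl}-\partial_j g_{ll}$ appearing in (\ref{COMreminder}) is then of type $(g^{-1}-\delta)\,\partial g$, whose \emph{even} part is $O(|x|^{-3-\alpha})$ by exactly the parity bookkeeping you already carry out for $Q$; this is what renders the extra surface and volume corrections $O(R^{-\alpha})$. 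Your approach can be repaired along the same lines---integrate the $(g^{-1}-\delta)\partial^2 g$ block by parts once more and treat the resulting $(g^{-1}-\delta)\partial g$ surface terms and $\partial(g^{-1})\,\partial g$ volume terms by parity---but as written the argument is incomplete.
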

\begin{proof}
Let us start by writing the scalar curvature as
\begin{align}\label{COM.1}
R_g=g^{ij}g^{kl}\partial_{ik}g_{jl} - g^{ij}g^{kl}\partial_{ij}g_{kl} + O(g_{\cdot}(\partial g)^2),
\end{align}
and consider an annulus $A_R=B_{R}\backslash \overline{B_{R_0}}$ for $R_0,R$ sufficiently large, $R>R_0$. Then, due to symmetry of $A_R$:
\begin{align}
\int_{A_R}x^aR_gdx=\int_{A_R}x^aR^{\:\Odd}_gdx.
\end{align}
To compute the above integrals, notice that in (\ref{COM.1}) we are interested in the odd part of the $O(g_{\cdot}(\partial g)^2)$, but to analyse it one needs a more explicit form. From (\ref{gauge0}) and (\ref{f-tensor}), we know this expression is a linear combination of quadratic forms on $\partial g$, which one can generically write as
\begin{align*}
Q&=(g^{-1}_{\cdot} g^{-1}_{\cdot} g^{-1})_{\cdot}(\partial g\otimes \partial g)=(\delta_{\cdot} \delta_{\cdot} \delta)_{\cdot}(\partial g\otimes \partial g) + O(|x|^{-5}),
\end{align*}
where the lower dots denote some specific contraction. So,
\begin{align*}
Q^{\Odd}&=(\delta_{\cdot} \delta_{\cdot} \delta)_{\cdot}(\partial g\otimes \partial g)^{\Odd} + O(|x|^{-5})=O((\partial g)^{\Odd})O((\partial g)^{\Ev}) + O(|x|^{-5}).
\end{align*}
Noticing that (\ref{ImprovedAsymptotics}) implies $\partial_kg_{ij}=-\frac{A_{ij}x^k}{|x|^3}+O(|x|^{-2-\alpha})$, we see that
\begin{align*}
(\partial g)^{\Odd}&=-\frac{A_{ij}x^k}{|x|^3}+O(|x|^{-2-\alpha})=O(|x|^{-2}),\\
(\partial g)^{\Ev}&=O(|x|^{-2-\alpha}),
\end{align*}
which implies 
\begin{align}\label{QuadraticTermsEstimate}
Q^{\Odd}&=O(|x|^{-4-\alpha}).
\end{align}
Therefore,
\begin{align}\label{COM.2}
\int_{A_R}x^a\left(g^{ij}g^{kl}\partial_{ik}g_{jl} - g^{ij}g^{kl}\partial_{ij}g_{kl} \right)dx=\int_{A_R}x^aR_gdx + O(R^{-\alpha}) + C,
\end{align}
where the constant $C$ above depends on $R_0$, which is fixed. Below, we shall change this constant appropriately, as long as it remains fixed once $R_0$ is fixed.

Now, it also holds that
\begin{align*}
\left(g^{ij}g^{kl}\partial_{ik}g_{jl} - g^{ij}g^{kl}\partial_{ij}g_{kl} \right)&=\partial_i\left(g^{ij}g^{kl}\partial_{k}g_{jl} - g^{ij}g^{kl}\partial_{j}g_{kl} \right) - g^{ij}\partial_ig^{kl}\partial_kg_{jl} - g^{kl}\partial_ig^{ij}\partial_{k}g_{jl} \\
&+ g^{kl}\partial_ig^{ij}\partial_jg_{kl} + g^{ij}\partial_ig^{kl}\partial_jg_{kl},\\
&=\partial_i\left(g^{ij}g^{kl}\partial_{k}g_{jl} - g^{ij}g^{kl}\partial_{j}g_{kl} \right) + f(g,\partial g).
%&=\partial_i\left(g^{ij}g^{kl}\partial_{k}g_{jl} - g^{ij}g^{kl}\partial_{j}g_{kl} \right) + O(|x|^{-4-\alpha}),
\end{align*}
One can make the above useful by noticing that
\begin{align*}
\int_{A_R}x^a\left(g^{ij}g^{kl}\partial_{ik}g_{jl} - g^{ij}g^{kl}\partial_{ij}g_{kl} \right)dx&=\int_{A_R}x^a\partial_i\left(g^{ij}g^{kl}\partial_{k}g_{jl} - g^{ij}g^{kl}\partial_{j}g_{kl} \right)dx + \int_{A_R}x^af^{\Odd}(g,\partial g)dx,\\
&=\int_{A_R}x^a\partial_i\left(g^{ij}g^{kl}\partial_{k}g_{jl} - g^{ij}g^{kl}\partial_{j}g_{kl} \right)dx + O(R^{-\alpha}) + C,\\
&=\int_{S_R}x^a\left(g^{ij}g^{kl}\partial_{k}g_{jl} - g^{ij}g^{kl}\partial_{j}g_{kl} \right)\nu^id\omega_r \\
%&- \int_{S_{R_0}}x^a\left(g^{ij}g^{kl}\partial_{k}g_{jl} - g^{ij}g^{kl}\partial_{j}g_{kl} \right)\nu^id\mu \\
&- \int_{A_R}\left(g^{aj}g^{kl}\partial_{k}g_{jl} - g^{aj}g^{kl}\partial_{j}g_{kl} \right)dx + O(R^{-\alpha})+C
\end{align*}
where the $f^{\Odd}$-terms have been estimated using using (\ref{QuadraticTermsEstimate}) and $\partial_kg^{ij}=-g^{ia}g^{jb}\partial_kg_{ab}$. One can similarly see that
\begin{align*}
g^{aj}g^{kl}\partial_{k}g_{jl} - g^{aj}g^{kl}\partial_{j}g_{kl}&=\partial_lg_{al} - \partial_ag_{ll} + h(\delta,(g^{-1}-\delta),\partial g).
\end{align*}
Above, the function $h(\delta,(g^{-1}-\delta),\partial g)$ is a linear combination of terms which have either the form $\delta_{\cdot}(g^{-1}-\delta)_{\cdot}\partial g$ or $(g^{-1}-\delta)_{\cdot}(g^{-1}-\delta)_{\cdot}\partial g$. Then, notice that
\begin{align}\label{COM.3}
\begin{split}
(\delta_{\cdot}(g^{-1}-\delta)_{\cdot}\partial g)^{\Ev}&=\delta_{\cdot}(g^{-1}-\delta)^{\Ev}_{\cdot}(\partial g)^{\Ev} + \delta_{\cdot}(g^{-1}-\delta)^{\Odd}_{\cdot}(\partial g)^{\Odd}=O(|x|^{-3-\alpha}),\\
(g^{-1}-\delta)_{\cdot}(g^{-1}-\delta)_{\cdot}\partial g&=O(|x|^{-4}).
\end{split}
\end{align}
Therefore $h^{\Ev}=O(|x|^{-3-\alpha})$
%\begin{align}
%g^{aj}g^{kl}\partial_{k}g_{jl} - g^{aj}g^{kl}\partial_{j}g_{kl}&=\partial_lg_{al} - \partial_ag_{ll} + O(|x|^{-3-\alpha}),
%\end{align}
implying
\begin{align*}
\int_{A_R}\left(g^{aj}g^{kl}\partial_{k}g_{jl} - g^{aj}g^{kl}\partial_{j}g_{kl} \right)dx&=\int_{A_R}\left(\partial_lg_{al} - \partial_ag_{ll} \right)dx + \int_{A_R}h^{\Ev}dx,\\
&=\int_{A_R}\left(\partial_lg_{al} - \partial_ag_{ll} \right)dx + O(R^{-\alpha}) + C,\\
&=\int_{S_R}\left(g_{al}\nu^l - g_{ll}\nu^a \right)d\omega_r + O(R^{-\alpha}) + C.
\end{align*}
Putting all the above together, we find
\begin{align*}
\int_{A_R}x^a\left(g^{ij}g^{kl}\partial_{ik}g_{jl} - g^{ij}g^{kl}\partial_{ij}g_{kl} \right)dx&=\int_{S_R}x^a\left(g^{ij}g^{kl}\partial_{k}g_{jl} - g^{ij}g^{kl}\partial_{j}g_{kl} \right)\nu^id\omega_r - \int_{S_R}\left(g_{al}\nu^l - g_{ll}\nu^a \right)d\omega_r \\
&+ O(R^{-\alpha}) + C.
\end{align*}
Notice now that
\begin{align*}
(g^{ij}g^{kl}\partial_{k}g_{jl} - g^{ij}g^{kl}\partial_{j}g_{kl})\nu^i&=(\partial_{l}g_{jl} - \partial_{j}g_{ll})\nu^j + \tilde{h}_i(\delta,g^{-1}-\delta,\partial g)\nu^{i},
\end{align*}
where again the functions $\tilde{h}_i(\delta,g^{-1}-\delta,\partial g)$ have the same structure as the function $h$ studied above. Being $\nu^ix^a$ even, we again care about the even parts, which satisfy (\ref{COM.3}), implying
\begin{align*}
\int_{S_R}x^a\left(g^{ij}g^{kl}\partial_{k}g_{jl} - g^{ij}g^{kl}\partial_{j}g_{kl} \right)\nu^id\omega_r&=\int_{S_R}(\partial_{l}g_{jl} - \partial_{j}g_{ll})\nu^jx^ad\omega_r + \int_{S_R}\tilde{h}^{\Ev}_i\nu^{i}x^ad\omega_r,\\
&=\int_{S_R}(\partial_{l}g_{jl} - \partial_{j}g_{ll})\nu^jx^ad\omega_r + O(R^{-\alpha}).
\end{align*}
Putting the above together with (\ref{COM.2}), we finally find
\begin{align}\label{COM.4}
\int_{S_R}(\partial_{l}g_{jl} - \partial_{j}g_{ll})\nu^ix^ad\omega_r - \int_{S_R}\left(g_{al}\nu^l - g_{ll}\nu^a \right)d\omega_r=\int_{A_R}x^aR_gdx + O(R^{-\alpha}) + C.
\end{align}

Let us then define, for each $a=1,2,3$, the sequence $\{C^{a}_{R_k}\}_{k=1}^{\infty}$ by
\begin{align}
C^a_{R_k}\doteq \int_{S_{R_k}}(\partial_{l}g_{jl} - \partial_{j}g_{ll})\nu^ix^ad\mu - \int_{S_{R_k}}\left(g_{al}\nu^l - g_{ll}\nu^a \right)d\mu
\end{align}
for $R_k$ a sequence of monotonically increasing real numbers. Since $R_g\in L^1_{-4}(\mathbb{R}^3\backslash\overline{B}_1)$, notice that
\begin{align*}
\Vert R_g\Vert_{L^{1}_{-4}(\mathbb{R}^3\backslash\overline{B_{R_0}})	}= \int_{\mathbb{R}^3\backslash\overline{B}_{R_0}}|R_g|\sigma^{-(-4+3)}dx=\int_{\mathbb{R}^3\backslash\overline{B}_{R_0}}|R_g|\sigma dx,
\end{align*}
where $\sigma(x)=(1+|x|^2)^{\frac{1}{2}}$. Thus,
\begin{align*}
\int_{\mathbb{R}^3\backslash\overline{B_{R_0}}}|R_g||x|dx&\lesssim \Vert R_g\Vert_{L^{1}_{-4}(\mathbb{R}^3\backslash\overline{B_{R_0}})},
\end{align*}

Thus, we see that (\ref{COM.4}) together with $R_g\in L^1_{-4}(\mathbb{R}^3\backslash\overline{B}_1)$ implies that $\{C^{a}_{R_k}\}_{k=1}^{\infty}$ is Cauchy, and therefore the limit (\ref{COMreminder}) exists.
\end{proof}

We can now establish the main result of this section, which is basically a restatement of Theorem \ref{MainThmAE}, accompanied with the convergence of the associated center of mass.
\begin{theo}\label{ThmCederbaumSakovichConjecture}
Let $(M^3,g)$ be a $W^{4,p}_{\tau}$-AE manifold with respect to a structure of infinity with coordinates $\{z^i\}_{i=1}^3$, satisfying 
\begin{enumerate}
\item[1.] $\tau\in (-1,-\frac{1}{2})$;
\item[2.] $R_g\in L^r_{-4-\epsilon}(M,\Phi_z)$ for some $r>3$ and $\epsilon>0$;
\item[3.] $C_{g}\in L^{p_1}_{\sigma}(M,\Phi_z)$ for some $-6<\sigma<-4$ and $p_1=\frac{3}{6+\sigma}$.
\end{enumerate}
Then, there is a structure of infinity with coordinates $\{\bar{z}^i\}_{i=1}^3$, which is $C^{1,\alpha}$-compatible with the original one, such that
\begin{align}\label{ImprovedDecayExpansion.2}
\begin{split}
g(\partial_{\bar{z}^i},\partial_{\bar{z}^j})&=\left(1+\frac{4C}{|\bar{z}|}\right)\delta_{ij} + O_1(|\bar{z}|^{-1-\alpha}),\\
\bar{z}(z)-\mathrm{Id}(z)&\in C^1_{1-\alpha}(\mathbb{R}^n\backslash\overline{B_{R_0}(0)}),
\end{split}
\end{align}
for some $\alpha>0$ and where the constant $C$ is given by (\ref{MassComputationGeneral}). Moreover, in this coordinates the center of mass (\ref{COMreminder}) is well defined.
\end{theo}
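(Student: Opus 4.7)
The plan is to combine Theorem \ref{MainThmAE} with Lemma \ref{COMConvergenenceLemma}. First, observe that the hypotheses of Theorem \ref{ThmCederbaumSakovichConjecture} are strictly stronger than those of Theorem \ref{MainThmAE}: the only change is the improved decay $R_g \in L^r_{-4-\epsilon}(M,\Phi_z)$ in item (2), in place of $L^r_{-3-\epsilon}$, and since $\sigma \geq 1$ one has the elementary weight monotonicity $L^r_{-4-\epsilon} \hookrightarrow L^r_{-3-\epsilon}$. Hence Theorem \ref{MainThmAE} applies and produces an asymptotic chart $\{\bar{z}^i\}_{i=1}^3$, $C^{1,\alpha}$-compatible with $\{z^i\}$, in which the first-order Schwarzschildian expansion (\ref{ImprovedDecayExpansion.2}) holds with the constant $C$ given by (\ref{MassComputationGeneral}). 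This establishes the first half of the statement.

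For the convergence of the centre of mass I will verify the two hypotheses of Lemma \ref{COMConvergenenceLemma} for $g$ expressed in the $\bar{z}$-chart. The expansion (\ref{ImprovedDecayExpansion.2}) is already of the form (\ref{ImprovedAsymptotics}) with $A_{ij}=4C\delta_{ij}$, so the only remaining point is to establish $R_g \in L^1_{-4}(\mathbb{R}^3 \setminus \overline{B_1(0)})$ in the $\bar{z}$-coordinates. The change of chart $\bar{z}(z)-\mathrm{Id}(z) \in C^1_{1-\alpha}$ is asymptotic to the identity with Jacobian $\partial \bar{z}/\partial z = \mathrm{Id} + O(|z|^{-\alpha})$, so the weighted spaces $L^p_{\delta}(M,\Phi_z)$ and $L^p_{\delta}(M,\Phi_{\bar z})$ coincide with equivalent norms on the end, as already noted in the opening of the proof of Lemma \ref{HarmonicExpansionLemma}. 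In particular $R_g \in L^r_{-4-\epsilon}(M,\Phi_{\bar z})$. A Hölder split with exponent $a = 4 + \epsilon/2 - 3/r$ then yields
$$\int_{|\bar z|\geq 1} |R_g|\,\sigma\, d\bar z \leq \Vert R_g\Vert_{L^r_{-4-\epsilon}(M,\Phi_{\bar z})} \Bigl(\int_{|\bar z|\geq 1} \sigma^{(1-a)r'}\, d\bar z\Bigr)^{1/r'},$$
and a direct computation shows $(1-a)r' = -3 - \epsilon r/(2(r-1)) < -3$, so the residual integral converges. Thus $R_g \in L^1_{-4}$ in the $\bar{z}$-chart, and Lemma \ref{COMConvergenenceLemma} applies to give the convergence of (\ref{COMreminder}) in these coordinates.

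Since this statement is essentially a corollary of Theorem \ref{MainThmAE}, no serious new obstruction arises here. The only mildly delicate step is the transfer of weighted integrability between the two asymptotic charts, and this is precisely what allows the improved hypothesis on $R_g$ in the $z$-chart to be fed into the convergence criterion stated in the $\bar{z}$-chart, thereby closing the loop between the decay assumptions on $R_g$ and $C_g$ in the original coordinates and the convergence of the ADM centre of mass in the distinguished coordinates produced by Theorem \ref{MainThmAE}.
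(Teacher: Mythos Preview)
Your proof is correct and follows essentially the same route as the paper: invoke Theorem \ref{MainThmAE} under the stronger hypothesis, transfer the weighted integrability of $R_g$ to the $\bar z$-chart via the $C^1_{1-\alpha}$ compatibility, and apply Lemma \ref{COMConvergenenceLemma}. The only cosmetic difference is that the paper cites the embedding $L^r_{-4-\epsilon}\hookrightarrow L^1_{-4}$ directly from Theorem \ref{AEWeightedEmbeedings}, whereas you verify it by an explicit H\"older split; both are fine.
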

\begin{proof}
The first part of the theorem is derived from Theorem \ref{MainThmAE} since our hypotheses are even stronger in this case. Once (\ref{ImprovedDecayExpansion.2}) has been established, we have the following implication\footnote{For details about this, the reader can consult \cite[Corollary 3.1]{ALM}.}
\begin{align*}
R_g\in L^p_{-4-\epsilon}(M,\Phi_z) \text{ and } \bar{z}(z)-\mathrm{Id}(z)&\in C^1_{1-\alpha}(\mathbb{R}^n\backslash\overline{B_{R_0}(0)})\Longrightarrow R_g\in L^p_{-4-\epsilon}(M,\Phi_{\bar{z}}).
\end{align*}
Noticing that $L^p_{-4-\epsilon}(M,\Phi_{\bar{z}})\hookrightarrow L^1_{-4}(M,\Phi_{\bar{z}})$, then Lemma \ref{COMConvergenenceLemma} establishes that the center of mass is well-defined with respect to the structure of infinity given by $\Phi_{\bar{z}}$, in the sense that the limit in (\ref{COMreminder}) exists.
\end{proof}

%\subsection*{Acknowledgements}
%The author would like to thank the Alexander von Humboldt Foundation for financial support during the writing of this paper.

\appendix
\markboth{Appendix}{Appendix}
\addcontentsline{toc}{section}{Appendices}
\renewcommand{\thesubsection}{\Alph{subsection}}
\numberwithin{equation}{section}
\numberwithin{theo}{section}
%\numberwithin{coro}{section}
\numberwithin{remark}{section}

\section*{Acknowledgements}

The author would like to thank Jan Metzger, Carla Cederbaum, Paul Laurain, Albachiara Cogo and Andoni Royo Abrego for helpful discussions and references related to this paper, as well as reading preliminary versions of it. Also, the author would like to thank the Alexander von Humboldt Foundation for financial support during the writing of this paper.

\section{Appendix I: Fredholmness of $\Delta_g$ on certain weighted spaces}\label{AppendixBartnik}

%\subsection*{Conformally Flat Warped-Product Space-Times}

Let $(M^n,g)$ be a $W^{2,q}_{\tau}$-AE manifold with $q>\frac{n}{2}$ and $\tau<0$. The objective of this appendix is to review the Fredholm properties of the operator $\Delta_g$ acting on $W^{2,p}_{\delta}$-spaces. In order to motivate the relevance of such a revision, let us first notice that these properties have been the focus of attention of several highly influential papers. We would like to stress that precise Fredholm properties of the Euclidean Laplacian are known since the work of Robert C. McOwen in \cite{McOwen}. In the literature one can find several generalisations of such properties to general AE-manifolds with metrics of different degree of regularity, specially in the case where $\Delta_g$ acts on spaces $W^{2,p}_{\delta}$ with $\delta\in (2-n,0)$. In this last case, under mild assumptions $\Delta_g$ actually becomes an isomorphism as can be consulted, for instance, in \cite{Maxwell1}. Nevertheless, we are interested in understanding such properties outside of the interval $\delta\in (2-n,0)$, and the classical reference for such a treatment clearly is the celebrated paper by Robert Bartnik \cite{BartnikMass}, which covers such generalisations contemplating also low regularity metrics (see \cite[Proposition 2.2]{BartnikMass}). In this last paper, one of the ingredients along the analysis of the Fredholm properties of $\Delta_g$ is \cite[Proposition 1.6]{BartnikMass}, which concerns regularity properties of a priori very weak solutions of a class of second order equations with coefficients of very weak regularity. Although the proof of this result appears as a result of standard elliptic regularity combined with scaling techniques, actually, in our opinion, the regularity statement seems to be out of the scope of the regularity theory one may find in standard references, such as \cite{GilbargTrudinger} (compare with Theorem 8.8 in Chapter 8 and Theorems in Chapter 9 of \cite{GilbargTrudinger}). Typically, the results in such references require an a priori solution of $W^{1,2}$-regularity, which are most of the time presented with stronger regularity conditions on the coefficients (such as local $L^{\infty}$-bounds on all of them). Actually finding a result which accommodates all the local regularity hypotheses of the operators treated in \cite[Definition 1.5]{BartnikMass} does not seem to be so easy. 

Let us also comment that the analysis of the regularity of weak solutions to equations with coefficients of the regularity classes proposed in \cite[Definition 1.5]{BartnikMass} seems to be rather close to the recent treatment of regularity of solutions to elliptic problems with coefficients of very limited regularity given in \cite{Brezis1,Regularity1,Regularity2} (among others), which was motivated by a conjecture of J. Serrin \cite{Serrin}. Within the PDE program associated to these papers, there seem to be very interesting counterexamples limiting the extent of possible regularity theory, such as \cite{Serrin,JIN}. In that spirit, we believe it is instructive to present a self-contained version of the regularity properties stated in \cite[Proposition 1.6]{BartnikMass} and their application to obtain Fredholm statements for $\Delta_g:W^{2,p}_{\delta}\to L^p_{\delta-2}$, for metrics of low regularity and outside the interval $\delta\in (2-n,0)$. We anticipate that we are unable recover the full statement of \cite[Proposition 1.6]{BartnikMass}, which should be compared with Theorem \ref{BartniksProp1.6} below. Nevertheless, the regularity we do obtain seems to be enough to get the full extent of \cite[Proposition 2.2]{BartnikMass} (compare with Corollary \ref{WeightedFredholmCoro}) .

\medskip
With the above motivations in mind, we start by noticing that locally:
\begin{align*}
\Delta_g=g^{ij}\partial_{ij}-g^{ij}\Gamma^l_{ij}\partial_l.
\end{align*}
Therefore, using the notations of Section \ref{Preliminaries}, on any bounded chart with smooth boundary $\{U,x^i\}_{i=1}^n$ we have $\Delta_g\in \mathcal{L}^2(W^{2,q})$, and hence $\Delta_g:W_0^{2,p}(U)\to L^p(U)$ is a bounded map for any $1<p\leq q$ due to the same reasoning as in Lemma \ref{ContinuityPropsGeneralOps}. Furthermore, the multiplication property of Theorem \ref{AEWeightedEmbeedings} also shows that $\Vert \Delta_g(\eta u)\Vert_{L^{p}_{\delta-2}(M)}\leq C\Vert \eta u\Vert_{W^{2,p}_{\delta}(M)}$ for any $u\in W^{2,p}_{\delta}(M)$, $\eta$ a cut-off function supported in an end of $M$ equal to one near infinity, $1<p\leq q$ and $\delta\in \mathbb{R}$. Thus, appealing to a partition of unity, we see that:
\begin{prop}
Let $(M^n,g)$ be a $W^{2,q}_{\tau}$-AE manifold with $q>\frac{n}{2}$ and $\tau<0$. Then, the operator $\Delta_g$ is a continuous map from $W^{2,p}_{\delta}(M)\to L^p_{\delta-2}(M)$ for all $1<p\leq q$ and $\delta\in \mathbb{R}$.
\end{prop}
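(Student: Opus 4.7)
The plan is to combine the local continuity of $\Delta_g \in \mathcal{L}^2(W^{2,q})$ already established via Lemma \ref{ContinuityPropsGeneralOps} with the weighted multiplication property and the weighted Lebesgue embedding of Theorem \ref{SobolevPorpsAE} through a partition of unity subordinate to the standard AE decomposition. Fix end charts $\{U_i,\Phi_i\}$ diffeomorphic to $\mathbb{R}^n\setminus\overline{B_1(0)}$, a cutoff $\chi\in C^\infty_0(M)$ equal to $1$ on a compact set containing $M\setminus\bigcup_iU_i$, and a partition of unity $\{\eta_i\}$ on the ends, so that any $u\in W^{2,p}_\delta(M)$ decomposes as $u=\chi u+\sum_i\eta_iu$. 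Each summand can then be treated separately, reducing the claim to (a) a compact-core bound and (b) an end-piece bound in asymptotic coordinates.

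The term $\chi u$ has compact support, so the weighted norms are comparable to their unweighted counterparts. Covering $\mathrm{supp}(\chi)$ by finitely many bounded coordinate charts with smooth boundary and summing the estimate $(\ref{OpContinuity2ndOrder})$ from Lemma \ref{ContinuityPropsGeneralOps} across them yields $\|\Delta_g(\chi u)\|_{L^p_{\delta-2}}\lesssim \|\chi u\|_{W^{2,p}}\lesssim \|u\|_{W^{2,p}_\delta}$, which is valid for every $1<p\leq q$ since $\Delta_g\in \mathcal{L}^2(W^{2,q})$.

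For each $\eta_iu$, work in the asymptotic coordinates $\{x^j\}$ on $U_i$ and expand
\begin{align*}
\Delta_g(\eta_iu)=\delta^{jk}\partial_{jk}(\eta_iu)+(g^{jk}-\delta^{jk})\partial_{jk}(\eta_iu)-g^{jk}\Gamma^l_{jk}\,\partial_l(\eta_iu).
\end{align*}
The Euclidean Laplacian contribution lies in $L^p_{\delta-2}$ directly by definition (\ref{WeightedSobolevNorm}). For the first error term, the hypothesis $g-e\in W^{2,q}_\tau$ gives $g^{jk}-\delta^{jk}\in W^{2,q}_\tau$, and item 6 of Theorem \ref{SobolevPorpsAE} applied with $(k_1,p_1,\delta_1)=(0,p,\delta-2)$, $(k_2,p_2,\delta_2)=(2,q,\tau)$, $k=0$, places the product into $L^p_{\delta_*}$ for any $\delta_*>\delta-2+\tau$; since $\tau<0$ one can choose $\delta_*\in(\delta-2+\tau,\delta-2)$ and conclude via the embedding $L^p_{\delta_*}\hookrightarrow L^p_{\delta-2}$ of item 1. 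The Christoffel term is analogous once one observes that $g^{jk}\Gamma^l_{jk}\in W^{1,q}_{\tau-1}$ while $\partial_l(\eta_iu)\in W^{1,p}_{\delta-1}$, so item 6 applies with $(k_1,p_1,\delta_1)=(1,p,\delta-1)$, $(k_2,p_2,\delta_2)=(1,q,\tau-1)$, $k=0$, yielding a product in $L^p_{\delta_*}$ for any $\delta_*>\delta-2+\tau$. In both applications the crucial hypothesis $k_1+k_2=2>n/q$ is precisely $q>n/2$.

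The only subtlety is the Christoffel contraction: a naive bound that treats $\partial u$ as an element of $L^p_{\delta-1}$ would require $k_1+k_2=1>n/q$, which is stronger than our hypothesis; the point is to exploit the extra derivative available from $u\in W^{2,p}_\delta$ and apply the multiplication result with $k_2=1$ on the Christoffel factor as well. Once this bookkeeping is carried out, summing the three end contributions across finitely many $i$ together with the compact-core estimate gives $\|\Delta_gu\|_{L^p_{\delta-2}}\lesssim \|u\|_{W^{2,p}_\delta}$ for every $1<p\leq q$ and every $\delta\in\mathbb{R}$, and the density of $C_0^\infty(M)$ in $W^{2,p}_\delta$ extends the result from smooth sections to the full Sobolev class.
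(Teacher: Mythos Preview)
Your proof is correct and follows essentially the same approach as the paper: split via a partition of unity into a compact core (handled by the local continuity from Lemma~\ref{ContinuityPropsGeneralOps}) and end pieces (handled by the weighted multiplication property, item 6 of Theorem~\ref{SobolevPorpsAE}). The paper's own argument is a two-sentence sketch preceding the proposition, and you have simply filled in the details; in particular, your observation that the Christoffel term must be treated via $W^{1,p}_{\delta-1}\otimes W^{1,q}_{\tau-1}$ rather than $L^p_{\delta-1}\otimes W^{1,q}_{\tau-1}$ (so that $k_1+k_2=2>n/q$) is exactly the point behind the paper's appeal to the multiplication property.
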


We will now concentrate in proving a priori estimates, regularity and decay properties for solutions to $\Delta_gu=f$. Our presentation will be based on the classical paper \cite{NirenbergWalker}, adapting their results to our notations and extending them to our context.

\begin{lem}\label{RegularityAtInfinityCteCoef}
For any given real number $\rho$, if $u\in W^{2,p}_{loc}(\mathbb{R}^n)\cap L^p_{\rho}(\mathbb{R}^n)$, $1<p<\infty$, and $\Delta u\in L^p_{\rho-2}(\mathbb{R}^n)$, then $u\in W^{2,p}_{\rho}(\mathbb{R}^n)$ and there is a fixed constant $C>0$, independent of $u$, such that:
\begin{align}\label{EstimateAtInfinityCteCoef}
\Vert u\Vert_{W^{2,p}_{\rho}(\mathbb{R}^n)}\leq C\left( \Vert \Delta u\Vert_{L^p_{\rho-2}(\mathbb{R}^n)} + \Vert u\Vert_{L^p_{\rho}(\mathbb{R}^n)}\right)
\end{align}
\end{lem}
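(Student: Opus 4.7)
The plan is a classical dyadic rescaling argument in the spirit of \cite{NirenbergWalker}, where one reduces the weighted global estimate on $\mathbb{R}^n$ to the standard constant-coefficient interior elliptic estimate on a fixed reference annulus. A separate interior estimate on a ball will take care of the region near the origin, where the weight $\sigma$ is bounded above and below.

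First I would cover $\mathbb{R}^n$ by the ball $B_2(0)$ together with the dyadic annuli $A_k\doteq\{x\in\mathbb{R}^n\,:\,2^{k-1}\leq |x|\leq 2^{k+1}\}$ for $k\geq 1$, and take a slightly larger enlargement $A_k'=\{2^{k-2}\leq|x|\leq 2^{k+2}\}$ of each. Both families have bounded overlap, uniformly in $k$. On $B_2(0)$, since $u\in W^{2,p}_{loc}(\mathbb{R}^n)$ and $\Delta u\in L^p(B_4(0))$, the standard interior elliptic estimate for the Laplacian (e.g.\ \cite[Theorem 9.11]{GilbargTrudinger}) yields
\begin{align*}
\Vert u\Vert_{W^{2,p}(B_2(0))}\leq C\left(\Vert \Delta u\Vert_{L^p(B_4(0))}+\Vert u\Vert_{L^p(B_4(0))}\right),
\end{align*}
and since $\sigma$ is bounded above and below on $B_4(0)$, the left hand side controls $\Vert u\Vert_{W^{2,p}_\rho(B_2(0))}^p$ and the right hand side is bounded by a constant multiple of $\Vert \Delta u\Vert_{L^p_{\rho-2}(B_4(0))}^p+\Vert u\Vert_{L^p_\rho(B_4(0))}^p$.

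The core of the argument is the treatment of each annulus $A_k$, $k\geq 1$, via rescaling. Fix the reference annulus $A\doteq\{1/2\leq|y|\leq 2\}$ and its enlargement $A'\doteq\{1/4\leq|y|\leq 4\}$, and consider $v(y)\doteq u(2^k y)$, so that $(\Delta v)(y)=2^{2k}(\Delta u)(2^k y)$. The standard constant-coefficient interior estimate gives a universal constant $C$ (independent of $k$) such that
\begin{align*}
\sum_{|\alpha|\leq 2}\Vert \partial^\alpha v\Vert_{L^p(A)}\leq C\left(\Vert \Delta v\Vert_{L^p(A')}+\Vert v\Vert_{L^p(A')}\right).
\end{align*}
A change of variables $x=2^k y$ converts this into
\begin{align*}
\sum_{|\alpha|\leq 2}2^{k(|\alpha|-n/p)}\Vert \partial^\alpha u\Vert_{L^p(2^k A)}\leq C\left(2^{k(2-n/p)}\Vert \Delta u\Vert_{L^p(2^k A')}+2^{-kn/p}\Vert u\Vert_{L^p(2^k A')}\right),
\end{align*}
and multiplying by $2^{-k\rho}$, raising to the $p$-th power, and using the elementary inequality $(a+b)^p\leq 2^{p-1}(a^p+b^p)$ yields
\begin{align*}
\sum_{|\alpha|\leq 2}2^{kp(|\alpha|-\rho-n/p)}\Vert \partial^\alpha u\Vert_{L^p(2^k A)}^p\leq C'\left(2^{kp(2-\rho-n/p)}\Vert \Delta u\Vert_{L^p(2^k A')}^p+2^{-kp(\rho+n/p)}\Vert u\Vert_{L^p(2^k A')}^p\right).
\end{align*}
On $2^k A'$ the weight satisfies $\sigma(x)\sim 2^k$ with constants independent of $k$, so the left hand side is comparable to $\Vert u\Vert_{W^{2,p}_\rho(2^k A)}^p$ while the two terms on the right are comparable to $\Vert\Delta u\Vert_{L^p_{\rho-2}(2^k A')}^p$ and $\Vert u\Vert_{L^p_\rho(2^k A')}^p$ respectively, with uniform constants. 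Note that the hypothesis $u\in W^{2,p}_{loc}(\mathbb{R}^n)$ guarantees that each of these local $W^{2,p}$ norms is finite, so the rescaling is legitimate.

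To conclude I would sum the resulting local estimates over $k\geq 1$, using the bounded overlap of both $\{A_k\}$ and $\{A_k'\}$ to obtain
\begin{align*}
\sum_{k\geq 1}\Vert u\Vert_{W^{2,p}_\rho(A_k)}^p\leq C''\left(\Vert \Delta u\Vert_{L^p_{\rho-2}(\mathbb{R}^n\setminus B_1)}^p+\Vert u\Vert_{L^p_\rho(\mathbb{R}^n\setminus B_1)}^p\right),
\end{align*}
and combining with the interior estimate on $B_2(0)$ gives the desired global bound (\ref{EstimateAtInfinityCteCoef}). The finiteness of the right hand side then upgrades $u$ from $W^{2,p}_{loc}\cap L^p_\rho$ to $W^{2,p}_\rho(\mathbb{R}^n)$. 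The main points requiring care, rather than any serious obstruction, are twofold: ensuring the constant $C$ in the reference annulus estimate is truly independent of $k$ (which it is, since after rescaling the problem is always the same constant coefficient equation on the fixed annulus $A$); and carefully tracking the exponents of $2^k$ so that the rescaled weighted norms align correctly. No cancellation or duality arguments are required here, in contrast to the more delicate results earlier in the paper, because the operator has constant coefficients.
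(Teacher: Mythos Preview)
Your proposal is correct and follows essentially the same approach as the paper: a dyadic rescaling to a fixed reference annulus, application of the standard constant-coefficient interior estimate there, reinsertion of the weight via $\sigma\sim 2^k$ on each dyadic shell, and summation using bounded overlap. The only cosmetic differences are that the paper writes the rescaling via the operator $(S_r u)(x)=u(rx)$ with continuous parameter $r=2^j$ and leaves the near-origin contribution implicit (since $u\in W^{2,p}_{loc}$ and $\sigma$ is bounded there), whereas you treat $B_2(0)$ explicitly and phrase the overlap argument more carefully.
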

\begin{proof}
We will appeal to the scaling properties associated to weighted spaces. First, let $A_r=B_{2r}\backslash \overline{B_{r}}$ and notice that using $(S_ru)(x)=u(rx)$ and $\partial^{\beta}_x(S_ru)(x)=r^{|\beta|}\partial^{\beta}u(rx)$ for all $|\beta|\leq 2$, we find that $r^{|\beta|p-n}\Vert \partial^{\beta}u \Vert^p_{L^p(A_r)}=\Vert \partial^{\beta}(S_ru)\Vert^p_{L^p(A_1)}$. Then, we use interior elliptic estimates to write
\begin{align*}
%\int_{A_r}r^{-n}|u(x)|^pdx&=\int_{A_1}|(S_ru)(x)|^pdx,\\
\int_{A_r}r^{|\beta|p-n}|\partial_x^{\beta}u(x)|^pdx&=\int_{A_1}|\partial^{\beta}_x(S_ru)(x)|^pdx\leq \Vert S_ru\Vert^p_{W^{2,p}(A_1)},\\
&\leq C \int_{\frac{1}{4}\leq |x|\leq 4}\left( |\Delta(S_ru)(x)|^p + |(S_ru)(x)|^p \right)dx,\\
&=C \int_{\frac{r}{4}\leq |x|\leq 4r}\left( r^{2p-n}|\Delta u(x)|^p + r^{-n}|u(x)|^p \right)dx,
\end{align*}
where the important point is that $C>0$ depends on $\Delta,p,n$ but neither on $u$ nor $r>0$. Let us now multiply the above inequality by $r^{-\rho p}$, to get
\begin{multline*}
\int_{r\leq |x|\leq 2r}\big|r^{-(\rho - |\beta| + \frac{n}{p})}|\partial_x^{\beta}u(x)|\big|^pdx\leq C \int_{\frac{r}{4}\leq |x|\leq 4r}\left( \big|r^{-(\rho - 2 + \frac{n}{p})}|L_{\infty}u(x)|\big|^p + \big|r^{-\rho - \frac{n}{p}}|u(x)|\big|^p \right)dx,
\end{multline*}
Notice now that above implies that there is another constant $C'$, which may now also depend on $\rho$ and $|\beta|\leq m$ (but is still independent of $r$), for which we have
\begin{multline*}
\int_{r\leq |x|\leq 2r}\big||x|^{-(\rho - |\beta| + \frac{n}{p})}|\partial_x^{\beta}u(x)|\big|^pdx\leq C' \int_{\frac{r}{4}\leq |x|\leq 4r}\left( \big||x|^{-(\rho- 2 + \frac{n}{p})}|\Delta u(x)|\big|^p + \big||x|^{-\rho - \frac{n}{p}}|u(x)|\big|^p \right)dx.
\end{multline*}
For $r\geq 1$, by modifying $C'$ we can replace in the above inequality $|x|$ by $\sigma(x)$. Then, we can pick $r=2^{j}$, $j\in \mathbb{N}_0$, and sum over $j$ to get
\begin{align*}
\int_{\mathbb{R}^n\backslash B_{1}}\big||\sigma(x)|^{-(\rho - |\beta| + \frac{n}{p})}|\partial_x^{\beta}u(x)|\big|^pdx\leq C' \left(\Vert \Delta u \Vert^p_{L^p_{\rho-2}(\mathbb{R}^n)} + \Vert u\Vert^p_{L^p_{\rho}(\mathbb{R}^n)}\right),
\end{align*}
where the right-hand side is finite by hypotheses. Summing over $|\beta|\leq 2$, we find
\begin{align}
\Vert u\Vert^p_{W^{2,p}_{\rho}(\mathbb{R}^n\backslash B_1)}\leq C'' \left(\Vert \Delta u \Vert^p_{L^p_{\rho-2}(\mathbb{R}^n)} + \Vert u\Vert^p_{L^p_{\rho}(\mathbb{R}^n)}\right),
\end{align}
for some other constant $C''>0$, which implies the desired statements.
\end{proof}

The above lemma will be useful to produce estimates near infinity in more general AE manifolds, and will be combined with the following local result given by Lemma \ref{ElliptEstimateW2}. The final goal is to establish:

%\begin{lem}\label{WeightedFredholmCompactLemma}
%Consider the Laplacian operator $\Delta_g$ of a $W_{loc}^{2,q}$ metric with $q>\frac{n}{2}$ defined on a $\mathbb{R}^n$. Given  $r>0$, considering a ball $B_r(0)\subset \mathbb{R}^n$, $1<p\leq q$, then there is a constant $C>0$ such that
%\begin{align}\label{WeightedFredholmCompactEstimate}
%\Vert u\Vert_{W^{2,p}(B_r(0))}\leq C(\Vert Lu\Vert_{L^{p}(B_r(0))} + \Vert u\Vert_{L^{p}(B_r(0))}) \:\: \forall\: u\in W_{0}^{2,p}(B_r(0)).
%\end{align}
%\end{lem}

%We can now establish the following result:

\begin{theo}\label{WeightedEstimatesPreliminarThm}
Consider the Laplacian operator $\Delta_g$ of a $W_{\tau}^{2,q}$-AE metric, with $q>\frac{n}{2}$ and $\tau<0$, defined on $\mathbb{R}^n$. If $1<p\leq q$ and $\rho\in \mathbb{R}$, then there is a constant $C>0$ such that for all $u\in W^{2,p}_{\rho}(\mathbb{R}^n)$ we have the following elliptic estimate:
\begin{align}
\Vert u\Vert_{W^{2,p}_{\rho}(\mathbb{R}^n)} \leq C\left( \Vert \Delta_g u\Vert_{L^p_{\rho-2}(\mathbb{R}^n)} + \Vert u\Vert_{L^p_{\rho}(\mathbb{R}^n)} \right).
\end{align}
\end{theo}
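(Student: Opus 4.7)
The plan is to prove the estimate by combining a localization argument with the two ingredients already established: the interior elliptic estimate of Lemma \ref{ElliptEstRoughW2} handles a large but bounded region, while Lemma \ref{RegularityAtInfinityCteCoef} yields the weighted $L^p_\rho$ control for the flat Laplacian $\Delta$ on the end of $\mathbb{R}^n$. The key observation is that since $g \in W^{2,q}_\tau$ with $\tau<0$, the operator $\Delta_g - \Delta$ has coefficients whose weighted norms on $\{|x|\geq R_0\}$ can be made arbitrarily small by choosing $R_0$ large, allowing us to absorb those contributions on the left.

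I would begin by choosing a smooth cutoff $\eta$ with $\eta\equiv 0$ on $|x|\leq R_0$ and $\eta\equiv 1$ on $|x|\geq 2R_0$, and splitting $u = (1-\eta)u + \eta u$. For the compactly supported piece $(1-\eta)u \in W^{2,p}_0(B_{3R_0}(0))$, Lemma \ref{ElliptEstRoughW2} gives
\begin{align*}
\Vert (1-\eta)u\Vert_{W^{2,p}(B_{3R_0})} \leq C\bigl(\Vert \Delta_g((1-\eta)u)\Vert_{L^p(B_{3R_0})} + \Vert (1-\eta)u\Vert_{L^p(B_{3R_0})}\bigr),
\end{align*}
and since weighted and unweighted norms are equivalent on a fixed bounded set, this bounds $\Vert(1-\eta)u\Vert_{W^{2,p}_\rho}$. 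The commutator $[\Delta_g,1-\eta]u$ is compactly supported in the annulus $\{R_0\leq |x|\leq 2R_0\}$ and involves at most first derivatives of $u$ with coefficients in $W^{1,q}_{\mathrm{loc}}$; by the interpolation inequality in Theorem \ref{SobolevPorpsAE} combined with the multiplication properties there, this term is dominated by $\epsilon\Vert u\Vert_{W^{2,p}_\rho} + C_\epsilon\Vert u\Vert_{L^p_\rho}$.

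For the tail $\eta u \in W^{2,p}_\rho(\mathbb{R}^n)$, I would apply Lemma \ref{RegularityAtInfinityCteCoef} to obtain
\begin{align*}
\Vert \eta u\Vert_{W^{2,p}_\rho(\mathbb{R}^n)} \leq C\bigl(\Vert \Delta(\eta u)\Vert_{L^p_{\rho-2}(\mathbb{R}^n)} + \Vert \eta u\Vert_{L^p_\rho(\mathbb{R}^n)}\bigr),
\end{align*}
then write $\Delta(\eta u) = \Delta_g(\eta u) - (g^{ij}-\delta^{ij})\partial_{ij}(\eta u) + g^{ij}\Gamma^l_{ij}(g)\partial_l(\eta u)$ and further decompose $\Delta_g(\eta u) = \eta \Delta_g u + [\Delta_g,\eta]u$. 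The commutator is supported in $\{R_0\leq |x|\leq 2R_0\}$, so once again it is controlled by $\Vert u\Vert_{W^{1,p}(B_{2R_0})}$, hence by $\epsilon\Vert u\Vert_{W^{2,p}_\rho} + C_\epsilon\Vert u\Vert_{L^p_\rho}$. The error terms $(g-\delta)_{\cdot}\partial^2(\eta u)$ and $g\cdot\Gamma(g)\cdot\partial(\eta u)$ are the core of the argument: by the weighted multiplication property of Theorem \ref{SobolevPorpsAE} applied on $\{|x|\geq R_0\}$,
\begin{align*}
\Vert (g-\delta)_{\cdot}\partial^2(\eta u)\Vert_{L^p_{\rho-2}} + \Vert g\cdot\Gamma(g)\cdot\partial(\eta u)\Vert_{L^p_{\rho-2}} \leq C\,\omega(R_0)\,\Vert \eta u\Vert_{W^{2,p}_\rho},
\end{align*}
where $\omega(R_0)\to 0$ as $R_0\to\infty$ because $g-\delta\in W^{2,q}_\tau$ with $\tau<0$ (so its tail norms vanish in the limit, and similarly for the Christoffel terms which lie in $W^{1,q}_{\tau-1}$).

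The main obstacle, and where care is needed, is precisely the absorption step: one must verify that the weighted multiplication property does give a factor that genuinely tends to zero with $R_0$, and that the choice of exponents ($1<p\leq q$, $q>n/2$) is compatible with the continuity range required in Theorem \ref{SobolevPorpsAE}. Once this is in place, one chooses $R_0$ so that $C\,\omega(R_0)\leq 1/2$ and $\epsilon$ small, absorbs the offending terms into the left-hand side of the combined estimate $\Vert u\Vert_{W^{2,p}_\rho}\leq \Vert (1-\eta)u\Vert_{W^{2,p}_\rho} + \Vert \eta u\Vert_{W^{2,p}_\rho}$, and arrives at the claimed inequality.
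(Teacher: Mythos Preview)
Your proposal is correct and follows essentially the same strategy as the paper's proof: split $u$ with a cutoff into a compactly supported piece handled by Lemma~\ref{ElliptEstRoughW2} and a tail piece handled by Lemma~\ref{RegularityAtInfinityCteCoef} for the flat Laplacian, then absorb the $(\Delta_g-\Delta)$ error terms using the decay $\tau<0$, deal with commutators supported in the annulus, and finish with the weighted interpolation inequality. The paper treats the top-order error term slightly differently (via the $C^0_\tau$ embedding rather than the multiplication property) and works out in detail the ``tail norms vanish'' step you flag as the main obstacle, but the architecture is the same.
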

\begin{proof}
First, use a cut-off function $\chi_R$ equal to one on $\overline{B_R(0)}$ and supported in $B_{2R}(0)$, constructed such that $|\partial^k\chi_R(x)|\leq C_kR^{-k}$ for any $k\in \mathbb{N}$ and a constant $C_k>0$ independent of $R$, and then decompose
\begin{align*}
u=\chi_Ru + (1-\chi_R) u=u_1+u_2,
\end{align*}
where $u_1=\chi_Ru$ and $u_2=(1-\chi_R) u$. Since on $B_{2R}(0)$ we have $u_1\in W^{2,p}_0(B_{2R}(0))$ and the coefficients of $\Delta_g$ satisfy the regularity assumptions of Lemma \ref{ElliptEstimateW2}, there is a constant $C=C(g,p,n,R)>0$, such that
\begin{align*}
\Vert u_1\Vert_{W^{2,p}(B_{2R}(0))}&\leq C(\Vert \Delta_g u_1\Vert_{L^p(B_{2R}(0))} + \Vert u_1\Vert_{L^p(B_{2R}(0))}),\\
&\leq C(\Vert \chi_R\Delta_g u\Vert_{L^p(B_{2R}(0))} + \Vert [\Delta_g,\chi_R] u\Vert_{L^p(B_{2R}(0))} + \Vert u_1\Vert_{L^p(B_{2R}(0))}).
\end{align*}
Then, from
\begin{align}\label{LowerOrderCritical.1}
[\Delta_g,\chi_R] u&=2\<\nabla\chi_R,\nabla u\>_g + u\Delta_g\chi_R,
\end{align}
since the derivatives of $\chi_R$ are bounded to any order, one finds
\begin{align*}
\Vert [L,\chi_R] u\Vert_{L^p(B_{2R}(0))}&\leq 2\Vert \<\nabla\chi_R,\nabla u\>_g\Vert_{L^p(B_{2R}(0))} + \Vert u\Delta_g\chi_R\Vert_{L^p(B_{2R}(0))}\leq C(g,R) \Vert u\Vert_{W^{1,p}(B_{2R}(0))}.
\end{align*}
%where we have used the continuous multiplication 
%\begin{align}\label{LowerOrderCritical.2}
%W^{|\alpha|,p}(B_{2R}(0))\otimes W^{m-(|\alpha|-|\beta|)-1,p}(B_{2R}(0))\hookrightarrow L^p(B_{2R}(0)),
%\end{align}
%valid for any $0\leq |\beta|\leq |\alpha|\leq m-1$, since $m>\frac{n}{p}+1$. 
Therefore, we then get a constant $C'=C(g,p,n,R)>0$ such that
\begin{align*}
\Vert u_1\Vert_{W^{2,p}(B_{2R}(0))}&\leq C'(\Vert \Delta_g u\Vert_{L^p(B_{2R}(0))} + \Vert u\Vert_{W^{1,p}(B_{2R}(0))}).
\end{align*}
Using the equivalence of the $L^{p}$ and $L^p$-weighted norms on bounded domains, we deduce that there is a fixed constant $C_1>0$, independent of $u$, and which is fixed once $R$ is fixed to any finite number, such that
\begin{align}\label{WeightedEstimatesInner}
\Vert u_1\Vert_{W^{2,p}_{\rho}(\mathbb{R}^n)}\leq C_1(g,n,p,R)\left( \Vert \Delta_g u\Vert_{L^p_{\rho}(\mathbb{R}^n)} + \Vert u\Vert_{W^{1,p}_{\rho}(\mathbb{R}^n)}\right).
\end{align}

To estimate $u_2$, we start decomposing
\begin{align*}
\Delta_g u_2&=\Delta u_2 + (g^{ij}-\delta_{ij})\partial_{ij}u_2 + g^{ij}\Gamma^l_{ij}\partial_lu_2.%,\\
%&=\Delta u_2 + (g^{ij}-\delta_{ij})\partial_{ij}u_2 + g^{ij}\Gamma^l_{ij}(1-\chi_R)\partial_lu - g^{ij}\Gamma^l_{ij}\partial_l\chi_R u.
\end{align*}
Since $u_2\in W^{2,p}_{\rho}(\mathbb{R}^n)$ by hypothesis, we can apply (\ref{EstimateAtInfinityCteCoef}) to $\Delta$ and use the above decomposition to obtain:
\begin{align*}
\Vert u_2\Vert_{W^{2,p}_{\rho}(\mathbb{R}^n)}&\leq C( \Vert \Delta u_2\Vert_{L^p_{\rho-2}(\mathbb{R}^n)} + \Vert u_2\Vert_{L^p_{\rho}(\mathbb{R}^n)}),\\
&\leq C( \Vert \Delta_g u_2\Vert_{L^p_{\rho-2}(\mathbb{R}^n)} + \Vert (g^{ij}-\delta_{ij})\partial_{ij}u_2 \Vert_{L^{p}_{\rho-2}(\mathbb{R}^n)} + \Vert g^{ij}\Gamma^l_{ij}\partial_lu_2 \Vert_{L^{p}_{\rho-2}(\mathbb{R}^n)} + \Vert u_2\Vert_{L^p_{\rho}(\mathbb{R}^n)}).
%&\leq C\big(  \Vert \Delta_gu_2\Vert_{L^{p}_{\rho-2}(\mathbb{R}^n)} + \Vert (g^{ij}-\delta_{ij})\partial_{ij}u_2 \Vert_{L^{p}_{\rho-2}(\mathbb{R}^n)} + \Vert g^{ij}\Gamma^l_{ij}(1-\chi_R)\partial_lu \Vert_{L^{p}_{\rho-2}(\mathbb{R}^n)} \\
%&+ \Vert g^{ij}\Gamma^l_{ij}\partial_l\chi_R u \Vert_{L^{p}_{\rho-2}(\mathbb{R}^n)}  + \Vert u_2\Vert_{L^p_{\rho}(\mathbb{R}^n)}\big),
\end{align*}
Noticing that $\mathrm{supp} (u_2)\subset \mathbb{R}^n\backslash\overline{B_R(0)}$ and $W^{2,q}_{\tau}\hookrightarrow C^{0}_{\tau}$, gives us
\begin{align*}
\Vert (g^{ij}-\delta_{ij})\partial_{ij}u_2 \Vert_{L^{p}_{\rho-2}(\mathbb{R}^n)}&=\Vert (g^{ij}-\delta_{ij})\partial_{ij}u_2 \Vert_{L^{p}_{\rho-2}(\mathbb{R}^n\backslash\overline{B_R(0)})},\\
%&\leq \Vert (g^{ij}-\delta_{ij})\partial_{ij}u_2 \Vert_{L^{p}_{\rho-2}(\mathbb{R}^n\backslash\overline{B_R(0)})}\\
&\leq \left(\sup_{x\in \mathbb{R}^n\backslash\overline{B_R(0)}} |g^{ij}-\delta_{ij}\vert \right)\Vert \partial^{2}u_2 \Vert_{L^{p}_{\rho-2}(\mathbb{R}^n\backslash\overline{B_R(0)})},\\
&\leq \left(\Vert g^{-1}-\delta\Vert_{C^{0}_{\tau}}\right)\sigma(R)^{\tau}\Vert u_2 \Vert_{W^{2,p}_{\rho}(\mathbb{R}^n\backslash\overline{B_R(0)})}\\
&=C(g)R^{\tau}\Vert u_2 \Vert_{W^{2,p}_{\rho}(\mathbb{R}^n\backslash\overline{B_R(0)})}.
\end{align*}

Concerning the lower order terms, first notice that $g^{ij}\Gamma^l_{ij}\in W^{1,q}_{\tau-1}(\mathbb{R}^n)\hookrightarrow W^{1,q}_{\gamma-1}(\mathbb{R}^n)$ for any $0>\gamma>\tau$. Since $\mathrm{supp} (u_2)\subset \mathbb{R}^n\backslash\overline{B_R(0)}$, let us pick a fixed cut-off function $\eta\in C^{\infty}_0(B_1(0))$, $0\leq \eta\leq 1$, $\eta\equiv 1$  on $\overline{B_{\frac{1}{2}}(0)}$, and then scale it to $\eta_R(x)\doteq \eta(R^{-1}x)\in C^{\infty}_0(B_R(0))$, and $\eta_R|_{\overline{B_{\frac{R}{2}}(0)}}\equiv 1$. Then $(1-\eta_R)$ is supported in $\mathbb{R}^{n}\backslash \overline{B_{\frac{R}{2}}(0)}$ and is identically equal to one on $\mathbb{R}^{n}\backslash \overline{B_{R}(0)}$, implying that $(1-\eta_R)|_{\mathrm{supp}(u_2)}\equiv 1$. Therefore
\begin{align*}
g^{ij}\Gamma^l_{ij}\partial_lu_2=(1-\eta_R)g^{ij}\Gamma^l_{ij}\partial_lu_2.
\end{align*}
Then, appealing to the multiplication property in Theorem \ref{SobolevPorpsAE}, we find that
\begin{align*}
W^{1,q}_{\gamma-1}(\mathbb{R}^n)\otimes W^{1,p}_{\rho - 1}(\mathbb{R}^n)\hookrightarrow L^p_{\rho-2}(\mathbb{R}^n),
\end{align*}
holds as long as $\tau<\gamma<0$, since $q>\frac{n}{2}$. Thus, there is a constant $C>0$, independent of $u_2$ and $R$, such that
\begin{align}\label{GammaEsti}
\begin{split}
\Vert g^{ij}\Gamma^l_{ij}\partial_lu_2\Vert_{L^{p}_{\rho-2}(\mathbb{R}^n)}&\leq C \Vert (1-\eta_R)g^{ij}\Gamma^l_{ij}\Vert_{W^{1,q}_{\gamma-1}(\mathbb{R}^n)}\Vert \partial_lu_2\Vert_{W^{1,p}_{\rho - 1}(\mathbb{R}^n)}\\
&\leq C'(g) \left(\sum_{|i,j,l|\leq n}\Vert  (1-\eta_R)\Gamma^l_{ij}\Vert_{W^{1,q}_{\gamma-1}(\mathbb{R}^n)}\right)\Vert u_2\Vert_{W^{2,p}_{\rho }(\mathbb{R}^n)}.
\end{split}
\end{align}
Notice now that
\begin{align}\label{GammaEsti.0}
\Vert  (1-\eta_R)\Gamma^l_{ij}\Vert_{L^{q}_{\gamma-1}(\mathbb{R}^n)}\leq 2 \Vert  \Gamma^l_{ij}\Vert_{L^{q}_{\gamma-1}(\mathbb{R}^n\backslash \overline{B_{\frac{R}{2}}(0)})}.
\end{align}
Similarly, since
\begin{align*}
\partial((1-\eta_R)\Gamma^l_{ij})&=(1-\eta_R)\partial\Gamma^l_{ij} - \partial\eta_R \Gamma^l_{ij},\\
&=(1-\eta_R)\partial\Gamma^l_{ij} - R^{-1}(\partial\eta)(R^{-1}x) \Gamma^l_{ij},
\end{align*}
we find
\begin{align}\label{GammaEsti.1}
\Vert  \partial ((1-\eta_R)\Gamma^l_{ij})\Vert_{L^{q}_{\gamma-2}(\mathbb{R}^n)}\leq \Vert  (1-\eta_R)\partial\Gamma^l_{ij}\Vert_{L^{q}_{\gamma-2}(\mathbb{R}^n)} + \Vert  R^{-1}(\partial\eta)(R^{-1}\cdot) \Gamma^l_{ij}\Vert_{L^{q}_{\gamma-2}(\mathbb{R}^n)}.
\end{align}
But since
\begin{align*}
\int_{\mathbb{R}^n}|R^{-1}(\partial\eta)(R^{-1}x) \Gamma^l_{ij}|^q\sigma^{-(\gamma-2+\frac{n}{q})q}dx&=\int_{B_{R}\backslash\overline{B_{\frac{R}{2}}}}|R^{-1}(\partial\eta)(R^{-1}x) \Gamma^l_{ij}|^q\sigma^{-(\gamma-2+\frac{n}{q})q}dx,\\
&\leq \left(\sup_{x\in B_{R}\backslash\overline{B_{\frac{R}{2}}}}|\partial\eta(R^{-1}x)|\right)^{q}\int_{B_{R}\backslash\overline{B_{\frac{R}{2}}}}R^{-q}| \Gamma^l_{ij}|^q\sigma^{-(\gamma-2+\frac{n}{q})q}dx,\\
&\leq \Vert \partial\eta\Vert^q_{L^{\infty}(B_1(0))}\int_{B_{R}\backslash\overline{B_{\frac{R}{2}}}}| \Gamma^l_{ij}|^q|x|^{-q}\sigma^{-(\gamma-2+\frac{n}{q})q}dx,\\
&\leq C\Vert \partial\eta\Vert^q_{L^{\infty}(B_1(0))}\int_{B_{R}\backslash\overline{B_{\frac{R}{2}}}}| \Gamma^l_{ij}|^q\sigma^{-q}\sigma^{-(\gamma-2+\frac{n}{q})q}dx,\\
&\leq C\Vert \partial\eta\Vert^q_{L^{\infty}(B_1(0))}\int_{B_{R}\backslash\overline{B_{\frac{R}{2}}}}| \Gamma^l_{ij}|^q\sigma^{-(\gamma-1+\frac{n}{q})q}dx,
\end{align*}
where the constant $C=C(q)>0$ appearing in the fourth-line above arises from the estimate $|x|^{-q}\leq C(q)(1+|x|^2)^{-\frac{q}{2}}$ for any $|x|\geq 1$. The above estimate implies that 
\begin{align*}
\Vert  R^{-1}(\partial\eta)(R^{-1}\cdot) \Gamma^l_{ij}\Vert_{L^{q}_{\gamma-2}(\mathbb{R}^n)}\leq C(n,q)\Vert \Gamma^l_{ij}\Vert_{L^{q}_{\gamma-1}(\mathbb{R}^n\backslash\overline{B_{\frac{R}{2}}(0)})},
\end{align*}
which put together with (\ref{GammaEsti.1}) implies
\begin{align*}
\Vert  \partial ((1-\eta_R)\Gamma^l_{ij})\Vert_{L^{q}_{\gamma-2}(\mathbb{R}^n)}\leq C'(n,q)\Vert \Gamma^l_{ij}\Vert_{W^{1,q}_{\gamma-1}(\mathbb{R}^n\backslash\overline{B_{\frac{R}{2}}(0)})}.
\end{align*}
This can be put together with (\ref{GammaEsti.0}) to obtain
\begin{align}\label{GammaEsti.2}
\Vert  ((1-\eta_R)\Gamma^l_{ij})\Vert_{W^{1,q}_{\gamma-1}(\mathbb{R}^n)}\leq C''(n,q)\Vert \Gamma^l_{ij}\Vert_{W^{1,q}_{\gamma-1}(\mathbb{R}^n\backslash\overline{B_{\frac{R}{2}}(0)})}.
\end{align}
Now, since $\tau<\gamma<0$, choosing $R$ large enough, we furthermore have
\begin{align}\label{GammaEsti.3}
\begin{split}
\Vert \Gamma^l_{ij}\Vert_{W^{1,q}_{\gamma-1}(\mathbb{R}^n\backslash\overline{B_{\frac{R}{2}}(0)})}&\leq \sigma^{-(\gamma-\tau)}\left(\frac{R}{2}\right)\Vert \Gamma^l_{ij}\Vert_{W^{1,q}_{\tau-1}(\mathbb{R}^n\backslash\overline{B_{\frac{R}{2}}(0)})},\\
&\leq CR^{-(\gamma-\tau)}\Vert \Gamma^l_{ij}\Vert_{W^{1,q}_{\tau-1}(\mathbb{R}^n\backslash\overline{B_{\frac{R}{2}}(0)})},
\end{split}
\end{align}
for a constant independent of $R>1$. Therefore, putting (\ref{GammaEsti}) together with (\ref{GammaEsti.2}) and (\ref{GammaEsti.3}), we find:
\begin{align*}
\Vert  g^{ij}\Gamma^l_{ij}\partial_{l}u_2\Vert_{L^{p}_{\rho-2}(\mathbb{R}^n\backslash\overline{B_R(0)})}&\leq C(n,p,q,g)R^{-(\gamma-\tau)}\Vert u_2\Vert_{W^{2,p}_{\rho }(\mathbb{R}^n)}.
\end{align*}
Thus, there are constants $C, C'>0$, independent booth of $u$ and $R$, such that,
\begin{align*}
\Vert u_2\Vert_{W^{2,p}_{\rho}(\mathbb{R}^n)}%&=\Vert u_2\Vert_{W^{2,p}_{\rho}(\mathbb{R}^n\backslash\overline{B_R(0)})}\\
&\leq C\big(  \Vert \Delta_gu_2\Vert_{L^{p}_{\rho-2}(\mathbb{R}^n)}+ C'(n,p,q,g)(R^{\tau}+R^{-(\gamma-\tau)})\Vert u_2 \Vert_{W^{2,p}_{\rho}(\mathbb{R}^n)} + \Vert u_2\Vert_{L^p_{\rho}(\mathbb{R}^n)}\big)% \\
%&\:\:+ \Vert u_2\Vert_{L^{p}_{\rho'}(\mathbb{R}^n\backslash\overline{B_R(0)})}\big).
\end{align*}
Picking $R$ sufficiently large, we can absorb the highest order term in the right-hand side into the left-hand side and obtain
\begin{align*}
\Vert u_2\Vert_{W^{2,p}_{\rho}(\mathbb{R}^n)}%&=\Vert u_2\Vert_{W^{2,p}_{\rho}(\mathbb{R}^n\backslash\overline{B_R(0)})}\\
&\leq C_2(n,p,q,g,R)\big(  \Vert \Delta_gu_2\Vert_{L^{p}_{\rho-2}(\mathbb{R}^n)} + \Vert u_2\Vert_{L^p_{\rho}(\mathbb{R}^n)}\big).
\end{align*}
Once $R$ has been fixed so that the above estimate holds, we deduce that
\begin{align*}
\Vert u_2\Vert_{W^{2,p}_{\rho}(\mathbb{R}^n)}&\leq C_2( \Vert (1-\chi_R)\Delta_gu\Vert_{L^{p}_{\rho-2}(\mathbb{R}^n)} + \Vert [\Delta_g,1-\chi_R]u\Vert_{L^{p}_{\rho-2}(\mathbb{R}^n)} + \Vert u\Vert_{L^{p}_{\rho}(\mathbb{R}^n)}).
\end{align*}
Using that $[\Delta_g,1-\chi_R]u=-2\<\nabla\chi_R,\nabla u\>_g - u\Delta_g\chi_R$ is of first order on $u$ and is supported in $B_{2R}(0)$ (since each term involved at least one derivative on $\chi_R$), with the same kind of computations done after (\ref{LowerOrderCritical.1}), we find
\begin{align*}
\Vert u_2\Vert_{W^{2,p}_{\rho}(\mathbb{R}^n)}&\leq C_2(\Vert \Delta_gu\Vert_{L^{p}_{\rho-2}(\mathbb{R}^n)} + \Vert u\Vert_{W^{1,p}(B_{2R}(0))} + \Vert u\Vert_{L^{p}_{\rho}(\mathbb{R}^n)}).
\end{align*} 
Noticing now that the bounded set $B_{2R}(0)$ is fixed, since $R$ has been fixed already, we once more appeal to the equivalence of weighted and unweighted norms on compacts to estimate $\Vert u\Vert_{W^{1,p}(B_{2R}(0))}\leq C(R)\Vert u\Vert_{W^{1,p}_{\rho}(B_{2R}(0))}$, and thus we find
\begin{align*}
\Vert u_2\Vert_{W^{2,p}_{\rho}(\mathbb{R}^n)}&\leq C_3(n,p,q,g,R)\left(\Vert \Delta_g u\Vert_{L^{p}_{\rho-2}(\mathbb{R}^n )} + \Vert u\Vert_{W^{1,p}_{\rho}(\mathbb{R}^n)}\right).
\end{align*} 
Recalling once more that the constant in (\ref{WeightedEstimatesInner}) is also fixed, putting all of this together
\begin{align*}
\begin{split}
\Vert u\Vert_{W^{2,p}_{\rho}(\mathbb{R}^n)}&\leq \Vert u_1\Vert_{W^{2,p}_{\rho}(\mathbb{R}^n)} + \Vert u_2\Vert_{W^{2,p}_{\rho}(\mathbb{R}^n)}\leq C\left(\Vert \Delta_gu\Vert_{L^{p}_{\rho-2}(\mathbb{R}^n )} + \Vert u\Vert_{W^{1,p}_{\rho}(\mathbb{R}^n)} \right)
\end{split}
\end{align*}
Finally, the interpolation inequality of Theorem \ref{AEWeightedEmbeedings} finishes the proof.
\end{proof}

Let us now recall the following interpolation result, extracted from \cite[Lemma 3.1]{NirenbergWalker}:

\begin{lem}
Let $\chi\in C^{\infty}_0(B_{2}(0))$ be a a cut-off function which is equal to one on $\overline{B_{1}(0)}$, and define $\chi_R(x)\doteq \chi(\frac{x}{R})$ for any $R>1$. Given any $\epsilon>0$, there is a constant $C_{\epsilon}>0$, such that
\begin{align}\label{NirenbergWalkerInterpolation}
\sum_{|\alpha|=1}\Vert \chi_R\sigma^{1-\delta-\frac{n}{p}}\partial^{\alpha} u \Vert_{L^p(\mathbb{R}^n)}\leq \epsilon\sum_{|\alpha|\leq 2}\Vert \chi^2_R\sigma^{|\alpha| -\delta -\frac{n}{p}} \partial^{\alpha} u \Vert_{L^{p}(\mathbb{R}^n)} + C_{\epsilon}\Vert u\Vert_{L^p_{\delta}(\mathbb{R}^n)}
\end{align}
holds for all $u\in W^{2,p}_{loc}(\mathbb{R}^n)\cap L^p_{\delta}(\mathbb{R}^n)$.  
\end{lem}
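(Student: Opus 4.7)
The plan is to prove the interpolation inequality (\ref{NirenbergWalkerInterpolation}) by means of a dyadic annular decomposition of $\mathbb{R}^n$ combined with a scaled version of the standard unweighted Gagliardo--Nirenberg type interpolation $\|Du\|_{L^p(\Omega)} \le \varepsilon' \|D^2 u\|_{L^p(\Omega)} + C_{\varepsilon'}\|u\|_{L^p(\Omega)}$ on a fixed reference domain $\Omega$. I would first introduce the dyadic annuli $A_k = \{2^k \le |x| < 2^{k+1}\}$ for $k \ge 0$ together with the central ball $A_{-1} = B_1(0)$, and choose slightly enlarged annuli $\tilde A_k = \{2^{k-1} \le |x| \le 2^{k+2}\}$ (with the obvious modification near the origin) so that the $\tilde A_k$ cover $\mathbb{R}^n$ with uniformly bounded multiplicity. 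The key structural observation is that on $A_k$ the weight $\sigma$ is comparable to $2^k$ for $k\ge 0$, so all weight factors $\sigma^{|\alpha|-\delta-n/p}$ reduce on each annulus to the single constant $2^{k(|\alpha|-\delta-n/p)}$.

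Next, I would apply on each enlarged annulus $\tilde A_k$ the rescaled interpolation inequality obtained by setting $y = x/2^k$: the substitution transforms the standard inequality on a fixed reference annulus $\tilde A_0$ into
$\|Du\|_{L^p(A_k)} \le \varepsilon' 2^k \|D^2 u\|_{L^p(\tilde A_k)} + C_{\varepsilon'} 2^{-k}\|u\|_{L^p(\tilde A_k)}$,
as is clear from how each norm scales under dilation. Multiplying this by $2^{k(-\delta-n/p)}$ converts each factor into the corresponding power of $\sigma$ (since $\sigma \simeq 2^k$ on $A_k$), and taking the $p$-th power and summing over $k \ge -1$ (using finite overlap of the $\tilde A_k$) yields the global weighted estimate
$\|\sigma^{1-\delta-n/p} D u\|_{L^p(\mathbb{R}^n)} \le \varepsilon \|\sigma^{2-\delta-n/p} D^2 u\|_{L^p(\mathbb{R}^n)} + C_{\varepsilon}\|u\|_{L^p_\delta(\mathbb{R}^n)}$
for any $\varepsilon>0$, after choosing $\varepsilon'$ proportional to $\varepsilon$. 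The central ball $A_{-1}$ is handled directly by the unweighted interpolation inequality on $B_2(0)$, with no rescaling needed.

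To insert the cut-offs $\chi_R$ and $\chi_R^2$ appearing in the statement and thereby allow $u\in W^{2,p}_{loc}\cap L^p_{\delta}$ (which a priori need not lie in any $W^{1,p}_{\delta-1}$ or $W^{2,p}_{\delta-2}$), I would apply the argument above to the function $\chi_R u\in W^{2,p}(\mathbb{R}^n)$ and then commute $\chi_R$ through the derivatives. The commutator terms involve $\partial^j \chi_R$, which by the scaling $\chi_R(x)=\chi(x/R)$ satisfies $|\partial^j\chi_R|\lesssim R^{-j}$ and is supported in $B_{2R}\setminus B_R$ where $\sigma \simeq R$; this exact cancellation of $R$-powers makes the commutator contributions uniformly bounded in $R$ by $C\|u\|_{L^p_\delta}$, and they are absorbed into the $C_\varepsilon\|u\|_{L^p_\delta}$ term. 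The asymmetry between $\chi_R$ on the left and $\chi_R^2$ on the right in the final inequality is then harmless: since $0\le\chi_R^2\le\chi_R\le 1$, the $|\alpha|=1$ term appearing on the right dominates any first-derivative contribution we might need, and if needed can be absorbed into the left-hand side by choosing $\varepsilon$ small.

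The main obstacle is the careful bookkeeping of weights and constants across scales: one must verify that the constant $C_{\varepsilon'}$ coming from the unweighted interpolation on the reference annulus is genuinely scale-independent, and that the commutators generated by inserting the cut-offs $\chi_R$ really do produce $R$-independent bounds controlled by $\|u\|_{L^p_\delta}$ alone. Once these two points are confirmed, the dyadic summation and the absorption of the first-derivative term on the right are routine, and no further geometric structure of $\mathbb{R}^n$ is required beyond the comparability $\sigma \simeq 2^k$ on $A_k$.
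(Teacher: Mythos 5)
Your dyadic-annulus derivation of the global weighted interpolation inequality $\Vert \sigma^{1-\delta-\frac{n}{p}}\partial v\Vert_{L^p}\leq \varepsilon\Vert\sigma^{2-\delta-\frac{n}{p}}\partial^2 v\Vert_{L^p}+C_\varepsilon\Vert v\Vert_{L^p_\delta}$ is fine (the scaling bookkeeping and the finite overlap of the enlarged annuli work exactly as you say). The gap is in the cut-off insertion step. When you apply that estimate to $v=\chi_R u$ and expand, the second-order term produces the cross term $2\,\partial\chi_R\otimes\partial u$, whose weighted contribution is, after the cancellation $R^{-1}\sigma\simeq 1$ on $\mathrm{supp}(\partial\chi_R)\subset B_{2R}\setminus \overline{B_R}$, of size $\varepsilon\,\Vert\sigma^{1-\delta-\frac{n}{p}}\partial u\Vert_{L^p(B_{2R}\setminus B_R)}$. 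This is a \emph{first-derivative} term, so it is not "uniformly bounded by $C\Vert u\Vert_{L^p_\delta}$" as you claim; and it lives precisely on the transition annulus where $\chi_R$ (hence also $\chi_R^2$) is not bounded below, so it is dominated neither by the left-hand side $\Vert\chi_R\sigma^{1-\delta-\frac{n}{p}}\partial u\Vert_{L^p}$ nor by the $|\alpha|=1$ term $\Vert\chi_R^2\sigma^{1-\delta-\frac{n}{p}}\partial u\Vert_{L^p}$ on the right, even with the small factor $\varepsilon$ in front. Your closing remark that the asymmetry is harmless because $\chi_R^2\leq\chi_R$ points the wrong way: the smaller cut-off on the right makes the target inequality \emph{stronger}, and the absorption you invoke is exactly what fails on $\mathrm{supp}(\partial\chi_R)$. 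For the same reason your argument only yields $\chi_R$ (not $\chi_R^2$) in front of the second-derivative term.

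For comparison: the paper does not prove this lemma at all; it quotes it from Nirenberg--Walker (Lemma 3.1 there), remarking only that the hypothesis can be relaxed to $u\in W^{2,p}_{loc}\cap L^p_\delta$. The proof in that reference integrates $\chi_R^p\,\sigma^{(1-\delta-\frac{n}{p})p}|\partial u|^{p-2}\partial u\cdot\partial u$ by parts and applies Young's inequality; the specific powers $\chi_R$ on the left and $\chi_R^2$ on the right are engineered so that $\chi_R^p=(\chi_R^2)^1\cdot(\chi_R^0)^1\cdot(\chi_R^1)^{p-2}$ splits exactly across the three Young factors, and so that the terms where derivatives fall on $\chi_R^p$ or on the weight are controlled by $\chi_R\sigma^{1-\delta-\frac{n}{p}}|\partial u|$ (absorbable) and $\sigma^{-\delta-\frac{n}{p}}|u|$ alone, with no derivative of $u$ ever left uncovered by the cut-off. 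If you want to salvage a purely localization-based proof, you would need a covering adapted to the level sets of $\chi_R\sigma$, i.e.\ an interpolation with a spatially varying small parameter $\eta(x)\sim\varepsilon\,\chi_R(x)\sigma(x)$; decomposing with respect to $\sigma$ only cannot see the degeneration of $\chi_R$ across the single transition annulus.
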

\begin{remark}
Let us highlight that \cite[Lemma 3.1]{NirenbergWalker} is stated under the stronger assumption that $u\in W^{2,p}(\mathbb{R}^n)$, although it follows from the proof that the same result holds if $u\in W^{2,p}_{loc}(\mathbb{R}^n)\cap L^p_{\delta}(\mathbb{R}^n)$.
\end{remark}
%begin{proof}
%First of all, let $\Omega\subset B_{2R}(0)$ be an open set such that $\chi_R>0$. Then
%\begin{align*}
%\Vert \chi_R\nabla u \Vert_{L^p_{\delta-1}(\mathbb{R}^n)}&\leq \frac{1}{\min_{\Omega}\chi_R}\Vert \chi^2_R\nabla %v\Vert_{L^p_{\delta-1}}.
%\end{align*}
%We can now compute
%\begin{align*}
%(\chi_R\nabla u \sigma^{-\delta+1-\frac{n}{p}})(x)&=(\nabla (\chi_Ru) \sigma^{-\delta+1-\frac{n}{p}})(x) + R^{-1}\chi_R\nabla\chi(\frac{x}{R}) u(x) \sigma^{-\delta+1-\frac{n}{p}}(x),
%\end{align*}
%thus
%\begin{align*}
%\Vert \chi_R\nabla u \Vert_{L^p_{\delta-1}(\mathbb{R}^n)}&\leq \frac{1}{\min_{\Omega}\chi_R}\left( \Vert \nabla (\chi_Ru) \Vert_{L^p_{\delta-1}(\Omega)} + \sup_{\Omega}\chi_R\sup_{\Omega}|\nabla\chi(\frac{x}{R})| \Vert u\Vert_{L^p_{\delta}(\Omega)}\right),\\
%&\leq \frac{1}{\min_{\Omega}\chi_R} \Vert \nabla (\chi_Ru) \Vert_{L^p_{\delta-1}(\Omega)} + \frac{\sup_{\Omega}\chi_R}{\min_{\Omega}\chi_R}\sup_{\Omega}|\nabla\chi(\frac{x}{R})| \Vert u\Vert_{L^p_{\delta}(\Omega)},
%\end{align*}

%\end{proof}

We shall exploit the above lemma in the following theorem, which represents our version of \cite[Proposition 1.6]{BartnikMass}. 

\begin{theo}\label{BartniksProp1.6}
Let $(M^n,g)$ be a $W^{2,q}_{\tau}$-AE manifold with $q>\frac{n}{2}$ and $\tau<0$. Letting $p$ be a real number $1<p\leq q$ such that $\frac{1}{q}-\frac{1}{n}\leq \frac{1}{p}< \frac{1}{q'}+\frac{1}{n}$, if $u\in L^{p}_{\delta}\cap L^{q'}_{loc}$, $\delta\in\mathbb{R}$, and $\Delta_gu\in L^p_{\delta-2}$, then $u\in W^{2,p}_{\delta}$ and 
\begin{align}\label{BartnikDecayEstimate}
\Vert u\Vert_{W^{2,p}_{\delta}}\leq C\left(\Vert \Delta_gu\Vert_{L^p_{\delta-2}} + \Vert u\Vert_{L^p_{\delta}} \right).
\end{align}
\end{theo}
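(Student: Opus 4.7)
The plan is to split the argument into two steps: first upgrading $u$ to $W^{2,p}_{loc}$ using the interior regularity developed in Section \ref{SectionRegularity}, and then promoting this local regularity to the weighted statement by a scaling argument on dyadic annuli at infinity. For the first step, since $\Delta_g u \in L^p_{\delta-2}\hookrightarrow L^p_{loc}$ and the exponent conditions $1<p\leq q$ together with $\tfrac{1}{q}-\tfrac{1}{n}\leq \tfrac{1}{p}< \tfrac{1}{q'}+\tfrac{1}{n}$ match the hypotheses of Corollary \ref{LocalLapReg}, the local a priori control $u\in L^{q'}_{loc}$ immediately bootstraps to $u\in W^{2,p}_{loc}(M)$.

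For the weighted step I would work in the end, identified with $\mathbb{R}^n\setminus\overline{B_{R_0}(0)}$ via the structure of infinity, and mimic the scaling technique used in Lemma \ref{RegularityAtInfinityCteCoef}. For each $r>R_0$, introduce $v_r(y)\doteq u(ry)$ on the reference annulus $A\doteq \{\tfrac{1}{4}<|y|<4\}$. A direct calculation based on $\partial^\alpha_y v_r(y)=r^{|\alpha|}(\partial^\alpha u)(ry)$ and the scaling $\Gamma^l_{ij}(g_r)(y)=r\,\Gamma^l_{ij}(g)(ry)$ of the Christoffel symbols yields $\Delta_{g_r}v_r(y)=r^{2}(\Delta_g u)(ry)$, where $g_r(y)\doteq g(ry)$. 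Since $g$ is $W^{2,q}_\tau$-AE with $\tau<0$, the rescaled metrics $g_r$ converge to the Euclidean metric in $W^{2,q}(A)$ as $r\to\infty$, so for $r$ sufficiently large $g_r$ lies in a fixed $W^{2,q}(A)$-neighbourhood of $\delta$. This allows one to apply the interior elliptic estimate of Lemma \ref{ElliptEstRoughW2} (after multiplying $v_r$ by a fixed cutoff supported in $A$ and equal to one on $\{1<|y|<2\}$, and absorbing the resulting first-order commutator terms) to produce
\[
\|v_r\|_{W^{2,p}(\{1<|y|<2\})}\leq C\bigl(\|\Delta_{g_r}v_r\|_{L^p(A)}+\|v_r\|_{L^p(A)}\bigr),
\]
with a constant $C$ independent of $r$ for $r$ large enough; Step 1 guarantees finiteness of the right-hand side on each annulus. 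Undoing the scaling through $x=ry$, multiplying by $r^{-\delta p}$, replacing $|x|$ by $\sigma(x)$, choosing $r=2^j$ and summing over $j$, yields
\[
\|u\|_{W^{2,p}_\delta(\mathbb{R}^n\setminus\overline{B_{R_1}(0)})}\leq C\bigl(\|\Delta_g u\|_{L^p_{\delta-2}(M)}+\|u\|_{L^p_\delta(M)}\bigr),
\]
for a suitable $R_1$; combining this with the unweighted $W^{2,p}$-bound on the compact core (where weighted and unweighted norms are equivalent and Step 1 applies) furnishes both $u\in W^{2,p}_\delta(M)$ and the estimate (\ref{BartnikDecayEstimate}).

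The main obstacle is securing the $r$-uniformity of the constant in the interior estimate for $\Delta_{g_r}$. The top-order part is straightforward, since the coefficients $g_r^{ij}$ converge uniformly to $\delta^{ij}$ on $A$; freezing them at a base point and treating the remainder as a small perturbation, in the spirit of the proof of Lemma \ref{ElliptEstRoughW2}, works uniformly for large $r$, while the finitely many residual small scales are handled by the ordinary local estimate. More delicate is the first-order coefficient $g_r^{ij}\Gamma^l_{ij}(g_r)$, whose $W^{1,q}(A)$-norm is controlled by a positive power of $r^{\tau}$ (as witnessed by the weighted bound $\Gamma\in W^{1,q}_{\tau-1}$ used in the proof of Theorem \ref{WeightedEstimatesPreliminarThm}); the absorption trick employed there, where sufficiently large scale renders the first-order contribution negligible, transfers verbatim to the scaled setting and closes the bootstrap.
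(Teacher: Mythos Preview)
Your first step --- bootstrapping $u\in L^{q'}_{loc}$ with $\Delta_g u\in L^p_{loc}$ to $u\in W^{2,p}_{loc}$ via Corollary \ref{LocalLapReg} --- is exactly the paper's opening move. The weighted step, however, follows a genuinely different route: you rescale the metric on dyadic annuli and aim for a uniform interior estimate for the operators $\Delta_{g_r}$, whereas the paper first proves the weighted estimate for functions already in $W^{2,p}_\delta$ (Theorem \ref{WeightedEstimatesPreliminarThm}), then extends to the a priori class $L^p_\delta\cap W^{2,p}_{loc}$ by applying that estimate to the truncation $\chi_R^2 v$, invoking the Nirenberg--Walker interpolation lemma (\ref{NirenbergWalkerInterpolation}) to absorb a residual $\chi_R$--weighted first-order term, and finally letting $R\to\infty$. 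Your scheme, if completed, would bypass both auxiliary ingredients.

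There is, however, a gap in the cutoff step. When you apply Lemma \ref{ElliptEstRoughW2} to $\psi v_r$, the commutator $[\Delta_{g_r},\psi]v_r$ contributes $2g_r^{ij}\partial_i\psi\,\partial_j v_r$, producing a term of size $\|\nabla v_r\|_{L^p(\mathrm{supp}\,\nabla\psi)}$. Unlike the operator's own first-order coefficient $g_r^{ij}\Gamma^l_{ij}(g_r)$ --- which, as you correctly argue, has $W^{1,q}(A)$-norm of order $r^\tau$ and can be absorbed by smallness --- this commutator term carries an $O(1)$ coefficient from the fixed $\nabla\psi$ and does not become small as $r\to\infty$. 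Interpolating $\|\nabla v_r\|_{L^p(A)}\leq\epsilon\|v_r\|_{W^{2,p}(A)}+C_\epsilon\|v_r\|_{L^p(A)}$ does not help directly, since the $W^{2,p}$-norm on the right lives on the larger annulus $A$, not on $\{1<|y|<2\}$; and summing over $j$ first simply reproduces $\|\nabla u\|_{L^p_{\delta-1}}$ on the right-hand side, which is not known to be finite. What is actually needed to close your argument at each scale is the standard interior-estimate iteration: apply Lemma \ref{ElliptEstRoughW2} on a nested family of annuli $A_t\subset A_s\subset A$ with cutoffs whose gradients scale like $(s-t)^{-1}$, interpolate so that the first-order contribution becomes $\tfrac{1}{2}\|v_r\|_{W^{2,p}(A_s)}+C(s-t)^{-2}\|v_r\|_{L^p(A)}$, and iterate. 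Since the constant in Lemma \ref{ElliptEstRoughW2} for $\Delta_{g_r}$ is uniform in $r$ (your convergence $g_r\to\delta$ in $W^{2,q}(A)$ does secure this), the iteration constants are uniform as well, and the scaling-and-summing strategy then goes through. The paper's $\chi_R^2$-truncation together with (\ref{NirenbergWalkerInterpolation}) is precisely its device for handling the analogous difficulty without a scale-by-scale iteration.
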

\begin{proof}
First, under our hypotheses, locally we have $\Delta_g=g^{ij}\partial_{ij}-g^{ij}\Gamma^l_{ij}\partial_l\in \mathcal{L}^2(W^{2,q})$ and thus Corollary \ref{LocalLapReg} applies to it to show that $u\in L^{q'}_{loc}$ and $\Delta_gu\in L^p_{loc}$ implies $u\in W^{2,p}_{loc}$. Then, assuming for simplicity $M$ has only one end, we consider a partition of unity $\{\eta_{\alpha}\}^{N}_{\alpha=1}$, with $\{\eta_i\}_{i=1}^{N-1}$ covering the compact core of $M$ and $\mathrm{supp}(\eta_i)\subset U_i$,\footnote{We take the $U_i$ sets as bounded subsets with smooth boundary, which is always possible.} while $\eta_N$ is supported in the end $M\backslash\mathcal{K}$ of $M$, so that $u=\sum_{\alpha}\eta_{\alpha}u$, and fixing $\eta\doteq \eta_N$ we concentrate on the case of $v\doteq \eta u$, which we treat as an element of $W^{2,p}_{loc}(\mathbb{R}^n)\cap L^{p}_{\delta}(\mathbb{R}^n)$ which is supported on $\mathbb{R}^n\backslash\overline{B_1(0)}$. Consider then a cut-off function $\chi\in C^{\infty}_0(B_{2}(0))$ which is equal to one on $B_{1}(0)$, and then define $\chi_R(x)\doteq \chi(\frac{x}{R})$ for some $R>1$. Then, $\chi_Rv$ is supported on $B_{2R}(0)\backslash\overline{B_1(0)}$ and
\begin{align*}
\partial(\chi^2_Rv)(x)&=\chi^2_R(x)\partial v(x) + \partial\chi^2_R(x) v(x)= \chi^2_R(x)\partial v(x) + 2R^{-1}\chi_R\partial\chi\left(\frac{x}{R}\right) v(x),\\
\partial^2(\chi^2_Rv)(x)&=\chi^2_R(x)\partial^2 v(x) + 4R^{-1}\chi_R\partial\chi\left(\frac{x}{R}\right) \partial v(x) + 2R^{-2}\partial\chi\left(\frac{x}{R}\right)\partial\chi\left(\frac{x}{R}\right) v(x) \\
&+ 2R^{-2}\chi_R\partial^2\chi\left(\frac{x}{R}\right) v(x)
\end{align*}
and we aim to estimate
\begin{align}\label{WeightedEstimatesStrong.1}
\begin{split}
\!\!\!\!\!\!\!\!\!\!\!\!\!\!\!\!\!\!\!\!\!\!\sum_{|\alpha|\leq 2}\Vert \chi^2_R\sigma^{|\alpha| -\delta -\frac{n}{p}} \partial^{\alpha} v \Vert_{L^{p}(\mathbb{R}^n)}&=\Vert \sigma^{2 -\delta -\frac{n}{p}} \chi^2_R\partial^{2} v \Vert_{L^{p}(\mathbb{R}^n)} + \Vert \sigma^{1 -\delta -\frac{n}{p}} \chi^2_R\partial v \Vert_{L^{p}(\mathbb{R}^n)}+ \Vert \chi^2_R\sigma^{ -\delta -\frac{n}{p}} v \Vert_{L^{p}(\mathbb{R}^n)},\\
&\leq C\Big( \Vert \sigma^{2 -\delta -\frac{n}{p}} \partial^2(\chi^2_Rv) \Vert_{L^{p}(\mathbb{R}^n)} + \Vert R^{-1}\chi_R\sigma^{2 -\delta -\frac{n}{p}} \partial v \Vert_{L^{p}(A_{R})} \\
&+ \Vert \sigma^{2 -\delta -\frac{n}{p}} R^{-2}v \Vert_{L^{p}(A_R)} + \Vert R^{-2}\chi_R\sigma^{2 -\delta -\frac{n}{p}}  v \Vert_{L^{p}(A_{R})}\\
& + \Vert \sigma^{1 -\delta -\frac{n}{p}} \partial (\chi^2_Rv) \Vert_{L^{p}(B_{2R}(0))} + \Vert R^{-1}\chi_R\sigma^{1 -\delta -\frac{n}{p}} \partial v \Vert_{L^{p}(A_{R})} \\
&+ \Vert \sigma^{ -\delta -\frac{n}{p}} \chi^2_R v \Vert_{L^{p}(B_{2R}(0))}\Big),\\
&\leq C'\Big(\Vert \chi^2_Rv \Vert_{W^{2,p}_{\delta}(\mathbb{R}^n)} + \Vert \chi_R\sigma^{1 -\delta -\frac{n}{p}} \partial v \Vert_{L^{p}(B_{2R}(0))} \\
&+ \Vert \chi_R\sigma^{ -\delta -\frac{n}{p}}  v \Vert_{L^{p}(B_{2R}(0))} + \Vert \sigma^{ -\delta -\frac{n}{p}}v  \Vert_{L^{p}(\mathbb{R}^n)}\Big).
\end{split}
\end{align}
where above $C,C'>0$ are independent of $R$, and in the second estimate we have denoted by $A_R\doteq B_{2R}(0)\backslash\overline{B_R(0)}$ highlighting that $\mathrm{supp}(\partial\chi_R)\subset A_R$. Also, in the third estimate we have made use of the fact that on $A_R$ one has $|x|\leq 2R$ and thus, there is a constant $C''>0$, independent of $R$, such that:
\begin{align*}
\Vert R^{-1}\chi_R\sigma^{2 -\delta -\frac{n}{p}} \partial v \Vert_{L^{p}(A_{R}(0))}&\leq 2 \Vert (2R)^{-1}\chi_R\sigma^{2 -\delta -\frac{n}{p}} \partial v \Vert_{L^{p}(A_{R}(0))}\leq 2\Vert |x|^{-1}\chi_R\sigma^{2 -\delta -\frac{n}{p}} \partial v \Vert_{L^{p}(A_{R}(0))},\\
&\leq C'' \Vert \chi_R\sigma^{1 -\delta -\frac{n}{p}} \partial v \Vert_{L^{p}(A_{R}(0))},\\
\Vert \sigma^{2 -\delta -\frac{n}{p}} R^{-2}v \Vert_{L^{p}(A_R)}&\leq C''  \Vert \sigma^{ -\delta -\frac{n}{p}} v \Vert_{L^{p}(A_R)}\leq C''\Vert \sigma^{ -\delta -\frac{n}{p}} v \Vert_{L^{p}(\mathbb{R}^n)},\\
\Vert R^{-2}\chi_R\sigma^{2 -\delta -\frac{n}{p}}  v \Vert_{L^{p}(A_{R}(0))}&\leq C'' \Vert \sigma^{ -\delta -\frac{n}{p}}  \chi_Rv \Vert_{L^{p}(A_{R}(0))},\\
\Vert R^{-1}\chi_R\sigma^{1 -\delta -\frac{n}{p}} \partial v \Vert_{L^{p}(A_{R}(0))}&\leq C'' \Vert \sigma^{ -\delta -\frac{n}{p}} \chi_R\partial v \Vert_{L^{p}(A_{R}(0))}\leq C'' \Vert \sigma^{ 1 -\delta -\frac{n}{p}} \chi_R\partial v \Vert_{L^{p}(A_{R}(0))} 
\end{align*}

We can apply the estimates Theorem \ref{WeightedEstimatesPreliminarThm} to the first term in the right-hand side of (\ref{WeightedEstimatesStrong.1}), so that 
\begin{align*}
\Vert \chi^2_Rv\Vert_{W^{2,p}_{\delta}(\mathbb{R}^n)}&\leq C_0( \Vert \Delta_g(\chi^2_Rv)\Vert_{L^{p}_{\delta-2}(\mathbb{R}^n)} + \Vert \chi^2_Rv\Vert_{L^{p}_{\delta}(\mathbb{R}^n)}),\\
&\leq C_0(\Vert \chi^2_R\Delta_gv\Vert_{L^{p}_{\delta-2}(\mathbb{R}^n)} + \Vert [\Delta_g,\chi^2_R]v\Vert_{L^{p}_{\delta-2}(\mathbb{R}^n)} + \Vert \chi^2_Rv\Vert_{L^{p}_{\delta}(\mathbb{R}^n)}),
\end{align*}
where 
\begin{align*}
[\Delta_g,\chi^2_R]v=2\< \nabla \chi^2_R,\nabla v\>_g + v\Delta_g\chi^2_R,
\end{align*}
which is compactly supported in the annulus $A_{R}= B_{2R}(0)\backslash \overline{B_{R}(0)}$. Therefore, 
\begin{align}\label{WalkerNirenbergEstimates.1}
\big\Vert [\Delta_g,\chi^2_R]v \big\Vert_{L^{p}_{\delta-2}(A_{R})}&\leq 2\big\Vert |\nabla \chi^2_R| \:|\nabla v| \big\Vert_{L^{p}_{\delta-2}(A_{R})}+ \Vert v\Delta_g\chi^2_R\Vert_{L^{p}_{\delta-2}(A_{R})},
\end{align}
and using that
\begin{align*}
\partial_k\chi^2_R&=2R^{-1}\chi_R\partial_k\chi\left(\frac{x}{R}\right),\\
\partial_{lk}\chi^2_R&=2R^{-2}\chi_R\partial_{lk}\chi\left(\frac{x}{R}\right) + 2R^{-2}\partial_l\chi\left(\frac{x}{R}\right)\partial_k\chi\left(\frac{x}{R}\right),
\end{align*}
we find a constant $C=C(g)>0$, independent of $R$, such that:
\begin{align*}
\Vert v\Delta_g\chi^2_R\Vert_{L^{p}_{\delta-2}(A_{R})}&\leq C(g)( \Vert v\partial^2\chi^2_R\Vert_{L^{p}_{\delta-2}(A_{R})} + \Vert v\Gamma\partial\chi^2_R\Vert_{L^{p}_{\delta-2}(A_{R})}),\\
&\leq C'(g)( R^{-2}\Vert v\Vert_{L^{p}_{\delta-2}(A_{R})}  +  R^{-1}\Vert \partial\chi (R^{-1}\cdot)\Gamma \chi_Rv\Vert_{L^{p}_{\delta-2}(\mathbb{R}^n)}),\\
&\leq C''(n,p,q,g)( R^{-2}\Vert v\Vert_{L^{p}_{\delta-2}(A_{R})}  +  R^{-1}\Vert \partial\chi (R^{-1}\cdot)\Gamma\Vert_{W^{1,q}_{\tau-1}(\mathbb{R}^n\backslash\overline{B_1})} \Vert\chi_Rv\Vert_{W^{1,p}_{\delta-1}(\mathbb{R}^n\backslash\overline{B_1}))}),\\
&\leq C'''(n,p,q,g)\Big( R^{-2}\Vert v\Vert_{L^{p}_{\delta-2}(A_{R})}  +  R^{-1}\left( \Vert\chi_Rv\Vert_{L^{p}_{\delta-1}(B_{2R}\backslash\overline{B_1})} + \Vert \partial(\chi_Rv)\Vert_{L^{p}_{\delta-2}(B_{2R}\backslash\overline{B_1})}\right)\Big),\\
&\leq C'''(n,p,q,g)( R^{-2}\Vert v\Vert_{L^{p}_{\delta-2}(A_{R})}  +  R^{-1} \Vert\chi_Rv\Vert_{L^{p}_{\delta-1}(B_{2R}\backslash\overline{B_1})} + R^{-1}\Vert \chi_R\partial v\Vert_{L^{p}_{\delta-2}(B_{2R}\backslash\overline{B_1})}),
\end{align*}
where in the third inequality we used the multiplication property on $\mathbb{R}^n$, then the compact support of $\chi$ to localise, and then the factor $\Vert \partial\chi (R^{-1}\cdot)\Gamma\Vert_{W^{1,q}_{\tau-1}(\mathbb{R}^n\backslash\overline{B_1})}$ is estimated along the same lines as (\ref{GammaEsti.2}). Using these expressions back in (\ref{WalkerNirenbergEstimates.1}), we find a new constant $C=C(n,p,q,g)>0$ such that:
\begin{align*}
\big\Vert [\Delta_g,\chi^2_R]v \big\Vert_{L^{p}_{\delta-2}(A_{R})}&\leq C( R^{-1}\Vert \chi_R \nabla v\Vert_{L^{p}_{\delta-2}(B_{2R}\backslash\overline{B_1})}+ R^{-2}\Vert v\Vert_{L^{p}_{\delta-2}(B_{2R}\backslash\overline{B_1}))} +  R^{-1} \Vert\chi_Rv\Vert_{L^{p}_{\delta-1}(B_{2R}\backslash\overline{B_1})}),\\
&=C\big(  R^{-1}\Vert \chi_R\nabla v \sigma^{-\delta+2-\frac{n}{p}}\Vert_{L^{p}(B_{2R}\backslash\overline{B_1}))}+ R^{-2}\Vert v \sigma^{-\delta+2-\frac{n}{p}}\Vert_{L^{p}(B_{2R}\backslash\overline{B_1}))} \\
&+  R^{-1} \Vert\chi_Rv\sigma^{-\delta+1-\frac{n}{p}}\Vert_{L^{p}(B_{2R}\backslash\overline{B_1})}\big),\\
&\leq C\big(\Vert \chi_R\nabla v |x|^{-1}\sigma^{-\delta+2-\frac{n}{p}}\Vert_{L^{p}(B_{2R}\backslash\overline{B_1}))}+ \Vert v |x|^{-2}\sigma^{-\delta+2-\frac{n}{p}}\Vert_{L^{p}(B_{2R}\backslash\overline{B_1}))} ,\\
&+   \Vert v|x|^{-1}\sigma^{-\delta+1-\frac{n}{p}}\Vert_{L^{p}(B_{2R}\backslash\overline{B_1})}\big),\\
&\leq C'(n,p,q,g)\big( \Vert \chi_R\nabla v \sigma^{-\delta+1-\frac{n}{p}}\Vert_{L^{p}(B_{2R}\backslash\overline{B_1})}+\Vert v \sigma^{-\delta-\frac{n}{p}}\Vert_{L^{p}(B_{2R}\backslash\overline{B_1}))}\big) ,\\
&\leq C'(n,p,q,g)\big( \Vert \chi_R\nabla v \sigma^{1-\delta-\frac{n}{p}}\Vert_{L^{p}(B_{2R}\backslash\overline{B_1})}+\Vert v \Vert_{L^{p}_{\delta}(\mathbb{R}^n)}\big),
\end{align*}
We therefore find another constant $C=C(n,p,q,g)>0$ such that
\begin{align*}
\Vert \chi^2_Rv\Vert_{W^{2,p}_{\delta}(\mathbb{R}^n)}&\leq C( \Vert \chi^2_R\Delta_gv\Vert_{L^{p}_{\delta-2}(\mathbb{R}^n)} + \Vert \chi_R\partial v \sigma^{1-\delta-\frac{n}{p}}\Vert_{L^{p}(\mathbb{R}^n)}+ \Vert v\Vert_{L^{p}_{\delta}(\mathbb{R}^n)}),
\end{align*}
which gives us
\begin{align*}
\sum_{|\alpha|\leq 2}\Vert \chi^2_R\sigma^{|\alpha| -\delta -\frac{n}{p}} \partial^{\alpha} v \Vert_{L^{p}(\mathbb{R}^n)}&\leq C\Big( \Vert \chi^2_R\Delta_gv\Vert_{L^{p}_{\delta-2}(\mathbb{R}^n)} + \Vert \chi_R\sigma^{1-\delta-\frac{n}{p}}\partial v \Vert_{L^{p}(\mathbb{R}^n)}+ \Vert v\Vert_{L^{p}_{\delta}(\mathbb{R}^n)} \Big)
\end{align*}
We can estimate the first order term above using (\ref{NirenbergWalkerInterpolation}), so as to obtain
\begin{align*}
\sum_{|\alpha|\leq 2}\Vert \chi^2_R\sigma^{|\alpha| -\delta -\frac{n}{p}} \partial^{\alpha} v \Vert_{L^{p}(\mathbb{R}^n)}&\leq C\Big( \Vert \chi^2_R\Delta_gv\Vert_{L^{p}_{\delta-2}(\mathbb{R}^n)} + \epsilon\sum_{|\alpha|\leq 2}\Vert \chi^2_R\sigma^{|\alpha| -\delta -\frac{n}{p}} \partial^{\alpha} u \Vert_{L^{p}(\mathbb{R}^n)} +(1+ C_{\epsilon})\Vert u\Vert_{L^p_{\delta}(\mathbb{R}^n)}  \Big)
\end{align*}
Therefore, picking $\epsilon<\frac{1}{C(n,p,q,g)}$ in (\ref{NirenbergWalkerInterpolation}), we find that there is some other constant $C'>0$ independent of $R>1$ and $v$ such that
\begin{align*}
\sum_{|\alpha|\leq 2}\Vert \chi^2_R\sigma^{|\alpha| -\delta -\frac{n}{p}} \partial^{\alpha} v \Vert_{L^{p}(\mathbb{R}^n)}&\leq C'\Big( \Vert \chi^2_R\Delta_gv\Vert_{L^{p}_{\delta-2}(\mathbb{R}^n)}  + \Vert u\Vert_{L^p_{\delta}(\mathbb{R}^n)}  \Big)
\end{align*}
As long as $\Delta_gv\in L^{p}_{\delta-2}(\mathbb{R}^n)$, we can take the limit $R\rightarrow\infty$ in the above inequality to obtain $v\in W^{2,p}_{\delta}(\mathbb{R}^n)$, with the estimate:
\begin{align*}
\Vert v\Vert_{W^{2,p}_{\delta}(\mathbb{R}^n)}= \sum_{|\alpha|\leq 2}\Vert \sigma^{|\alpha| -\delta -\frac{n}{p}} \partial^{\alpha} v \Vert_{L^{p}(\mathbb{R}^n)}&\leq C'\Big( \Vert \Delta_gv\Vert_{L^{p}_{\delta-2}(\mathbb{R}^n)}  + \Vert u\Vert_{L^p_{\delta}(\mathbb{R}^n)}  \Big).
\end{align*}

Now recalling that $v=\eta u$, this already proves that $u\in W^{2,p}_{\delta}(M)$, and putting together the above estimate with the corresponding interior estimates for each $\eta_i u$ supported in the compact core, we find (\ref{BartnikDecayEstimate}).
\end{proof}

\medskip
\begin{cor}\label{CoroGeneralDecayBootstrap}
Let $(M^n,g)$ be a $C^{\infty}_{\tau}$-AE manifold with $\tau<0$. If $u\in L^{p}_{\delta}(M)$, $\delta\in\mathbb{R}$, $p>1$, and $\Delta_gu\in W^{k,p}_{\delta-2}(M)$, then $u\in W^{k+2,p}_{\delta}(M)$.
\end{cor}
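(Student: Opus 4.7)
The plan is to proceed by induction on $k$, using Theorem \ref{BartniksProp1.6} as the base case and a commutator argument in the end coordinates to propagate regularity through differentiation of the equation.

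For the base case $k=0$, since $g$ is smooth, I may freely take $q$ in Theorem \ref{BartniksProp1.6} as large as I please: choosing $q\geq \max(p,p',n)$ makes $q'\leq p$, so $u\in L^p_\delta \hookrightarrow L^p_{loc}\hookrightarrow L^{q'}_{loc}$, and trivially satisfies $\tfrac{1}{q}-\tfrac{1}{n}\leq \tfrac{1}{p}<\tfrac{1}{q'}+\tfrac{1}{n}$. Theorem \ref{BartniksProp1.6} then immediately yields $u\in W^{2,p}_\delta$, which is the $k=0$ claim.

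For the inductive step I assume the result for order $k-1$. Because $W^{k,p}_{\delta-2}\hookrightarrow W^{k-1,p}_{\delta-2}$, the inductive hypothesis applied directly to $u$ already gives $u\in W^{k+1,p}_\delta$; only the final derivative needs to be gained. On the compact core $\mathcal{K}$, $\Delta_g u\in W^{k,p}_{loc}$ with smooth coefficients, so classical interior elliptic regularity (see, e.g., \cite[Chapter 9]{GilbargTrudinger}) yields $u\in W^{k+2,p}_{loc}(\mathcal{K})$. In the end I work in the end coordinates $\{x^i\}_{i=1}^n$. For each $a=1,\ldots,n$, set $v_a\doteq \partial_a u$, which belongs to $W^{k,p}_{\delta-1}\hookrightarrow L^p_{\delta-1}$. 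A direct computation in these coordinates gives
\begin{align*}
\Delta_g v_a = \partial_a(\Delta_g u) + [\Delta_g,\partial_a]u, \qquad [\Delta_g,\partial_a]u = -(\partial_a g^{ij})\partial_{ij}u + \partial_a(g^{ij}\Gamma^l_{ij})\partial_l u.
\end{align*}
Since $\Delta_g u\in W^{k,p}_{\delta-2}$, the first term lies in $W^{k-1,p}_{\delta-3}$. The coefficients $\partial_a g^{ij}$ and $\partial_a(g^{ij}\Gamma^l_{ij})$ are smooth with decay rates $O(|x|^{\tau-1})$ and $O(|x|^{\tau-2})$ respectively, while $\partial^2 u\in W^{k-1,p}_{\delta-2}$ and $\partial u\in W^{k,p}_{\delta-1}$; weighted Sobolev multiplication (Theorem \ref{SobolevPorpsAE}) thus places $[\Delta_g,\partial_a]u$ in $W^{k-1,p}_{\delta-3+\tau}\subset W^{k-1,p}_{\delta-3}$, using $\tau<0$. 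Consequently $\Delta_g v_a\in W^{k-1,p}_{(\delta-1)-2}$.

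Applying the inductive hypothesis to $v_a$ with weight $\delta-1$ gives $v_a\in W^{k+1,p}_{\delta-1}$ on the end for every $a$, which combined with the interior control on $\mathcal{K}$ and the already established $L^p_\delta$ bound on $u$ amounts exactly to $u\in W^{k+2,p}_\delta$ via the definition of the weighted spaces from a partition of unity (\ref{GlobalWeightedAENorm}). The main obstacle, though modest, is the weight bookkeeping for the commutator together with checking that the intermediate regularity already produced by the inductive hypothesis is high enough to validate the distributional manipulations; this is ultimately cheap precisely because each extra derivative falling on $g-\delta$ improves the decay by a power of $|x|$ and the sign $\tau<0$ ensures these extra weights are absorbed.
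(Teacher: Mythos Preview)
Your proof is correct and follows essentially the same induction-plus-commutator strategy as the paper: base case via Theorem \ref{BartniksProp1.6}, then differentiate the equation in end coordinates, estimate $[\Delta_g,\partial_a]u$ using the $C^{\infty}_{\tau}$ decay of the coefficients, and apply the inductive hypothesis to $\partial_a u$ at weight $\delta-1$. The only organisational difference is that the paper multiplies by a cutoff $\eta$ (equal to one near infinity and zero on the compact core) and applies the inductive hypothesis to the globally defined function $\eta\partial_i u$, whereas you treat the compact core via interior regularity and then apply the inductive hypothesis to $v_a=\partial_a u$ ``on the end''. Strictly speaking the inductive hypothesis is a statement on $M$, so to invoke it for $v_a$ you should also localise with such a cutoff (the extra commutator terms $2\langle\nabla\eta,\nabla\partial_a u\rangle_g+\partial_a u\,\Delta_g\eta$ are compactly supported and hence harmless); this is implicit in your final partition-of-unity remark but worth making explicit.
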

\begin{proof}
The case $k=0$ is given by Theorem \ref{BartniksProp1.6}, and to establish the general case we work by induction. Assuming the claim holds for some $k\geq 0$ and that $\Delta_gu\in W^{k+1,p}_{\delta-2}$, we have a priori from the inductive hypothesis that $u\in W^{k+2,p}_{\delta}$. Considering a cut-off function $\eta$ equal to zero in the compact core of $M$ and to one in a neighbourhood of infinity, we have
\begin{align}\label{GeneralDecayBootstrap}
\Delta_g(\eta \partial_i u)&=\eta\Delta_g(\partial_i u) + 2\< \nabla\eta,\nabla(\partial_i u) \>_g + \partial_iu\Delta_g\eta.
%&=\eta \Delta(\partial_iu) + \eta (g^{ab}-\delta_{ab})\partial_{ab}(\eta u) - \eta g^{ab}\Gamma^l_{ab}\partial_iu+ 2\< \nabla\eta,\nabla(\partial_i u) \>_g + \partial_iu\Delta_g\eta.
\end{align}
Noticing that
\begin{align*}
\eta\Delta_g(\partial_i u)&=\eta\left(g^{ab}\partial_{ab}(\partial_iu) - g^{ab}\Gamma^l_{ab}\partial_{li}u \right),\\
&=\eta\left(\partial_i\left(g^{ab}\partial_{ab}u\right) - \partial_ig^{ab}\partial_{ab}u - \partial_i\left(g^{ab}\Gamma^l_{ab}\partial_{l}u \right) + g^{ab}\partial_i\Gamma^l_{ab}\partial_{l}u + \partial_ig^{ab}\Gamma^l_{ab}\partial_{l}u \right),\\
&=\eta\left(\partial_i\left(\Delta_g u\right) - \partial_ig^{ab}\partial_{ab}u + g^{ab}\partial_i\Gamma^l_{ab}\partial_{l}u + \partial_ig^{ab}\Gamma^l_{ab}\partial_{l}u \right).
\end{align*}
Since the last two terms in (\ref{GeneralDecayBootstrap}) are compactly supported and 
\begin{align*}
&\eta \partial_ig^{ab}\partial_{ab}u\in C^{\infty}_{\tau-1}\otimes W^{k,p}_{\delta-2}\hookrightarrow W^{k,p}_{\delta-3},\\
&\eta g^{ab}\partial_i\Gamma^l_{ab}\partial_lu\in L^{\infty}\otimes C^{\infty}_{\tau-2}\otimes W^{k+1,p}_{\delta-1}\hookrightarrow W^{k,p}_{\delta-3},\\
&\eta \partial_ig^{ab}\Gamma^l_{ab}\partial_{l}u\in C^{\infty}_{\tau-1}\otimes C^{\infty}_{\tau-1}\otimes W^{k+1,p}_{\delta-1}\hookrightarrow W^{k,p}_{\delta-3}
\end{align*}
Thus, $\Delta_g(\eta \partial_i u)-\eta \partial_i\left(\Delta_gu\right) \in W^{k,p}_{\delta-3}$, and $\Delta_gu\in W^{k+1,p}_{\delta-2}$. Then, the inductive hypothesis implies
\begin{align*}
\Delta_g(\eta \partial_i u)\in W^{k,p}_{\delta-3} \Longrightarrow \eta\partial_iu\in W^{k+2,p}_{\delta-1}.
\end{align*}
Therefore, we find
\begin{align*}
\Delta_gu\in W^{k+1,p}_{\delta-2}\Longrightarrow \eta u\in W^{k+2,p}_{\delta} \text{ and } \eta\partial_iu\in W^{k+2,p}_{\delta-1}\Longrightarrow \eta u\in W^{k+3,p}_{\delta},
\end{align*}
which proves the inductive step and general claim follows.
\end{proof}

\medskip
We shall now present semi-Fedholm properties for $\Delta_g$, but it is exactly at this point that the \emph{exceptional values} of the weight-parameter come into play crucially. Let recall that on an $n$-dimensional manifold they are given by the the real numbers  $\mathbb{Z}\backslash \{3-n,\cdots,-1\}$ and thus one says that $\delta$ is a non-exceptional weight if it does not lie in such a set of exceptional values. Furthermore, given $\delta\in \mathbb{R}$, one defines $k^{-}(\delta)$ to be the maximum exceptional value such that $k^{-}(\delta)\leq \delta$.

\begin{theo}\label{BartniksSemiFredThm}
Let $(M^n,g)$ be a $W^{2,q}_{\tau}$-AE manifold with $q>\frac{n}{2}$ and $\tau<0$. If $1<p\leq q$ and $\delta$ is non-exceptional, then the map $\Delta_g:W^{2,p}_{\delta}\to L^p_{\delta-2}$ is semi-Fredholm.
\end{theo}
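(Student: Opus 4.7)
The proof will proceed via the standard criterion (e.g., \cite[Proposition 19.1.3]{Hormander3}) that a bounded linear operator $T: X \to Y$ between Banach spaces is semi-Fredholm iff every bounded sequence $\{u_k\} \subset X$ with $\{Tu_k\}$ Cauchy in $Y$ admits a subsequence Cauchy in $X$. Applying Theorem \ref{WeightedEstimatesPreliminarThm} to the difference $u_k - u_j$ gives
\begin{align*}
\Vert u_k - u_j\Vert_{W^{2,p}_\delta} \leq C\left(\Vert \Delta_g(u_k-u_j)\Vert_{L^p_{\delta-2}} + \Vert u_k - u_j\Vert_{L^p_\delta}\right),
\end{align*}
so the task reduces to: given $\{u_k\}$ bounded in $W^{2,p}_\delta$ with $\{\Delta_g u_k\}$ Cauchy in $L^p_{\delta-2}$, extract an $L^p_\delta$-Cauchy subsequence.

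The heart of the argument is to upgrade the a priori estimate to the improved form
\begin{align*}
\Vert u\Vert_{W^{2,p}_\delta(M)} \leq C\left(\Vert \Delta_g u\Vert_{L^p_{\delta-2}(M)} + \Vert u\Vert_{L^p(B_R)}\right)
\end{align*}
for a sufficiently large coordinate ball $B_R$; this is where the non-exceptional condition on $\delta$ enters essentially. The plan is to take a cut-off $\eta$ supported on the end and identically one outside a compact core, and view $v \doteq \eta u$ as an element of $W^{2,p}_\delta(\mathbb{R}^n)$. By McOwen's classical theorem (Corollary \ref{LapFredCoro1}, as used in Lemma \ref{ImprovedDecayBartnik}), the Euclidean Laplacian $\Delta: W^{2,p}_\delta(\mathbb{R}^n) \to L^p_{\delta-2}(\mathbb{R}^n)$ is Fredholm for non-exceptional $\delta$, and thus obeys a Fredholm-type estimate where the $L^p_\delta$-term is replaced by a finite-rank projection onto the kernel, which in turn is controlled by $\Vert v\Vert_{L^p(B_R)}$ (enlarging $R$ if necessary, since the kernel elements are smooth and polynomially bounded). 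Writing
\begin{align*}
\Delta v = \Delta_g v + (\delta^{ij} - g^{ij})\partial_{ij} v - g^{ij}\Gamma^l_{ij}\partial_l v,
\end{align*}
the two AE correction terms can be estimated exactly as in the proof of Theorem \ref{WeightedEstimatesPreliminarThm}: the quadratic term contributes $O(R^\tau)$ in operator norm from $W^{2,p}_\delta \to L^p_{\delta-2}$ via $g^{-1}-\delta \in C^0_\tau$, while the $\Gamma$-term contributes $O(R^{\tau-\gamma})$ for any $\gamma \in (\tau,0)$ via the weighted multiplication $W^{1,q}_{\tau-1}\otimes W^{1,p}_{\delta-1}\hookrightarrow L^p_{\delta-2}$ from Theorem \ref{SobolevPorpsAE}. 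Choosing $R$ large enough makes these corrections absorbable on the left-hand side, and standard interior elliptic estimates on the compact core handle the remaining piece $(1-\eta)u$.

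Once the improved estimate is in place, the conclusion is immediate by Rellich--Kondrachov: the compact embedding $W^{2,p}(B_R) \hookrightarrow L^p(B_R)$ yields a subsequence of $\{u_k\}$ Cauchy in $L^p(B_R)$, and the improved estimate applied to pairwise differences promotes this to Cauchy convergence in $W^{2,p}_\delta(M)$. The main obstacle is the absorption in the upgrade step, specifically with the $\Gamma$-type lower-order term. Although lower order in a classical sense, it is only controlled via the low-regularity weighted multiplication property and its smallness must be extracted by paying a bit of decay (choosing $\tau < \gamma < 0$), mirroring the delicate argument already needed in Theorem \ref{WeightedEstimatesPreliminarThm}. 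A secondary subtlety is identifying the finite-rank projection arising from the Euclidean Fredholm estimate with a term bounded by $\Vert v\Vert_{L^p(B_R)}$; since the cokernel elements of $\Delta$ on $\mathbb{R}^n$ live in suitable dual weighted spaces rather than in $L^1_{loc}$, the pairing requires careful interpretation, but the smooth polynomial structure of these elements makes this technical rather than conceptual.
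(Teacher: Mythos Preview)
Your approach is correct and follows the same route as the paper's (which largely defers to \cite[Theorem~1.10]{BartnikMass}): both establish the broken-scale estimate $\Vert u\Vert_{W^{2,p}_\delta}\le C(\Vert\Delta_g u\Vert_{L^p_{\delta-2}}+\Vert u\Vert_{L^p(\Omega_R)})$ via a sharp Euclidean estimate on the far end together with absorption of the AE correction terms for large $R$, and then conclude semi-Fredholmness from Rellich compactness on the bounded region. One caution: your citation of Corollary~\ref{LapFredCoro1} is circular, since that corollary itself invokes Theorem~\ref{BartniksSemiFredThm}---the correct independent reference for the Euclidean Fredholm property is McOwen's original result \cite{McOwen}; moreover, since $v=\eta u$ is supported away from a fixed ball you may choose the separating functionals for the (harmonic-polynomial) kernel with disjoint support from $v$, so the finite-rank projection term actually vanishes outright and your ``secondary subtlety'' about the pairing disappears.
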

\begin{proof}
The proof in this case follows from the arguments of \cite[Theorem 1.10]{BartnikMass}. First, let $S_R\hookrightarrow M$ be a large sphere contained in the end of $M$, define $\Omega_R$ to be the interior domain such that $\partial \Omega_R=S_R$ and $A_R\doteq \Omega_{2R}\backslash\overline{\Omega_R}$. Then use a cut-off function $\chi_R$ such that $0\leq \chi_R\leq 1$ satisfies $\chi_R(x)=1$ on $\overline{\Omega_R}$ while $\mathrm{supp}(\chi_R)\subset \Omega_{2R}$. Let $\Vert \cdot \Vert_{Op,R}$ denote the operator norm for maps $W^{2,p}_{\delta}(M)\to L^p_{\delta-2}(M)$ restricted to functions with support in $M\backslash\overline{\Omega_R}$, so that multiplication properties guarantee that $\Vert \Delta_g-\Delta \Vert_{Op,R}=o(1)$ as $R\rightarrow\infty$.\footnote{This is a difference between our proof and the one in \cite[Theorem 1.10]{BartnikMass}: there, the functional hypotheses on the coefficients are weaker. In particular, there the author admits operators with zero order coefficients in $L^{r}_{\tau-2}$ for $r>\frac{n}{2}$. Nevertheless, our multiplication properties do not guarantee $L^r_{\tau-2}(\mathbb{R}^n)\otimes W^{2,p}_{\delta}(\mathbb{R}^n)\hookrightarrow L^p_{\delta-2}(\mathbb{R}^n)$, even for $p>\frac{n}{2}$, unless one assumes a priori $r\geq p>\frac{n}{2}$ as well.} Then, one can follow the proof of \cite[Theorem 1.10]{BartnikMass} between equations (1.27) and (1.28) to obtain
\begin{align*}
\Vert u_{\infty}\Vert_{W^{2,p}_{\delta}(M)}\leq C\left( \Vert \Delta_gu \Vert_{L^p_{\delta-2}(M)} + \Vert u \Vert_{W^{1,p}_{\delta}(A_R)} \right),
\end{align*}
where $u_{\infty}=(1-\chi_R)u$, although one subtle clarification is in order. In \cite[Theorem 1.10]{BartnikMass}, the first estimate after equation (1.27) appeals to a sharp estimate obtained in \cite[Theorem 1.7, equation (1.22)]{BartnikMass}. This last sharp estimate is obtained (among other things) appealing to \cite[Proposition 1.6, equation (1.19)]{BartnikMass},\footnote{See \cite[Page 669]{BartnikMass}.} which (as stated within the proposition) we have commented seems to be out of the scope of standard regularity results, motivating all this revision in the first place. Notice nonetheless, that Theorem \ref{BartniksProp1.6} is our self-contained version of \cite[Proposition 1.6]{BartnikMass} with the estimate (\ref{BartnikDecayEstimate}) replacing equation (1.19) in \cite{BartnikMass}. In such a case, when Theorem \ref{BartniksProp1.6} is applied to the Euclidean Laplacian, then the exponent $p$ can be taken in the interval $1<p<\infty$, which would agree with \cite[Proposition 1.6]{BartnikMass} in this special case, which is the one used within the proof of \cite[Theorem 1.7, equation (1.22)]{BartnikMass}, and thus the argument in between equations (1.27) and (1.28) within the proof of \cite[Theorem 1.10]{BartnikMass} is sound.

Then, we can apply estimate (\ref{BartnikDecayEstimate}) to $u_0\doteq \chi_Ru\in W^{2,p}_{\delta}$ so as to obtain
\begin{align*}
\Vert u_0\Vert_{W^{2,p}_{\delta}(M)}&\leq C\left( \Vert \Delta_gu_0 \Vert_{L^p_{\delta-2}(M)} + \Vert u_0 \Vert_{L^{p}_{\delta-2}(\Omega_{2R})} \right),\\
&\leq C'\left( \Vert \chi_R\Delta_gu \Vert_{L^p_{\delta-2}(M)} + \Vert u \Vert_{W^{1,p}_{\delta}(A_R)} + \Vert \chi_Ru \Vert_{L^{p}_{\delta-2}(\Omega_{2R})} \right).
\end{align*}
Thus, for some other constant $C>0$ independent of $u$, it follows that:
\begin{align*}
\Vert u\Vert_{W^{2,p}_{\delta}(M)}\leq C\left( \Vert \Delta_gu \Vert_{L^p_{\delta-2}(M)} + \Vert u \Vert_{W^{1,p}_{\delta}(A_R)} + \Vert u \Vert_{L^{p}_{\delta-2}(\Omega_{2R})} \right),
\end{align*}
Having already fixed $R$ in the above procedure, the above estimate implies the following one:
\begin{align}\label{BartnikBrokenScale.1}
\Vert u\Vert_{W^{2,p}_{\delta}(M)}\leq C'\left( \Vert \Delta_gu \Vert_{L^p_{\delta-2}(M)} + \Vert u \Vert_{W^{1,p}(\Omega_{2R})}  \right),
\end{align}
for a constant $C'$ depending on such a choice of $R$. We may then apply the interpolation inequality
\begin{align*}
 \Vert u \Vert_{W^{1,p}(\Omega_{2R})}&\leq \epsilon\Vert u \Vert_{W^{2,p}(\Omega_{2R})} + C_{\epsilon}\Vert u \Vert_{L^{p}(\Omega_{2R})}\leq \epsilon C''(R)\Vert u \Vert_{W^{2,p}_{\delta}(\Omega_{2R})}+ C_{\epsilon}\Vert u \Vert_{L^{p}(\Omega_{2R})},\\
&\leq \epsilon C''(R)\Vert u \Vert_{W^{2,p}_{\delta}(M)}+ C_{\epsilon}\Vert u \Vert_{L^{p}(\Omega_{2R})},
\end{align*}
%$\Vert u \Vert_{W^{1,p}_{\delta-1}(A_R)}\leq \epsilon\Vert u \Vert_{W^{2,p}_{\delta}(M)} + \Vert u \Vert_{L^{p}_{\delta-2}(M)}$ 
and absorb the second order term into the left hand side of (\ref{BartnikBrokenScale.1}) by picking $\epsilon$ small enough, finally obtaining:
\begin{align*}
\Vert u\Vert_{W^{2,p}_{\delta}}\leq C\left( \Vert \Delta_gu \Vert_{L^p_{\delta-2}(M)} + \Vert u \Vert_{L^{p}(\Omega_{2R})} \right),
\end{align*}
for sufficiently large $R$. This \emph{broken scale estimate} allows one to continue as in \cite[Theorem 1.10]{BartnikMass} to establish the semi-Fredholm property via standard functional analytic methods.
\end{proof}

\medskip
We now want to present Fredholm properties of $\Delta_g$, and for that one needs a more explicit characterisation of its adjoint map. With this in mind, let us recall:
\begin{align*}
W^{-k,p'}_{-\delta-n}(M)\doteq \left( W^{k,p}_{\delta}(M)\right)'.
\end{align*}

\begin{prop}\label{LapBelDualWeighted}
Let $(M^n,g)$ be a $W^{2,q}_{\tau}$-AE manifold, with $q>\frac{n}{2}$ and $\tau<0$. Letting $1<p\leq q$ and $\delta\in \mathbb{R}$, the Laplace operator extends by duality to a bounded map $\Delta_g:L^{p'}_{2-\delta-n}(M)\to W^{-2,p'}_{-\delta-n}(M)$ and, moreover, $\Delta^{*}_g\vert_{L^{p'}_{2-\delta-n}(M)}=\sqrt{\mathrm{det}(g)}\Delta_g\vert_{L^{p'}_{2-\delta-n}(M)}$.
%\begin{alignat*}{4}
%\Delta_g:L^{p'}_{2-\delta-n}(M)&\to W^{-2,p'}_{-\delta-n}(M).
%u&\mapsto \Delta_g(u): &&W_{\delta}^{2,p}(M) &&\to && \mathbb{R},\\
%&&v &&\mapsto &&(\Delta_gu)(v)=u(\Delta_gv)
%\end{alignat*}
%and therefore we find that $\Delta^{*}_g\vert_{L^{p'}_{2-\delta-n}(M)}=\Delta_g\vert_{L^{p'}_{2-\delta-n}(M)}$.
\end{prop}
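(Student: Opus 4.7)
The plan is to establish the statement in two complementary steps. First, I will derive the boundedness of the extension by a functional-analytic duality argument starting from the already-established continuity. Second, I will identify the resulting extension with the Banach-adjoint using integration by parts in the divergence form of the operator, and finally pass from smooth functions to general $v\in L^{p'}_{2-\delta-n}$ by a density argument.

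For the boundedness part, I start from the continuity $\Delta_g:W^{2,p}_{\delta}(M)\to L^{p}_{\delta-2}(M)$, which was established at the opening of this appendix via the multiplication property of Theorem \ref{SobolevPorpsAE} combined with the $\mathcal{L}^2(W^{2,q})$-structure of $\Delta_g$ provided by Lemma \ref{ContinuityPropsGeneralOps}. Standard Banach-space duality then yields a bounded adjoint $\Delta_g^*:(L^p_{\delta-2})'\to (W^{2,p}_{\delta})'$, and invoking the weighted duality identifications $W^{-k,p'}_{-\delta-n}\cong (W^{k,p}_{\delta})'$ rewrites this as a bounded linear map $L^{p'}_{2-\delta-n}(M)\to W^{-2,p'}_{-\delta-n}(M)$, which is the first claim.

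The key observation for the identification $\Delta_g^*|_{L^{p'}_{2-\delta-n}}=\sqrt{\det g}\,\Delta_g|_{L^{p'}_{2-\delta-n}}$ is the local divergence-form rewriting
\begin{align*}
\sqrt{\det g}\,\Delta_g u=\partial_i\big(\sqrt{\det g}\,g^{ij}\partial_j u\big),
\end{align*}
already used in (\ref{TensorLapLoc}), in which the coefficients $\sqrt{\det g}\,g^{ij}$ lie in $W^{1,q}_{\mathrm{loc}}$. For smooth compactly supported $u,v$ supported in a common coordinate chart, two standard integrations by parts then give the symmetry
\begin{align*}
\int_M \big(\sqrt{\det g}\,\Delta_g u\big)\, v\,dx=\int_M u\, \big(\sqrt{\det g}\,\Delta_g v\big)\,dx,
\end{align*}
which, interpreted with the appropriate pairing between the weighted spaces (the natural $L^2(dV_g)$-pairing, equivalent up to the bounded factor $\sqrt{\det g}$ to the Euclidean pairing used to identify the duals), yields precisely the adjointness identity $\langle \Delta_g u,v\rangle=\langle u,\sqrt{\det g}\,\Delta_g v\rangle$ on smooth test functions. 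A partition-of-unity argument extends this local identity globally.

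The transition from the smooth case to arbitrary $v\in L^{p'}_{2-\delta-n}(M)$ then proceeds by approximating $v$ by a sequence $\{v_k\}\subset C^\infty_0(M)$ in $L^{p'}_{2-\delta-n}(M)$ (density of $C^\infty_0$ in these weighted spaces was noted after Theorem \ref{SobolevPorpsAE}) and passing to the limit in the adjointness identity, using continuity of both $\Delta_g^*$ (from Step 1) and of $\sqrt{\det g}\,\Delta_g$ understood distributionally (which follows from the same continuity arguments applied to the divergence-form expression, with coefficients viewed as multipliers). The principal technical subtlety I anticipate is the careful bookkeeping of the duality convention between the weighted Sobolev spaces, since the $\sqrt{\det g}$ appearing in the final formula is precisely the Radon-Nikodym factor reconciling the Euclidean pairing $\int u v\,dx$ used to define the duals with the Riemannian pairing $\int u v\,dV_g$ which makes $\Delta_g$ formally self-adjoint via the divergence structure.
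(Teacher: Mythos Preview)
Your Step 1 does not establish the first claim. The boundedness of the abstract Banach adjoint $\Delta_g^*$ is immediate, but the proposition's first assertion is that the \emph{distributional} Laplacian---acting on $u\in L^{p'}_{2-\delta-n}$ via the local formula $\frac{1}{\sqrt{\det g}}\partial_i(\sqrt{\det g}\,g^{ij}\partial_j u)$ with each derivative and each coefficient-multiplication interpreted by duality---is bounded into $W^{-2,p'}_{-\delta-n}$. A priori $\Delta_g^*$ and this distributional $\Delta_g$ are two distinct operators; the second assertion $\Delta_g^*=\sqrt{\det g}\,\Delta_g$ is precisely what relates them, and it is this relation that Corollary \ref{LapFredCoro1} subsequently uses to transfer kernel information. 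So declaring the adjoint boundedness to be ``the first claim'' conflates the two objects, and then your density argument becomes structurally circular: passing to the limit requires continuity of the distributional $\sqrt{\det g}\,\Delta_g$ on $L^{p'}_{2-\delta-n}$, which you have not yet proved but only gestured at in the parenthetical ``follows from the same continuity arguments applied to the divergence-form expression''.

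The paper takes that parenthetical as the actual substance of the first part. It factors $\Delta_g$ locally as $\partial_j:L^{p'}_{2-\delta-n}\to W^{-1,p'}_{1-\delta-n}$, followed by multiplication by $\sqrt{\det g}\,g^{ij}\in W^{2,q}_\tau$ (bounded on $W^{-1,p'}_{1-\delta-n}$ because $M_f^*=M_f$ and the weighted multiplication property of Theorem \ref{SobolevPorpsAE} handles $W^{2,q}_\tau\otimes W^{1,p}_{\delta-1}\hookrightarrow W^{1,p}_{\delta-1}$ on the predual), then $\partial_i:W^{-1,p'}_{1-\delta-n}\to W^{-2,p'}_{-\delta-n}$, and finally multiplication by $\frac{1}{\sqrt{\det g}}$ on $W^{-2,p'}_{-\delta-n}$. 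With each piece bounded, the adjoint identity is read off directly for general $u\in L^{p'}_{2-\delta-n}$ via the same chain, with no density step needed. Your route is salvageable if you expand the parenthetical into this multiplier-boundedness argument and reorder the logic so that the boundedness of the distributional extension is proved before (or simultaneously with) the adjoint identification, rather than assumed in the limit.
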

\begin{proof}
Let $U$ denote either a bounded coordinate patch in $M$ or the domain of an asymptotic coordinate chart on $\mathbb{R}^n\backslash\overline{B_1(0)}$, and in either case associate coordinates $\{x^i\}_{i=1}^n$, where we write $\Delta_gu=\frac{1}{\sqrt{\mathrm{det}(g)}}\partial_i\left( \sqrt{\mathrm{det}(g)}g^{ij}\partial_ju\right)$. By duality, we have that
\begin{align*}
\partial_i=(-\partial_i)^{*}:L^{p'}_{2-\delta-n}(U)&\to W^{-1,p'}_{1-\delta-n}(U),\\
\partial_i:W^{-1,p'}_{1-\delta-n}(U)&\to W^{-2,p'}_{-\delta-n}(U)
\end{align*}
are bounded maps. Similarly, given a function $f\in W^{2,q}_{\tau}(U)$, the multiplication map
\begin{align*}
M_f:W^{-1,p'}_{1-\delta-n}(U)&\to W^{-1,p'}_{1-\delta-n}(U),
\end{align*}
acting via $(M_fu)(v)=u(fv)$ for all $v\in \overset{\circ}{W}{}^{1,p}_{\delta-1}(U)$, will be bounded provided that $M_f$ is bounded on $\overset{\circ}{W}{}^{1,p}_{\delta-1}(U)$.\footnote{$\overset{\circ}{W}{}^{k,p}_{\delta}(U)$ denotes the closure of $C^{\infty}_0(U)$ in  the $\overset{\circ}{W}{}^{k,p}_{\delta}$-norm.} Using the multiplication property of Theorem \ref{SobolevPorpsAE}, this holds as long as $1<p\leq q$, $q>\frac{n}{2}$ and $\tau<0$, which hold by hypothesis. Therefore, we see that the map
\begin{align*}
L^{p'}_{2-\delta-n}(U)&\to W^{-1,p'}_{1-\delta-n}(U),\\
u&\mapsto \sqrt{\mathrm{det}(g)}g^{ij}\partial_ju,
\end{align*}
is well-defined and bounded, and then 
\begin{align*}
L^{p'}_{2-\delta-n}(U)&\to W^{-2,p'}_{-\delta-n}(U),\\
u&\mapsto \partial_i\left(\sqrt{\mathrm{det}(g)}g^{ij}\partial_ju\right),
\end{align*}
is again well-defined and bounded. Finally, we analyse the case of the multiplication map $M_f$ acting on $W^{-2,p'}_{-\delta-n}(U)$, which by duality will be a bounded map with range in $W^{-2,p'}_{-\delta-n}(U)$ iff $M_f:\overset{\circ}{W}{}^{2,p}_{\delta}(U)\to \overset{\circ}{W}{}^{2,p}_{\delta}(U)$ is a bounded map. Once more from Theorem \ref{SobolevPorpsAE}, this holds as long as $1<p\leq q$, $q>\frac{n}{2}$ and $\tau<0$. These statements and a partition of unity argument then show that $\Delta_g:L^{p'}_{2-\delta-n}(M)\to W^{-2,p'}_{\delta-n}(M)$ is a bounded map. 

Let us then appeal to the Riemannian measure induced by $g$ to identify $L^{p'}_{2-\delta-n}(M)\cong \left(L^{p}_{\delta-2}(M)\right)'$ via the $L^2(M,dV_g)$ inner-product:
\begin{align*}
u(v)=\int_Muvd V_g, \: u\in L^{p'}_{2-\delta-n}(M) \text{ and } v\in L^{p}_{\delta-2}(M).
\end{align*}
Then, take a coordinate cover of the form $\{U_{\alpha}\}_{\alpha=1}^{N+1}$, with $\{U_{\alpha}\}_{\alpha=1}^{N}$ covering he compact core of $M$ and $U_{N+1}\cong \mathbb{R}^n\backslash\overline{B_1(0)}$, and then consider a partition of unity $\{\eta_{\alpha}\}_{\alpha=1}^{N+1}$ subordinate to it, and decompose any $\phi\in W^{2,p}_{\delta}(M)$ via $\phi=\sum_{\alpha}\eta_{\alpha}\phi$. Given $u\in L^{p'}_{2-\delta-n}(M)$, it then follows that
\begin{align*}
[u](\Delta_g\phi)&=\sum_{\alpha}[u](\Delta_g\phi_{\alpha})=\sum_{\alpha}\langle u,\Delta_g\phi_{\alpha}\rangle_{L^2(M,dV_g)}=\sum_{\alpha}\langle \sqrt{\mathrm{det}(g)}u,\Delta_g\phi_{\alpha}\rangle_{(U_{\alpha},\delta)},\\
&=\sum_{\alpha}\langle \partial_i\left( \sqrt{\mathrm{det}(g)}g^{ij}\partial_ju\right),\phi_{\alpha})\rangle_{(U_{\alpha},\delta)}=\sum_{\alpha}\langle \sqrt{\mathrm{det}(g)}\Delta_gu,\phi_{\alpha})\rangle_{(U_{\alpha},\delta)},\\
&=(\sqrt{\mathrm{det}(g)}\Delta_gu)(\phi),
\end{align*}
which proves that $\sqrt{\mathrm{det}(g)}\Delta_g\vert_{L^{p'}_{2-\delta-n}(M)}=\Delta^{*}_g\vert_{L^{p'}_{2-\delta-n}(M)}$, which is a bounded map.
\end{proof}

Using the above results, we find that:
\begin{cor}\label{LapFredCoro1}
Let $(M^n,g)$ be a $C^{\infty}_{\tau}$-AE manifold with $\tau<0$. If $1< p< \infty$ and $\delta$ is non-exceptional, then the map $\Delta_g:W^{2,p}_{\delta}(M)\to L^p_{\delta-2}(M)$ is Fredholm.
\end{cor}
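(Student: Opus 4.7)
The plan is to upgrade the semi-Fredholm statement of Theorem \ref{BartniksSemiFredThm} to a full Fredholm statement by controlling the cokernel via a duality argument combined with the regularity theory developed in Theorem \ref{BartniksProp1.6}. First, since $g$ is smooth, for any $1<p<\infty$ I would choose $q\geq p$ as large as I wish in the hypotheses of Theorem \ref{BartniksSemiFredThm}, which directly yields that $\Delta_g:W^{2,p}_{\delta}(M)\to L^p_{\delta-2}(M)$ has closed image and finite-dimensional kernel, via the broken scale estimate and the compact embedding $W^{2,p}_{\delta}\hookrightarrow L^p(\Omega_{2R})$. It thus remains only to show that the cokernel $L^p_{\delta-2}(M)/\Img(\Delta_g)$ is finite-dimensional.

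Because the image is closed, the cokernel is canonically isomorphic to the annihilator of $\Img(\Delta_g)$ inside the dual space $(L^p_{\delta-2}(M))'=L^{p'}_{2-\delta-n}(M)$, which in turn coincides with $\Ker(\Delta_g^{*})$ where $\Delta_g^{*}:L^{p'}_{2-\delta-n}(M)\to W^{-2,p'}_{-\delta-n}(M)$ is the adjoint. By Proposition \ref{LapBelDualWeighted}, $\Delta_g^{*}|_{L^{p'}_{2-\delta-n}}=\sqrt{\mathrm{det}(g)}\,\Delta_g|_{L^{p'}_{2-\delta-n}}$, and since $\sqrt{\mathrm{det}(g)}$ is a smooth strictly positive function, this kernel is precisely the set of $u\in L^{p'}_{2-\delta-n}(M)$ satisfying $\Delta_g u=0$ distributionally. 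The key elementary observation that makes the duality work is that the exceptional set $\{3-n,\dots,-1\}$ is preserved by the involution $\delta\mapsto 2-\delta-n$, so $\delta$ non-exceptional forces $2-\delta-n$ non-exceptional as well.

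To show this kernel is finite-dimensional, I would first bootstrap regularity of its elements and then apply Theorem \ref{BartniksSemiFredThm} in the dual setting. Given $u\in L^{p'}_{2-\delta-n}(M)$ with $\Delta_g u=0$, I can apply Theorem \ref{BartniksProp1.6} with exponent $p'$ and weight $2-\delta-n$: since $g$ is smooth I am free to pick $q$ large enough that both $1<p'\leq q$ and $\tfrac{1}{q}-\tfrac{1}{n}\leq \tfrac{1}{p'}<\tfrac{1}{q'}+\tfrac{1}{n}$ hold and $u\in L^{p'}_{2-\delta-n}\subset L^{q'}_{loc}$, while trivially $\Delta_g u=0\in L^{p'}_{-\delta-n}$. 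The theorem then upgrades $u$ to $W^{2,p'}_{2-\delta-n}(M)$, so $\Ker(\Delta_g^{*})$ equals $\Ker\bigl(\Delta_g|_{W^{2,p'}_{2-\delta-n}}\bigr)$. A second invocation of Theorem \ref{BartniksSemiFredThm} — now with exponent $p'$ and the non-exceptional weight $2-\delta-n$ — guarantees that this latter kernel is finite-dimensional, completing the proof.

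The main delicacy I anticipate is bookkeeping around the duality identification: verifying that the annihilator interpretation of the cokernel really yields the kernel of the distributional operator $\sqrt{\mathrm{det}(g)}\Delta_g$ on $L^{p'}_{2-\delta-n}$, and that dropping the positive multiplicative factor is legitimate when passing to the regularity statement for $\Delta_g u=0$. Once that is in place, the symmetry of the exceptional set and the smoothness of $g$ make the rest of the argument essentially mechanical.
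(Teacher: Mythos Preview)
Your proposal is correct and follows essentially the same route as the paper: semi-Fredholmness from Theorem \ref{BartniksSemiFredThm}, identification of the cokernel with $\Ker(\Delta_g|_{L^{p'}_{2-\delta-n}})$ via Proposition \ref{LapBelDualWeighted}, a regularity bootstrap by Theorem \ref{BartniksProp1.6} into $W^{2,p'}_{2-\delta-n}$, and a second application of Theorem \ref{BartniksSemiFredThm} at the dual weight. One small terminological slip: the set $\{3-n,\dots,-1\}$ consists of the \emph{non}-exceptional integers (the exceptional values are $\mathbb{Z}\setminus\{3-n,\dots,-1\}$), but your actual claim---that $\delta$ non-exceptional implies $2-\delta-n$ non-exceptional---is correct and is exactly the symmetry needed.
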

\begin{proof}
We know this map is semi-Fredholm, so we need only establish that $\mathrm{Ker}(\Delta_g^{*}:L^{p'}_{2-\delta-n}(M)\to W^{-2,p'}_{-\delta-n}(M))$ is finite dimensional. Proposition \ref{LapBelDualWeighted} guarantees that 
\begin{align*}
u\in \Ker\left(\Delta_g^{*}|_{L^{p'}_{2-\delta-n}(M)}\right) \Longleftrightarrow u\in \Ker\left(\Delta_g|_{L^{p'}_{2-\delta-n}(M)}\right),
\end{align*}
thus, Theorem \ref{BartniksProp1.6} guarantees that $u\in \mathrm{Ker}(\Delta_g:L^{p'}_{2-\delta-n}\to W^{-2,p'}_{-\delta-n})\subset W^{2,p'}_{2-n-\delta}$. Then, Theorem \ref{BartniksSemiFredThm} shows that $\mathrm{Ker}(\Delta_g\vert_{ W^{2,p'}_{2-n-\delta}})=\mathrm{Ker}(\Delta_g\vert_{L^{p'}_{2-\delta-n}})$ is finite dimensional. That is,
\begin{align*}
\mathrm{dim}(\Ker(\Delta_g^{*}|_{L^{p'}_{2-\delta-n}(M)}))=\mathrm{dim}(\Ker(\Delta_g|_{W^{2,p'}_{2-\delta-n}(M)}))<\infty,
\end{align*}
which shows that $\Delta_g:W^{2,p}_{\delta}(M)\to L^p_{\delta-2}(M)$ is Fredholm.
%But Proposition \ref{BartniksProp1.6} shows that $\Ker(\Delta_g|_{W^{2,r}_{\sigma}(M)})$ is independent of $1<r<\infty$ and $\sigma<0$, and thus $\Ker(\Delta_g|_{W^{2,p'}_{2-\delta-n}(M)})=\Ker(\Delta_g|_{W^{2,p}_{\delta}(M)})$
\end{proof}

%\medskip

%We can use the above results to prove the following:

%\bigskip

%Using the above theorem and the fact that $\mathrm{Ker}(\Delta_g^{*}\vert_{L^p_{\delta}})\subset W^{2,p}_{\delta}$ for all $q\geq p\geq q'$, we find that:
Finally, by approximation arguments we find:

\begin{cor}\label{WeightedFredholmCoro}
Let $(M^n,g)$ be a $W^{2,q}_{\tau}$-AE manifold with $q>\frac{n}{2}$ and $\tau<0$. If $1< p\leq q$ and $\delta$ is non-exceptional, then the map $\Delta_g:W^{2,p}_{\delta}\to L^p_{\delta-2}$ is Fredholm.
\end{cor}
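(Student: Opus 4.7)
The plan is to deduce this from the smooth case (Corollary \ref{LapFredCoro1}) via an approximation argument together with stability of the Fredholm index under small perturbations in operator norm. By Theorem \ref{BartniksSemiFredThm} the map $\Delta_g\colon W^{2,p}_{\delta}\to L^{p}_{\delta-2}$ is already known to be semi-Fredholm under our hypotheses, so the content of the statement reduces to showing that its cokernel is finite dimensional, and in fact that its index equals zero.

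First I would construct a sequence $\{g_k\}$ of smoothly AE metrics with $g_k\to g$ in $W^{2,q}_{\tau}$. Since $g-\Phi^{*}e\in W^{2,q}_{\tau}$ on the end and $C^{\infty}_0$ is dense in $W^{2,q}_{\tau}$, one can mollify $g-\Phi^{*}e$ to produce smooth symmetric $(0,2)$-tensors that are compactly supported perturbations of the Euclidean metric on the end, and similarly mollify $g$ on the compact core using a partition of unity. For $k$ large the $C^{0}$-convergence (coming from $W^{2,q}\hookrightarrow C^0$ when $q>n/2$) guarantees that $g_k$ is positive definite, so $g_k$ is a smoothly AE metric. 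By Corollary \ref{LapFredCoro1} each $\Delta_{g_k}\colon W^{2,p}_{\delta}\to L^{p}_{\delta-2}$ is Fredholm, and a further application of the same corollary combined with the duality characterization of Proposition \ref{LapBelDualWeighted} (or a direct argument via the smooth $L^2$-theory) yields $\mathrm{index}(\Delta_{g_k})=0$.

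Second, I would establish operator-norm convergence $\|\Delta_g-\Delta_{g_k}\|_{\mathrm{Op}(W^{2,p}_\delta,L^{p}_{\delta-2})}\to 0$. Writing locally
\begin{equation*}
(\Delta_g-\Delta_{g_k})u \;=\; (g^{ij}-g_k^{ij})\partial_{ij}u \;-\; \bigl(g^{ij}\Gamma^{l}_{ij}(g)-g_k^{ij}\Gamma^{l}_{ij}(g_k)\bigr)\partial_l u,
\end{equation*}
and invoking the weighted multiplication properties of Theorem \ref{SobolevPorpsAE}, one bounds the top-order term by $\|g^{-1}-g_k^{-1}\|_{W^{2,q}_{\tau}}\|u\|_{W^{2,p}_{\delta}}$ (up to an improvement in decay of order $\tau$), and the lower-order term by a product of $\|g_k^{ij}\Gamma^{l}_{ij}(g_k)-g^{ij}\Gamma^{l}_{ij}(g)\|_{W^{1,q}_{\tau-1}}$ with $\|u\|_{W^{2,p}_{\delta}}$. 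That the latter quantity tends to zero follows from the $W^{2,q}_{\tau}$-convergence of $g_k\to g$, since the map $g\mapsto g^{ij}\Gamma^{l}_{ij}(g)$ is a continuous (nonlinear) function of $g$ and $\partial g$ taking values in $W^{1,q}_{\tau-1}$, as guaranteed by the algebra and multiplication properties of weighted Sobolev spaces under our hypotheses $q>n/2$ and $\tau<0$.

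Finally, since $\Delta_g$ is semi-Fredholm and the set of semi-Fredholm operators is open with locally constant index (see \cite[Theorem 19.1.5]{Hormander3}), the norm convergence above together with $\mathrm{index}(\Delta_{g_k})=0$ for all sufficiently large $k$ forces $\mathrm{index}(\Delta_g)=0$, so in particular $\Delta_g$ has finite-dimensional cokernel and is Fredholm. The main obstacle will be the careful verification of the operator-norm convergence in the second step, since the weighted multiplication indices need to be matched precisely, and one has to make sure that the lower-order coefficient $g^{ij}\Gamma^{l}_{ij}(g)$ depends continuously on $g\in W^{2,q}_{\tau}$ with values in $W^{1,q}_{\tau-1}$; this is where the hypothesis $q>n/2$ is essential, precisely as in the mapping-property analysis of Lemma \ref{ContinuityPropsGeneralOps}.
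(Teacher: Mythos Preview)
Your approach is essentially identical to the paper's: semi-Fredholmness from Theorem \ref{BartniksSemiFredThm}, approximation by smooth AE metrics, operator-norm convergence of $\Delta_{g_k}\to\Delta_g$, and stability of the index via \cite[Theorem 19.1.5]{Hormander3}. The paper's proof does precisely this.

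There is, however, one inaccurate claim you should drop: you assert that the smooth approximations have index zero, and set out to prove this for $\Delta_g$ as well. This is false in general. The index of $\Delta_g\colon W^{2,p}_{\delta}\to L^{p}_{\delta-2}$ depends on the weight $\delta$ and jumps at each exceptional value (cf.\ \cite{McOwen,BartnikMass}); it is zero only for $\delta\in(2-n,0)$. Your duality sketch via Proposition \ref{LapBelDualWeighted} identifies $\Ker(\Delta_g^{*}\vert_{(L^{p}_{\delta-2})'})$ with $\Ker(\Delta_g\vert_{W^{2,p'}_{2-n-\delta}})$, which lives at a \emph{different} weight, so there is no reason for the two kernel dimensions to match. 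Fortunately this does not affect the corollary: you only need that the $\Delta_{g_k}$ have \emph{finite} index, which Corollary \ref{LapFredCoro1} already gives; stability then forces $\Delta_g$ to have the same finite index, hence to be Fredholm. The paper's proof makes exactly this argument and deliberately stops at ``finite index'' without claiming it is zero.
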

\begin{proof}
First of all, from Theorem \ref{BartniksSemiFredThm} we know $\Delta_g:W^{2,p}_{\delta}\to L^p_{\delta-2}$ is semi-Fredholm. Furthermore, we can consider a sequence $\{g_j\}_{j=1}^{\infty}$ of $C^{\infty}_{\tau}$-AE metrics such that $g-g_j\xrightarrow[]{W^{2,q}_{\tau}}0$, from which it follows
\begin{align*}
\Vert \Delta_g-\Delta_{g_j}\Vert_{Op(W^{2,p}_{\delta};L^p_{\delta-2})}\xrightarrow[j\rightarrow\infty]{}0.
\end{align*}
Thus, since by Corollary \ref{LapFredCoro1} we know that each $\Delta_{g_j}$ is Fredholm, appealing to the stability of the index of a semi-Fredholm operator \cite[Theorem 19.1.5]{Hormander3}, we know that the index of $\Delta_g$ is finite and thus its cokernel must also be finite dimensional, establishing that that $\Delta_g$ is Fredholm under our hypotheses.

%%%%%We know this map is semi-Fredholm, so we need only establish that $\mathrm{Ker}(\Delta_g^{*}:L^{p'}_{2-\delta-n}\mapsto W^{-2,p'}_{-\delta-n})$ is finite dimensional. Due to $p\leq q$, $p'\geq q'$, and thus Proposition \ref{LapWeightedIntByPartsProp} guarantees $\mathrm{Ker}(\Delta_g^{*}:L^{p'}_{2-\delta-n}\mapsto W^{-2,p'}_{-\delta-n})=\mathrm{Ker}(\Delta_g:L^{p'}_{2-\delta-n}\mapsto W^{-2,p'}_{-\delta-n})$. Also, Proposition \ref{BartniksProp1.6} guarantees that $\mathrm{Ker}(\Delta_g:L^{p'}_{2-\delta-n}\mapsto W^{-2,p'}_{-\delta-n})\subset W^{2,p'}_{2-n-\delta}$. Since $2-n-\delta$ is non-exceptional iff $\delta$ is, then Theorem \ref{BartniksSemiFredThm} shows that $\mathrm{Ker}(\Delta_g\vert_{ W^{2,p'}_{2-n-\delta}})=\mathrm{Ker}(\Delta_g\vert_{L^{p'}_{2-\delta-n}})$ is finite dimensional, which concludes the proof.
\end{proof}

%\begin{remark}
%Notice that the above corollary reduces to the Fredholm statement of \cite[Theorem 0]{McOwens} when $(M,g)=(\mathbb{R}^n,\delta)$ and $n\geq 3$, since in that case the manifold is $W^{2,\infty}_{\tau}$-AE for any $\tau$, and thus $1<p<\infty$ is allowed in the above corollary.
%\end{remark}

\section{Appendix II: Compactification and decompactification of rough metrics}\label{AppendixMaxwell}

The goal of this appendix is to present Lemma 5.2 of \cite{MaxwellDiltsYamabeAE}, but we shall state this lemma highlighting some details which follow from its proof as it appears in this reference, although this additional information does no appear in the statement of \cite[Lemma 5.2]{MaxwellDiltsYamabeAE}. Therefore, for the benefit of the reader, we shall outline the main ideas of the proof, highlighting the role of the additional structure which enters into our statement.

\begin{lem}\label{LemmaMaxwellDilts}
Let $p>\frac{n}{2}$ and $\tau=\frac{n}{p}-2$ be a real numbers, so that $\tau\in (-2,0)$, and suppose that $(M^n,g)$ is a $W^{2,p}_{\tau}$-AE manifold with respect to a structure of infinity with coordinates $\{z^i\}_{i=1}^n$. Then, there is a conformal factor $\phi$, which is equal to $|z|^{2-n}$ in a neighbourhood of infinity, such that the metric $\bar{g}\doteq \phi^{\frac{4}{n-2}}g$ extends to a $W^{2,p}(\bar{M})$ metric on the one-point compactification $\bar{M}$ of $M$, and $\phi\in C^{\infty}(\hat{M}\backslash\{p_{\infty}\})$, where $p_{\infty}\in \hat{M}^n$ denotes the added point at infinity. Moreover, the inversion $x=\frac{z}{|z|^2}$ provides a coordinate system around $p_{\infty}$ with the properties that $x(p_{\infty})=0$ and $\bar{g}(\partial_{x^i},\partial_{x^j})|_{p_{\infty}}=\delta_{ij}$. 

Conversely, suppose that $(\bar{M}^n,\bar{g})$ is a closed manifold with  $\bar{g}\in W^{2,p}$, $p>\frac{n}{2}$ and $p\neq n$. Given a point $p\in \bar{M}$ and a coordinate system $\{x^i\}^n_{i=1}$ around $p$ such that $g(\partial_{x^i},\partial_{x^j})|_{p}=\delta_{ij}$, which in the case $p>n$ we furthermore demand to be a normal coordinate system, then there is a conformal factor $\phi\in C^{\infty}(\bar{M}\backslash\{p\})$ such that $\phi=|x|^{2-n}$ in a neighbourhood of $p$ and $(M\doteq \bar{M}\backslash\{p\},g\doteq \phi^{\frac{4}{n-2}}\bar{g})$ is a $W^{2,p}_{\tau}$-AE manifold, with $\tau=\frac{n}{p}-2$, with respect to the structure of infinity given by the inverted coordinates $z^i=\frac{x^i}{|x|^2}$.
\end{lem}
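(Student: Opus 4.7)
The plan is to prove both directions of the lemma via the inversion $z \leftrightarrow x = z/|z|^2$, using the fact that the weight $\tau = n/p - 2$ is precisely the one for which the Kelvin transform is an isomorphism between $W^{2,p}_{\tau}$ near infinity and $W^{2,p}$ near the origin. In both directions the conformal factor is chosen to equal $|z|^{2-n}$ (respectively $|x|^{2-n}$) in a neighbourhood of the distinguished end/point and then extended to a smooth, strictly positive function on the complement; the specific exponent $2-n$ is dictated by the conformal identity that, under the inversion, the flat metric $\delta$ is mapped to itself with conformal factor exactly $|z|^{-4} = (|z|^{2-n})^{4/(n-2)}$.

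For the forward direction, I would set $\phi(z) = |z|^{2-n}$ near infinity, extend to $\phi \in C^{\infty}(M)$ strictly positive on the core, and introduce the chart $x = z/|z|^2$ around $p_{\infty}$ with $x(p_{\infty}) = 0$. In these coordinates,
\[
\bar{g}_{ij}(x) = |x|^{4}\, g_{ab}(z(x))\, \frac{\partial z^{a}}{\partial x^{i}}\frac{\partial z^{b}}{\partial x^{j}} = g_{ab}(z(x))\, M^{a}_{i}(x)\, M^{b}_{j}(x),
\]
where $M^{a}_{i}(x) \doteq \delta^{a}_{i} - 2x^{a}x^{i}/|x|^{2}$ is smooth and bounded on $\overline{B_{1}(0)}$. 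The Kelvin identity $\delta_{ab} M^{a}_{i} M^{b}_{j} = \delta_{ij}$ reduces the question to showing that $h_{ab} = g_{ab} - \delta_{ab}$ pulled back through the inversion lies in $W^{2,p}(B_{1}(0))$. A direct change-of-variables computation then yields, for each multi-index $|\alpha| \leq 2$,
\[
\int_{B_{1}(0)} |\partial_{x}^{\alpha} h(z(x))|^{p}\, dx \;\lesssim\; \sum_{|\beta|\leq|\alpha|}\int_{|z|\geq 1} |\partial_{z}^{|\beta|} h(z)|^{p}\, |z|^{|\beta| p - 2n + (|\alpha|-|\beta|)\cdot p}\, dz,
\]
and these integrals are precisely controlled by the weighted norm $W^{2,p}_{\tau}$ when $(|\beta| - \tau) p - n$ matches the exponent above, which forces $\tau = n/p - 2$. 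The smoothness of $\bar{g}$ on $\hat{M}\setminus\{p_{\infty}\}$ is immediate from the smoothness of $\phi$ and $g$ there, and $\bar{g}(\partial_{x^{i}}, \partial_{x^{j}})|_{0} = \delta_{ij}$ follows because $h(z(x))$ vanishes in the trace sense as $x \to 0$ under the weighted decay.

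The converse is essentially the same computation run backwards: starting from $\bar{g} \in W^{2,p}(\bar{M})$ with coordinates $\{x^{i}\}$ adapted at $p$, take $\phi = |x|^{2-n}$ in a punctured ball around $p$, extend smoothly and positively, and define $g = \phi^{4/(n-2)}\bar{g}$ on $M = \bar{M}\setminus\{p\}$ with the asymptotic chart $z = x/|x|^{2}$. The same change-of-variables identity, used in the opposite direction, yields $g \in W^{2,p}_{\tau}$. The subtle point is the coordinate hypothesis at $p$: when $p > n$, $\bar{g} \in C^{1,\alpha}$ and any linear term of $\bar{g} - \delta$ at $p$ would contribute an $O(|z|^{-1})$ tail to $g - \delta$ at infinity, which is borderline non-integrable for the required weight $\tau \in (-2,-1)$; demanding $\{x^{i}\}$ to be normal coordinates kills such linear terms and produces the required $|z|^{\tau}$-decay. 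When $n/2 < p \leq n$ the metric is only in $W^{2,p} \hookrightarrow C^{0}$, normal coordinates are not available, but the weaker weight $\tau \in [-1,0)$ tolerated in this regime is already delivered by the bare condition $\bar{g}(\partial_{x^{i}}, \partial_{x^{j}})|_{p} = \delta_{ij}$ combined with the $W^{2,p}$ regularity of $\bar{g} - \delta$ near $p$.

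The main technical obstacle will be the careful bookkeeping of the weighted change-of-variables estimate, including the mixed chain-rule contributions in which first derivatives of $h$ are multiplied by second derivatives of the inversion: these produce the intermediate factor $|x|^{-3} = |z|^{3}$ rather than the homogeneous $|x|^{-4} = |z|^{4}$, so the corresponding integral must be checked at the exponent $(1-\tau)p - n$, which again balances precisely at $\tau = n/p - 2$. Once these borderline balances are in place, the smoothness of $\phi$ and standard elliptic regularity on the complement of the distinguished point handle the remaining claims.
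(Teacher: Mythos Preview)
Your forward direction follows the paper's approach, but the displayed change-of-variables exponent is wrong: the chain rule through the inversion gives $|\partial_x^{\alpha}(h\circ z)|\lesssim \sum_{1\le|\beta|\le|\alpha|}|z|^{|\alpha|+|\beta|}\,|\partial_z^{\beta}h|$ (since $z$ is homogeneous of degree $-1$ in $x$, a Fa\`a di Bruno term with $|\beta|$ factors of derivatives of $z$ of total order $|\alpha|$ is homogeneous of degree $-|\alpha|-|\beta|$ in $x$). Hence the $z$-integrand carries $|z|^{(|\alpha|+|\beta|)p-2n}$, not the $|z|^{|\alpha|p-2n}$ your formula collapses to. With the correct exponent the balance $(|\beta|-\tau)p-n=(|\alpha|+|\beta|)p-2n$ does force $\tau=n/p-2$ at top order, so your conclusion survives.

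The genuine gap is in the converse. It is \emph{not} the same computation run backwards: after the change of variables, controlling $\|k\|_{W^{2,p}_{\tau}}$ near infinity reduces to finiteness of the singular integrals
\[
\int_B \frac{|\bar{k}|^p}{|x|^{2p}}\,dx \qquad\text{and}\qquad \int_B \frac{|\partial_x\bar{k}|^p}{|x|^{p}}\,dx,
\]
and these are not automatic for $\bar{k}\in W^{2,p}(B)$. What makes them finite is a Hardy-type inequality bounding both by $\int_B|\partial_x^2\bar{k}|^p\,dx$, and \emph{this} is where the vanishing hypotheses at $p$ enter. The paper (following Dilts--Maxwell) first proves the Hardy bound for smooth functions vanishing near the origin via Bartnik's weighted inequality, then passes to $\bar{k}$ by constructing $\bar{k}_j\in C^\infty_0(B)$ vanishing near $0$ with $\bar{k}_j\to\bar{k}$ in $W^{2,p}(B)$. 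That approximation is where the case split $p\lessgtr n$ actually bites: for $p>n$ the cut-off sequence converges in $W^{2,p}$ only if both $\bar{k}(0)=0$ and $\partial_x\bar{k}(0)=0$, which is exactly what normal coordinates supply. Your heuristic that normal coordinates ``kill the linear term'' is morally right, but the pointwise Morrey bound $|\partial_x\bar{k}(x)|\lesssim|x|^{1-n/p}$ plugged into the first-order integral yields the borderline-divergent $\int_B|x|^{-n}\,dx$ and does not close; the paper instead uses a sharp local Morrey estimate on shrinking cubes, which exploits $\|\nabla^2\bar{k}\|_{L^p(B_\epsilon)}\to 0$. For $\tfrac{n}{2}<p<n$ there is no $C^1$ control at all, and the bare condition $\bar{k}(0)=0$ permits the approximation---but again only through the Hardy mechanism, not by directly ``delivering the weight'' as you suggest.
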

\begin{proof}
We shall follow the proof given in \cite[Lemma 5.2]{MaxwellDiltsYamabeAE}, highlighting some key steps. To prove the first claim, one starts noticing that outside a compact set we have
\begin{align*}
g_{ij}\doteq g(\partial_{z^i},\partial_{z^j})=\delta_{ij}+k_{ij},
\end{align*}
where $k_{ij}\in W^{2,p}_{\tau}(\mathbb{R}^n\backslash\overline{B_1(0)},\Phi_z)$. Then, we define the conformal factor as $\phi=|z|^{2-n}$ in a neighbourhood of infinity and extend this function to a smooth function on the rest of $M$, and we consider the inversion $x^i\doteq \frac{z^i}{|z|^2}$, mapping $\mathbb{R}^n\backslash\overline{B_1(0)}\to B_1(0)\backslash\{0\}$. In a neighbourhood of $0\in B_1(0)$, one then notices that:
\begin{align*}
\bar{g}_{ij}\doteq \phi^{\frac{4}{n-2}}g(\partial_{x^i},\partial_{x^j})=\delta_{ij} + k_{ij} -\frac{2}{|x|^2}(k_{aj}x^ax^j + k_{ai}x^ax^i) + \frac{4k_{ab}x^ax^bx^ix^j}{|x|^4}.
\end{align*}

Clearly, since $k\rightarrow 0$ as $|z|\rightarrow \infty$, the above shows that we can define $\bar{g}_{ij}(0)=\delta_{ij}$, $\bar{k}_{ij}(0)=0$, and extend $\phi^{\frac{4}{n-2}}g$ continuously to the one point compactification of $M$. The goal is now to show that 
\begin{align*}
\bar{k}_{ij}(x)=\begin{cases}
0, \text{ if } x=0 \\
k_{ij} -\frac{2}{|x|^2}(k_{aj}x^ax^j + k_{ai}x^ax^i) + \frac{4k_{ab}x^ax^bx^ix^j}{|x|^4}, \text{ if } x\neq 0,
\end{cases}
\end{align*}
is in $W^{2,p}(B_1(0))$ with respect to the differential structure given by the inverted coordinates $\{x^i\}_{i=1}^n$. Clearly, $\bar{g}\in W^{2,p}_{loc}(\bar{M}\backslash\{p\})$, so establishing the above regularity claim provides the desired result. Also, since this only needs to be shown in a neighbourhood of the origin, we can assume that $\bar{k}_{ij}$ is compactly supported within $B_1(0)$, if necessary by first using a cut-off function. Finally, since a point is removable for $W^{1,p}(B_1(0))$ for any $n\geq 3$, we need to show first that $\bar{k}_{ij}\in W^{1,p}(B_1(0)\backslash\{p\})$, establishing $\bar{k}_{ij}\in W^{1,p}(B_1(0))$, and then $\partial\bar{k}_{ij}\in W^{1,p}(B_1(0)\backslash\{p\})$, establishing $\partial\bar{k}_{ij}\in W^{1,p}(B_1(0))$ and hence $\bar{k}_{ij}\in W^{2,p}(B_1(0))$. This amounts to showing that the weak derivatives computed on $B_1(0)\backslash\{0\}$ and given by
\begin{align*}
|\partial_{x}\bar{k}|&=O(|\partial_zk|)O(|z|^{2})+O(|k|)O(|z|)\doteq \bar{k}',\\
|\partial^2_{x}\bar{k}|&=O(|\partial^2_zk|)O(|z|^{4}) + O(|\partial_zk|)O(|z|^3) + O(|k|)O(|z|^2)\doteq \bar{k}'',
\end{align*}
define $L^p(B_1(0))$ functions. The computations of \cite[Lemma 5.2]{MaxwellDiltsYamabeAE} between equations (5.5)-(5.7) show that this is actually the case, and therefore the first claim holds. 

\medskip
To prove the converse statement, we start considering a closed Riemannian manifold $(\bar{M}^n,\bar{g})$, with $\bar{g}\in W^{2,p}$, we select a point $p\in \bar{M}$ and a coordinate system $\{x^i\}_{i=1}^n$ around $p$ such that $\bar{g}_{ij}\doteq \bar{g}(\partial_{x^i},\partial_{x^j})|_{p}=\delta_{ij}$. In case $p>n$, we furthermore demand such coordinate system to be normal, so that $\partial_xg_{ij}|_{0}=0$ as well. Within some small coordinate ball $B$ of $p$ we then write
\begin{align*}
\bar{g}_{ij}(x)=\delta_{ij} + \bar{k}_{ij}(x),
\end{align*}
with $\bar{k}_{ij}\in W^{2,p}_{loc}(B)$, and in case $p>n$ we also have $\partial_xk_{ij}|_{0}=0$. Since below we shall be only interested in establishing local properties of $\bar{k}$ around $0$, we can assume that $\bar{k}$ is compactly supported within $B$, if necessary by multiplying by an appropriate cut-off function. Now, following \cite[Lemma 5.2]{MaxwellDiltsYamabeAE}, one claims that there is a constant $C>0$ such that 
\begin{align}\label{DiltsMaxwellMainEstimate}
\begin{split}
\int_B\frac{|\bar{k}|^p}{|x|^{2p}}dx &\leq C\int_B|\partial^2_x\bar{k}|^pdx,\\
\int_B\frac{|\partial_x\bar{k}|^p}{|x|^{p}}dx &\leq C\int_B|\partial^2_x\bar{k}|^pdx
\end{split}
\end{align}

Then, we consider the inversion $B\backslash\{p\}\to \mathbb{R}^n\backslash\overline{B'}$, given by $z(x)=\frac{x}{|x|^2}$, which provides a structure of infinity $\Phi_z$ for $M\doteq \bar{M}\backslash\{p\}$. We furthermore set $\phi$ to be a smooth positive function on $M$ such that $\phi=|x|^{2-n}$ on $B\backslash\{p\}$, and then define $g\doteq \phi^{\frac{4}{n-2}}\bar{g}$. The goal is then to prove that $g(\partial_{z^i},\partial_{z^j})\in W^{2,p}_{\tau}(\mathbb{R}^n\backslash\overline{B'},\Phi_z)$, with $\tau=\frac{n}{p}-2$. Accepting (\ref{DiltsMaxwellMainEstimate}), this follows along similar lines than in the previous implication and can be checked explicitly in \cite[Lemma 5.2]{MaxwellDiltsYamabeAE} between equations (5.10)-(5.14). Let us then comment on the proof of (\ref{DiltsMaxwellMainEstimate}).

In \cite[Lemma 5.2, equation (5.15)]{MaxwellDiltsYamabeAE} it is shown that \cite[Theorem 1.3]{BartnikMass} implies that
\begin{align}\label{DiltsMaxwellMainEstimate.2}
\int_B\frac{|f|^p}{|x|^{2p}}dx \leq C_1\int_B\frac{|\partial f|^p}{|x|^{p}}dx\leq C_2\int_B|\partial^2_xf|^pdx
\end{align}
holds for all smooth function $f\in C^{\infty}_0(B)$ which vanish in a neighbourhood of $p$. The next key step is then to build a sequence $\{\bar{k}_j\}_{j=1}^{\infty}\subset C^{\infty}_0(B)$ such that each $\bar{k}_j$ vanishes in a neighbourhood of $p$ and such that $\bar{k}_j\xrightarrow[]{W^{2,p}(B)}\bar{k}$, then apply the above inequality to the sequence and pass to the limit appropriately. The last step is done within \cite[Lemma 5.2, equation (5.16)]{MaxwellDiltsYamabeAE}, and the corresponding sequence can be built as a consequence, for instance, of Proposition \ref{PropApproximation} below.
\end{proof}

\begin{prop}\label{PropApproximation}
Let $B\subset \mathbb{R}^n$ stand for a ball around the origin and $n\geq 3$. Then:
\begin{enumerate}
\item Given $1<p<n$ and a function $\bar{k}\in W_0^{1,p}(B)$, there is a sequence $\{\bar{k}_j\}_{j=1}^{\infty}\subset C^{\infty}_0(B)$ such that all $\bar{k}_j$ vanish in a neighbourhood of the origin and $\bar{k}_{j}\xrightarrow[j\rightarrow\infty]{W^{1,p}}\bar{k}$;
\item If $p>n$, $\bar{k}\in W_0^{1,p}(B)$ and $\bar{k}|_{0}=0$, there is a sequence $\{\bar{k}_j\}_{j=1}^{\infty}\subset C^{\infty}_0(B)$ such that all $\bar{k}_j$ vanish in a neighbourhood of the origin and $\bar{k}_{j}\xrightarrow[j\rightarrow\infty]{W^{1,p}}\bar{k}$;
\item Given $\frac{n}{2}<p<n$ and a function $\bar{k}\in W_0^{2,p}(B)$ such that $\bar{k}|_{0}=0$, there is a sequence $\{\bar{k}_j\}_{j=1}^{\infty}\subset C^{\infty}_0(B)$ such that all $\bar{k}_j$ vanish in a neighbourhood of the origin and $\bar{k}_{j}\xrightarrow[j\rightarrow\infty]{W^{2,p}}\bar{k}$;
\item Finally, if $p>n$, $\bar{k}\in W_0^{2,p}(B)$, $\bar{k}|_{0}=0$, and $\partial\bar{k}|_{0}=0$, then there is a sequence $\{\bar{k}_j\}_{j=1}^{\infty}\subset C^{\infty}_0(B)$ such that all $\bar{k}_j$ vanish in a neighbourhood of the origin and $\bar{k}_{j}\xrightarrow[j\rightarrow\infty]{W^{2,p}}\bar{k}$.
\end{enumerate}
\end{prop}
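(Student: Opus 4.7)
The plan is to produce, in each of the four cases, a sequence $\bar{k}_j \in C^\infty_0(B)$ vanishing in a neighborhood of the origin and converging to $\bar{k}$ in the appropriate Sobolev norm, via a two-step procedure: first truncate with a radial cutoff concentrated near the origin, then mollify. Concretely, fix a smooth radial $\chi\colon [0,\infty)\to [0,1]$ with $\chi(t)=0$ for $t\le 1$ and $\chi(t)=1$ for $t\ge 2$, pick a sequence $r_j\searrow 0$, and set $\chi_j(x)\doteq \chi(|x|/r_j)$, so that $\chi_j\equiv 0$ on $B_{r_j}$, $\chi_j\equiv 1$ outside $B_{2r_j}$, $|\nabla^\ell \chi_j|\lesssim r_j^{-\ell}$ with support of $\nabla \chi_j$ contained in the annulus $A_j\doteq B_{2r_j}\setminus B_{r_j}$. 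Set $\tilde k_j\doteq \chi_j \bar k\in W^{k,p}_0(B\setminus B_{r_j})$. Standard mollification of each $\tilde k_j$ (using mollifiers with radius smaller than $r_j/2$) then produces $\bar k_j\in C^\infty_0(B)$ vanishing near $0$ with $\bar k_j\to \tilde k_j$ in $W^{k,p}$, so by a diagonal argument it suffices to prove $\tilde k_j\to \bar k$ in $W^{k,p}(B)$ as $j\to\infty$.

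Writing the error in Leibniz form, e.g.\ $\nabla(\tilde k_j-\bar k)=(\chi_j-1)\nabla\bar k +\bar k\,\nabla \chi_j$ and similarly for second derivatives, the terms containing no derivatives of $\chi_j$ tend to zero by dominated convergence, so the real task is to bound the commutator terms $I^{(1)}_j\doteq \|\bar k\,\nabla\chi_j\|_{L^p}$, $I^{(2)}_j\doteq \|\nabla\bar k\,\nabla\chi_j\|_{L^p}$ and $I^{(3)}_j\doteq \|\bar k\,\nabla^2\chi_j\|_{L^p}$. For part (1), since $|\nabla\chi_j|\lesssim r_j^{-1}\mathbf 1_{A_j}$, one has $I^{(1)}_j{}^{p}\lesssim r_j^{-p}\int_{A_j}|\bar k|^p\,dx\lesssim \int_{A_j}|\bar k|^p/|x|^p\,dx$, which tends to zero by Hardy's inequality (valid for $p<n$ and $\bar k\in W^{1,p}_0$) together with absolute continuity of the integral. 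For part (2), where $p>n$, the hypothesis $\bar k(0)=0$ combined with Morrey's inequality on $B_{2|x|}$ gives the pointwise control
\[
|\bar k(x)|\le C|x|^{1-n/p}\|\nabla\bar k\|_{L^p(B_{2|x|})},
\]
and substituting this into $I^{(1)}_j$ yields $I^{(1)}_j{}^{p}\lesssim \|\nabla\bar k\|_{L^p(B_{4r_j})}^p\to 0$ by absolute continuity.

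For parts (3) and (4), the analogous pointwise bounds are obtained from the Sobolev embedding $W^{2,p}\hookrightarrow W^{1,q}$ with $q\doteq np/(n-p)>n$ (part 3) or from Morrey applied directly to $\nabla\bar k$ (part 4). In both cases I would show
\[
|\bar k(x)|\le C|x|^{2-n/p}\,\omega(|x|),\qquad |\nabla\bar k(x)|\le C|x|^{1-n/p}\,\omega(|x|),
\]
with $\omega(r)\to 0$ as $r\to 0$, where the second bound in part (4) uses $\nabla\bar k(0)=0$ via Morrey, and the first in part (4) is derived from the second by integrating along the radial segment from $0$ to $x$. Using $|\nabla^2\chi_j|\lesssim r_j^{-2}\mathbf 1_{A_j}$ and integrating the resulting powers of $|x|$ on $A_j$, the exponents cancel and one obtains $I^{(2)}_j+I^{(3)}_j\lesssim \omega(4r_j)\to 0$; in part (3) the control on $I^{(2)}_j$ instead uses Hölder's inequality together with $\nabla\bar k\in L^q$ on $B_{2r_j}$.

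The principal obstacle throughout is ensuring that the scaling balance works out with a \emph{genuinely vanishing} factor rather than merely a bounded one. A naive application of global Morrey estimates gives $O(1)$ commutator terms, and only the \emph{localized} Morrey/Sobolev inequality on $B_{2|x|}$, combined with absolute continuity of the $L^p$ norms of $\nabla\bar k$ or $\nabla^2\bar k$, produces the required smallness. This is why the vanishing conditions $\bar k(0)=0$ (and $\nabla\bar k(0)=0$ in part 4) are sharp for the argument: they are exactly what is needed to keep the pointwise bound on $\bar k$ in step with the degree of $\chi_j$'s derivatives, in the borderline regimes where Hardy inequalities are unavailable.
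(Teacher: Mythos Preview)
Your proposal is correct and follows essentially the same strategy as the paper: multiply by a family of cutoffs $\chi_j$ vanishing near the origin with $|\nabla^\ell\chi_j|\lesssim r_j^{-\ell}$, and show the commutator terms supported in the shrinking annuli $A_j$ go to zero by balancing the blow-up of $r_j^{-\ell}$ against localized smallness coming from Sobolev/Morrey embeddings and absolute continuity of the $L^p$ norms of derivatives of $\bar k$.

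There are two minor technical differences worth noting. First, you truncate $\bar k$ directly and mollify afterwards, whereas the paper first passes to a smooth approximating sequence $\tilde k_j\to\bar k$ in $W^{l,p}$ and then applies the cutoff to $\tilde k_j$; both orderings work, and yours is slightly cleaner since one avoids carrying the approximation error through every estimate. Second, in case~(1) you invoke Hardy's inequality $\int |\bar k|^p/|x|^p\,dx\lesssim\int|\nabla\bar k|^p\,dx$ (valid for $p<n$), while the paper instead uses the Sobolev embedding $W^{1,p}_0\hookrightarrow L^q$ with $q=np/(n-p)$ together with H\"older on the annulus, which produces the same scaling cancellation $r_j^{-p}\cdot r_j^{n(1-p/q)}=1$. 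For cases~(2)--(4) both arguments hinge on the same localized Morrey estimate $|u(x)-u(y)|\le K|x-y|^{1-n/p}\Vert\nabla u\Vert_{L^p(Q_{4r_j})}$ (this is \cite[Lemma~4.28]{Adams} in the paper) to extract a genuinely vanishing factor $\omega(r_j)\to 0$; your identification of this as the ``principal obstacle'' is exactly right.
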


\begin{proof}
As a common framework for all the above items, since $\bar{k}\in W^{l,p}_0(B)$ with $l=1,2$, we know there is some sequence $\{\tilde{k}_j\}_{j=1}^{\infty}\subset C^{\infty}_0(B)$ such that 
\begin{align*}
\tilde{k}_j\xrightarrow[j\rightarrow\infty]{W^{l,p}(B)}\bar{k}.
\end{align*}
Let us then consider a cut-off function $\eta\in C^{\infty}_0(B_2(0))$ which is equal to one on $\overline{B_1(0)}$, and then, given $\epsilon>0$, the scaled function $\eta_{\epsilon}(x)\doteq \eta(\epsilon^{-1}x)\subset C^{\infty}_0(B_{2\epsilon}(0))$ which is equal to one on $\overline{B_{\epsilon}(0)}$. We can then consider the cut-off
\begin{align*}
\chi_{\epsilon}(x)\doteq 1- \eta_{\epsilon}(x)=\begin{cases}
0, \text{ if } |x|\leq \epsilon,\\
1, \text{ if } |x|\geq 2\epsilon,
\end{cases}
\end{align*}
and the sequence $\{\bar{k}_j\doteq \chi_{\epsilon_j}\tilde{k}_j\}_{j=1}^{\infty}\subset C^{\infty}_0(B)$ with $\epsilon_j\doteq 2^{-j}$. This sequence has the property that each element vanishes in a neighbourhood of the origin, and we aim to show that it still converges to $\bar{k}$ in $W^{l,p}(B)$ under each restriction on $p$ mentioned above.

With the above in mind, first notice that
\begin{align}\label{CutoffSequence.1}
\Vert\bar{k}_j-\bar{k}\Vert_{L^p(B)}\leq\Vert(\chi_{\epsilon_j}-1)\tilde{k}_j\Vert_{L^p(B)} + \Vert\tilde{k}_j-\bar{k}\Vert_{L^p(B)}=\Vert\eta_{\epsilon_j}\tilde{k}_j\Vert_{L^p(B)} + \Vert\tilde{k}_j-\bar{k}\Vert_{L^p(B)} ,
\end{align}
the second term in the right-hand side of the above expression goes to zero by hypothesis, while for the first term we have that
\begin{align*}
%\int_{B}|\eta_{\epsilon_j}\tilde{k}_j|^pdx\leq \int_{B_{2\epsilon_j}}|\tilde{k}_j|^pdx\lesssim \epsilon_j^n\xrightarrow[j\rightarrow\infty]{} 0.
\Vert\eta_{\epsilon_j}\tilde{k}_j\Vert_{L^p(B)}&=\Vert\eta_{\epsilon_j}\tilde{k}_j\Vert_{L^p(B_{2\epsilon_j})}\leq \Vert\tilde{k}_j\Vert_{L^p(B_{2\epsilon_j})} \leq \Vert\tilde{k}_j-\bar{k}\Vert_{L^p(B_{2\epsilon_j})} + \Vert\bar{k}\Vert_{L^p(B_{2\epsilon_j})},\\
&\lesssim \Vert\tilde{k}_j-\bar{k}\Vert_{L^p(B)} + \Vert\bar{k}\Vert_{L^p(B_{2\epsilon_j})} \xrightarrow[j\rightarrow\infty]{} 0.
\end{align*}
Therefore, we obtain that $\tilde{k}_j\xrightarrow[j\rightarrow\infty]{L^p(B)} \bar{k}$. Then, notice that
\begin{align*}
\partial_x\bar{k}_j(x) = \chi_{\epsilon_j}(x)\partial_x\tilde{k}_j(x) + \partial_x\chi_{\epsilon_j}(x)\tilde{k}_j(x)=\chi_{\epsilon_j}(x)\partial_x\tilde{k}_j(x) - \epsilon^{-1}_j\partial_x\eta (\epsilon^{-1}_jx)\tilde{k}_j(x).
\end{align*}
Now, the fact that $\chi_{\epsilon_j}\partial_x\tilde{k}_j\xrightarrow[j\rightarrow\infty]{L^p(B)}\partial_x\bar{k}$ follows along the same lines as in the case of (\ref{CutoffSequence.1}). Thus, we need to show that the second term in the right-hand side of the above expression converges to zero under each of the conditions in the proposition. %Although in this case this could be established with less effort than what we will do below, when analysing second order terms in (\ref{CutoffSequence.5}) below, a similar but more subtle proof would be needed. We shall exhibit the more general case now and avoid providing two separate arguments. 
 
We first notice that $\mathrm{supp}(\partial\eta(\epsilon^{-1}_j\cdot))\subset A_{\epsilon_j}\doteq B_{2\epsilon_j}\backslash\overline{B_{\epsilon_j}}$ and is uniformly bounded on it. Let us simplify notations by denoting $u_j\doteq \partial\eta(\epsilon^{-1}_j\cdot)\tilde{k}_j\in W^{1,p}_0(B_{2\epsilon_j})$, and notice that
\begin{align*}
\frac{1}{\epsilon^p_j}\int_B|u_j|^pdx=\frac{1}{\epsilon^p_j}\int_{A_{\epsilon_j}}|u_j|^pdx.
\end{align*}
If $1<p<n$, then we estimate the above appealing to $W^{1,p}_0(B_{2\epsilon_j})\hookrightarrow L^{q}(B_{2\epsilon_j})\hookrightarrow L^{p}(B_{2\epsilon_j})$, for $q=\frac{np}{n-p}$, with explicit estimates
\begin{align*}
\int_{A_{\epsilon_j}}|u_j|^pdx\leq (\mu(B_{2\epsilon_j}))^{1-\frac{p}{q}}\Vert u_j\Vert^p_{L^q(B_{2\epsilon_j})}\lesssim \epsilon_j^{n-\frac{np}{q}}\Vert u_j\Vert^p_{L^q(B_{2\epsilon_j})},
\end{align*}
where $\mu(B_{2\epsilon_j})$ stands for the volume of the ball of radius $2\epsilon_j$. Then, we obtain
\begin{align*}
\frac{1}{\epsilon^p_j}\int_B|u_j|^pdx\leq C(n) \epsilon_j^{n-\frac{np}{q}-p}\Vert u_j\Vert^p_{L^q(B_{2\epsilon_j})}\leq C(n) \Vert u_j\Vert^p_{L^q(B_{2\epsilon_j})},
\end{align*}
where we have used that $\frac{np}{q}=n-p$ and thus $n-\frac{np}{q}-p=0$. Notice then that since $\partial\eta(\epsilon^{-1}_j\cdot)$ are uniformly bounded, then 
\begin{align}\label{CutoffSequence.2}
\begin{split}
\Vert u_j\Vert_{L^q(B_{2\epsilon_j})}&\leq \Vert \partial\eta (\epsilon^{-1}_j\cdot)\Vert_{L^{\infty}(B_{2\epsilon_j})} \Vert \tilde{k}_j\Vert_{L^q(B_{2\epsilon_j})},\\
&\leq \Vert \partial\eta \Vert_{L^{\infty}(B_{2})}\left( \Vert \bar{k}\Vert_{L^q(B_{2\epsilon_j})} + \Vert \bar{k}-\tilde{k}_j\Vert_{L^q(B_{2\epsilon_j})}\right)\xrightarrow[j\rightarrow\infty]{}0
\end{split}
\end{align}
All this shows that $\partial_x\bar{k}_j\xrightarrow[j\rightarrow\infty]{L^p(B)} \partial_x\bar{k}$ when $1<p<n$, and thus establishes item $1$. 

\medskip
If $p>n$, then we have $W^{1,p}_0(B)\hookrightarrow C^{0,\alpha}(B)$ for $\alpha=1-\frac{n}{p}$ \cite[Chapter 4, Theorem 4.12, Part II]{Adams}, and since $\bar{k}|_{0}=0$, this implies that $|\bar{k}|=O(|x|^{\alpha})$, and from $\tilde{k}_j\xrightarrow[]{C^{0,\alpha}}\bar{k}$ we can actually chose the sequence $\{\tilde{k}_j\}$ satisfying $\tilde{k}_j(0)=0$ for all $j$. Thus, 
\begin{align*}
\begin{split}
\frac{1}{\epsilon^p_j}\int_B|u_j|^pdx&=\frac{1}{\epsilon^p_j}\int_{B_{2\epsilon_j}}|u_j|^pdx\leq \frac{1}{\epsilon^p_j}\sup_{x\in B_{2\epsilon_j}}|\partial\eta(\epsilon_j^{-1}x)|^p\int_{B_{2\epsilon_j}}|\tilde{k}_j|^pdx=\frac{1}{\epsilon^p_j}\sup_{y\in B_{2}}|\partial\eta(y)|^p\int_{B_{2\epsilon_j}}|\tilde{k}_j|^pdx.
%&\lesssim \frac{1}{\epsilon^p_j}\Vert \tilde{k}_j\Vert^p_{C^{0,\alpha}(B_{2\epsilon_j}(0))}\int_{B_{2\epsilon_j}}|x|^{\alpha p}dx\lesssim \epsilon^{n+\alpha p-p}_j\Vert \tilde{k}_j\Vert^p_{C^{0,\alpha}(B_{2\epsilon_j}(0))}.
\end{split}
\end{align*}
To proceed further, %consider the cube $Q_{4\epsilon_j}$ centred at the origin and with sides of size $4\epsilon_j$, so that $B_{2\epsilon_j}\subset Q_{4\epsilon_j}$. Then, 
 we can estimate
\begin{align*}
\!\!\!\!\int_{B_{2\epsilon_j}}|\tilde{k}_j|^p(x)dx\leq  \Big(\sup_{\underset{|x|\neq 0}{x\in B_{2\epsilon_j}}}\frac{|\tilde{k}_j(x)|}{|x|^{1-\frac{n}{p}}}\Big)^{p}\int_{B_{2\epsilon_j}}|x|^{p-n}dx\leq \Big(\sup_{\underset{|x-y|\neq 0}{x,y\in B_{2\epsilon_j}}}\frac{|\tilde{k}_j(x) - \tilde{k}_j(y)|}{|x-y|^{1-\frac{n}{p}}}\Big)^{p} C(n,p)\epsilon^{p}_j,
\end{align*}
where we have used that the chosen sequence $\tilde{k}_j$ satisfies $\tilde{k}_j(0)=0$ for all $j$. Putting the above estimates together, we find a constant $C'(n,p)$, depending only on $n$ and $p$, such that 
\begin{align}\label{CutoffSequence.3}
\begin{split}
\Vert \epsilon_j^{-1}u_j \Vert_{L^p(B)}&\leq C'(n,p) \sup_{\underset{|x-y|\neq 0}{x,y\in B_{2\epsilon_j}}}\frac{|\tilde{k}_j(x) - \tilde{k}_j(y)|}{|x-y|^{1-\frac{n}{p}}}.
\end{split}
\end{align}
In order to further estimate the right-hand side of the above expression we appeal to a rather sharp estimate, which follows from \cite[Chapter 4, Lemma 4.28]{Adams}. Denoting by $Q_{4\epsilon_j}$ the $n$-cube centred at the origin and with sides of size $4\epsilon_j$, examination of the first part of the proof of this lemma guarantees that, given points $(x,y)\in Q_{4\epsilon_j}$ such that $|x-y|<4\epsilon_j$, there is a constant $K=K(n,p)>0$ such that the following estimate holds for all functions $u\in C^{\infty}(Q_{4\epsilon_j})$:
\begin{align}\label{CutoffSequence.3.1.0}
\sup_{\underset{0<|x-y|<4\epsilon_j}{x,y\in Q_{4\epsilon_j}}}\frac{|u(x) - u(y)|}{|x-y|^{1-\frac{n}{p}}}\leq K(n,p)\Vert \nabla u \Vert_{L^{p}(Q_{4\epsilon_j})}.
\end{align}
Noticing then that for $(x,y)\in B_{2\epsilon_j}\subset Q_{4\epsilon_j}$ we always have $|x-y|<4\epsilon_j$, the estimate (\ref{CutoffSequence.3.1.0}) implies:
\begin{align}\label{CutoffSequence.3.1}
\sup_{\underset{|x-y|\neq 0}{x,y\in B_{2\epsilon_j}}}\frac{|u(x) - u(y)|}{|x-y|^{1-\frac{n}{p}}}\leq K(n,p)\Vert \nabla u \Vert_{L^{p}(Q_{4\epsilon_j})}.
\end{align}

Putting together (\ref{CutoffSequence.3}) with (\ref{CutoffSequence.3.1}) applied to $\tilde{k}_j$ we find
\begin{align}\label{CutoffSequence.3.2}
\Vert \epsilon_j^{-1}u_j \Vert_{L^p(B)}&\leq K'(n,p)\Vert \nabla \tilde{k}_j \Vert_{L^{p}(Q_{4\epsilon_j})}\leq K'(n,p)\left( \Vert \nabla \tilde{k}_j - \nabla \tilde{k}\Vert_{L^{p}(B)} + \Vert \nabla \tilde{k} \Vert_{L^{p}(Q_{4\epsilon_j})}\right)\xrightarrow[j\rightarrow\infty]{} 0,
\end{align}
finally establishing
\begin{align}\label{CutoffSequence.4}
\partial_x\bar{k}_j\xrightarrow[j\rightarrow\infty]{L^p(B)} \partial_x\bar{k},
\end{align} 
which proves item $2$.

\medskip
Let us now consider items $3$ and $4$, and thus analyse the convergence of
\begin{align}\label{CutoffSequence.5}
\begin{split}
\partial^2_{x^ix^a}\bar{k}_j(x) &= \chi_{\epsilon_j}(x)\partial^2_{x^ix^a}\tilde{k}_j(x) - \epsilon^{-1}_j(\partial_{x^i}\eta (\epsilon^{-1}_jx)\partial_{x^a}\tilde{k}_j(x) + \partial_{x^a}\eta (\epsilon^{-1}_jx)\partial_{x^i}\tilde{k}_j(x) ) \\
&- \epsilon^{-2}_j\partial^2_{x^ix^a}\eta (\epsilon^{-1}_jx)\tilde{k}_j(x).
\end{split}
\end{align}
First of all, the same arguments as is (\ref{CutoffSequence.1}) show that $\chi_{\epsilon_j}(x)\partial^2_x\tilde{k}_j(x)\xrightarrow[j\rightarrow\infty]{L^p(B)} \partial^2\bar{k}$, and also since $\partial_x\eta (\epsilon^{-1}_jx)\partial_x\tilde{k}_j(x)\in W^{1,p}_0(B_{2\epsilon_j})$, the same arguments as in (\ref{CutoffSequence.2}) (when $\frac{n}{2}<p<n$) and (\ref{CutoffSequence.3.2}) (when $p>n$) can be applied to show that 
\begin{align*}
\epsilon^{-1}_j\partial_x\eta (\epsilon^{-1}_j\cdot)\partial_x\tilde{k}_j\xrightarrow[j\rightarrow\infty]{L^p(B)}  0
\end{align*}
To analyse the last term in (\ref{CutoffSequence.5}), first assume that $\frac{n}{2}<p<n$, so that $W^{2,p}_0(B_{2\epsilon_j})\hookrightarrow W^{1,q}_0(B_{2\epsilon_j})$, with $q\doteq \frac{np}{n-p}>n$. We can then estimate
\begin{align*}
\epsilon^{-2p}_j\int_{B_{2\epsilon_j}}|\partial^2_x\eta (\epsilon^{-1}_jx)\tilde{k}_j(x)|^pdx&\leq \epsilon^{-2p}_j\Vert\partial^2_x\eta (\epsilon^{-1}_j\cdot)\Vert^p_{C^{0}(B_{2\epsilon_j})}\int_{B_{2\epsilon_j}}|\tilde{k}_j|^{p}dx=\epsilon^{-2p}_j\Vert\partial^2\eta\Vert^p_{C^{0}(B_{2})}\int_{B_{2\epsilon_j}}|\tilde{k}_j|^{p}dx,
%&\lesssim \epsilon^{-2p}_j\Vert\tilde{k}_j\Vert^p_{C^{0,\alpha}(B_{2\epsilon_j})}\int_{B_{2\epsilon_j}}|x|^{p\alpha}dx\lesssim \epsilon^{n+\alpha p -2p}_j\Vert\tilde{k}_j\Vert^p_{C^{0,\alpha}(B_{2\epsilon_j})}.
\end{align*}
where as in the proof of the estimates before (\ref{CutoffSequence.3}), we have
\begin{align*}
\int_{B_{2\epsilon_j}}|\tilde{k}_j|^{p}dx&\leq \Big( \sup_{\underset{|x|\neq 0}{x\in B_{2\epsilon_j}}}\frac{|\tilde{k}_j(x)|}{|x|^{1-\frac{n}{q}}}\Big)^p\int_{B_{2\epsilon_j}}|x|^{p-\frac{np}{q}}dx\leq \Big( \sup_{\underset{|x-y|\neq 0}{x,y\in B_{2\epsilon_j}}}\frac{|\tilde{k}_j(x)-\tilde{k}_j(y)|}{|x-y|^{1-\frac{n}{q}}}\Big)^pC(n,p)\epsilon_j^{p-\frac{np}{q}+n}.
\end{align*}
Putting together the above estimates, we find a constant $C'=C'(n,p)>0$, such that
\begin{align*}
\epsilon_j^{-2}\Vert \tilde{k}_j \Vert_{L^{p}(B_{2\epsilon_j})}\leq C'(n,p) \sup_{\underset{|x-y|\neq 0}{x,y\in B_{2\epsilon_j}}}\frac{|\tilde{k}_j(x)-\tilde{k}_j(y)|}{|x-y|^{1-\frac{n}{q}}}\epsilon_j^{-1-\frac{n}{q}+\frac{n}{p}}=C'(n,p) \sup_{\underset{|x-y|\neq 0}{x,y\in B_{2\epsilon_j}}}\frac{|\tilde{k}_j(x)-\tilde{k}_j(y)|}{|x-y|^{1-\frac{n}{q}}}.
\end{align*}
Using once more the estimate (\ref{CutoffSequence.3.1}), we find
\begin{align*}
\epsilon_j^{-2}\Vert \tilde{k}_j \Vert_{L^{p}(B_{2\epsilon_j})}&\leq K'(n,p)\Vert \nabla\tilde{k}_j\Vert_{L^q(Q_{4\epsilon_j})}\leq  K'(n,p)\left(\Vert \nabla\tilde{k}_j-\nabla\tilde{k}\Vert_{L^q(B)} + \Vert \nabla\tilde{k}\Vert_{L^q(Q_{4\epsilon_j})} \right)\xrightarrow[j\rightarrow\infty]{} 0
\end{align*}
which establishes case $3$.

\medskip
Finally, if $p>n$, we use $\partial_x\bar{k}|_{0}=0$, so that we can take the sequence $\tilde{k}_j$ satisfying $\tilde{k}_j|_{0},\partial_x\tilde{k}_j|_{0}=0$. We furthermore have the optimal embedding $W^{2,p}_0(B_{2\epsilon_j})\hookrightarrow C^{1,\alpha}(B_{2\epsilon_j})$, this time with $\alpha\doteq 1-\frac{n}{p}$, and thus from the mean value inequality
\begin{align*}
|\tilde{k}_j(x)|&\leq |\partial_x\tilde{k}_j(cx)||x|, \text{ for any } x\in B_{2\epsilon_j} \text{ and some } 0\leq c\leq 1,\\
&\leq \Big(\sup_{\underset{|x|\neq 0}{x\in B_{2\epsilon_j}}}\frac{|\partial_x\tilde{k}_j(cx)|}{|x|^{1-\frac{n}{p}}}\Big)|x|^{1+1-\frac{n}{p}}\leq c^{1-\frac{n}{p}} |x|^{2-\frac{n}{p}}\Big(\sup_{\underset{|x|\neq 0}{x\in B_{2\epsilon_j}}}\frac{|\partial_x\tilde{k}_j(cx)|}{|cx|^{1-\frac{n}{p}}}\Big),\\
&\leq C(n,p) |x|^{2-\frac{n}{p}}\sup_{\underset{|x-y|\neq 0}{x,y\in B_{2\epsilon_j}}}\frac{|\partial_x\tilde{k}_j(x) - \partial_x\tilde{k}_j(y)|}{|x-y|^{1-\frac{n}{p}}}
\end{align*}
and therefore
\begin{align*}
\epsilon^{-2p}_j\int_{A_{\epsilon_j}}|\partial^2_x\eta (\epsilon^{-1}_jx)\tilde{k}_j(x)|^pdx&\leq \epsilon^{-2p}_j\sup_{x\in B_{2\epsilon_j}}|\partial^2_x\eta (\epsilon^{-1}_j\cdot)|^p\int_{B_{2\epsilon_j}}|\tilde{k}_j|^{p}dx= \epsilon^{-2p}_j\sup_{y\in B_{2}}|\partial^2\eta (y)|^p\int_{B_{2\epsilon_j}}|\tilde{k}_j|^{p}dx,\\
&\leq C''(n,p) \epsilon^{ -2p}_j\Big(\sup_{\underset{|x-y|\neq 0}{x,y\in B_{2\epsilon_j}}}\frac{|\partial_x\tilde{k}_j(x) - \partial_x\tilde{k}_j(y)|}{|x-y|^{1-\frac{n}{p}}}\Big)^{p}\int_{B_{2\epsilon_j}}|x|^{(2-\frac{n}{p})p}dx,\\
&\leq C'''(n,p) \Big(\sup_{\underset{|x-y|\neq 0}{x,y\in B_{2\epsilon_j}}}\frac{|\partial_x\tilde{k}_j(x) - \partial_x\tilde{k}_j(y)|}{|x-y|^{1-\frac{n}{p}}}\Big)^{p}.
\end{align*}
Using once more (\ref{CutoffSequence.3.1}) applied to $\partial_x\tilde{k}_j\in C^{\infty}(Q_{4\epsilon_j})$, we find
\begin{align*}
\Vert \epsilon^{-2}_j\partial^2_x\eta (\epsilon^{-1}_j\cdot)\tilde{k}_j \Vert_{L^p(B_{2\epsilon_j})}&\leq K'(n,p)\Vert \nabla^2\tilde{k}_j \Vert_{L^p(Q_{4\epsilon_j})}\leq K'(n,p)\left( \Vert \nabla^2\tilde{k}_j - \nabla^2\tilde{k} \Vert_{L^p(B)} + \Vert \nabla^2\tilde{k} \Vert_{L^p(Q_{4\epsilon_j})} \right),
\end{align*}
where the right-hand side of the above expression goes to zero as $j\rightarrow\infty$, since $\tilde{k}_j\xrightarrow[j\rightarrow\infty]{W^{2,p}(B)} \tilde{k}$ by hypothesis, which finally establishes case $4$ in the proposition.
\end{proof}

The above approximation argument can be used to deduce the following result:
\begin{prop}\label{PropOnePointSupport}
Let $B\subset \mathbb{R}^n$ stand for a ball around the origin and $n\geq 3$. Letting $p>\frac{n}{2}$, if $V\in W^{-1,p}(B)$ is supported at the origin, then $V\equiv 0$.
\end{prop}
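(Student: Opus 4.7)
\medskip

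\textbf{Proof proposal.} The plan is to exploit the pairing $W^{-1,p}(B)=\bigl(W^{1,p'}_{0}(B)\bigr)'$ and combine it with item~1 of Proposition~\ref{PropApproximation} to directly reduce the problem to the distributional definition of ``supported at the origin''. Concretely, I will show that $V(\phi)=0$ for every $\phi\in W^{1,p'}_{0}(B)$ by approximating $\phi$ by smooth compactly supported functions that vanish in a neighbourhood of $0$, on which $V$ must vanish by the support hypothesis.

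First I would check that the hypothesis $p>n/2$ places $p'$ in the range covered by item~1 of Proposition~\ref{PropApproximation}. Since $n\geq 3$, one has $n/2\geq n/(n-1)$, so $p>n/2$ forces $p<n/(n-2)$, equivalently $p'<n$, and in particular $1<p'<n$. Hence item~1 applies: given any $\phi\in W^{1,p'}_{0}(B)$, there exists a sequence $\{\phi_{j}\}_{j=1}^{\infty}\subset C^{\infty}_{0}(B)$ with each $\phi_{j}$ vanishing in a neighbourhood of $0$ and $\phi_{j}\xrightarrow[j\to\infty]{W^{1,p'}(B)}\phi$.

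Next, since $V\in W^{-1,p}(B)\subset\mathcal{D}'(B)$ is supported at $\{0\}$, the distributional pairing $\langle V,\psi\rangle$ vanishes for every $\psi\in C^{\infty}_{0}(B)$ with $\mathrm{supp}(\psi)\cap\{0\}=\emptyset$. In particular $V(\phi_{j})=0$ for all $j$. On the other hand, $V$ is continuous on $W^{1,p'}_{0}(B)$, so
\[
V(\phi)=\lim_{j\to\infty}V(\phi_{j})=0,
\]
and since $\phi\in W^{1,p'}_{0}(B)$ was arbitrary, $V\equiv 0$ in $W^{-1,p}(B)$.

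I do not anticipate a real obstacle here: the whole argument rests on verifying the range of exponents (routine, as above) and on the density statement of Proposition~\ref{PropApproximation}(1), which is already available. The only point requiring some care is the identification of the action of $V$ on $C^{\infty}_{0}$ test functions supported away from the origin with the distributional pairing, but this is immediate from the fact that the inclusion $C^{\infty}_{0}(B)\hookrightarrow W^{1,p'}_{0}(B)$ is continuous with dense image, so the extension of $V$ to $W^{1,p'}_{0}(B)$ is the unique continuous one and agrees with the distributional pairing on $C^{\infty}_{0}$.
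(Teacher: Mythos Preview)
Your approach is exactly the one the paper takes: verify $1<p'<n$ so that item~1 of Proposition~\ref{PropApproximation} applies, approximate an arbitrary $\phi\in W^{1,p'}_0(B)$ by smooth functions vanishing near $0$, and use continuity of $V$ together with its support to conclude $V(\phi)=0$.

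One small slip to fix: the line ``$p>n/2$ forces $p<n/(n-2)$, equivalently $p'<n$'' is garbled. The inequality $p<n/(n-2)$ neither follows from $p>n/2$ nor is it equivalent to $p'<n$. What you want (and what your own observation $n/2\geq n/(n-1)$ actually gives) is $p>n/2\geq n/(n-1)$, hence $1/p<(n-1)/n$, hence $1/p'=1-1/p>1/n$, i.e.\ $p'<n$. With that correction the argument is complete and matches the paper.
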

\begin{proof}
Recalling that $W^{-1,p}(B)=(W^{1,p'}_0(B))'$, under our present conditions we can compute that
\begin{align*}
\frac{1}{p'}=1-\frac{1}{p}>1-\frac{2}{n}=\frac{n-2}{n}\Longleftrightarrow p'<\frac{n}{n-2}.
\end{align*}
Since $\frac{n}{n-2}\leq n\Longleftrightarrow n\geq 3$, we always obtain $p'<n$. Therefore, given any $\phi\in W^{1,p'}_0(B)$, we can use item $1$ of Proposition \ref{PropApproximation} to obtain a sequence $\{\phi_j\}_{j=1}^{\infty}\subset C^{\infty}_0(B)$ such that $\phi_j\xrightarrow[j\rightarrow\infty]{W^{1,p'}} \phi$ and all the elements in the sequence are supported away from the origin. Therefore,
\begin{align*}
V(\phi)=\lim_{j\rightarrow\infty}V(\phi_j)=0,
\end{align*} 
where the last equality follows since $V$ is supported at the origin by hypothesis. Since in the above argument $\phi\in W^{1,p'}_0(B)$ was arbitrary, it then follows that $V\equiv 0$.
\end{proof}

\addcontentsline{toc}{section}{References}
%\bibliographystyle{unsrt}
%\bibliography{bibliography}
\printbibliography

\end{document}